\documentclass[11pt,a4paper]{amsart}
\usepackage[margin=2cm]{geometry}

\usepackage{amsmath, amsthm, amssymb}
\input xy
\xyoption{all}
\usepackage{mathrsfs}

\usepackage{enumerate}
\usepackage{caption}
\usepackage{comment}
\usepackage{subcaption}
\usepackage{graphicx}
\usepackage{color}

\usepackage{tikz}
\usepackage{tikz-cd}
\usepackage{tkz-euclide}

\usepackage{scalerel}

\usepackage[driverfallback=hypertex]{hyperref}
\usepackage{nameref,zref-xr}

\newtheorem{thm}{Theorem}[section]
\newtheorem{prop}[thm]{Proposition}
\newtheorem{pr}[thm]{Proposition}
\newtheorem{lem}[thm]{Lemma}

\newtheorem{lemma}[thm]{Lemma}
\newtheorem{cor}[thm]{Corollary}

\theoremstyle{definition}
\newtheorem{definition}[thm]{Definition}
\newtheorem{nn}[thm]{Notation}

\theoremstyle{remark}
\newtheorem{rmk}[thm]{Remark}

\newtheorem{ex}[thm]{Example}
\newtheorem{obs}[thm]{Observation}

\newcommand{\CI}{{\nu}} 
\newcommand{\CM}{{\mathcal{M}}}

\newcommand{\oCM}{{\overline{\mathcal{M}}}}

\newcommand{\Gammar}{{\Gamma^W_{0,k_1,k_2,1,\{(a_i,b_i)\}_{i\in[l]}}}}

\newcommand{\CC}{{\mathcal{C}}}

\newcommand{\CL}{{\mathbb{L}}}
\newcommand{\Pos}{{H^+}}
\newcommand{\TAU}{{\mathrm{proj}}}
\newcommand{\TTT}{{\mathcal{T}}}
\newcommand{\detach}{\textup{detach}}
\newcommand{\smooth}{{\text{sm}}}
\newcommand{\RS}{{(r,s)}}
\newcommand{\Root}{{\times}}
\newcommand{\alt}{{\text{alt} }}
\newcommand{\xch}{\textup{exchange}}
\newcommand{\XCH}{\textup{Exchange}}
\newcommand{\SIMX}{{\sim_{\XCH} }}
\newcommand{\simx}{{\sim_{\xch} }}

\newcommand{\CRIT}{{\text{Crit}}}
\newcommand{\Critrit}{{\widetilde{\text{Crit}}}}
\newcommand{\INT}{{\text{Balanced}}}
\newcommand{\Inv}{{\mathbb{Q}\textendash\mathrm{Bal}}}
\newcommand{\InvSym}{{\mathbb{Q}\textendash\mathrm{Bal}^{\mathrm{sym}}}}
\newcommand{\OFJRW}{\Omega_{r,s}}
\newcommand{\OFJRWSym}{{\Omega_{r,s}^{\mathrm{sym}}}}
\newcommand{\ChamberIndices}{\Upsilon_{r,s}}
\newcommand{\ChamberIndicesSym}{\Upsilon_{r,s}^{\mathrm{sym}}}
\newcommand{\MAPS}{{\text{Maps}}}

\newcommand{\ess}{\mathbf{s}}
\newcommand{\vecd}{\mathbf{d}}

\newcommand{\op}[1]{\operatorname{#1}}
\newcommand{\Aut}{\mathup{Aut}}

\newcommand{\II}{\mathbb{I}}

\newcommand{\CB}{{\mathcal{B}}}

\newcommand{\CG}{{\mathcal{G}}}

\newcommand{\s}{\mathbf{s}}
\newcommand{\srest}{\mathbf{s}_1}

\renewcommand{\Aut}{\text{Aut}}

\newcommand{\ttt}{t}
\newcommand{\XXX}{w}
\newcommand{\ttts}{\mathbf{t}}
\newcommand{\cS}{\mathcal{S}}
\newcommand{\cJ}{\mathcal{J}}
\newcommand{\cW}{\mathcal{W}}

\newcommand{\eee}{{{\mathbf{e}}}}

\newcommand{\tw}{\text{tw}}
\newcommand{\NN}{\Z_{\ge 0}}
\newcommand{\NNN}{{\nu}}

\newcommand{\maltcross}{\scalerel*{
    \tikz\fill
    (0.02,0.02)    -- (0.2,0.5)   -- (0,0.4)  -- (-0.2,0.5)  --
    (-0.02,0.02)   -- (-0.5,0.2)  -- (-0.4,0) -- (-0.5,-0.2) --
    (-0.02,-0.02)  -- (-0.2,-0.5) -- (0,-0.4) -- (0.2,-0.5)  --
    (0.02,-0.02)   -- (0.5,-0.2)  -- (0.4,0)  -- (0.5,0.2)   --
    cycle;}
        {0}
}

\newcommand{\N}{\mathbb{N}}
\newcommand{\Universe}{\Omega}

\newcommand{\RRR}{{\mathcal{R}}}
\newcommand{\TRAM}{{\mathcal{R}}}
\newcommand{\Ass}{{{\maltcross}}}
\newcommand{\Detach}{{{\textup{Detach}}}}
\newcommand{\Conn}{{{\textup{Conn}}}}
\newcommand{\oPM}{{\overline{\mathcal{PM}}}}
\newcommand{\PM}{{{\mathcal{PM}}}}

\newcommand{\tildooo}{{\tilde{\mathfrak{o}}}}
\newcommand{\hatooo}{{\hat{\mathfrak{o}}}}
\newcommand{\ooo}{{{\mathfrak{o}}}}
\newcommand{\std}{{\text{std}}}

\newcommand{\kk}{{K}}
\newcommand{\rank}{\operatorname{rank}}
\newcommand{\For}{{\textup{For}}}
\newcommand{\forg}{{\textup{for}}}
\newcommand{\spinqi}{{\text{spin}\neq i}}

\newcommand{\Cont}{\textup{Cont}}

\newcommand{\Z}{\ensuremath{\mathbb{Z}}}
\newcommand{\Q}{\ensuremath{\mathbb{Q}}}
\newcommand{\C}{\ensuremath{\mathbb{C}}}
\newcommand{\R}{\ensuremath{\mathbb{R}}}
\renewcommand{\P}{\ensuremath{\mathbb{P}}}
\newcommand{\M}{\ensuremath{\overline{\mathcal{M}}}}
\renewcommand{\O}{\ensuremath{\mathcal{O}}}

\newcommand{\ev}{\ensuremath{\textrm{ev}}}

\renewcommand{\d}{\ensuremath{\partial}}

\newcommand{\eps}{\epsilon}
\newcommand{\<}{\left<}
\renewcommand{\>}{\right>}

\DeclareMathOperator{\re}{Re}

\usepackage{geometry}
\geometry{margin=1in}

\numberwithin{equation}{section}

\begin{document}

\title{Open FJRW Theory and Mirror Symmetry}

\author{Mark Gross}
\address{M. Gross: \newline Department of Pure Mathematics and Mathematical Statistics, University of Cambridge, CB4 0WB, United Kingdom}
\email{mgross@dpmms.cam.ac.uk}

\author{Tyler L. Kelly}
\address{T. L. Kelly: \newline School of Mathematical Sciences, Queen Mary University of London, 327 Mile End Rd, London E1 4NS, UK}
\email{t.l.kelly@qmul.ac.uk}

\author{Ran J. Tessler}
\address{R. J. Tessler:\newline Department of Mathematics, Weizmann Institute of Science, Rehovot 76100, Israel}
\email{ran.tessler@weizmann.ac.il}

\begin{abstract}
We construct an open enumerative theory for the Landau-Ginzburg (LG) model $(\C^2, \mu_r\times \mu_s, x^r+y^s)$. The invariants are defined as integrals of multisections of a Witten bundle with descendents over a moduli space that is a real orbifold with corners. In turn, a generating function for these open invariants yields the mirror LG model and a versal deformation of it with flat coordinates. After establishing an open topological recursion result, we prove an LG/LG open mirror symmetry theorem in dimension two with all descendents. The open invariants we define are not unique but depend on boundary conditions that, when altered, exhibit wall-crossing phenomena for the invariants. We describe an LG wall-crossing group classifying the wall-crossing transformations that can occur. 
\end{abstract}

\maketitle

\setcounter{tocdepth}{2}
\tableofcontents

\section*{Introduction}

In a sequence of papers, Abramovich, Chiodo, Jarvis, Kimura and Vaintrob developed an enumerative theory for the moduli space of $r$-spin curves, that is, stable curves with an $r$th root of the canonical bundle \cite{Witten93, AJ03, Ja1, Ja2, JKV, Chi08}. The intersection numbers of this moduli space can be encapsulated in a generating function related to the Gelfand-Dickey hierarchies. This is predicted by the $r$-spin version of Witten's conjecture, generalizing the original proof of Witten's conjecture by Kontsevich \cite{KontCMP}. Nowadays, their theory can be seen as a special case of FJRW theory, an enumerative theory for a Landau-Ginzburg $A$-model introduced by Fan, Jarvis and Ruan in another sequence of
papers \cite{FJR,FJR2,FJR3}. FJRW theory introduces an $A$-model
enumerative invariant associated to the Landau-Ginzburg model $(W,G)$ where
$W:\C^a\rightarrow \C$ is a weighted homogeneous polynomial
and $G$  is a subgroup of the automorphism group of $W$. FJRW theory is 
equivalent to $r$-spin theory when considering the case $(x^r, \mu_r)$ \cite{FJRrspin}.

Here, there should be a form
of mirror symmetry between Landau-Ginzburg models
\[
(W,G) \leftrightarrow (\check W,\check G).
\]
One such example, which shall be explored in this paper, is the
mirror pair
\begin{equation}
\label{eq:mirror pair}
(x_1^{r_1}+\cdots+x_a^{r_a},\mu_{r_1}\times\cdots\times\mu_{r_a})
\leftrightarrow (x_1^{r_1}+\cdots+x_a^{r_a}, 1).
\end{equation}
This mirror pair at its onset was predicted as a case of Berglund-H\"ubsch duality \cite{BH}, a combinatorial mirror construction that provides mirror pairs of Landau-Ginzburg models. In this paper, we consider the left-hand side as the $A$-model and the right hand side as the $B$-model.  Many aspects of mirror symmetry for this mirror pair  are understood. For example it is known that their corresponding Frobenius algebras are isomorphic \cite{Kra} and the (closed) FJRW theory of $(x_1^{r_1}+\cdots+x_a^{r_a},\mu_{r_1}\times\cdots\times\mu_{r_a})$ corresponds to the (closed) Saito-Givental theory of $(x_1^{r_1}+\cdots+x_a^{r_a}, 1)$ \cite{HeLiShenWebb}.

The closed $A$-model on the left of \eqref{eq:mirror pair} involves moduli spaces of pointed
stable \emph{orbicurves} $$(C,z_1,\ldots,z_n)$$
where $z_1,\ldots,z_n$ are marked points with non-trivial stabilizer,
and the only other points on $C$ with non-trivial stabilizer are
nodes, satisfying the so-called balancing condition.
Further, one also includes data of \emph{spin bundles} $S_1,\ldots,S_a$
on $C$, with the data of an isomorphism $S_i^{\otimes r_i}\cong
\omega_{C,\log}$, the logarithmic dualizing sheaf. Such a curve
is called a \emph{$W$-spin curve}. In this particular (closed) case,
the genus zero FJRW invariants we are interested in are defined
relatively easily from the top Chern class of the \emph{Witten bundle}
on the moduli space, whose fibre at a point in moduli space is
$\bigoplus_i H^1(C,S_i)^{\vee}$. In more general situations, such
as higher genus or more complex potentials, the definition of
FJRW invariants is considerably more difficult, as one might
have $H^0(C,S_i)\not=0$.
Those cases may require either a more in-depth study of analytic properties of the Witten equation as carried out in \cite{FJR3} or the more recent
approaches \cite{FJRGLSM, CFFGKS, FaveroKim} for general algebraic enumerative theories for gauged linear sigma models.
Thus we avoid such situations in this paper.

The closed $B$-model side of the story, at least for the right-hand side of
the correspondence \eqref{eq:mirror pair}, was clarified in work
of Li, Li, Saito and Shen \cite{LLSS}, with mirror symmetry for closed invariants being established in \cite{FJR, HeLiShenWebb, MS}. The $B$-model side is
a Saito-Givental
theory, which, put as simply as possible, involves calculating
oscillatory integrals of the form
\begin{equation}
\label{eq:osc integral}
\int_{\Gamma} e^{W_{\bf t}/\hbar} f(x_1,\ldots,x_a,{\bf t}) dx_1\wedge
\cdots \wedge dx_a.
\end{equation}
Here $x_1,\ldots,x_a$ are coordinates on $\C^a$, $\hbar$ is a coordinate
on an auxiliary $\C^*$, $f$
is a carefully chosen complex-valued regular function, and ${\bf t}=\{t_{i_1\cdots i_a}\,|\,
1\le i_j \le r_j-2\}$ is
a set of coordinates on the parameter space for a universal unfolding
$W_{\bf t}$ of $W$.
Finally $\Gamma$ runs over some suitable non-compact cycles in $\C^a$.
The requirement on $f$ is that the form $f dx_1\wedge\cdots \wedge dx_a$
is a so-called \emph{primitive form} in the sense of Saito-Givental theory.

While Saito-Givental theory in general gives a Frobenius manifold
structure on the parametrizing space of a universal unfolding of $W$,
determining this structure
can be quite difficult. However, experience with mirror
symmetry for toric Fano varieties \cite{ChoOh,GrossP2,FOOO1} suggests that
mirror symmetry becomes much more transparent when a specific perturbation
$W_{\bf t}$ of the original potential is used. In particular,
in the Fano case, the mirror Landau-Ginzburg model $W_{\bf t}$ should be taken to be a generating function
for certain \emph{open} Gromov-Witten invariants. Slightly more precisely,
one considers a Lagrangian torus fibre $F$ of an SYZ fibration on a given
Fano manifold and counts holomorphic disks with interior marked points realizing
some set of constraints and with boundary lying on $F$. The correct choice
of $W_{\bf t}$ is then a generating function for these counts. Importantly,
these open invariants are not well-defined. However, the oscillatory
integrals analogous to \eqref{eq:osc integral} are well-defined,
and furthermore, if properly set up, the function $f$ becomes identically
$1$ and the analogous coordinates ${\bf t}$ become the so-called \emph{flat
coordinates} of Saito-Givental theory. As a consequence, the entirely
well-defined closed Gromov-Witten invariants can be immediately read
off from the oscillatory integrals. Unfortunately, an enumerative theory for a Landau-Ginzburg model does not have a target space, hence there is no associated SYZ fibration; therefore, a new approach is needed to create an analogous story.

We take this as motivation for providing 
an open enumerative theory for the Landau-Ginzburg model $(x_1^{r_1}+x_2^{r_2},\mu_{r_1}\times\mu_{r_2})$. We now summarize the key points of the paper:
\begin{itemize}
\item We define open FJRW invariants 
as integrals of multisections of (descendent) Witten bundles over moduli
spaces which are real orbifolds
with corners. 
\item These open invariants depend on choices of boundary conditions for
sections of the (descendent) Witten bundles on the boundaries of moduli spaces.
However, such choices may be made (Theorem \ref{open FJRW invariants exist}).
\item Some boundary conditions can be fixed via a geometrically natural 
positivity condition. However, there are degrees of freedom in the choice
of boundary conditions which cannot be constrained. This leads to the first example of wall-crossing for an open enumerative theory for a Landau-Ginzburg model.
\item If the boundary conditions not determined by positivity
are chosen with an inductive
structure, certain combinations of open FJRW invariants satisfy a form
of topological recursion: see Theorem~\ref{thm:open TRR intro}. This type of topological recursion is new in a sense as we can only establish recursions between certain polynomials of invariants rather than a single invariant due to the wall-crossing phenomenon.
\item Given a system of open FJRW invariants determined by a family
of boundary
conditions satisfying this inductive structure, we construct
a perturbation of $W$ as a generating function for these open invariants.
The oscillatory integrals \eqref{eq:osc integral} then become
generating functions for closed extended FJRW invariants, see Theorem
\ref{thm:intro mirror theorem}. Besides proving mirror symmetry at
genus zero, we can construct the mirror entirely in terms of the
original open $A$-model, as opposed to relying on previous combinatorial
constructions such as Berglund-H\"ubsch.
\item We then show there is a wall-crossing group
which acts faithfully and transitively  
on (i) the collection of all possible systems of open FJRW invariants; and
 (ii) the set of all perturbations
of the potential $W$ satisfying certain homogeneity constraints for
which Theorem \ref{thm:intro mirror theorem} holds. Thus our method of
choosing boundary conditions is geometrically natural, allowing us to
match the $A$- and $B$-models for FJRW theory on the nose. See 
Theorem~\ref{introthm: torsor}.
\end{itemize} 

\noindent We now give more details of different aspects of this.

\subsection{The open $A$-model invariants}

Our construction of the open $A$-model invariants generalizes
the construction of open $r$-spin invariants of
Buryak, Clader and Tessler \cite{BCT:I,BCT:II}.
This in turn builds to a certain extent on the open descendent theory
of Pandharipande, Solomon and Tessler \cite{PST14}.

Given a potential $W=\sum_{i=1}^a x_i^{r_i}$, we review in
\S\ref{sec:graded surfaces}
the notion of a stable (twisted) $W$-spin curve and give the
definition of a stable (twisted) $W$-spin disk.
In slightly more detail, a $W$-spin curve involves data $(C,\{z_i\},S_1,\ldots, S_a,
\tau_1,\ldots,\tau_a)$.
Here $C$ is a stable marked orbicurve, with all marked points and
nodes having the same stabilizer group $\mu_d$, where $d$ is the
least common multiple of the $r_i$. Further, $\tau_i$ is an isomorphism
\[
\tau_i:S_i^{\otimes r_i} \rightarrow \omega_{C,\log}(-\sum_{i\in I_0} r_iz_i)
\]
where $I_0$ is a subset of indices of the marked points. It is
this twist by $z_i$ which leads to the phrase ``twisted'', and only Ramond
points are allowed to be twisted. Each marked point comes with
\emph{twists}
$\tw_i$ for $1\le i \le a$, where $\tw_i \in \{-1,\ldots,r_i-1\}$
records the action of the stabilizer group of the marked
point on the fibre of the twisted spin 
bundle $S_i$. See \S\ref{subsec:closed graded}
for details
on our conventions for these twists. The case of $\tw_i=-1$ or $r_i-1$ corresponds to the Ramond
case (where the action of the stabilizer on the fibre of $S_i$ is
trivial), with the two possibilities corresponding to the cases that
$i\not\in I_0$ or $i\in I_0$ respectively.

To have a compact moduli space of $W$-spin curves, we must consider $W$-spin curves whose underlying orbicurves $C$ have nodal singularities. If $C$ has a nodal point $n$, then take a partial normalization $\nu:\widetilde C
\rightarrow C$ at the node $n$. This yields two \emph{half-nodes} on $\widetilde C$, the inverse image of $n$, which we call $z_1, z_2$. These half-nodes will have twists, 
with the standard balancing condition
at nodes dictated by Equations \eqref{eq:balanced} and \eqref{eq:balanced action}
implying that these twists satisfy
\[
\tw_i(z_1)+\tw_i(z_2)\equiv r_i-2 \mod r_i.
\]

Closed FJRW invariants, in this setup, may be defined from the Euler class
of the Witten bundle on the moduli space of twisted $W$-spin curves when
the underlying curves have genus $0$ and, for each $1\le i \le n$, there
is at most one marked point with $\tw_i=-1$; otherwise the spin bundle
$S_i$ may have sections and defining FJRW invariants becomes more complicated.
In the $r$-spin case, such invariants were introduced in \cite{JKV}, and are called \emph{closed extended
FJRW invariants}. See \S\ref{subsec:closed extended} for details.
They were often not studied in the subsequent literature as the $\tw_i=-1$ case is not directly involved in statements of closed mirror symmetry; however, the extended theory is required to establish a natural relationship between open and closed extended invariants through open topological recursion as seen here and in \cite{BCT_Closed_Extended, BCT:I, BCT:II}. 

\medskip

A \emph{$W$-spin disk} is a genus zero $W$-spin curve
\[
(C,\{z_i\}_{i\in I}, \{\bar z_i\}_{i\in I}, \{x_i\}_{i\in B},
S_1,\ldots,S_a,\tau_1,\ldots,\tau_a)
\]
equipped with the additional data of an anti-holomorphic involution
$\phi:C\rightarrow C$ along with liftings of $\phi$ to involutions of $S_i$:
see \S\ref{subsec:open objects} for details and more precise notation.
We usually write $\Sigma$ for a choice
of fundamental domain of the anti-holomorphic involution $\phi$,
so that $C= \Sigma \cup \phi(\Sigma)$ and $\partial\Sigma =
\Sigma\cap \phi(\Sigma)$. The marked points $z_i$ are \emph{interior
marked points} indexed by a set $I$, i.e.,
$z_i\in \Sigma\setminus \partial\Sigma$ for $i\in I$, with
complex conjugate marked points $\bar z_i=\phi(z_i)$. The marked points
$x_i$
are \emph{boundary marked points} indexed by a set $B$, i.e., $x_i\in \partial\Sigma$ for $i\in B$.
While the twists of internal marked points are unrestricted,
we place the restriction on boundary marked points that
$\tw_i \in \{0, r_i-2\}$ (see Remark~\ref{remark on twist restriction} for discussion).
We further exclude the possibility that $\tw_i=0$ for all $i$: these
are in fact points which may be forgotten; forgetting such points
will play an important role.
As in \cite{BCT:I,BCT:II},
we also require some additional choice of data called a \emph{grading}
involving the restriction of the spin bundles to $\partial\Sigma$.
While this is technical and we omit further mention of
this data in the introduction, gradings are crucial for imposing the necessary
boundary conditions to define open invariants.
$W$-spin
disks with this additional data and constraints on boundary twists
are called \emph{graded $W$-spin disks}.

In \S\ref{sec:graded surfaces}, we first give the definition of
(closed) $W$-spin curves, followed by $W$-spin disks. This definition is a slight generalization of the definitions of the corresponding
$r$-spin curves and disks of \cite{BCT:I,BCT:II}. We then introduce a language
of dual intersection graphs to describe various ways that graded
$W$-spin curves or disks may degenerate. Unfortunately, there are a large number
of ways $W$-spin disks may degenerate, and this is where
combinatorial complexities appear. However, these generate 
interesting features.
We define the notion of a \emph{graded $W$-spin graph},
which records the combinatorial data of a stable graded $W$-spin curve or
disk. This data consists essentially of its dual intersection graph,
the twists of marked points and nodes, and the ordering (or collection of orderings) of boundary marked points.

In \S\ref{sec: moduli} we commence a study of moduli spaces
of stable graded $W$-spin disks. In Proposition \ref{prop:orbi_w_corners}
we construct a compact moduli space $\oCM^W_{\Gamma}$ where $\Gamma$ denotes
a graded $W$-spin graph as above corresponding to a smooth disk with
specified numbers of boundary and internal marked points along with
their twists.  This compact moduli space includes all
degenerations of smooth disks described by the graph $\Gamma$.

Having constructed these moduli spaces, we may define the Witten
and descendent bundles on these moduli spaces. This is
carried out as in \cite{BCT:I,BCT:II}. For $1\le i \le a$,
there is a bundle ${\mathcal W}_i$ on $\oCM^W_{\Gamma}$ whose fibre at a point
corresponding to a $W$-spin curve $C$ with
anti-holomorphic involution $\phi$ is the eigenspace of the
action of $\phi$ on $H^1(C,S_i)^{\vee}$ with eigenvalue $-1$.
Further, for each internal marked point there is a descendent
bundle $\CL_i$, a rank one complex vector bundle, whose fibre is
the cotangent space at the $i$th marked point. If $\Gamma$ is a
$W$-spin graph corresponding to a smooth disk
with internal marked points indexed by a set $J$, then we may consider
a \emph{descendent vector} $\vecd=(d_j)_{j\in J}\in \NN^{J}$.
We then obtain a real vector bundle
\[
E_\Gamma(\vecd):= \bigoplus_{i=1}^a \mathcal{W}_i \oplus
\bigoplus_{j\in J} \CL_j^{\oplus d_j},
\]
 the \emph{descendent Witten bundle}.
We say $\Gamma$ is \emph{balanced} for $\vecd$ if
\[
\dim \oCM_{\Gamma}^W = \rank E_{\Gamma}(\vecd).
\]
It is precisely this case where we will extract invariants.

The very first step in defining invariants is to orient the moduli
spaces and the descendent Witten bundles. This is carried out in
\S\ref{subsec:or} using results of \cite{PST14, BCT:I}. It is here where for the first time we restrict to the rank $2$ case. There is little to say here about this process,
other than that it is a necessary but unpleasant step.  Because we are restricting to rank two, we now change notation and write
\[
W=x^r+y^s.
\]
Since now a marked point or half-node has two twists $\tw_1,\tw_2$,
we typically write the twist as a pair $(\tw_1,\tw_2)$.

The second step in defining invariants is to have a suitable
set of boundary conditions.
Because $\oCM_{\Gamma}^W$ is an orbifold with corners,
one does not define the invariants simply by integrating Euler classes of the vector
bundle $E_{\Gamma}(\vecd)$. Rather, one has to impose boundary conditions.

The general setup is as follows; see \S\ref{subsubsec:multisections}
for a brief but more detailed review of these notions.
If given a compact orbifold with corners $M$ of dimension $n$
and a rank $n$ orbibundle $E$ on $M$,
along with a nowhere vanishing multisection $\ess$ of $E|_{\partial M}$, the
signed number of zeroes of any extension of $\ess$ to a transverse multisection
$\tilde \ess$ of $E$ over all of $M$ is well-defined, see \cite{PST14},
Theorem~A.14. This is the \emph{relative Euler class} of $E$ with respect
to $M$ and we write this as
\[
\int_{M} e(E; \ess)=\# Z(\tilde\ess).
\]
In \cite{BCT:II}, Appendix A, a variant of this is given, where $M$
need not be compact. Rather, one assumes given an open subset $U\subseteq M$
such that $M\setminus U$ is compact, and a nowhere vanishing multisection $\ess$ of
$E|_{U\cup \partial M}$. Then, the signed number of zeroes of any
transverse extension of $\ess$ is well-defined, again giving the relative
Euler class. We still write this using the notation above.

Thus, to define open invariants, we need to make a choice of boundary
conditions for the descendent Witten bundle for every balanced
$\Gamma$. To be of any use and to have any hope of proving relationships
between the different invariants, such as open topological recursion,
there has to be some inductive structure to this choice of boundary
conditions. Here, the approach to such an inductive structure follows
the approach of \cite{BCT:I,BCT:II, PST14}.

We now give an impressionistic description of how this is done. The
reader should be aware that the definitions given here in the introduction aren't
precisely correct, and should refer to the main text for the complete
definitions.\footnote{For example, we elide the notion of alternating for boundary marked points and nodes, see Definition \ref{def:alt}.}
Consider a $W$-spin graph $\Gamma$ corresponding to a smooth $W$-spin
disk, thus giving a moduli space $\oCM_{\Gamma}^W$ of such disks
and their degenerations. The boundary
of this moduli space is a union of codimension one strata, which arise
in several different ways. See Figure~\ref{fig:node_types} for a pictorial description of the various degenerations possible. We note that strata depicted in Figure~\ref{fig:node_types}(A) have codimension 2, so such strata are not on the boundary $\partial\M_{\Gamma}^W$. Thus we now focus on the cases in Figure~\ref{fig:node_types}(B) and~\ref{fig:node_types}(C). We broadly distinguish here between three
types of boundary strata:
\begin{enumerate}
\item  There are strata where the corresponding
stable disk has a \emph{contracted boundary node}, i.e., $C$ has a node which
is a fixed point of the anti-holomorphic involution.
\item There are strata with a boundary node where the corresponding stable disk
is a union of two disks, with the resulting half-nodes having twists
$(a,b)$, $(r-a-2,s-b-2)$, with either $a \in \{1,\ldots,r-3\}$
or $b\in \{1,\ldots,s-3\}$. Such a stratum is called \emph{positive}.
\item We have a similar degeneration to two disks with a boundary node, but this time
$a\in \{0,r-2\}$ and $b\in \{0,s-2\}$. We call such a stratum
\emph{relevant}.
\end{enumerate}

For case (1), we impose a boundary condition on multisections of the Witten bundle along
boundary strata arising from contracted boundary nodes. This is done
via a positivity condition and
guarantees non-vanishing of these multisections along such boundary
strata.

For case (2), we deal with the positive strata by removing them and imposing conditions
near such strata.
We define a new moduli space $\oPM_{\Gamma}$ obtained from
$\oCM_{\Gamma}^W$ by deleting the positive boundary strata. If $V\subseteq
\oCM_{\Gamma}^W$ is a suitably small open neighbourhood of the union
of positive boundary strata, we take $U=V\cap \oPM_{\Gamma}$. We then
only consider multisections of $E_{\Gamma}(\vecd)$ which satisfy a certain
positivity condition on $U$. These sections are then non-vanishing
on $U$, by definition.

A key point in the construction of the boundary conditions is to define the positivity boundary conditions in a consistent way. A more naive strategy is consistent only in low codimension strata and requires modifications (see Remark~\ref{rmk: why strongly positive}). We continue to use the modification used by Buryak, Clader and Tessler in \cite{BCT:I, BCT:II}, where $\oPM_\Gamma$ is introduced to construct an open $r$-spin intersection theory. The crucial point is that the zero locus of the section of Witten's bundle avoids nodal strata with certain twists at so-called non-alternating marked points in $\oPM_\Gamma$ (see Definition~\ref{def:alt}).

We call those multisections of $E_{\Gamma}(\vecd)$ on $\oPM_{\Gamma}$
which satisfy these positivity
conditions at contracted boundary node strata and near positive
strata \emph{strongly positive multisections}, see
Definition \ref{def: strongly positive}. We note that the notion of
grading, brushed over earlier in this introduction, is necessary to
define positivity. Given such a section $\ess^{\Gamma}$, we
are then able to define the \emph{open $W$-spin invariant}
\[
\left\langle \prod_{i\in I(\Gamma)}\tau^{(a_i,b_i)}_{d_i}\sigma_1^{k_1(\Gamma)}\sigma_2^{k_2(\Gamma)}\sigma_{12}^{k_{12}(\Gamma)} \right\rangle^{\mathbf{s}^{\Gamma},o}:=
\int_{\oPM_\Gamma}e(E ; \mathbf{s}^{\Gamma}|_{U\cup\partial\oPM_\Gamma})=\#Z(\mathbf{s}^{\Gamma})\in\mathbb{Q}.
\]
Here $I(\Gamma)$ denotes the set of tails of $\Gamma$
corresponding to internal marked points of the corresponding disk,
with
$(a_i,b_i)$ denoting the twist at the internal point labelled by $i$
and $d_i$ the component of the descendent vector. On the other hand,
$k_1(\Gamma), k_2(\Gamma)$ and $k_{12}(\Gamma)$ denote the number of
boundary points with twists $(r-2,0)$, $(0,s-2)$ and $(r-2,s-2)$
respectively. We note that we do not label the boundary points, but the invariants may depend on the labels of the internal points and not just their twists.

This construction has now produced a single invariant associated
to a $W$-spin graph $\Gamma$ corresponding to a smooth $W$-spin disk
and a descendent vector $\vecd=(d_i)_{i\in I(\Gamma)}
\in \Z_{\ge 0}^{I(\Gamma)}$. However, the range of boundary conditions allowed by strongly positivity is still too flexible to provide meaningful systems of invariants.

The key punchline here is that, in case (3) above, there is an inductive
structure for the relevant strata. Here, the normalization
of a stable disk in the interior of a relevant stratum is a disjoint
union of two disks, each of which only have boundary points with
twists $(0,0)$, $(r-2,0)$, $(0,s-2)$ or $(r-2,s-2)$.
However, after forgetting those boundary points of twist $(0,0)$,
these are precisely
the types of boundary twists we allow in graded $W$-spin disks. We call points with twist $(r-2,0)$ $r$-points, $(0,s-2)$ $s$-points, and $(r-2,s-2)$ fully twisted points. In particular,
if $\CM$ denotes such a boundary stratum and $\CM_1$, $\CM_2$
are the moduli spaces of the corresponding normalized disks (with
boundary points of twist $(0,0)$ forgotten),
we have a natural map $\CM\rightarrow \CM_1\times\CM_2$. We will call this moduli $\CM_1\times\CM_2$ the \emph{base} of the stratum $\CM$,
and it will be a central object for this paper. Further,
it follows from gluing properties of the Witten bundle (see Proposition
\ref{pr:decomposition}) that
$E_{\Gamma}(\vecd)|_{\CM}$ is the pull-back of a corresponding
descendent Witten bundle on $\CM_1\times \CM_2$, a sum $E_1\boxplus E_2$ of the pullbacks of descendent Witten bundles $E_i$
on $\CM_i$.
As a consequence, if we have already fixed multisections of $E_1$ and $E_2$,
we obtain via pullback under the map $\CM\rightarrow \CM_1\times\CM_2$
a multisection of $E_{\Gamma}(\vecd)|_{\CM}$. This allows us to inductively
determine boundary conditions along relevant strata.

To be useful, and to take into account inductively
defined boundary conditions, we organize this data as follows.
We will fix the set\footnote{We remark this is
a slight simplification of what is necessary in the main text.}
$\N=\{1,2,\ldots\}$ to be the set
of all possible labels of interior marked points, so that every $W$-spin
graph $\Gamma$ comes with an injective marking function $m^I:I(\Gamma)
\rightarrow \N$. We also fix a twist
function
\[
\tw:\N \rightarrow \{0,\ldots,r-1\}\times \{0,\ldots,s-1\}
\]
with the property that $\tw^{-1}(a,b)$ is an infinite set for
any $(a,b)\in \{0,\ldots,r-1\}\times \{0,\ldots,s-1\}$. We often write
$\tw(i)=(a_i,b_i)$ for $i\in\N$. We will thus always assume that an internal
marked point labeled by $i$ has twist $(a_i,b_i)$.
We typically will further fix a finite subset $I\subset \N$
of possible labels and focus attention on those graphs $\Gamma$
with $I(\Gamma)\subseteq I$.
This will help deal with a number of convergence issues in formal
power series rings when we define various  potentials later.

Given the above data,
we define the notion of a \emph{family of
canonical multisections bounded by $I$} (see
Definition \ref{def:family of canonical}). This consists of
a choice of multisection $\mathbf{s}^{\Gamma}$ of $E_{\Gamma}(\vecd)$
for each graph $\Gamma$ associated to a smooth graded $W$-spin disk
and descendent vector $\vecd\in\NN^{I(\Gamma)}$, with $I(\Gamma)\subseteq I$.
Further, these multisections satisfy the inductive boundary conditions
described above.
Note these sections must be defined even in the non-balanced case to be
useful; indeed, in general boundary strata involved in the inductive
definition of canonical families involve moduli spaces which are not
balanced.

Given such a family, we may now assemble a system of invariants. Let
$J\subseteq I$ and $\vecd\in \NN^J$. Denote by $\INT(J,\vecd)$
the set of graded
$W$-spin graphs $\Gamma$ with $I(\Gamma)=J$ which are balanced for
$\vecd$. Then define
\begin{equation}\label{eq: def Inv I}
\Inv(I)=\prod_{J\subseteq I}\prod_{\vecd\in\NN^J} \Q^{\INT(J,\vecd)}.
\end{equation}

A family $\ess$ of canonical multisections bounded by $I$ then produces an
element $\CI$ of $\Inv(I)$ whose entry corresponding to a
graph $\Gamma$ balanced for $\vecd$ is
\begin{equation}
\label{eq:chamber index from multisection}
\CI^{\ess}_{\Gamma,\vecd}=
\left\langle \prod_{i\in I(\Gamma)}\tau^{(a_i,b_i)}_{d_i}\sigma_1^{k_1(\Gamma)}\sigma_2^{k_2(\Gamma)}\sigma_{12}^{k_{12}(\Gamma)} \right\rangle^{\mathbf{s},o}.
\end{equation}

We remark here that we do not assume
our canonical multisections are invariant under a twist-preserving change
of labels of internal marked points. This will make many arguments easier,
and we may, if we wish,
pass to symmetric families by an averaging process, see
\S\ref{subsec:chamber indices}. We say a family $\ess$ of canonical multisections (bounded by $I$) is \emph{symmetric} if it is invariant under the natural lift of a twist-preserving bijection on $I$ to the level of moduli (see \textsection\ref{subsec:chamber indices} for a more precise treatment).
Analogously, we call an element $\CI\in \Inv(I)$
\emph{symmetric} if $\CI$ is invariant under all twist-preserving
automorphisms of the set $I$. Explicitly, let $\sigma:I\rightarrow I$
be a permutation with $\tw\circ\sigma=\tw$, and for a graph $\Gamma$
with internal tails labeled by elements of a set $J\subseteq I$,
let $\sigma(\Gamma)$ denote the same graph with an internal label
$i$ replaced by $\sigma(i)$. Then $\CI$ is symmetric if
\[
\CI_{\Gamma,(d_j)_{j\in J}} =
\CI_{\sigma(\Gamma),(d_{\sigma(j)})_{j\in J}}
\]
for all such $\sigma$. We let $\InvSym(I)\subseteq\Inv(I)$ denote the subset of symmetric
elements. With these definitions in hand, we can state our first theorem, which is a combination of  Theorem~\ref{prop:int_numbers_exist} and Observation~\ref{symmetric invariants exists}.

\begin{thm}\label{open FJRW invariants exist}
Open FJRW invariants for the LG model $(x^r+y^s, \mu_r \times \mu_s)$ exist.  That is, we can construct a family (bounded by $I$) of canonical transverse multisections. Moreover, one can construct a  family (bounded by $I$) of symmetric canonical transverse multisections. 
\end{thm}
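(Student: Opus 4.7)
The plan is to construct $\{\mathbf{s}^{\Gamma}\}$ by induction on $\dim\oCM^W_\Gamma$, which decreases strictly when one passes from $\Gamma$ to either of the graphs $\Gamma_1,\Gamma_2$ parameterizing the base $\CM_1\times\CM_2$ of a relevant boundary stratum $\CM\subset\partial\oPM_\Gamma$, since such strata have codimension one. The induction must cover all graphs $\Gamma$ with $I(\Gamma)\subseteq I$, including the non-balanced ones, because non-balanced bases also arise in Definition \ref{def:family of canonical}; in those cases the multisection produces no invariant but still propagates boundary conditions further up the inductive chain.

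For the inductive step, fix $\Gamma$ and $\vecd\in\NN^{I(\Gamma)}$, and define $\mathbf{s}^{\Gamma}$ on $U\cup\partial\oPM_\Gamma$ stratum by stratum. On each relevant stratum $\CM$, the Witten bundle decomposition $E_\Gamma(\vecd)|_{\CM} = E_1\boxplus E_2$ of Proposition \ref{pr:decomposition}, together with the base map $\CM\to \CM_1\times \CM_2$, lets us set $\mathbf{s}^{\Gamma}|_{\CM}$ equal to the pullback of $\mathbf{s}^{\Gamma_1}\boxplus\mathbf{s}^{\Gamma_2}$, both of which are already defined by induction. On contracted boundary node strata, impose the canonical positivity condition dictated by the grading. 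On $U$, pick $\mathbf{s}^{\Gamma}$ inside the strong positivity cone; since this cone is open and convex, such nowhere-vanishing multisections exist. Then extend $\mathbf{s}^{\Gamma}$ from $U\cup\partial\oPM_\Gamma$ to the interior of $\oPM_\Gamma$ via the orbifold-with-corners multisection extension theorem of \cite{PST14}, Appendix A and \cite{BCT:II}, Appendix A, and perturb generically in the interior to achieve transversality without disturbing the boundary data.

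The main obstacle is compatibility on the corner strata where two or more of these boundary prescriptions are in force simultaneously --- two relevant nodes, a relevant node meeting a contracted boundary node, or a relevant node meeting the frontier of $U$. Iterated application of Proposition \ref{pr:decomposition} shows that the Witten bundle decomposition is associative under further normalization, so at an intersection of two relevant strata the two candidate pullback values agree; compatibility with the contracted boundary node condition and with positivity on $U$ then follows from the compatibility of the grading data with the decomposition and the convexity of the strong positivity cone, which permits a smooth interpolation between the prescriptions inside a common neighborhood.

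For the symmetric strengthening asserted by Observation \ref{symmetric invariants exists}, one produces a symmetric family from the above by a standard averaging procedure. Explicitly, for each graph $\Gamma$ set
\[
\tilde{\mathbf{s}}^{\Gamma} := \frac{1}{|\mathrm{Stab}(\Gamma)|}\sum_{\sigma\in\mathrm{Stab}(\Gamma)}\sigma^{*}\mathbf{s}^{\Gamma},
\]
where $\mathrm{Stab}(\Gamma)$ denotes the stabilizer of $\Gamma$ under the action of twist-preserving bijections of $I$, and transport the result equivariantly across orbits of this action. This averaging preserves the pullback boundary condition on relevant strata (the base maps are equivariant), preserves positivity (the cones are convex and equivariant), and admits an equivariant generic perturbation in the interior restoring transversality while preserving symmetry, yielding the required symmetric canonical transverse family.
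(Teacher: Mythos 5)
Your construction of the (non-symmetric) family follows essentially the same route as the paper: Theorem \ref{prop:int_numbers_exist} is established by exactly this kind of induction over boundary strata --- prescribe the multisection on relevant strata as the pullback from the base via Proposition \ref{pr:decomposition} and Observation \ref{obs:Witten_pulled_back_perm}, use the compatibility $F_{f_{\Gamma}(\Gamma')}\circ F_{\Gamma}|_{\oCM^W_{\Gamma'}}=F_{\Gamma'}$ of Observation \ref{obs:for_comp} to reconcile the prescriptions on corner strata, impose positivity near positive, contracted-boundary and irrelevant strata, and then extend transversally into the interior --- with the details deferred to the analogous Lemma 4.10 of \cite{BCT:II}. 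So that half of your argument is in substance the paper's.

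The symmetrization step, however, contains a genuine error. You average by the arithmetic mean $\frac{1}{|\mathrm{Stab}(\Gamma)|}\sum_{\sigma}\sigma^{*}\mathbf{s}^{\Gamma}$. For multisections this is the wrong operation: a sum of transverse multisections need not be transverse and can acquire new, even non-isolated, zeros (branches related by $\sigma$ can cancel along the fixed locus of $\sigma$). This failure occurs already on the lower-dimensional moduli $\oPM_{\Xi}$ that feed the inductive boundary conditions, where you cannot perturb afterwards without destroying canonicity; an ``equivariant generic perturbation in the interior'' does not repair it, and the impossibility of equivariantly perturbing an honest section to transversality is precisely why multisections are used in this framework in the first place. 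The paper's Observation \ref{symmetric invariants exists} instead symmetrizes with the weighted union $\biguplus_{\sigma}\Psi_{\sigma}^{*}\widetilde{\mathbf{s}}^{\Gamma}$ of Definition \ref{def:union_of_sections}: the local branches of the union are just the relabelled branches of the original multisections with rescaled weights, so transversality, strong positivity and the pullback boundary conditions are preserved verbatim and no further perturbation is needed. Replacing your arithmetic average by this $\biguplus$-union fixes the argument.
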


We now turn to describing the systems of open FJRW invariants for the LG model  $(x^r+y^s, \mu_r \times \mu_s)$. Define 
\[
\OFJRW(I)= \{\CI^{\ess}\,|\,\hbox{$\ess$ a family of canonical multisections
bounded by $I$}\}\subseteq\Inv(I)
\]
to be the set of all systems of open FJRW invariants arising from
families of canonical multisections. We also define
\[
\OFJRWSym(I)=\OFJRW(I)\cap \InvSym(I)\subseteq\InvSym(I),
\]
which we will see consists of systems of open FJRW invariants
arising from symmetric families of canonical multisections.
A priori, Theorem~\ref{open FJRW invariants exist} only shows the nonemptiness $\OFJRW(I)$ and $\OFJRWSym(I)$.
Thus the real
task is to understand the structure of the sets $\OFJRW(I)$ and $\OFJRWSym(I)$.

Unlike the $r$-spin case of \cite{BCT:I,BCT:II}, it is very important
to keep in mind that these open $W$-spin invariants are dependent on boundary
conditions. It is precisely this dependence which we will explore
in more detail below in \S\ref{subsec:intro wall crossing}.
In general, given two families of canonical
multisections $\ess_0$ and $\ess_1$,
one can construct homotopies between them, and the change of
open invariants is then seen by counting
zeroes escaping off the boundary of moduli space, or coming in from the
boundary of moduli space. In other words, what matters is the number of
zeros of these homotopies restricted to the boundary of the various moduli
spaces. To harness this, we define
\emph{families of canonical homotopies} in Definition \ref{def:canonical homotopy}. It is important for the homotopies to have a rich structure for us to control their contributions to the invariants. In particular, they must be strongly positive and be pulled back from the base when restricted to relevant boundary strata. 
 In Lemma \ref{lem:partial_homotopy}, we show that we can construct families of canonical homotopies in such a way that we can control their non-vanishing on various boundary strata.

We then are able to provide a structure theorem for the sets $\OFJRW(I)$ and $\OFJRWSym(I)$ after taking inspiration from the structure of the $B$-model.

\subsection{The $B$-model}

In \S\ref{sec:B model} we study the period integrals which yield the
$B$-model side of our story. We refer the reader to \S\ref{subsec:state space} and \cite{GKTdim1}
for more details of these period integrals. The key point
is to study integrals of the form \eqref{eq:osc integral}
where $\Gamma$ runs over a so-called \emph{good basis} of cycles
$\Xi_{\mu}$ of unbounded $a$-dimensional cycles in $\C^a$
with the property that $\re(W/\hbar)$ tends to $-\infty$ in the
unbounded directions. Here, $\mu$ runs over tuples
$(\mu_1,\ldots,\mu_a)$ with $0\le \mu_i \le r_i-2$.
This basis is described explicitly in \cite{GKTdim1}.
These cycles have a multi-valued dependence on $\hbar$, hence exhibit 
monodromy as
$\hbar$ varies in $\C^*$.  The good basis is determined by the requirement that
\[
\int_{\Xi_{\mu}} x^{\mu'}e^{W/\hbar}
dx_1\wedge\cdots\wedge dx_a=\delta_{\mu \mu'},
\]
where $\delta_{\mu\mu'}$ is the Kronecker delta function, $x^{\mu'}$ denotes the monomial $\prod_i x_i^{\mu'_i}$,
and $W=\sum_i x_i^{r_i}$.

While the full $B$-model description gives a semi-infinite variation of
Hodge structure and a Frobenius manifold, we will ignore this structure,
other than making a connection between the description of this $B$-model
data in \cite{LLSS, HeLiShenWebb} and the explicit period integrals we consider
here. We briefly review the setup in terms of period integrals, and
send the reader to \S\ref{subsec:state space} for more details.

One considers a universal unfolding $W_{\mathbf{y}}$ of $W$,
where $\mathbf{y}$ denotes a set of coordinates $\{y_{\mu}\}$
on the universal unfolding parameter space. One then would like
to identify a \emph{primitive form} in the sense of Saito, i.e., a form
$f(x_1,\ldots,x_a,\mathbf{y})dx_1\wedge\cdots\wedge dx_a$ with the
property that
\[
\int_{\Xi_{\mu}} e^{W_{\mathbf{y}}/\hbar} fdx_1\wedge\cdots \wedge dx_a
=
\delta_{\mu 0} +t_{\mu}(\mathbf{y}) \hbar^{-1} +O(\hbar^{-2})
\]
for some collection of functions $t_{\mu}$. These functions then
form a system of coordinates on the universal unfolding moduli space
known as \emph{flat coordinates}. Once the above integrals are
written in terms of these flat coordinates, they become generating
functions for closed FJRW invariants. The precise form of this
generating function may be found in the statement of Theorem
\ref{thm:intro mirror theorem}.

To be explicit, we now return to the rank two case with $W = x^r + y^s$.
Fix $I\subseteq \N$ a finite set of allowable markings as in the
previous subsection, and recall the twist function $\tw$ assigning
a specific twist to each label in $I$. Consider the
ring
\[
A_{I,\mathrm{sym}}:=\Q[t_{\alpha,\beta,d}\,|\, 0 \le \alpha\le r-2,
0 \le \beta\le s-2, d\in \NN]/\mathrm{Ideal}(I),
\]
where $\mathrm{Ideal}(I)$ is the ideal generated by monomials of the
form
\[
\prod_d t_{\alpha,\beta,d}^{n_d}
\]
such that $\sum n_d > |(\tw|_I)^{-1}(\alpha,\beta)|$.
Note that the definition of $\mathrm{Ideal}(I)$ is
designed so that in the potential defined below, we only consider graphs
with at most $|(\tw|_I)^{-1}(\alpha,\beta)|$ internal marked points with twist
$(\alpha,\beta)$.

Given a symmetric element $\CI=(\CI_{\Gamma,\vecd})\in \InvSym(I)$, we may then define the
\emph{potential} of $\CI$ to be
\[
W^{\CI,\mathrm{sym}} = \sum_{k_1,k_2 \geq 0, l \geq 0} \, \,
\sum_{A= \{(\alpha_i, \beta_i,d_i)\} \in \mathcal{A}_l}
(-1)^{l-1} \frac{
\CI_{\Gamma_{0,k_1,k_2,1,\{(\alpha_i,\beta_i)\}},\vecd}}{|\Aut(A)|}
(\textstyle{\prod}_{i=1}^l t_{\alpha_i,\beta_i,d_i} ) x^{k_1}y^{k_2}
\in A_{I,\mathrm{sym}}[[x,y]].
\]
Here:
\begin{itemize}
\item $\mathcal{A}_l$ denotes the set of cardinality $l$ multisets of tuples
$(\alpha,\beta,d)$ with $0\le\alpha\le r-2$, $0\le\beta\le s-2$, $d\ge 0$;
\item $\Gamma_{0,k_1,k_2,1,\{(\alpha_i,\beta_i)\}}$ denotes
the graph corresponding to a smooth disk with:
\begin{itemize} 
\item $k_1$ boundary $r$-points,
\item $k_2$ boundary $s$-points, 
\item 1 boundary point that is fully twisted, and
\item $l$ internal marked points with twists $(\alpha_i,\beta_i)$,
$1\le i \le l$;
\end{itemize}
\item  $\Aut(A)$ denotes the automorphism group
of the multiset $A$, i.e., permutations $\sigma$ on $l$ letters such that
$(\alpha_{\sigma(i)},\beta_{\sigma(i)},d_{\sigma(i)})=(\alpha_i,\beta_i,
d_i)$ for $1\le i \le l$; and 
\item $\vecd=(d_i)_{1\le i \le l}$ denotes the
descendent vector.
\end{itemize}

\noindent We take the number $\nu_{\Gamma,\vecd}$ to be
zero if $\Gamma$ is not balanced for
$\vecd$. 

We make three remarks about this potential, assuming that
$\CI=\CI^{\ess}$, i.e., $\nu$ is enumerative, arising from a symmetric canonical family of
multisections $\ess$
via \eqref{eq:chamber index from multisection}. First, in fact we will show
(see Theorem \ref{thm:simple invariants}) that
\begin{equation}
\label{eq:needed cong}
W^{\CI,\mathrm{sym}}\equiv
x^r+y^s+\sum_{\substack{0\le \alpha\le r-2\\ 0\le \beta \le s-2}} t_{\alpha,\beta,0} x^\alpha y^\beta
\end{equation}
modulo all monomials quadratic in the variables $t_{\alpha,\beta,0}$ or linear
in $t_{\alpha,\beta,d}$ for $d>0$. This agrees
with a natural choice of universal unfolding of $W=x^r+y^s$.
However, in general there will be a possibly infinite number of additional
terms which are higher order in the coordinates $t_{\alpha,\beta,d}$.

Second, the condition that a graph $\Gamma$ must be balanced with
respect to $\vecd$ to contribute
to the potential makes $W^{\CI,\mathrm{sym}}$ homogeneous of
degree $rs$ with respect to an Euler vector field
\begin{equation}
\label{eq:Euler}
E:=sx\partial_x+ry\partial_y-\sum_{\stackrel{0\le\alpha\le r-2,\ 0\le\beta\le s-2,}{ d\ge 0}}
(s\beta+r\alpha+rs(d-1))t_{\alpha,\beta,d}\partial_{t_{\alpha,\beta,d}}.
\end{equation}
In other words,
\[
E(W^{\CI,\mathrm{sym}})=rs W^{\CI,\mathrm{sym}}.
\]

Third, integrality considerations for degrees of spin bundle show that
$W^{\CI,\mathrm{sym}}$ is invariant under the action 
$x\mapsto \xi_r x$, $y\mapsto \xi_sy$, $t_{a,b,d}\mapsto 
\xi^{-a}_r \xi^{-b}_s t_{a,b,d}$, where $\xi_r,\xi_s$ are primitive
$r^{th}$ and $s^{th}$ roots of unity respectively. 

The following theorem, which is Corollary \ref{cor:symmetric integal},
is fairly straightforward via integration by parts:

\begin{thm}
\label{thm:osc int}
Fix $\CI\in\InvSym(I)$ such that
\[
W^{\CI,\mathrm{sym}}\equiv x^r+y^s \mod \langle t_{\alpha,\beta,d} \ | \ 0 \le \alpha\le r-2, \  0 \le \beta \le s-2, \ 0 \le d\rangle.
\]
For a multiset $A\in \bigcup_l \mathcal{A}_l$, $A=\{(\alpha_i,\beta_i,d_i)\}$,
define $r(A)$ to be the unique element of $\{0,\ldots,r-1\}$
congruent to $\sum_{i=1}^l \alpha_i$ modulo $r$, and similarly  $s(A)$  the
unique element of $\{0,\ldots,s-1\}$ congruent to $\sum_{i=1}^l \beta_i$ modulo $s$.
Finally, let
\[
d(A)={sr(A)+rs(A)-\sum_{i=1}^l (s\alpha_i+r\beta_i+rs(d_i-1))\over rs}-2.
\]

Then
\begin{equation}\begin{aligned}\label{messy intro equation}
&\int_{\Xi_{(a,b)}} e^{W^{\CI,\mathrm{sym}}/\hbar} dx \wedge dy\\
= {} & \delta_{a,0}\delta_{b,0} +
\sum_{l\ge 1}
\sum_{A=\{(\alpha_i,\beta_i,d_i)\}\in \mathcal{A}_l} (-1)^l
(-\hbar)^{-d(A)-2}
\mathcal{A}(A,\CI)\delta_{r(A), a}\delta_{s(A), b}
\left({\prod_{j=1}^l t_{\alpha_j,\beta_j,d_j}\over |\textup{Aut}(A)|}\right).
\end{aligned}\end{equation}
Here $\mathcal{A}(A, \CI)\in \mathbb{Q}$ is defined in Notation \ref{nn:A(J)} and Definition~\ref{AAD}.
\end{thm}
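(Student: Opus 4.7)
The plan is to reduce the oscillatory integral on the left-hand side to integrals of monomials against $e^{(x^r+y^s)/\hbar}\Omega$ over the good basis cycle $\Xi_{(a,b)}$, and then to invoke the defining property of that basis together with integration by parts.

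First I would write $W^{\CI,\mathrm{sym}} = x^r + y^s + P$, where by hypothesis $P$ lies in the ideal generated by the variables $t_{\alpha,\beta,d}$. Formally expand
\[
e^{W^{\CI,\mathrm{sym}}/\hbar} = e^{(x^r+y^s)/\hbar}\,\sum_{n\ge 0}\frac{1}{n!\hbar^n}P^n.
\]
Using the definition of $W^{\CI,\mathrm{sym}}$, each power $P^n$ is a finite $\Q$-linear combination of monomials of the form $x^{K_1} y^{K_2}\prod_{j=1}^{l(A)} t_{\alpha_j,\beta_j,d_j}$. Collecting by the underlying multiset $A=\{(\alpha_i,\beta_i,d_i)\}\in\mathcal{A}_l$, the combined coefficient of $\prod_j t_{\alpha_j,\beta_j,d_j}$ packages the contributions of all ways of partitioning $A$ into the boundary-graph data of the potential, weighted by the $(-1)^{l-1}/|\Aut|$ appearing in the definition of $W^{\CI,\mathrm{sym}}$ and by the multinomial and $1/n!\hbar^n$ weights. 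This combinatorial sum is exactly what is encoded in $\mathcal{A}(A,\CI)$ via Notation~\ref{nn:A(J)} and Definition~\ref{AAD}, and the overall sign flips from $(-1)^{l-1}$ to $(-1)^l$ once one accounts for the relationship between boundary graph counts and the structure constants in $\mathcal{A}(A,\CI)$.

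Next I would integrate term by term against $\Xi_{(a,b)}$. Each resulting piece is of the shape $\int_{\Xi_{(a,b)}} x^{K_1} y^{K_2} e^{(x^r+y^s)/\hbar}\Omega$, and I would reduce this via integration by parts: from $d\bigl(x^{K_1 - r + 1} e^{(x^r+y^s)/\hbar}\bigr)$ one obtains
\[
\int_{\Xi_{(a,b)}} x^{K_1} y^{K_2} e^{(x^r+y^s)/\hbar}\Omega = -\frac{\hbar(K_1-r+1)}{r}\int_{\Xi_{(a,b)}} x^{K_1-r} y^{K_2} e^{(x^r+y^s)/\hbar}\Omega,
\]
where the boundary term vanishes because $\re(W/\hbar)\to -\infty$ along the unbounded directions of $\Xi_{(a,b)}$. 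Iterating this reduction (and the analogous one for $y$) drives the exponents down to $(K_1 \bmod r, K_2 \bmod s)\in\{0,\ldots,r-2\}\times\{0,\ldots,s-2\}$, at which point the defining property $\int_{\Xi_{(a,b)}} x^{a'} y^{b'} e^{(x^r+y^s)/\hbar}\Omega = \delta_{aa'}\delta_{bb'}$ of the good basis kicks in. This produces exactly the Kronecker deltas $\delta_{r(A),a}\delta_{s(A),b}$, and the count of integration-by-parts steps yields a power of $\hbar$ whose exponent is $-d(A)-2$, by matching the total $x$- and $y$-weight of the monomial against the degree formula for $d(A)$; here one uses that balancedness of the graphs contributing to $\mathcal{A}(A,\CI)$ is equivalent to the weighted-homogeneity statement $E(W^{\CI,\mathrm{sym}})=rs\,W^{\CI,\mathrm{sym}}$ for the Euler field \eqref{eq:Euler}, which guarantees that $d(A)$ is an integer.

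The main obstacle will be the bookkeeping in the first step: one must verify that the combinatorial coefficient that appears after expanding $P^n/n!\hbar^n$ and collecting monomials in the $t$-variables agrees on the nose with $\mathcal{A}(A,\CI)$ as defined in Notation~\ref{nn:A(J)} and Definition~\ref{AAD}, including signs and automorphism factors. The $n=0$ term of the expansion yields the isolated term $\delta_{a,0}\delta_{b,0}$ in \eqref{messy intro equation}, and everything else arises from $n\ge 1$; reindexing the sum over $n$ and partitions of $A$ by the multiset $A$ itself gives precisely the stated formula. All remaining steps (integration by parts, vanishing of boundary terms, and the good-basis identity) are routine.
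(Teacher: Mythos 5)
Your proposal is correct, and its computational core—expanding $e^{(W^{\CI,\mathrm{sym}}-x^r-y^s)/\hbar}$ as a power series, collecting coefficients of $t$-monomials, reducing monomial integrals by the integration-by-parts identity of Lemma~\ref{lem:int by parts}, and finishing with the good-basis orthogonality—is exactly the paper's argument. The one organizational difference is where the combinatorial bookkeeping happens. The paper first proves the non-symmetric analogue (Theorem~\ref{thm:period integral}) in the ring $A_I$, whose variables $u_{i,d}$ satisfy $u_{i,d}u_{i,d'}=0$; there each surviving monomial corresponds bijectively to a subset $J\subseteq I$ with a descendent vector, so the coefficient is literally the sum over ordered partitions $J_1\sqcup\cdots\sqcup J_h=J$ weighted by $1/h!$ that defines $\mathcal{A}(J,\vecd,\CI)$ in Notation~\ref{nn:A(J)}, with no multiplicity issues. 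The symmetric statement is then obtained by pushing through the injective homomorphism $\psi_I:A_{I,\mathrm{sym}}\to A_I$, and the lemma computing $\psi_I\bigl(\prod_j t_{\alpha_j,\beta_j,d_j}/|\Aut(A)|\bigr)$ is precisely what disposes of the automorphism factors you flag as your ``main obstacle.'' Working directly in $A_{I,\mathrm{sym}}$ as you propose is perfectly viable, but you would have to carry out that $|\Aut(A)|$ verification by hand; the square-zero device buys you that step for free. Your sign accounting is also slightly imprecise as stated: the passage from $(-1)^{l-1}$ per factor to the global $(-1)^l$ comes from multiplying $(-1)^{|J_j|-1}$ over the $h$ parts of the partition (giving $(-1)^{l-h}$) and absorbing the remaining $(-1)^{n_1+n_2}$ from integration by parts and the $\hbar^{-h}$ from the exponential into the single factor $(-\hbar)^{-d(A)-2}$, rather than from any sign hidden in $\mathcal{A}(A,\CI)$ itself, which is sign-free. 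These are presentational points, not gaps.
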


We do not repeat the definition of $\mathcal{A}(A,\CI)$ in
the introduction as it is slightly involved, but it involves
a sum over products of the $\CI_{\Gamma,\vecd}$ and is obtained quite
naturally from evaluating the integral by integration by parts
after a power series
expansion of $e^{(W^{\CI,\mathrm{sym}}-x^r-y^s)/\hbar}$. Nevertheless, these
invariants play a crucial role in the paper, with our form of open
topological recursion being a recursion involving a combination
of these $\mathcal{A}(A,\CI)$ and closed extended FJRW
invariants.

\subsection{The mirror theorem and open topological recursion}
The key mirror symmetry result in our paper is the following. A priori, if we choose a deformed potential $W^{\CI, \mathrm{sym}}$ with coefficients determined by an arbitrary $\CI \in \InvSym(I)$, we can compute the oscillatory integrals in Theorem~\ref{thm:osc int}, but the quantities $\mathcal{A}(A, \nu)$ in ~\eqref{messy intro equation} contain no enumerative meaning.  However, when we use a system of open FJRW invariants for $\CI$ to construct $W^{\CI, \mathrm{sym}}$, the variables $t_{a,b,0}$ are flat coordinates for the underlying Frobenius manifold and the oscillatory integrals are then generating functions for all closed extended FJRW invariants with descendents with internal markings bounded by $I$. Moreover, this only happens when $\nu$ is enumerative, that is,  a system of open FJRW invariants.

We can state this precisely as follows.
We continue with a choice of finite set $I\subseteq \N$. The following is
Corollary \ref{cor:open MS}.

\begin{thm}[Open Mirror Symmetry for 2-dimensional Fermat polynomials]
\label{thm:intro mirror theorem}
Let $\CI\in \OFJRWSym(I)$. Then
\begin{align*}
&\int_{\Xi_{a,b}} e^{W^{\CI,\mathrm{sym}}/\hbar} dx \wedge dy
= {}
\delta_{a,0}\delta_{b,0} + \sum_{d\ge 0} t_{a,b,d}\hbar^{d-1}
\\
{} & +\sum_{l\ge 2}
\sum_{\substack{A\in \mathcal{A}_l\\
d(A)\ge 0}} (-1)^{l}
(-\hbar)^{-d(A)-2}{\delta_{r(A),a}\delta_{s(A),b}\over |\textup{Aut}(A)|}
 \left\langle \tau_{d(A)}^{(r-r(A)-2,s-s(A)-2)}\prod_{(\alpha_i,\beta_i,d_i)
\in A}\tau_{d_i}^{(\alpha_i,\beta_i)}
\right\rangle^{\textup{ext}}
\prod_{(\alpha_i,\beta_i,d_i)\in A} t_{\alpha_j,\beta_j,d_j}.
\end{align*}
 Here $r(A)$ and $s(A)$ are as defined in Theorem \ref{thm:osc int} and $\< \cdots \>^{\mathrm{ext}}$ denotes the closed extended FJRW
invariants, as $r-r(A)-2$ or $s-s(A)-2$ may be $-1$.

In fact, if one removes the descendent part of the potential $W^{\CI,\mathrm{sym}}$
by working modulo the ideal $\langle t_{\alpha,\beta,d} \,|\, 0 \le \alpha 
\le r-2, \ 
0\le \beta \le s-2, \ 0<d\rangle$, then $dx\wedge dy$ is a primitive form
and the $t_{a,b,0}$ are flat coordinates.
\end{thm}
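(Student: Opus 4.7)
The plan is to combine the oscillatory integral computation of Theorem~\ref{thm:osc int} with the open topological recursion result (Theorem~\ref{thm:open TRR intro}), and finally match the resulting $\hbar$-expansion with the characterization of flat coordinates and primitive forms in Saito--Givental theory. The scaffolding of the proof is: apply \ref{thm:osc int} to produce the formula in terms of $\mathcal{A}(A,\CI)$; then use enumerativity of $\CI$ together with open TRR to rewrite each $\mathcal{A}(A,\CI)$ as a closed extended FJRW invariant; then read off the primitive form statement from the $\hbar^{-1}$ coefficient.

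First I would verify the hypothesis of Theorem~\ref{thm:osc int}: for any $\CI\in\OFJRWSym(I)$, the potential $W^{\CI,\mathrm{sym}}$ reduces to $x^r+y^s$ modulo the deformation variables $t_{\alpha,\beta,d}$ by \eqref{eq:needed cong}, so \ref{thm:osc int} applies and produces \eqref{messy intro equation}, expressing the oscillatory integral as $\delta_{a,0}\delta_{b,0}$ plus a series in the $t_{\alpha,\beta,d}$ with coefficients $\mathcal{A}(A,\CI)$. I would then separate out the $l=1$ contributions, verifying directly from Notation~\ref{nn:A(J)} and Definition~\ref{AAD} that they assemble into $\sum_{d\ge 0}t_{a,b,d}\hbar^{d-1}$, which matches the first two terms on the right-hand side of the target formula. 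Terms with $l\ge 2$ and $d(A)<0$ must be shown to vanish: this is a dimension count, since such tuples $A$ fail the balancing condition required for a nonzero contribution.

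The crux is to identify, for $l\ge 2$ and $d(A)\ge 0$, the quantity $\mathcal{A}(A,\CI)$ with the closed extended descendent invariant $\langle\tau_{d(A)}^{(r-r(A)-2,s-s(A)-2)}\prod_i\tau_{d_i}^{(\alpha_i,\beta_i)}\rangle^{\mathrm{ext}}$. By its definition $\mathcal{A}(A,\CI)$ is a specific polynomial in the open invariants $\CI_{\Gamma,\vecd}$, read off from the integration-by-parts expansion in \ref{thm:osc int}; the key feature is that this polynomial is exactly the combination that records the recursive degeneration of graded $W$-spin disks along relevant boundary strata. Open topological recursion (Theorem~\ref{thm:open TRR intro}) asserts precisely that when $\CI\in\OFJRWSym(I)$ these polynomial combinations coincide with closed extended invariants. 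The wall-crossing phenomenon makes individual $\CI_{\Gamma,\vecd}$ ambiguous, so it is essential that $\mathcal{A}(A,\CI)$ is a wall-crossing-invariant combination; verifying this compatibility between the integration-by-parts combinatorics and the open TRR combinatorics is the main obstacle, and where the homotopy analysis from Lemma~\ref{lem:partial_homotopy} together with the positivity constraints near contracted boundary nodes and positive strata must be invoked.

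Finally, for the primitive form assertion, I would work modulo the ideal $\langle t_{\alpha,\beta,d}\mid d>0\rangle$. Truncating the main identity in this way yields
\[
\int_{\Xi_{a,b}}e^{W^{\CI,\mathrm{sym}}/\hbar}\,dx\wedge dy=\delta_{a,0}\delta_{b,0}+t_{a,b,0}\hbar^{-1}+O(\hbar^{-2}),
\]
because the $l=1$, $d=0$ contribution supplies the $t_{a,b,0}\hbar^{-1}$ term, while all higher-$l$ contributions have $d(A)\ge 0$ and thus contribute only to $\hbar^{-k}$ with $k\ge 2$. This is precisely the normalization characterizing a primitive form with flat coordinates $t_{a,b,0}$ in the sense reviewed in \S\ref{subsec:state space}, so the descendent-free restriction of the identity realizes $\Omega=dx\wedge dy$ as primitive and the $t_{a,b,0}$ as flat coordinates, completing the proof.
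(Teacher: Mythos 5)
Your overall route is the one the paper takes: apply the oscillatory integral computation of Theorem~\ref{thm:osc int} (Corollary~\ref{cor:symmetric integal}), substitute the values of $\mathcal{A}(A,\CI)$ supplied by Corollary~\ref{thm:mirrorA}, and read the primitive form statement off the $\hbar^{-1}$ coefficient as in Corollary~\ref{cor:flat coordinates}. Two of your intermediate justifications, however, are wrong as stated.

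The vanishing of $\mathcal{A}(A,\CI)$ for $l\ge 2$ and $d(A)<0$ is \emph{not} a dimension count, and such $A$ do \emph{not} ``fail the balancing condition.'' By Notation~\ref{nn:A(J)}, $\mathcal{A}(A,\CI)$ is a sum over partitions $J=\sqcup_{i=1}^h J_i$ and nonnegative $k_1(i),k_2(i)$ with $sk_1(i)+rk_2(i)=m(J_i,\vecd)$; summing these constraints gives $d(A)=h-n_1-n_2-2$ where $\sum_i k_1(i)=r(J)+n_1r$ and $\sum_i k_2(i)=s(J)+n_2s$. Taking $h=1$, $n_1=n_2=0$ already produces a genuinely balanced graph $\Gamma_{0,r(J),s(J),1,J}$ (Proposition~\ref{prop:balanced}) contributing an unconstrained open invariant even though $d(A)=-1$. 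The vanishing is therefore a \emph{cancellation} among nonzero terms; it is condition (3) of Definition~\ref{def:chamberIndex}, established in Corollary~\ref{thm:mirrorA}(3) by induction on $|A|$ using the open TRR together with the closed extended TRR and Ramond vanishing of Observation~\ref{obs:closed extended}. Your argument as written would conclude the sum is empty, which is false. Similarly, the $l=1$ terms cannot be ``verified directly from the definition'' when $d>0$: one needs $\mathcal{A}(\{i\},(d))=(-1)^d$, which is Corollary~\ref{thm:mirrorA}(1), obtained by iterating the recursion \eqref{eq:calculation_1} (whose closed-bubbling summand vanishes for singletons) down to the normalization $\langle\tau_0^{(a,b)}\sigma_1^{a}\sigma_2^{b}\sigma_{12}\rangle^{\ess,o}=1$ of Theorem~\ref{thm:simple invariants}. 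Finally, once Theorem~\ref{thm:open TRR intro} is granted, the passage to the mirror formula does not require re-invoking Lemma~\ref{lem:partial_homotopy} or the positivity constraints: those enter the proofs of Theorem~\ref{thm:A_mod_invs} and of the TRR itself, not the short induction deducing Corollary~\ref{thm:mirrorA}(3) from it.
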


\begin{rmk}
(1)
The above theorem matches with the philosophy of the open Gromov-Witten 
framework for Fano manifolds. For example, work of Cho--Oh, Gross and
Fukaya--Oh--Ohta--Ono \cite{ChoOh,GrossP2,FOOO1} showed how counts of
Maslov index two disks with boundaries on Lagrangian tori naturally
give rise to the correct mirror potentials. Our framework then provides
an analogue of Maslov index two disks in FJRW theory.

(2) When working modulo the ideal $\langle t_{\alpha,\beta,d} \,|\, 0 \le \alpha \le r-2, \ 0\le \beta \le s-2, \ 0<d\rangle$, we are in the setting of standard Saito-Givental theory. Here we recover the (closed) mirror symmetry results of \cite{LLSS, HeLiShenWebb} in the case of $W=x^r+y^s$. The proofs are fundamentally different. There, the authors consider the integrand $e^{W^{\nu,\mathrm{sym}}}f dx \wedge dy$ where $W^{\nu,\mathrm{sym}}$ is the versal deformation given by~\eqref{eq:needed cong} and perturbatively correct the coordinates through the function $f$ (Theorem 3.7 of \cite{LLSS}) up to quadratic terms and then use reconstruction arguments. Our approach finds a precise versal deformation $W^{\nu,\mathrm{sym}}$ using open FJRW invariants so that $f=1$ and the coordinates for the versal deformation are already flat.

(3) In the context of open mirror symmetry for Fano manifolds, Overholser \cite{Overholser} generalized the oscillatory integral framework to include open descendent invariants for $\P^2$. Theorem~\ref{thm:intro mirror theorem} creates a similar framework for Landau-Ginzburg mirror symmetry, deducing all genus zero FJRW invariants with descendents from the oscillatory integrals.
\end{rmk}

In light of Theorem \ref{thm:osc int}, the proof of Theorem
\ref{thm:intro mirror theorem} follows from
a calculation of the complicated invariants $\mathcal{A}(A,\CI)$,
which are shown in Theorem \ref{thm:A_mod_invs} to be independent
of the choice of $\CI\in \OFJRWSym(I)$. It also provides a large amount of structure for the open FJRW invariants due to the sheer number of constraints given by the relations in Theorem~\ref{thm:intro mirror theorem}. This is summarized in Corollary~\ref{thm:mirrorA}.

\medskip

The key tool for calculating these quantities is open topological recursion. To express it, we consider the case where the open FJRW invariants are defined using a family of canonical multisections bounded by $I$ that need not be symmetric. If $\CI \in \Inv(I)$, we have a expression $\mathcal{A}(I, \vecd, \nu)$ similar to $\mathcal{A}(A, \nu)$ given by the oscillatory integrals, as defined in Notation~\ref{nn:A(J)}. In Theorem~\ref{thm:A_mod_invs}, we show that the expression $\mathcal{A}(I, \vecd, \nu)$ is independent of $\nu$ whenever $\nu \in \OFJRW(I)$. For this reason, we write $\mathcal{A}(I, \vecd):=\mathcal{A}(I, \vecd, \nu)$ for $\nu \in \OFJRW(I)$.  We are now ready to state open topological recursion:

\begin{thm}[Open topological recursion for 2-dimensional Fermat polynomials]
\label{thm:open TRR intro}
Given $J\subseteq I$ non-empty, choose $j_1\in J$
and $\vecd\in \NN^J$. Denote by $\mathbf{e}_1\in \NN^J$ the vector
whose entry corresponding to $j_1$ is $1$ and all other entries zero.
Then the following identities hold.
\begin{enumerate}
\item
If $|J|\ge 1$, then
\begin{align}\label{eq:calculation_1}
\mathcal{A}&(J,\vecd+\eee_1)=\\
&\notag\quad\quad\sum_{\substack{a\in\{-1,\ldots,r-2\}\\b\in\{-1,\ldots,s-2\}}}\sum_{\substack{A \coprod B = J\setminus\{j_1\}\\ A\not=\varnothing}}\left\langle \tau_0^{(a,b)}\tau_{d_{j_1}}^{(a_1,b_1)}\prod_{i \in A}\tau_{d_i}^{(a_i,b_i)}\right\rangle^{\textup{ext}}_0
\mathcal{A}(B\cup\{z_{a,b}\},\vecd)-
\mathcal{A}(J,\vecd).
\end{align}
Here $z_{a,b}$ is an additional internal marking with twist $(r-2-a,s-2-b)$,
with $d_{z_{a,b}} = 0$. 
\item If $|J|\ge 2$, let $j_2\in J\setminus \{j_1\}$. Then
\begin{equation}\begin{aligned}\label{eq:calculation_2}
\mathcal{A}&(J, \vecd + \eee_1)\\
&=\sum_{\substack{a\in\{-1,\ldots,r-2\}\\b\in\{-1,\ldots,s-2\}}}\sum_{\substack{A \coprod B = J\setminus\{j_1\},j_2\in B\\ A\not=\varnothing}}\left\langle \tau_0^{(a,b)}\tau_{d_{j_1}}^{(a_1,b_1)}\prod_{i \in A}\tau_{d_i}^{(a_i,b_i)}\right\rangle^{\textup{ext}}_0 \mathcal{A}(B\cup\{z_{a,b}\},\vecd).
\end{aligned}\end{equation}
where $z_{a,b}$ and $d_{z_{a,b}}$ are as in (1).
\end{enumerate}
\end{thm}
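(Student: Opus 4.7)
The plan is to establish both identities by analyzing zeros of multisections of the descendent line bundle $\CL_{j_1}$ on the moduli spaces contributing to $\mathcal{A}(J,\vecd+\eee_1)$. Recall from Notation \ref{nn:A(J)} that $\mathcal{A}(J,\vecd,\nu)$ is assembled (up to combinatorial factors) from signed counts of zeros of canonical multisections of the descendent Witten bundles $E_\Gamma(\vecd)$ on $\oPM_\Gamma$. Since $E_\Gamma(\vecd+\eee_1)=E_\Gamma(\vecd)\oplus\CL_{j_1}$, combining the canonical multisection $\mathbf{s}^\Gamma$ with any transverse multisection $\tilde{s}$ of $\CL_{j_1}$ produces a valid section whose relative Euler class computes $\mathcal{A}(J,\vecd+\eee_1)$. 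The strategy is then to choose $\tilde{s}$ so that its zero locus is confined to controlled boundary strata, and to identify the resulting contributions with the right-hand sides of \eqref{eq:calculation_1} and \eqref{eq:calculation_2}.

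For part (2), I would take $\tilde{s}$ to be pulled back along the forgetful map $\oPM_\Gamma\to\oPM_{\Gamma'}$ that forgets all internal markings except $j_1$ and $j_2$. A generic section of $\CL_{j_1}$ on the target has zero locus supported on boundary divisors where $j_1$ and $j_2$ are separated by a node; by pullback, the zeros of $\mathbf{s}^\Gamma\oplus\tilde{s}$ on $\oPM_\Gamma$ lie on relevant boundary strata where $j_1$ and $j_2$ sit on opposite disk components. Applying the gluing decomposition of Proposition \ref{pr:decomposition}, the Witten bundle restricted to each such stratum becomes a direct sum of pullbacks from the two component moduli spaces. The component containing $j_1$, being a closed $W$-spin curve with markings $\{j_1\}\cup A$ and a node of twist $(a,b)$, contributes the closed extended invariant $\langle\tau_0^{(a,b)}\tau_{d_{j_1}}^{(a_1,b_1)}\prod_{i\in A}\tau_{d_i}^{(a_i,b_i)}\rangle^{\mathrm{ext}}_0$; the open component, with markings $B$ and a node marking $z_{a,b}$ of the complementary twist $(r-2-a,s-2-b)$, contributes exactly $\mathcal{A}(B\cup\{z_{a,b}\},\vecd)$. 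Summing over all node twists $(a,b)$ (allowing $a=-1$ or $b=-1$ to incorporate the extended theory) and all splittings $A\coprod B=J\setminus\{j_1\}$ with $j_2\in B$ and $A\neq\varnothing$ yields \eqref{eq:calculation_2}.

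For part (1), I would instead pull back a section of $\CL_{j_1}$ via a forgetful map that only keeps $j_1$ and a single boundary marked point. The argument proceeds as above, but without the requirement $j_2\in B$, so we sum over all splittings $A\coprod B=J\setminus\{j_1\}$ with $A\neq\varnothing$. However, a new degenerate boundary stratum now appears where the node separates $j_1$ together with the chosen boundary marking from all of the other internal markings. A direct evaluation of the two-point closed extended invariant $\langle\tau_0^{(r-2-a_1,s-2-b_1)}\tau_{d_{j_1}}^{(a_1,b_1)}\rangle^{\mathrm{ext}}_0$ associated with this stratum, together with the resulting open count, shows that this stratum contributes precisely $\mathcal{A}(J,\vecd)$, giving rise to the $-\mathcal{A}(J,\vecd)$ correction term on the right-hand side of \eqref{eq:calculation_1}.

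The main obstacle is the careful boundary analysis. One must rule out spurious contributions from positive boundary strata (excluded by construction of $\oPM_\Gamma$) and from contracted-boundary-node strata (excluded by strong positivity, Definition \ref{def: strongly positive}); verify that the only nonzero contributions arise from the relevant strata described above; and track both the twist range $a\in\{-1,\ldots,r-2\}$, $b\in\{-1,\ldots,s-2\}$ (needed to include the extended classes when the Witten bundle on the closed component acquires an $H^0$) and the combinatorial weights (automorphism factors, signs from the orientation setup of \S\ref{subsec:or}, and the precise normalization in the definition of $\mathcal{A}$) matching the stated formulas.
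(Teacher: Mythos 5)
Your proposal contains a genuine gap that bypasses the hardest part of the argument. You assert that ``combining the canonical multisection $\mathbf{s}^\Gamma$ with any transverse multisection $\tilde{s}$ of $\CL_{j_1}$ produces a valid section whose relative Euler class computes $\mathcal{A}(J,\vecd+\eee_1)$.'' This is false: the open invariants are defined via relative Euler classes taken with respect to \emph{canonical} multisections satisfying the inductive boundary conditions of Definition \ref{def:family of canonical}, and the pointing section $t_w$ (or your pulled-back analogue) is not canonical. Replacing the canonical multisection of the $\CL_{j_1}$-summand by $t_w$ changes the invariant by a sum of boundary zero counts $\#Z(H|_{[0,1]\times\oPM_\Lambda})$ where $H$ is a homotopy from $t_w$ to a canonical section, as in Lemma \ref{lem: homotopy TRR contributions classify}. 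These boundary contributions are decomposed into $Z_{WC}$, $Z_{Cont}$, and $Z_{\XCH}$. Only $Z_{\XCH}$ vanishes for free (Lemma \ref{lem:xch_vanishing_invariants}). The wall-crossing terms $Z_{WC}$ do \emph{not} vanish for a single choice of $w$: they vanish only for the specific weighted linear combinations $a^P t_r + b^P t_s + c^P t_\Root + d^P t_{z_2}$ with coefficients satisfying Property {\bf Q1} or {\bf Q2} (Definition \ref{4tuplesQ}, Lemma \ref{lem:cancelation_eqs}). Your proposal makes no mention of this necessary averaging over different pointing sections and over the disconnected components indexed by $\mathcal{P}(I,\vecd+\eee_1)$, and without it the terms $Z_{WC}$ remain as uncontrolled error terms.

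The second problem is your account of the correction term $-\mathcal{A}(J,\vecd)$ in part (1). You attribute it to ``a two-point closed extended invariant $\langle\tau_0^{(r-2-a_1,s-2-b_1)}\tau_{d_{j_1}}^{(a_1,b_1)}\rangle^{\mathrm{ext}}_0$,'' but the component that contains $j_1$ together with a boundary point is an open disk, not a closed sphere, so no closed extended invariant appears. In the paper, this term is the contracted contribution $Z_{Cont}$, arising from boundary strata with a boundary node of twists $(0,0)$ and $(r-2,s-2)$ where the untwisted half-node is forgotten when passing to the base (case (iv) of Lemma \ref{lem: homotopy TRR contributions classify}); the computation proceeds via the winding number $-1$ of $t_{h_0}$ around the fiber circle $\hat\Phi_p$ (Observations \ref{obs:trivial_on_fiber} and \ref{obs:cont_on_fiber} and Lemma \ref{lem:TRR_cont_for_sections}), and the resulting sum over $\mathcal{P}_{Cont}(I,\vecd+\eee_1)$ reassembles to $-\mathcal{A}(I,\vecd)$ precisely under Property {\bf Q1} (Lemma \ref{lem:contribution_eqs}). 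For Property {\bf Q2} the same sum vanishes, which is why part (2) has no correction term.

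Finally, the intermediate lemma you do need for the interior contribution---that the relative Euler class against $\mathbf{s}_1\oplus t_w$ localizes to closed-bubbling strata and decomposes as the sum of closed extended invariants times open invariants---requires the multisection of the Witten bundle to be \emph{special canonical} (Definition \ref{def: special canonical}) rather than merely canonical, because along Ramond internal nodes the Witten bundle decomposes via an extension rather than a direct sum, and one needs the coherent assembling structure of \S\ref{subsec:coherent} (Lemma \ref{lem:closed_contribution}). Your proposal glosses over this with a reference to Proposition \ref{pr:decomposition}, but the direct-sum decomposition only holds in the Neveu-Schwarz case.
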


The proof of open topological recursion occupies
\S\ref{subsec:proof of open TRR} and is the heart of the paper.
The proof works by choosing a suitable multisection of $\CL_{j_1}$
whose vanishing locus consists of strata in which closed spheres bubble
off from disks. Most importantly this vanishing locus is easy to
describe. However, this multisection
does not belong to a family of canonical multisections, and hence a detailed study of zeroes arising
in homotoping this multisection to a canonical multisection is required. This
is the hardest part of the proof.

Another key difference in the open topological recursion relations in our paper and the previous ones found in the literature is that there does not exist an open topological recursion relation for a single open invariant. Rather one must use the polynomial $\mathcal{A}(J,\vecd+\eee_1,\CI)$ in open invariants  in order to obtain a topological recursion relation. This is unlike what is found in the open $r$-spin case in \cite{BCT:II} and the  topological recursion structures found by higher Airy structures (see, e.g., \textsection6 of \cite{BBCCN}).

Furthermore, the quantities $\mathcal{A}(I, \vecd)$ are familiar. We can crucially use Theorem~\ref{thm:open TRR intro} to establish in Corollary~\ref{thm:mirrorA}(3) an open-closed correspondence that links polynomials of open invariants to closed extended invariants.
\begin{cor}[Open-Closed Correspondence]
Given $J\subseteq I$ non-empty
and $\vecd\in \NN^J$. Suppose $|J|\ge 2$.
Let $d(J,\vecd)$ be as defined in Definition \ref{def:d(J)}. Then
\[
\mathcal{A}(J,\vecd)=
\begin{cases}
0 & d(J,\vecd)<0\\
(-1)^{d(J,\vecd)-1}\left\langle \tau_{d(J,\vecd)}^{(r-r(I)-2,s-s(I)-2)}\prod_{i\in J}\tau_{d_i}^{(a_i,b_i)}
\right\rangle^{\textup{ext}}&d(J,\vecd)\ge 0.
\end{cases}
\]
\end{cor}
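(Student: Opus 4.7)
The plan is to extract the corollary from the equality of the two expressions for the oscillatory integral $\int_{\Xi_{a,b}} e^{W^{\CI,\mathrm{sym}}/\hbar}\Omega$ given by Theorem~\ref{thm:osc int} and Theorem~\ref{thm:intro mirror theorem}. The first expression is a purely formal expansion valid for any symmetric $\CI\in\InvSym(I)$, encoding the integral as a series in the $t_{\alpha,\beta,d}$ whose coefficients involve the quantities $\mathcal{A}(A,\CI)$. The second expression, valid precisely when $\CI\in\OFJRWSym(I)$, encodes the same integral as a series whose coefficients are closed extended FJRW invariants (together with the flat-coordinate part). Matching the two expansions monomial by monomial should produce exactly the claimed identification.

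First I would reduce to the symmetric enumerative case: by Theorem~\ref{thm:A_mod_invs}, $\mathcal{A}(J,\vecd)$ is independent of the choice of $\CI\in\OFJRW(I)$, and by Theorem~\ref{open FJRW invariants exist} one can fix some $\CI\in\OFJRWSym(I)\subseteq\OFJRW(I)$. I may then compute $\mathcal{A}(J,\vecd)$ using that particular $\CI$. Next I would set up the translation between the multiset-indexed quantity $\mathcal{A}(A,\CI)$ appearing in Theorem~\ref{thm:osc int} and the set-indexed quantity $\mathcal{A}(J,\vecd,\CI)$ from Notation~\ref{nn:A(J)} appearing in the statement of the corollary: up to the automorphism factor $|\Aut(A)|$ and a bookkeeping sign coming from the conventions in Definition~\ref{AAD}, these compute the same sum of products of the $\CI_{\Gamma,\vecd'}$.

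The main computational step is the coefficient extraction. Fix a multiset $A$ with $|A|=l\ge 2$, and let $(a,b)=(r(A),s(A))$. On the Theorem~\ref{thm:osc int} side the coefficient of $\hbar^{-d(A)-2}\prod_j t_{\alpha_j,\beta_j,d_j}$ is (up to the universal prefactor $(-1)^l(-1)^{-d(A)-2}/|\Aut(A)|$) equal to $\mathcal{A}(A,\CI)$. On the Theorem~\ref{thm:intro mirror theorem} side it is the corresponding closed extended invariant $\langle \tau_{d(A)}^{(r-r(A)-2,s-s(A)-2)}\prod_i \tau_{d_i}^{(\alpha_i,\beta_i)}\rangle^{\mathrm{ext}}$ when $d(A)\ge 0$, and it is zero when $d(A)<0$ since the sum on that side is restricted to $d(A)\ge 0$. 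Equating the two coefficients yields the desired identification in the $d(J,\vecd)\ge 0$ case, and the vanishing in the $d(J,\vecd)<0$ case.

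The step I expect to be the main obstacle is bookkeeping the sign $(-1)^{d(J,\vecd)-1}$ stated in the corollary. It must emerge from combining several conventions: the prefactor $(-1)^l$, the expansion $(-\hbar)^{-d(A)-2}=(-1)^{d(A)}\hbar^{-d(A)-2}$, and the precise sign difference between the definitions of $\mathcal{A}(A,\CI)$ and $\mathcal{A}(J,\vecd,\CI)$ in Notation~\ref{nn:A(J)} and Definition~\ref{AAD}. I expect the $(-1)^l$ to cancel against the analogous factor on the mirror side and the remaining $(-1)^{d(A)}=(-1)^{d(J,\vecd)}$ to combine with a single sign from the translation between multiset and set conventions, producing $(-1)^{d(J,\vecd)-1}$. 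Once these conventions are pinned down consistently, both cases of the corollary follow at once from the term-by-term comparison; the argument should not require any further geometric input beyond Theorems~\ref{thm:osc int},~\ref{thm:intro mirror theorem},~\ref{thm:A_mod_invs}, and~\ref{open FJRW invariants exist}.
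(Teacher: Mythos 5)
Your proposal is circular. The key input you rely on, Theorem~\ref{thm:intro mirror theorem}, is not an independent fact that can be fed into a coefficient comparison: in the paper it is Corollary~\ref{cor:open MS}, whose proof consists precisely of plugging the open-closed correspondence (together with the $|A|=1$ evaluation) into the formal expansion of Theorem~\ref{thm:osc int} (i.e.\ Corollary~\ref{cor:symmetric integal}). The entire content of Theorem~\ref{thm:intro mirror theorem} beyond Theorem~\ref{thm:osc int} \emph{is} the identification $\mathcal{A}(A,\CI)=\pm\langle \tau_{d(A)}^{(r-r(A)-2,s-s(A)-2)}\prod\tau_{d_i}^{(\alpha_i,\beta_i)}\rangle^{\mathrm{ext}}$, so "matching the two expansions monomial by monomial" only restates the claim you are trying to prove. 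Your preliminary reductions (invoking Theorem~\ref{thm:A_mod_invs} to fix a symmetric enumerative $\CI$, and translating between the multiset-indexed $\mathcal{A}(A,\CI)$ of Definition~\ref{AAD} and the set-indexed $\mathcal{A}(J,\vecd,\CI)$ of Notation~\ref{nn:A(J)}) are fine, but the central step has no content, and the sign bookkeeping you flag as the main obstacle is a non-issue compared to this.

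The missing idea is that the correspondence must be \emph{derived}, and the paper does this from the open topological recursion relations, Theorem~\ref{thm:open TRR}. Concretely: subtracting \eqref{eq:calculation_2} from \eqref{eq:calculation_1} yields the recursion \eqref{eq:mirrorEqA} expressing $\mathcal{A}(I,\vecd)$ as a sum of closed extended invariants times $\mathcal{A}(B\cup\{z_{a,b}\},\vecd)$ for strictly smaller index sets (after the base case $|I'|=1$ is settled via \eqref{eq:calculation_1} and Theorem~\ref{thm:simple invariants}). One then inducts on $|J|$, using the dimension constraint from Observation~\ref{obs:closed extended}(1) to show $d(B\cup\{z_{a,b}\},\vecd)=d(J,\vecd)-1$ for every nonvanishing term, and finally reassembles the right-hand side using the closed extended topological recursion \eqref{eq:closed TRR} together with Ramond vanishing. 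None of this geometric and combinatorial input appears in your proposal; without it (or some substitute for Theorem~\ref{thm:open TRR}, whose proof occupies \S\ref{subsec:proof of open TRR} and is described as the heart of the paper), the statement does not follow.
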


\subsection{Wall-crossing formulae}
\label{sec:wall-crossing-intro}
The remaining question is to understand the subset $\OFJRWSym(I)$
giving all possible sets of symmetric open $W$-spin invariants.
As mentioned earlier in this introduction, given two families
of canonical multisections $\ess_0$ and $\ess_1$, there exists
a canonical family of homotopies $H^*$ between them, and the change
of invariants associated to a $W$-spin graph $\Gamma$
requires understanding the number of zeros $\#Z(H|_{[0,1]\times \partial\oPM^W_{\Gamma}})$.
Crucially, these zeros will occur on boundary strata corresponding
to $W$-spin graphs $\Lambda$ representing stable $W$-spin disks which
are unions of two disks. These disks meet at a node with the twists at the half-nodes satisfying certain constraints. Further, we have the  relation
\[
\dim\oCM^W_{\Lambda}=\rank E_{\Gamma}(\vecd)-1
\]
so that $H^{\Gamma}|_{[0,1]\times \oPM^W_{\Lambda}}$ will have a well-defined
number of zeroes. Such boundary strata are called \emph{critical
boundaries}, and are responsible for wall-crossing phenomena. Given a graph $\Lambda$, we denote by $\Lambda_v$ the subgraph consisting of all half-edges adjacent to the vertex $v$ in $\Lambda$. A graph $\Lambda$ corresponding to a critical boundary will have a vertex $v$ 
such that the subgraph $\Lambda_v$ will be graded, have no fully twisted boundary marked points, and 
\[
\dim\oCM^W_{\Lambda_v}=\rank E_{\Lambda_v}(\vecd)-1.
\]
We will call such a (connected) subgraph $\Lambda_v$ \emph{critical for $\vecd$}, see Definition~\ref{def:critical}(1).

We define the \emph{Landau-Ginzburg wall-crossing group} (Definition~\ref{Wall Crossing Group}) to be the subgroup $G_{A_{I, \mathrm{sym}}}\subseteq \Aut_{A_{I,\mathrm{sym}}}(A_{I,\mathrm{sym}}[[x,y]]) $ of the group of continuous automorphisms of
$A_{I,\mathrm{sym}}[[x,y]]$ as an $A_{I,\mathrm{sym}}$-algebra consisting of elements that (1) are the identity modulo the ideal generated by the $t_{\alpha,\beta,d}$;
(2) preserve $dx\wedge dy$; (3) preserve the ideal generated by $xy$. These properties are natural for our framework. The first property arises as all
critical boundaries involve disks with internal marked points.
The second property ensures that, if $g\in
G_{A_{I,\mathrm{sym}}}$,
then replacing a potential
$W^{\CI,\mathrm{sym}}$ with $g(W^{\CI,\mathrm{sym}})$ does
not change the result of the period integrals. Finally, the third condition is more subtle,
but reflects that for certain boundary strata, we are able to define
positivity of multisections of the Witten bundle to guarantee that
zeroes never occur on these boundary strata. This issue arises in
Definitions \ref{def:special kind of graded graphs}(3) and
\ref{def: strongly positive}(3).

 It turns out that the wall-crossing is governed by a Lie subgroup
\[
G^{r,s}_{A_{I,\mathrm{sym}}} \subset
G_{A_{I,\mathrm{sym}}},
\]
which is specified via its corresponding Lie algebra $\mathfrak{g}^{r,s}_{A_{I,\mathrm{sym}}}$. This Lie algebra is a subalgebra of the Lie algebra
$\mathfrak{g}_{A_{I,\mathrm{sym}}}$ of $G_{A_{I,\mathrm{sym}}}$ consisting
of vector fields on which $[E,\cdot]$ acts by multiplication by $rs$,
where $E$ is as in 
\eqref{eq:Euler}, and are invariant under the action 
$x\mapsto \xi_r x$, $y\mapsto \xi_s, y$, $t_{a,b,d}\mapsto 
\xi^{-a}_r \xi^{-b}_s t_{a,b,d}$, where $\xi_r,\xi_s$ are primitive
$r^{th}$ and $s^{th}$ roots of unity respectively. 

The underlying vector space of the Lie algebra is a direct sum of one-dimensional $\mathbb{Q}$-vector spaces indexed by all critical graphs.

Theorem \ref{thm:G action} shows that the group $G^{r,s}_{A_{I,\mathrm{sym}}}$ acts on $\InvSym(I)$ via its action
on potentials: for $g\in G^{r,s}_{A_{I,\mathrm{sym}}}$ and $\CI\in
\InvSym(I)$, we define $g(\CI)$ by the formula
\[
W^{g(\CI),\mathrm{sym}}=g(W^{\CI,\mathrm{sym}}).
\]
The main wall-crossing theorem (see Corollary \ref{chambers are enumerative})
then states:

\begin{thm}\label{introthm: torsor}
The group $G^{r,s}_{A_{I,\mathrm{sym}}}$ acts faithfully and transitively
on $\OFJRWSym(I)$. Further, $\OFJRWSym(I)\subseteq \InvSym(I)$ is characterized
as the subset of those $\CI\in\InvSym(I)$ satisfying the following conditions:
\begin{enumerate}
\item If $J\subseteq I$  and
$\Gamma\in \INT(J,\mathbf{0})$, then
$\CI_{\Gamma,\mathbf{0}}=1$ if $|J|=1$ and
$\CI_{\Gamma,\mathbf{0}}=-1$ if $|J|=0$.
\item For $I'\subseteq I$ with $|I'|=1$, $\vecd = (d)\in \NN^{I'}$,
we have
\[
\mathcal{A}(I',\vecd,\CI) = (-1)^d.
\]
\item For any $J\subseteq I$, $\mathbf{d}\in \NN^{J}$ with
$d(J,\mathbf{d})<0$, we have $\mathcal{A}(J,\mathbf{d},\CI)=0$.
\end{enumerate}
\end{thm}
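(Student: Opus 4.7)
The plan is to prove Theorem \ref{introthm: torsor} in three phases: first, set up the wall-crossing between two canonical families of multisections and identify its contributions with a sum indexed by critical graphs; second, translate this sum into an automorphism in $G^{r,s}_{A_{I,\mathrm{sym}}}$ and thereby deduce transitivity and faithfulness; third, verify that conditions (1)--(3) characterize $\OFJRWSym(I)$. For the first phase, fix symmetric families of canonical multisections $\ess_0,\ess_1$ bounded by $I$ producing $\CI_0,\CI_1\in\OFJRWSym(I)$. I would invoke Lemma \ref{lem:partial_homotopy} to produce a symmetric family of canonical homotopies $H$ between $\ess_0$ and $\ess_1$ with controlled non-vanishing on boundary strata. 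For each pair $(\Gamma,\vecd)$ with $\Gamma$ balanced for $\vecd$, the difference $\CI^{\ess_1}_{\Gamma,\vecd}-\CI^{\ess_0}_{\Gamma,\vecd}$ equals, up to sign, the number of zeroes of $H^\Gamma$ on $[0,1]\times\partial\oPM^W_\Gamma$. Strong positivity and the inductive structure kill the contributions from contracted-boundary strata and from neighborhoods of positive boundary nodes, so only critical boundary strata contribute. Each such stratum is determined by a critical subgraph $\Lambda_v$ in the sense of Definition \ref{def:critical}(1); via the splitting $E_\Gamma(\vecd)|_{\CM}=E_1\boxplus E_2$ on the base $\CM_1\times\CM_2$ supplied by Proposition \ref{pr:decomposition}, the contribution factors as an enumerative count on the critical side $\Lambda_v$ times an open or closed extended invariant on the complementary side.

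For the second phase, the sum over critical strata is exactly parametrized by the basis of $\mathfrak{g}^{r,s}_{A_{I,\mathrm{sym}}}$. Repackaging the wall-crossing at the level of potentials yields $W^{\CI_1,\mathrm{sym}}=g(W^{\CI_0,\mathrm{sym}})$ for some continuous automorphism $g$, and I would then verify $g\in G^{r,s}_{A_{I,\mathrm{sym}}}$: property (1) of the wall-crossing group holds because every critical graph has at least one internal marked point; preservation of $dx\wedge dy$ follows from a Jacobian computation as in \cite{PST14,BCT:II}; preservation of the $(xy)$-ideal is enforced by the twist constraints at critical nodes together with strong positivity; and the homogeneity with $[E,\cdot]$ acting as multiplication by $rs$, together with $\mu_r\times\mu_s$-invariance, follow from the balancing condition on $\Lambda_v$ and the integrality of the twists. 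This yields transitivity. For faithfulness, I would argue that the basis of $\mathfrak{g}^{r,s}_{A_{I,\mathrm{sym}}}$ indexed by critical graphs is in bijection with the independent wall-crossing modes, so the lowest-order nontrivial term of $\log g$ necessarily produces a nonzero change in some open invariant attached to its indexing graph; an induction on the grading of $\mathfrak{g}^{r,s}_{A_{I,\mathrm{sym}}}$ then forces $g=\id$ whenever $g$ fixes any single element of $\OFJRWSym(I)$.

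For the third phase, necessity of the three conditions is direct: condition (1) is a base-case check on smooth disks with $|J|\le 1$ where the moduli space is a point (or empty) and the multisections are trivially oriented; condition (2) is the singleton case of Theorem \ref{thm:A_mod_invs}; and condition (3) is the Open--Closed Correspondence of Corollary \ref{thm:mirrorA}(3). For sufficiency, fix $\CI\in\InvSym(I)$ satisfying (1)--(3), pick a reference $\CI_0\in\OFJRWSym(I)$ (which exists by Theorem \ref{open FJRW invariants exist}), and build $g\in G^{r,s}_{A_{I,\mathrm{sym}}}$ with $g(\CI_0)=\CI$ by induction along the grading of $\mathfrak{g}^{r,s}_{A_{I,\mathrm{sym}}}$: at each stage conditions (1) and (2) eliminate obstructions from graphs with zero or one internal tail (where the wall-crossing coefficient is already forced) and (3) eliminates dimensionally forbidden contributions, so the inductive step reduces to solving a finite linear system at the next grading level. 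The hard part will be phase one: making the identification between zeroes of $H$ on critical boundary strata and the generators of $\mathfrak{g}^{r,s}_{A_{I,\mathrm{sym}}}$ fully rigorous, including careful bookkeeping of signs from the orientation conventions of \S\ref{subsec:or}. The subtlety is that each critical stratum has a product base $\CM_1\times\CM_2$ and one must verify that after integration against the Witten bundle one factor yields an open enumerative count while the other yields a closed extended FJRW invariant with descendents --- this is exactly what allows the wall-crossing to be packaged as an action of the continuous group $G^{r,s}_{A_{I,\mathrm{sym}}}$ on $A_{I,\mathrm{sym}}[[x,y]]$ rather than as an uncontrolled combinatorial expression.
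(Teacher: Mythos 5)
Your phases one and two reproduce, in outline, the paper's Theorem \ref{thm: group actions only} (geometric wall-crossing between two canonical families is implemented by an element of the wall-crossing group) and the faithfulness part of Theorem \ref{thm:G action}, and your necessity checks in phase three are essentially the paper's Theorem \ref{thm:simple invariants} and Corollary \ref{thm:mirrorA}. But there is a genuine gap in the remaining half of the argument: you never establish that the group orbit of a geometric point stays geometric, i.e.\ that for $g\in G^{r,s}_{A_{I,\mathrm{sym}}}$ and $\CI_0\in\OFJRWSym(I)$ one has $g(\CI_0)\in\OFJRWSym(I)$. Without this, the statement that $G^{r,s}_{A_{I,\mathrm{sym}}}$ \emph{acts} on $\OFJRWSym(I)$ is not even well posed, and your sufficiency argument in phase three only shows that a tuple $\CI$ satisfying (1)--(3) lies in the group orbit of $\CI_0$ inside $\InvSym(I)$ --- it does not show that $\CI$ is realized by an actual family of canonical multisections. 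Being in the orbit of a geometric point and being geometric coincide only after one proves the realization statement, which is precisely the content of the paper's Theorem \ref{last containment for chambers}.

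That realization step is not a formality: the paper proves it by a double induction over graphs (on $|I(\Gamma)|$ and on the weighted boundary count $n(\Gamma)$), extending multisections from boundary strata via Lemma \ref{lemma:extension} and then using Lemma \ref{lem:changing_winding} to insert zeroes of prescribed weighted cardinality on the top critical boundary strata $\Lambda_{J,i}$, adjusting the invariants $\nu^{\ess}_{\Gamma_{J,p},\vecd}$ one at a time; the final invariant $\nu^{\ess}_{\Gamma_{J,N},\vecd}$ is then forced to match by the constraint $\mathcal{A}(J,\vecd,\nu^{\ess})=\mathcal{A}(J,\vecd,\nu)$ from Corollary \ref{cor:CIs same result}. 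Your ``finite linear system at the next grading level'' solves for the group element $g$, which is the transitivity statement on the algebraic side (Step 3 of Theorem \ref{thm:G action}), but it does not produce the multisections. You should either add the realization construction or restructure the proof as the paper does: define the chamber indices $\ChamberIndicesSym(I)$ algebraically by conditions (1)--(3), prove the group acts faithfully and transitively on them by pure computation with potentials, prove $\OFJRWSym(I)\subseteq\ChamberIndicesSym(I)$ via the open topological recursion, and then prove the reverse containment by explicit construction of multisections.
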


In fact, the three conditions in the theorem characterizing
$\OFJRWSym(I)$ are precisely the conditions guaranteeing
that the oscillatory integral $\int e^{W^{\CI,\mathrm{sym}}/\hbar} \Omega$
of Theorem \ref{thm:intro mirror theorem} takes the form given there up to terms of
the form $\hbar^{-n}$, $n\ge 2$.

\begin{rmk}
The Landau-Ginzburg wall crossing group  is very similar in spirit to the tropical vertex group,
a group of automorphisms
of a two-dimensional algebraic torus preserving ${dx\over x}\wedge {dy\over y}$
introduced by Kontsevich and Soibelman in \cite{KontSoib}
and given an enumerative interpretation for Fano manifolds in \cite{GPS}.
Generalizations of the tropical vertex group have been used extensively in the literature
in a large number of wall-crossing situations. For Fano manifolds, elements of the wall-crossing group for a Fano manifold correspond to Maslov index zero disks.  In our case, a $\mathbb{Q}$-summand of automorphisms in $G^{r,s}_{A_{I,\mathrm{sym}}}$ correspond to a critical graph, which is crucial in the proof of Theorem~\ref{introthm: torsor}.
\end{rmk}

\begin{rmk}
For pairs $(r,s) \in \{(n,2), (2,n), (3,3), (3,4), (4,3) \ | \ n \in \mathbb{Z}_{\ge 2}\}$, the potentials $W=x^r + y^s$ are simple singularities $A_{n-1}$, $E_6$, and $E_8$. Here, there are no critical graphs with respect to descendent vector $\vecd= \mathbf{0}$, and thus the open FJRW invariants with no descendents are well-defined. The fact that the primitive form is unique in these cases is well-established \cite{Sai83, NoumiYamada}. However, in these cases there are indeed critical graphs for non-trivial $\vecd$ and wall-crossing for the theory with descendents. We predict that the only open FJRW invariants that do not enjoy wall-crossing phenomena will be the simple singularities when considered with no descendents.
\end{rmk}

\subsection{Context and comparison to existing literature}

Mirror symmetry for open Gromov Witten (OGW) theories with Landau-Ginzburg (LG) mirrors were first considered for Fano manifolds in \cite{ChoOh,GrossP2,FOOO1} Here, the open Gromov-Witten invariants corresponded to compact toric manifolds paired with a Lagrangian torus $T^n$.  The toric $n=1$ case generalizes nicely to some more complicated Lagrangians, essentially to Lagrangians whose rational cohomology is spanned by the unit and the point class, and certain generalizations of this case (see, for example, \cite{Sol16}).  It turns out that such OGW theories \cite{Sol16,Net17,Bur22} give rise to intersection numbers which are independent of choices  and are governed by a universal recursion, the \emph{open WDVV (OWDVV) equation} \cite{Sol07},\cite[Theorem 3]{Sol19}. Moreover, they satisfy other interesting properties such as an extension of the quantum cohomology to relative quantum cohomology, a correspondence between open invariants with a certain internal relative constraint, and open invariants in which this constraint is replaced by a boundary constraint \cite[Theorem 6]{Sol19}. In the case of the target pair $(\mathbb{CP}^1,\mathbb{RP}^1)$ the OWDVV extends also to descendant theory \cite{Bur22}, and such an extension is expected to hold more generally.

The OGW theory in the case $n\geq 2$ is less understood. Here, Gross and Overholser  studied the tropical open $\mathbb{CP}^2$ case with descendants \cite{GrossP2,Overholser}, while Fukaya, Oh, Ohta, and Ono studies the case $n\geq2$ without descendants in detail and obtained mirror symmetry for $n=2$ and under additional assumptions also for $n>2$ targets \cite{FOOO1}. In the general case where $n\geq 2$, invariants are defined only up to wall crossings. These wall crossings are conjectured to be governed by the Kontsevich-Soibelman wall crossing group \cite{KontSoib}, and this conjecture is proven in some tropical or symplectic cases \cite{GrossP2,Overholser,Lin17a,Lin17b}.

\emph{Open FJRW} theory (OFJRW) is a theory which is still under development. The first construction of an OFJRW theory, the open $r$-spin theories, has appeared in \cite{BCT:I,BCT:II}, where the main difficulty was identifying a natural choice of boundary conditions to the \emph{open Witten bundle}. These boundary conditions rely on a hidden notion of positivity in this theory.  Open $r$-spin are precisely the OFJRW analog of the $n=1$ OGW case described above.
As such they have well defined invariants \cite[Theorem 3.17]{BCT:II}, satisfy the OWDVV equations \cite[Section 4]{BCT:II}, involve an extension of the closed state space (where the Ramond states play the role of relative classes) \cite{BCT_Closed_Extended} and a correspondence between open and closed extended intersection numbers.

This work addresses the construction and calculation of the OFJRW analog of the $n=2$ OGW case. The first sections of the paper up to Section~\ref{subsec:Rank 2} provide a geometric construction of the open moduli spaces in all dimension; however, we specialize to the dimension $n=2$ case exactly when the theory diverges from the situations found in $n<2$ and $n>2$. We then geometrically construct the rank $2$ theory, including descendants, characterize all invariant quantities, which this time do not satisfy the OWDVV equations, but a different universal recursion. We then identify the wall crossing group and realize it geometrically. Our geometric construction involves the boundary conditions found in \cite{BCT:II}. Interestingly, there are certain boundaries for which the boundary conditions are not specified. They turn out to be exactly the source for the wall crossing. The calculation of invariants and geometric realization of the wall crossings involve new ideas which are then used to establish mirror symmetry in this case.

\medskip

\emph{Acknowledgements:} The authors would like to thank Alexander Buryak, Emily Clader, Rahul Pandharipande, Yongbin Ruan, and Jake Solomon for discussions relating to this work. The authors would also like to thank the referee for their comments which have improved the paper. The first author acknowledges support from the
EPSRC under Grants EP/N03189X/1, a Royal Society Wolfson Research Merit Award,
and the ERC Advanced Grant MSAG. The second author acknowledges support provided by the National Science Foundation under Award No.\ DMS-1401446, the EPSRC under Grants EP/N004922/1,2 and EP/S03062X/1, and the UK Research and Innovation Future Leaders Fellowship MR/T01783X/1,2 and MR/Y033841/1. The third author, incumbent of the Lillian and George Lyttle Career Development Chair, acknowledges support provided by the ISF grants No. 335/19 and 1729/23, and by a research grant from the Center for New Scientists of Weizmann Institute.
\label{subsec:intro wall crossing}

\section{Graded $W$-spin surfaces}
\label{sec:graded surfaces}

\subsection{Closed graded $W$-structures}
\label{subsec:closed graded}
We begin by defining closed $W$-spin Riemann surfaces. These are needed
for two reasons. First, an open $W$-spin surface is a closed surface
equipped with additional structure, including an anti-holomorphic involution.
Second, even in the study of open surfaces, disks may bubble off spheres
on the boundary of moduli space. Hence moduli of closed $W$-spin
surfaces naturally occur
in the boundary of open $W$-spin surfaces. We first consider the special
case of an $r$-spin surface, reviewing terminology from \cite{BCT:I}.

A \emph{closed marked genus $0$ orbifold Riemann surface $C$}
is a proper, one-dimensional Deligne--Mumford stack whose coarse
space $|C|$ is a genus $0$ Riemann surface with at worst nodal singularities, and such that the morphism $\pi: C \to |C|$ is an
isomorphism away from the finitely many marked points and nodes.
We often allow disconnected surfaces, in which case it must be a disjoint
union of such genus $0$ surfaces.
We collectively call these marked points and nodes the \emph{special points}. Any special point may have a non-trivial stabilizer which is required to be a finite cyclic group. Any node must be \emph{balanced}, i.e.,
the local picture at any node is
\begin{equation}\label{eq:balanced}
\{xy=0\}/\mu_d,
\end{equation}
where $\mu_d$ is the group of $d^{th}$ roots of unity, and generator $\zeta$ acts by
\begin{equation}
\label{eq:balanced action}
\zeta\cdot(x,y)=(\zeta x,\zeta^{-1}y).
\end{equation}
The curve $C$ is \emph{$d$-stable} if all special points have isotropy
group $\mu_d$.

As in \cite{BCT:I}, we introduce a set for markings: let $\Universe$
denote the set of all finite subsets of $\N=\{1,2,3,\ldots\}$.
We identify the element $\{i\}\in\Universe$ with $i$, for $i\in \N$.
A marking of a set $A$ is a function
\[m: A \rightarrow \Universe,\]
such that whenever $a,a'$ are distinct elements of $A\setminus m^{-1}(\emptyset)$,
we have $m(a)\cap m(a')=\emptyset$. Such functions are used in what
follows to label the marked points on the curve. The possibility of marking
some points by $\emptyset$ or with a set is needed for two reasons. First, we use marking by $\emptyset$ to  handle boundary marked points, whose cyclic order will often serve as a substitute for their markings. Second, we use $\emptyset$ and subsets of $\N$ in a consistent way to mark new internal marked
points that arise via normalization of a nodal curve, see Definitions \ref{def:closed normal} and~\ref{def:open normal}. An element of $A$ which is mapped to $\emptyset$ will sometimes be referred as \emph{unlabeled}.

Explicitly, the markings on the curve $C$ are indexed by a set $I$
and are labeled by a marking function $m^I:I\to\Universe$.
We denote the marked points of $C$ by $z_i,~i\in I$. There is another distinction we have for marked points:

\begin{definition}[Anchors]
\label{def:anchors}
We consider a subset of the marked points of $C$
to be \emph{anchors}. We allow at most one anchor in each connected component
of $C$
and require that this point be labeled $\emptyset$.
A connected component of $C$ is \emph{anchored} if it contains an anchor.
\end{definition}

\begin{rmk}
\label{rmk:anchor yoga}
The anchor will serve as a book-keeping mechanism to deal with closed $W$-spin
surfaces which arise as a connected component in a partial normalization
of an open surface.
The anchor should be thought of as the marked point in a closed connected component which, before normalization, was a half-node of an open surface. In this context, such a half-node connected the closed part to the open part or  is a contracted boundary half-node. Note that as we only consider genus zero in this
paper, the anchor must be unique.

We define the slightly complicated labeling above in order to keep track of this information and to avoid adding symmetries when we normalize.
Intersection theory on moduli spaces of $r$-spin Riemann surfaces without boundary is, of course, not affected by the anchor, and is equivalent to the classical one defined in \cite{Witten93}. The same comment holds for closed $W$-spin surfaces, to be defined below.

This yoga of anchors thus requires some care in defining the normalization
of a curve.
\end{rmk}

\begin{definition}
\label{def:closed normal}
Let $q$ be a node of $C$.
The \emph{normalization} of $C$ at $q$ is the usual normalization at $q,$ $\NNN_q:C'\to C,$ with additional care concerning anchors. Both points of
the preimage of $q$ become marked points.
Suppose a connected component $C_1$ of $C$ containing $q$ is separated into
two connected components $C_1',C_1''$ by the normalization. If $C_1$ is anchored and $C'_1$ does not contain the anchor, we define the preimage of $q$ in
$C'_1$ as its anchor and label it $\emptyset$ (See Figure~\ref{fig:norm_anchors}). The other preimage of $q$ will be labeled by the union of labels of the other marked points of $C_1'$. In all other cases, the preimages of $q$ are labeled $\emptyset$. Observe that the resulting labeling is still a marking. We denote by
\[
\NNN:\widehat{C}\to C
\]
the result of the normalization at all nodes.

A \emph{half-node} is a point of the normalization $\widehat{C}$
mapping to a node $q$ in $C$. We often refer to a half-node also as a node $q\in C$
along with a choice of branch of $C$ at $q$.
\end{definition}

\begin{figure}
  \centering
\begin{tikzpicture}[scale=0.6]


  \draw (-1,7) circle (2cm);
  \draw (-3,7) arc (180:360:2 and 0.6);
  \draw[dashed] (1,7) arc (0:180:2 and 0.6);
    \draw [fill] (1,7) circle [radius=0.15];

  \draw [fill, blue] (-1,6) circle [radius=0.15];

  \draw (5,7) circle (2cm);
  \draw (3,7) arc (180:360:2 and 0.6);
  \draw[dashed] (7,7) arc (0:180:2 and 0.6);
  \draw [fill, blue] (3,7) circle [radius=0.15];

  \node[above] at (2,8) {$\nu_q^{-1}(q)$};
  \draw (2.5,8) to [out=270,in=180] (3,7);

    \draw (1.5,8) to [out=270,in=0] (1,7);

  \draw [->] (2,4.5) -- (2,2.5);
  \node [right] at (2,3.5) {$\nu_q$};


  \draw (0,0) circle (2cm);
  \draw (-2,0) arc (180:360:2 and 0.6);
  \draw[dashed] (2,0) arc (0:180:2 and 0.6);

\draw [fill] (2,0) circle [radius=0.15];
\node [below] at (2,-1) {$q$};
    \draw (2,-1) to (2,0);

\draw [fill, blue] (0,-1) circle [radius=0.15];

  \draw (4,0) circle (2cm);
  \draw (2,0) arc (180:360:2 and 0.6);
  \draw[dashed] (6,0) arc (0:180:2 and 0.6);

\end{tikzpicture}

\caption{The normalization $\nu_q: C' \rightarrow C$ of the connected component $C_1$ at a node $q$. Anchors are color-coded as blue.}
\label{fig:norm_anchors}
\end{figure}
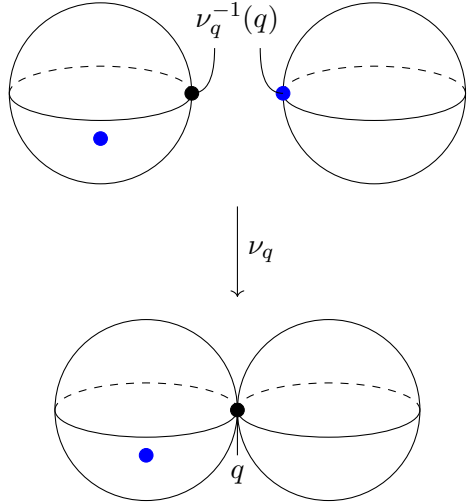

Suppose that $C$ is $d$-stable and $r$ divides $d$.
An \emph{$r$-spin} structure on $C$ is an orbifold line bundle $L\to C$, called the \emph{$r$-spin bundle}, together with an isomorphism
\begin{equation}\label{defn: iso tau L}
\tau:L^{\otimes r}\to\omega_{C,\log}.
\end{equation}
The local structure of $L$ at a marked point $p$ looks like the quotient stack $[\C^2/\mu_d]$ where the generator $\zeta=e^{\frac{2\pi i}{d}}\in\mu_d$ acts by
\begin{equation}\label{eqn: local action}
\zeta\cdot(X,T)=(e^{\frac{2\pi i}{d}} X,e^{m\frac{2\pi i}{d}} T).
\end{equation}
Here $X$ is a local coordinate on the curve, $T$ a coordinate on the
fibres of $L$, and
the \emph{multiplicity} (of $L$) is some integer $m=\text{mult}_p(L)$  modulo $d$. We impose here that $\text{mult}_p(L) \in \{0,\dots, d-1\}$. Moreover, since the multiplicity of $\omega_{C,\log}$ at any point is $0,$ for each marking $p$ it must hold that
\[{\text{mult}_p(L)\over d/r}\in\Z. \]

A \emph{twisted $r$-spin} structure on a closed marked genus $0$ orbifold Riemann surface $C$ is an orbifold bundle of the form
\begin{equation}\label{eq:S}S = L \otimes \O\left(-\sum_{i \in I_0} [z_i] \right),\end{equation}
where $L$ is an $r$-spin bundle and $I_0$ is a subset of the markings of $L$-multiplicity zero which contains
every marking whose $L$-multiplicity is $0,$ except, possibly,
anchors. Note that a priori the subset $I_0$ is a choice and part of the data of the $r$-spin structure, but we will see below that there is a standard choice of $I_0$ determined by the rest of the $r$-spin structure. Here, the twist
$\O([z_i])$ is the orbifold line bundle associated to the degree $1$ divisor
$[z_i]$, i.e., $\O([z_i])$ is the pull-back of a line bundle on the
coarse moduli space.
The \emph{twist} of the $i^{th}$ marking (with respect to $S$) is defined to be \[l_i-1+r\mathbf{1}_{i\in I_0}\] where  $l_i$ is the unique solution of\[\text{mult}_{z_i}(L) = \left(\frac{d}{r}\right)l_i,~~l_i\in\{0,\ldots,r-1\}\]
and $\mathbf{1}_{i\in I_0}$ denotes the characteristic function of
$I_0$. Thus in particular:
\begin{equation}
\label{eq:I0}
\hbox{If a marked point has twist $-1$, it is an anchor.}
\end{equation}

Using~\eqref{defn: iso tau L}, note that there is an isomorphism $\tau$ so that
\begin{equation}\label{defn: iso tau S}
\tau: S^{\otimes r}\simeq\omega_{C,\log}(-\sum_{i\in I_0}r[z_i]).
\end{equation}

\begin{definition}
A \emph{closed twisted genus $0$ $r$-spin surface} is a closed genus $0$
$d$-stable surface together with a twisted $r$-spin structure.
The notion of isomorphism is the standard one.
\end{definition}

Given the normalization $\nu:\widehat C\rightarrow C$,
the orbifold line bundle $\NNN^*L\to\widehat{C}$ is automatically an $r$-spin bundle. However,  $\NNN^*S$ need not be a twisted $r$-spin bundle. Instead,
if we take
\begin{equation}
\label{eq:S Shat def}
\widehat{S}=\NNN^*S\otimes\O\left(-\sum_{q\in\mathcal{R}} [q]\right),
\end{equation}
where
\begin{equation}
\label{eq:R closed}
\mathcal{R}:=\{\hbox{$q$ a half-node of $\widehat{C}$ which is of
multiplicity $0$ but not an anchor}\},
\end{equation}
then $\widehat{S}$ is a twisted $r$-spin bundle, with the set of
marked points where the spin structure is twisted being
\[
\widehat{I}_0=I_0\cup \mathcal{R}.
\]
The \emph{twists} of the half-nodes of $\widehat{C}$ are defined to be the twists of
$\widehat{S}$ at these points. We often use this notion for the twist of a half-node
viewed as a nodal point in $C$ along with a choice
of branch of $C$ through this node.

Note in analogy with \eqref{eq:I0}, we have
\begin{equation}
\label{eq:cR}
\hbox{The only half-nodes of $(\widehat{C},\widehat{S})$
of twist $-1$ are anchors.}
\end{equation}
For twisted $r$-spin structures, we will impose that $I_0$ consists of all markings of $L$-multiplicity zero except anchors.\footnote{Moving forwards, we require a rule for twisted $r$-spin structures in the open case. Here, closed $r$-spin curves are connected components of the normalization of some stable marked genus 0 orbifold Riemann surfaces with boundary. We exclude from $I_0$ the anchors that come from a contracted boundary (see Definition~\ref{def: open RS bdry}).}

\begin{definition}
A special point whose twist is $-1\mod r$ is called \emph{Ramond}, while the other markings are called \emph{Neveu-Schwarz} (with respect to $S$).
\end{definition}

Consider the coarsification map $\pi: C \rightarrow |C|$ and the pushforward $\pi_*L$. Here, we can see that locally at the marked point $p$, the sections of this sheaf will be given by $\mu_d$-invariant sections of $\O_{\mathbb{C}}$ with respect to the action in Equation~\eqref{eqn: local action}. The generator acts locally on a section $f$ by taking $\zeta\cdot(X, f(X)) = (\zeta X, \zeta^m f(X))$. For invariance, we need that $\zeta^m f(X) = f(\zeta X)$, which implies that $f(X) = X^m g(X^d)$ for some polynomial $g$. Note that
there is a local coordinate $x$ on $|C|$ with
$\pi^*(x) = X^d$, hence $\pi_*L$ is a line bundle.  We then have that $(\pi_*L)^{\otimes d}$ near $p$ is generated by $X^{md} = x^m$, which is used to prove the following proposition:

\begin{prop}\label{obs:twists_on_coarse}
For any connected component $C_l$ of $|\widehat{C}|,$ with markings also denoted by $\{z_i\}_{i\in I_l}$ and half-nodes $\{p_h\}_{h\in N_l},$ it holds that $|\widehat{S}|$ is a line bundle and
\begin{equation}\label{eq:nodal_curve_closed}\left(|\widehat{S}|\big|_{C_l}\right)^{\otimes r} \cong \omega_{|C_{l}|} \otimes  \O\left(-\sum_{i \in I_l} a_i [z_i] - \sum_{h \in N_l} c_h [p_h]\right),\end{equation}
\[a_i,c_h\in \{-1,0,\ldots, r-1\},\]
where $a_i,c_h$ are the twists.
\end{prop}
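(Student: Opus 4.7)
The plan is to reduce Proposition \ref{obs:twists_on_coarse} to a pointwise local calculation at each special point of the coarsification map $\pi\colon\widehat{C}\to|\widehat{C}|$, by comparing the natural multiplication map $\Phi\colon(\pi_*\widehat{S})^{\otimes r}\to\pi_*(\widehat{S}^{\otimes r})$ with the global isomorphism $\tau$ of \eqref{defn: iso tau S}. Since $\widehat{S}$ differs from the spin bundle $\widehat L=\NNN^*L$ only by a tensor factor of $\O(-[p])$ at points $p\in\widehat I_0$, and each such $\O(-[p])$ is pulled back from $|\widehat C|$, the local analysis of $\mu_d$-invariant sections carried out for $L$ in the paragraph preceding the proposition extends with essentially no change, showing that $\pi_*\widehat S$ is a line bundle. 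A local frame of $\pi_*\widehat S$ at a special point $p$ is the invariant section $X^{l_p(d/r)}T$ when $p\notin\widehat I_0$, and $X^d T$ (via the inclusion $L(-[p])\hookrightarrow L$) when $p\in\widehat I_0$, where $l_p\in\{0,\ldots,r-1\}$ is determined by $\text{mult}_p(L)=(d/r)l_p$.

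Next, I will compute $\pi_*(\widehat S^{\otimes r})$ using $\tau$. Since $r\mid d$, the orbifold bundle $\O(-r[p])$ is the pull-back of $\O(-r\{|p|\})$ from the coarse space at every special point $p$, while a direct local calculation using $dx/x=d\cdot(dX/X)$ for $x=X^d$ shows that $\omega_{\widehat C,\log}=\pi^*\omega_{|\widehat C|,\log}$. The projection formula applied to the isomorphism $\widehat S^{\otimes r}\cong\omega_{\widehat C,\log}(-r\sum_{i\in\widehat I_0}[z_i])$ then yields
\[
\pi_*(\widehat S^{\otimes r})\;\cong\;\omega_{|\widehat C|,\log}\otimes\O\Big(-r\sum_{p\in\widehat I_0}\{|p|\}\Big).
\]
For the third step, $\Phi$ is an isomorphism on the complement of the special points, so it suffices to measure its cokernel at each such $p$. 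Using the explicit frames from step one, a short computation shows that $\Phi$ is locally multiplication by $x^{l_p}$ when $p\notin\widehat I_0$, and is an isomorphism when $p\in\widehat I_0$ (the matching factor $X^d$ already appearing on both sides from the explicit twist in $\widehat S$ and the $-r[p]$ in $\tau$). Setting $l_p=0$ when $p\in\widehat I_0$ for uniformity, one obtains
\[
(\pi_*\widehat S)^{\otimes r}\;\cong\;\pi_*(\widehat S^{\otimes r})\otimes\O\Big(-\sum_{p}l_p\{|p|\}\Big).
\]
Combining the two displays, rewriting $\omega_{|\widehat C|,\log}=\omega_{|\widehat C|}\otimes\O(\sum_{p}\{|p|\})$, and restricting to the connected component $C_l$ yields
\[
(\pi_*\widehat S|_{C_l})^{\otimes r}\;\cong\;\omega_{|C_l|}\otimes\O\Big(-\sum_{p\in C_l}(l_p-1+r\mathbf{1}_{p\in\widehat I_0})\{|p|\}\Big),
\]
and the identity $l_p-1+r\mathbf{1}_{p\in\widehat I_0}=a_p$ is immediate in all three cases $a_p=-1$, $a_p\in\{0,\ldots,r-2\}$, and $a_p=r-1$.

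The main obstacle is the case-by-case reconciliation in the final step: in the anchor case ($a_p=-1$, $l_p=0$, $p\notin\widehat I_0$) the map $\Phi$ contributes no correction; in the Neveu-Schwarz case ($a_p\in\{0,\ldots,r-2\}$) the map $\Phi$ contributes the factor $x^{a_p+1}$ but no twist appears in $\tau$; and in the twisted Ramond case ($a_p=r-1$, $p\in\widehat I_0$) $\Phi$ is an isomorphism but $\tau$ already forces the extra $-r\{|p|\}$ via the explicit twist in the definition of $\widehat S$. Reconciling these three distinct contributions into the single uniform formula $-\sum_p a_p\{|p|\}$ is precisely the content of the proposition.
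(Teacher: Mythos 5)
Your proof is correct, and it is essentially the explicit version of the argument the paper sketches. The paper's proof has two moves: it appeals to ``the argument in the preceding paragraph'' for the untwisted bundle $|\nu^*L|$ (the $\mu_d$-invariant-sections computation, giving the coefficient $l_p-1$ at each special point), and then observes that since $\widehat{S}$ differs from $\nu^*L$ by $\O(-\sum_{p\in\mathcal{R}}[p])$ with each $\O(-[p])$ pulled back from the coarse curve, coarsening commutes with this twist, which adds the $r\mathbf{1}_{p\in\widehat{I}_0}$ correction. You instead perform the local comparison of $(\pi_*\widehat{S})^{\otimes r}$ with $\pi_*(\widehat{S}^{\otimes r})$ directly for the twisted bundle, via the multiplication map $\Phi$ and the projection formula applied to $\tau$, so all three cases (anchor, Neveu--Schwarz, twisted Ramond) fall out of a single uniform computation $a_p=l_p-1+r\mathbf{1}_{p\in\widehat{I}_0}$. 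The content is the same; what your version buys is a self-contained derivation that makes transparent exactly where each term of $-\sum_p a_p\{|p|\}$ originates, at the cost of re-deriving the untwisted case rather than citing it.
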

\begin{proof}
The corresponding claim for $|\NNN^* L|$ is well-known and can be seen through the argument in the preceding paragraph. 
Since $\widehat{S}$ differs from $\NNN^*L$ by twisting down at a divisor pulled back from the coarse curve, the twisting and coarsening commute which allows us to deduce the claim.
\end{proof}

We can also define the orbifold line bundle
\[
J:=\omega_C\otimes S^\vee
\]
on $C$, which has a natural pairing with $S$ given by
\begin{equation}
\label{eq:natural_pairing}
\langle -,-\rangle: S\otimes J \rightarrow \omega_C.
\end{equation}

We now require the notion of a \emph{grading}.

\begin{definition}[Grading]\label{def:closed grading}
A \emph{grading} of a twisted closed genus $0$ $r$-spin surface $(C,S)$ consists
of the following additional structure at each anchor $z_j$ with
twist $r-1$.
Recall that we have a map $\tau'$ defined as the composition
\[
\tau':\left(S\otimes \O\left([z_j]\right)\right)^{\otimes r}\big|_{z_j}\rightarrow \omega_{C,\log}\big|_{z_j}\rightarrow\C,
\]
where the second map is an isomorphism given by the residue map. Then
the data of a grading at $z_j$ is, first, the choice of
an $\R$-linear involution $\widetilde{\phi}$ on the fiber $\left(S\otimes \O\left([z_j]\right)\right)_{z_j}$ which is required to satisfy two properties:
\begin{enumerate}
\item For all $v\in\left(S\otimes \O\left([z_j]\right)\right)_{z_j}$,
we have $\tau'(\widetilde{\phi}(v)^{\otimes r})=-\overline{\tau'(v^{\otimes r})}$
where $w\mapsto\overline{w}$ is the standard conjugation.
\item We have
$\{\tau'(v^{\otimes r})\,|\,v\in \left(S\otimes \O\left([z_j]\right)\right)^{\widetilde\phi}_{z_j}\}\supseteq i\R_+$, where $i$ is the root of $-1$ in the upper half plane.
\end{enumerate}
Second, we choose a non-zero vector $v_j$ in the fiber $(S \otimes \O([z_j]))^{\tilde \phi}_{z_j}$ so that $\tau'(v_j^{\otimes r}) \in i\R_+$.
Two gradings $\{v_j\}, \{v_j'\}$ are \emph{equivalent} if $v_j=\lambda_j
v_j'$ for some collection of positive real numbers $\lambda_j$.

We call the connected component of $\left(S\otimes \O\left([z_j]\right)\right)^{\widetilde\phi}_{z_j}\setminus\{0\}$ containing $v_j$ the \emph{positive direction} on $\left(S\otimes \O\left([z_j]\right)\right)^{\widetilde\phi}_{z_j}$ and call any $v$ in that connected component \emph{positive}. Note that if $v$ is positive then $\tau'(v^{\otimes r})\in i\R_+$.
\end{definition}

Since
\[J_{z_j}\otimes \left(S\otimes \O\left([z_j]\right)\right)_{z_j}\simeq \omega_{C,\log}\big|_{z_j},\] $\widetilde\phi$ induces a conjugation on the fiber $J_{z_j},$ also denoted by $\widetilde\phi$, by the requirement that under the identification of $\omega_{C,\log}\big|_{z_j}$ with $\C$, we have
\[\langle \widetilde\phi(w),\widetilde\phi (v)\rangle = -\overline{\langle w, v\rangle}\]
for $w\in J_{z_j}$, $v\in \left(S\otimes \O\left([z_j]\right)\right)_{z_j}$.
Similarly, using the positive direction on $\left(S\otimes \O\left([z_j]\right)\right)^{\widetilde\phi}_{z_j}$, we define a positive direction on $J^{\widetilde\phi}_{z_j}$ in the following way.  We say that $w\in J_{z_j}$ is positive if, for any positive $v\in \left(S\otimes \O\left([z_j]\right)\right)_{z_j}$,  it holds that \[\langle w,v\rangle\in i\R_+.\]

\begin{rmk}\label{rmk: gradings and contracted boundaries}
Gradings arise naturally from the open $r$-spin disk case. Indeed,
as we shall see, such disks may acquire contracted boundary nodes,
i.e., nodes obtained by shrinking the boundary to a point, see Figure
\ref{fig:node_types}. The
stalk of the spin bundle at these points then naturally carry
an involution $\tilde\phi$ and the grading can be seen as the
limiting data of a lifting on the open disks in the sense of Definition
\ref{def:lifting_compatible},(2).
\end{rmk}

\begin{definition}
A \emph{closed genus $0$ graded $r$-spin surface} is a twisted closed genus $0$ $r$-spin surface all of whose connected components are anchored, together with a choice of  grading, up to equivalence. An {\it isomorphism} of closed genus $0$ graded $r$-spin surfaces is an isomorphism of closed genus $0$ twisted $r$-spin surfaces that preserves the anchors and, in the case where an anchor has twist $r-1$, also the involution $\widetilde{\phi}$ and the positive direction.
\end{definition}

\begin{rmk}
\label{rmk:anchor yoga2}
As already mentioned in Remark \ref{rmk:anchor yoga}, once we start to
normalize open $r$-spin surfaces, the
anchor of a sphere component should be thought of as the half-node
at which the component met a disk component, or met a simple
path of sphere components connecting it to a disk component. The additional
data in the grading in the case of a twist $r-1$ anchor arises as in Remark~\ref{rmk: gradings and contracted boundaries}. 
\end{rmk}

The properties of the twists and grading can be summarized in the following well-known observation below.
\begin{obs}\label{obs:closed_constraints}
It follows immediately from
Proposition~\ref{obs:twists_on_coarse} that if $C$ is a smooth $r$-spin curve,
then the degree of the line bundle $|S|\to|C|$ is
\begin{equation}\label{eq:Sr}
\frac{-2-\sum a_i}{r}\in \Z,
\end{equation} where $a_i$ are the twists. This number must be an integer.
Thus, also
\begin{equation}\label{eq:close_rank1_general}
\frac{\sum a_i -(r-2)}{r}\in \Z.
\end{equation}
If $C$ is a $d$-stable $r$-spin curve and
$p$ and $p'$ are the two branches of a node, then
\begin{equation}\label{eq:twists_at_half_nodes}c_p + c_{p'} \equiv r-2 \mod r.\end{equation}
Moreover, in the notation of \eqref{eq:nodal_curve_closed}, for any component $C_l$ of $|\widehat{C}|$
with markings also denoted by $\{z_i\}_{i\in I_l}$ and half-nodes $\{p_h\}_{h\in N_l},$ we have
\[r\Big|\sum_{i \in I_l} a_i + \sum_{h \in N_l} c_h+2-r.\]
When \eqref{eq:close_rank1_general} holds, there is, up to isomorphism,
a unique graded structure on $C$ with the given twists.
\end{obs}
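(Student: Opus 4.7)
The plan is to derive each of the four assertions from Proposition~\ref{obs:twists_on_coarse} together with the explicit local model \eqref{eq:balanced}--\eqref{eq:balanced action} at a balanced node.

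For \eqref{eq:Sr} and \eqref{eq:close_rank1_general}, I would apply Proposition~\ref{obs:twists_on_coarse} to the smooth curve $C$: there are no half-nodes, so taking degrees of both sides and using $\deg\omega_{|C|}=-2$ gives $r\deg|S|=-2-\sum a_i$, and integrality of $\deg|S|$ yields both claims (which differ only by a multiple of $r$). Similarly, for the divisibility statement on a component $C_l$ of $|\widehat C|$, applying Proposition~\ref{obs:twists_on_coarse} to $C_l$ and taking degrees gives $r\deg(|\widehat S|_{C_l})=-2-\sum_{i\in I_l}a_i-\sum_{h\in N_l}c_h$, and integrality produces the desired $r\mid \sum a_i+\sum c_h+2-r$.

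The heart of the argument is the balancing relation \eqref{eq:twists_at_half_nodes}. In the local model $[\{xy=0\}/\mu_d]$ at a node, the fiber of $S$ at $(0,0)$ is a one-dimensional $\mu_d$-representation with character $\zeta\mapsto\zeta^m$ for some $m\in\Z/d$. Computing $\mu_d$-equivariant local sections branch by branch: on the $x$-branch, equivariance $T(\zeta x)=\zeta^m T(x)$ forces $k\equiv m\bmod d$ for monomials $x^k$, giving $m_p\equiv m\bmod d$, while on the $y$-branch the sign flip in $\zeta\cdot y=\zeta^{-1}y$ turns the condition into $k\equiv -m\bmod d$, giving $m_{p'}\equiv -m\bmod d$. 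Hence $m_p+m_{p'}\equiv 0\bmod d$, so writing $m_p=(d/r)l_p$ and $m_{p'}=(d/r)l_{p'}$ yields $l_p+l_{p'}\equiv 0\bmod r$. The relation $c_p+c_{p'}\equiv r-2\bmod r$ then follows by case analysis using the twist formula $c=l-1+r\mathbf{1}_{\widehat I_0}$ and the characterization of $\mathcal R$ in \eqref{eq:R closed}: if both $l_p,l_{p'}>0$, then $c_p+c_{p'}=l_p+l_{p'}-2\equiv -2\equiv r-2$; if both vanish, each $c$ lies in $\{-1,r-1\}$ and so the sum lies in $\{-2,r-2,2r-2\}$, all $\equiv r-2\bmod r$.

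For the uniqueness of the graded structure on smooth $\P^1$ with prescribed twists satisfying \eqref{eq:close_rank1_general}, the degree of $|S|$ is fixed by \eqref{eq:Sr}, so $|S|$ is unique in $\operatorname{Pic}(\P^1)\cong\Z$; the orbifold enhancement $S$ is determined by the prescribed multiplicities at the markings, and the isomorphism $\tau$ is unique up to the action of $\Aut(S)=\C^*$ by $r$-th powers, so any two choices are identified by an automorphism of $S$. At each anchor of twist $r-1$, the conditions of Definition~\ref{def:closed grading} uniquely determine the real involution $\widetilde\phi$ on the one-dimensional complex fiber, and the remaining freedom in the positive vector $v_j$ is exactly positive scalars, which is killed upon passing to the equivalence class of gradings. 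The main obstacle is the local multiplicity computation for \eqref{eq:twists_at_half_nodes}, where one must carefully track that the opposite signs in the $\mu_d$-action on the two branches of a balanced node force the two multiplicities to be additive inverses modulo $d$; all remaining steps reduce to elementary degree computations on $\P^1$ and the rigidity of spin data there.
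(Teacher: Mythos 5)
Your proposal is correct and follows exactly the route the paper intends (the paper states this observation without proof, as an immediate consequence of Proposition~\ref{obs:twists_on_coarse}): degree computations on $\P^1$ for \eqref{eq:Sr}, \eqref{eq:close_rank1_general} and the component-wise divisibility, the standard balanced-node multiplicity cancellation $m_p+m_{p'}\equiv 0\bmod d$ combined with the case analysis on $\mathcal{R}$ for \eqref{eq:twists_at_half_nodes}, and torsion-freeness of $\operatorname{Pic}(\P^1)$ for uniqueness. One small imprecision: the involution $\widetilde\phi$ at a twist-$(r-1)$ anchor is not literally unique (the $r$ solutions of $\lambda^r=-1$ give distinct admissible involutions), but they are all conjugate under rescaling the spin bundle by an $r$-th root of unity, which is an isomorphism of graded structures, so your conclusion "unique up to isomorphism" stands.
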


\begin{rmk}
\label{rmk: automorphisms}
The only automorphisms of genus $0$ graded $r$-spin surfaces with injective marking $m$
are the \emph{ghost automorphisms}. There are two types of such automorphisms: (i)  scaling fiberwise the bundle $S$ and (ii)  ``rotating'' $C$ at a node. In type (i), we scale the bundle $S$ fiberwise by an $r^{th}$ root of unity. Note that we can do this independently on each of the connected components; however, as explained
in Proposition 2.15 of \cite{BCT:I}, the extra data of a grading at
anchors of twist $r-1$ eliminates the automorphism given by scaling
the spin bundle by an $r^{th}$ root of unity on the connected component
of $C$ containing the anchor. This provides a factor of $r^{|\pi_0(C)|-A}$ to the group of ghost automorphisms, where $|\pi_0(C)|$ is the number
of connected components of $C$ and $A$ is the number of anchors whose
twist is $r-1$. In type (ii), recall that the stabilizer group is $\mu_d$ at a node, so
this contributes a factor of $\mu_d$ to the automorphism group of $C$.
In terms of the local picture \eqref{eq:balanced}, the action of
$\mu_d$ is given by
\[\zeta\cdot(x,y)=(\zeta x,y),~~\zeta~\text{is an $d^{th}$ root of unity.}\]
One can check as in \cite{CZ} that the pull back of the spin bundle
$S$ via this automorphism gives an isomorphic spin structure, and hence
induces an automorphism. See also \cite{BCT:I}, Remark 2.12 and Proposition
2.15. However, in that reference, $d=r$. Consequently, we obtain a factor of $d^N$ where $N$ is the number of nodes. Putting together, the group of ghost automorphisms is of order $r^{|\pi_0(C)|-A}d^N$.
\end{rmk}

We now generalize to the $W$-spin case.
Let $W$ be any Fermat polynomial in $n$ variables,
\begin{equation}
\label{eq:Fermat W}
W(x_1,\ldots, x_a)=\sum_{i=1}^a x_i^{r_i},
\end{equation}
and let $d={\mathrm{lcm}}(r_1,\ldots,r_a)$.

\begin{definition}
A \emph{closed graded genus $0$ $W$-spin Riemann surface} is a tuple
\[
(C,S_1,\ldots, S_a;\tau_1,\ldots,\tau_a)
\]
where $C$ is $d$-stable marked genus $0$ orbifold Riemann surface, all of whose components are anchored, and for all $i\in[a]:=
\{1,\ldots,a\}$, $(S_i,\tau_i)$ is a graded twisted $r_i$-spin structure on $C$.

The \emph{twist} of a marking or a half-node is the $a$-tuple of its twists with respect to each $S_i$. We write $\tw(q)$ for the twist of $q,$ and $\tw_i(q)$ for its $i^{th}$ component, $i\in[a]$. A special point $q$ is \emph{broad} if it is Ramond with respect to one of the bundles $S_i,$ otherwise $q$ is \emph{narrow}.
\end{definition}

\begin{rmk}\label{rmk:NegTwist}
Recall from~\eqref{eq:cR} that the only half-nodes of twist $-1$ are anchors. In (closed) genus $0$ $W$-spin theory, it is possible to allow at most one marked point with twist $-1$ for each $r_i$-spin bundle, $i=1,\ldots,a$. This is observed in \cite{JKV2}, and studied in detail in \cite{BCT_Closed_Extended} for a single spin bundle. So long as all other marked points have non-negative twist, the degree of the spin bundle is negative which implies that the Witten bundle defined in Section~\ref{subsec:WittenBundle} is indeed a bundle.
\end{rmk}

\begin{obs}\label{obs:automorphism}
The number of automorphisms is $d^N\prod_{i=1}^a r_i^{|\pi_0(C)|-A_i}$, where $A_i$ is the number of anchors at which the bundle $S_i$ has twist $r_i -1$
and $N$ is the number of nodes.
\end{obs}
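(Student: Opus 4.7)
The plan is to adapt the argument sketched in Remark~\ref{rmk: automorphisms} for the $r$-spin case to the $W$-spin setting, exploiting the fact that the $a$ spin structures $S_1,\ldots,S_a$ sit on a common underlying orbicurve and contribute essentially independently to the automorphism group. Since $C$ is genus zero and $d$-stable with injective marking function, any automorphism of the underlying coarse curve that fixes the markings is trivial, so every ghost automorphism decomposes into (i) fiberwise scalings of the spin bundles over a fixed $C$, and (ii) ``node rotations'' coming from the $\mu_d$-stabilizers at nodes.

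For the fiberwise scalings, I would argue bundle by bundle. Scaling $S_i$ on a single connected component of $C$ by a scalar $\lambda \in \C^*$ preserves the isomorphism $\tau_i: S_i^{\otimes r_i} \to \omega_{C,\log}(-\sum_{j\in I_0^{(i)}} r_i[z_j])$ precisely when $\lambda^{r_i}=1$, giving a contribution of $\mu_{r_i}$ per connected component, hence $r_i^{|\pi_0(C)|}$ in total before imposing the grading. Now I would invoke the grading obstruction, as in Proposition~2.15 of \cite{BCT:I}: at an anchor $z_j$ with $\tw_i(z_j)=r_i-1$, the choice of positive direction in $(S_i\otimes\O([z_j]))^{\tilde\phi_i}_{z_j}$ is a real ray, and a nontrivial $r_i$-th root of unity acting on this fiber by scaling would rotate this ray away from itself (up to the equivalence of gradings by positive real rescaling, only the trivial element of $\mu_{r_i}$ is allowed). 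Hence each such anchor kills a factor of $r_i$ on its component, yielding the total contribution $r_i^{|\pi_0(C)|-A_i}$ for the bundle $S_i$.

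For the node rotations, I would import the analysis of \cite{CZ} (as cited in the $r$-spin discussion): at each node, the local model $\{xy=0\}/\mu_d$ carries the ghost action $\zeta\cdot(x,y)=(\zeta x,y)$ for $\zeta\in\mu_d$, which lifts compatibly to each $S_i$ since $r_i \mid d$. These rotations act independently at distinct nodes and commute with the fiberwise scalings of (i), producing a factor of $d^N$.

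Assembling the two independent contributions yields $|\Aut| = d^N\prod_{i=1}^a r_i^{|\pi_0(C)|-A_i}$, as claimed. The main subtlety I would be careful about is the independence claim in step (i): the gradings attached to different spin bundles at the same anchor are independent data (the involution $\tilde\phi_i$ is intrinsic to $S_i$), so the $A_i$ reductions can be carried out one index $i$ at a time without cross-interactions; likewise, the node rotations lift uniquely to each $S_i$ and so commute with the scalings of every $S_j$, justifying the product formula.
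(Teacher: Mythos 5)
Your proof is correct and matches the paper's intended argument: the paper states this observation without proof, immediately after Remark~\ref{rmk: automorphisms}, which works out the $r$-spin case and is plainly meant to generalize componentwise; your proof spells out that generalization, including the key point that the grading obstruction and the $\mu_d$ node rotations act independently for each spin bundle $S_i$. The care you take at the end about independence of the $A_i$ reductions is exactly the right thing to flag, and your appeal to Proposition~2.15 of \cite{BCT:I} for the grading-kills-scaling step is the same reference the paper relies on.
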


\subsection{Open $r$-spin structures}\label{subsec:open objects}

\begin{definition}\label{def: open RS bdry}
\begin{enumerate}
\item
A {\it connected marked genus $0$ orbifold Riemann surface with boundary} or \emph{disk} is a tuple
\[(C, \phi, \Sigma, \{z_i\}_{i \in I}, \{x_j\}_{j \in B}, m^I, m^B),\]
in which:
\begin{enumerate}[(i)]
\item $C$ is a connected genus $0$ orbifold Riemann surface.
\item $\phi: C \rightarrow C$ is an anti-holomorphic involution ({\it conjugation}).
\item $\Sigma$ is a fundamental domain for the induced conjugation
on $|C|$. We also write $\phi:|C|\rightarrow |C|$ for this induced
conjugation. Here $\Sigma$ is a Riemann surface, usually, but not
always, with boundary.
\item $z_i \in C$ are a collection of distinct points
 (the {\it internal marked points}) labeled by the set $I$, whose images in $|C|$ lie in $\Sigma \setminus \d\Sigma$, with {\it conjugate marked points} $\bar{z}_i:= \phi(z_i)$.
\item $x_j \in \text{Fix}(\phi)$ are a collection of distinct points (the {\it boundary marked points}) labeled by the set $B$, whose images in $|C|$ lie in $\d\Sigma$.
\item $m^I:I\to\Universe$ and $m^B:B\to\Universe$ are markings.
\end{enumerate}
A typical such surface with the various marked points labeled can be seen in Figure~\ref{fig:smooth_orbRS_boundary}. We shall often denote the object only by $C$ or by the preferred half $\Sigma$.

Note there can be at most one isolated $\phi$-invariant point. If it exists, it is called a \emph{contracted boundary node} and it is regarded as an \emph{anchor}. In this case, $\Sigma$ has no boundary. In all other cases, $\Sigma$ is a \emph{nodal marked disk} and has no anchor. Those $\phi$-invariant nodes which are not a contracted boundary are called \emph{boundary nodes}. The remaining nodes whose image under the coarsification map falls in the interior of $\Sigma$ are the \emph{internal nodes}. Their conjugate points are the \emph{conjugate nodes}. We call $\partial\Sigma$ the \emph{boundary of the open surface}. It is oriented as the boundary of $\Sigma$. See Figure \ref{fig:node_types}.
\item
A {\it marked genus $0$ orbifold Riemann surface with boundary} is a finite union of connected marked genus $0$ orbifold Riemann surfaces with boundary and connected marked genus $0$ orbifold Riemann surfaces without boundary. It is \emph{smooth} if there are no nodes. See Figure~\ref{fig:smooth_orbRS_boundary}.

\item
A marked genus $0$ orbifold Riemann surface $C$ (with or without boundary) is \emph{stable} if each connected component of the normalization has at least three marked points. We say that $C$ is \emph{partially stable} if $C$ is connected and open, and either (i) $|I|=1$ and $|B|=0$ or (ii) $|I|=0$ and $|B| = 2$.
\item
We usually suppress $\phi$ from the notation and write $\overline{w}$ for $\phi(w)$ when $w$ lies in the preimage of $\Sigma$ in $C$. In what follows we usually identify a special point with its image in $\Sigma$.
\item
An {\it isomorphism} of marked orbifold Riemann surfaces with boundary
\[(C_1, \phi_1, \Sigma_1, \{z_{1,i}\}_{i \in I_1}, \{x_{1,j}\}_{j \in B_1}, m_1^{I_1}, m_1^{B_1}) \cong (C_2, \phi_2, \Sigma_2, \{z_{2,i}\}_{i \in I_2}, \{x_{2,j}\}_{j \in B_2}, m_2^{I_2}, m_2^{B_2})\]
consists of an isomorphism $s: C_1\rightarrow C_2$ and bijections $f^I: I_1 \rightarrow I_2$ and $f^B: B_1 \rightarrow B_2$ such that
\begin{enumerate}[(i)]
\item $s \circ \phi_1 = \phi_2 \circ s$,
\item $s(\Sigma_1) = \Sigma_2$,
\item $s(x_{1,j}) = x_{2,f^B(j)}$ for all $j \in B_1$ and $s(z_{1,i}) = z_{2,f^I(i)}$ for all $i \in I_1$, and
\item $m_1^{I_1} = m_2^{I_2} \circ f^I$ and $m_1^{B_1} = m_2^{B_2} \circ f^B$.
\end{enumerate}
Note that one can write
\[|C| = \left(\Sigma \coprod_{\d \Sigma} \overline{\Sigma}\right)/\sim_{CB},\]
where $\bar{\Sigma}$ is obtained from $\Sigma$ by reversing the complex structure, and $\sim_{CB}$ is the equivalence relation on $\Sigma \cup_{\d \Sigma} \overline{\Sigma}$ defined by $x\sim_{CB}y$ precisely if $x=\phi(y)$ and one of them is a contracted boundary node.
\end{enumerate}
\end{definition}

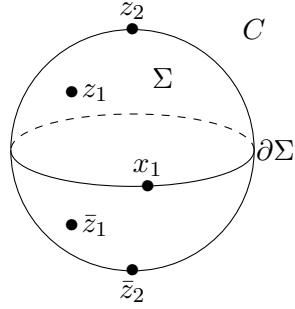
\begin{figure}
  \centering

\begin{tikzpicture}[scale=0.8]
  \draw (0,0) circle (2cm);
  \draw (-2,0) arc (180:360:2 and 0.6);
  \draw[dashed] (2,0) arc (0:180:2 and 0.6);

 \node at (2, 2) {$C$};
 \node at (2.35,0) {$\partial\Sigma$};
\node (c) at (-1,.95) {$\bullet$};
\node[right] at (-1,.95) {$z_1$};

\node at (-1,-1.25) {$\bullet$};
\node[right] at (-1,-1.25) {$\bar z_1$};

\node at (.25, -.6) {$\bullet$};
\node[above] at (.25,-.6) {$x_1$};

\node (a) at (.5, 1.2) { $\Sigma$};
\node at (0,2) {$\bullet$};
\node[above] at (0,2) {$z_2$};
\node (c) at (0, -2) {$\bullet$};
\node[below] at (0,-2) {$\bar z_2$};
\end{tikzpicture}

  \caption{A smooth connected marked genus 0 orbifold Riemann surface with boundary with two internal markings $z_1, z_2$ and a boundary marking $x_1$.}
  \label{fig:smooth_orbRS_boundary}
\end{figure}

We next take care in defining normalization of open spin surfaces.
As already mentioned in Remarks \ref{rmk:anchor yoga} and \ref{rmk:anchor yoga2},
anchors play a key role in bookkeeping in the normalization process.
Precisely:

\begin{definition}
\label{def:open normal}
Let $(C, \phi, \Sigma, \{z_i\}_{i \in I}, \{x_j\}_{j \in B}, m^I, m^B)$
be a marked genus $0$ orbifold Riemann surface with boundary.
The \emph{partial normalization} $\NNN_q:C'\to C$ at the node $q$ is defined by
taking the usual partial normalization $\NNN_q:C''\rightarrow C$ at $q$ and
its conjugate node
$\phi(q)$ and then taking $C'\subseteq C''$ to be the union of
connected components whose image in $|C|$ intersects $\Sigma$ in more
than one point. For example, in the examples in
Figure \ref{fig:node_types}, the light gray spheres are discarded in
cases (A) and (C), if the normalization occurs at the internal node
and the contracted boundary node respectively. However, nothing is
discarded if one normalizes (B) at the boundary node.

We then define $\NNN_q:C'\rightarrow C$ to be $\NNN_q|_{C'}$. In the normalization, we label the resulting half-nodes and choose
anchors, generalizing Definition \ref{def:closed normal}, as follows:
\begin{enumerate}
\item If $q$ is an internal node, suppose without loss of generality that its image in $|C|$ belongs to $\Sigma$. In this case $C'$ will have two connected components: one component will be closed and one component will be open or have
a contracted boundary node. The preimage $q_c$ of $q$ in the closed component is declared to be the anchor, and is marked by $\emptyset\in\Universe$. The other half-node is marked by the union of the labels of the remaining points in the component of $q_c$.
\item If $q$ is a boundary node, then we label each of its half-nodes by $\emptyset
\in \Universe$. Note that both connected components after the partial normalization are still nodal marked disks and consequently do not have anchors.
\item If $q$ is a contracted boundary node, we declare its preimage in $C'$
to be an anchor, and we mark it by $\emptyset\in\Universe$.
\end{enumerate}
\begin{figure}
\centering
\begin{subfigure}{.3\textwidth}
  \centering

\begin{tikzpicture}[scale=0.3]
  \shade[ball color = gray, opacity = 0.5] (0,0) circle (2cm);
  \draw (0,0) circle (2cm);
  \draw (-2,0) arc (180:360:2 and 0.6);
  \draw[dashed] (2,0) arc (0:180:2 and 0.6);

  \shade[ball color = gray, opacity = 0.1] (0,-4) circle (2cm);
  \draw (0,-4) circle (2cm);
  \draw (-2,-4) arc (180:360:2 and 0.6);
  \draw[dashed] (2,-4) arc (0:180:2 and 0.6);
   \shade[ball color = gray, opacity = 0.6] (-2,-4) arc (180:360:2 and 0.6) arc (0:180:2);

    \shade[ball color = gray, opacity = 0.1] (0,-8) circle (2cm);
  \draw (0,-8) circle (2cm);
  \draw (-2,-8) arc (180:360:2 and 0.6);
  \draw[dashed] (2,-8) arc (0:180:2 and 0.6);
\end{tikzpicture}

  \caption{Internal node}
\end{subfigure}
\begin{subfigure}{.3\textwidth}
  \centering
\vspace{0.9cm}
\begin{tikzpicture}[scale=0.4]
  \shade[ball color = gray, opacity = 0.1] (0,0) circle (2cm);
  \draw (0,0) circle (2cm);
  \draw (-2,0) arc (180:360:2 and 0.6);
  \draw[dashed] (2,0) arc (0:180:2 and 0.6);
  \shade[ball color = gray, opacity = 0.6] (-2,0) arc (180:360:2 and 0.6) arc (0:180:2);

  \shade[ball color = gray, opacity = 0.1] (4,0) circle (2cm);
  \draw (4,0) circle (2cm);
  \draw (2,0) arc (180:360:2 and 0.6);
  \draw[dashed] (6,0) arc (0:180:2 and 0.6);
  \shade[ball color = gray, opacity = 0.6] (2,0) arc (180:360:2 and 0.6) arc (0:180:2);
\end{tikzpicture}
\vspace{0.9cm}

  \caption{Boundary node}
\end{subfigure}
\begin{subfigure}{.3\textwidth}
  \centering

\begin{tikzpicture}[scale=0.4]
\vspace{0.15cm}
  \shade[ball color = gray, opacity = 0.6] (0,0) circle (2cm);
  \draw (0,0) circle (2cm);
  \draw (-2,0) arc (180:360:2 and 0.6);
  \draw[dashed] (2,0) arc (0:180:2 and 0.6);

  \shade[ball color = gray, opacity = 0.1] (0,-4) circle (2cm);
  \draw (0,-4) circle (2cm);
  \draw (-2,-4) arc (180:360:2 and 0.6);
  \draw[dashed] (2,-4) arc (0:180:2 and 0.6);
\end{tikzpicture}
\vspace{0.15cm}

  \caption{Contracted boundary}
\end{subfigure}

\caption{The closed curve $|C|$, with the nodal disk $\Sigma$ shaded.}
\label{fig:node_types}
\end{figure}
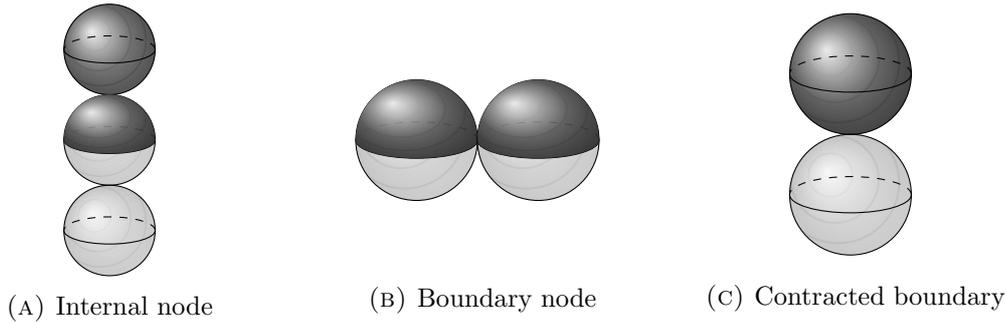

The normalization $\NNN:\widehat{C}\to C$ for a marked genus 0 orbifold Riemann surface with boundary $C$ is similarly defined by iterating the partial normalization definition over all nodes.
\end{definition}

\begin{definition}
Let $C$ be a $d$-stable marked genus 0 orbifold Riemann surface with boundary. Let $r$ divide $d$. An \emph{$r$-spin} (resp. \emph{twisted $r$-spin}) structure on $C$ is an $r$-spin structure $L$ (resp. twisted $r$-spin structure $S$) on $C$ together with an involution  $\widetilde{\phi}$ on $L$ (resp. $S$) lifting
$\phi$ and such that $\widetilde{\phi}^{\otimes r}$ agrees under $\tau$
with the involution of $\omega_{C,\log}$ induced by $\phi$.
\end{definition}

Multiplicities of marked points and half-nodes, as well as twists of marked points, are defined as in the closed case. Note that this implies that, for any $r$-spin structure on a $d$-stable marked genus 0 orbifold Riemann surface with boundary, the multiplicities and twists at an internal marked point $z_i$ (called the \emph{internal twist} $a_i$) must be the same at its conjugate marked point $\bar z_i$. We note that we analogously call the twist $b_i$ at a boundary special point the \emph{boundary twist}.

\begin{obs}
\label{obs:open_rank1_general}
There is a constraint on the allowable twists of the marked points,
arising from Observation \ref{obs:closed_constraints}.  Indeed,
let $S$ be a twisted $r$-spin structure with internal twists $a_i$ and boundary twists $b_j$. Then
\begin{equation}\label{eq:open_rank1_general}
\frac{2\sum_{i\in I} a_i + \sum_{j\in B} b_j-(r-2)}{r}\in \Z.
\end{equation}
\end{obs}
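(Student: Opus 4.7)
The plan is to deduce this directly from the closed integrality constraint \eqref{eq:close_rank1_general} by invoking the doubling construction implicit in the definition of an open $r$-spin surface. The coarse space $|C|$ of an open $r$-spin surface $(C,\phi,\widetilde\phi,S)$ is a closed genus $0$ orbifold Riemann surface. When $C$ is smooth (in particular has no contracted boundary), so is $|C|$. Its marked points, viewed as a closed curve, consist of the internal marked points $z_i$ together with their conjugates $\bar z_i = \phi(z_i)$, for $i \in I$, and the boundary marked points $x_j$ for $j \in B$, each of the latter being $\phi$-fixed and therefore contributing a single point to $|C|$.

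The key observation is that the involution $\widetilde\phi$ lifting $\phi$ identifies the fiber $S_{z_i}$ with the conjugate of $S_{\bar z_i}$. In the local orbifold presentation \eqref{eqn: local action} with coordinate $X$ on $C$ and fibre coordinate $T$, the action of the stabiliser $\mu_d$ is $\zeta \cdot (X,T) = (\zeta X, \zeta^m T)$; the antiholomorphic lift $\widetilde\phi$ sends $(X,T)$ to (a conjugate-linear identification with) $(\overline X, \overline T)$, so the stabiliser exponent $m$ and hence the multiplicity and twist at $\bar z_i$ agree with those at $z_i$. Consequently, the total sum of twists over all markings of the closed curve $|C|$ is exactly
\[
2\sum_{i\in I} a_i + \sum_{j\in B} b_j.
\]

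Applying \eqref{eq:close_rank1_general} to $|C|$ then immediately yields the desired integrality
\[
\frac{2\sum_{i\in I} a_i + \sum_{j\in B} b_j - (r-2)}{r} \in \mathbb{Z}.
\]
If one wants the statement beyond the smooth case, e.g. when $C$ carries a contracted boundary node so that $|C|$ is nodal, one invokes instead the per-component refinement of the closed constraint: the pair of half-nodes at the contracted boundary contribute twists summing to $r-2 \bmod r$ by \eqref{eq:twists_at_half_nodes}, and the contributions from the two conjugate components of $|\widehat C|$ telescope in the same way as above.

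The only subtle point in the argument is the twist-matching at conjugate internal markings, which reduces to a routine check that an antiholomorphic involution of the orbifold line bundle $S$ covering $\phi$ must preserve the local exponent $m$ governing the stabiliser action on the fibre. Everything else is bookkeeping.
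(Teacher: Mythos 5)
Your proof is correct and is essentially the argument the paper intends: apply the closed integrality constraint \eqref{eq:close_rank1_general} to the doubled closed curve $C$, whose marked points are $z_i,\bar z_i, x_j$, using the fact (asserted in the paper just before the observation) that the twists at $z_i$ and $\bar z_i$ coincide, so the total twist is $2\sum_{i\in I}a_i+\sum_{j\in B}b_j$. The only point worth noting is that your local check that $\widetilde\phi$ preserves the exponent $m$ involves two conjugations (on the base generator and on the fibre) that cancel, but this is exactly the fact the paper already records.
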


As in the closed case, $\NNN^*L\to\widehat{C}$ is itself an $r$-spin structure. We define the twisted $r$-spin structure $\widehat{S}\to\widehat{C}$ as
\begin{equation} \label{eq: normalised twisted spin structure}
\widehat{S}=\NNN^*S\otimes \O\left(-\sum_{q\in\mathcal{R}} [q]\right).
\end{equation}
Here
\begin{align}
\label{eq:R open}
\begin{split}
\mathcal{R}:=&\{\hbox{$q$ is a contracted boundary node of multiplicity $0$}\}\\
&\cup\{\hbox{$q$ is a half-node of $\widehat{C}$ which is of
multiplicity $0$ but not an anchor}\}.
\end{split}
\end{align}
We recall that $\O([q])$ denotes the pull-back of the corresponding
line bundle from $|\widehat C|$.
The twists of half-nodes with respect to $S$ are defined as the twists of the corresponding half-nodes with respect to $\widehat{S}$. The notions of Ramond and Neveu-Schwarz points are as in the closed case.

We remark that \eqref{eq:open_rank1_general} also induces constraints
on the twists of half-nodes. Indeed, when applied to a connected
component of the normalization $\widehat{C}$, one must sum over twists
of the half-nodes as well as the twists of the marked points.
Note that the definition of $\mathcal{R}$ determines the twists of
all half-nodes.

\begin{rmk}
\label{rem:CB Ramond}
In the course of the proof of \cite{BCT:I}, Proposition 2.15, it is shown that
every contracted boundary node is Ramond.
\end{rmk}

\subsubsection{Lifting and alternations}
\begin{definition}\label{def:lifting_compatible}
Let $S$ be a twisted $r$-spin structure on a connected marked genus 0 orbifold Riemann surface with boundary $C$.

\begin{enumerate}

\item Suppose that $C$ has no contracted boundary node. Let $A$ be the complement in $C^\phi$ of the set of special points. In this case a \emph{lifting of $S$} is a choice (if it exists) of an orientation for the real line bundle
\[S^{\widetilde\phi}\to A\]
 satisfying the following property.  If we take any vector $v \in S^{\widetilde\phi}_a$ at any $a\in A$ such that $v$ is positive with respect to the orientation specified by the choice of lifting, then its image under the map $v \mapsto \tau(v^{\otimes r})$ is positive with respect to the natural orientation of $\omega_{C}^{\phi}|_{A}$. Two sections $v$ and $v'$ of $S|_A$ are \emph{equivalent} if $v = c v'$ for a continuous function $c: A \rightarrow \R^+$. We write $[v]$ for the equivalence class of $v$.  When a section $v$ is positive with respect to the lifting, we denote a lifting of $S$ over $A$ by $[v]$.

A {\it lifting of $J$ over $A$} is an orientation of the real line bundle
\[  J^{\widetilde\phi}\to A,\]
 satisfying the following property. If we take any vector $w\in J^{\widetilde\phi}_a$ at any $a\in A$ such that $w$ is positive with respect to the orientation, then there exists a vector $v\in S^{\widetilde\phi}_a$ with $\tau(v^{\otimes r})$ and $\langle w,v\rangle$ positive with respect to the natural orientation of $\omega_{C}^{\phi}|_{A}$, where $\langle-,-\rangle$ is as in \eqref{eq:natural_pairing}. We similarly define an equivalence class $[w]$ for sections of $J$ over $A$ as above.

 \item Suppose that $C$ has a contracted boundary node $q$. In this case a \emph{lifting of $S$} is a choice (if it exists) of an orientation for the real line $(S_q)^{\tilde{\phi}}$
 satisfying a different set of properties as follows. 
The fiber $\omega_{C,q}$ is canonically identified with $\C$ via the residue, and the involution $\phi$ is sent, under this identification, to the involution $z\to -\bar{z},$ whose fixed points are the purely imaginary numbers.\footnote{The residue of a conjugation invariant form $\zeta$ can be calculated as $\frac{1}{2\pi i} \oint_L\zeta $ where $L\subset\Sigma$ is a small loop surrounding $q$ whose orientation is such that $q$ is to the left of $L$. The behaviour under conjugation shows that the residue of an invariant section is purely imaginary.} 
The choice of orientation must satisfy the property that for any vector $v\in S_q^{\tilde\phi}$ positive with respect to the orientation, the image of $v^{\otimes r}$ under the map
\[
S^{\otimes r}_q \rightarrow \omega_{C,q}
\]
is \emph{positive imaginary}, meaning that it lies in $i\R_+$. We then write a lifting of $S$ at $q$ as an equivalence class $[v]$ of such positive elements $v$ under the equivalence relation of multiplication by a positive real number. In the contracted boundary case,
there always exists a $\widetilde{\phi}$-invariant $w \in J|_q$ such that $\langle w , v\rangle$ is positive imaginary, and we refer to such $w$ as a {\it grading at $q$}.
 \end{enumerate}

We say that a twisted $r$-spin structure $S$ on a marked genus 0 orbifold Riemann surface with boundary $C$ is \emph{compatible} if each connected component of $C$ has a lifting. A \emph{lifting on $C$} is a choice of lifting on each connected component.
\end{definition}

\begin{rmk}
Consider the case where $C$ has a contracted boundary node. Recall by Remark~\ref{rem:CB Ramond} that $q$ is Ramond and by Observation~\ref{obs:open_rank1_general} that the twist at $q$ on the partial normalization at $q$ will be $r-1$. Then the choice of a lifting $q$ will induce a grading (Definition~\ref{def:closed grading}) on the closed genus 0 $r$-spin surface given by the partial normalization at $q$, as $q$ will become the anchor (Definition~\ref{def:open normal}(3)).
\end{rmk} 

In the case where there are no contracted boundaries, observe that a twisted $r$-spin structure admits a lifting of $S$ over $A$ precisely if it admits a lifting of $J$ over $A$.  Moreover, there is a bijection between equivalence classes of liftings of $S$ and of $J$, in which $[v]$ corresponds to $[w]$ if $\langle w,v \rangle$ is everywhere positive for all representatives $v$ and $w$ of $[v]$ and $[w]$.

Now, a lifting of $J$ induces a lifting of $\widehat{J}:=\omega_{\widehat{C}}\otimes \widehat{S}^\vee$. Take $q$ to be a boundary marked point or boundary half-node of $\widehat{C}$. Then let $U_q$ be some neighborhood of $|\NNN^{-1}(q)|$ in $\partial \widehat{\Sigma}$, where $\widehat{\Sigma}$ is the normalisation of $\Sigma$. The lifting of $J$ induces an orientation for the real line bundle $|\widehat{J}|^{\widetilde{\phi}}|_{U_q \setminus|\nu^{-1}(q)|}$. Here, we use the notation $\widetilde\phi$ also for the involutions on $\widehat{J}$ and its coarsening $|\widehat{J}|$.

\begin{definition}[Alternating nodes or marked boundary points]
\label{def:alt}
Let $S$ be a compatible twisted $r$-spin structure on a connected marked genus $0$ orbifold Riemann surface with boundary. Suppose further that $C$ has no contracted boundary node. Take $q$ to be a boundary marked point or boundary half-node of $\widehat{C}$ and take $U_q$ to be some neighborhood of $|\NNN^{-1}(q)|$ as above. We say that the \emph{lifting alternates in $q$} and that $q$ is \emph{alternating} with respect to the lifting if the orientation \emph{cannot be extended} to an orientation of $|\widehat{J}|^{\widetilde\phi}\big|_{U_q}$. Otherwise the lifting \emph{does not alternate} or is \emph{non-alternating} and $q$ is \emph{non-alternating}.

We write $\alt(q)=1$ if the lifting alternates in $q$ and $\alt(q)=0$ if it does not alternate.
\end{definition}

\begin{rmk}
In \cite{BCT:I}, the terms \emph{legal} and \emph{illegal} are used
instead of alternating and non-alternating, respectively.
\end{rmk}

We summarize the properties of $r$-spin structures with a lifting in the following propositions. We start with Proposition 2.5 of \cite{BCT:I}:

\begin{prop}\label{prop:graded_r_spin_prop}
Suppose that $(C,S)$ is a smooth connected genus $0$ twisted $r$-spin Riemann surface with boundary.
\begin{enumerate}
\item\label{it:compatibility_odd} When $r$ is odd, any twisted $r$-spin structure is compatible, and there is a unique equivalence class of liftings.
\item\label{it:compatibility_even} When $r$ is even, the boundary twists $b_j$ in a compatible twisted $r$-spin structure must be even.  Whenever the boundary twists are even, either the $r$-spin structure is compatible or it becomes compatible after replacing $\widetilde\phi$ by $\xi \circ \widetilde\phi \circ \xi^{-1}$ for $\xi$ an $r^{th}$ root of $-1$, which yields an isomorphic $r$-spin structure.
\item\label{it:lifting and parity_odd} Suppose $r$ is odd and $v$ is a lifting. Then $x_j$ is alternating if and only if its twist is odd.
\item\label{it:lifting and parity_even}
Suppose $r$ is even.  If a lifting alternates precisely at a subset $D \subset \{x_j\}_{j \in B}$, then
\begin{equation}
\label{parity}
\frac{2\sum a_i + \sum b_j+2}{r} \equiv |D| \mod 2.
\end{equation} If~\eqref{parity} holds, then there exist exactly two liftings up to equivalence, one the negative of the other, which alternate precisely at $D \subset \{x_j\}_{j \in B}$.
\end{enumerate}
\end{prop}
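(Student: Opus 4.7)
The plan is to analyze the real line bundle $S^{\widetilde\phi}\to A$, where $A=\partial\Sigma\setminus\{x_j\}_{j\in B}$ is the complement of the boundary marked points in the real locus of $C$. Since each connected component of $A$ is an open arc, $S^{\widetilde\phi}$ is a trivial real line bundle there and orientations always exist locally; the question is whether one can choose orientations so that the map $v\mapsto\tau(v^{\otimes r})$ sends positive vectors into the positive direction of $(\omega_{C,\log})^\phi|_A$, the latter being canonically oriented by the boundary orientation of $\partial\Sigma$. Existence and uniqueness of liftings are then controlled by (a) the behaviour of the real power map $c\mapsto c^r$ fiberwise, and (b) local analysis at each boundary marked point together with a global compatibility condition along the circle $\partial\Sigma$.

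For odd $r$, the map $c\mapsto c^r$ is strictly increasing on $\R$, so on each arc of $A$ there is a unique orientation of $S^{\widetilde\phi}$ carrying positive vectors to positive directions of $(\omega_{C,\log})^\phi$; this yields both compatibility and uniqueness up to equivalence, proving~(1). For part~(3), I would work in orbifold coordinates near a boundary marking $x_j$ of twist $b_j$: the local stabilizer acts on $S$ by a character determined mod~$r$ by $b_j$, and a direct computation comparing the chosen orientation on $S^{\widetilde\phi}$ with the induced orientation on $J^{\widetilde\phi}$ via the natural pairing $\langle-,-\rangle$ shows that the orientation extends across $x_j$ precisely when $b_j$ is even, i.e.\ $\alt(x_j)\equiv b_j\pmod 2$.

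For even $r$, the map $c\mapsto c^r$ is non-negative on $\R$, so $\tau(v^{\otimes r})$ for $v\in S^{\widetilde\phi}_a\setminus\{0\}$ always lies in a fixed ray of $(\omega_{C,\log})^\phi_a$; compatibility holds iff this ray is the positive one. If not, one replaces $\widetilde\phi$ by $\xi\widetilde\phi\xi^{-1}$ with $\xi^r=-1$. This remains an anti-holomorphic involution (a direct check using anti-linearity gives $(\xi\widetilde\phi\xi^{-1})^2=\id$), its $r$-th tensor power still agrees with the induced involution on $\omega_{C,\log}$ since $(\xi\bar\xi^{-1})^r=1$, and $(\xi v)^{\otimes r}=-v^{\otimes r}$ flips the ray; hence compatibility is restored up to the isomorphism of $r$-spin structures induced by multiplication by $\xi$. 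The same local model near $x_j$ used in part~(3) shows that when $r$ is even the real structure $S^{\widetilde\phi}$ cannot survive across a point with odd $b_j$ in a conjugation-equivariant way, forcing even boundary twists and completing~(2). For~(4), once an alternation pattern $D$ is fixed, a lifting corresponds to a section of a $\Z/2$-torsor over the circle $\partial\Sigma$ whose obstruction class is the sum of local monodromy contributions at the $x_j$ together with a global degree term coming from $|S|$ on the closed double; computing this sum using the degree formula built into \eqref{eq:open_rank1_general} yields exactly the congruence~\eqref{parity}, and when the obstruction vanishes the torsor has precisely two sections, related by a global sign, giving the two equivalence classes of liftings. The main obstacle throughout will be the careful local computation at boundary marked points relating $b_j$ to the parity of alternation in the even-$r$ case, where the $\mu_d$-action on $S$ and the anti-holomorphic involution $\widetilde\phi$ must be disentangled, and propagating these local contributions into the global parity formula of~(4).
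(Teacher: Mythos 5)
The paper does not prove this proposition; it is imported verbatim as Proposition 2.5 of \cite{BCT:I}, and your outline follows essentially the same strategy as the proof given there: reduce everything to the real line bundle $S^{\widetilde\phi}$ over the boundary arcs, use the parity of $c\mapsto c^r$ to settle existence/uniqueness on each arc, and then account for what happens when crossing a boundary marked point. Parts (1) and (4) are structurally sound as written: for odd $r$ the fiberwise map is odd, so exactly one ray works and the lifting is canonical; for even $r$, fixing the alternation pattern $D$ rigidifies the relative orientations of consecutive arcs, the circle-holonomy obstruction is $w_1$ of $|J|^{\widetilde\phi}$, which equals $\deg|J|=-2+\tfrac{2\sum a_i+\sum b_j+2}{r}$ mod $2$ on the doubled curve, and vanishing of the obstruction leaves exactly the two global sign choices.

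The caveat is that the entire nontrivial content of the proposition lives in the local computations at the $x_j$ that you defer with ``a direct computation shows'': the claim $\alt(x_j)\equiv b_j\pmod 2$ for odd $r$ in (3), the claim that crossing an odd-twist point flips the sign of $\tau(v^{\otimes r})$ for even $r$ in (2), and the identification of the local monodromy contributions entering the parity formula in (4). These require writing down the explicit local model of $|S|$ and $|J|$ near an orbifold boundary point of twist $b_j$ (as is done in \cite{BCT:I}) and are not consequences of the soft topology alone. Relatedly, in (2) your phrase ``compatibility holds iff this ray is the positive one'' tacitly assumes the sign of $\tau(v^{\otimes r})$ is the \emph{same} on all boundary arcs, so that one global $\xi$ with $\xi^r=-1$ repairs every arc simultaneously; this global consistency is exactly what the even-twist local computation must deliver, and you should state it explicitly rather than leave it folded into the existence claim. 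With those computations supplied, the argument is complete and coincides with the cited proof.
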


In particular, in the nodal case the analogous claims hold for the connected components of $(\widehat{C},\widehat{S})$.

The following is a combination of \cite{BCT:I}, Propositions 2.11 and 2.15,
with a slight change to the number of automorphisms as indicated in
Remark \ref{rmk: automorphisms}.

\begin{prop}\label{prop:existence_stable}
Suppose $(C,\phi,\Sigma, \{z_1, \ldots, z_l\}, \{x_1, \ldots, x_k\})$ is a connected genus $0$ marked orbifold Riemann surface with boundary, and
\[
a_1,\ldots,a_l \in \{0, 1, \ldots, r-1\},\quad b_1,\ldots,b_k\in
\{0,1,\ldots,r-2\}
\]
are such that \eqref{eq:open_rank1_general} holds. If $r$ is even, suppose the $b_i$ are even for all $i$ and let $D\subseteq[k]$ be an arbitrary set for which \eqref{parity} holds. If $r$ is odd, let $D=\{i\in[k]\,|\,~2\nmid b_i\}$. Then, up to isomorphism,
there exists a unique twisted $r$-spin structure with a lifting on this disk with internal twists given by the integers $a_i,$ boundary twists by the integers $b_j$, and $D$ being the set of alternating boundary marked points. The automorphism group of the $r$-spin disk with lifting is of size $d^{N}$,
where $N$ is the number of internal nodes.
\end{prop}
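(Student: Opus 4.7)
The plan is to follow the strategy of \cite{BCT:I}, Propositions 2.11 and 2.15, with modifications to accommodate general $d$-stability rather than the special case $d=r$ considered there. I would proceed in three stages: construct the underlying twisted $r$-spin bundle $S$, equip it with an anti-holomorphic involution $\widetilde{\phi}$ lifting $\phi$, and finally fix a lifting with the prescribed alternation pattern $D$.

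For the existence of $S$: working first on the coarse curve $|C|$, the numerical condition \eqref{eq:open_rank1_general} is precisely the assertion that the candidate line bundle whose $r$-th power is $\omega_{|C|,\log}$ twisted by the prescribed divisor has degree divisible by $r$ on each irreducible component. Since $\mathrm{Pic}(\mathbb{P}^1)$ is torsion-free, the $r$-th root is unique up to isomorphism; pulling back to the orbifold curve and twisting by the appropriate fractional divisors at marked points realizes the required twisted $r$-spin structure. In the presence of internal nodes one performs this construction componentwise on the normalization and glues across half-nodes, with twists determined by the balancing condition \eqref{eq:twists_at_half_nodes}. For $\widetilde{\phi}$: since $\phi^*\overline{S}$ is another twisted $r$-spin structure with the same twists, it is isomorphic to $S$ by the above uniqueness, and any such isomorphism squares (via $\tau$) to the natural involution on $\omega_{C,\log}$ up to multiplication by an element of $\mu_r$. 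In the odd case this ambiguity is resolved by a unique choice of $r$-th root; in the even case \eqref{parity} is precisely the obstruction to selecting a consistent sign over the whole surface, and if it fails one recovers compatibility by replacing $\widetilde\phi$ by $\xi\circ\widetilde\phi\circ \xi^{-1}$ for $\xi$ an $r$-th root of $-1$, as in Proposition~\ref{prop:graded_r_spin_prop}(2).

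For the lifting and alternation: over each arc of the real locus, compatibility of an orientation of $S^{\widetilde{\phi}}$ with the natural orientation of $\omega_{C,\log}^\phi$ determines the orientation up to a global $\pm 1$ sign. Around a boundary marked point with twist $b_j$, the local monodromy of $|S|$ determines whether this orientation extends across $x_j$ (non-alternating) or flips (alternating); for odd $r$ this is forced by the parity of $b_j$, and for even $r$ one has freedom in choosing the alternation pattern subject to the global consistency condition \eqref{parity}. The two resulting liftings differ by a global sign and are identified up to isomorphism, giving uniqueness. The automorphism group then consists solely of ghost automorphisms: a $\mu_d$-rotation at each of the $N$ internal nodes, contributing the factor $d^N$, together with fiberwise scalings of $S$ by $\mu_r$ on each connected component, which on the unique open component are killed by the rigidification provided by the lifting (since a non-trivial scaling either reverses the orientation of $S^{\widetilde\phi}$ or fails to commute with $\widetilde\phi$).

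The main obstacle will be the parity/alternation analysis in the even-$r$ case: carefully tracking how an orientation of $S^{\widetilde{\phi}}$ propagates along the real locus across boundary marked points and across the half-nodes of internal nodes, and deducing the global parity relation \eqref{parity}. This is essentially the topological content of \cite{BCT:I}, Proposition 2.5, and must be adapted from the setting $d=r$ to general $d$-stability; the adaptation is mostly routine once one checks that the extra orbifold isotropy at special points contributes trivially to the real orientation calculation, since the local $\mu_d$-action on $S$ at each special point preserves the real subbundle setwise and the quotient of its real locus by $\mu_d$ recovers the analysis of \cite{BCT:I}.
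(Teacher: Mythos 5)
Your proposal is correct and follows essentially the same route as the paper, which does not write out a proof at all but simply cites \cite{BCT:I}, Propositions 2.11 and 2.15, together with the modified automorphism count of Remark~\ref{rmk: automorphisms}. Your three-stage reconstruction (root existence and uniqueness on the coarse genus-zero components, descent of the involution $\widetilde\phi$ with the $\mu_r$-ambiguity and parity obstruction \eqref{parity}, rigidification of the $\mu_r$-scalings by the lifting leaving only the $\mu_d$-rotations at the $N$ internal nodes) is exactly the content of those cited results adapted to general $d$-stability.
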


\begin{rmk}
Note that if $r$ is even, the two choices of liftings
in Proposition \ref{prop:graded_r_spin_prop} are related by an isomorphism
which is the identity on the underlying Riemann surface and rescales the
spin bundle by $-1$.
\end{rmk}

\begin{definition}\label{def:open_W_graded}
Consider a Fermat polynomial $W$ as in \eqref{eq:Fermat W}, with $d=\mathrm{lcm}(r_1,\ldots,r_a)$.
A \emph{$W$-spin Riemann surface with boundary} is a tuple
\[(C,S_1,\ldots, S_a;\tau_1,\ldots,\tau_a;\phi_1,\ldots,\phi_a)\]
where
\begin{enumerate}
\item $C$ is an orbifold marked Riemann surface with boundary such that:
\begin{enumerate}
\item
all of the special points of $C$ have isotropy group $\mu_d$;
\item all the closed connected components of $C$ are anchored;
\end{enumerate}
\item each $(S_i,\tau_i,\phi_i)$ are twisted $r_i$-spin structures.
\end{enumerate}
The $W$-spin structure on $C$ is \emph{compatible} if each $r_i$-spin structure is.

A \emph{lifting} for a $W$-spin Riemann surface with boundary is an $a$-tuple $([v_1],\ldots,[v_a])$ of liftings for each $J_i=\omega_C\otimes S_i^\vee$ (or equivalently liftings of $S_i$). For any boundary special marking or half-node $q$ we write $\alt(q)$ for the $a$-tuple whose $i^{th}$ component is $\alt_i(q),$ the alternation with respect to the $i^{th}$ lifting.
\end{definition}

We now consider restrictions on the behaviour of boundary marked points.
The kinds of open surfaces considered below will be the ones for which
we define open $W$-spin invariants, as Euler classes of certain bundles
on their moduli spaces:

\begin{definition}
\label{def:open_W_graded2}
\begin{enumerate}
\item
A \emph{connected pre-graded open $W$-spin surface} is a connected, compatible, $W$-spin Riemann surface with boundary, together with a lifting which satisfies
\begin{enumerate}
\item
For any boundary marking $p$ and any $i\in[a]$, either \[\tw_i(p)=r_i-2,\alt_i(p)=1~~\text{or }\tw_i(p)=\alt_i(p)=0.\]
\item
If $q$ is a boundary node and $i\in [a]$ where $q$ is Neveu-Schwarz
with respect to $S_i$, then the lifting alternates at precisely one of
the half-nodes of $q$.
\end{enumerate}
We call the lifting the \emph{grading}.
\item
A \emph{connected graded $W$-spin surface} is a connected pre-graded
$W$-spin surface such that in addition
there is no boundary marking $p$ with $\tw(p)=\alt(p)=\vec{0}$.
For $T\subseteq[a]$ denote by $k_T$ the number of boundary points $p$ with
\[(\tw_i(p),\alt_i(p))=(r_i-2,1)\Leftrightarrow i\in T.\] $k_{\{j\}}$ is also denoted by $k_j$.
\item A boundary marked point $p$ of a graded connected $W$-spin surface is \emph{singly twisted} if there exists an $i\in[a]$ such that
$\tw_j(p)=(r_j-2)\delta_{ij}$. In this case $p$ is alternating only for the $i^{th}$ bundle.
\item
A \emph{rooted connected $W$-spin surface} is a graded connected $W$-spin
surface such that all boundary marked points but one are singly twisted, and the
remaining point, called the \emph{root}, is \emph{fully twisted}, i.e., $\tw(p)=(r_1-2,\ldots, r_a-2)$.
\end{enumerate}
\end{definition}

\begin{rmk}\label{remark on twist restriction}
The imposition of the twists and alternation dictated in Definition~\ref{def:open_W_graded2} is required to define open $W$-spin invariants in the way we do below. 
The existence of a lifting was used in \cite{BCT:I} to define canonical relative orientation for the Witten bundle. It was used further in \cite{BCT:II} to define the notion of \emph{positivity} for boundary conditions of the Witten bundle, 
which is a crucial condition in their open $r$-spin construction (and our generalization), see \textsection\ref{subsec: positivity}. It is nontrivial to ensure that multisections which satisfy the positivity boundary conditions even exist. For this one needs in the $r$-spin case the boundary twists to be $r-2$, see \cite[Proposition 3.20]{BCT:II}. 

Moreover, by ensuring positivity, the graphs that will be important to define open $W$-spin intersection numbers will be precisely the pre-graded ones and their degenerations.  The boundary strata of the moduli space corresponding to \emph{relevant} graphs (see Definition~\ref{def:special kind of graded graphs}) will correspond to pre-graded open $W$-spin surfaces, after normalization. Here, we will be able to forget the (untwisted) boundary markings $p$ with $\tw(p)=\alt(p)=\vec{0}$, and build boundary conditions recursively from lower-dimensional graded $W$-spin surfaces, whose boundary twists are again as above. This is used to establish open topological recursion \cite[Theorem 4.1]{BCT:II}, which can be thought of as an open $r$-spin, descendent, analog of Solomon's Open WDVV \cite{Sol07}. 

Algebraically, the enumerative results from imposing the choice of boundary twist being $r-2$ and the modulo 2 condition fit naturally into other enumerative structures, including the nonzero coefficients of the Gelfand-Dickey wave function \cite{BCT:II} or of the perturbation of the mirror superpotential \cite{GKTdim1}.

Recent work \cite{TZ1,TZ2} constructs a new collection of open $r$-spin theories which still require a lifting, but allow more general boundary twists. It is not yet known how these theories are related to mirror symmetry.
\end{rmk}

\begin{rmk}
The grading at a contracted boundary, defined in Definition \ref{def:lifting_compatible}, is a limit case of the grading in Definition~\ref{def:open_W_graded2} that is obtained when there are no boundary markings or nodes and the boundary itself contracts.
\end{rmk}
\begin{rmk}
In order to obtain compact moduli spaces of connected
pre-graded open $W$-spin surfaces, we need to know that
given a degeneration of a smooth connected pre-graded
$W$-spin surface, condition (1)(b) in the above definition
holds for the degenerate surface.
When $r$ is odd, it follows immediately from \eqref{eq:twists_at_half_nodes}
and Proposition \ref{prop:graded_r_spin_prop}, (3) that
condition (1)(b) holds for any compatible
$W$-spin surface with boundary.
For $r$ even, a priori we have that $(2\sum_i a_i + \sum_j b_j + 2)/ r \equiv |D| \pmod 2$ as in Proposition \ref{prop:graded_r_spin_prop}, (4).
When we degenerate, we obtain twists $t_1$ and $t_2$ at a newly
acquired boundary node. If we normalize at the node, the corresponding
half-nodes lie in different
connected components of the normalization. Then the internal and boundary marked points are partitioned into the two different components. Thus we have
partitions $I = I_1 \sqcup I_2$ and $B = B_1 \sqcup B_2$. We then have
\begin{align*}
&\frac{2\sum_{i \in I_1} a_i + \sum_{j \in B_1} b_j + t_1 + 2 }{r} + \frac{2\sum_{i \in I_2} a_i + \sum_{j \in B_2} b_j + t_2 + 2 }{r}\\
= {} & \frac{2\sum_{i \in I} a_i + \sum_{j \in B} b_j + 2 + t_1 +t_2 + 2 }{r}\equiv |D| + 1 \pmod 2,
\end{align*}
as $t_1+t_2\equiv r-2 \pmod 2$.
Thus, if the two half-nodes are either both non-alternating or both alternating,
we will obtain a contradiction.
Hence one must be alternating and the other non-alternating.
\end{rmk}
In the disconnected case, we define:

\begin{definition}
A \emph{pre-graded  genus $0$ $W$-spin surface} is the disjoint union of several closed graded and open connected pre-graded $W$-spin surfaces. It is \emph{graded/open/smooth/rooted} if all components are.
\end{definition}

\begin{rmk}
In \cite{ST1} the case of graded $2$-spin surfaces is considered. There, another equivalent definition of open graded surfaces is given purely in terms of the half $\Sigma,$ considered as an \emph{orbifold surface with corners}. This definition is shown to be equivalent to the definition in terms of the doubled surface $C$. We shall not take this path, but we should note that the definition in the language of orbifold surfaces with corners can be generalized to the general $r$-spin, or even $W$-spin case.
\end{rmk}
By combining Observation~\ref{obs:open_rank1_general} and Proposition \ref{prop:graded_r_spin_prop}, we obtain the following.
\begin{obs}
\label{obs:open_rank_pre_graded_W}
Suppose given a smooth pre-graded $W$-spin genus $0$ marked orbifold Riemann
surface with boundary with:
\begin{itemize}
\item  internal marked points $z_1,\ldots,z_l$ with
twist
$\vec{a}_j=(a_{j,1},\ldots,a_{j,a}),~j\in[l]$;
\item the numbers $k_T,~T\subseteq[a]$ governing the twists of the
boundary points.
\end{itemize}
Then the following congruence conditions are satisfied:
\begin{equation}\label{eq:open_rank1_W}
e_i:=\frac{2\sum_j a_{j,i} + (-1+\sum_{T: i\in T}k_T )(r_i-2)}{r_i}\in \Z,~~i=1,\ldots,a,
\end{equation}
\begin{equation}\label{eq:open_rank2_W}
e_i\equiv -1+\sum_{T: i\in T}k_T~(\text{mod}~2).
\end{equation}
Here \eqref{eq:open_rank2_W} follows from \eqref{eq:open_rank1_W} if
$r_i$ is odd and from \eqref{parity} if $r_i$ is even.
\end{obs}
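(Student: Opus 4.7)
The plan is straightforward: for each $i \in [a]$, I apply the single-bundle rank-one constraints (Observation~\ref{obs:open_rank1_general} for \eqref{eq:open_rank1_W} and Proposition~\ref{prop:graded_r_spin_prop} for \eqref{eq:open_rank2_W}) to the twisted $r_i$-spin structure $(S_i,\tau_i,\phi_i)$ underlying the $W$-spin data. The only real work is translating the combinatorial parameters $\{k_T\}_{T\subseteq[a]}$, which record the joint twist/alternation pattern across all $a$ bundles, into the data seen by a single $S_i$.

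Set $K_i := \sum_{T \ni i} k_T$. By Definition~\ref{def:open_W_graded2}(1)(a), a boundary marked point $p$ of ``type'' $T$ (i.e., satisfying $(\tw_j(p),\alt_j(p)) = (r_j-2,1) \Leftrightarrow j \in T$) contributes boundary twist $r_i-2$ with respect to $S_i$ precisely when $i \in T$, and twist $0$ otherwise. Hence for $S_i$, the total sum of boundary twists is $K_i(r_i - 2)$ and the number of alternating boundary points is also $K_i$. Substituting $\sum b_j = K_i(r_i-2)$ into Observation~\ref{obs:open_rank1_general} applied to $S_i$ yields
\[
\frac{2\sum_j a_{j,i} + K_i(r_i-2) - (r_i - 2)}{r_i} = \frac{2\sum_j a_{j,i} + (K_i - 1)(r_i - 2)}{r_i} = e_i,
\]
and the integrality of the left-hand side gives \eqref{eq:open_rank1_W}.

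For \eqref{eq:open_rank2_W}, I split on the parity of $r_i$. When $r_i$ is odd, $r_i - 2$ is also odd, so reducing the identity $r_i e_i = 2\sum_j a_{j,i} + (K_i - 1)(r_i - 2)$ modulo $2$ gives $e_i \equiv K_i - 1 \pmod 2$ at once. When $r_i$ is even, I invoke \eqref{parity} in Proposition~\ref{prop:graded_r_spin_prop}(4), which in our notation reads
\[
\frac{2\sum_j a_{j,i} + K_i(r_i - 2) + 2}{r_i} \equiv K_i \pmod 2;
\]
a direct comparison (the numerator differs from $r_i e_i$ by exactly $r_i$) shows that this expression equals $e_i + 1$, so again $e_i \equiv K_i - 1 \pmod 2$.

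No step is substantively difficult: the whole argument is a bookkeeping reduction to the rank-one statements already established in the paper. The only potential pitfall is verifying that the alternation count for $S_i$ is exactly $K_i$, but this is automatic since Definition~\ref{def:open_W_graded2}(1)(a) rules out both non-alternating points of twist $r_i - 2$ and alternating points of twist $0$, making the correspondence between ``type $T$ with $i\in T$'' and ``alternating for $S_i$'' a tautology.
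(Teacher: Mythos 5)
Your argument is correct and follows the same route the paper intends: apply Observation~\ref{obs:open_rank1_general} bundle-by-bundle, using Definition~\ref{def:open_W_graded2}(1)(a) to see that the boundary twist sum for $S_i$ is $K_i(r_i-2)$, and then obtain the parity constraint from \eqref{eq:open_rank1_W} directly when $r_i$ is odd and from \eqref{parity} in Proposition~\ref{prop:graded_r_spin_prop}(4) when $r_i$ is even. The algebraic check that the numerator in \eqref{parity} equals $r_i(e_i+1)$ is the one nontrivial bookkeeping step, and you carried it out correctly.
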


We now define an equivalence relation on graded genus $0$ $W$-spin surfaces
which will simplify some definitions and computations in what follows.

\begin{definition}\label{def:SIMX}
Let $C$ be a graded genus $0$ $W$-spin surface.
Suppose $C_1,\ldots,C_m$ are a collection of irreducible components of $C$
such that each $C_i$ is a disk with three special points, namely a boundary
 half-node and two boundary marked points $x_i,x'_i$, with
\[
\tw_k(x_i)=(r_k-2)\mathbf{1}_{k\in T_i},~\alt_k(x_i)=\mathbf{1}_{k\in T_i}\quad \tw_k(x'_i)=(r_k-2)\mathbf{1}_{k\in T'_i},~\alt_k(x'_i)=\mathbf{1}_{k\in T'_i},
\]
for some arbitrary disjoint sets $T_i,T'_i\subseteq[a]$. We obtain a new
graded $W$-spin curve $C'$ by replacing
$C_i$ with the disk $C_i'$ which simply transposes $x_i$ and $x_i'$
in the cyclic order (see Figure~\ref{fig:exchange}). In this case, we write $C\SIMX C'$, and this
is an equivalence relation on graded genus $0$ $W$-spin disks.

Intuitively, equivalence classes of this relation can be thought of as graded
genus $0$ $W$-spin surfaces
whose representatives share pairs of boundary points which have collided.
Equivalently, $C'$ is being obtained from $C$ by replacing
each $C_i$ with its complex conjugate, thus reversing the boundary orientation.
We then use Proposition \ref{prop:graded_r_spin_prop} to possibly modify
the involution on the spin bundles to obtain a grading.
\end{definition}

\begin{figure}
\begin{subfigure}{.45\textwidth}
  \centering
  \begin{tikzpicture}[scale=0.6]

  \draw (0,0) circle (2cm);
  \draw (4,0) circle (2cm);

\node at (1.35, 0) {$n_h$};
\node at (2.9, 0) {$n_{\sigma_1 h}$};
\node at (2,0) {$\bullet$};
\node at (-1, 1.732) {$\bullet$};
\node at (-1, -1.732) {$\bullet$};
\node[above] at (-1, 1.732) {$p_1$};
\node[below] at (-1, -1.732) {$p_2$};\node at (4,2) {$\bullet$};
\node at (4,-2) {$\bullet$};
\node at (6,0) {$\bullet$};

\node at (0,0) {$C_1$};

  \end{tikzpicture}
  \caption{Fundamental domain of $C$ with $C_1$ \newline the irreducible component on the left.}
 \end{subfigure}
 \begin{subfigure}{.45\textwidth}
  \centering
  \begin{tikzpicture}[scale=0.6]

  \draw (0,0) circle (2cm);
  \draw (4,0) circle (2cm);

\node at (1.35, 0) {$n_h$};
\node at (2.9, 0) {$n_{\sigma_1 h}$};
\node at (2,0) {$\bullet$};
\node at (-1, 1.732) {$\bullet$};
\node at (-1, -1.732) {$\bullet$};
\node[above] at (-1, 1.732) {$p_2$};
\node[below] at (-1, -1.732) {$p_1$};
\node at (4,2) {$\bullet$};
\node at (4,-2) {$\bullet$};
\node at (6,0) {$\bullet$};

\node at (0,0) {$C_1'$};

  \end{tikzpicture}
  \caption{Fundamental domain of $C'$ with $C_1'$ \newline the irreducible component on the left.}
 \end{subfigure}

 \caption{Two graded genus 0 $W$-spin surfaces $C$ and $C'$ where $C \SIMX C'$. Only the fundamental domains $\Sigma$ and $\Sigma'$ are drawn.}
\label{fig:exchange}
\end{figure}
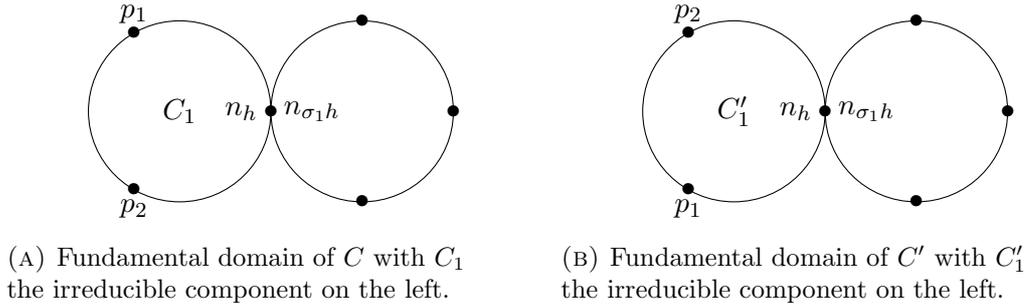

\subsection{Graded and rooted graphs}

Here we consider the combinatorial data which will be used to keep track of
boundary strata of moduli spaces of graded $W$-spin disks.
Of course, there is a standard dual graph construction associated
to a stable curve with marked points. In our case, we have
more structure which we encode
in decorated dual graphs. Our graphs generalize
the $r$-spin graphs of \cite{BCT:I}, with one difference: our graphs will also carry a collection of cyclic orders of boundary tails.

\begin{definition}\label{def:non_spin_graph}
A {\it genus zero, anchored, pre-stable dual graph} is a tuple
\[\Gamma = (V, H, \sigma_0, \sim, \hat\Pi, H^{CB}, T^{\mathrm{anch}},m),\]
in which
\begin{enumerate}[(i)]
\item $V$ is a finite set (the {\it vertices}) equipped with a decomposition $V = V^O \sqcup V^C$ into {\it open} and {\it closed vertices};
\item $H$ is a finite set (the {\it half-edges}) equipped with a decomposition $H = H^B \cup H^I$ into {\it boundary} and {\it internal half-edges};
\item $\sigma_0: H \rightarrow V$ is a function, viewed as associating to each half-edge the vertex from which it emanates;
\item $\sim$ is an equivalence relation on $H$, which decomposes as a pair of equivalence relations $\sim_B$ on $H^B$ and $\sim_I$ on $H^I$.  The equivalence classes are required to be of size $1$ or $2$, and those of size $1$ are referred to as {\it tails}.  We denote by $T^B \subseteq H^B$ and $T^I \subseteq H^I$ the sets of equivalence classes of size $1$ in $H^B$ and $H^I$, respectively;
\item $\hat\Pi=(\hat\Pi_v)_{v\in V^O},$  where $\hat\Pi_v$ is a collection of cyclic orders on $\sigma_0^{-1}(v)\cap H^B$.
\item $H^{CB}\subseteq T^{\mathrm{anch}}\subseteq T^I$. Here $H^{CB}$ is the set of {\it contracted boundary tails} and $T^{\mathrm{anch}}$ is the set of \emph{anchors};
\item $m$ is a function given by
\[m = m^B \sqcup m^I: T^B \sqcup (T^I \setminus H^{CB}) \rightarrow \Universe,\]
where $m^B$ and $m^I$ (the \emph{boundary} and \emph{internal markings}) satisfy the definition of a marking when restricted to any connected component of $\Gamma$. 
\end{enumerate}
Note that $(V,H, \sigma_0,\sim)$ defines a graph, and that we do not require this graph to be connected; denote its connected components by $\{\Gamma_i\}$. We require the above data to satisfy the following conditions:
\begin{enumerate}
\item For each boundary half-edge $h \in H^B$, we have $\sigma_0(h) \in V^O$;
\item For each $\Gamma_i$, the obvious topological realisation of the
graph $\Gamma_i$ has first Betti number $0$;
\item Each \emph{open component} $\Gamma_i$ (a component which has open vertices) has no half-edge in $T^{\mathrm{anch}}$. Any \emph{closed component} ($V^O(\Gamma_i)=\emptyset$) contains exactly one half-edge in $T^{\mathrm{anch}}$.
\item For each $\Gamma_i$, the sub-graph formed by its open vertices (if any exist) and their incident boundary edges is connected.
\end{enumerate}
\end{definition}

\begin{ex}
In Figure~\ref{fig:graded_disk_and_graph}, we provide a graded $W$-spin graph and its corresponding dual graph $\Gamma$. Here, $V^O = \{v_1, v_2\}$, $V^C = \{v_3\}$, $H^B$ contains all the $x_i$ and the two half-edges corresponding to the edge $q_2$, and $H^I$ contains all the half-edges $z_i$ and the two half-edges corresponding to the edge $q_1$.
\end{ex}

\begin{figure}
\begin{subfigure}{.4\textwidth}
  \centering

\begin{tikzpicture}[scale=0.6]
\vspace{0.15cm}

  \draw (0,4) circle (2cm);
   \draw (-2,4) arc (180:360:2 and 0.6);
  \draw[dashed] (2,4) arc (0:180:2 and 0.6);
  \draw (-2,0) arc (180:360:2 and 0.6);
  \draw[dashed] (2,0) arc (0:180:2 and 0.6);
  \draw (2,0) arc (0:180:2 and 2);
    \draw (2,0) arc (180:360:2 and 0.6);
  \draw[dashed] (6,0) arc (0:180:2 and 0.6);
  \draw (6,0) arc (0:180:2 and 2);

  \node (c) at (-1,.95) {$\bullet$};
\node[right] at (-1,.95) {$z_1$};

  \node (c) at (2,0) {$\bullet$};
\node[below] at (2,0) {$q_2$};

\node at (.25, -.6) {$\bullet$};
\node[above] at (.25,-.6) {$x_1$};

\node at (4.25, -.6) {$\bullet$};
\node[above] at (4.25,-.6) {$x_2$};

\node at (0,2) {$\bullet$};
\node[above] at (0,2) {$q_1$};
  \node (c) at (3,.95) {$\bullet$};
\node[right] at (3,.95) {$z_2$};

\node at (.25, 3.6) {$\bullet$};
\node[above] at (.25,3.6) {$z_4$};
  \node (c) at (-1,4.95) {$\bullet$};
\node[right] at (-1,4.95) {$z_3$};

\end{tikzpicture}
\vspace{0.15cm}

  \caption{Graded $W$-spin disk}
\end{subfigure}
\begin{subfigure}{.4\textwidth}
  \centering
\vspace{.24cm}
\begin{tikzpicture}[scale=0.4]

	\draw (0,0) circle (.2cm);
	\draw (0,4)[black, fill = black] circle (.2cm);
	\draw (4,0) circle (.2cm);
	
	\draw (0,0.2) -- (0,4);
	\draw[dashed] (0.2,0) -- (3.8,0);
	\node[below] at (2,0) {$e_{q_2}$};
	\node[right] at (0,2) {$e_{q_1}$};
	
	\draw (4.1414,.1414) -- (5.6414, 1.6414);
	\draw[dashed] (4.1414,-.1414) -- (5.6414, -1.6414);
	
	\draw (-.1414,.1414) -- (-1.6414, 1.6414);
	\draw[dashed] (-.1414,-.1414) -- (-1.6414, -1.6414);
	
	\node[above] at (5.6414, 1.6414) {$t_{z_2}$};
	\node[below] at (5.6414, -1.6414) {$t_{x_2}$};
	
	\node[above] at (-1.6414, 1.6414) {$t_{z_1}$};
	\node[below] at (-1.6414, -1.6414) {$t_{x_1}$};
	
	\draw (-.1414,4.1414) -- (-1.6414, 5.6414);
	\draw (.1414,4.1414) -- (1.6414, 5.6414);
	\node[above] at (-1.6414, 5.6414) {$t_{z_3}$};
	\node[above] at (1.6414, 5.6414) {$t_{z_4}$};
	
	\node[below] at (0,0) {$v_1$};
	\node[below] at (4,0) {$v_2$};
	\node[right] at (0,4) {$v_3$};
\end{tikzpicture}
\vspace{0.15cm}

  \caption{Dual graph $\Gamma$}
\end{subfigure}
\caption{A graded $W$-spin disk and its corresponding dual graph $\Gamma$}
\label{fig:graded_disk_and_graph}
\end{figure}
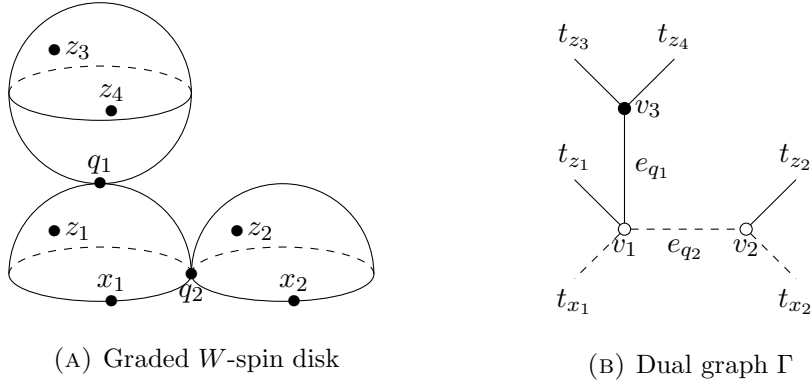

\begin{nn}
(1) We refer to elements of $T^B$ as {\it boundary tails} and elements of $T^I \setminus H^{CB}$ as {\it internal tails}, and we denote
\[T := T^I \sqcup T^B.\]
Note that $\sim$ induces a fixed-point-free involution on $H \setminus T$, which we denote by $\sigma_1$.  We denote
\[E^B := (H^B \setminus T^B)/\sim_B,~~E^I := (H^I \setminus T^I)/\sim_I\]
and refer to these as {\it boundary edges} and {\it internal edges}.
The set of {\it edges} is
\[E := E^B \sqcup E^I.\]
Denote by $\sigma_0^B$ the restriction of $\sigma_0$ to $H^B$, and similarly for $\sigma_0^I$.

For each vertex $v$, set
\[k(v) := |(\sigma_0^B)^{-1}(v)|,~~B(v)=m^B(T^B\cap (\sigma_0^B)^{-1}(v)).\]
Analogously, set
\[l(v):= |(\sigma_0^I)^{-1}(v)|,~~I(v)= m^I((T^I\setminus H^{CB})\cap (\sigma_0^I)^{-1}(v) ).\]
We say that an open vertex $v \in V^O$ is {\it stable} if $k(v) + 2l(v) > 2$, and we say that a closed vertex $v \in V^C$ is {\it stable} if $l(v) > 2$. An open vertex is \emph{partially stable} if $k(v)=0,l(v)=1$ or $k(v) = 2, l(v) = 0$,
i.e., $k(v)+2l(v)=2$.

\smallskip

(2) A genus zero pre-stable dual graph is said to be {\it connected} if it is connected as a graph, and we write $\Conn(\Gamma)$ for the set of connected components of $\Gamma$. The graph is said to be {\it stable} ({\it partially stable}) if all of its vertices are stable (stable or partially stable), and it is said to be {\it closed} if $V^O = \emptyset$. The graph is \emph{smooth} if there are no edges or contracted boundary tails. An \emph{isomorphism} of genus $0$ pre-stable dual graphs is a graph isomorphism which respects all additional structure. We denote by $\text{Aut}(\Gamma)$ the group of automorphisms of $\Gamma$.

\smallskip

(3) When each set $\hat\Pi_v$ is a singleton $\{\hat\pi_v\}$, we denote $\hat\Pi,\hat\Pi_v$ by $\hat\pi,\hat\pi_v,$ respectively. When $\hat\Pi$ is omitted from the notation we mean that for all $v\in V^O,~\hat\Pi_v$ consists of all possible cyclic orders of the boundary half edges of $v$.

Observe that $\hat\Pi$ induces a set of cyclic orders of the boundary tails of each connected component $\Lambda$ of $\Gamma$ obtained as follows.
For each vertex $v$ of $\Lambda$, choose $\hat\pi_v\in \hat\Pi_v$. We may then
follow the boundary tails of $\Lambda$ in the unique cyclic way compatible with $\hat\pi_v$. We denote the set of all cyclic orders of the
boundary tails of $\Gamma$ obtained in this way by $\hat\Pi_\Gamma,$ or,
in case of a singleton, $\hat\pi_\Gamma$. The data of $\hat\Pi_\Gamma$ and $\hat\Pi$ is equivalent and will be referred to as the cyclic orderings of $\Gamma$.
\end{nn}

While pre-stable dual graphs encode the discrete data of an anchored orbifold Riemann surface with boundary, the following decorated version encodes the additional data of a graded $W$-spin structure. We require many properties
of the data, but they all arise from corresponding properties of
surfaces. For each condition, we will indicate the corresponding property of
surfaces to motivate the definition.

\begin{definition}
\label{def:graph}
A {\it genus zero $W$-spin dual graph with a lifting} is a genus zero pre-stable anchored dual graph $\Gamma$ as above, together with two $a$-tuples of maps,
\[\tw = (\tw_i)_{i\in [a]},~~\tw_i: H \rightarrow \{-1, 0, 1, \ldots, r_i-1\}\]
(the {\it twist}) and
\[\alt = (\alt_i)_{i\in[a]},~~\alt_i: H^B \rightarrow \Z/2\Z,\]
satisfying the following conditions:
\begin{enumerate}[(i)]
\item
Any connected component of $\Gamma$ is either: (1) stable, i.e., all
vertices are stable, or (2) consists of a single partially stable vertex.

\item
All tails $t$ with $\tw_i(t) =-1$ for some $i$ belong to $T^{\mathrm{anch}}$:
this arises from \eqref{eq:I0}.
Recalling $H^{CB}\subseteq T^{\mathrm{anch}}$, any element of $T^{\mathrm{anch}}\setminus H^{CB}$ is marked $\emptyset$ and is the only tail marked $\emptyset$ in its connected component: this condition arises from the behaviour of labeling of anchors under
normalization, Definition \ref{def:open normal}.

\item
\label{c1}
For any vertex $v$, there exists at most one incident half-edge $h$ with
either $h \in T^{\mathrm{anch}}$ or $\tw_i(h) = -1$ for some $i$.  Note that the latter case only consists of half-edges that, after normalization, will become an anchor.

\item
For any contracted boundary tail $t \in H^{CB}$, we have $\tw(t) =(r_1-1,\ldots,r_a-1)$. This condition arises from Remark \ref{rem:CB Ramond}.

\item
 \label{item} For any open vertex $v \in V^O,~i\in[a]$,
\[2\sum_{h \in (\sigma_0^I)^{-1}(v)} \tw_i(h) + \sum_{h \in (\sigma_0^B)^{-1}(v)} \tw_i(h) \equiv r_i-2 \mod r_i\]
and
\[\frac{2\sum_{h \in (\sigma_0^I)^{-1}(v)}\ \tw_i(h) + \sum_{h \in (\sigma_0^B)^{-1}(v)} \tw_i(h) + 2}{r_i} \equiv \sum_{h \in (\sigma_0^B)^{-1}(v)} \alt_i(h) \mod 2.\]
These conditions arise from \eqref{eq:open_rank1_general} and
Proposition \ref{prop:graded_r_spin_prop}, (3), (4) respectively.

\item
For any closed vertex $v \in V^C,~i\in[a]$,
\[\sum_{h \in \sigma_0^{-1}(v)} \tw_i(h) \equiv r_i-2 \mod r_i.\]
This condition arises from \eqref{eq:close_rank1_general}.

\item
\label{c2}
\begin{enumerate}
\item
For any half-edge $h \in H \setminus T,~i\in[a]$, we have
\[\tw_i(h) + \tw_i(\sigma_1(h)) \equiv r_i -2 \mod r_i,\]
and
at most one of $\tw_i(h)$ and $\tw_i(\sigma_1(h))$ equals $-1$.
This condition arises from \eqref{eq:twists_at_half_nodes} and \eqref{eq:cR}.
\item No boundary half-edge $h$ satisfies $\tw_i(h)=-1$, as all anchors
are internal.
\item In case $h \in H^I \setminus T^I$ satisfies $\tw_i(h) \equiv -1\mod~r_i,$
then $\tw_i(h)=r_i-1$ precisely when the corresponding half-node does not become an anchor in the partial normalisation following the rules given in Definition~\ref{def:open normal}. More precisely, in terms of graphs, this can determined algorithmically as follows. 
Remove the edge of $\Gamma$ determined by $h$ and replace it
with two tails, $h$ and $\sigma(h)$, to obtain a graph
$\Gamma'$ with one more connected component than $\Gamma$.\footnote{This will be called the
detaching of $\Gamma$ at $e$: see
Definition \ref{def:detach}.}  Note that
one of these connected components can be viewed as containing
$h$. Then either (a) 
$h$ belongs to a connected component of $\Gamma'$ containing an anchor of $\Gamma$,
or (b)  $h$
belongs to a connected connected component of $\Gamma'$ containing
an open vertex. 
\end{enumerate}

\item
\label{it:-1}For any boundary half-edge $h \in H^B \setminus T^B$, if $\tw_i(h) \neq r_i-1 $ we have
\[\alt_i(h) + \alt_i(\sigma_1(h)) = 1\]
and if $\tw_i(h) =r_i -1$ we have
\[\alt_i(h) = \alt_i(\sigma_1(h)) = 0.\]
The first condition arises from Definition \ref{def:open_W_graded2}(1)(b), and the
second from Proposition~\ref{prop:graded_r_spin_prop}(2) and (3). Note that this proposition also implies that the second case only occurs for odd $r$.
\item
If $r_i$ is odd, then for any $h \in H^B$,
\[\alt_i(h) \equiv \tw_i(h) \mod 2,\]
and if $r_i$ is even, then for any $h \in H^B$,
\[2|\tw_i(h).\]
These conditions arise from Proposition \ref{prop:graded_r_spin_prop},
(3) and (2) respectively.
\end{enumerate}
\end{definition}
We define $\text{for}_{\text{spin}}(\Gamma)$ to be the dual graph obtained from $\Gamma$ by forgetting the additional data $\tw,\alt$. We similarly define $\text{for}_{\text{spin}\neq i}(\Gamma),$ to be the $W_i$-dual graph with a lifting obtained by forgetting all $\tw_j,\alt_j$ for $j\neq i,$ where $W_i=x_i^{r_i}$.

A genus $0$ $W$-spin surface with boundary and lifting $\Sigma$  induces
a genus $0$ $W$-spin dual graph with a lifting, denoted by $\Gamma(\Sigma)$,
in the standard way, see Figure \ref{fig:smooth_and_detach}.

In what follows we shall be interested only in pre-graded, graded and
rooted graphs
analogous to Definition \ref{def:open_W_graded2},
which we now define. More general graphs with a lifting will appear only
as boundary strata of graded graphs.

\begin{definition}\label{def: graded W spin graph}
\begin{enumerate}
\item
A \emph{pre-graded $W$-spin graph} is a genus zero $W$-spin dual graph with a lifting, with the additional requirement that
\begin{enumerate}
\item
For any boundary marking $p,$ and any $i\in[a]$, either \[\tw_i(p)=r_i-2,\alt_i(p)=1,~~\text{or }\tw_i(p)=\alt_i(p)=0.\]
\item
If $e$ is a boundary edge with half-edges $h, h'$  and $i\in [a]$ with
$\tw_i(h)\in \{0,\ldots,r_i-2\}$,
then the lifting alternates at precisely one of $h,h'$.
\end{enumerate}
\item
A \emph{graded $W$-spin graph} is a pre-graded $W$-spin graph,
such that in addition
there is no boundary tail $t$ with $\tw(t)=\alt(t)=\vec{0}$.
\item
A \emph{rooted $W$-spin graph} is a graded $W$-spin graph
such that for every
open connected component all boundary tails are singly twisted,
i.e., there exists an $i\in[a]$ such that $\tw_j(p)=(r_j-2)\delta_{ij}$,
except for one which has twist $(r_1-2,\ldots,r_a-2)$.
\end{enumerate}
\end{definition}

\begin{definition}
Let $\Gamma$ be a genus zero $W$-spin dual graph with a lifting.
\begin{enumerate}
\item Boundary half-edges $h$ with $\alt_i(h) = 1$ are called {\it alternating for the $i^{th}$ bundle}, and those with $\alt_i(h) = 0$ are called {\it non-alternating for the $i^{th}$ bundle}.
\item
Half-edges $h$ with $\tw_i(h) \in \{-1, r_i-1\}$ are called {\it Ramond for the $i^{th}$ bundle}, and those with $\tw_i(h) \in \{0, \ldots, r_i-2\}$ are called {\it Neveu--Schwarz for the $i^{th}$ bundle}. An edge is Ramond or
Neveu--Schwarz with respect to the $i^{th}$ bundle if its half-edges
are.\footnote{We note that if a half-edge is Ramond for some
$i\in [a]$, it is standard in the literature to refer to this as a
\emph{broad half-edge}, and otherwise it is called \emph{narrow}. The
same terminology holds for edges.}
\item A half-edge $h$ is \emph{singly twisted} if there exists an $i\in [a]$ such that
$\tw_j(h)=(r_j-2)\delta_{ij}$ for all $j\in [a]$.
\end{enumerate}
\end{definition}

\begin{definition}
We say that a genus zero pre-graded $W$-spin graph is {\it stable} if the underlying dual graph is stable, in the sense specified above.  An {\it isomorphism} between pre-graded $W$-spin dual graphs with liftings consists of an isomorphism as dual graphs that respects $\tw,~\alt$.

In the special case $W=x^r$ we call the (pre-)graded $W$-spin graph a \emph{(pre-)graded $r$-spin graph}.
\end{definition}

\begin{nn}
When $\Gamma=\Gamma(\Sigma),$ we write $n_h=n_h(\Sigma)$ for the half-node in the normalization of $\Sigma$ that corresponds to the half-edge $h\in (H\setminus T)\cup H^{CB}$. We write $n_e$ for the node in $\Sigma$ that corresponds to the edge $e$.  When $h$ is an internal half-edge or a contracted boundary tail, we sometimes write $z_h$ instead of $n_h$.  When $h$ is a boundary half-edge, we sometimes write $x_h$ instead of $n_h$.
\end{nn}
We end this subsection by providing notation for some special graphs that will appear throughout the remainder of the paper.

\begin{nn}\label{nn:repeating graphs}
We denote by $\Gamma_{0,B,I}$ the genus zero, anchored, pre-stable dual graph
$$
\Gamma_{0,B,I} = (V, H, \sigma_0, \sim, \hat\Pi, H^{CB},T^{\mathrm{anch}},m)
$$
where
\begin{itemize}
\item $V$ consists of a single open vertex $v$;
\item $H$ is a finite set of half-edges $H = B\sqcup I$, where $H^B = B$ are the boundary half-edges and $H^I = I$ are the interior half-edges;
\item $\sigma_0: H\to V$ is the constant map to $v$;
\item $\sim$ is the trivial equivalence relation;
\item $\hat\Pi = \hat\Pi_v$ is the collection of all possible cyclic orders of the set of boundary half-edges $B$;
\item $H^{CB} =T^{\mathrm{anch}} = \emptyset$;
\item the marking function $m=m^B \sqcup m^I: B \sqcup I \to \Omega$ is defined in such a way so that $m^B$ is the constant map mapping all elements to $\emptyset \in \Omega$ and $m^I$ is a bijection with $\{1, \ldots, |I|\} \subset \Omega$.
\end{itemize}
Note that all half-edges in $\Gamma_{0,B,I}$ are also tails. We write  $\Gamma_{0,B,I}^{\text{labeled}}$ for the same graph, except we replace $m^B$ with a marking function $m^{B, \text{labeled}}$ that is a bijection with $\{1,\ldots, |B|\}\subset\Omega$.
In the special case where $B=[k]$ we write $\Gamma_{0,k,I}$. If we further have that $I=[l]$, we write $\Gamma_{0,k,l}$. In the labeled version, we write $\Gamma_{0,k,I}^{\text{labeled}}$ and $\Gamma_{0,k,l}^{\text{labeled}}$ for the graphs with the same alteration of the marking function.\footnote{Frequently we want to choose
some master index set $I\subseteq\Omega$, typically $I=[l]$, and then
consider graphs whose internal tails are marked by subsets $J\subseteq I$.
The choice of index set is chosen depending on what marking is natural in the context.}

We also write
$\Gamma_{0,\{B_J\}_{J\subseteq{[a]}}, \{\vec{a}_j\}_{j\in [l]}}^{W}$ for the (smooth, connected) pre-graded $W$-spin graph given by taking $\Gamma = \Gamma_{0, B, I}$ where $B = \sqcup_{J\subseteq [a]} B_J$ and $I =[l]$ with the $a$-tuples of maps $\tw$ and $\alt$ defined as follows. For the internal tails, enumerated from $1$ to $l$, the $j$th internal tail point has twist $\vec{a}_j$. For the set of unlabeled boundary tails $t_j$ in the set $B_J$ for some $J \subseteq [a]$, we have that
\[(\tw_i(t_j),\alt_i(t_j))=\begin{cases} (r_i-2,1) & \text{ if $i\in J$}\\ (0,0) & \text{ if $i\notin J$.}\end{cases} \]

We write $\Gamma_{0,\{B_J\}_{J\subseteq{[a]}}, \{\vec{a}_j\}_{j\in [l]}}^{W,\text{labeled}}$ for the labeled analogue with the aforementioned change to the marking function. We write $\Gamma_{0,\{k_J\}_{J\subseteq{[a]}}, \{\vec{a}_j\}_{j\in [l]}}^{W}$ for the case $B_J=[k_J]$, noting that $B$ is the disjoint union of the sets $B_J$.

Note that when $B_\emptyset=\emptyset$, the graph is graded. In this case, we omit $B_\emptyset$ from the notations.
\end{nn}

\begin{ex}
The dual graph to the smooth connected marked genus 0 Riemann surface with $k$ boundary markings and $l$ internal markings is $\Gamma_{0,k,l}$.
\end{ex}

\begin{definition}\label{defn:subordinate}
Let $\Gamma= (V, H, \sigma_0, \sim, \hat\Pi, H^{CB}, T^{\mathrm{anch}},m)$ be a genus zero, anchored, pre-stable dual graph. Take another genus zero, anchored, pre-stable dual graph $\Lambda$. We say that $\Lambda\equiv \Gamma$ \emph{modulo cyclic orders} if $\Lambda$ can be written in the form
\[\Lambda = (V, H, \sigma_0, \sim, \hat\Pi_{\Lambda}, H^{CB}, T^{\mathrm{anch}},m)
\] for some new set of cyclic orders $\hat\Pi_{\Lambda}$. If, moreover, we have the inclusion $\hat\Pi_{\Lambda} \subset \hat\Pi$ then we say $\Lambda$ is a \emph{subordinate graph} of $\Gamma$ and denote this relation by $\Lambda \prec \Gamma$.
\end{definition}
\begin{rmk}
Geometrically, the relation $\Lambda \prec \Gamma$ will later mean that the moduli space of disks corresponding to $\Lambda$ consists of certain connected components of the moduli space of disks corresponding to $\Gamma$.
\end{rmk}

\subsection{Graphs associated to non-smooth graded disks}

Several graph operations play a role in what follows.

\begin{definition}\label{def:smoothingraph}
Let $\Gamma$ be a pre-graded $W$-spin graph.
\begin{enumerate}
\item
Suppose $\Gamma$ has an edge $e$ connecting vertices $v_1$ and $v_2$. The {\it smoothing} of $\Gamma$ along $e$ is the graph $\smooth_e\,\Gamma$
obtained by contracting $e$ and replacing the two vertices $v_1$ and $v_2$ with a single vertex $v_{12}$. This vertex is closed if and only if both $v_1$ and $v_2$ are closed. If $v_1,v_2$ are both open,
we obtain a collection of cyclic orderings for $v_{12}$ as follows.
Take any $\hat{\pi}_{v_1}\in \hat{\Pi}_{v_1}, \hat{\pi}_{v_2}\in \hat{\Pi}_{v_2}$. Let $e=\{h_1,h_2\}$ where $h_i$ is attached to $v_i$.  Construct the ordering induced by the cyclic ordering $\hat{\pi}_{v_2}\big|_{\sigma_0^{-1}(v_2)\cap H^B\setminus\{h_2\}}$, obtained by starting at the half-edge that directly follows $h_2$. When one reaches $h_2$ again, we now concatenate with the analogous ordering induced from the cyclic ordering $\hat{\pi}_{v_1}\big|_{\sigma_0^{-1}(v_1)\cap H^B\setminus\{h_1\}}$. The set $\hat{\Pi}_{v_{12}}$ is the
collection of cyclic orders that can be obtained this way.
\item
Suppose $\Gamma$ has a contracted boundary tail $h \in H^{CB}$.
The {\it smoothing} of $\Gamma$ along $h$
is the graph $\smooth_h\Gamma$ obtained by erasing $h$ and moving the vertex $v = \sigma_0^{-1}(h)$ from $V^C$ to $V^O$.
\item
For a set $F$ of edges and contracted boundary tails, one can perform a sequence of smoothings, and the graph obtained is independent of the order in which those smoothings are performed; denote the result by $\smooth_F\Gamma$. Write also $\smooth\,\Gamma$ for the smooth graph $\smooth_{E(\Gamma)\cup H^{CB}(\Gamma)}\Gamma$ which we will call the \emph{smoothing of $\Gamma$}.
\item
We say a pre-graded $W$-spin graph $\Lambda$ is a \emph{degeneration} of $\Gamma$ if both of the following hold:
\begin{enumerate}
\item $\smooth_{F}\Lambda \prec \Gamma$ for some set $F$ of edges and contracted boundary tails of $\Lambda$.
\item If $\Lambda \prec \Lambda'$ and $\smooth_{F}\Lambda' \prec \Gamma$, then $\Lambda = \Lambda'$, i.e., $\Lambda$ is maximal among orderings satisfying
(a).
\end{enumerate}
\end{enumerate}
\end{definition}

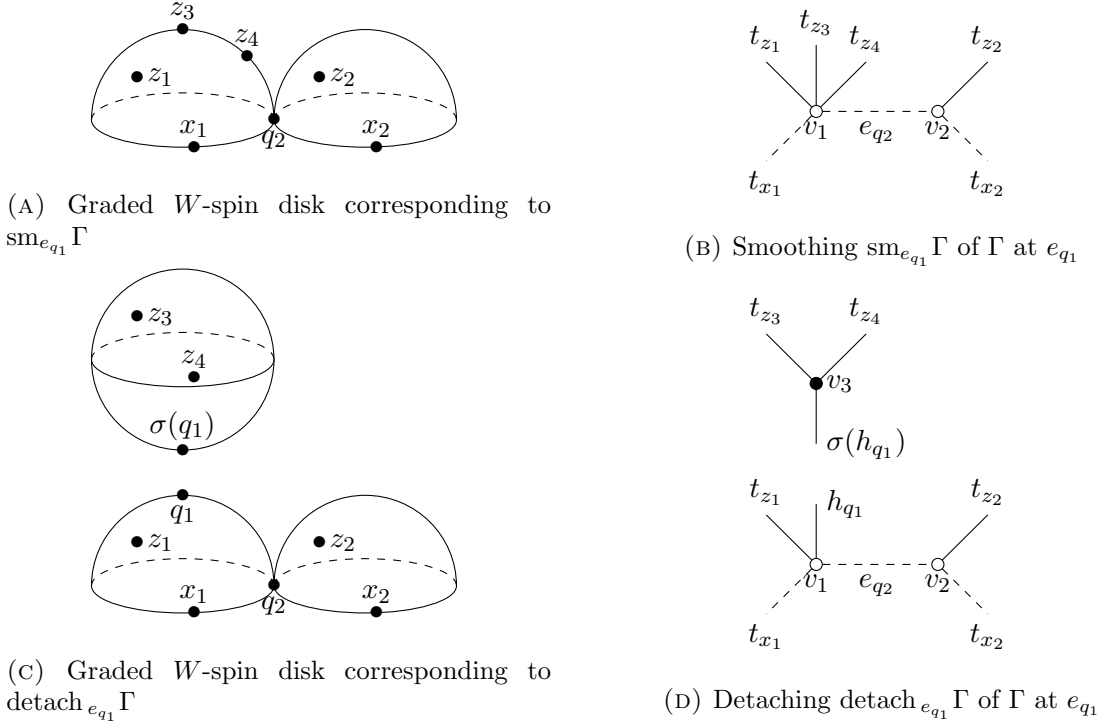
\begin{figure}
\begin{subfigure}{.45\textwidth}
  \centering

\begin{tikzpicture}[scale=0.6]
\vspace{0.15cm}

  \draw (-2,0) arc (180:360:2 and 0.6);
  \draw[dashed] (2,0) arc (0:180:2 and 0.6);
  \draw (2,0) arc (0:180:2 and 2);
    \draw (2,0) arc (180:360:2 and 0.6);
  \draw[dashed] (6,0) arc (0:180:2 and 0.6);
  \draw (6,0) arc (0:180:2 and 2);

  \node (c) at (-1,.95) {$\bullet$};
\node[right] at (-1,.95) {$z_1$};

  \node (c) at (2,0) {$\bullet$};
\node[below] at (2,0) {$q_2$};

\node at (.25, -.6) {$\bullet$};
\node[above] at (.25,-.6) {$x_1$};

\node at (4.25, -.6) {$\bullet$};
\node[above] at (4.25,-.6) {$x_2$};

  \node (c) at (3,.95) {$\bullet$};
\node[right] at (3,.95) {$z_2$};

\node at (1.414, 1.414) {$\bullet$};
\node[above] at (1.414,1.414) {$z_4$};
  \node (c) at (0,2) {$\bullet$};
\node[above] at (0,2) {$z_3$};
\end{tikzpicture}
\vspace{0.15cm}
  \caption{Graded $W$-spin disk corresponding to $\smooth_{e_{q_1}}\Gamma$}
\end{subfigure}\qquad
\begin{subfigure}{.45\textwidth}
  \centering
\vspace{.24cm}
\begin{tikzpicture}[scale=.4]

	\draw (0,0) circle (.2cm);
	\draw (4,0) circle (.2cm);
	
	\draw[dashed] (0.2,0) -- (3.8,0);
	\node[below] at (2,0) {$e_{q_2}$};
	
	\draw (4.1414,.1414) -- (5.6414, 1.6414);
	\draw[dashed] (4.1414,-.1414) -- (5.6414, -1.6414);
	
	\draw (-.1414,.1414) -- (-1.6414, 1.6414);
	\draw[dashed] (-.1414,-.1414) -- (-1.6414, -1.6414);
	
	\node[above] at (5.6414, 1.6414) {$t_{z_2}$};
	\node[below] at (5.6414, -1.6414) {$t_{x_2}$};
	
	\node[above] at (-1.6414, 1.6414) {$t_{z_1}$};
	\node[below] at (-1.6414, -1.6414) {$t_{x_1}$};
	
	\draw (0,.2) -- (0, 2.2);
	\draw (.1414,.1414) -- (1.6414, 1.6414);
	\node[above] at (0, 2.2) {$t_{z_3}$};
	\node[above] at (1.6414, 1.6414) {$t_{z_4}$};
	
	\node[below] at (0,0) {$v_1$};
	\node[below] at (4,0) {$v_2$};
\end{tikzpicture}
\vspace{0.15cm}

  \caption{Smoothing $\smooth_{e_{q_1}}\Gamma$ of $\Gamma$ at $e_{q_1}$}
\end{subfigure}

\begin{subfigure}{.45\textwidth}
  \centering

\begin{tikzpicture}[scale=0.6]
\vspace{0.15cm}

  \draw (0,5) circle (2cm);
   \draw (-2,5) arc (180:360:2 and 0.6);
  \draw[dashed] (2,5) arc (0:180:2 and 0.6);
  \draw (-2,0) arc (180:360:2 and 0.6);
  \draw[dashed] (2,0) arc (0:180:2 and 0.6);
  \draw (2,0) arc (0:180:2 and 2);
    \draw (2,0) arc (180:360:2 and 0.6);
  \draw[dashed] (6,0) arc (0:180:2 and 0.6);
  \draw (6,0) arc (0:180:2 and 2);

  \node (c) at (-1,.95) {$\bullet$};
\node[right] at (-1,.95) {$z_1$};

  \node (c) at (2,0) {$\bullet$};
\node[below] at (2,0) {$q_2$};

\node at (.25, -.6) {$\bullet$};
\node[above] at (.25,-.6) {$x_1$};

\node at (4.25, -.6) {$\bullet$};
\node[above] at (4.25,-.6) {$x_2$};

\node at (0,2) {$\bullet$};
\node[below] at (0,2) {$q_1$};

\node at (0,3) {$\bullet$};
\node[above] at (0,3) {$\sigma(q_1)$};
  \node (c) at (3,.95) {$\bullet$};
\node[right] at (3,.95) {$z_2$};

\node at (.25, 4.6) {$\bullet$};
\node[above] at (.25,4.6) {$z_4$};
  \node (c) at (-1,5.95) {$\bullet$};
\node[right] at (-1,5.95) {$z_3$};

\end{tikzpicture}
\vspace{0.15cm}

  \caption{Graded $W$-spin disk corresponding to $\detach_{e_{q_1}}\Gamma$}
\end{subfigure}\qquad
\begin{subfigure}{.45\textwidth}
  \centering
\vspace{.24cm}
\begin{tikzpicture}[scale=0.4]

	\draw (0,0) circle (.2cm);
	\draw (0,6)[black, fill = black] circle (.2cm);
	\draw (4,0) circle (.2cm);
	
	\draw (0,0.2) -- (0,2);
	\draw (0,4) -- (0,6);
	\draw[dashed] (0.2,0) -- (3.8,0);
	\node[below] at (2,0) {$e_{q_2}$};
	\node[right] at (0,2) {$h_{q_1}$};
	\node[right] at (0,4) {$\sigma(h_{q_1})$};

	\draw (4.1414,.1414) -- (5.6414, 1.6414);
	\draw[dashed] (4.1414,-.1414) -- (5.6414, -1.6414);
	
	\draw (-.1414,.1414) -- (-1.6414, 1.6414);
	\draw[dashed] (-.1414,-.1414) -- (-1.6414, -1.6414);
	
	\node[above] at (5.6414, 1.6414) {$t_{z_2}$};
	\node[below] at (5.6414, -1.6414) {$t_{x_2}$};
	
	\node[above] at (-1.6414, 1.6414) {$t_{z_1}$};
	\node[below] at (-1.6414, -1.6414) {$t_{x_1}$};
	
	\draw (-.1414,6.1414) -- (-1.6414, 7.6414);
	\draw (.1414,6.1414) -- (1.6414, 7.6414);
	\node[above] at (-1.6414, 7.6414) {$t_{z_3}$};
	\node[above] at (1.6414, 7.6414) {$t_{z_4}$};
	
	\node[below] at (0,0) {$v_1$};
	\node[below] at (4,0) {$v_2$};
	\node[right] at (0,6) {$v_3$};
\end{tikzpicture}
\vspace{0.15cm}

  \caption{Detaching $\detach_{e_{q_1}}\Gamma$ of $\Gamma$ at $e_{q_1}$}
\end{subfigure}

\caption{Smoothings and detachings of the dual graph $\Gamma$ from Figure~\ref{fig:graded_disk_and_graph}}
\label{fig:smooth_and_detach}
\end{figure}

\begin{ex}
In Figure~\ref{fig:smooth_and_detach}, we take the smoothing at one of the edges $e_{q_1}$ in the dual graph $\Gamma$ in Figure~\ref{fig:graded_disk_and_graph}. This graded $W$-spin disk corresponding to $\smooth_{e_{q_1}}\Gamma$ has two open irreducible components. We shall see in the next section that the moduli of graded $W$-spin disks with dual graph $\Gamma$ will be a boundary component for the moduli space of graded $W$-spin disks with dual graph $\smooth_{e_{q_1}}\Gamma$. For this reason, we call $\Gamma$ a degeneration of $\smooth_{e_{q_1}}\Gamma$.
\end{ex}

\begin{definition}
We set
\begin{align}
\label{eq:d!}\begin{split}
 \d^!\Gamma &= \{\Lambda \; | \; \Lambda \text{ is a degeneration of $\Gamma$} \},\\
\d \Gamma &= \d^!\Gamma \setminus \{\Gamma\}.
 \end{split}\end{align}
\end{definition}

Here $\d^!\Gamma$ is the set of all graphs that are degenerations of $\Gamma$ and $\d\Gamma$ is the set of all graphs that are non-trivial degenerations of $\Gamma$.

\begin{definition}
\label{def:detach}
Let $\Gamma$ be a genus zero pre-graded $W$-spin graph, and let $e$ be an edge of $\Gamma$ or a contracted boundary tail.  Then the {\it detaching} of $\Gamma$ at $e$ is the graph $\detach_e(\Gamma)=\Gamma(\Sigma_n)$ where
$\Sigma$ is any pre-graded $W$-spin disk with $\Gamma=\Gamma(\Sigma)$
and $\Sigma_n$ is the partial normalization of $\Sigma$ at the node $n$
corresponding to $e$.\footnote{The detaching operation can also be stated purely in terms of graphs, in the expected way. For example, when $e$ is an edge, the graph $\detach_e(\Gamma)$ is obtained from $\Gamma$ by cutting $e$ into the two half edges which form it, and then updating both $\sim$ and the marking data. Note that the half-edges that were originally in the edge $e$ are now tails.}

There is a canonical identification of $E(\Gamma) \setminus \{e\}$ with the edges of $\detach_e(\Gamma)$ when $e$ is an edge of $\Gamma$.
Similarly there is a canonical identification of $E(\Gamma)$ with the edges of $\detach_t(\Gamma)$ when $t$ is a contracted boundary tail. As the set of boundary half-edges incident to any vertex $v$ remains unchanged, the detaching does not change the set $\hat\Pi$ of cyclic orderings. One can also iterate this detaching process.  For any subset $N \subset E(\Gamma) \cup H^{CB}(\Gamma)$, we denote by $\detach_{N}(\Gamma)$ the graph obtained by performing $\detach_f$ for each element $f \in N$; the result is independent of the order in which the detachings are performed. When we write $\detach(\Gamma)$ without any subscript, we mean $\detach_{E(\Gamma)}(\Gamma)$.
\end{definition}

\begin{definition}\label{def: positive with respect to grading}
Let $\Gamma$ be a genus zero pre-graded $W$-spin graph with a lifting.
A boundary half-edge $h$ with $\alt_i(h)=0,\tw_i(h)>0$ for $i\in [a]$ will be called \emph{positive for the $i^{th}$ grading}. A boundary half-edge which is positive for one of the gradings is called \emph{positive}. Write $\Pos_i(\Gamma)$ for the set of half-edges $h$ of the graph $\Gamma$ such that either $h$ or $\sigma_1(h)$ is positive for the $i^{th}$ grading. We set
\[
\Pos(\Gamma)=\bigcup_{i\in [a]}\Pos_i(\Gamma).
\]
A half-node in a stable pre-graded $W$-spin surface with a lifting which corresponds to a positive half-edge will be called positive as well. 
\end{definition}

\begin{definition}\label{def:graphBestiary}
Let $\Gamma$ be a pre-graded $W$-spin graph. Let $\partial^{+}\Gamma$ be the subset of $\partial^{!}\Gamma$ with at least one positive half-edge.
Write $\partial^{\xch}\Gamma$ for the subset of $\partial\Gamma\setminus\partial^+\Gamma$ made of graphs $\Lambda$ such that $\Lambda$ has at least one boundary half-edge $h$ with the following property. Let $\Xi$ be the subgraph of $\detach_{\mathrm{edge}(h)}\Lambda$
which contains $h$. Then $\Xi$ is the graph with precisely one vertex and three boundary tails $t$, $t'$, and $h$, such that
\[
\tw_k(t)=(r_k-2)\mathbf{1}_{k\in I},\quad \tw_k(t')=(r_k-2)\mathbf{1}_{k\in J},~~\tw_k(h)=(r_k-2)\mathbf{1}_{k\notin I\cup J},
\]
for some arbitrary disjoint sets $I,J\subseteq[a]$. We call such graphs \emph{exchangeable}, and if $v\in V^O$ is the vertex which after detaching forms the graph $\Xi,$ then $v$ is called an \emph{exchangeable vertex}.

Define the equivalence relation $\simx$ on $\partial\Gamma\setminus\partial^+\Gamma$ by saying that $\Lambda\simx\Lambda'$ if $V^O(\Lambda)$ contains exchangeable vertices $v_1,\ldots,v_m$ whose sets of cyclic orders are singletons, $\hat\pi_{v_1},\ldots,\hat{\pi}_{v_m}$, such that the graph obtained by reversing those cyclic orders is isomorphic to $\Lambda'$. If  $\Gamma$ is an exchangeable graph and $v$ an exchangeable vertex, we denote by $\xch_v(\Gamma)$ the graph obtained from $\Gamma$ by replacing the cyclic orders in $\hat\Pi_v$ with their opposite cyclic orders. If $\Gamma$ has a single exchangeable vertex $v$ we write $\xch(\Gamma)$ for $\xch_v(\Gamma)$. This arises from Definition~\ref{def:SIMX}.
\end{definition}

\begin{rmk}Note that in rank $a=2$, a graph in $\partial\Gamma\setminus\partial^+\Gamma$ will be exchangeable if there exist $h,\Xi,t,t'$ as in the previous definition such that either (i) one of $t,t'$ has $(\tw,\alt)=(\vec{0},\vec{0})$. or (ii) both are singly twisted and one of them has $(\tw_1,\alt_1)=(0,0)$ and the other has $(\tw_2,\alt_2)=(0,0)$. Note that if the graph is graded,
case (i) does not occur.
\end{rmk}

\begin{definition}\label{partial 0 graph bestiary}
Let $\partial^0\Gamma $
be the subset of all graphs $\Lambda \in \partial\Gamma\setminus\left(\partial^+\Gamma\cup\partial^\xch\Gamma\right)$ satisfying one of the following:
\begin{enumerate}
\item $\Lambda$ contains a contracted boundary half-edge.
\item $\Lambda$ contains a boundary edge.
\end{enumerate}
\end{definition}

The motivation for these definitions and the upcoming definition
of irrelevant graphs is as follows.
The reader may skip this upon first reading if they do not enjoy spoilers.
In Section~\ref{sec: moduli}, we will define a moduli space $\oCM^W_{\Gamma}$
compactifying the moduli space $\CM^W_{\Gamma}$ parameterizing all open $W$-spin curves with dual graph $\Gamma$ (Definition~\ref{def:moduliGraph}).
Graphs $\Lambda \in \partial \Gamma$ parameterize strata of $\oCM^W_{\Gamma}$.
At the same time, $\oCM^W_{\Gamma}$ carries the \emph{Witten bundle} $\cW$
arising from the spin structure, also defined in Section~\ref{sec: moduli}.

The open FJRW invariants we define here will
be counts of zeroes of sections of $\cW$ satisfying certain boundary
conditions. How these conditions are treated depends on the type
of boundary stratum. Strata corresponding to $\Lambda\in\partial^+\Gamma$
are simply removed from $\oCM^W_{\Gamma},$ giving a partial
compactification $\oPM_\Gamma\subset \oCM^W_{\Gamma}$.

On the other hand, the graphs of $\partial^0\Gamma\cup\partial^\xch\Gamma$
precisely correspond to strata of $\partial \oPM_{\Gamma}$, see
the discussion following Definition \ref{def:positive_edge,node,PM}. Among these graphs, boundary components labeled by graphs that are $\simx$-equivalent but distinct will have similar
boundary counditions; however, an analysis of orientations will
show the contribution of such boundary strata to the zero count
cancels. 
One can think of this as if we were gluing exchangeable boundaries of $\oPM_\Gamma$, and counting the number of zeroes in the glued space. This is indeed possible, and equivalent to the approach we take.

The \emph{irrelevant} graphs $\Lambda$ defined below
are called irrelevant as they are irrelevant to the wall-crossing and enumerative invariants in the following sense. We will
require (Definition~\ref{def: strongly positive}) that all sections of the
Witten bundle we consider satisfy a notion of positivity either on
or near the strata corresponding to irrelevant graphs.
Hence as we vary sections, zeroes
cannot escape off of $\oCM^W_{\Gamma}$ through these boundary strata.
The sections of $\cW$ we consider must satisfy an inductive boundary
condition over the remaining strata indexed by elements of $\partial^0\Gamma$ which
are relevant. This is encoded in the notion of a
family of canonical multisections, see Definition \ref{def:family of canonical}.

\begin{definition}\label{def:special kind of graded graphs}
Let $\Gamma$ be a pre-graded $W$-spin graph. It is \emph{irrelevant}
if one of the following holds:
\begin{enumerate}
\item
\label{irrelevant:item 1}
$H^{CB}(\Gamma)\neq \emptyset$;
\item
\label{irrelevant:item 2}
$\Gamma$ has a positive boundary half-edge;
\item
\label{irrelevant:item 3}
$\Gamma$ has a vertex $v$ such that there exists an $i\in[a]$ such that, for any boundary half-edge $h$ attached to the vertex $v$, one has that \[\tw_i(h)=\alt_i(h)=0.\]
\end{enumerate}
Otherwise the graph is \emph{relevant}.
\end{definition}

\begin{figure}

\begin{subfigure}{.45\textwidth}
  \centering
\vspace{.24cm}
\begin{tikzpicture}[scale=.4]

	\draw (0,0) circle (.2cm);
	
	\draw[dashed] (0,-0.2) -- (0,-2.2);
    \node[below] at (0,-2.2) {$t_{x_2}$};
	
	\draw (-.1414,.1414) -- (-1.6414, 1.6414);
	\draw[dashed] (-.1414,-.1414) -- (-1.6414, -1.6414);
    \draw[dashed] (.1414,-.1414) -- (1.6414, -1.6414);
	
	\node[below] at (1.6414, -1.6414) {$t_{x_1}$};
	
	\node[above] at (-1.6414, 1.6414) {$t_{z_1}$};
	\node[below] at (-1.6414, -1.6414) {$t_{x_3}$};
	
	\draw (0,.2) -- (0, 2.2);
	\draw (.1414,.1414) -- (1.6414, 1.6414);
	\node[above] at (0, 2.2) {$t_{z_2}$};
	\node[above] at (1.6414, 1.6414) {$t_{z_3}$};
	
	\node[left] at (0,0) {$v$};
\end{tikzpicture}
\vspace{0.15cm}

  \caption{The $W$-spin graph  $\Gamma$}
\end{subfigure}
\qquad
\begin{subfigure}{.45\textwidth}
  \centering
\vspace{.24cm}
\begin{tikzpicture}[scale=.4]

	\draw (0,0) circle (.2cm);
	\draw (4,0) circle (.2cm);
	
	\draw[dashed] (0.2,0) -- (3.8,0);
	\node[below] at (2,0) {$e_{q}$};
	
	\draw[dashed] (4,-.2) -- (4, -2.2);
	\draw[dashed] (4.1414,-.1414) -- (5.6414, -1.6414);
	
	\draw (-.1414,.1414) -- (-1.6414, 1.6414);
	\draw[dashed] (-.1414,-.1414) -- (-1.6414, -1.6414);
	
	\node[below] at (4, -2.2) {$t_{x_2}$};
	\node[below] at (5.6414, -1.6414) {$t_{x_1}$};
	
	\node[above] at (-1.6414, 1.6414) {$t_{z_1}$};
	\node[below] at (-1.6414, -1.6414) {$t_{x_3}$};
	
	\draw (0,.2) -- (0, 2.2);
	\draw (.1414,.1414) -- (1.6414, 1.6414);
	\node[above] at (0, 2.2) {$t_{z_2}$};
	\node[above] at (1.6414, 1.6414) {$t_{z_3}$};
	
	\node[left] at (0,0) {$v_1$};
	\node[right] at (4,0) {$v_2$};
\end{tikzpicture}
\vspace{0.15cm}

  \caption{Exchangeable graph $\Lambda_1 \in \partial^{\xch}\Gamma$}
\end{subfigure}
\qquad
\begin{subfigure}{.45\textwidth}
  \centering
\vspace{.24cm}
\begin{tikzpicture}[scale=0.4]

	\draw (0,0) circle (.2cm);

	\draw (4,0) circle (.2cm);
	
	\draw (4,.2) -- (4,2);

	\draw[dashed] (0.2,0) -- (3.8,0);
	\node[below] at (2,0) {$e_{q}$};
	\node[above] at (4,2) {$t_{z_2}$};

	\draw[dashed] (.1414,-.1414) -- (1.6414, -1.6414);
    \node[below] at (1.6414, -1.6414) {$t_{x_3}$};
	
	\draw (.1414,.1414) -- (1.6414, 1.6414);
	\draw[dashed] (4.1414,-.1414) -- (5.6414, -1.6414);
	
	\draw (-.1414,.1414) -- (-1.6414, 1.6414);
	\draw[dashed] (-.1414,-.1414) -- (-1.6414, -1.6414);
	
	\node[above] at (1.6414, 1.6414) {$t_{z_3}$};
	\node[below] at (5.6414, -1.6414) {$t_{x_2}$};
	
	\node[above] at (-1.6414, 1.6414) {$t_{z_1}$};
	\node[below] at (-1.6414, -1.6414) {$t_{x_1}$};
	
	
	\node[below] at (0,0) {$v_1$};
	\node[below] at (4,0) {$v_2$};
\end{tikzpicture}
\vspace{0.15cm}

  \caption{Positive graph $\Lambda_2\in \partial^+\Gamma$}
\end{subfigure}
\qquad
\begin{subfigure}{.45\textwidth}
  \centering
\vspace{.24cm}
\begin{tikzpicture}[scale=0.4]

	\draw (0,0) circle (.2cm);

	\draw (4,0) circle (.2cm);
	
	\draw (4,.2) -- (4,2);

	\draw[dashed] (0.2,0) -- (3.8,0);
	\node[below] at (2,0) {$e_{q}$};
	\node[above] at (4,2) {$t_{z_2}$};

	\draw[dashed] (.1414,-.1414) -- (1.6414, -1.6414);
    \node[below] at (1.6414, -1.6414) {$t_{x_3}$};
	
	\draw (4.1414,.1414) -- (5.6414, 1.6414);
	\draw[dashed] (4.1414,-.1414) -- (5.6414, -1.6414);
	
	\draw (-.1414,.1414) -- (-1.6414, 1.6414);
	\draw[dashed] (-.1414,-.1414) -- (-1.6414, -1.6414);
	
	\node[above] at (5.6414, 1.6414) {$t_{z_3}$};
	\node[below] at (5.6414, -1.6414) {$t_{x_2}$};
	
	\node[above] at (-1.6414, 1.6414) {$t_{z_1}$};
	\node[below] at (-1.6414, -1.6414) {$t_{x_1}$};
	
	
	\node[below] at (0,0) {$v_1$};
	\node[below] at (4,0) {$v_2$};
\end{tikzpicture}
\vspace{0.15cm}

  \caption{Relevant graph $\Lambda_3\in \partial^0\Gamma$}
\end{subfigure}

\caption{Graphs from Example~\ref{exmple: exchangeable and positive}}
\label{fig:exchange_and positive}
\end{figure}

\begin{ex}\label{exmple: exchangeable and positive}
We give two examples to help the reader understand positive graphs and exchangeable graphs. Consider the LG model $(x_1^4 + x_2^5, \mu_4 \times \mu_5)$. We consider the smooth connected $W$-spin graph $\Gamma$ depicted in Figure~\ref{fig:exchange_and positive}(A) with three internal tails $t_{z_1}, t_{z_2}, t_{z_3}$ and three boundary tails $t_{x_1}, t_{x_2}, t_{x_3}$ adjacent to an open vertex $v$. We will assume that $t_{x_1}$ is singly twisted with respect to $1$, $t_{x_2}$ is singly twisted with respect to $2$, and $t_{x_3}$ is a root (i.e., fully twisted). We suppose that the twist vectors for $t_{z_1}, t_{z_2},$ and $t_{z_3}$ are $(1,1)$, $(2,2)$, and $(2,3)$, respectively. One can check that $\Gamma$ is a graded $W$-spin graph as in Definition~\ref{def: graded W spin graph}(2).

One can then see in Figure~\ref{fig:exchange_and positive}(B) an example of an exchangeable graph $\Lambda_1 \in \partial^{\xch}\Gamma$. Lastly, one can check in Figure~\ref{fig:exchange_and positive}(C) that $\Lambda_2 \in \partial^+\Gamma$ by computing $(\tw(q_i), \alt(q_i))$ at the two half-nodes, as required in Definition~\ref{def:graph} to ensure that $\Lambda_2$ is a genus zero $W$-spin dual graph with a lifting. Indeed, the half-node $q_1$ adjacent to $v_1$ has $\tw(q_1) = (0,2)$ and $\alt(q_1) = (1,0)$ and $q_2$ adjacent to $v_2$ has $\tw(q_2) = (2,1)$ and $\alt(q_2) = (0,1)$. Thus, from Definition~\ref{def: positive with respect to grading}, we see that $q_1$ is positive for the $2^{nd}$ grading and $q_2$ is positive for the $1^{st}$ grading.

Lastly, we have in Figure~\ref{fig:exchange_and positive}(D) an example of a relevant graph $\Lambda_3 \in \partial^0\Gamma$. One can check, in similar notation above that the half-node $q_1$ adjacent to $v_1$ has $\tw(q_1) = (0,3)$ and $\alt(q_1) = (0,1)$ and $q_2$ adjacent to $v_2$ has $\tw(q_2) = (2,0)$ and $\alt(q_2) = (1,0)$.
\end{ex}

\section{Moduli, bundles and orientation}\label{sec: moduli}

\subsection{The moduli}
This subsection follows \S3.2 in \cite{BCT:I}.

Fan, Jarvis, and Ruan constructed a moduli space $\M_{g,n}^{\text{FJR}, W}$ consisting of compact stable $W$-spin orbicurves, for which they then provide an enumerative
theory. This moduli space does not precisely coincide with our moduli space $\M_{g,n}^W$ of closed $W$-spin curves. This is due to the treatment of anchors and gradings
which rigidify the curves, as well as choice of stabilizer group at orbifold points (see below). The following theorem is essentially carrying out the proof of Theorem 2.2.6 of \cite{FJR} in our context of
$W=\sum_{i=1}^n x_i^{r_i}$ with the maximal symmetry group, with some
slight differences which arise, for the most part, because of different
automorphism groups.

\begin{thm}
$\M_{g,n}^{W}$ is a smooth Deligne-Mumford stack with projective coarse moduli. Moreover, if we let $\operatorname{st}: \M_{g,n}^{W}\rightarrow \M_{g,n}$ be the morphism given by forgetting the spin bundles and orbifold structure, then $\operatorname{st}$ is a flat, proper, and quasi-finite (but not representable) morphism.
\end{thm}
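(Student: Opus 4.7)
The plan is to adapt the proof of Theorem~2.2.6 of \cite{FJR}, constructing $\M_{g,n}^W$ as a fiber product of moduli of twisted $r_i$-spin structures over the stack $\M_{g,n}^d$ of $d$-stable marked orbicurves, and then rigidifying by the anchor and grading data of Definition~\ref{def:closed grading}. The map $\M_{g,n}^d\to\M_{g,n}$ is smooth, flat, proper, quasi-finite and non-representable by the theory of twisted curves of Abramovich--Vistoli and Olsson, since requiring isotropy $\mu_d$ at every special point contributes a gerby $\mu_d$-factor at each node and each marking. For each $i$, the forgetful map from the stack of pairs (orbicurve, twisted $r_i$-spin bundle) down to $\M_{g,n}^d$ is smooth, proper and quasi-finite by Jarvis's construction of $r$-spin moduli, adapted to the twisted setting used in FJRW theory.

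Fiber products preserve these properties over the smooth base $\M_{g,n}^d$, so the resulting stack is smooth, proper and quasi-finite over $\M_{g,n}$. Designating an anchor is a finite combinatorial refinement of the marking data, and the grading at a twist $r_i-1$ anchor rigidifies the scalar ghost automorphism on the corresponding component by singling out one of $r_i$ positive rays in the real fiber; this is a finite \'etale operation which preserves smoothness, properness and quasi-finiteness, while reproducing the automorphism count of Observation~\ref{obs:automorphism}.

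For $\operatorname{st}\colon\M_{g,n}^W\to\M_{g,n}$, properness follows from the valuative criterion: a twisted $W$-spin structure on the generic fiber of a family of stable curves over a DVR extends uniquely to the central fiber once the isotropy at nodes is fixed, with the balancing condition \eqref{eq:balanced action} determining the gluing data. Quasi-finiteness follows because, on a fixed curve $C$, the set of twisted $r_i$-spin structures with prescribed twists is a torsor over the finite $r_i$-torsion subgroup of $\operatorname{Pic}^0(C)$. Flatness then follows by miracle flatness, as $\operatorname{st}$ is an equidimensional morphism between smooth Deligne--Mumford stacks. Non-representability is immediate from Observation~\ref{obs:automorphism}: the $\mu_d$ branch-rotation at every node, together with the surviving scalar automorphisms on non-anchored components, give a nontrivial automorphism group at every point. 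Projectivity of the coarse moduli space follows from properness of $\operatorname{st}$ combined with the projectivity of the coarse moduli space of $\M_{g,n}$ and the finiteness of the induced map on coarse spaces.

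The hard part, as in \cite{FJR}, will be the stacky bookkeeping: ensuring that the $\mu_d$ isotropy is consistently imposed at every special point despite the a priori smaller groups $\mu_{r_i}$ seen by each individual spin bundle; that the grading rigidification remains compatible with specialization at boundary strata where a twist $r_i-1$ anchor degenerates; and that the various finite \'etale steps reproduce the automorphism count of Observation~\ref{obs:automorphism} precisely, so that later intersection-theoretic pushforwards along $\operatorname{st}$ carry the expected rational weights.
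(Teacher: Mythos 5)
Your proposal is correct and follows essentially the same route as the paper, which itself defers to the proof of Theorem~2.2.6 of \cite{FJR} and only records the modifications needed here: the uniform $\mu_d$ isotropy at special points (making the moduli a root stack over that of \cite{FJR}), the twisting of the universal spin bundles, and the partial rigidification coming from the grading at anchors of twist $r_i-1$. Your additional details (fiber product over the stack of $d$-stable curves, the valuative criterion, the $\operatorname{Pic}^0$-torsor argument for quasi-finiteness, and miracle flatness) are exactly the standard content of the cited argument, so nothing is genuinely new or missing.
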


 We point out the differences from our moduli space and the one outlined in \cite{FJR}. First, with our definitions, stabilizer groups of special points are always $\mu_d$, while Fan, Jarvis and Ruan choose a stabilizer group $\mu_e$ at a special point $x$
so that the representation of $\mu_e$ on the direct sum of the fibres of the spin bundles at $x$ is faithful. This changes the universal curve over $\M_{g,n}^W$. In addition, it changes automorphism groups of nodal curves which lie at the boundary of the moduli space (see Observation~\ref{obs:automorphism}), so that our moduli space is a root stack over the moduli space of \cite{FJR}.

However, there is a further distinction arising from twisted spin structures. Our universal twisted spin bundles are twists of the universal spin bundles of \cite{FJR}. Further, there may be an additional change in automorphism group arising due to the notion of grading (Definition \ref{def:closed grading}) at anchors with twist $r-1$, as the data of a grading partially rigidifies the curve. In this case, our moduli space is an unramified $r$-fold cover in the orbifold sense (but does not change the coarse moduli spaces) of the moduli space of \cite{FJR}, at least away from the boundary.

By stratifying the moduli space $\M_{g,n}^W$ by twists in the spin structure, it decomposes into open and closed substacks,
\begin{equation}
\label{eq:closedmoduli}
\M_{g,n}^W = \bigsqcup_{( (b_{11},\ldots,b_{1a}),\ldots,(b_{n1},\ldots,b_{na}))}
\M_{g, ((b_{11},\ldots,b_{1a}) \ldots, (b_{n1},\ldots,b_{na}))}^W,
\end{equation}
where $b_{ij}\in \{-1,0,\ldots, r_j-1\}$ for each $i,j$ and $\M_{g, ((b_{11},\ldots,b_{1a}) \ldots, (b_{n1},\ldots,b_{na}))}^W$ denotes the substack of graded closed
genus $g$ $W$-spin curves with $W$-spin structures having twist $(b_{i1}, \ldots,b_{ia})$ at the $i$th marked point for all $i$.  For any choice of
$\{(b_{i1},\ldots,b_{ia})\}_{i\in[n]}$ such that Equation~\eqref{eq:close_rank1_general} holds for all $j\in [a]$, we know that  the moduli space of compact  stable genus 0 $W$-spin orbicurves
$\M_{0, ((b_{11},\ldots,b_{1a}) \ldots, (b_{n1},\ldots,b_{na}))}^W$ with the corresponding prescribed twists is non-empty, by the results mentioned in Proposition~\ref{prop:existence_stable}. Moreover, in this case, the moduli space has coarse moduli isomorphic to $\M_{0,n}$ and generic additional isotropy $\prod_{i\in [a]} \Z/r_i\Z$. The isomorphism of coarse moduli is given by the smooth map $\text{For}_{\text{\text{spin}}}$ that forgets the spin structures.

To generalize the construction of the moduli space to the open setting, we introduce some new notation.
\begin{definition}\label{ModSpaceOfMarkedDiscs}
For a genus zero, anchored, pre-stable, smooth dual graph $\Gamma$, we denote by $\oCM_\Gamma$ the set of isomorphism classes of smooth marked disks whose dual graph $\Gamma'$ after smoothing gives $\Gamma$. We denote by $\oCM_\Gamma^W$ the set of all isomorphism classes of pre-graded $W$-spin disks whose corresponding dual graph $\Gamma'$ satisfies  $\smooth(\text{for}_{\text{spin}}(\Gamma'))=\Gamma$.  For a smooth pre-graded $W$-spin dual graph $\Gamma$ we denote by $\oCM_\Gamma^W$ the set of all isomorphism classes of graded $W$-spin disks whose corresponding graph, after smoothing, is $\Gamma$.

For some common cases, we introduce special notation. Recall Notation \ref{nn:repeating graphs}.
We write
\[\oCM_{0,B,I}=
\oCM_{\Gamma_{0,B,I}},\quad\oCM^W_{0,B,I}=
\oCM^W_{\Gamma_{0,B,I}},\quad\oCM_{0,B,I}^{\text{labeled}}=\oCM_{\Gamma_{0,B,I}^{\text{labeled}}},\quad \oCM_{0,B,I}^{W,\text{labeled}}=\oCM^W_{\Gamma_{0,B,I}^{\text{labeled}}}.\]
As before, we add the superscript `labeled' since in most of the article we shall assume that the boundary points are marked by $\emptyset$, but it is sometimes more convenient to assume that they are marked bijectively by $[k]\subset\Omega$.
As in Notation \ref{nn:repeating graphs}, when $B=[k],I=[l]$ we similarly define $\oCM_{0,k,l}$, $\oCM_{0,k,l}^{\text{labeled}}$,
$\oCM_{0,k,l}^W$, and $\oCM_{0,k,l}^{W,\text{labeled}}$.
We also write
$\M_{0,\{B_J\}_{J\subseteq{[a]}}, \{\vec{a}_j\}_{j\in [l]}}^{W},~\M_{0,\{k_J\}_{J\subseteq{[a]}}, \{\vec{a}_j\}_{j\in [l]}}^{W}$ for $\oCM_{\Gamma_{0,\{B_J\}_{J\subseteq{[a]}}, \{\vec{a}_j\}_{j\in [l]}}^W}^W,~\oCM_{\Gamma_{0,\{k_J\}_{J\subseteq{[a]}}, \{\vec{a}_j\}_{j\in [l]}}^W}^W,$ respectively, and $\M_{0,\{B_J\}_{J\subseteq{[a]}}, \{\vec{a}_j\}_{j\in [l]}}^{W,\text{labeled}},~\M_{0,\{k_J\}_{J\subseteq{[a]}}, \{\vec{a}_j\}_{j\in [l]}}^{W,\text{labeled}}$ for their labeled versions. When $W=x^r$ we write the superscript `$1/r$' instead of `$W$.' When $B_\emptyset=\emptyset$ or $k_\emptyset=0$, we omit it from the notation.
\end{definition}

\begin{rmk} The labeled versions of the moduli spaces will only be used in this section, mainly in Proposition \ref{prop:orbi_w_corners} and in
\S\ref{subsec:or} to create a consistent family of orientations on our moduli space. \end{rmk}

The moduli space $\M_{0,k,l}^{\textup{labeled}}$ of stable marked disks is considered in \cite{PST14}.  It is a smooth orientable manifold with corners in the sense of \cite{Joyce}, and its dimension is
\begin{equation}\label{real dim of open Moduli of discs}
\dim_{\R}(\M_{0,k,l}^{\textup{labeled}}) = \dim_{\R}(\M_{0,k,l})=k+2l-3.
\end{equation}
There is a set-theoretic decomposition analogous to \eqref{eq:closedmoduli},
\begin{equation}\label{DecompositionOfModuliOfDiscs}
\M_{0,k,l}^{W,\textup{labeled}} = \bigsqcup_{\substack{\{\vec{a}_j\}_{j\in [l]}, \ \{k_J\}_{J\subseteq {[a]}},\\\sum_{J\subseteq{[a]}}k_J=k}}\M_{0,\{k_J\}_{J\subseteq{[a]}}, \{\vec{a}_j\}_{j\in [l]}}^{W,\textup{labeled}}.
\end{equation}

If we set $k=\sum k_T$, then there is a forgetful map
\[\text{For}_{\text{\text{spin}}}: \M_{0,\{k_J\}_{J\subseteq{[a]}}, (\vec{a}_j)_{j\in [l]}}^{W,\textup{labeled}} \rightarrow \M_{0,k,l}^{\textup{labeled}},\]
or more generally
\[\text{For}_{\text{\text{spin}}}: \M_\Gamma^W \rightarrow \M_{\text{for}_{\text{spin}}(\Gamma)},\]
which forgets the spin structures and removes the orbifold structure at
the special points. By
Proposition~\ref{prop:existence_stable},
if the domain is non-empty, this map is a bijection at the level of
sets. We  use these bijections to give
the coarse moduli space of $\M_{0,k,l}^{W,\textup{labeled}}$ the structure of a manifold with corners.

This describes the underlying coarse moduli space of 
$\M_\Gamma^W$.  We now explain a procedure that defines an orbifold with corners structure on $\M_\Gamma^{W}$, in the sense of \cite[Section 3]{Zernik}.

\begin{prop}\label{prop:orbi_w_corners}
Let $\Gamma$ be a smooth pre-graded $W$-spin graph.
There exists a compact moduli space $\M_\Gamma^{W}$ of stable pre-graded $W$-spin disks whose dual graph smooths to $\Gamma$.
It is a smooth orbifold with corners of real dimension $|T^B(\Gamma)|+2|T^I(\Gamma)|-3|\Conn(\Gamma)|$. Its universal bundle admits a universal $r_i$-spin grading for each $i\in[a]$.

\end{prop}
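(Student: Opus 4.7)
The plan is to transport the orbifold-with-corners structure across the forgetful map $\text{For}_{\text{spin}}$ and then add the universal spin and grading data on top. First, I would work one connected component at a time: since $\Gamma$ is smooth, its connected components are either closed (an anchored sphere) or open (a disk with boundary and internal tails), and a pre-graded $W$-spin disk over $\Gamma$ is just the product of such components over $\Conn(\Gamma)$. So it suffices to construct $\M^{W}_{\Gamma_i}$ for each connected component $\Gamma_i$ of $\Gamma$ and take the product; this immediately gives the dimension count $\sum_i (|T^B(\Gamma_i)|+2|T^I(\Gamma_i)|-3)=|T^B(\Gamma)|+2|T^I(\Gamma)|-3|\Conn(\Gamma)|$, since closed components contribute $2|T^I|-3$ and open ones contribute $|T^B|+2|T^I|-3$, matching the classical dimensions of $\M_{0,n}$ and $\M_{0,k,l}$.

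Next, for a connected open component, I would use the forgetful map $\text{For}_{\text{spin}}: \M^{W,\text{labeled}}_{\Gamma} \to \M^{\text{labeled}}_{\text{for}_{\text{spin}}(\Gamma)}$. The target is a smooth compact orientable manifold with corners by \cite{PST14}, and by Proposition~\ref{prop:existence_stable} (applied componentwise after normalization, combined with the twist/alternation constraints of Definition~\ref{def:graph}) this map is a set-theoretic bijection on the relevant stratum. This lets me pull back the manifold-with-corners atlas; the closed case is handled similarly by working with the smooth Deligne-Mumford stack $\M^{W}_{g,n}$ already discussed in the excerpt. To promote this to a smooth orbifold with corners, I then record the automorphism groups: by Observation~\ref{obs:automorphism} and Proposition~\ref{prop:existence_stable}, the automorphism group at a smooth $W$-spin disk (or closed curve) is finite and discrete, built out of ghost automorphisms rescaling spin bundles fiberwise by $r_i$-th roots of unity (rigidified at anchors of twist $r_i-1$ by the grading) and $\mu_d$-rotations at each internal node. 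I would construct local charts by taking standard versal deformation charts of the underlying marked disk and enriching them by choices of $r_i$-th roots, which is an \'etale procedure and hence preserves smoothness and the corner structure. Compactness follows because $\text{For}_{\text{spin}}$ is a finite-to-one, proper map onto a compact space.

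For the universal bundles, on the locus of smooth disks I would take the universal family $\pi:\mathcal{C}\to\M^{W}_{\Gamma}$ and define each $\mathcal{S}_i$ as the relative twisted $r_i$-spin bundle; its existence on stable $W$-spin curves is standard (cf.\ \cite{FJR}), and I would extend it across boundary strata by using the local model \eqref{eq:balanced}, \eqref{eq:balanced action} for nodal orbicurves together with the twisting prescription \eqref{eq: normalised twisted spin structure}. The lifting $\tilde\phi_i$ of the anti-holomorphic involution $\phi$ is already part of the data of a $W$-spin disk (Definition~\ref{def:open_W_graded}), and the consistency condition $\tilde\phi_i^{\otimes r_i}=\phi^*$ holds fiberwise, so it globalizes to an involution of each universal $\mathcal{S}_i$ covering the universal anti-holomorphic involution of $\mathcal{C}$. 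The universal grading is then constructed by globalizing Definition~\ref{def:lifting_compatible}: away from contracted boundary nodes one takes the positive component of the real line bundle $\mathcal{S}_i^{\tilde\phi_i}$ over the universal fixed locus, and this orientation extends across corners because the notion of alternation (Definition~\ref{def:alt}) and the compatibility at boundary edges built into Definition~\ref{def:graph}\eqref{it:-1} force the two branches of a node to glue with matching orientation; near contracted boundary strata one uses the residue prescription in Definition~\ref{def:lifting_compatible}(2), which is a continuous open condition on the fiber of $\mathcal{S}_i$ at the node.

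The step I expect to be the main obstacle is the corner analysis: verifying that the \'etale charts glue to an honest smooth orbifold-with-corners structure at strata where several boundary and contracted-boundary nodes degenerate simultaneously, and that the universal grading extends continuously across these corners. This boils down to a careful local analysis of the versal deformation of a nodal $W$-spin disk, combining the standard gluing parameter $t$ at a node with the root-of-unity choices forced by $d$-stability, together with checking that Proposition~\ref{prop:graded_r_spin_prop} provides the correct parity for alternation of the lifting on both sides of each degenerating boundary node so that the positive direction of $\mathcal{S}_i^{\tilde\phi_i}$ matches up. Once this local picture is in place, the global assembly is formal.
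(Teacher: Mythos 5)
Your plan is a genuinely different route from the paper's, and it stalls exactly at the step you flag as "the main obstacle." The paper does not build charts by hand or pull back an atlas along $\text{For}_{\text{spin}}$: it only uses the set-theoretic bijection of Proposition~\ref{prop:existence_stable} to put a manifold-with-corners structure on the \emph{coarse} space, and then obtains the orbifold-with-corners structure by a completely different mechanism — the chain of maps (A)--(F). Concretely, one starts from the complex smooth Deligne--Mumford stack $\overline{\mathcal{M}}_{0,k+2l}^{'W,\textup{labeled}}$ of $W$-spin spheres, takes the fixed locus of an anti-holomorphic involution to get a real orbifold $\overline{\mathcal{M}}_{0,k+2l}^{W,\Z_2,\textup{labeled}}$, and then performs the real hyperplane blowup of \cite{Zernik} along the real normal crossing divisor of curves with a boundary node. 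It is this blowup (``cutting'') that produces the corners and the smooth structure at strata where several boundary nodes degenerate simultaneously; your proposal has no substitute for it, and "a careful local analysis of the versal deformation" is precisely the content that is being outsourced to Zernik's construction rather than something that follows formally. A second, related gap: the orbifold structure is \emph{not} simply the coarse structure enriched by the finite automorphism groups of Observation~\ref{obs:automorphism}, because isotropy is lost when passing to the real locus (as noted in Step (A): one can only rescale the spin bundles by $\pm1$ compatibly with the involution when $r$ is even), so "pulling back the atlas and recording the automorphism groups" does not produce the right stack.

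Two smaller points. First, the choice of preferred half $\Sigma$ (Step (C)) and the choice of lifting (Step (E)) are realized in the paper as covering spaces of moduli ($2$-to-$1$ and $2^e$-to-$1$ respectively), and the nontrivial assertion is that the distinguished-disk choice extends \emph{continuously and uniquely} over nodal curves (Zernik \S2.6) rather than being chosen independently on each boundary component; treating the lifting as "already part of the data" fiberwise hides this global statement. Second, your dimension bookkeeping for closed components is off: $\dim_\R \M_{0,n}=2n-6$, not $2|T^I|-3$; the formula in the proposition is the one for open components, and the paper's Step (F) reduces the general case to products of spaces $\oCM_{0,k,l}^{W,\text{labeled}}$ quotiented by $\Aut(\Gamma')$, which your proposal also omits (the unlabeled moduli with unordered boundary points is a quotient, not a product, of the labeled ones).
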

\begin{proof}
We first establish the result in the case of $\M_{0,k,l}^{W,\text{labeled}}$.
What we do is completely analogous to \cite{BCT:I}, \S3.2 (which, in turn, follows the procedure performed in \cite[Section 2]{Zernik}, and sketched also in Section 1 of \cite{Zernik}). We have the following sequence of maps, with arrows
labeled by the step in which they are defined below:
\begin{equation}
\label{eq:OWCsequence}
\begin{tikzcd}[row sep=scriptsize, column sep = small]
\M_{0,k,l}^{W,\textup{labeled}} \arrow{r}{(E)}\arrow{d}{(F)} &	\widehat{\mathcal{M}}_{0,k,l}^{W,\textup{labeled}}\arrow{r}{(D)}  & 	\widetilde{\mathcal{M}}_{0,k,l}^{W,\textup{labeled}} 	\arrow{r}{(C)}  & \widetilde{\mathcal{M}}_{0,k,l}^{W,\Z_2,\textup{labeled}}\arrow{r}{(B)}  &
\overline{\mathcal{M}}_{0,k+2l}^{W,\Z_2,\textup{labeled}}\arrow{r}{(A)} &\overline{\mathcal{M}}_{0,k+2l}^{'W,\textup{labeled}}  \\
\M_{0,k,l}^{W} & &&&&&
\end{tikzcd}
\end{equation}
  Let us now define the moduli spaces appearing in \eqref{eq:OWCsequence}. We will go through them in reverse order.

{\bf Step (A):} The morphism $\overline{\mathcal{M}}_{0,k+2l}^{W,\Z_2,\textup{labeled}}\rightarrow\overline{\mathcal{M}}_{0,k+2l}^{'W,\textup{labeled}}$. The space $\overline{\mathcal{M}}_{0,k+2l}^{'W,\textup{labeled}}$ is the sub-orbifold of $\M_{0,k+2l}^{W}$ given by the conditions
\begin{enumerate}
\item The first $k$ markings, $w_1,\ldots,w_k$, have twists which satisfy
\[\forall j\in[k],i\in[a],~\tw_i(w_j)\in\{0,r_i-2\}.\]
\item For every $i\in[a]$
\[
\frac{\sum_{j\in[k+2l]}\tw_i(w_j)-(r_i-2)}{r_i}\equiv-1+\sum_{T:i\in T} k_T\mod 2.\]
\end{enumerate}
The latter condition comes from \eqref{eq:open_rank2_W}.
Inside this space, $\M_{0,k+2l}^{W, \Z_2,\textup{labeled}}$ is the fixed locus of the involution defined by
\[(C;w_1, \ldots, w_{k+2l}, \{S_i\}) \mapsto (\overline{C}; w_1, \ldots, w_k, w_{k+l+1},\ldots, w_{k+2l},w_{k+1},\ldots,w_{k+l}, \{\overline{S}_i\}),\]
where $\overline{C}$, $\overline{S}_i,~i\in[a]$ are the same as $C$, $S_i,~i\in[a]$, respectively, but with the conjugate complex structure.
As the moduli space $\M_{0,k+2l}^{W, \Z_2,\textup{labeled}}$ is
the fixed locus of the above anti-holomorphic involution, it
 has the structure of a real orbifold. A point in the fixed locus comes equipped with an involution $\phi: C \rightarrow C$ given by conjugation which is covered by involutions $\tilde \phi_i: S_i \rightarrow S_i$ for all $i$.
This moduli space parameterizes isomorphism types of marked spin spheres with a real structure, involutions $\phi$, $\tilde\phi_i,~i\in[a]$, and the prescribed twists. The moduli space $\M_{0,k+2l}^{W, \Z_2,\textup{labeled}}$ then maps to $\M_{0,k+2l}^{'W,\textup{labeled}}$, so it inherits a universal curve via pullback.
Note that in general  $\M_{0,k+2l}^{W, \Z_2,\textup{labeled}}$ is not a sub-orbifold of $\M_{0,k+2l}^{'W,\textup{labeled}}$ as isotropy is lost: We now cannot scale the $r$-spin structure by an arbitrary $r$th root of unity as its action must be the same under conjugation. This means that the only scaling that can happen is by $\pm1$ in the case where $r$ is even.

{\bf Step (B):} The morphism $\widetilde{\mathcal{M}}_{0,k,l}^{W,\Z_2,\textup{labeled}}\rightarrow\overline{\mathcal{M}}_{0,k+2l}^{W,\Z_2,\textup{labeled}}$. 
Take $N\hookrightarrow \overline{\mathcal{M}}_{0,k+2l}^{W,\Z_2,\textup{labeled}}$ to be the real simple normal crossing divisor consisting of
 curves with at least one boundary node.
Via the real hyperplane blowup of
\cite{Zernik}, we cut $\overline{\mathcal{M}}_{0,k+2l}^{W,\Z_2,\textup{labeled}}$ along $N$, yielding an orbifold with corners $\widetilde{\mathcal{M}}_{0,k,l}^{W, \Z_2,\textup{labeled}}$. The morphism in this step is then constructed by gluing the cuts described here.

{\bf Step (C):} The morphism $\widetilde{\mathcal{M}}_{0,k,l}^{W,\textup{labeled}}
\rightarrow\widetilde{\mathcal{M}}_{0,k,l}^{W,\Z_2,\textup{labeled}}$. From here, we define $\widetilde{\mathcal{M}}_{0,k,l}^{W,\textup{labeled}}$ to be the disconnected $2$-to-$1$ cover of $\widetilde{\mathcal{M}}_{0,k,l}^{W,\Z_2,\textup{labeled}}$. The generic point of the moduli space $\widetilde{\mathcal{M}}_{0,k,l}^{W,\textup{labeled}}$ corresponds to a smooth marked real spin sphere with a choice of a distinguished connected disk component of $C\setminus C^\phi$.  Equivalently, in the generic (smooth) situation, we are choosing an orientation for $C^{\phi}$.  It is important to note, however, that this choice can be uniquely continuously extended to nodal points, see \cite{Zernik}, \S2.6,
as opposed to being chosen independently for each boundary component.

{\bf Step (D):} The morphism $\widehat{\mathcal{M}}_{0,k,l}^{W,\textup{labeled}}\hookrightarrow\widetilde{\mathcal{M}}_{0,k,l}^{W,\textup{labeled}}$. Inside $\widetilde{\mathcal{M}}_{0,k,l}^{W,\textup{labeled}}$, we denote by $\widehat{\mathcal{M}}_{0,k,l}^{W,\textup{labeled}}$ the union of connected components such that the marked points points $w_{k+1},\ldots, w_{k+l}$ lie in the distinguished stable disk and, for even $r_i,$ the $i^{th}$ spin structure is compatible in the sense of Definition~\ref{def:lifting_compatible}. The morphism here is inclusion.

{\bf Step (E):} The morphism $\M_{0,k,l}^{W,\textup{labeled}}\rightarrow\widehat{\mathcal{M}}_{0,k,l}^{W,\textup{labeled}}$.  Here, $\M_{0,k,l}^{W,\textup{labeled}}$ is the cover of $\widehat{\mathcal{M}}_{0,k,l}^{W,\textup{labeled}}$ given by a choice of lifting. By Proposition~\ref{prop:graded_r_spin_prop}, the cover is of degree $2^e$, where $e$ is the number of even $r_i$.  Thus we have given the moduli space $\M_{0,k,l}^{W,\textup{labeled}}$ defined
in Definition~\ref{ModSpaceOfMarkedDiscs} the structure of an orbifold with corners. The proof of this fact is similar to the proof of the analogous claim, Theorem 2, in \cite{Zernik}, and will be omitted.

Over $\M_{0,k,l}^{W,\textup{labeled}}$, there is a universal curve whose fibers are compatible stable $W$-spin disks. Take a component from the decomposition given in Equation~\eqref{DecompositionOfModuliOfDiscs}, say $\oCM_{0,\{k_J\}_{J\subseteq[a]},\{\vec{a}_j\}_{j\in[l]} }^{W,\textup{labeled}}$. It has forgetful maps, for
each $i\in [a]$,
\begin{equation}
\label{eq:forget all but i}
\text{For}_{\text{spin }\neq i}:
\oCM_{0,\{k_J\}_{J\subseteq[a]},\{\vec{a}_j\}_{j\in[l]} }^{W,\textup{labeled}}
\rightarrow
\oCM^{1/{r_i},\textup{labeled}}_{0,\sum_{J|i\in J}k_J,\{a_{j,i}\}_{j\in[l]}}
\end{equation}
which forget all spin structures but the $i^{th}$ one, while also forgetting the consequently untwisted boundary points. We define the \emph{$r_i$-spin lifting} for $i\in[a]$ on $\M_{0,k,l}^{W,\textup{labeled}}$ as the pullbacks with respect to
these maps of the lifting on $\oCM^{1/{r_i},\textup{labeled}}_{0,\sum_{J|i\in J}k_J,\{a_{j,i}\}_{j\in[l]}}$, as in \cite{BCT:I}, Theorem 3.4.

{\bf Step (F):} The morphism $\overline{\mathcal{M}}_{0,k,l}^{W,\textup{labeled}}\rightarrow\overline{\mathcal{M}}_{0,k,l}^W$.
Finally, for any smooth pre-graded $W$-spin graph $\Gamma$, let $\Gamma'$ be the graph obtained from $\Gamma$ by allowing all possible cyclic orders at open vertices, and let $\widetilde\Gamma$ be the graph obtained from $\Gamma'$ by replacing the marking functions by injective marking functions. Then $\oCM^W_{\widetilde\Gamma}$ is the product of moduli spaces of the form $\oCM_{0,k,l}^{W,\text{labeled}}$. Finally, the moduli space $\oCM_\Gamma^W$ is isomorphic to the union of those connected components of $\oCM_{\widetilde{\Gamma}}^W/\text{Aut}(\Gamma')$ whose boundary cyclic orders correspond to those in $\Gamma$. The universal bundles and gradings are defined by pulling back from $\oCM_{\widetilde{\Gamma}}^W/\text{Aut}(\Gamma')$.
\end{proof}

\begin{rmk}
Henceforth, we usually denote a pre-graded or graded $W$-spin disk simply by $\Sigma$, meaning the preferred half along with the spin structure and marked points, suppressing most of the notation. We will write $\Sigma\in\M_{0,k,l}^{W}$ for the point $\Sigma$ represents in $\M_{0,k,l}^{W}$.
\end{rmk}
\subsubsection{Moduli associated to dual graphs}
We also want moduli spaces associated to dual graphs in which we keep track of cyclic orders
on boundary marked points. We define them as follows.

Given a connected stable genus zero twisted $W$-spin pre-graded dual graph $\Gamma$, recall the smoothing $\smooth\,\Gamma$ of $\Gamma$ defined in Definition \ref{def:smoothingraph}. As $
\smooth\,\Gamma$ is a smooth connected graph with one vertex, it is associated to a moduli space $\M_{\smooth\Gamma}^W$ as in Proposition \ref{prop:orbi_w_corners}.

\begin{definition}\label{def:moduliGraph}
Given a connected stable genus zero twisted $W$-spin pre-graded dual graph $\Gamma$, there is a closed embedding
\begin{equation}\label{def:iotaGamma}
\iota_{\Gamma}: \M^W_{\Gamma}\hookrightarrow \M^W_{\smooth\Gamma},\end{equation} where the general point $\Sigma$ of $\M^W_{\Gamma}$ is a graded open $W$-spin surface with dual graph $\Gamma=\Gamma(\Sigma)$. If $\Gamma$ is disconnected, then $\M^W_{\Gamma}$ is defined as the product of the moduli spaces $\M^W_{\Gamma_i}$ associated to its connected components $\Gamma_i$.
\end{definition}

We have that $\M^W_{\Gamma}$ is a  closed sub-orbifold with corners of $\M^W_{\smooth\Gamma}$, of (real) codimension \begin{equation}\label{codim computation} 2|E^I(\Gamma)|+|E^B(\Gamma)|+|H^{CB}(\Gamma)|.\end{equation}  Take $\CM^W_\Gamma$ to be the open subspace of $\M^W_{\Gamma}$ consisting of graded $W$-spin disks whose dual graph is $\Gamma$.

\begin{ex}\label{ex: hexagon}
Consider the moduli space of disks with one internal point $z_1$ and three marked boundary points $x_1, x_2, x_3$. By \eqref{real dim of open Moduli of discs}, we know that the real dimension of $\oCM_{0,3,1}$ and $\oCM^{\mathrm{labeled}}_{0,3,1}$
is $2$. The labeled moduli has two connected components, depending on the
cyclic order of the boundary marked points. Each connected
component is a hexagon and has
 six codimension 1 boundary strata and six codimension 2 boundary strata. We can see that there is no way to have a contracted boundary or internal node in this case. The dimension of each strata, computed using \eqref{codim computation}, is $3$ minus the number of irreducible components. We draw one of the connected
components of the labeled moduli space in Figure~\ref{fig:moduliExample}. In this figure, each stratum is accompanied by a depiction of a representative stable marked disk $\Sigma$. Note the unlabeled moduli space is a quotient of
the labeled moduli space by the symmetric group on three letters. This
identifies the two hexagons and further quotients to produce a bigon.
We suppress twists in the example, but they must satisfy the conditions outlined in Definition~\ref{def:graph} and satisfy Equations~\eqref{eq:open_rank1_W} and~\eqref{eq:open_rank2_W}.

\begin{figure}

  \centering
  \begin{tikzpicture}[scale=0.7]

\newdimen\Rad
   \Rad=5cm
   \draw[line width=0.25mm] (0:\Rad) \foreach \x in {60,120,...,360} {  -- (\x:\Rad) };
   \draw (5,0)[black, fill = black] circle (.1cm);
   \draw (-5,0)[black, fill = black] circle (.1cm);
   \draw (2.5, 4.33)[black, fill = black] circle (.1cm);
      \draw (-2.5, 4.33)[black, fill = black] circle (.1cm);
         \draw (2.5, -4.33)[black, fill = black] circle (.1cm);
            \draw (-2.5, -4.33)[black, fill = black] circle (.1cm);

  \draw (0,0) circle (1cm);
  \node at (0, 0) {$\bullet$};
\node[above] at (0,0) {$z_1$};
  \node at (1, 0) {$\bullet$};
\node[right] at (1,0) {$x_1$};
\node at (-.5, .866) {$\bullet$};
\node[above] at (-.5,.866) {$x_2$};
\node at (-.5, -.866) {$\bullet$};
\node[below] at (-.5,-.866) {$x_3$};

  \draw (0,6) circle (1cm);
    \draw (0,8) circle (1cm);

  \node at (0, 6) {$\bullet$};
\node[above] at (0,6) {$z_1$};
  \node at (0, 5) {$\bullet$};
\node[below] at (0,5) {$x_1$};
\node at (.866, 8.5) {$\bullet$};
\node[right] at (.866,8.5) {$x_2$};
\node at (-.866, 8.5) {$\bullet$};
\node[left] at (-.866,8.5) {$x_3$};


  \draw (5.196,3) circle (1cm);
    \draw (6.928,4) circle (1cm);

  \node at (5.196,3) {$\bullet$};
\node[above] at (5.196,3) {$z_1$};
  \node at (6.428, 4.866) {$\bullet$};
\node[above] at (6.428, 4.866) {$x_3$};
\node at (7.79423, 4.5) {$\bullet$};
\node[right] at (7.79423, 4.5) {$x_2$};
\node at (7.428, 3.134) {$\bullet$};
\node[below] at (7.428, 3.134) {$x_1$};


  \draw (5.196,-3) circle (1cm);
    \draw (6.928,-4) circle (1cm);

  \node at (5.196,-3) {$\bullet$};
\node[above] at (5.196,-3) {$z_1$};
  \node at (7.794, -3.5) {$\bullet$};
\node[right] at (7.794, -3.5) {$x_2$};
  \node at (4.330, -2.5) {$\bullet$};
\node[above] at (4.330, -2.5) {$x_3$};
\node at (7, -5) {$\bullet$};
\node[below] at (7, -5) {$x_1$};


  \draw (0,-6) circle (1cm);
    \draw (0,-8) circle (1cm);

  \node at (0, -6) {$\bullet$};
\node[above] at (0,-6) {$z_1$};
  \node at (1, -8) {$\bullet$};
\node[right] at (1,-8) {$x_2$};
\node at (0, -9) {$\bullet$};
\node[below] at (0,-9) {$x_1$};
\node at (-1, -8) {$\bullet$};
\node[left] at (-1,-8) {$x_3$};

  \draw (-5.196,-3) circle (1cm);
    \draw (-6.928,-4) circle (1cm);

  \node at (-5.196,-3) {$\bullet$};
\node[above] at (-5.196,-3) {$z_1$};
  \node at (-7.794, -3.5) {$\bullet$};
\node[left] at (-7.794, -3.5) {$x_3$};
  \node at (-4.330, -2.5) {$\bullet$};
\node[above] at (-4.330, -2.5) {$x_2$};
\node at (-7, -5) {$\bullet$};
\node[below] at (-7, -5) {$x_1$};


  \draw (-5.196,3) circle (1cm);
    \draw (-6.928,4) circle (1cm);

  \node at (-5.196,3) {$\bullet$};
\node[above] at (-5.196,3) {$z_1$};
  \node at (-6.428, 4.866) {$\bullet$};
\node[above] at (-6.428, 4.866) {$x_2$};
\node at (-7.79423, 4.5) {$\bullet$};
\node[left] at (-7.79423, 4.5) {$x_3$};
\node at (-7.428, 3.134) {$\bullet$};
\node[below] at (-7.428, 3.134) {$x_1$};


  \draw (3.25,5.629) circle (1cm);
    \draw (4.25,7.361) circle (1cm);
        \draw (5.25,9.093) circle (1cm);

            \node at (3.25,5.629) {$\bullet$};
\node[below] at (3.25,5.629) {$z_1$};

  \node at (5.116,6.861) {$\bullet$};
\node[right] at (5.116,6.861) {$x_1$};
\node at (6.25,9.093 ) {$\bullet$};
\node[right] at (6.25,9.093) {$x_2$};
\node at (4.75, 9.959) {$\bullet$};
\node[left] at (4.75, 9.959) {$x_3$};


  \draw (6.5,0) circle (1cm);
    \draw (8.5,0) circle (1cm);
        \draw (10.5,0) circle (1cm);

          \node at (6.5,0) {$\bullet$};
\node[above] at (6.5,0) {$z_1$};
  \node at (8.5,1) {$\bullet$};
\node[above] at (8.5,1) {$x_3$};
\node at (11,.866 ) {$\bullet$};
\node[right] at (11,.866) {$x_2$};
\node at (11,-.866) {$\bullet$};
\node[right] at (11,-.866) {$x_1$};


  \draw (3.25,-5.629) circle (1cm);
    \draw (4.25,-7.361) circle (1cm);
        \draw (5.25,-9.093) circle (1cm);

            \node at (3.25,-5.629) {$\bullet$};
\node[below] at (3.25,-5.629) {$z_1$};

  \node at (3.384,-7.861) {$\bullet$};
\node[left] at (3.384,-7.861) {$x_3$};
\node at (6.25,-9.093 ) {$\bullet$};
\node[right] at (6.25,-9.093) {$x_2$};
\node at (4.75, -9.959) {$\bullet$};
\node[left] at (4.75, -9.959) {$x_1$};


  \draw (-3.25,-5.629) circle (1cm);
    \draw (-4.25,-7.361) circle (1cm);
        \draw (-5.25,-9.093) circle (1cm);

            \node at (-3.25,-5.629) {$\bullet$};
\node[below] at (-3.25,-5.629) {$z_1$};

  \node at (-3.384,-7.861) {$\bullet$};
\node[right] at (-3.384,-7.861) {$x_2$};
\node at (-6.25,-9.093 ) {$\bullet$};
\node[left] at (-6.25,-9.093) {$x_3$};
\node at (-4.75, -9.959) {$\bullet$};
\node[right] at (-4.75, -9.959) {$x_1$};


  \draw (-6.5,0) circle (1cm);
    \draw (-8.5,0) circle (1cm);
        \draw (-10.5,0) circle (1cm);

                  \node at (-6.5,0) {$\bullet$};
\node[above] at (-6.5,0) {$z_1$};
  \node at (-8.5,1) {$\bullet$};
\node[above] at (-8.5,1) {$x_2$};
\node at (-11,.866 ) {$\bullet$};
\node[left] at (-11,.866) {$x_3$};
\node at (-11,-.866) {$\bullet$};
\node[left] at (-11,-.866) {$x_1$};


  \draw ( -3.25,5.629) circle (1cm);
    \draw (-4.25,7.361) circle (1cm);
        \draw (-5.25,9.093) circle (1cm);

            \node at (-3.25,5.629) {$\bullet$};
\node[below] at (-3.25,5.629) {$z_1$};

  \node at (-5.116,6.861) {$\bullet$};
\node[left] at (-5.116,6.861) {$x_1$};
\node at (-6.25,9.093 ) {$\bullet$};
\node[left] at (-6.25,9.093) {$x_3$};
\node at (-4.75, 9.959) {$\bullet$};
\node[right] at (-4.75, 9.959) {$x_2$};

\end{tikzpicture}
 \caption{Moduli space of stable disks with 3 boundary markings and one internal marking.}
\label{fig:moduliExample}
\end{figure}
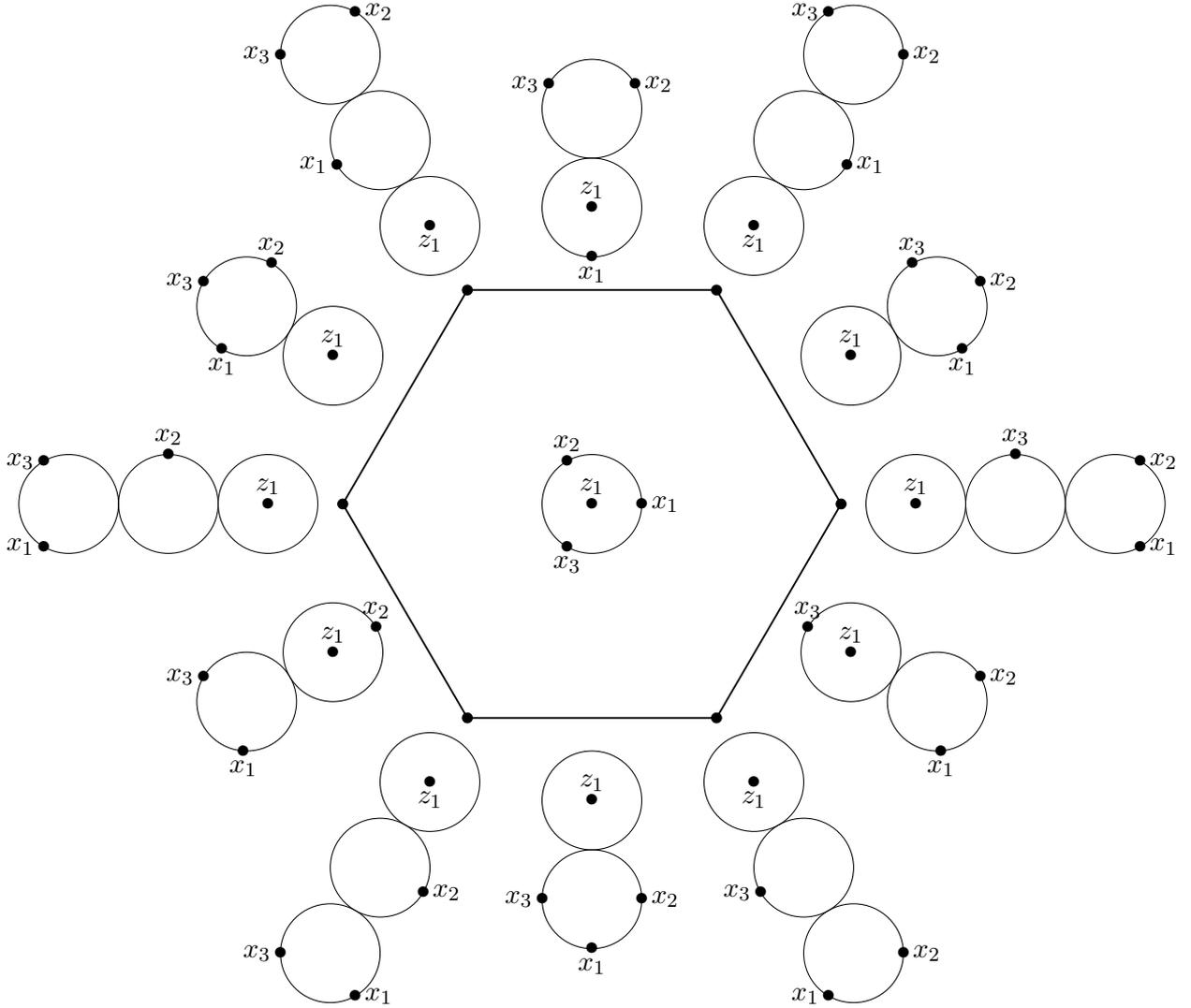
\end{ex}

There are forgetful maps between some of the $\oCM_{\Gamma}^W$. Indeed, we note that marked points can only be forgotten if all their twists are zero, since \eqref{eq:open_rank1_general} must be preserved. Moreover, boundary marked points can only be forgotten if they are in addition non-alternating, since the grading must descend to the moduli space with fewer marked points.  With this in mind, we define:

\begin{definition}
\label{def:for non-alt}
Given a (not necessarily connected) stable genus zero twisted $W$-spin pre-graded dual graph, we define
\[\text{For}_{\text{non-alt}}: \M_{\Gamma}^W \rightarrow \M_{\Gamma'}^W\]
by forgetting all non-alternating boundary marked points with $\tw=\vec{0}$. However, this may create unstable irreducible components. We repeatedly contract any unstable irreducible component which is not  partially stable with an internal
marked point.
This process might create new boundary marked points which were formerly boundary half-nodes. These new boundary marked points may be non-alternating with $\tw=\vec{0}$. In this case, we repeat the process. If the process ends with some unstable connected components which are not partially stable, then we remove them. We update the cyclic orders data in the natural way.

In the case where the graph $\Gamma'$ is partially stable, we define its moduli to be the moduli of the graph obtained by removing the partially stable irreducible
components. If the resulting graph is empty, we take the moduli space to be a point. If $\Gamma$ is a twisted $W$-spin dual graph with a lifting, we analogously define the graph $\Gamma'=\text{for}_{\text{non-alt}}(\Gamma)$ (which, again, may include partially stable components).
\end{definition}

\begin{definition}
\label{def:for spin}
Let $\Gamma$
be a pre-graded $W$-spin graph. The graded $r_i$-spin graph $\text{for}_{\text{spin}\neq i}\Gamma$ is an $r_i$-spin graph with a lifting obtained from $\Gamma$ by forgetting the additional $\tw_j,\alt_j$ data, for $j\neq i$ and from forgetting all tails $t$ with $(\tw_i(t),\alt_i(t))=(0,0)$.
We obtain a map
\[\text{For}_{\text{spin}\neq i}:\oCM_{\Gamma}^W\to\oCM^{1/r_i}_{\text{for}_{\spinqi}\Gamma}\]
induced by the map \eqref{eq:forget all but i}.
\end{definition}

\subsection{The bundles}
\subsubsection{The Witten bundles and their properties}\label{subsec:WittenBundle}
For graded closed $\Gamma$ and $i \in [a]$, we define the Witten bundle \[\cW_{i}=\cW_{i,\Gamma} = (R^1\pi_*\mathcal{S}_i)^{\vee},\] where $\pi: \mathcal{C} \rightarrow \M_{0,n}^{W}$ is the universal family and $\mathcal{S}_i \rightarrow \mathcal{C}$ are the universal twisted spin bundles.

\begin{obs}
\label{obs:twist-1}
As stated in Remark~\ref{rmk:NegTwist}, we can have at most one marked point with twist $-1$ with respect to each $r_i$-spin structure, and as a consequence
the degree of each $\mathcal{S}_i$ is negative. Thus
$R^0\pi_*\mathcal{S}_i = 0$ and hence $(R^1\pi_*\mathcal{S}_i)^{\vee}$
is a bundle.
This observation, which is crucial for the normalization operation defined below, explains our choice of allowable twist in conditions \eqref{c1} and \eqref{c2} of Definition~\ref{def:graph}.  
We note that if such a point exists, it must be the anchor, and whether or not
the point has twist $-1$  for the $i^{th}$ spin bundle will depend on $i$.
\end{obs}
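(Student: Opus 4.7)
The plan is to verify the assertion in two stages: first establish the fiberwise vanishing $H^0(C, S_i) = 0$, then invoke cohomology and base change to deduce that $R^0\pi_*\mathcal{S}_i = 0$ and that $(R^1\pi_*\mathcal{S}_i)^\vee$ is a vector bundle.

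For the degree computation, I would restrict $\widehat{S}_i$ to each irreducible component $C_l$ of the coarsified normalization $|\widehat{C}|$ and apply Proposition~\ref{obs:twists_on_coarse}, which gives
\[
r_i \deg |\widehat{S}_i|_{C_l} = -2 - \sum_{j} a_j - \sum_{h} c_h,
\]
where $a_j, c_h \in \{-1, 0, \ldots, r_i - 1\}$ record the twists at markings and half-nodes of $C_l$. By \eqref{eq:cR} every half-node twist satisfies $c_h \geq 0$, and by condition \eqref{c1} of Definition~\ref{def:graph} combined with the hypothesis, at most one marking on $C_l$ carries twist $-1$ (necessarily the unique anchor of $C_l$, when such a twist occurs), while the remaining twists are non-negative. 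Thus $\sum_j a_j + \sum_h c_h \geq -1$, so $\deg |\widehat{S}_i|_{C_l} \leq -1/r_i < 0$ on every component; as each $C_l$ is a rational curve, $H^0(C_l, |\widehat{S}_i|_{C_l}) = 0$, and therefore $H^0(\widehat{C}, \widehat{S}_i) = 0$.

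To descend to the unnormalized curve I would use the standard identification $H^0(C, S_i) \cong H^0(\widehat{C}, \widehat{S}_i)$ from twisted spin theory, in which the twist by $-\sum_{q \in \mathcal{R}} [q]$ in \eqref{eq:S Shat def} is precisely designed to encode the node-matching constraints on orbifold sections of $S_i$. With fiberwise vanishing in hand, cohomology and base change applied to the proper flat family $\pi: \mathcal{C} \to \overline{\mathcal{M}}_{g,n}^W$ yields $R^0\pi_*\mathcal{S}_i = 0$, and since the Euler characteristic is locally constant and equal to $-h^1(C, S_i)$, the sheaf $R^1\pi_*\mathcal{S}_i$ is locally free; dualising gives the Witten bundle.

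The main technical obstacle is the identification $H^0(C, S_i) = H^0(\widehat{C}, \widehat{S}_i)$, which requires a local analysis at each node distinguishing Ramond (multiplicity-zero) half-nodes, where sections may be non-zero, from the remaining half-nodes whose non-trivial stabilizer action on the fiber forces sections to vanish there. Once the degree bound and this gluing identification are in place, the cohomology-and-base-change argument is routine.
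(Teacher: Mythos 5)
The paper offers no proof of this observation beyond the pointer to Remark~\ref{rmk:NegTwist} and the references cited there, so you are filling in details the authors deferred to \cite{JKV2, BCT_Closed_Extended}. Your degree computation on the normalization is correct as far as it goes: each component $C_l$ of $\widehat{C}$ carries at most one anchor, only anchors can have a twist equal to $-1$, and so Proposition~\ref{obs:twists_on_coarse} gives $r_i\deg|\widehat{S}_i|_{C_l}\le -2+1=-1$, whence $H^0(\widehat{C},\widehat{S}_i)=0$. (One small slip: \eqref{eq:cR} does not say every half-node has $c_h\ge 0$ --- an anchor half-node of a Ramond node has twist $-1$ --- but your global bound $\sum_j a_j+\sum_h c_h\ge -1$ survives because there is at most one anchor per component.)

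The genuine gap is the asserted identification $H^0(C,S_i)\cong H^0(\widehat{C},\widehat{S}_i)$, which is not the correct statement and is precisely where the content lies. A section of $S_i$ pulls back to a section of $\nu^*S_i$, not of $\widehat{S}_i$: at a Neveu--Schwarz half-node the nontrivial isotropy action forces vanishing, but at a Ramond node the fibre is an honest line on which the stabilizer acts trivially, and the only constraint is that the values on the two branches agree --- the section need not vanish there. Since $\widehat{S}_i=\nu^*S_i\otimes\mathcal{O}(-\sum_{q\in\mathcal{R}}[q])$ by \eqref{eq:S Shat def}, the space $H^0(\widehat{C},\widehat{S}_i)$ consists of sections of $\nu^*S_i$ \emph{vanishing} at the non-anchor Ramond half-nodes, a strictly stronger condition than matching; the correct statement is $H^0(C,S_i)=\ker\bigl(H^0(\widehat{C},\nu^*S_i)\to\bigoplus_{q\ \text{Ramond node}}(S_i)_q\bigr)$. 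Moreover $\nu^*S_i$ restricted to a component with several Ramond half-nodes (each contributing $+1/r_i$ to the degree relative to $\widehat{S}_i$) can have non-negative degree, so the component-wise negativity you established for $\widehat{S}_i$ does not transfer to $\nu^*S_i$, and the vanishing of $H^0(C,S_i)$ does not follow. The standard repair is an induction over the tree of components: peel off a leaf component not containing the curve's anchor, check that $\nu^*S_i$ restricted to it has negative degree (it has a single half-node, and any twist $-1$ there makes that half-node the component's anchor), conclude the section vanishes identically on the leaf, feed the resulting vanishing at the node into the adjacent component as an additional twist, and iterate toward the anchored component. With $H^0(C,S_i)=0$ established fibrewise in this way, your cohomology-and-base-change conclusion that $R^1\pi_*\mathcal{S}_i$ is locally free is fine.
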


These are complex vector bundles with fibers
\[H^1(C, S_i)^{\vee} \cong H^0(C, J_i),\]
of complex ranks \begin{equation}\label{eq:graded_closed_rank}\frac{\sum_{j\in I} \tw_i(z_j) - (r_i-2)}{r_i}.\end{equation}

Equivalently, the bundle $\cW_i$ can be constructed by pulling back the Witten bundle with respect to the $r_i$-spin structure via the map $\text{For}_{\text{spin}\neq i}$ from Definition~\ref{def:for spin}. 
Write \[\cW=\cW_\Gamma=\bigoplus_{i\in[a]}\cW_{i,\Gamma}.\]

Following \cite{BCT:I}, we now define the open Witten bundles. First, recalling the sequence of maps in \eqref{eq:OWCsequence}, we can construct the corresponding unlabeled versions by quotienting by the appropriate automorphisms, as dictated in Step (F) of Proposition \ref{prop:orbi_w_corners}:

\begin{equation}\label{unlabelled moduli}\begin{tikzcd}[row sep=scriptsize, column sep = small]
\M_{0,k,l}^{W,\textup{labeled}} \arrow{r}{}\arrow{d}{} &	\widehat{\mathcal{M}}_{0,k,l}^{W,\textup{labeled}}\arrow{r}{}\arrow{d}{}  & 	\widetilde{\mathcal{M}}_{0,k,l}^{W,\textup{labeled}} 	\arrow{r}{}\arrow{d}{}  & \widetilde{\mathcal{M}}_{0,k,l}^{W,\Z_2,\textup{labeled}}\arrow{r}{} \arrow{d}{} &
\overline{\mathcal{M}}_{0,k+2l}^{W,\Z_2,\textup{labeled}}\arrow{r}{}\arrow{d}{}  &\overline{\mathcal{M}}_{0,k+2l}^{'W,\textup{labeled}} \arrow{d}{}  \\
\M_{0,k,l}^{W} \arrow{r}{} &	\widehat{\mathcal{M}}_{0,k,l}^{W}\arrow{r}{} & 	\widetilde{\mathcal{M}}_{0,k,l}^{W} 	\arrow{r}{} & \widetilde{\mathcal{M}}_{0,k,l}^{W,\Z_2}\arrow{r}{} &
\overline{\mathcal{M}}_{0,k+2l}^{W,\Z_2}\arrow{r}{} &\overline{\mathcal{M}}_{0,k+2l}^{'W}
\end{tikzcd}
\end{equation}

We denote by $\varpi:\M_{0,k,l}^{W} \to \overline{\mathcal{M}}_{0,k+2l}^{W,\Z_2}$ the map given by composing the appropriate maps in~\eqref{unlabelled moduli}.  Abusing notation somewhat, denote by $\pi: \mathcal{C} \rightarrow \M^{W,\Z_2}_{0,k+2l}$ the universal curve over the moduli space of real spin spheres defined above (after setting the markings of the first $k$ points to $\emptyset$), and by $\cS_i \rightarrow \mathcal{C},~i\in[a],$ the universal spin bundles.  There are universal involutions
\[\phi: \mathcal{C} \rightarrow \mathcal{C}\]
and
\[\widetilde{\phi}_i: \cS_i \rightarrow \cS_i,\]
which induce an involution on $R^1\pi_*\mathcal{S}_i$.
Set $\mathcal{J}_i:= \mathcal{S}^{\vee}_i \otimes \omega_{\pi}$. Define
\[\mathcal{W}^{\text{pre}}_i := (R^0\pi_*\mathcal{J}_i)_+ =(R^1\pi_*\mathcal{S}_i)^{\vee}_-\]
to be the real vector bundle of $\widetilde{\phi}_i$-invariant sections of $J_i$. Note here the second equality uses Serre duality, under which invariant sections become anti-invariant.
The real rank of $\mathcal{W}^{\text{pre}}_i$ is seen in Equation (4.2) of \cite{BCT:I} to be
\begin{equation}\label{eq:graded_open_rank}
\frac{\sum_{j=1}^{2l} \tw_i(z_j)+(-1+\sum_{T:i\in T}k_T)(r_i-2)}{r_i}.
\end{equation}

\begin{definition}
\label{def:open Witten bundles}
The \emph{open Witten bundles} are defined on $\M^{W}_{0,k,l}$ by $\cW_i=\varpi^*\mathcal{W}^{\text{pre}}_i$. 
For an open $\Gamma,$ let $\iota_\Gamma:\oCM_\Gamma\to\oCM_{\smooth\Gamma}$
 be the embedding as described in Definition~\ref{def:moduliGraph}. We write $\cW_{i,\Gamma}=\iota_\Gamma^*\cW_{i,\smooth\Gamma}$ and 
\[
\cW_\Gamma=\bigoplus_{i\in[a]}\cW_{i,\Gamma}.
\]
 \end{definition}

Let $\Gamma$
be a pre-graded $W$-spin graph.
There is an identification of Witten bundles under the map $\text{For}_{\text{non-alt}}$ that forgets all non-alternating boundary marked points with $\tw = \vec{0}$. We will use the following observation extensively:

\begin{obs}\label{obs:forgetful}
If $\Gamma'$ is obtained from $\Gamma$ by forgetting all tails $t$ with $(\tw(t),\alt(t))=(\vec{0},\vec{0}),$
then
\begin{equation}\label{eq:forgetful_witten_0}\text{For}_{\text{non-alt}}^*(\cW_{\Gamma'})\backsimeq \cW_{\Gamma}\end{equation}
where $\text{For}_{\text{non-alt}}:\oCM^W_{\Gamma}\to\oCM^W_{\Gamma'}$ is
defined in Definition \ref{def:for non-alt}. See \cite{BCT:I}, (4.3).

Applying this in the $r_i$-spin case, we thus have a canonical isomorphism
\[\text{For}_{\text{spin}\neq i}^*(\cW_{i,\text{for}_{\spinqi}\Gamma})\backsimeq \cW_{i,\Gamma}\]
where $\text{For}_{\text{spin}\neq i}$ is defined in Definition
\ref{def:for spin}. 
Here, the graph $\text{for}_{\spinqi}\Gamma$ is the graded $r_i$-spin graph obtained from $\Gamma$ by
first forgetting the data $\tw_j,\alt_j$ for $j\neq i,$ and second forgetting the boundary tails $h$ with $\tw_i(h)=\alt_i(h)=0$.
\end{obs}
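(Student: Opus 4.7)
The proof reduces directly to the single-spin case of \cite[(4.3)]{BCT:I}, exploiting the decomposition $\cW_\Gamma = \bigoplus_{i \in [a]} \cW_{i,\Gamma}$ together with the fact that each summand depends only on the $i$-th twisted spin bundle $\cS_i$ and the lift $\widetilde\phi_i$ of the involution. I would first establish the second displayed isomorphism. By Step (E) of the proof of Proposition~\ref{prop:orbi_w_corners} together with the forgetful map \eqref{eq:forget all but i}, both the universal $r_i$-spin bundle and its lifting on $\oCM^W_\Gamma$ are canonically pulled back via $\text{For}_{\spinqi}$ from the corresponding data on $\oCM^{1/r_i}_{\text{for}_{\spinqi}\Gamma}$. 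Since $\cW_{i,\Gamma}$ is functorially constructed from this datum as the $\widetilde\phi_i$-invariant part of $R^0\pi_*\cJ_i$, it is itself the pullback of the $r_i$-spin Witten bundle, giving the second isomorphism.

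For the first isomorphism, the key observation is that a boundary tail $t$ with $(\tw(t),\alt(t))=(\vec{0},\vec{0})$ in particular satisfies $(\tw_i(t),\alt_i(t))=(0,0)$ for every $i\in[a]$. Thus each such tail is among those forgotten when passing to the $r_i$-spin graph $\text{for}_{\spinqi}\Gamma$, and the forgetful maps fit into a commutative square
\[
\begin{tikzcd}
\oCM^W_\Gamma \arrow[r, "\text{For}_{\text{non-alt}}"] \arrow[d, "\text{For}_{\spinqi}"'] & \oCM^W_{\Gamma'} \arrow[d, "\text{For}_{\spinqi}"] \\
\oCM^{1/r_i}_{\text{for}_{\spinqi}\Gamma} \arrow[r, "F_i"'] & \oCM^{1/r_i}_{\text{for}_{\spinqi}\Gamma'}
\end{tikzcd}
\]
for every $i\in[a]$, where $F_i$ is the $r_i$-spin forgetful map appearing in \cite[(4.3)]{BCT:I}. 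Combining the second isomorphism, applied to both $\Gamma$ and $\Gamma'$, with the established $r$-spin identity $F_i^*\cW_{i,\text{for}_{\spinqi}\Gamma'}\cong \cW_{i,\text{for}_{\spinqi}\Gamma}$ and then summing over $i\in[a]$ yields $\text{For}_{\text{non-alt}}^*\cW_{\Gamma'}\cong \cW_\Gamma$.

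The only substantive point is to verify commutativity of the above square at the level of universal twisted spin structures, namely that the universal $\cS_i$ and its involution $\widetilde\phi_i$ on $\oCM^W_\Gamma$ are genuinely pulled back from $\oCM^W_{\Gamma'}$ via $\text{For}_{\text{non-alt}}$, including across the components contracted during the stabilization in Definition~\ref{def:for non-alt}. Because each forgotten point has multiplicity zero and non-alternating grading with respect to every spin structure, the twisted $r_i$-spin bundle extends uniquely across it without modification, so this bookkeeping is already handled by the $r$-spin analysis in \cite[\S4]{BCT:I}.
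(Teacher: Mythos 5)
Your argument is correct and follows essentially the same route the paper intends: both isomorphisms reduce to the rank-one forgetful compatibility of \cite{BCT:I}, (4.3), using that each summand $\cW_{i,\Gamma}$ depends only on the $i$-th spin datum (so is pulled back under $\text{For}_{\spinqi}$) and that any tail with $(\tw,\alt)=(\vec{0},\vec{0})$ is in particular forgotten by every $\text{For}_{\spinqi}$, so that $\text{for}_{\spinqi}\Gamma=\text{for}_{\spinqi}\Gamma'$. The paper records this as an observation with exactly that citation, so no further detail is needed.
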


Another important property is the behaviour of the Witten bundles under restriction to strata. We follow \S4.2 of \cite{BCT:I} (although these properties are well-known already from Witten's original work \cite{Witten93}).

Let $\Gamma$ be a pre-graded $W$-spin graph, and let $\widehat{\Gamma} = \detach_{N}(\Gamma)$ for some set $N \subset E(\Gamma) \cup H^{CB}(\Gamma)$ of edges and contracted boundary tails, see Definition \ref{def:detach}.
Unlike the moduli space of disks, the moduli space of $W$-spin disks does not always have a gluing map $\M_{\widehat\Gamma}^{W} \rightarrow \M_{0,k,l}^{W}$, because there is no canonical way to glue the fibers of $S_i,~i\in[a]$ at the internal nodes.  Instead, we consider the following diagram of morphisms:
\begin{equation}\begin{tikzcd}\label{eq:instead_of_product}
\M_{\forg_{\text{spin}}(\widehat\Gamma)} \times_{\M_{\forg_{\text{spin}}(\Gamma)}} \M_{\Gamma}^{W}  \arrow{d}{q} \arrow{r}{\mu} &  \M_{\Gamma}^{W} \arrow{r}{\iota_\Gamma} & \M_{0,k,l}^{W}  \\
\M_{\widehat\Gamma}^{W} & &
\end{tikzcd}
\end{equation}
Here, $\M_{\forg_{\text{spin}}(\Gamma)} \subset \M_{0,k,l}$ is the moduli space of marked disks
with dual graph $\Gamma$, and $\M_{\forg_{\text{spin}}(\widehat\Gamma)}$ is analogously defined.  Recalling~\eqref{eq: normalised twisted spin structure}, we can define the morphism $q$ to send the spin structure $S_j,~j\in[a]$ to
\begin{equation}
\label{eq:hatS}
\widehat{S}_j = \NNN^*S_j \otimes \O\left(-\sum_{p \in \RRR_j} [p]\right),
\end{equation}
where $\NNN: \widehat{C} \rightarrow C$ is the normalization map and $\RRR_j$ is the subset of all half-nodes $p$ of multiplicity $0$ with respect to the $r_j$-spin structure which are not anchors in the normalization. 
Note that $\widehat{S}_j$ then has the correct twists for $\M_{\widehat\Gamma}^{W}$. In particular, it has $\tw_j=r_j-1$ at the Ramond half-nodes corresponding to half-edges in $\RRR_j$ and $\tw_j=-1$ at the other Ramond half-nodes.

Since the map $\M_{\forg_{\text{spin}}(\hat\Gamma)} \to \M_{\forg_{\text{spin}}(\Gamma)}$ is an isomorphism, the projection map $\mu$ is an isomorphism. We will distinguish however between the domain and codomain of $\mu$ because they have different universal objects. We note that $q$ is not an isomorphism, in general. Indeed, the automorphism groups of $\M_\Gamma^W$ and $\M_{\hat\Gamma}^W$ differ when one computes using Observation~\ref{obs:automorphism}.  However, in the strata of $\M_{\Gamma}$ where $\Gamma$ has no internal edges, we have that $q$ is an isomorphism when restricted to the strata (see \cite[Remark 4.5]{BCT:I}). It may be of some comfort to the reader for us to remark that this is an isomorphism on the level of coarse moduli spaces.

\begin{nn}\label{nn:Detach}
For $\Gamma,N$ as above, we denote the map $q\circ \mu^{-1}:\M_{\Gamma}^{W}\to\M_{\widehat\Gamma}^{W}$ by $\Detach_{N}$. When $N=\{e\}$ is a singleton, we denote this map by $\Detach_{e}$.
\end{nn}

There are two natural universal curves over the fiber product $\M_{\forg_{\text{spin}}(\widehat\Gamma)} \times_{\M_{\forg_{\text{spin}}(\Gamma)}} \M_{\Gamma}^{W}$: we define $\mathcal{C}_{\Gamma}$ by the fiber diagram

\begin{equation*}\begin{tikzcd}
\mathcal{C}_{\Gamma} \arrow{r}{} \arrow{d}{\pi} & \mathcal{C}\arrow{d}{} \\
\M_{\forg_{\text{spin}}(\widehat\Gamma)} \times_{\M_{\forg_{\text{spin}}(\Gamma)}} \M_{\Gamma}^{W} \arrow{r}{\iota_\Gamma \circ \mu} & \M_{0,k,l}^{W}
\end{tikzcd}
\end{equation*}
and $\mathcal{C}_{\widehat{\Gamma}}$ by the fiber diagram
\begin{equation*}\begin{tikzcd}
\mathcal{C}_{\widehat{\Gamma}} \arrow{r}{} \arrow{d}{\hat{\pi}} & \widehat {\mathcal{C}}\arrow{d}{} \\
\M_{\forg_{\text{spin}}(\widehat\Gamma)} \times_{\M_{\forg_{\text{spin}}(\Gamma)}} \M_{\Gamma}^{W} \arrow{r}{q} & \M^W_{\widehat{\Gamma}}
\end{tikzcd}
\end{equation*}
in which $\widehat{\mathcal{C}}$ is the universal curve over $\M_{\widehat\Gamma}^{W}$.  There are universal bundles $\mathcal{S}_j$ and $\widehat{\mathcal{S}}_j,~j\in[a]$ on these two universal curves, and a universal normalization morphism
\[\NNN: \mathcal{C}_{\widehat\Gamma} \rightarrow \mathcal{C}_{\Gamma}\]
relating them via \eqref{eq:S Shat def}. Let $\mathcal{J}_i$ be the universal twisted spin bundle corresponding to the twisted spin bundle  $J_i$ for $i\in [a]$.

We can now state the decomposition properties of the Witten bundle.  We state the properties in the case where $N = \{e\}$ for a single edge $e$, but all can be readily generalized to the setting where more than one edge is detached.

\begin{prop}
\label{pr:decomposition}
Let $\Gamma$ be a pre-graded $W$-spin graph.
Suppose that $\Gamma$ has a single edge $e$, so the general point of $\M_{\Gamma}^{W}$ is a stable $W$-spin disk with two components $C_1$ and $C_2$ meeting at a node $p$.  Let $\widehat{\Gamma} = \detach_e(\Gamma)$.  Let $\cW_i$ and $\widehat\cW_i$ denote the $i^{th}$ Witten bundle summand on $\M_{0,k,l}^{W}$ and $\M_{\widehat\Gamma}^{W}$, respectively.

The Witten bundle decomposes as follows along the node $p$:
\begin{enumerate}[(i)]
\item\label{it:NS} If $e$ is Neveu--Schwarz for the $i^{th}$ bundle $S_i$,
then $\mu^*\iota_{\Gamma}^*\cW_i = q^*\widehat\cW_i$.
\item\label{it:Ramondbdryedge} If $e$ is a Ramond boundary edge for the $i^{th}$ bundle $S_i$, then there is an exact sequence
\begin{equation}
\label{eq:decompses}0 \rightarrow \mu^*\iota_{\Gamma}^*\mathcal{W}_i \rightarrow q^*\widehat{\mathcal{W}}_i \rightarrow \TTT_+ \rightarrow  0,
\end{equation}
where $\TTT_+$ is a trivial real line bundle.
\item\label{it:internalRamondWitten} If $e$ is a Ramond internal edge for the $i^{th}$ bundle $S_i$, then it connects two vertices corresponding to components $\mathcal{C}_1$ and $\mathcal{C}_2$. Either both components are closed or exactly one is open. Without loss of generality, let $\mathcal{C}_1$ be the component which is open or the closed component containing the anchor of $\Gamma$. Write $q^*\widehat\cW_i = \widehat\cW_i^1 \boxplus \widehat\cW_i^2$, where $\widehat\cW_i^1$ is the $i^{th}$ Witten summand on the component $\mathcal{C}_1$, defined using the spin bundle $\widehat\cS_i|_{\mathcal{C}_1}$, and $\widehat\cW_i^2$ is the $i^{th}$ Witten summand on the component $\mathcal{C}_2$.  Then:
\begin{enumerate}
\item There is an exact sequence
\begin{equation}
\label{eq:decompses2}
0 \rightarrow \widehat\cW_i^2 \rightarrow \mu^*\iota_{\Gamma}^*\cW_i \rightarrow \widehat\cW_i^1 \rightarrow 0.
\end{equation}
\item
If $\widehat\Gamma'$ is defined to agree with $\widehat\Gamma$ except that the twist at each tail which corresponds to a half-edge of
$e$ is $r_i-1$,
and $q': \M_{\forg_{\textup{spin}}(\widehat{\Gamma})} \times_{\M_{\forg_{\textup{spin}}(\Gamma)}} \M_{\Gamma}^{W} \rightarrow \M_{\widehat\Gamma'}^{W}$ is defined analogously to $q$, then there is an exact sequence
\begin{equation}
\label{eq:decompses3}
0 \rightarrow \mu^*\iota_{\Gamma}^*\mathcal{W}_i \rightarrow (q')^*\widehat{\mathcal{W}}'_i \rightarrow \TTT \rightarrow 0,
\end{equation}
where $\widehat{\cW}'_i$ is the $i^{\text{th}}$ Witten bundle associated to $\widehat\Gamma'$ and $\TTT$ is  a line bundle whose $r^{\text{th}}$ power is trivial.
\end{enumerate}
\item\label{it:cont_bdry_tail} Suppose that $\Gamma$ has a single vertex, no edges, and a contracted boundary tail $t$, and let $\widehat{\Gamma} = \detach _t(\Gamma)$.  If $\cW_i$ and $\widehat\cW_i$ denote the $i^{th}$ Witten summands on $\M_{0,k,l}^{W}$ and $\M_{\widehat\Gamma}^{W}$, respectively, then the sequence \eqref{eq:decompses} also holds in this case.
\end{enumerate}
\end{prop}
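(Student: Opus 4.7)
The approach is to derive all four statements from the standard normalization short exact sequence on the universal curve, pushed forward to the fiber product $\oCM_{\forg_{\text{spin}}(\hat\Gamma)} \times_{\oCM_{\forg_{\text{spin}}(\Gamma)}} \oCM^W_\Gamma$ via $R^\bullet\pi_*$, and (in the open cases) restricted to $\tilde\phi_i$-invariants. Two inputs make the algebra clean. First, by Observation \ref{obs:twist-1} together with Definition \ref{def:graph}(\ref{c1}),(\ref{c2}), on each irreducible component of every curve parametrized by $\oCM^W_\Gamma$ or $\oCM^W_{\hat\Gamma}$ the relevant $\cS_i$ has negative degree, so $R^0\pi_*\cS_i = 0$ and the long exact sequences reduce to short ones at the $R^1$ level. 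Second, the compatibility $\hat\cS_i = \nu^*\cS_i \otimes \O(-\sum_{p \in \RRR_i}[p])$ is canonical, so $\nu^*\cS_i$ and $\hat\cS_i$ differ by a skyscraper whose analysis is entirely local at the node.

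I would handle case (i) first. At a Neveu--Schwarz half-node the $\mu_d$-isotropy acts nontrivially on the fiber of $\cS_i$, and the coarsified normalization sequence therefore collapses to an isomorphism $\cS_i \cong \nu_*\hat\cS_i$ (the skyscraper term has no $\mu_d$-invariants and no twist-down is applied at such half-nodes). Pushing forward by $\pi$ and dualizing gives $\mu^*\iota_\Gamma^*\cW_i \cong q^*\hat\cW_i$, compatibly with $\tilde\phi_i$-invariants in the open case. Cases (ii) and (iv) are structurally identical: the node $p$ is Ramond and $\phi$-fixed, the $\mu_d$-isotropy acts trivially on the fiber, and no twist-down occurs at the half-nodes of $e$ in $\hat\cS_i$. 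The normalization sequence thus has a rank-one skyscraper cokernel at $p$, producing \eqref{eq:decompses} after $R^\bullet\pi_*$ and Serre duality. Since the pre-grading condition in Definition \ref{def:open_W_graded2}(1)(b), via Definition \ref{def:lifting_compatible}, forces $\tilde\phi_i$ to act on this fiber with eigenvalue $+1$, and the universal grading data provides a global trivialization, the cokernel is a trivial real line bundle $\TTT_+$.

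Case (iii) is the main obstacle. Here the half-nodes $p_1 \in \mathcal{C}_1$ (anchor) and $p_2 \in \mathcal{C}_2$ (non-anchor) are treated asymmetrically: $\hat\cS_i|_{\mathcal{C}_1} = \nu^*\cS_i|_{\mathcal{C}_1}$ while $\hat\cS_i|_{\mathcal{C}_2} = \nu^*\cS_i|_{\mathcal{C}_2}(-[p_2])$. Combining the normalization sequence for $\nu^*\cS_i$ with the inclusion $\hat\cS_i \hookrightarrow \nu^*\cS_i$ (whose cokernel is a skyscraper at $p_2$), then applying $R^1\pi_*$, I would derive \eqref{eq:decompses2} after dualizing: the anchored contribution $\hat\cW_i^1$ appears as the quotient and the non-anchored contribution $\hat\cW_i^2$, where the twist-down lives, appears as the kernel. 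For \eqref{eq:decompses3}, I would compare $\hat\Gamma$ with $\hat\Gamma'$: moving from twist $-1$ to twist $r_i-1$ at $p_1$ amounts to replacing $\hat\cS_i$ with $\hat\cS_i' = \hat\cS_i \otimes \O(-[p_1])$, and the surjection $\hat\cS_i' \twoheadrightarrow \hat\cS_i'|_{p_1}$ yields after pushforward a rank-one quotient $\TTT$; the $\mu_{r_i}$-stabilizer at the orbifold half-node $p_1$ acts on $\TTT$ by an $r_i$-th root of unity, so $\TTT^{\otimes r_i}$ is trivial. The subtlest step in this case is tracking compatibility of the universal gradings under detaching, which I would verify using Proposition \ref{prop:graded_r_spin_prop}.
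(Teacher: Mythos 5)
The paper does not prove this; it is quoted from \cite{BCT:I}, Proposition 4.7. Your strategy --- the normalization sequence on the universal curve, pushed forward by $R^\bullet\pi_*$ with $R^0\pi_*\cS_i=0$ supplied by Observation \ref{obs:twist-1}, then restricted to $\widetilde\phi_i$-invariants --- is the standard route and is essentially how the cited result is established.

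There is, however, a concrete error in case (iii). You place the anchor at $p_1\in\mathcal{C}_1$. But the proposition prescribes that $\mathcal{C}_1$ be the open component (or, if both are closed, the one already containing the anchor of $\Gamma$), and Definitions \ref{def:closed normal} and \ref{def:open normal}(1) put the newly created anchor on the \emph{other} component. So the anchoring half-node is $p_2\in\mathcal{C}_2$, and it is $p_1$ that gets twisted down: $\widehat{\cS}_i|_{\mathcal{C}_1}=\nu^*\cS_i|_{\mathcal{C}_1}(-[p_1])$ while $\widehat{\cS}_i|_{\mathcal{C}_2}=\nu^*\cS_i|_{\mathcal{C}_2}$, the opposite of what you wrote. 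This is not cosmetic: carrying out your derivation with your own assignment, the twist in $\widehat{\cJ}_i=\omega_{\widehat{C}}\otimes\widehat{\cS}_i^\vee$ lands on $\mathcal{C}_1$, and the normalization sequence yields $0\to\widehat\cW_i^1\to\mu^*\iota_\Gamma^*\cW_i\to\widehat\cW_i^2\to 0$, the reverse of \eqref{eq:decompses2}. You state the correct sequence, so two errors have cancelled; the argument as written is internally inconsistent. With the correct assignment the $\cJ$-side twist sits on $\mathcal{C}_2$ (at the anchor $p_2$, not at the side where the $\cS$-twist-down lives), the kernel of restriction to $\mathcal{C}_1$ is exactly $H^0(\mathcal{C}_2,\nu^*\cJ_i(-[p_2]))=\widehat\cW_i^2$, and \eqref{eq:decompses2} follows. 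The same mislabeling propagates into (iii)(b), where the $\O(-[\cdot])$ twist defining $\widehat\Gamma'$ should be placed at $p_2$.

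A smaller point: in case (ii) you assert that ``no twist-down occurs at the half-nodes of $e$ in $\widehat{\cS}_i$.'' In fact both boundary half-nodes of a Ramond boundary edge have multiplicity $0$ and neither is an anchor, so both lie in $\RRR_i$ and both are twisted down. This does not affect the conclusion, since $\widehat{\cJ}_i=\nu^*\cJ_i$ either way (the twists on $\omega$ and on $\cS_i^\vee$ cancel), but the claim is false as written.
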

This is Proposition 4.7 in \cite{BCT:I}. It should be stressed that in the presence of internal edges these decomposition properties do not respect the automorphism group actions on the Witten bundles. Despite this subtlety, a multisection on $\widehat{\cW}$ canonically induces a multisection of $\cW$, see Remark 2.7 in \cite{BCT:II}.

\subsubsection{The tautological lines and direct sums of bundles}

For each $i \in I$, a cotangent line bundle $\mathbb{L}_i$ is defined on the moduli space of stable marked disks as the line bundle whose fiber over $(C, \phi, \Sigma, \{z_i\}, \{x_j\}, m^I, m^B)$ is the cotangent line $T^*_{z_i}\Sigma$ at the interior marked point $z_i$.
We define cotangent line bundles $\mathbb{L}_i$ on $\M^W_{0,k,l}$ by pullback under the morphism forgetting the graded spin structure, and, for any graded graph $\Gamma$, we let $\mathbb{L}_i^{\Gamma}$ be the pullback of $\mathbb{L}_i$ to $\M_{\Gamma}$.

The following important observations are discussed further in \cite[Section 3.5]{PST14}:
\begin{obs}\label{isomorphism forgetting non-alt for descendents}
Let $\Gamma$ be a pre-graded $W$-spin graph.
\begin{enumerate}[(i)]
\item If $i$ is a marking of an internal tail of $\Lambda\in\Conn(\detach(\Gamma))$,
\[\mathbb{L}_i^{\Gamma} = \pi^*\mathbb{L}_i^{\Lambda},\]
where $\pi: \oCM^W_{\Gamma} \rightarrow \oCM^W_{\Lambda}$ is the
composition of the map $\Detach_{E(\Gamma)}:\oCM^W_{\Gamma}
\rightarrow \oCM^W_{\detach(\Gamma)}$ of Notation \ref{nn:Detach}
with the projection to the factor $\oCM^W_{\Lambda}$.
\item If $\Gamma' = \text{for}_{\text{non-alt}}(\Gamma)$, then there exists a canonical morphism
\[t_{\Gamma} : \text{For}_{\text{non-alt}}^*\mathbb{L}_i^{\Gamma'} \rightarrow \mathbb{L}_i^{\Gamma}.\]
This morphism vanishes identically on the strata
where the component containing $z_i$ is contracted by the forgetful map. Away
from these strata, this morphism is an isomorphism. See Observations 3.31 and 3.32 of \cite{PST14}.
\item $\mathbb{L}_i^{\Gamma}$ is canonically oriented as a complex orbifold line bundle.
\end{enumerate}
\end{obs}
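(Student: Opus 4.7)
The plan is to reduce each of the three assertions to the corresponding facts established in \cite[Section 3.5]{PST14}, verifying that the additional spin data plays no obstructive role since $\mathbb{L}_i$ is defined by pullback from the moduli of marked disks under the forgetful map that drops the spin structure.

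For part (i), I would argue directly from the definition. The fiber of $\mathbb{L}_i^{\Gamma}$ over a point $[\Sigma] \in \oCM^W_{\Gamma}$ is the cotangent line $T^*_{z_i}\Sigma$ at the $i$-th internal marked point. The detaching map $\Detach_{E(\Gamma)}:\oCM^W_{\Gamma}\to\oCM^W_{\detach(\Gamma)}$ of Notation~\ref{nn:Detach} realizes the source as a product of the factors $\oCM^W_\Lambda$ for $\Lambda\in\Conn(\detach(\Gamma))$ (at least on the level of coarse moduli, up to the $q$-map subtlety discussed after diagram~\eqref{eq:instead_of_product}), and the tail marked $i$ persists as a smooth internal marked point on the factor corresponding to $\Lambda$. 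Since normalization at a node away from $z_i$ leaves the cotangent space at $z_i$ untouched, the universal property of $\mathbb{L}_i$ immediately yields $\mathbb{L}_i^{\Gamma}=\pi^*\mathbb{L}_i^{\Lambda}$.

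For part (ii), I would construct $t_\Gamma$ by pulling back the standard comparison morphism from \cite[Observations 3.31, 3.32]{PST14} under the spin-forgetting map. On the open locus of $\oCM^W_{\Gamma}$ where no irreducible component is contracted by $\mathrm{For}_{\mathrm{non-alt}}$, the universal curve pulled back from $\oCM^W_{\Gamma'}$ is canonically identified with the universal curve over $\oCM^W_\Gamma$ in a neighborhood of $z_i$, so $t_\Gamma$ is an isomorphism there. When the component carrying $z_i$ is stabilized away, the limit of the comparison map develops a zero of order one along the corresponding stratum, exactly as in the (non-spin) disk case treated in \cite{PST14}; the $W$-spin data is transparent to this argument because neither the universal curve nor the cotangent line is sensitive to the twisted spin bundles. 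The hard part, and the step where one must be slightly careful, is ensuring this morphism extends across the boundary strata of our orbifold with corners $\oCM^W_\Gamma$ compatibly with the real structure — but this is again a purely moduli-of-disks question, inherited via the forgetful map $\mathrm{For}_{\mathrm{spin}}$ and handled by the construction in \cite{PST14}.

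Part (iii) is immediate: $\mathbb{L}_i^\Gamma$ is pulled back from a complex orbifold line bundle (its fiber is the complex cotangent space $T^*_{z_i}\Sigma$ to the Riemann surface at an interior marked point), and any complex line bundle carries the canonical orientation determined by the complex structure on each fiber.
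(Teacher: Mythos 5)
Your proposal is correct and follows the same route the paper takes: the paper offers no argument beyond the citation to \cite[Section 3.5]{PST14}, relying precisely on the fact that $\mathbb{L}_i$ is defined by pullback under the spin-forgetting map, so all three assertions reduce to the non-spin statements for moduli of marked disks. Your expansions of (i)--(iii) — the cotangent space at $z_i$ being insensitive to detaching at other nodes, the comparison morphism of Observations 3.31--3.32 of \cite{PST14} pulled back through $\text{For}_{\text{spin}}$, and the complex orientation — are all accurate.
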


\begin{definition}\label{nn:Direct sums of bundles}
Let $\vecd=(d_i)_{i\in I}\in\NN^I$ be a vector labeled by a set $I\subseteq\Universe$ and let $\Gamma$ be a graded graph with $I(\Gamma)\subseteq I$. We write
\[
E_\Gamma(\vecd):=
\cW_\Gamma\oplus\bigoplus_{i\in I(\Gamma)}\CL_i^{\oplus d_i}.
\]
We call this the \emph{descendent-Witten bundle with respect to $\vecd$}.
We omit $\Gamma$ from the notation if it is clear from context. We call the $\cW_{i,\Gamma}$ the \emph{Witten bundle components}, while the other summands are called the \emph{tautological line components}.\footnote{We remark that
$\vecd$ contains data which may be irrelevant for the definition
of $E_{\Gamma}(\vecd)$, i.e., $d_i$ for $i\in I\setminus I(\Gamma)$. However,
this will prove to be convenient to avoid even heavier notation in proofs.}
\end{definition}

The decomposition properties of the bundles $E_\Gamma(\vecd)$ are easily deduced from the decomposition properties of the Witten bundles and of the tautological line given in Proposition~\ref{pr:decomposition} and Observation~\ref{isomorphism forgetting non-alt for descendents}.

\subsubsection{Multisections and the relative Euler class}\label{multisection primer}
\label{subsubsec:multisections}
We start by recalling some notation involving multisections of
orbifold bundles. Let $E$ be an orbifold vector bundle over an orbifold with corners $M$. We refer the reader to Appendix A of \cite{BCT:II} for more details.

\begin{definition}[Definition A.1 of \cite{BCT:II}]
A \emph{multisection} of $E \to M$ is a function
$$
\ess: E \to \Q_{\ge 0}
$$
that satisfies the following:
\begin{enumerate}
\item the function $\ess$ is well-defined with respect to the orbifold structure on $E$.
\item for each $x\in M$, there is an (\'etale) open neighborhood $U$, a nonempty finite set of smooth local sections $s_i: U \to E, i = 1, \dots, N$ and a collection of positive rational numbers $\mu_1, \ldots, \mu_N$ so that for all $y \in U$,
$$
\sum_{v \in E_y} \ess(y, v) = 1.
$$
and for all $v \in E_y$
$$
 \ess(y, v) = \sum_{i, s_i(y) = v} \mu_i.
$$
We call $s_i$ the \emph{local branches} and $\mu_i$ the \emph{weights}.
\end{enumerate}
\end{definition}

Note that in a chart where $N=1$, then the weight is $\mu_1 = 1$ and the support of $\ess(y,v)$ is the graph of the local branch $s_1$ as a section in $E$.

\begin{nn}\label{not: restriction notation}
Given $E$ and $M$ as above, we denote by $C^\infty_m(M, E)$ the space of smooth multisections. When the base space $M$ is understood from the context, we will drop it and write $C^\infty_m(E)$. If $U\subseteq M$, we will abuse notation and write $C^\infty_m(U, E)$ or $C^\infty_m(E|_U)$, rather than a pullback. In words, when we discuss a multisection  on a subspace of the base then we mean the total space of the bundle restricted to the subspace of the base. When we do this, the bundle should be obvious from the context. For example, if we have a multisection $\ess \in C_m^\infty(M, E)$ and $U\subseteq M$, we will write $\ess|_{U}$ for $\ess|_{E|_U}$ to compress notation.
\end{nn}

There are natural operations for addition of multisections and a multiplication of a multisection by a function. Let $\lambda\in C^\infty(M, \mathbb{R})$ be a real-valued smooth function and $\ess,\ess'\in C_m^\infty(M,E)$ be multisections given locally by $(s_i,\mu_i)_{i=1}^N,~~(s'_i,\mu'_i)_{i=1}^{N'},$ respectively.
The multisection $\lambda\ess$ is obtained from $\ess$ by replacing each $s_i$ with $\lambda s_i$, and keeping $\mu_i$ the same.
The multisection $\ess+\ess'$ is given locally by the collection of smooth local sections $\tilde{s}_{ij}=s_i+s'_j$ and $\tilde{\mu}_{ij}=\mu_i\mu_j$ (see \cite[Definition A.9]{PST14} or \cite[Appendix A]{BCT:II}).

\begin{definition}\label{def:union_of_sections}

Let $\ess_1,\ldots, \ess_m$ be multisections of an orbifold bundle $E\to M$ and take positive integers $a_1,\ldots, a_m \in \mathbb{Z}_{>0}$.
Define the multisection $\uplus_{i=1}^m a_i\ess_i(x,v)$ by
\[\uplus_{i=1}^m a_i\ess_i(x,v) = \frac{1}{\sum_{i=1}^m a_i}\sum a_i\ess_i(x,v).\]
\end{definition}

We note that the support of $\uplus_{i=1}^m \ess_i(x)$ is the properly weighted union of the supports of $\ess_i$. Observe  (or recall from Equation (A.2) of \cite{BCT:II}) that
\begin{equation}\label{eq:union_of_sections}
\uplus_{i=1}^m \ess=\ess.
\end{equation}

Let $p \in M$ and consider the smooth local sections $s_i$ of $\ess$ near $p$. If $p \in Z(s_i)$ for some $i$, then we say that $\ess$ \emph{vanishes} at $p$. Otherwise, we say $\ess$ is nonvanishing at $p$. Given a multisection $\ess$ the \emph{zero locus} is defined to be $Z(\ess) := \{ p \in M \ | \ \ess \text{ vanishes at $p$} \}$. A multisection $\ess$ is \emph{transverse to zero} if, for any point $x$ in the zero locus of $\ess$, there is a neighborhood $U$ so that  every local branch $s_i$ of $\ess$ in $U$ is transverse to the zero section, denoted $\ess \pitchfork 0$.

Given a global multisection $\ess$ of $E$ on $M$ with isolated zeros, one can define the weighted cardinality of its zero set as follows. In a neighborhood of a point $p\in M$, consider the local branches $s_i$ with weights $\mu_i$ for $i = 1, \ldots, N$ of the multisection $\ess$. Let $\deg_p(s_i)$ be the degree of vanishing of $s_i$ at $p$ (see Appendix A of \cite{BCT:II} for a precise definition). Note that this is zero if $s_i(p) \ne 0$. We then can say the weight of $\ess$ at $p$ is given by
\begin{equation}\label{def: weight}
\eps_p := \frac{1}{|G|} \sum_{i=1}^N \mu_i \deg_p(s_i)
\end{equation}
where $G$ is the isotropy group of the orbifold at $p$. Then we can define the weighted cardinality of the zero locus $\#Z(\ess)$ of $\ess$ to be
\begin{equation}\label{def: weighted cardinality}
\#Z(\ess) = \sum_{p \in M} \eps_p.
\end{equation}
We emphasize that both the zero locus (and the weighted cardinality of the zero locus) are (weighted) subsets of the base space $M$.

We now define the relative Euler class as constructed in Appendix A and Notation 3.2 of \cite{PST14}.
By Theorem A.14 of \cite{PST14}, if (i) $\dim M = \rank E$,
(ii) $\ess$ a nowhere vanishing multisection of $E|_{\partial M}$, and
(iii) $\tilde \ess \in C_m^\infty(E)$ is a transverse extension of $\ess$ to all of $M$, then
the weighted cardinality of the zero set $\#Z(\tilde{\ess})$ depends only on $\ess$ and not on the choice of $\tilde \ess$. For this reason, we will often conflate notation and denote a choice of extension $\tilde{\ess}$ just by $\ess$.  Moreover, this means that the homology class $[Z(\tilde \ess)] \in H_0(M, \Q)$ only depends on $E$ and $\ess$.
Note that this can be extended to a special case of noncompact orbifolds $M$ in the sense that one can replace $\partial M$ with $U \cup \partial M$ where $M\setminus (U \cup \partial M)$ is compact (see Appendix A of \cite{BCT:II}).

 \begin{definition}\label{def: relative Euler class}
 Let $E$ be an orbifold vector bundle over an orbifold with corners $M$. Take an open sub-orbifold $U \subseteq M$ such that $M \setminus U$ is a compact orbifold with corners.  Given a nowhere vanishing smooth multisection $\mathbf{s} \in C^\infty_m(E|_{\partial M \cup U})$, we denote by $e(E; \mathbf{s}) \in H^*(M, \partial M\cup U)$  the Poincar\'e dual of the zero locus $[Z(\tilde \ess)] \in H_0(M)$ of a transverse extension $\tilde \ess$ of $\mathbf{s}$ to $M$. We call  $e(E; \mathbf{s})$ \emph{the relative Euler class} of $E$ and $\ess$.
 \end{definition}

\begin{nn}
\label{not:pushforward}
Suppose given a covering map $F:M\rightarrow M'$ 
of orbifolds with corners  with (positive) finite degree $\deg F$. Suppose
also given
orbifold vector bundles $E, E'$ on $M$ and $M'$ respectively with
$E = F^*E'$. Then for a multisection $\ess\in C^{\infty}_m(E)$, we may
define a multisection $F_*\ess$ given by
\[
(F_*\ess)(x',v') =  {1\over \deg F}\sum_{x\in F^{-1}(x')} \ess(x,v_x)
\]
where $v_x \in E_x$ is the point of the fibre $E_x$ mapping to
$v'\in E'_{x'}$. 
Note this needs to be interpreted in an orbifold sense. For example, if
$M\rightarrow M'$ is a quotient by a finite group $G$ acting trivially
on $M$ but possibly non-trivially on $E$, then $F_*\ess$ coincides with
$\biguplus_{g\in G} g^*\ess$.

If $\ess$ is transverse to the zero-section of $E$, then it is easy to see
that $F_*\ess$ is transverse to the zero-section of $E'$, and if $\ess$ has
a finite number of zeros,
\begin{equation}
\label{eq:pushforward zeros}
\# Z(F_*\ess) = {1\over \deg F} \# Z(\ess), \quad \# Z(F^*F_* \ess) = 
\# Z(\ess).
\end{equation}
Here, the pull-back of a multisection is given
by the standard notion of pull-back of sections of vector bundles.
\end{nn}

We finish this subsection with the following useful tool.
 \begin{lemma}\label{lem:zero diff as homotopy}
Let $E \to M$ be an orbifold vector bundle over an orbifold with corners, with $\rank(E) = \dim(M)$, and let $U\subseteq M$ be an open set with $M\setminus U$
compact.
Let $\ess_0$ and $\ess_1$ be nowhere-vanishing smooth multisections on $U\cup \partial M$. Denote by $p : [0,1] \times M\to M$ the projection, and let
\[
H \in C_m^\infty([0,1]\times (U\cup \partial M),p^*E)
\]
where $U\subseteq M$ is an open set with $M\setminus U$ compact.
Suppose $H|_{\{i\}\times E}=\ess_i$, $H\pitchfork 0$ and $H$ is nowhere vanishing in $[0,1]\times U$. 
Then
\[
\int_M e(E ; \ess_1) - \int_M e(E ; \ess_0) = \# Z(H)=\# Z(H|_{[0,1] \times
\partial M}).
\]
\end{lemma}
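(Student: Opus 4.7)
The equality $\# Z(H) = \# Z(H|_{[0,1] \times \partial M})$ is immediate from the hypotheses: since $H$ is defined on $[0,1] \times (U \cup \partial M)$ and is nonvanishing on $[0,1] \times U$, its entire zero locus lies in $[0,1] \times \partial M$. The substance of the lemma is the first equality, and the plan is to obtain it by the standard oriented-cobordism argument, executed in the orbifold-with-corners / multisection framework developed in the appendices of \cite{PST14} and \cite{BCT:II}.

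The first step is to extend $H$ to a transverse multisection $\tilde H \in C^\infty_m([0,1] \times M, p^*E)$ that agrees with $H$ on $[0,1] \times (U \cup \partial M)$; such an extension exists by the standard multisection transversality results of \cite{PST14}. By construction $\tilde H = H$ on $[0,1] \times U$, where $H$ is nonvanishing, so $Z(\tilde H) \subseteq [0,1] \times (M \setminus U)$ and is therefore compact. Moreover $\tilde \ess_i := \tilde H|_{\{i\} \times M}$ is a transverse extension of $\ess_i$ from $U \cup \partial M$ to $M$, so $\#Z(\tilde \ess_i) = \int_M e(E; \ess_i)$ by Definition \ref{def: relative Euler class}. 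Because $\rank(p^*E) = \dim(M) = \dim([0,1] \times M) - 1$, transversality forces $Z(\tilde H)$ to be a compact $1$-dimensional suborbifold with corners of $[0,1] \times M$.

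The second step is to decompose its boundary, as a suborbifold-with-corners, as the disjoint union
\[
\partial Z(\tilde H) \;=\; Z(\tilde \ess_0) \,\sqcup\, Z(\tilde \ess_1) \,\sqcup\, Z\bigl(H|_{[0,1] \times \partial M}\bigr),
\]
each piece being the transverse intersection of $Z(\tilde H)$ with one of the codimension-one faces of $[0,1] \times M$. The third piece is precisely the object appearing on the right-hand side of the lemma; it is finite because $\rank(p^*E|_{[0,1]\times\partial M}) = \dim([0,1]\times\partial M)$ and $H|_{[0,1]\times\partial M}$ is transverse to zero.

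The final step is to invoke the fact that the signed count of the boundary of a compact oriented $1$-dimensional (orbi-)manifold with corners vanishes. Taking the induced boundary orientations (outward normal $-\partial_t$ at $\{0\} \times M$ and $+\partial_t$ at $\{1\} \times M$) one obtains
\[
\int_M e(E; \ess_1) \;-\; \int_M e(E; \ess_0) \;-\; \#Z\bigl(H|_{[0,1] \times \partial M}\bigr) \;=\; 0,
\]
where the sign of the last term is fixed by the induced boundary orientation along the $[0,1] \times \partial M$ face. The only genuinely subtle point is the orientation bookkeeping for transverse zero loci of multisections in the orbifold-with-corners setting; this is handled exactly as in \cite[Appendix A]{PST14}, and this is the step where one must be most careful. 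Everything else is a direct application of the standard $1$-manifold cobordism principle.
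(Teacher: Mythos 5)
Your proof is correct and is essentially the paper's argument: the paper simply states that this is the non-compact orbifold analogue of \cite[Lemma~3.55]{PST14} with a verbatim proof, and that proof is exactly the transverse-extension / one-dimensional-cobordism argument you give, with the orientation conventions fixed as in \cite[Appendix~A]{PST14}.
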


This lemma is the non-compact orbifold analog of \cite[Lemma 3.55]{PST14} and the proof is verbatim.

\begin{rmk}
Note that we orient $[0,1]$ by its natural orientation and the boundary of $M$ by its induced orientation. We write the unit interval first to denote that its orientation is first in the appropriate ordering of the orientations of the product. This will become important in Subsection~\ref{subsec:or} (see, e.g.,  Observation~\ref{rmk:orientation and exact sequence}).
\end{rmk}

\subsection{Coherent multisections and their assembling}\label{subsec:coherent}

This subsection is technical and a reader trying to  understand only the main thrust of the paper may skip it.
As indicated in the introduction, the boundary conditions of interest
will be inductively defined, and lead to the definition of canonical
families of multisections. However, the key topological recursion result
of the paper, Theorem \ref{thm:open TRR}, requires understanding the
vanishing of certain sections along strata of the moduli spaces where the
disks have one internal node. Such strata are not actually contained
in the topological boundary of the moduli space, so the behaviour of canonical
sections along these strata is not particularly controlled.
Thus it will be important in the
proof of open topological recursion to choose canonical
multisections also satisfying an inductive structure along these strata:
these sections will be called \emph{special canonical multisections},
see Definition \ref{def: special canonical}. Ideally one would
like to specify an element of the fibre of the Witten bundle at a point
of the moduli space corresponding to a disk with an internal node
by gluing sections of $\widehat{J}_1, \widehat{J}_2$ on the two curves
obtained by normalizing
the internal node. However, in the case of Ramond nodes, the gluing
properties of the Witten bundle as stated in Lemma \ref{pr:decomposition}
are subtle.
This leads to the technical complication addressed in this subsection.

In fact this notion of special canonical only plays a role in the
proof of Lemma \ref{lem:closed_contribution}, whose proof follows
the proof of the analogous result in \cite{BCT:II}. However, there are
some slight complications in the generalization of the definitions
of coherent multisections from \cite{BCT:II}, and hence we must devote
some space to this concept here, even though the use of this concept
will be largely invisible for the remainder of the paper.

\subsubsection{Coherent multisections}
Consider a (possibly disconnected)  stable graded
$W$-spin dual graph $\Gamma$.
For any twisted $r_i$-spin structure on a $W$-spin surface
with dual graph $\Gamma$, we set
\[J'_i := J_i \otimes \O\left(\sum_t [z_t]\right),\]
where the sum is over all anchors $t$ with twist $\tw_i(t)=-1$.
Effectively, we are changing the twists of the anchors to $r-1$,
as described in Proposition~\ref{pr:decomposition}(iii)(b).

Similarly, on the universal curve over $\oCM_{\Gamma}^W$, we take a modified universal line bundle
\[\mathcal{J}'_i := \mathcal{J}_i \otimes \O \left( \sum_t \Delta_{z_t}\right),\]
where $\Delta_{z_t}$ is the divisor in the universal curve corresponding to $z_t$.
Define an orbifold vector bundle $\TRAM_{\Gamma}$ on $\M_{\Gamma}^W$ by
\[\TRAM_{\Gamma} = \bigoplus_{i=1}^a\bigoplus_{\substack{\text{anchors $t$,} \\ \tw_i(t)= -1}} \sigma_{z_t}^*\mathcal{J}'_i,\]
where $\sigma_{z_t}$ is the corresponding section of the universal curve. The second summation only runs over anchors $t$ that have $\tw_i(t)= -1$  for the given $i$.\footnote{Consider the special case that all anchors are either (1) tails
which are Neveu-Schwarz
for all $i\in [a]$ or (2) contracted boundary tails. Then $\TRAM_\Gamma$ is the rank 0 vector bundle on $\oCM_\Gamma^{W}$.
If $\Gamma$ is connected, then $\mathcal{R}_{\Gamma}$ contains at most
one summand for each $i$.
Throughout what follows (except in the current subsection), the notation $\TRAM_\Gamma$ will not be used for graphs that are not closed, to avoid confusion.}
We use the notation $\TRAM_{\Gamma}$ both for the bundle and for its total space, and we denote by
\[\TAU: \TRAM_{\Gamma} \rightarrow \M_{\Gamma}^{W}\]
the projection.
For any degeneration $\Gamma' \in \d\Gamma$ of the graph $\Gamma$, there is a map $\TRAM_{\Gamma'}\rightarrow \TRAM_\Gamma$ between total spaces, which is an embedding on the underlying coarse level.

We denote
by $\cW_i'$ the bundle
\[{\cW}_i' = (R^0\pi_*\mathcal{J}_i')_+\]
on $\oCM_\Gamma^W$, and by $\boldsymbol{\cW}_i'$ the pullback of the bundle $(R^0\pi_*\mathcal{J}_i')_+$ to $\TRAM_\Gamma$ via $\TAU$.

\begin{definition}
\label{def:coherent}
Let $\Gamma$ be a connected graded $W$-spin dual graph, and let $s$ be a multisection of $\boldsymbol{\mathcal{W}}'_i$ over a subset $U\subset\TRAM_{\Gamma}$ for $i\in[a]$.  We say that $\ess$ is {\it coherent} if either:
\begin{enumerate}
\item the anchor of $\Gamma$ does not have $\tw_i=-1$; or
\item take any point $\zeta=(\Sigma, \vec{u}) \in U$. This consists of
data of a graded $W$-spin surface $\Sigma$ with bundles
$J_i'$ for each $i$ and an element $\vec{u}$ of the fibre
of $\TRAM_{\Gamma}$ over the corresponding point of $\oCM_{\Gamma}^W$.
The choice of $\vec{u}$ is equivalent to the data
$\vec{u}_t \in \bigoplus_{i\big|\tw_i(z_t)=-1}(J'_i)_{z_t}$ for
each anchor $t$ which has twist $-1$ for some $i$. Then, for any local branch $s_j$ of the multisection $\ess$, the element $s_j(\zeta)\in H^0(\Sigma,J'_i)$
satisfies
\[\ev_{z_t}s_j(\zeta) = (u_t)_i.\]
Here $\ev_{z_t}$ denotes the value of a section of the bundle
$J_i'$ at the point $z_t$, and $(u_t)_i$ denotes
the component of $\vec{u}_t$ which corresponds to the $i^{th}$ bundle.
\end{enumerate}

A \emph{coherent multisection} of $\boldsymbol{\cW}':=\bigoplus_{i=1}^a \boldsymbol{\cW}'_i$ is a direct
sum of coherent multisections of $\boldsymbol{\cW}'_i,i=1,\ldots,a$.

If $\Gamma$ is instead a disconnected graded $W$-spin graph, we say that a multisection $\ess$ of $\boldsymbol{\cW}'_i$ or $\boldsymbol{\cW}'$ over $U \subset \TRAM_{\Gamma}$ is coherent if it can be written as the restriction to $U$ of
\[\boxplus_{\Lambda\in\Conn(\Gamma)}\ess_\Lambda,\]
where each $\ess_\Lambda$ is a coherent multisection of $(\boldsymbol{\cW}'_i)_\Lambda \rightarrow U_\Lambda$ or $\boldsymbol{\cW}'_\Lambda \rightarrow U_\Lambda,$ respectively, for some $U_\Lambda\subseteq\TRAM_{\Lambda}$.
\end{definition}

Note that, if $\ess$ is a coherent multisection of $\boldsymbol{\cW}'_i \rightarrow \TRAM_{\Gamma}$ and $\zeta=(\Sigma, \vec{0}) \in\TRAM_{\Gamma}$, the multisection $\ess(\zeta)$ of $J'_i$ vanishes at each anchor $z_t$ with twist $-1$, and thus it is induced by a multisection of $J_i$.   In other words, the restriction of a coherent multisection $\ess$ to $\M_{\Gamma}^{W}$ is canonically identified with a multisection of $\cW_i \rightarrow \M_{\Gamma}^{W}$, which we will call $\overline{\ess}$.  If the graded $W$-spin dual graph $\Gamma$ has no anchor $z_t$ with twist $\tw_i(t) =-1$ for some $i \in [a]$, then $\overline{
\ess} = \ess$.

We note, also, that a multisection of $\TAU^*(\cW\to\oCM_\Gamma^{W})$ induces a multisection of $\boldsymbol{\cW}' \rightarrow \TRAM_{\Gamma}$ which vanishes along the $z_t$. Adding this multisection to a coherent multisection of $\boldsymbol{\cW}' \rightarrow \TRAM_{\Gamma}$ thus yields another coherent multisection.

\subsubsection{The assembling operation} 
In \S4.1.3 of \cite{BCT:II}, the authors define the \emph{assembling operation} $\Ass$. In the $r$-spin case, if $E'$ is a subset of internal edges of
$\Gamma$, the assembling operation takes a coherent multisection of $\ess$ of $\boldsymbol{\cW}'$ on  $\TRAM_{\detach_{E'}\Gamma}$ and yields a coherent multisection $\Ass_{\Gamma,E'}(\ess)$ of $\boldsymbol{\cW}'$ on $\TRAM_{\Gamma}$.

In our situation, this operation generalizes to coherent multisections of $\boldsymbol{\cW}'$
with an analogous construction.
We refer the reader to \S4.1.3 in \cite{BCT:II} for details, only sketching the construction and the slight differences needed to generalize from the $r$-spin
to the $W$-spin case, and then mentioning
the relevant properties for this paper.

We first discuss the assembling operation when
$\Gamma$ is a connected graded $W$-spin graph and
$E'=\{e\}$ a single internal edge.
Let $\ess=\oplus_{i\in [a]} \ess_i$ be a coherent multisection of
$\boldsymbol{\cW}'\to\TRAM_{\detach_{e}\Gamma}$, with $\ess_i$ a coherent
multisection
of $\boldsymbol{\cW}'_i$. Let $\Gamma_1$ and $\Gamma_2$ be the connected components of $\detach_{e}\Gamma$, with half-edges $h_1$ and $h_2$ respectively
corresponding to the edge $e$ of $\Gamma$. Without loss of generality, we choose $\Gamma_1$ so that it contains the open vertices or the anchor of $\Gamma$.
Thus $\Gamma_2$ will contain an additional anchor, the half-edge $h_2$.

A  point of $\TRAM_{\Gamma}$ consists of a
$W$-spin surface $\Sigma$ with dual graph lying in $\partial^!\Gamma$
along with a choice of some additional data $\vec{u}_t$ at the anchor $t$
of $\Sigma$, if it has one, indexed by the tail $t$.
We obtain a corresponding normalization
$\widehat\Sigma = \Sigma_1\sqcup \Sigma_2$. Then $(\Sigma_1, \vec{u}_t)$
determines a point of $\TRAM_{\Gamma_1}$ and $\Sigma_2$ determines
a point of $\oCM^W_{\Gamma_2}$. In particular, there is a natural morphism
$\TRAM_{\Gamma}\rightarrow \TRAM_{\Gamma_1}\times \oCM_{\Gamma_2}^W$.

Let $z_1$, $z_2$ be the half-nodes of
$\Sigma_1\cup\Sigma_2$ corresponding to the half-edges $h_1$, $h_2$ of $\Gamma$.
For each $i$ such that $\tw_i(h_2)=-1$,
we choose an identification $\rho_i$ of $(\mathcal{J}'_{1,i})_{z_1}$ and
$(\mathcal{J}_{2,i}')_{z_2}$; there are $r_i$ possible choices which differ by
$r_i^{th}$ roots of unity.\footnote{One can see this choice does not affect
the result, as the different choices of identification are related by
automorphisms of the $W$-spin curve $\Sigma_2$ given by rescaling the
spin bundles, see Remark~\ref{rmk: automorphisms}.}
In any event, we may write $\ess_i = \ess_i^1\boxplus \ess_i^2$, where
$\ess_i^1$ and $\ess_i^2$ are coherent multisections over $\TRAM_{\Gamma_1}$
and $\TRAM_{\Gamma_2}$. This allows us to lift the morphism
$\TRAM_{\Gamma}\rightarrow \TRAM_{\Gamma_1}\times \oCM^W_{\Gamma_2}$
to a morphism $\Detach_e:\TRAM_{\Gamma}\rightarrow \TRAM_{\Gamma_1}
\times \TRAM_{\Gamma_2}$ by taking $(\Sigma,\vec{u}_t)$ to the
pair
$\big((\Sigma_1,\vec{u}_t),(\Sigma_2,(\rho_i(\ess_i^1(z_1)))_{i|\tw_i(z_2)=-1})
\big)$.

The
$i^{th}$ component of the multisection $\Ass_{\Gamma,e}(\ess)$ is constructed in the following way. We have two cases:
\begin{itemize}
\item
The edge $e$ is Neveu-Schwarz for the $i^{th}$ Witten bundle summand.
We can use the identification of the Witten bundle given in Proposition~\ref{pr:decomposition}(\ref{it:NS}). The
$i^{th}$ component of the multisection $\Ass_{\Gamma,e}(\ess)$ is then,
under this identification, the pullback
$\Detach_e^*(\ess^1_i\boxplus \ess_i^2)$.
\item
The edge $e$ is Ramond for the $i^{th}$ Witten bundle summand. We are now in the case of Proposition \ref{pr:decomposition}(\ref{it:internalRamondWitten}).
In this case $\Detach_e^*(\ess^1_i\boxplus \ess_i^2)$ still defines
a multisection of $\boldsymbol{\cW}_i$ using
Proposition~\ref{pr:decomposition}(iii)(b), which follows from the fact
that $J'_i$ is obtained by gluing $J'_{1,i}$ and $J'_{2,i}$ via the
identification $\rho_i$. In particular, $\ess_i^1(\Sigma_1,\vec{u}_t)$
and $\ess^2_i\big(\Sigma_2,(\rho_i(\ess_i^1(z_1)))_{i|\tw_i(z_2)=-1})\big)$
glue at the node to give a section of $J'_i$. This gives the 
$i^{th}$ component of the multisection $\Ass_{\Gamma,e}(\ess)$.
\end{itemize}

In the Ramond case,
 $\Ass_{\Gamma,e}(\ess)$ has the property that when it is projected to
$\boldsymbol{\cW}'_i\to\TRAM_{\Gamma_1}$ via the exact sequence \eqref{eq:decompses2}, one obtains $\ess_1$.
In the locus where $\ess_1=0,$ the multisection $\Ass_{\Gamma,e}(\ess)$ is the image of $\overline{\ess}_2$ via the inclusion given by the exact sequence.

It is also worth mentioning that using the assembling operation is
associative (see Observation 4.3 of \cite{BCT:II}). That is, let $\Gamma$ be a graded $W$-spin dual graph. Let $E_1$ and $E_2$ be disjoint sets of edges in $\Gamma$. Take $\Gamma_i$ be the graph obtained from detaching edges $E_i$ from $\Gamma$ and $\Gamma_{12}$ be the graph from detaching $E_1\cup E_2$.
If $\ess$ is a coherent multisection of $\boldsymbol{\cW}' \rightarrow \TRAM_{\Gamma_{12}}$, then
\[\Ass_{\Gamma,E_1}(\Ass_{\Gamma_1,E_2}(\ess)) = \Ass_{\Gamma,E_1\cup E_2}(\ess).\]

\subsection{Rank two}\label{subsec:Rank 2}
In the special case $W=x^r+y^s$, we call a $W$-spin disk an $\RS$-disk and a $W$-spin graph an $(r,s)$-graph.
In this case, we write the twist vector as $(a,b)$ instead of $(a_1, a_2)$.

Recall that we have the set $\Universe$ used to mark points, containing
a copy of $\N$. We will, from now on, assume given a function
\[
\tw:\N \rightarrow
\{0,\ldots,r-1\}\times\{0,\ldots,s-1\}
\]
associating to each $i\in\N\subseteq
\Universe$ a fixed choice of twist $(a_i,b_i)$.
We assume that for each $(a,b)\in \{0,\ldots,r-1\}\times \{0,\ldots,s-1\}$,
the inverse image $\tw^{-1}(a,b)$ is infinite.

For a pre-graded $\RS$-disk,
we call a boundary marked point with $\tw=(r-2,0),\alt=(1,0)$ an $r$-point. We similarly define an $s$-point. The $r$-points and $s$-points are the two types
of singly twisted (see Definition \ref{def:open_W_graded2})
points on a pre-graded $\RS$-spin disk. We shall denote the $r$-points by $x_i$ and the $s$-points by $y_j$.
We use the same terminology for boundary tails or boundary half-nodes
($r$-tails et cetera), or conflate the terminology.

As in Definition \ref{def:open_W_graded2}(2), we write for a pre-graded open $\RS$-spin surface:
\begin{itemize}
\item  $k_1=k_{\{1\}}$, the number of $(r-2,0)$-twisted  boundary points,
\item $k_2=k_{\{2\}}$, the number of $(0,s-2)$-twisted boundary points, and
\item  $k_{12}=k_{\{1,2\}}$, the number of $(r-2,s-2)$-twisted points.
\end{itemize}

\begin{obs}\label{obs:closed}
It follows from \eqref{eq:close_rank1_general}
that the twists $(a_i,b_i)$ for a twisted $\RS$-spin structure on a closed, smooth marked genus $0$ orbifold Riemann surface satisfy
\begin{equation}\label{eq:close_rank1}
e_1=\frac{\sum a_i -(r-2)}{r}\in \Z,~~e_2=\frac{\sum b_i -(s-2)}{s}\in \Z.
\end{equation}
We have that the complex ranks of the two Witten bundle components in the closed case are $e_1,e_2$.
\end{obs}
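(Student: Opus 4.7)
The plan is to deduce this observation by applying the rank-one case of the excerpt (as already developed for $r$-spin theory) separately to each of the two spin structures carried by an $(r,s)$-spin surface. Recall that a closed twisted $\RS$-spin structure on a smooth marked genus $0$ orbifold Riemann surface is, by Definition in \S\ref{subsec:closed graded}, a tuple $(C,S_1,S_2;\tau_1,\tau_2)$ in which $(S_1,\tau_1)$ is a twisted $r$-spin structure (with twists $a_i$ at the marked points) and $(S_2,\tau_2)$ is a twisted $s$-spin structure (with twists $b_i$). The two constraints are therefore independent of one another.

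First I would verify the integrality conditions. The congruence \eqref{eq:close_rank1_general}, namely that $(\sum a_i - (r-2))/r \in \Z$, was derived in Observation \ref{obs:closed_constraints} from Proposition \ref{obs:twists_on_coarse} applied to the coarse line bundle $|S|$ of a twisted $r$-spin structure on a smooth closed genus zero curve. Apply this statement verbatim to $(S_1,\tau_1)$ to obtain the first half of \eqref{eq:close_rank1}, and then apply it to $(S_2,\tau_2)$, with $r$ replaced by $s$ and $a_i$ by $b_i$, to obtain the second half. This gives $e_1,e_2\in\Z$.

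Next I would compute the complex ranks of the two Witten bundle summands. By Definition \ref{def:open Witten bundles} (and already in the closed case introduced in \S\ref{subsec:WittenBundle}), the Witten bundle decomposes as $\cW=\cW_1\oplus\cW_2$, with $\cW_i$ pulled back from the Witten bundle of the corresponding $r_i$-spin moduli via $\text{For}_{\text{spin}\neq i}$. The complex rank of the $i^{\mathrm{th}}$ closed Witten bundle summand was computed in \eqref{eq:graded_closed_rank} to be $(\sum_j \tw_i(z_j)-(r_i-2))/r_i$, which is an integer by the integrality just established. Specializing $i=1$ and $i=2$ yields ranks $e_1$ and $e_2$ respectively, concluding the proof.

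There is essentially no obstacle here; the only point requiring a little care is to notice that the two spin structures decouple completely in the Fermat rank-two case, so that both the integrality relations of Observation \ref{obs:closed_constraints} and the rank formula \eqref{eq:graded_closed_rank} apply independently in each factor.
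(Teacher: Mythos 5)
Your proposal is correct and matches the paper's intent exactly: the observation is stated as an immediate consequence of \eqref{eq:close_rank1_general} applied separately to the $r$-spin and $s$-spin structures, with the ranks read off from \eqref{eq:graded_closed_rank}, which is precisely what you do. No further comment is needed.
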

Similarly, Observation \ref{obs:open_rank_pre_graded_W} gives the following:
\begin{obs}
\label{obs:open_rank1}
The internal twists $(a_i,b_i)$ for a pre-graded $\RS$-spin structure on a smooth genus $0$ marked orbifold Riemann surface with boundary satisfy:
\begin{equation}\label{eq:open_rank1}
e_1=\frac{2\sum a_i + (k_1+k_{12}-1)(r-2)}{r}\in \Z,~~e_2=\frac{2\sum b_i + (k_2+k_{12}-1)(s-2)}{s}\in \Z,
\end{equation}
and
\begin{equation}\label{eq:open_rank2}
e_1\equiv k_1+k_{12}-1~(\text{mod}~2),~~e_2\equiv k_2+k_{12}-1~(\text{mod}~2).
\end{equation}
Here, $e_1,e_2$ are the ranks of the real Witten bundles, see \eqref{eq:graded_open_rank}.
\end{obs}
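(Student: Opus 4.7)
The statement is a direct specialization of Observation \ref{obs:open_rank_pre_graded_W} to the rank two case, so my plan is simply to unwind the indexing in that general result when $a=2$, $r_1=r$, $r_2=s$.

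First, I would identify the labeling of the boundary twist types. In the notation of Definition \ref{def:open_W_graded2}(2), the parameter $T$ ranges over subsets of $[a]=\{1,2\}$, and $k_T$ counts boundary tails $p$ with $(\tw_j(p),\alt_j(p))=(r_j-2,1)$ exactly when $j\in T$. Non-alternating boundary tails with $T=\emptyset$ do not appear on a (pre-)graded disk (or contribute trivially), so the only cases that matter are $T=\{1\}$, $T=\{2\}$, $T=\{1,2\}$. By definition these counts are respectively $k_1$, $k_2$, and $k_{12}$.

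Next, I would evaluate the index sums $\sum_{T\ni i}k_T$ appearing in \eqref{eq:open_rank1_W} and \eqref{eq:open_rank2_W}. For $i=1$, the subsets of $\{1,2\}$ containing $1$ are $\{1\}$ and $\{1,2\}$, giving $\sum_{T\ni 1}k_T=k_1+k_{12}$. Symmetrically, for $i=2$ the sum equals $k_2+k_{12}$. Plugging these into the two congruence statements of Observation \ref{obs:open_rank_pre_graded_W} yields exactly the integrality relations \eqref{eq:open_rank1} and the parity relations \eqref{eq:open_rank2}, upon setting $(a_{j,1},a_{j,2})=(a_j,b_j)$ and $(r_1,r_2)=(r,s)$.

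Finally, to confirm that $e_1$ and $e_2$ are indeed the real ranks of the Witten bundle components, I would invoke the general rank formula \eqref{eq:graded_open_rank}: the real rank of $\mathcal{W}^{\text{pre}}_i$ on a smooth pre-graded $W$-spin disk is
\[
\frac{\sum_j \tw_i(z_j)+(-1+\sum_{T: i\in T}k_T)(r_i-2)}{r_i},
\]
and substituting the same identifications $(a_{j,1},a_{j,2})=(a_j,b_j)$, $(r_1,r_2)=(r,s)$, and $\sum_{T\ni i}k_T$ as above gives $e_1$ for $\mathcal{W}_1$ and $e_2$ for $\mathcal{W}_2$, matching \eqref{eq:open_rank1}. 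No further argument is needed—the observation is purely a notational unpacking—so I anticipate no genuine obstacle, only the bookkeeping of $T$-subsets which I have carried out above.
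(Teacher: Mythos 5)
Your proposal is correct and matches the paper exactly: the paper states this observation as an immediate specialization of Observation \ref{obs:open_rank_pre_graded_W} to $a=2$, with the same identification $\sum_{T\ni 1}k_T=k_1+k_{12}$, $\sum_{T\ni 2}k_T=k_2+k_{12}$ and the rank formula \eqref{eq:graded_open_rank}. No further comment is needed.
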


For the benefit of the reader we now specialize Notation \ref{nn:repeating graphs} and Definition \ref{ModSpaceOfMarkedDiscs} to the rank $2$ case.
\begin{nn}\label{balanced rank 2 gammas}
For a finite set $I\subseteq \N\subseteq \Universe$,
write $\Gamma=\Gamma^W_{0,k_1,k_2, k_{12},\{(a_i,b_i)\}_{i \in I}}$ for the graded graph with
\begin{itemize}
\item one vertex $v$;
\item $|I|$ internal tails marked bijectively to $I$, with the $i$th
tail having twist $\operatorname{tw} = (a_i,b_i)$;
\item $k_1$ boundary $r$-tails and $k_2$ boundary $s$-tails marked by $\emptyset$ (that is, unlabeled);
\item $k_{12}$ boundary tails with $\tw=(r-2,s-2)$ and $\alt=(1,1)$  marked by $\emptyset$;
\item $\Pi = (\Pi_v)$ consisting of all possible cyclic orders for boundary tails.
\end{itemize}
We define $\oCM_{0,k_1,k_2, k_{12},\{(a_i,b_i)\}_{i \in I}}^{W} := \oCM_\Gamma^W,$ and $\oCM_{0,k_1,k_2, k_{12},\{(a_i,b_i)\}_{i \in I}}^{W,\text{labeled}}$ for its labeled version.
\end{nn}

\begin{rmk}\label{rmk:cyclic_orders}
There is an important difference between the conventions for our moduli and those of \cite{PST14,BCT:I, BCT:II}. In those references, boundary markings are labeled; however, we do not label them. This corresponds to taking some quotients in the moduli defined in those papers, see Step (F) of the proof of
Proposition \ref{prop:orbi_w_corners}.

In the case where $k_{12}=1$, the labelled moduli space $\oCM_{0,k_1,k_2, k_{12},\{(a_i,b_i)\}_{i \in I}}^{W,\text{labeled}}$ has $(k_1+k_2)!$ connected components. This can be understood by computing the number of cyclic orders of $k_1+k_2+k_{12}$ boundary markings. However, in the unlabeled case, there are additional automorphisms of the disks which permute the boundary markings that preserve the twists.  Without loss of generality, we can view any cyclic ordering as an ordering that starts at the root. There are  $\binom{k_1+k_2}{k_1}$ orderings starting with the root that preserve the relative ordering of $r$-points and $s$-points. Each of these orderings correspond to a connected component of the moduli $\oCM_{0,k_1,k_2, k_{12},\{(a_i,b_i)\}_{i \in I}}^{W}$.
\end{rmk}

\begin{ex}
Consider Example~\ref{ex: hexagon} where we have one internal marked point and three boundary marked points. Suppose further that the boundary marked points consist of one root, one $r$-point, and one $s$-point.
Here, the moduli $\oCM_{0,1,1, 1,\{(1,1)\}}^{W}$ has two connected components, depending on the cyclic orderings of the three boundary markings.

On the other hand, if we suppose that the boundary marked points consist of one root and two $r$-points, then the moduli $\oCM_{0,2,0, 1,\{(2,0)\}}^{W}$ has only one connected component. Indeed, the two $r$-points will become indistinguishable and there will exist an isomorphism for the $W$-spin disks that will identify the two hexagons. This can be viewed as quotienting the labeled moduli space $\oCM_{0,2,0, 1,\{(2,0)\}}^{W,\text{labeled}}$ (consisting of two connected components) by this extra involution, so the moduli $\oCM_{0,2,0, 1,\{(2,0)\}}^{W}$ is the hexagon in Figure~\ref{fig:moduliExample}.
\end{ex}

\begin{nn}\label{nn:r(I),s(I),m(I)}
Let $I$ index a set of internal markings with twists $(a_i,b_i)$ and $\vecd=(d_i)_{i\in I}$ be a vector of descendents at each internal marking. Write $r(I) \in \{0,\dots, r-1\}$ and $s(I) \in \{0,\dots, s-1\}$ for the unique numbers so that 
\begin{align}\label{eq:r(I),s(I),m(I)}
\begin{split}
r(I) \equiv {} & \sum_{j\in I}a_j~(\text{mod}~r),\\
s(I)\equiv {} & \sum_{j\in I}b_j~(\text{mod}~s), \text{ and }\\
m(I,\vecd):= {} &rs + \sum_{j\in I} \left(sa_j+rb_j+rs(d_j-1)\right).
\end{split}
\end{align}
If $J\subseteq I$, we write $m(J,\vecd):= m(J, \vecd|_J)$.
\end{nn}

\begin{prop}\label{prop:balanced}
Consider a smooth rooted connected $\RS$-graph $\Gamma:=\Gamma_{0,k_1,k_2,1,\{(a_i,b_i)\}_{i\in I}}$ with a set of internal tails $I$.  Then we have the following:
\begin{enumerate}[(a)]
\item  $k_1\equiv r(I)~(\text{mod} ~r)$ and $k_2 \equiv s(I)~(\text{mod} ~s)$ if and only if the space $\oCM^W_{\Gamma}$ is nonempty.
\item Assume that the space $\oCM^W_{\Gamma}$ is nonempty. The equation $sk_1+rk_2=m(I,\vecd)$ holds if and only if
\begin{equation}\label{balancedModuli}
\rank(E_{\Gamma}(\vecd))=\dim\oCM^W_{\Gamma}.
\end{equation}
\end{enumerate}
\end{prop}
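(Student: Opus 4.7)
The plan is to prove both parts by reducing them to the numerical identities already established in the paper, with part (a) following from the integrality and parity constraints on twists and part (b) being a direct dimension count.

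For part (a), I would apply the rank-2 specialization of Observation \ref{obs:open_rank_pre_graded_W} (i.e., Observation \ref{obs:open_rank1}) with $k_{12}=1$. The integrality of $e_1$ gives $2\sum_i a_i + k_1(r-2) \equiv 0 \pmod{r}$, which rearranges to $2(\sum_i a_i - k_1) \equiv 0 \pmod{r}$. If $r$ is odd, this alone yields $k_1 \equiv r(I) \pmod{r}$. If $r$ is even, it only yields the congruence modulo $r/2$, and one then invokes the parity condition \eqref{eq:open_rank2_W}, $e_1 \equiv k_1 \pmod{2}$, to upgrade the congruence to hold modulo $r$. The same argument applied to the $s$-spin component gives $k_2 \equiv s(I) \pmod{s}$. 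This shows the congruences are \emph{necessary} for non-emptiness. For the converse, assuming both congruences hold, we can then apply Proposition \ref{prop:existence_stable} separately to the $r$-spin and $s$-spin factors obtained via $\text{for}_{\spinqi}\Gamma$: the congruences exactly translate into the hypotheses on twists that produce a graded $r$-spin (respectively $s$-spin) disk, and one combines them to produce an $\RS$-spin disk.

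For part (b), I would simply compute both sides. By Proposition \ref{prop:orbi_w_corners}, since $\Gamma$ has one open vertex, $k_1+k_2+1$ boundary tails, and $|I|$ internal tails,
\[
\dim \oCM^W_\Gamma = (k_1+k_2+1) + 2|I| - 3 = k_1+k_2+2|I|-2.
\]
The real rank of $E_\Gamma(\vecd) = \cW_1 \oplus \cW_2 \oplus \bigoplus_{i\in I}\CL_i^{\oplus d_i}$ is, using \eqref{eq:open_rank1} and the fact that each $\CL_i$ is a complex line bundle (real rank $2$),
\[
\rank E_\Gamma(\vecd) = \frac{2\sum_i a_i + k_1(r-2)}{r} + \frac{2\sum_i b_i + k_2(s-2)}{s} + 2\sum_i d_i.
\]
Setting $\dim = \rank$ and clearing denominators by multiplying through by $rs/2$ gives
\[
sk_1 + rk_2 = rs + s\sum_i a_i + r\sum_i b_i + rs\sum_i d_i - rs|I|,
\]
which is precisely $m(I,\vecd)$ as defined in Notation \ref{nn:r(I),s(I),m(I)}.

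There is no serious obstacle here: part (a) is a careful congruence bookkeeping which is slightly delicate only because the parity condition is needed to upgrade the integrality mod $r/2$ to mod $r$ in the even case, and part (b) is pure arithmetic from the formulas already in hand. The only thing to double-check is that the real rank of the descendent summand is $2\sum d_i$ rather than $\sum d_i$, which follows since the cotangent lines $\CL_i$ at \emph{internal} marked points are complex.
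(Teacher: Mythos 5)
Your proposal is correct and follows essentially the same route as the paper: part (a) is obtained by combining the integrality of $e_1,e_2$ from Observation \ref{obs:open_rank1} with the parity condition \eqref{eq:open_rank2} to force $r\mid(\sum a_i-k_1)$ and $s\mid(\sum b_i-k_2)$, with the converse supplied by Proposition \ref{prop:existence_stable}; part (b) is the same dimension-versus-rank computation cleared of denominators. Your final remark that the descendent summands contribute real rank $2\sum_i d_i$ is exactly the point the paper's equation also relies on, so there is nothing to fix.
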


\begin{proof}
(a) The backwards implication is immediate from \eqref{eq:open_rank1} and \eqref{eq:open_rank2}. We know, since $k_{12}(\Gamma) =1$, that
$$
e_1=k_1 + \frac{2(\sum a_i - k_1)}{r}\in \Z, \qquad e_1\equiv k_1 \pmod{2}.
$$
This implies that $ \frac{2(\sum a_i - k_1)}{r}\in 2\Z$, which requires that $r$ divides $(\sum a_i - k_1)$. The analogue for the $s(I)$ follows similarly. The forwards implication follows from Proposition~\ref{prop:existence_stable}.

(b) We start by noting that Equation~\eqref{balancedModuli} is equivalent to the equation
\begin{equation}\begin{aligned}\label{balancedequationstandard} \sum_{i\in I}2d_i+\frac{2\sum_{i\in I(\Gamma)}a_i+(k_1(\Gamma)+k_{12}(\Gamma)-1)(r-2)}{r}&+\frac{2\sum_{i\in I(\Gamma)}b_i+(k_2(\Gamma)+k_{12}(\Gamma)-1)(s-2)}{s} \\&=2|I|+k_1(\Gamma)+k_2(\Gamma)+k_{12}(\Gamma)-3.\end{aligned}\end{equation}

By multiplying by $rs$, using that $k_{12}(\Gamma) = 1$, and subtracting $rs(k_1(\Gamma)+k_2(\Gamma))$ from both sides, we have that the above equation is equivalent to:
\begin{equation}\begin{aligned}\sum_{i\in I}2rsd_i+2\sum_{i\in I}sa_i-2sk_1(\Gamma)+2\sum_{i\in I}rb_i-2rk_2(\Gamma) & =2rs|I|-2rs.\end{aligned}\end{equation}
We then can divide by two and isolate $sk_1(\Gamma)+rk_2(\Gamma)$ on one side of the equation to see:
\begin{equation}\begin{aligned}
sk_1(\Gamma)+rk_2(\Gamma) &= \sum_{i\in I}rsd_i+\sum_{i\in I}sa_i+\sum_{i\in I}rb_i - rs|I| +rs \\
	&= rs + \sum_{i\in I}(rs (d_i -1) + sa_i + rb_i)\\
	&= m(I, \vecd). 
\end{aligned}\end{equation}
\end{proof}
Proposition~\ref{prop:balanced}(b) provides a combinatorial description on the level of graphs for when~\eqref{balancedModuli} holds, and we will consequently have non-trivial enumerative invariants. We give the following definition for such graphs:

\begin{definition}\label{def:balanced,critical,etc}
Let $I$ be a set of markings with twists $(a_i,b_i)$, $i\in I$,
and $\vecd=(d_i)_{i\in I}\in\NN^I$ a vector of descendents.
A smooth rooted connected graded $\RS$-graph $\Gamma$ with $I(\Gamma)\subseteq I$ is \emph{balanced} for $\vecd$ if $\oCM^W_\Gamma$ is nonempty and 
\[
\rank E_{\Gamma}(\vecd)
=\dim\oCM^W_{\Gamma}.
\]
For $J\subseteq I$, denote by $\INT(J,\vecd)$  the collection of connected rooted  graphs with internal markings $J$ that are balanced for $\vecd$. If instead $\Gamma$ is a disconnected smooth rooted
$\RS$-graph, it is balanced for $\vecd$ if all its connected components are balanced for $\vecd$.
\end{definition}

We also can define critical graphs, which will correspond to irreducible components of a codimension one boundary strata of $\oCM^W_{0,k_1,k_2,1,\{(a_i,b_i)\}}$ that yield wall-crossing (see Theorem~\ref{thm: group actions only}
and \S\ref{sec:WC}). Recall the definition of a relevant graph given in Definition~\ref{def:special kind of graded graphs}.

\begin{definition}\label{def:critical}
Let $I$ be a set of internal markings and $\vecd=(d_i)_{i\in I} \in \NN^I$.
\begin{enumerate}
\item
A smooth connected relevant graded $\RS$-graph $\Gamma$ with $I(\Gamma)\subseteq
I$ and without fully twisted boundary tails is \emph{critical} for $\vecd$ if
\begin{equation}
\label{eq:crit equality}
\rank(E_{\Gamma}(\vecd))=
\dim\oCM^W_{\Gamma}+1.
\end{equation}
For $J\subseteq I$, we denote by $\CRIT(J,\vecd)$ the collection of critical  graphs for $\vecd$ with internal markings $J$.
\item
A (connected) graded graph $\Lambda\in\partial\Gamma$, for $\Gamma\in\INT(J,\vecd)$, is a \emph{critical boundary graph} if it has a unique edge which is a boundary edge and after detaching this edge the graph is separated into two vertices $v_0,v_1$ such that $v_0\in\CRIT(I(v_0),\vecd)$ and $v_1\in\INT(I(v_1),\vecd)$.
\item A graded graph $\Lambda\in\partial\Gamma$ where $\Gamma$ is
balanced is a \emph{critical boundary graph} if one connected component
is a critical boundary graph and the remaining components are balanced.
\item If $\Gamma$ is balanced, a graph $\Lambda\in \partial\Gamma$
is an \emph{exchangeable critical boundary} if
$\Lambda \in\partial^\xch\Gamma$ and $\Lambda$ is a critical boundary graph.
\end{enumerate}
\end{definition}

\begin{nn}
\label{subsets for Bal and Crit}
When $\vecd$ is the zero vector we do not add the phrase ``for $\vecd$" and omit $\vecd$ from the notations of $\CRIT$ and $\INT$.
We write
\[
\CRIT(\subsetneq I,\vecd) = \bigcup_{J\subsetneq I}\CRIT(J,\vecd), \quad \CRIT(\subseteq I,\vecd)=\CRIT(\subsetneq I,\vecd)\cup \CRIT(I,\vecd),
\]
and similarly for $\INT(\subsetneq I,\vecd)$ and $\INT(\subseteq I,\vecd)$.
\end{nn}

\begin{definition}
A stratum $\oCM_\Lambda$ in the moduli space $\oCM_{\Gamma}$ is relevant, critical etc.\ if the corresponding graph $\Lambda \in \partial \Gamma$ is.
\end{definition}

\begin{definition}\label{top boundary, subsets for Bal and Crit}
With the same notation as in Definition~\ref{def:critical}(2), a \emph{top boundary} for $\Gamma\in\INT(I,\vecd)$ is a critical boundary of $\Gamma$ such that $I(v_1)=\emptyset$.
\end{definition}

\begin{rmk}\label{rmk:crit}
We can rewrite \eqref{eq:crit equality} as an equation analogous to
\eqref{balancedequationstandard} in Proposition~\ref{prop:balanced}(b) as:
\begin{align*}\sum_{i\in I}2d_i+\frac{2\sum_{i\in I(\Gamma)}a_i+(k_1(\Gamma)-1)(r-2)}{r}&+\frac{2\sum_{i\in I(\Gamma)}b_i+(k_2(\Gamma)-1)(s-2)}{s} \\&=2|I|+k_1(\Gamma)+k_2(\Gamma)-2.\end{align*}
\end{rmk}

\begin{figure}

\begin{subfigure}{.45\textwidth}
  \centering
\vspace{.24cm}
\begin{tikzpicture}[scale=.4]

	\draw (0,0) circle (.2cm);
	
	
	\draw (-.1414,.1414) -- (-1.6414, 1.6414);
    \draw (.1414,.1414) -- (1.6414, 1.6414);
    \node[above] at (-1.6414, 1.6414) {$t_{z_1}$};
    \node[above] at (1.6414, 1.6414) {$t_{z_2}$};
    
    \draw[dashed] (.1732, -.1) -- (1.732, -1);
    \draw[dashed] (-.1732, -.1) -- (-1.732, -1);
    \draw[dashed] (-.1, -.1732) -- (-1, -1.732);
    \draw[dashed] (.1, -.1732) -- (1, -1.732);
    \draw[dashed] (-.2, 0) -- (-2, 0);

    \node[left] at (-2,0) {$\times$};
    \node[below] at (1.732, -1) {$r$};
    \node[below] at (-1.732, -1) {$r$};
    \node[below] at (-1, -1.732) {$r$}; 
    \node[below] at (1, -1.732) {$r$}; 
	
	

\end{tikzpicture}
\vspace{0.15cm}

  \caption{The balanced $W$-spin graph  $\Gamma_1$}
\end{subfigure}
\qquad
\begin{subfigure}{.45\textwidth}
  \centering
\vspace{.24cm}
\begin{tikzpicture}[scale=.4]

	\draw (0,0) circle (.2cm);
	\draw (4,0) circle (.2cm);
	
	\draw[dashed] (0.2,0) -- (3.8,0);
	\node[above] at (2,0) {$e_{q}$};

	\draw (3.8586,.1414) -- (2.3586, 1.6414);
    \draw (4.1414,.1414) -- (5.6414, 1.6414);
    \node[above] at (2.3586, 1.6414) {$t_{z_1}$};
    \node[above] at (5.6414, 1.6414) {$t_{z_2}$};
    
    \draw[dashed] (4.1732, -.1) -- (5.732, -1);
    \draw[dashed] (-.1732, -.1) -- (-1.732, -1);
    \draw[dashed] (-.1, -.1732) -- (-1, -1.732);
    \draw[dashed] (.1, -.1732) -- (1, -1.732);
    \draw[dashed] (-.2, 0) -- (-2, 0);

    \node[left] at (-2,0) {$\times$};
    \node[below] at (5.732, -1) {$r$};
    \node[below] at (-1.732, -1) {$r$};
    \node[below] at (-1, -1.732) {$r$}; 
    \node[below] at (1, -1.732) {$r$};

	
	
	
	
	
	 \node[above] at (0,0) {$v_1$};
	 \node[below] at (4,0) {$v_2$};
\end{tikzpicture}
\vspace{0.15cm}

  \caption{Critical boundary graph $\Lambda_1$}
\end{subfigure}
\qquad
\begin{subfigure}{.45\textwidth}
  \centering
\vspace{.24cm}
\begin{tikzpicture}[scale=0.4]

	\draw (0,0) circle (.2cm);
	
	
	\draw (-.1414,.1414) -- (-1.6414, 1.6414);
    \draw (.1414,.1414) -- (1.6414, 1.6414);
    \node[above] at (-1.6414, 1.6414) {$t_{z_1}$};
    \node[above] at (1.6414, 1.6414) {$t_{z_2}$};
    
    \draw[dashed] (.1732, -.1) -- (1.732, -1);
    \draw[dashed] (-.1732, -.1) -- (-1.732, -1);
    \draw[dashed] (-.1, -.1732) -- (-1, -1.732);
    \draw[dashed] (.1, -.1732) -- (1, -1.732);
    \draw[dashed] (-.2, 0) -- (-2, 0);
    \draw[dashed] (0,-.2) -- (0,-2);

    \node[left] at (-2,0) {$\times$};
    \node[below] at (1.732, -1) {$s$};
    \node[below] at (-1.732, -1) {$s$};
    \node[below] at (-1, -1.732) {$s$}; 
    \node[below] at (1, -1.732) {$s$}; 
    \node[below] at (0,-2) {$s$};
	
	

\end{tikzpicture}
\vspace{0.15cm}

  \caption{Balanced $W$-spin graph $\Gamma_2$}
\end{subfigure}
\qquad
\begin{subfigure}{.45\textwidth}
  \centering
\vspace{.24cm}
\begin{tikzpicture}[scale=0.4]
	\draw (0,0) circle (.2cm);
	\draw (4,0) circle (.2cm);
	
	\draw[dashed] (0.2,0) -- (3.8,0);
	\node[above] at (2,0) {$e_{q}$};

	\draw (3.8586,.1414) -- (2.3586, 1.6414);
    \draw (4.1414,.1414) -- (5.6414, 1.6414);
    \node[above] at (2.3586, 1.6414) {$t_{z_1}$};
    \node[above] at (5.6414, 1.6414) {$t_{z_2}$};
    
    \draw[dashed] (4.1732, -.1) -- (5.732, -1);
    \draw[dashed] (-.1732, -.1) -- (-1.732, -1);
    \draw[dashed] (-.1, -.1732) -- (-1, -1.732);
    \draw[dashed] (.1, -.1732) -- (1, -1.732);
    \draw[dashed] (-.2, 0) -- (-2, 0);

    \node[left] at (-2,0) {$\times$};
    \node[below] at (5.732, -1) {$s$};
    \node[below] at (-1.732, -1) {$s$};
    \node[below] at (-1, -1.732) {$s$}; 
    \node[below] at (1, -1.732) {$s$}; 
    \draw[dashed] (0,-.2) -- (0,-2);
    \node[below] at (0,-2) {$s$};   
	
	
	
	
	
	
	 \node[above] at (0,0) {$v_1$};
	 \node[below] at (4,0) {$v_2$};
\end{tikzpicture}
\vspace{0.15cm}

  \caption{Critical boundary graph $\Lambda_2$}
\end{subfigure}

\caption{Balanced and critical boundary graphs from Example~\ref{exmpl: bal and crit}.}
\label{fig:balanced and crit graphs}
\end{figure}

\begin{ex}\label{exmpl: bal and crit}
Consider the LG model $(x^4 + y^5, \mu_4 \times \mu_5)$. We consider the smooth connected rooted graded $W$-spin graph $\Gamma_1$ with 4 $r$-tails and two internal tails $t_{z_1}$ and $t_{z_2}$ with twist vectors $(2,2)$ and $(2,3)$. Then one can check that $\Gamma_1$ is balanced for $\mathbf{d} = \mathbf{0}$. See Figure~\ref{fig:balanced and crit graphs}(A), where $\times$ denotes a root and $r$ denotes a (boundary) $r$-tail.
Consider the boundary graph $\Lambda_1$ as seen in Figure~\ref{fig:balanced and crit graphs}(B). One can see that the vertex $v_1$ is balanced and $v_2$ is critical, so $\Lambda_1$ is a critical boundary graph. Moreover, it is a top boundary for $\Gamma_1$.

Secondly, one can consider the smooth connected rooted graded $W$-spin graph $\Gamma_2$ in Figure~\ref{fig:balanced and crit graphs}(C) with 5 $s$-tails and two internal tails $t_{z_1}$ and $t_{z_2}$ with twist vectors $(2,2)$ and $(2,3)$. Then one can check that $\Gamma_2$ is balanced for $\mathbf{d} = \mathbf{0}$. Moreover, the graph $\Lambda_2$ as seen in Figure~\ref{fig:balanced and crit graphs}(D) is top boundary for $\Gamma_2$ in a similar way. One can notice that these are two instances where the same critical graph appears in two distinct critical boundaries. 

Indeed, the vertex $v_2$ has an $r$-point and an $s$-point. In $\Lambda_1$, the $s$-point is a boundary half-edge that corresponds to a half-node in the corresponding $W$-spin disk. Meanwhile the roles of the boundary special points reverse for $\Lambda_2$. 

\end{ex}

\begin{prop}\label{prop:critical m(I)}
Let $\Gamma = \Gamma_{0,k_1+1, k_2+1, 0, \{(a_i,b_i)\}_{i\in I}}$ be a smooth, connected graph. Then:
\begin{enumerate}[(a)]
\item $k_1 \equiv r(I) \pmod r$ and $k_2 \equiv s(I)  \pmod s$ if and only if the space $\oCM^W_{\Gamma}$ is nonempty.
\item Assume that the space $\oCM^W_{\Gamma}$ is nonempty. The equation $sk_1 + rk_2 = m(I, \vecd)- rs$ holds if and only if $\Gamma$ is critical for $\vecd$.
\end{enumerate}
\end{prop}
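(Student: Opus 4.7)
My plan is to argue exactly parallel to Proposition~\ref{prop:balanced}, with the parameters of the graph changed so that $k_1(\Gamma)=k_1+1$, $k_2(\Gamma)=k_2+1$, $k_{12}(\Gamma)=0$, and with the balanced equation replaced by the critical one of Definition~\ref{def:critical}(1). In particular, I would first note that $\Gamma$ is relevant: the only irrelevancy condition from Definition~\ref{def:special kind of graded graphs} that could conceivably apply is (3), but at the unique vertex the $r$-points and $s$-points (which exist since $k_1+1, k_2+1\ge 1$) prevent both $i=1$ and $i=2$ cases.

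For (a), specializing Observation~\ref{obs:open_rank_pre_graded_W} (equivalently Observation~\ref{obs:open_rank1}) to our parameters gives
\[
e_1=\frac{2\sum_{i\in I} a_i + k_1(r-2)}{r}\in\mathbb{Z},\qquad e_1\equiv k_1\pmod{2},
\]
and analogously for $e_2$. The integrality condition rewrites as $r\mid 2(\sum a_i-k_1)$: if $r$ is odd this directly forces $k_1\equiv r(I)\pmod r$, while if $r$ is even the integrality gives only $\sum a_i-k_1\equiv 0\pmod{r/2}$, but combining with the parity condition $e_1\equiv k_1\pmod 2$ (as in the proof of Proposition~\ref{prop:balanced}(a)) forces $\sum a_i-k_1\equiv 0\pmod r$ as well. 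The symmetric argument handles $k_2$, giving the forward implication; the reverse implication is supplied by Proposition~\ref{prop:existence_stable} applied to each of the $r$-spin and $s$-spin components.

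For (b), I would compute
\[
\dim\oCM^W_{\Gamma}=2|I|+(k_1+1)+(k_2+1)-3=2|I|+k_1+k_2-1,\qquad \rank E_{\Gamma}(\vecd)=2\sum_{i\in I}d_i+e_1+e_2,
\]
with $e_1,e_2$ as above. The criticality equation $\rank E_{\Gamma}(\vecd)=\dim\oCM^W_{\Gamma}+1$ becomes
\[
2\sum_{i\in I} d_i+\frac{2\sum a_i+k_1(r-2)}{r}+\frac{2\sum b_i+k_2(s-2)}{s}=2|I|+k_1+k_2,
\]
and clearing denominators by $rs$, cancelling the $rs k_1$ and $rsk_2$ terms, and dividing by two leads to
\[
sk_1+rk_2=\sum_{i\in I}\bigl(rs(d_i-1)+sa_i+rb_i\bigr)=m(I,\vecd)-rs,
\]
using Notation~\ref{nn:r(I),s(I),m(I)}. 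Every step is reversible, so the two conditions are equivalent. The only genuinely delicate point is the parity bookkeeping in the even-$r$ (or even-$s$) case of (a), but this is a direct transcription of the corresponding argument in Proposition~\ref{prop:balanced}; the rest is straightforward arithmetic.
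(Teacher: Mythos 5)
Your proposal is correct and follows exactly the route the paper intends: the paper's own proof simply says the result "follows the same computations as in Proposition~\ref{prop:balanced}," and your argument is precisely that computation carried out with $k_1(\Gamma)=k_1+1$, $k_2(\Gamma)=k_2+1$, $k_{12}(\Gamma)=0$ and the balanced equation replaced by the critical one. The arithmetic in (b) and the integrality-plus-parity argument in (a) both check out, and the relevance check you include is a reasonable (if brief) addition.
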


\begin{proof}
This proof follows the same computations as in Proposition~\ref{prop:balanced}.
\end{proof}

Lastly, we can classify all balanced and critical graphs with a fixed set of internal markings $J \subseteq I$:

\begin{nn}\label{Concrete bal and crit graphs with prescribed internals}
Suppose $\Gamma \in \INT(J, \vecd)$. Using Notation~\ref{nn:r(I),s(I),m(I)}, we have that
\[\sum_{i\in J} a_i= r(J) + \ell_1 r \text{ and } \sum_{i\in J} b_i = s(J) + \ell_2 s\]
for some $\ell_1, \ell_2 \in \mathbb{Z}_{\geq 0}$.   Since $\Gamma$ is a smooth rooted graded graph $\Gamma$ with $I(\Gamma)=J$ and
balanced with respect to $\vecd \in\NN^J$, one can compute using Proposition \ref{prop:balanced}(b), that
\begin{equation}\label{balanced equivalent}
k_1(\Gamma) = r(J) + pr \text{ and } k_2(\Gamma) = s(J) + (\ell_1+\ell_2 - |J|+1+\sum_{j\in J} d_j - p)s,
\end{equation}
for some $0 \leq p \leq  \ell_1+\ell_2 - |J|+1 + \sum_{j\in J} d_j$.
Taking $N := \ell_1+\ell_2 - |J|+1 + \sum_{j\in J} d_j$, we can enumerate
$$\INT(J,\vecd) = \{ \Gamma_{J,0}, \ldots, \Gamma_{J,p}, \ldots, \Gamma_{J,N}\},$$
 where $\Gamma_{J,p}$ is the unique balanced graph above that has internal markings $J$ and $k_1(\Gamma_{J,p}) = r(J) + pr$.

Similarly, suppose $\Lambda \in \CRIT(J, \vecd)$. We can also compute using Proposition~\ref{prop:critical m(I)} that 
\begin{equation}\label{balanced equivalent}
k_1(\Gamma) = r(J) + (p-1)r +1 \text{ and } k_2(\Gamma) = s(J) + (\ell_1+\ell_2 - |J| - p +1 + \sum_{j\in J} d_j)s +1,
\end{equation}
for some  $1 \leq p \leq  N$. Enumerate the set of critical graphs $$
\CRIT(J,\vecd)=\{\Lambda_{J,1},\ldots,\Lambda_{J,N}\},$$ with the same $N$ as above, where $\Lambda_{J,p}$ is the unique critical graph above with internal markings $J$ and $k_1(\Lambda_{J,p}) = r(J) + (p-1)r +1$.
\end{nn}

\subsection{Orientation}\label{subsec:or}
In this subsection, we describe the canonical relative orientation of the Witten bundle $\cW$ over the various moduli spaces considered here, as well as the properties of these orientations. For that, we need some notation and background. We start by recalling the following.

\begin{obs}\label{rmk:orientation and exact sequence}
If $0\to A\xrightarrow{f} B\to C\to 0$ is an exact sequence of vector bundles, then there is a canonical isomorphism
\begin{equation}\label{eqn:ses orient}
\det(A)\otimes \det(C)\simeq \det(B)
\end{equation}
given by
\[(a_1\wedge\cdots\wedge a_n)\otimes(c_1\wedge\cdots\wedge c_m)\longmapsto \bigwedge_{i=1}^n f(a_i) \wedge \bigwedge_{i=1}^m c'_i, \]
where $\{a_i\}$ are $n:=\dim(A)$ elements of $A$, $\{c_i\}$ are $m:=\dim(C)$ elements of $C$, and $c'_i$ is an arbitrary element of $B$ mapped to $c_i$.  This isomorphism induces, by writing the elements of $C$ first, an isomorphism
\begin{equation}\label{eqn:ses orient reverse}
\det(C)\otimes\det(A)\simeq \det(B).
\end{equation}
\end{obs}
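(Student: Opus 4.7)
The plan is to prove this by exploiting the fact that both sides are line bundles, reducing the problem to checking well-definedness of the stated formula and then verifying pointwise non-vanishing. Since $\rank(B) = \rank(A) + \rank(C) = n+m$, both $\det(A) \otimes \det(C)$ and $\det(B)$ are line bundles, so once we verify that the displayed formula defines an actual bundle map, showing it is an isomorphism will reduce to showing it is non-zero on each fibre.

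First I would verify well-definedness with respect to the choice of lifts $c'_j \in B$ of $c_j \in C$. If $\tilde c_j$ is another lift, exactness gives $\tilde c_j - c'_j = f(\alpha_j)$ for some $\alpha_j \in A$, and expanding by multilinearity in each slot, every correction term is a wedge containing at least $n+1$ elements of $f(A) \subseteq B$. Since $f$ is injective and $f(A)$ has rank $n$, such wedges vanish identically, so the output depends only on the $c_j$. This is the main (though routine) obstacle, and the rest is essentially bookkeeping: multilinearity and alternation of the formula in the $a_i$ and (separately) in the $c_j$ are inherited directly from the wedge product on $B$, so the formula descends to a well-defined bundle map $\det(A) \otimes \det(C) \to \det(B)$.

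For non-vanishing, I would fix a point of the base, pick bases $a_1,\ldots,a_n$ of $A$ and $c_1,\ldots,c_m$ of $C$ at that point, and choose arbitrary lifts $c'_j \in B$. By exactness of $0 \to A \to B \to C \to 0$, the collection $\{f(a_1),\ldots,f(a_n),\,c'_1,\ldots,c'_m\}$ is a basis of the fibre $B$, so its wedge is a generator of $\det(B)$ and in particular non-zero. A pointwise non-vanishing bundle map between line bundles is an isomorphism, so \eqref{eqn:ses orient} holds.

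For the second isomorphism \eqref{eqn:ses orient reverse}, the identical argument applies verbatim to the formula in which the $c'_j$'s are listed first in the wedge; alternatively, it factors as \eqref{eqn:ses orient} composed with the canonical swap $\det(C)\otimes\det(A) \xrightarrow{\sim} \det(A)\otimes\det(C)$, which introduces the sign $(-1)^{nm}$. Either viewpoint gives the claimed isomorphism, with signs matched consistently.
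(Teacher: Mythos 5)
Your proof is correct: well-definedness under change of lifts (corrections lie in $f(A)$, so the extra wedge terms contain $n+1$ vectors from a rank-$n$ subbundle and vanish), descent through multilinearity and alternation, and pointwise non-vanishing because $\{f(a_i)\}\cup\{c'_j\}$ is a fibrewise basis of $B$ — this is the standard argument, and the paper states the observation without proof precisely because it is this routine linear-algebra fact. Your remark that the reverse-ordered isomorphism \eqref{eqn:ses orient reverse} differs from the naive swap composed with \eqref{eqn:ses orient} by $(-1)^{nm}$ is also consistent with the paper's convention of \emph{defining} it by the reordered formula rather than by the swap.
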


We will often use this observation in order to decompose orientations using a fibration structure. We provide a few examples now:

\begin{ex}\label{ex:internal edge example}
Let $\Gamma\in\d\Gamma_{0,k_1,k_2,1, \{(a_i, b_i)\}_{i\in [l]}}^{W,\textup{labeled}}$
 be a labeled $\RS$-graph consisting of two vertices $v_1$ and $v_2$ connected by a single edge $e$. Let $N$ be the normal bundle of  the inclusion $\iota_{\Gamma}: \oCM_{\Gamma}^W \hookrightarrow \oCM_{0,k_1,k_2,1,\{(a_i,b_i)\}_{i\in[l]}}^{W,\text{labeled}}$. We then have the isomorphism of line bundles using~\eqref{eqn:ses orient reverse}
\begin{equation}
\label{eq:TMdec}
\iota_{\Gamma}^* \det(T\oCM_{0,k_1,k_2,1,\{(a_i,b_i)\}_{i\in[l]}}^{W,\textup{labeled}}) \cong \det(N)  \otimes \det(T\oCM^W_{\detach_e(\Gamma)}),
\end{equation}
where $\iota_\Gamma$ is the inclusion map defined in Equation~\eqref{def:iotaGamma}.

Consider a point in the moduli space corresponding to $\Sigma = \Sigma_1 \cup_q \Sigma_2$, with $\Sigma_i$ corresponding to the vertex $v_i$. Let $T_q\Sigma_i$ be the tangent line at the node $q$. The fiber of the normal line bundle $N$ at $\Sigma$ can be identified with the tensor product $T_q\Sigma_1 \otimes T_q \Sigma_2$. These two tangent lines are canonically oriented complex lines when $e$ is internal, and are orientable real lines invariant under complex conjugation when $e$ is boundary.
In both cases, $N$ carries a canonical orientation, where in the second case we use the convention of orienting $N$ by the outward-pointing normal.
\end{ex}

\begin{ex}\label{ex:forget internal}
Consider the forgetful map $\text{For}_{\varnothing, l+1}:\CM_{0,k,l+1}^{\textup{labeled}} \to \CM_{0,k,l}^{\textup{labeled}}$ that forgets the $(l+1)^{\textup{st}}$ internal marked point. Its fiber is a disk with $l$ punctures, which carries a canonical (complex) orientation. For $k \ge 1$, we can also consider the forgetful map $\text{For}_{k+1, \varnothing}: \CM_{0,k+1,l}^{\textup{labeled}} \to \CM_{0,k,l}^{\textup{labeled}}$, which has a fiber that is a union of open intervals, canonically oriented as a boundary of an oriented disk.  We will denote these canonical orientations by $o_{\text{For}_{\varnothing, l+1}^{-1}(\Sigma)}$ and $o_{\text{For}_{k+1, \varnothing}^{-1}(\Sigma)}$ respectively. Note that by Observation~\ref{rmk:orientation and exact sequence} this gives two potential decompositions of the orientation $\CM_{0,k,l}^{\textup{labeled}}$ via the two different forgetful maps.
\end{ex}

\subsubsection{Background in rank one}\label{or:background in rank one}

In \cite{BCT:I}, the authors constructed families of orientations for the moduli space $\oCM_{0,B,I}$ of marked disks and the Witten bundles over the $r$-spin moduli space $\oCM^{1/r,\text{labeled}}_{0,B,\{a_i\}_{i\in I}}$. In this subsection, we will recall their results and the properties of these orientations.

We note that our orientations will depend on the cyclic ordering of the boundary markings, hence we will have to discuss each connected component of the moduli spaces individually. To do so, we introduce the following notation. Let $\Gamma$ be a genus zero, anchored, pre-stable dual graph. This comes with the set $\hat \Pi$ of cyclic orders. We denote by $\oCM_{\Gamma}^{\text{labeled}, \hat\pi}$  the connected component whose disks have cyclic order $\hat\pi \in \hat \Pi$. Similarly, if $\Gamma$ is a (pre-)graded $W$-spin graph, then we will denote by $\oCM_{\Gamma}^{W, \text{labeled}, \hat\pi}$ the  connected component associated to the cyclic order $\hat \pi$. Note that if we want to consider the unlabeled case, then there is a set of cyclic orderings that correspond to the same connected component. Here, we will continue to  use the notation $\oCM_{\Gamma}^{\hat\pi}$ and $\oCM_{\Gamma}^{W, \hat \pi}$ for the corresponding connected component (which may be represented by multiple $\hat \pi \in \hat \Pi$).

Crucially, the families of orientations also must extend over the boundary in a way that is compatible with the lower dimensional moduli spaces. One key case for this is when one has exactly one boundary edge. We will now give the notation for this key case.

\begin{nn}\label{nn: boundary orientation case}
Let $\Gamma$ be a graph with two open vertices $v_1$ and $v_2$, connected by an edge $e$. Suppose that each vertex $v_i$ has internal tails $I_i$, boundary tails $B_i$, and a unique boundary half-edge $h_i$ which is part of the edge $e$. Take $B = B_1 \sqcup B_2$ and $I = I_1 \sqcup I_2$ and assume that the marking functions $m^I$ and $m^B$ are injective.

We will also denote by $v_i$ the graph $\Gamma_{v_i}$ given by taking the subgraph of $\detach_e\Gamma$ which is the connected component containing the vertex $v_i$. We write $v_i'$ for the graph $\text{For}_{h_i}(v_i)$.

We can construct an ordering $\pi$ on $B$ from the orderings $\pi_i$ on the two vertices $v_1$ and $v_2$ in a unique way. Intuitively, this is done by starting at $h_1$ and following the arrow in 
Figure~\ref{fig:boundary edge orient}(A). Algorithmically, it is done as follows.

For any $\Sigma \in \CM_{\Gamma}$, with normalization $\Sigma_1 \sqcup \Sigma_2$ (where $\Sigma_i$ corresponds to $v_i$), we have an ordering $\hat{\pi}_{v_i}$ on the boundary half-edges, induced by the natural orientation on $\partial\Sigma_i$. Using $\hat{\pi}_{v_1}$ we can define an ordering $\pi_1$ on the half-edges of $v_1$ by starting after the half-edge $h_1$ and following the natural orientation on $\partial\Sigma_1$, keeping $h_1$ at the end of the ordering. Similarly, we denote by $\pi_2$ the ordering on the half-edges of $v_2$ given by starting at $h_2$ and following the natural orientation. We can restrict the orderings $\pi_i$ to the boundary tails by omitting the half-edges; in this case, we will still denote this ordering by $\pi_i$. 

Lastly, we will denote by $\pi$ the unique ordering of $B$ such that, for any $\Sigma\in \CM_\Gamma$ as above,
\begin{itemize}
\item under $\pi$, the marked points of $\Sigma_1$ appear before those of $\Sigma_2$;
\item when $\pi$ is restricted to the marked points of $\Sigma_i$, it agrees with the ordering $\pi_i$ on the boundary tails. 
\end{itemize}
 Denote by $\hat \pi$ a cyclic ordering on $B$ induced by $\pi$.
\end{nn}

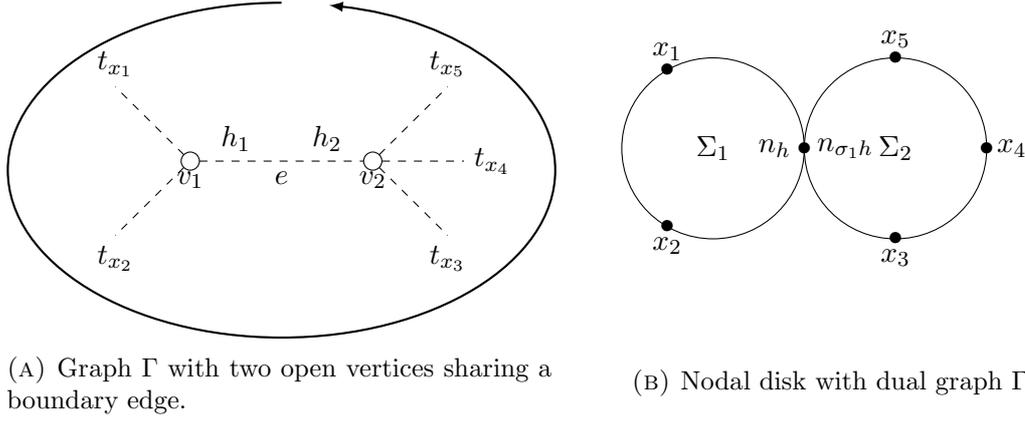
\begin{figure}
 \begin{subfigure}{.45\textwidth}
  \centering
  \begin{tikzpicture}[scale=0.6]

	\draw (0,0) circle (.2cm);
	\draw (4,0) circle (.2cm);
	
	\draw[dashed] (0.2,0) -- (3.8,0);
	\node[below] at (2,0) {$e$};
	
	\draw[dashed] (4.1414,.1414) -- (5.6414, 1.6414);
	\draw[dashed] (4.1414,-.1414) -- (5.6414, -1.6414);
	
	\draw[dashed] (-.1414,.1414) -- (-1.6414, 1.6414);
	\draw[dashed] (-.1414,-.1414) -- (-1.6414, -1.6414);
	
	\draw[dashed] (4.2,0) -- (6,0);
	
	\node[above] at (5.6414, 1.6414) {$t_{x_5}$};
	\node[below] at (5.6414, -1.6414) {$t_{x_3}$};
	\node[right] at (6,0) {$t_{x_4}$};
	
	\node[above] at (-1.6414, 1.6414) {$t_{x_1}$};
	\node[below] at (-1.6414, -1.6414) {$t_{x_2}$};
	
	\node[above] at (1,0) {$h_1$};
	\node[above] at (3,0) {$h_2$};
	
	\draw [-latex, thick, rotate=0] (2,3.5) arc [start angle=-270, end angle=80, x radius=6cm, y radius=3.7cm];

	\node[below] at (0,0) {$v_1$};
	\node[below] at (4,0) {$v_2$};
  \end{tikzpicture}
  \caption{Graph $\Gamma$ with two open vertices sharing a boundary edge. }
 \end{subfigure}
\begin{subfigure}{.45\textwidth}
  \centering
  \begin{tikzpicture}[scale=0.6]

  \draw (0,0) circle (2cm);
  \draw (4,0) circle (2cm);

\node at (1.35, 0) {$n_h$};
\node at (2.9, 0) {$n_{\sigma_1 h}$};
\node at (2,0) {$\bullet$};
\node at (-1, 1.732) {$\bullet$};
\node at (-1, -1.732) {$\bullet$};
\node[above] at (-1, 1.732) {$x_1$};
\node[below] at (-1, -1.732) {$x_2$};
\node at (4,2) {$\bullet$};

\node at (4,-2) {$\bullet$};
\node at (6,0) {$\bullet$};
\node[right] at (6,0) {$x_4$};
\node[below] at (4,-2) {$x_3$};
\node[above] at (4,2) {$x_5$};

\node at (0,0) {$\Sigma_1$};
\node at (4,0) {$\Sigma_2$};

  \end{tikzpicture}
  \vspace{1cm}
  \caption{Nodal disk with dual graph $\Gamma$.}
 \end{subfigure}

 \caption{An example of a graph $\Gamma$ and its corresponding nodal disk from Notation~\ref{nn: boundary orientation case}. The arrow  in (A) corresponds to the unique ordering $\pi$ of boundary markings by just following the arrow around (that is, $\pi$ is the ordering $(t_{x_1} \to t_{x_2} \to t_{x_3} \to t_{x_4} \to t_{x_5})$).}
 \label{fig:boundary edge orient}
 \end{figure}

We now define and then construct families of orientations $\{\tilde{\mathfrak{o}}^{\pi}_{0,B,I}\}$ and $\{\ooo^\pi_{0,B, \{a_i\}_{i\in I}}\}$ on the moduli spaces $\oCM_{0,B,I}^{\text{labeled},\hat{\pi}}$ and the Witten bundle over $\oCM^{1/r,\text{labeled},\hat{\pi}}_{0,B,\{a_i\}_{i\in I}}$, respectively.

\begin{definition}
(i) A \emph{family of orientations} $\{\tildooo_{0,B,I}^{\pi}\}$ on the moduli spaces $\oCM_{0,B,I}^{\text{labeled},\hat{\pi}}$ consists of the data of an orientation $\tildooo_{0,B,I}^{\pi}$ on the moduli space $\oCM_{0,B,I}^{\text{labeled},\hat{\pi}}$ for each tuple $(B,I, \pi, \hat\pi)$ where $|B| + 2|I| \ge 3$. Here, each tuple $(B,I, \pi, \hat\pi)$ consists of: $B$ a set of boundary marked points, $I$ a set of internal marked points, $\hat\pi$ a cyclic ordering of $B$, and $\pi$ an ordering of $B$ that induces $\hat\pi$.

(ii) A \emph{family of orientations} $\{\ooo^\pi_{0,B, \{a_i\}_{i\in I}}\}$ on the Witten bundles  $\cW\to\oCM^{1/r,\text{labeled},\hat{\pi}}_{0,B,\{a_i\}_{i\in I}}$  over the moduli space of $r$-spin disks consists of the data of an orientation $\ooo^\pi_{0,B, \{a_i\}_{i\in I}}$ on the Witten bundle over the moduli space $\oCM^{1/r,\text{labeled},\hat{\pi}}_{0,B,\{a_i\}_{i\in I}}$ of $r$-spin disks, for each tuple $(B, \{a_i\}_{i\in I}, \pi, \hat\pi)$ where $|B| + 2|I| \ge 3$. Here,  each tuple $(B, \{a_i\}_{i\in I}, \pi, \hat\pi)$ consists of: $B$ the set of boundary marked points, $\{a_i\}$ the set of twists on the $|I|$ internal marked points, $\hat\pi$ a cyclic ordering of the boundary marked points, and $\pi$ an ordering of $B$ that induces $\hat\pi$.
\end{definition}

\begin{lemma}\label{lem:or_mod}
There exists a unique family of orientations $\{\tilde{\mathfrak{o}}^{\pi}_{0,B,I}\}$
for the spaces $\oCM^{\text{labeled},\hat{\pi}}_{0,B,I}$ with the following properties:
\begin{enumerate}
\item\label{it:or_mod_zero}
In the zero-dimensional case where $k = l=1$, the orientation is positive, while when $k = 3$ and $l=0$, the orientations are negative.
\item\label{it:or_mod_cov}
If $f^B:B\to B'$ and $f^I:I\to I'$ are bijections and $F:\oCM_{0,B,I}^{\textup{labeled}}\to\oCM_{0,B',I'}^{\textup{labeled}}$ is the induced map, then $\tilde{\mathfrak{o}}_{0,B,I}^\pi=F^* \tilde{\mathfrak{o}}_{0,B',I'}^{f^B\circ \pi}$.
In particular,
the action of any $g\in \text{Sym}(I)$ preserves the orientation on $\oCM_{0,B,I}^{\textup{labeled}}$.
\item\label{it:or_mod_perm}
Fix an integer $h$, and let $\pi$ be an order of $B,~g\in \text{Sym}(B)$
be such that $g$ sends $x_{\pi(i)}$ to $x_{\pi(i+h)}$ cyclically.  Then $g$ preserves the orientation of $\oCM_{0,B,I}^{\textup{labeled},\hat\pi}$ if and only if $h(|B|-1)$ is even.
\item\label{it:or_mod_forget_int}
The orientation $\tilde{\mathfrak{o}}^{\pi}_{0,k,l+1}$ agrees with the orientation induced from $\tilde{\mathfrak{o}}^{\pi}_{0,k,l}$ by the fibration $\text{For}_{\varnothing, l+1}:\oCM_{0,k,l+1}^{\textup{labeled}} \to \oCM_{0,k,l}^{\textup{labeled}}$ and the complex orientation on the fiber. (See Example
\ref{ex:forget internal} for the forgetful map.)
\item\label{it:or_mod_forget_bdry}
On $\oCM_{0,k+1,l}^{\textup{labeled},(1\to2\to\cdots \to k+1\to 1)}$, we have $\tilde{\mathfrak{o}}_{0,k+1,l}^{(1\to2\to\cdots \to k+1)}=o_{\text{For}_{k+1, \varnothing}^{-1}(\Sigma)}\otimes \text{For}_{k+1, \varnothing}^*\tilde{\mathfrak{o}}_{0,k,l}^{(1\to2\to\cdots \to k)}$.
\item\label{it:or_mod_bdry} Let $\Gamma$ be a graph with injective boundary marking, two open vertices, $v_1$ and $v_2$, connected by an edge $e$, as in Notation~\ref{nn: boundary orientation case}.
We have \[\det(T\oCM_{0,B,I}^{\textup{labeled},\hat\pi})|_{\CM_\Gamma} = \det(N)\otimes \det(Tf_{h_1})\otimes(\det(T\CM_{v'_1})\boxtimes\det(T\CM_{v_2})),\]
where $N$ is the normal bundle and $Tf_{h_1}$ is the tangent bundle of the fiber of the map which forgets the tail $h_1$ (see Example~\ref{ex:internal edge example}). If we write $o_N$ for the canonical orientation of $N$ and $o_{h_1}$ for the canonical orientation of $Tf_{h_1}$ described above, then \[\iota_\Gamma^*\tilde{\mathfrak{o}}^\pi=(-1)^{(|B_1|-1)|B_2|}o_N\otimes o_{h_1}\otimes
(\tilde{\mathfrak{o}}^{\pi_1}_{0,B_1,I_1}\boxtimes\tilde{\mathfrak{o}}^{\pi_2}_{0,\{h_2\}\cup B_2,I_2}).
\]
Moreover, using Item \eqref{it:or_mod_forget_bdry}, this equation can also be written as
\[\iota_\Gamma^*\tilde{\mathfrak{o}}^\pi=(-1)^{(|B_1|-1)|B_2|}o_N\otimes
(\tilde{\mathfrak{o}}^{\pi_1}_{0,B_1\cup\{h_1\},I_1}\boxtimes\tilde{\mathfrak{o}}^{\pi_2}_{0,\{h_2\}\cup B_2,I_2}).\]
\item\label{it:or_mod_int}
Let $\Gamma$ be a connected graph with injective boundary marking, two vertices, an open vertex $v^o$ and a closed vertex $v^c$.  We have
\[
\det(T\CM_\Gamma) = \det(N)\otimes ( \det(T\CM_{v^c})\boxtimes\det(T\CM_{v^o})),
\] where $N$ is again the normal bundle.  Then, for any order $\pi,$
\begin{equation}\label{eq:induced_or_interior_moduli}
\iota_\Gamma^*\tilde{\mathfrak{o}}^\pi = o_N\otimes (\tilde{\mathfrak{o}}^\pi_{v^o}\boxtimes \tilde{\mathfrak{o}}_{v^c}),
\end{equation}
where $o_N$ and $\tilde{\mathfrak{o}}_{v^c}$ are the canonical complex orientations.
\end{enumerate}
\end{lemma}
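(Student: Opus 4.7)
My plan is to prove this by a two-step strategy: first establish uniqueness by showing the properties reduce everything to the zero-dimensional base cases, then construct the family explicitly and verify consistency.

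For uniqueness, I would argue by induction on $|B|+2|I|$. The base case $|B|+2|I|=3$ is fixed by \eqref{it:or_mod_zero}. For the inductive step, if $|I|\ge 1$ then \eqref{it:or_mod_forget_int} expresses $\tildooo^{\pi}_{0,B,I}$ in terms of $\tildooo^{\pi}_{0,B,I\setminus\{i\}}$ for any $i\in I$, combined with the canonical complex orientation on the fiber of $\text{For}_{\varnothing,l+1}$. Otherwise $|B|\ge 4$, and \eqref{it:or_mod_forget_bdry} determines $\tildooo^{\pi}_{0,B,\varnothing}$ from $\tildooo^{\pi'}_{0,B\setminus\{b\},\varnothing}$, with $b$ the boundary tail placed last by $\pi$. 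Together this recovers every orientation in the family.

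For existence, I would take the recursive rules as definitions: fix \eqref{it:or_mod_zero} as base data, and inductively use \eqref{it:or_mod_forget_int} followed by \eqref{it:or_mod_forget_bdry} to define each $\tildooo^\pi_{0,B,I}$. Two things then need to be checked. First, the construction is well-defined, i.e.\ independent of which internal or boundary marked point one forgets at each step. For two internal markings this is immediate since the relevant fibers are both complex and the two forgetful maps commute up to a canonical isomorphism. For the interplay of one internal and one boundary marking, it follows from the fact that the forgetful fibers are real curves and complex curves respectively, with the orientations independently ordered in \eqref{it:or_mod_forget_int} and \eqref{it:or_mod_forget_bdry}. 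For two boundary markings, independence follows from the sign convention in \eqref{it:or_mod_forget_bdry}: forgetting two consecutive markings in a cyclic order produces the same orientation regardless of which goes first, because both operations yield the same interval fiber for the composed map. Second, properties \eqref{it:or_mod_cov} and \eqref{it:or_mod_perm} must be verified; these follow from \eqref{it:or_mod_forget_bdry} by tracking signs under cyclic relabeling and using that the operation of cycling $\pi$ by $h$ positions requires $h$ applications of a sign $(-1)^{|B|-1}$.

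The most delicate part will be verifying the boundary formula \eqref{it:or_mod_bdry}. I would analyse the codimension-one stratum $\oCM_{\Gamma}\subset \oCM_{0,B,I}^{\textup{labeled},\hat\pi}$ where $\Gamma$ has two open vertices $v_1,v_2$ joined by a boundary edge $e$, using the convention of Notation~\ref{nn: boundary orientation case} to fix $\pi$ as the concatenation of $\pi_1$ (on $B_1\cup\{h_1\}$) and $\pi_2$ (on $\{h_2\}\cup B_2$). I would then compute $\iota_\Gamma^*\tildooo^\pi$ by iteratively forgetting all boundary tails of $v_2$ first, then the tails of $v_1$, using \eqref{it:or_mod_forget_bdry} at each step. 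Each forgetting produces an outward normal factor that combines to give $o_N\otimes o_{h_1}$, and the sign $(-1)^{(|B_1|-1)|B_2|}$ arises from pulling the sign factor $(-1)^{|B|-1}$ across the two halves: once the $|B_2|$ tails of $v_2$ are forgotten, the ordering on $v_1$ ends with $h_1$, and each permutation of $h_1$ past a tail of $B_2$ in the full ordering contributes. Finally \eqref{it:or_mod_int}, the analogous formula for an internal Ramond edge, is verified by essentially the same method, but now the normal bundle is complex and the fiber of the map contracting the closed bubble is $\oCM_{0,I_c}$, whose orientation is canonical; no extra sign appears because the reordering of internal tails is orientation-preserving by \eqref{it:or_mod_cov}.
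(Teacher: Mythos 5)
The paper does not actually prove this lemma from scratch: its proof consists of citing Proposition 3.12, Definition 3.11 and Lemma 3.15 of \cite{BCT:I}, where the family $\{\tildooo^{\pi}_{0,B,I}\}$ is constructed and these properties are established. Your proposal instead reconstructs that construction directly, which is a legitimately different (and more self-contained) route, but as written it has gaps precisely at the points that carry the content.

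The concrete problem is in your induction. The reduction rule ``if $|I|\ge 1$ apply property (4), otherwise $|B|\ge 4$ and apply property (5)'' presupposes that the relevant forgetful map lands in a stable moduli space, and it does not always. For $(|B|,|I|)=(2,1)$ the map $\text{For}_{\varnothing,1}$ would land in the unstable case $(2,0)$, so one must forget a boundary point instead; more seriously, for $B=\varnothing$ and $|I|=2$ (which is within the scope of the definition of a family of orientations, since $|B|+2|I|=4\ge 3$) neither forgetful map exists, so properties (1), (4), (5) alone never determine $\tildooo_{0,\varnothing,I}$ and your induction does not terminate for $(0,l)$, $l\ge 2$; these cases must be pinned down either by an extra base case or via the boundary compatibility (6). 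Beyond that, the consistency checks you defer are where the actual work lies. Independence of the order in which boundary points are forgotten cannot be read off from (5) alone: (5) only forgets the $\pi$-last point, and forgetting any other point first requires a cyclic shift whose sign $(-1)^{|B|-1}$ is exactly the content of property (3) --- so your derivation of (3) from (5) assumes what it is meant to prove. Likewise the sign $(-1)^{(|B_1|-1)|B_2|}$ in (6) is asserted rather than computed. If you intend to reprove the cited results of \cite{BCT:I} rather than invoke them, these sign verifications must be carried out explicitly.
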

\begin{proof}
Parts (1), (3), (4), (5) are parts (1), (2), (3), and (4) of Proposition 3.12 of \cite{BCT:I}.  Part (2) corresponds to the covariant property (Definition 3.11 of loc.\ cit.) for the unique family of orientations constructed there. Lastly, parts (6) and (7) are Lemma 3.15(1) and (2) of loc.\ cit., respectively.
\end{proof}

\begin{rmk}
To clean up notation, we will often instead use restriction notation $|_{\oCM_\Gamma}$ to denote pulling back under an inclusion map $\iota_\Gamma$, viewing the moduli space $\oCM_{\Gamma}$ as a boundary strata of $\oCM_{\smooth{\Gamma}}$. For example, in Equation~\eqref{eq:induced_or_interior_moduli} above, we write $\tilde{\mathfrak{o}}^\pi |_{\oCM_\Gamma}$ instead of $\iota_{\Gamma}^*\tilde{\mathfrak{o}}^\pi $.
\end{rmk}

We now turn to the family of orientations of the Witten bundles.
\begin{lemma}\label{lem:or_single_Witten}
There exists a family of orientations ${\mathfrak{o}}^{\pi}_{0,B,\{a_i\}_{i\in I}}$
for the Witten bundles $\cW\to\oCM^{1/r,\text{labeled},\hat{\pi}}_{0,B,\{a_i\}_{i\in I}}$ with the following properties:
\begin{enumerate}
\item\label{it:or_Witten_zero}
The orientation is positive if the rank is $0$ and $I$ is non-empty ($|B|=|I|=1,a_i=0$).
\item\label{it:or_Witten_cov}
Assume $f^B:B\to B'$ and $f^I:I\to I'$ are bijections and \[F:(\cW\to\oCM_{0,B,I}^{1/r,\textup{labeled}})\to(\cW\to\oCM_{0,B',I'}^{1/r,\textup{labeled}})\] is the induced map, lifted to the Witten bundle. Then ${\mathfrak{o}}_{0,B,\{a_i\}_{i\in I}}^\pi=F^* {\mathfrak{o}}_{0,B',\{a_i\}_{i\in I'}}^{f^B\circ \pi}$. In particular, the action of any $g\in \text{Sym}(I)$ permuting the internal marked points is orientation preserving for $\cW\to\oCM_{0,B,I}^{1/r,\textup{labeled}}$.
\item\label{it:or_Witten_perm}
Fix an integer $h$, and let $\pi$ be an order of $B$. Take $g\in \text{Sym}(B)$ so that $g$ sends $x_{\pi(i)}$ to $x_{\pi(i+h)}$ cyclically.  The action of $g$ on the Witten bundle preserves the orientation  ${\mathfrak{o}}^{\pi}_{0,B,\{a_i\}_{i\in I}}$ of $\cW\to\oCM_{0,B,I}^{1/r,\textup{labeled},\hat\pi}$ if and only if $h(|B|-1)$ is even.
\item\label{it:or_Witten_bdry} Let $\Gamma$ be a graded $r$-spin graph, whose underlying pre-stable dual graph is that in Notation~\ref{nn: boundary orientation case}. Assume that $\tw(h_1) = \alt(h_1) = 0$ and recall that $v_1'$ is the vertex obtained when one takes $v_1$ and forgets the half-edge $h_1$. Then
 $\mathfrak{o}^\pi$ agrees with
$\mathfrak{o}^{\pi_1}_{0,B_1,I_1}\boxtimes\mathfrak{o}^{\pi_2}_{0,\{h_2\}\cup B_2,I_2}$ under the isomorphism
\[\det(\cW_{\Gamma}) = \det(\cW_{v_1}) \boxtimes \det(\cW_{v_2}) = \For_{h_1}^*\det(\cW_{v'_1}) \boxtimes \det(\cW_{v_2}).\]
\item\label{it:or_Witten_int}
Let $\Gamma$ be a connected graph with injective boundary markings, two vertices, an open vertex $v^o,$ whose tails are cyclically ordered by $\pi,$ and a closed vertex $v^c$.  Then for any order $\pi$ whose induced cyclic order is $\hat\pi,$ the pull-back orientation $\iota_\Gamma^*\mathfrak{o}^\pi$ on the Witten bundle agrees with $\mathfrak{o}^\pi_{v^o}\boxtimes\mathfrak{o}_{v^c}$ under the isomorphism \[\det(\cW_{\Gamma})\cong \Detach^* \bigg( \det(\cW_{v^o})\boxtimes \det(\cW_{v^c}) \bigg).\]
\end{enumerate}
\end{lemma}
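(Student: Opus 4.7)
The proof parallels the construction of the Witten-bundle orientation for the single rank-one case in \cite{BCT:I}, Lemma~3.15, adapted to our cyclic-order-indexed moduli. The strategy is to recycle that construction essentially verbatim: the Witten-bundle determinant line is trivialized through a combination of Observation~\ref{obs:forgetful} (which lets us reduce to configurations with no removable non-alternating zero-twist tails), Proposition~\ref{pr:decomposition} (which trivializes the determinant along codimension-one degenerations), and the canonical complex orientation on the closed-vertex Witten summand that appears at internal-node strata. One can organize the argument inductively: the base case is supplied by property~(1), namely that when $|B|=|I|=1$ and $a_1=0$ the bundle has rank zero and there is a unique positive orientation, and the inductive step promotes property~(4) into a definition by bubbling off a three-point open disk across a Neveu--Schwarz boundary node and tensoring the inductively available orientations of the two factors.

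The routine verifications then proceed as follows. Property~(2) is immediate from the naturality of spin bundles, cyclic orders, and forgetful maps under bijective relabellings. The consistency of the inductive definition, that different choices of bubbled-off tail produce the same orientation on $\cW$, reduces to the associativity of the decompositions in Proposition~\ref{pr:decomposition} at codimension-two strata obtained by two consecutive bubblings; the requisite sign computation is identical to the one in \cite{BCT:I}. Property~(4) holds by construction. Property~(5) at a closed--open internal-node stratum follows from Proposition~\ref{pr:decomposition}\eqref{it:NS} in the Neveu--Schwarz case, and in the Ramond case from Proposition~\ref{pr:decomposition}\eqref{it:internalRamondWitten} together with the canonical complex orientation on the closed-vertex Witten summand.

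The main obstacle will be property~(3), the cyclic-shift sign $(-1)^{h(|B|-1)}$. This is the customary subtle sign check for Witten-bundle orientations, already present in \cite{PST14} and \cite{BCT:I}. I would first reduce to the case $h=1$ by iteration, and then to adjacent transpositions of boundary markings. Applying property~(4) with a bubble containing exactly the two swapped markings, the sign discrepancy of the transposition on $\det(\cW)$ is pinned down by the parity of the rank contribution of the bubbled-off component; the rank formula \eqref{eq:graded_open_rank} yields a contribution of parity $|B|-1$, and iterating produces the claimed $(-1)^{h(|B|-1)}$. When the swapped markings carry nontrivial twist the bubble is Ramond, and one must additionally track the canonical complex orientation entering through the exact sequence of Proposition~\ref{pr:decomposition}\eqref{it:Ramondbdryedge}; after book-keeping, the cumulative sign budget still matches $(-1)^{h(|B|-1)}$.
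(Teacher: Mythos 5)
The paper does not actually reprove this lemma: each item is obtained by direct citation to the rank-one construction of \cite{BCT:I} (Definition 5.10 for the normalization in (1), Theorem 5.2(i) for (2), Lemma 5.6 for (3), Lemma 5.13 for (4), Lemma 5.14 for (5)), the only added content being the remark that in the Ramond case of item (5) one combines Proposition~\ref{pr:decomposition}\eqref{it:internalRamondWitten} with the determinant-line convention of Observation~\ref{rmk:orientation and exact sequence}. Your proposal instead attempts to reconstruct that construction. The skeleton (inductive definition by bubbling, base case the rank-zero normalization, consistency checked at codimension-two strata) has the right shape, but two steps would not go through as written.

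First, the well-definedness of the inductive definition --- that different choices of bubbled-off component yield the same orientation --- is precisely the hard part of the construction in \cite{BCT:I}; it is not a formal associativity statement, because at Ramond edges the determinant line is only identified through the exact sequences of Proposition~\ref{pr:decomposition}\eqref{it:Ramondbdryedge} and \eqref{it:internalRamondWitten} together with the convention of Observation~\ref{rmk:orientation and exact sequence}, and the resulting signs must actually be computed; declaring the computation ``identical to the one in \cite{BCT:I}'' is circular if the goal is to reprove the lemma rather than cite it. Second, and more seriously, your plan for item (3) is flawed: an adjacent transposition of boundary markings does not preserve the cyclic order $\hat\pi$, so it is not an automorphism of the connected component $\oCM^{1/r,\text{labeled},\hat\pi}_{0,B,\{a_i\}}$ on which statement (3) lives --- it maps that component to a different one, and comparing $\mathfrak{o}^{\pi}$ across components (as in Corollary~\ref{cor:different_starting_pt}) is a separate statement that itself depends on (3). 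Moreover, item (4) only supplies a product decomposition when the bubbled half-edge satisfies $\tw(h_1)=\alt(h_1)=0$, so ``bubbling off exactly the two swapped markings'' is unavailable when those markings are twisted; one would instead need a Ramond analogue of (4), which is exactly the book-keeping you defer. The clean route is the paper's: invoke Lemmas 5.6, 5.13 and 5.14 of \cite{BCT:I}, whose proofs obtain the sign $(-1)^{h(|B|-1)}$ by direct computation with the explicit definition of $\mathfrak{o}^{\pi}$ rather than by composing transpositions.
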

\begin{proof}
Item (1) is induced by the choice given in Definition 5.10 in \cite{BCT:I}. Item (2) is Theorem 5.2(i) of loc. cit. Item (3) is Lemma 5.6 of loc cit. Item (4) is Lemma 5.13 and the discussion above it in loc. cit. Lastly, Item (5) is Lemma 5.14 of loc. cit.
Here, in Item (5), we note that we have used Observation \ref{rmk:orientation and exact sequence}: In case $e$ is Neveu-Schwarz, the Witten bundle decomposes as a direct sum by  Proposition \ref{pr:decomposition}(\ref{it:NS}), but when $e$ is Ramond, we use Proposition~\ref{pr:decomposition}(\ref{it:internalRamondWitten})  along with Observation \ref{rmk:orientation and exact sequence} in order to write $\iota_\Gamma^*\ooo^\pi = \ooo_{v^o}^\pi \boxtimes \ooo_{v^c}$.
\end{proof}

\begin{rmk}\label{rmk:uniqueness}
In fact, there are precisely two families $\{\mathfrak{o}^{\pi}_{0,B,\{a_i\}_{i\in I}}\},\{\mathfrak{o}^{'\pi}_{0,B,\{a_i\}_{i\in I}}\}$ of orientations of the Witten bundle which satisfy Lemma \ref{lem:or_single_Witten}. They are related by $(-1)^{|B|-1}\mathfrak{o}^{\pi}_{0,B,\{a_i\}_{i\in I}}=\mathfrak{o}^{'\pi}_{0,B,\{a_i\}_{i\in I}}$. In \cite[Section 5]{BCT:I}, the family of orientations
$\mathfrak{o}^{\pi}$ under which $\langle\tau^1_0\sigma^2\rangle^{1/r,o}=1$ was chosen, and we continue with this choice.
\end{rmk}

We require another useful result concerning the induced orientations on the boundary that involves changing the cyclic ordering of the boundary marked points.
\begin{cor}\label{cor:different_starting_pt}
Suppose $\Gamma$ is an $r$-spin boundary graph as described in Notation~\ref{nn: boundary orientation case}, using the unique ordering $\pi$ stated there. Assume further that $\tw(h_1) = \alt(h_i) = 0$. Let $\pi'$ be a different ordering given by shifting $\pi$ by $m$ steps, i.e., $\pi'(i)=\pi(m+i\pmod{|B|})$.
\begin{enumerate}
\item\label{it:SameVertexOrderShift} If $0\leq m<|B_1|$, then we have that
\begin{equation}\label{eq:ChangeOf b}
\tilde{\mathfrak{o}}^{\pi'}|_{\CM_\Gamma}=(-1)^{(|B_1|-1)|B_2|+m|B_2|}o_N\otimes o_{h_1}\otimes
(\tilde{\mathfrak{o}}^{\pi'_1}_{0,B_1,I_1}\boxtimes\tilde{\mathfrak{o}}^{\pi'_2}_{0,\{h_2\}\cup B_2,I_2}),
\end{equation}
\begin{equation}\label{orientation shift order with half-edge included}
\tilde{\mathfrak{o}}^{\pi'}|_{\CM_\Gamma}= (-1)^{(|B_1|-1)|B_2|+m(|B_2|-1)}o_N\otimes
(\tilde{\mathfrak{o}}^{\pi'_1}_{0,\{h_1\}\cup B_1,I_1}\boxtimes\tilde{\mathfrak{o}}^{\pi'_2}_{0,\{h_2\}\cup B_2,I_2}),
\end{equation}
and
\begin{equation}\label{eq:changeOrderOnTwoVertices}
\mathfrak{o}^{\pi'}=(-1)^{m|B_2|}\mathfrak{o}^{\pi'_1}_{0,B_1,I_1}\boxtimes\mathfrak{o}^{\pi'_2}_{0,\{h_2\}\cup B_2,I_2}.
\end{equation}

\item\label{it:DifferentVertexOrderShift} If $|B_1| \leq m <|B|$, then we have that
\[\ooo^{\pi'}=(-1)^{m(|B_1|-1)}\mathfrak{o}^{\pi'_2}_{0,\{h_2\}\cup B_2,I_2}\boxtimes\mathfrak{o}^{\pi'_1}_{0,B_1,I_1}.\]
Moreover, if $m=|B_1|$ so  $\pi_2''$ is the cyclic order that takes $h_2$ to be the last tail, then we have that
\begin{equation}
\label{eq:Witten-orientation-special-case}
\ooo^{\pi'}=\mathfrak{o}^{\pi''_2}_{0,B_2\cup\{h_2\},I_2}\boxtimes\mathfrak{o}^{\pi'_1}_{0,B_1,I_1}.
\end{equation}
\end{enumerate}
\end{cor}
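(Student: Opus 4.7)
The proof of each identity in the corollary is a bookkeeping exercise that starts from the base decomposition given by Lemma~\ref{lem:or_mod}(\ref{it:or_mod_bdry}) (respectively Lemma~\ref{lem:or_single_Witten}(\ref{it:or_Witten_bdry})) for the ordering $\pi$, and then uses the cyclic-shift formula Lemma~\ref{lem:or_mod}(\ref{it:or_mod_perm}) (resp.\ Lemma~\ref{lem:or_single_Witten}(\ref{it:or_Witten_perm})) to pass between $\pi$ and $\pi'$, both globally on $B$ and separately on $B_1$ or on $\{h_2\}\cup B_2$.

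For part (\ref{it:SameVertexOrderShift}), since $0\le m<|B_1|$ the shift stays within the $B_1$-portion of $\pi$, so the block structure ($v_1$-tails before $v_2$-tails) is preserved. Applying Lemma~\ref{lem:or_mod}(\ref{it:or_mod_perm}) globally introduces $(-1)^{m(|B|-1)}$; Lemma~\ref{lem:or_mod}(\ref{it:or_mod_bdry}) then yields the decomposition with the factor $(-1)^{(|B_1|-1)|B_2|}$ times $\tilde{\mathfrak{o}}^{\pi_1}_{0,B_1,I_1}\boxtimes\tilde{\mathfrak{o}}^{\pi_2}_{0,\{h_2\}\cup B_2,I_2}$. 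Since the shift does not move the tails of $B_2$, one has $\pi_2'=\pi_2$, while a further application of Lemma~\ref{lem:or_mod}(\ref{it:or_mod_perm}) within $B_1$ converts $\tilde{\mathfrak{o}}^{\pi_1}$ into $\tilde{\mathfrak{o}}^{\pi_1'}$ at the cost of $(-1)^{m(|B_1|-1)}$. The total exponent $m(|B|-1)+(|B_1|-1)|B_2|+m(|B_1|-1)$ reduces modulo $2$ to $(|B_1|-1)|B_2|+m|B_2|$, which is \eqref{eq:ChangeOf b}. Identity \eqref{orientation shift order with half-edge included} then follows by absorbing $o_{h_1}$ into $\tilde{\mathfrak{o}}^{\pi_1'}_{0,\{h_1\}\cup B_1,I_1}$ via Lemma~\ref{lem:or_mod}(\ref{it:or_mod_forget_bdry}); since placing $h_1$ at the last position of the shifted cyclic class of $\{h_1\}\cup B_1$ requires one additional cyclic shift, an extra sign $(-1)^m$ arises, which accounts for the discrepancy $m|B_2|\mapsto m(|B_2|-1)$. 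The Witten-bundle identity \eqref{eq:changeOrderOnTwoVertices} is proved verbatim, replacing Lemma~\ref{lem:or_mod} by Lemma~\ref{lem:or_single_Witten} and noting that there is no analogue of the $o_{h_1}$ factor.

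For part (\ref{it:DifferentVertexOrderShift}) the key new feature is that the shift by $m\ge|B_1|$ places the starting point of $\pi'$ inside $B_2$, so the natural decomposition puts $v_2$ before $v_1$. Starting from $\mathfrak{o}^\pi=\mathfrak{o}^{\pi_1}_{0,B_1,I_1}\boxtimes\mathfrak{o}^{\pi_2}_{0,\{h_2\}\cup B_2,I_2}$ via Lemma~\ref{lem:or_single_Witten}(\ref{it:or_Witten_bdry}), I swap the two $\boxtimes$-factors, which introduces $(-1)^{d_1d_2}$ where $d_i=\operatorname{rank}_\R\cW_{v_i}$: indeed, the canonical identification $\det(\cW_{v_1})\otimes\det(\cW_{v_2})\cong\det(\cW_{v_2})\otimes\det(\cW_{v_1})$ arising from reordering the basis of the direct sum contributes $d_1d_2$ transpositions. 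Using the parity relation $d\equiv k-1\pmod 2$ of \eqref{eq:open_rank2}, and recording that after forgetting $h_1$ the vertex $v_1'$ has exactly $|B_1|$ alternating boundary tails while $v_2$ has $|B_2|+1$ alternating boundary tails (the extra one being $h_2$), we get $d_1\equiv|B_1|-1$ and $d_2\equiv|B_2|\pmod 2$, hence $d_1d_2\equiv(|B_1|-1)|B_2|\pmod 2$. Combining this with the global shift sign $(-1)^{m(|B|-1)}$ from Lemma~\ref{lem:or_single_Witten}(\ref{it:or_Witten_perm}) and the cyclic-shift sign $(-1)^{s|B_2|}$ required to pass from $\pi_2$ to $\pi_2'$ inside the $(|B_2|+1)$-element cyclic class of $\{h_2\}\cup B_2$, the total exponent reduces modulo $2$ to $m(|B_1|-1)$ precisely when $s=m-|B_1|+1$; this is the convention that, in the special case $m=|B_1|$, yields $s=1$ and therefore places $h_2$ at the last position, giving $\pi_2''$ with total sign $+1$ as in \eqref{eq:Witten-orientation-special-case}.

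The main obstacle is the parity computation in part (\ref{it:DifferentVertexOrderShift}): one must reconcile the four distinct signs (the two cyclic shifts on $B$ and on $\{h_2\}\cup B_2$, the swap sign $(-1)^{d_1d_2}$, and the boundary-decomposition sign) against the relatively clean answer $(-1)^{m(|B_1|-1)}$, and the relation $d\equiv k-1\pmod 2$ together with the alternation balance at the node is essential to make this work. All remaining steps are direct invocations of the orientation axioms of Lemmas~\ref{lem:or_mod} and~\ref{lem:or_single_Witten}.
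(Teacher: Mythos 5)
Your proposal is correct and follows essentially the same route as the paper's own argument: you decompose via Lemma~\ref{lem:or_mod}(\ref{it:or_mod_bdry}) (resp.\ Lemma~\ref{lem:or_single_Witten}(\ref{it:or_Witten_bdry})), track the cyclic-shift signs from part (\ref{it:or_mod_perm})/(\ref{it:or_Witten_perm}), and use the rank-parity relation \eqref{eq:open_rank2} to reconcile the swap of $\boxtimes$-factors, arriving at the same exponent reductions modulo~$2$. The only cosmetic differences are that you derive \eqref{orientation shift order with half-edge included} by absorbing $o_{h_1}$ from \eqref{eq:ChangeOf b} rather than re-applying the last line of Lemma~\ref{lem:or_mod}(\ref{it:or_mod_bdry}) as the paper does (both give the $(-1)^m$ discrepancy), and the phrase ``one additional cyclic shift'' is a loose gloss on the net $(-1)^m$ coming from shifting by $m$ in a set of size $|B_1|+1$ versus size $|B_1|$; the arithmetic underneath is the same.
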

\begin{proof}
First, we consider the case where $0 \leq m<|B_1|$. We define $\pi_1'$ as the order of $B_1$ induced by restricting $\pi'$, meaning $\pi'_1(i)=\pi_1(m+i~(\text{mod} |B_1|))$. Note that the order of $B_2$ induced by restricting $\pi'$ stays the same, i.e.,  $\pi'_2=\pi_2$. By applying Lemma \ref{lem:or_mod}(\ref{it:or_mod_perm}) and (\ref{it:or_mod_bdry}), we have that
\begin{equation}\begin{aligned}\label{orientation shift order}
\tilde{\mathfrak{o}}^{\pi'}|_{\CM_\Gamma}& = (-1)^{m(|B| -1)} \tilde{\mathfrak{o}}^{\pi}|_{\CM_\Gamma}\\
	&= (-1)^{m(|B|-1)} \left((-1)^{(|B_1|-1)|B_2|}o_N\otimes o_{h_1}\otimes
(\tilde{\mathfrak{o}}^{\pi_1}_{0,B_1,I_1}\boxtimes\tilde{\mathfrak{o}}^{\pi_2}_{0,\{h_2\}\cup B_2,I_2})\right) \\
	&= (-1)^{(|B_1|-1)|B_2|+m|B_2|}o_N\otimes o_{h_1}\otimes
(\tilde{\mathfrak{o}}^{\pi'_1}_{0,B_1,I_1}\boxtimes\tilde{\mathfrak{o}}^{\pi'_2}_{0,\{h_2\}\cup B_2,I_2}).
\end{aligned}\end{equation}
Note that if we want to instead replace $o_{h_1}\otimes
\tilde{\mathfrak{o}}^{\pi'_1}_{0,B_1,I_1}$ with $\tilde{\mathfrak{o}}^{\pi'_1}_{0,\{h_1\} \cup B_1,I_1}$, we have the analogous computation but using the last line of Lemma \ref{lem:or_mod}(\ref{it:or_mod_bdry}) and the fact that $\tilde{\mathfrak{o}}^{\pi_1}_{0,B_1\cup\{h_1\},I_1} = (-1)^{m|B_1|}\tilde{\mathfrak{o}}^{\pi_1'}_{0,B_1\cup\{h_1\},I_1}$. In this fashion, we obtain \eqref{orientation shift order with half-edge included}.
Similarly, we apply Lemma \ref{lem:or_single_Witten}(\ref{it:or_Witten_perm}) and (\ref{it:or_Witten_bdry}) to obtain that $\mathfrak{o}^{\pi'}=(-1)^{m|B_2|}\mathfrak{o}^{\pi'_1}_{0,B_1,I_1}\boxtimes\mathfrak{o}^{\pi'_2}_{0,\{h_2\}\cup B_2,I_2}$.

Next, we consider the case $|B_1| \leq m <|B|$. Here, we can see that the induced order on $B_1$ is the same, i.e., $\pi'_1 = \pi_1$; however, we compute that $\pi'_2(i)=\pi_2(i+m+1-|B_1|~\pmod{|B_2|+1})$. By Lemma \ref{lem:or_single_Witten}(\ref{it:or_Witten_perm}), we then have the following relations using the new orderings:
$$
\ooo^{\pi'}=(-1)^{m(|B|-1)}\ooo^\pi; \qquad \ooo^{\pi'_2}=(-1)^{(1+m+|B_1|)|B_2|}\ooo^{\pi_2}.
$$
Recall that, by using Observation~\ref{obs:open_rank1}, we can see that the parities of the rank of the bundles $\cW_{v_1'}$ and $\cW_{v_2}$ are $|B_1|-1$ and $|B_2|$, respectively. We then can see that, by using Lemma \ref{lem:or_single_Witten}(\ref{it:or_Witten_bdry}), we have that
\begin{equation}\begin{aligned}
\ooo^{\pi'} &= (-1)^{m(|B|-1)} \ooo^\pi \\
	&= (-1)^{m(|B|-1)}  \mathfrak{o}^{\pi_1}_{0,B_1,I_1}\boxtimes\mathfrak{o}^{\pi_2}_{0,\{h_2\}\cup B_2,I_2} \\
	&= (-1)^{m(|B|-1) + (1+m+|B_1|)|B_2|}\mathfrak{o}^{\pi_1'}_{0,B_1,I_1}\boxtimes\mathfrak{o}^{\pi_2'}_{0,\{h_2\}\cup B_2,I_2} \\
	&= (-1)^{m(|B_1| - 1)}\mathfrak{o}^{\pi_2'}_{0,\{h_2\}\cup B_2,I_2}  \boxtimes \mathfrak{o}^{\pi_1'}_{0,B_1,I_1}.
\end{aligned}\end{equation}
\end{proof}

\subsubsection{Orientations in Rank two} We now turn to the rank two case to develop a family of orientations on $\oCM_{\Gamma}^W$ for graded graphs $\Gamma$ and $W= x^r+y^s$. 

\begin{nn}\label{nn:N_pi}
Let $\pi$ be an ordering of the boundary markings $B = B_1\sqcup B_2\sqcup B_{12}$,
where $B_1$ (resp.\ $B_2, B_{12}$) is the set of boundary $r$-tails
(resp.\ boundary $s$-tails, fully twisted tails).
Write $\hat\pi$ for the induced cyclic order. 
We write $x_1,\ldots,x_{k_1}$ and $y_1,\ldots,y_{k_2}$ for the boundary marked points on a disk with twists $(r-2,0)$ and $(0,s-2)$, respectively. Let
\[
N(\pi)=\#\{(x_i,y_j)\,|\, \hbox{the $s$-point $y_j$ precedes the $r$-point $x_i$
in the $\pi$-order}\}.
\]
Denote by $\pi|_{r}$ the restriction of $\pi$ to $B_1\sqcup B_{12},$ and by $\pi|_{s}$ the restriction to $B_2\sqcup B_{12}$.

Given a rooted connected smooth graded 
graph $\Gamma$ and a cyclic ordering $\hat\pi\in\hat\Pi_\Gamma$, let $\pi^\std$ be the unique ordering of boundary tails which starts from the root and induces $\hat \pi$. Equivalently, on the level of a disk in the moduli space, this is the order of markings along the orientation of the boundary, starting from the root.
\end{nn}

\begin{definition}\label{def:or_and_rel_or1}
Given a smooth graded graph $\Gamma$ with bijective boundary marking, a cyclic order $\hat\pi\in\hat\Pi_\Gamma$, and an order $\pi$ which induces the cyclic order $\hat\pi$, we define the orientation
\[
\hatooo^\pi_\Gamma = (-1)^{N(\pi)}\tildooo^\pi,
\]
on the underlying moduli space of disks $\CM^{\text{labeled},\hat\pi}_{\Gamma}$, where $N(\pi)$ is defined in Notation \ref{nn:N_pi}. Recall that, by Observation \ref{obs:forgetful}, we have an isomorphism of vector bundles over $\oCM^W_\Gamma$
 \[\cW_1\simeq \text{For}_{\text{spin}\neq 1}^*(\cW_1),\]
 where  $\text{For}_{\text{spin}\neq 1}:\oCM^W_\Gamma\to\oCM^{1/r}_{\Gamma_r}$
is as defined in Definition \ref{def:for spin}, with
the graph $\Gamma_r=\text{for}_{\text{spin}\neq 1}(\Gamma)$.

The ordering given by the restriction of $\pi$ to unforgotten tails, denoted by $\pi|_r$ as seen in Notation \ref{nn:N_pi}, induces an orientation $\ooo^{\pi|_r}_{\Gamma_{r}}$ on $\cW_1\to\oCM^{1/r}_{\Gamma_r}$ given in Lemma~\ref{lem:or_single_Witten}. We then use the isomorphism of vector bundles under $\text{For}_{\text{spin}\neq 1}$ to pull back this orientation to obtain the orientation on the bundle $\cW_1\to\oCM^W_\Gamma$, which we will denote by $\ooo_r^{\pi|_{r}}=\ooo_{r,\Gamma}^{\pi|_{r}}$. Define $\ooo_s^{\pi|_s}=\ooo_{s,\Gamma}^{\pi|_s}$ similarly for $\cW_2$. Finally, we can combine these three orientations to obtain a relative orientation
\[
o_{\Gamma}^\pi := \hatooo^\pi_\Gamma \otimes\ooo_r^{\pi|_{r}}\otimes\ooo_s^{\pi|_s},
\]
of the full Witten bundle $\cW$, or equivalently an orientation on the total space of $\cW\to\oCM^W_{\Gamma}$.

When $\Gamma = \Gamma_{0,k_1,k_2,1,\{(a_i,b_i)\}_{i\in I}}^{\text{labeled}}$, we write \[\hatooo_{0,k_1,k_2,1,\{(a_i,b_i)\}_{i\in I}}=\hatooo^{\pi^\std}_\Gamma,~o_{0,k_1,k_2,1,\{(a_i,b_i)\}_{i\in I}}=o_\Gamma^{\pi^\std},\]
where, on each connected component of the moduli, $\pi^\std$ is the order of markings along the orientation of the boundary, starting from the root as defined in Notation~\ref{nn:N_pi}. When $\Gamma = \Gamma_{0,k_1+1,k_2+1,0,\{(a_i,b_i)\}_{i\in I}}^{\text{labeled}}$
, we write \[\hatooo_{0,k_1+1,k_2+1,0,\{(a_i,b_i)\}_{i\in I}}=\hatooo^{\pi}_\Gamma,~o_{0,k_1+1,k_2+1,0,\{(a_i,b_i)\}_{i\in I}}=o_\Gamma^{\pi},\]
where $\pi$ is an arbitrary order which agrees with the cyclic order of the points.
\end{definition}

This latter orientation a priori depends on the choice of $\pi$, but we have the following proposition.

\begin{prop}\label{obs:orders_well_def_and_r_s_change}
Suppose $\Gamma$ is a graph that can be written either in the form
$\Gamma_{0,k_1,k_2,1,\{(a_i,b_i)\}_{i\in I}}^{\text{labeled}}$ or $\Gamma_{0,k_1+1,k_2+1,0,\{(a_i,b_i)\}_{i\in I}}^{\text{labeled}}$.
The following hold:
\begin{enumerate}
\item
The relative orientation 
 $o_{0,k_1+1,k_2+1,0,\{(a_i,b_i)\}_{i\in I}}$
is independent of the choice of $\pi$.
\item
If we change the roles of the $r$- and $s$-bundles in the definitions of $\hatooo$ and $o$ then the orientations $o_{0,k_1+1,k_2+1,0,\{(a_i,b_i)\}_{i\in I}}$ changes by $(-1)^{k_1+k_2+1}$ while $o_{0,k_1,k_2,1,\{(a_i,b_i)\}_{i\in I}}$ does
not change.
\item The relative orientations
\[
\hbox{$o_{0,k_1,k_2,1,\{(a_i,b_i)\}_{i\in I}}$ and $o_{0,k_1+1,k_2+1,0,\{(a_i,b_i)\}_{i\in I}}$}
\]
descend to relative orientations of the Witten bundles over the \emph{unlabeled} moduli
spaces $\oCM^W_{0,k_1,k_2,1,\{(a_i,b_i)\}_{i\in I}}$ and
$\oCM^W_{0,k_1+1,k_2+1,0,\{(a_i,b_i)\}_{i\in I}}$.
\item The orientation
$\hatooo_{0,k_1,k_2,1,\{(a_i,b_i)\}_{i\in I}}$ descends to an orientation of $\oCM^W_{0,k_1,k_2,1,\{(a_i,b_i)\}_{i\in I}}$.
\end{enumerate}
\end{prop}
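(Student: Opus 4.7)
My plan is to reduce each part to a combinatorial sign bookkeeping using the covariance and cyclic-shift properties from Lemmas~\ref{lem:or_mod} and~\ref{lem:or_single_Witten}, together with the parity computation of Witten bundle ranks from Observation~\ref{obs:open_rank1}. The main subtlety, and the place where I expect the accounting to be delicate, is (1): one must track simultaneously how $\tildooo^\pi$, $N(\pi)$, and the restrictions $\pi|_r$, $\pi|_s$ all respond to a cyclic shift of $\pi$, and verify that the four resulting signs cancel exactly.

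For (1), since any two orderings of $B$ inducing the same cyclic order differ by iterated cyclic shifts, it suffices to check invariance under a single shift. Let $\pi'$ be the shift moving $p:=\pi(1)$ to the last position. By Lemma~\ref{lem:or_mod}(\ref{it:or_mod_perm}), $\tildooo^{\pi'}=(-1)^{|B|-1}\tildooo^\pi$, where $|B|=k_1+k_2+2$. The change $N(\pi')-N(\pi)$ equals $k_2+1$ if $p\in B_1$ and $-(k_1+1)$ if $p\in B_2$, from the $r$-$s$ pairs flipping across $p$. The key point is that $\pi|_r$ (resp.\ $\pi|_s$) is cyclically shifted by one precisely when $p\in B_1$ (resp.\ $p\in B_2$), so by Lemma~\ref{lem:or_single_Witten}(\ref{it:or_Witten_perm}), $\ooo_r^{\pi|_r}$ picks up a sign $(-1)^{k_1}$ in the first case and is unchanged in the second, with the symmetric situation for $\ooo_s^{\pi|_s}$. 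Summing the four contributions mod $2$ gives $(k_1+k_2+1)+(k_2+1)+k_1\equiv 0$ in the case $p\in B_1$, and $(k_1+k_2+1)+(k_1+1)+k_2\equiv 0$ in the case $p\in B_2$.

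For (2), swapping the roles of $r$ and $s$ leaves $\tildooo^\pi$ unchanged but replaces $N(\pi)$ by its complementary count $N'(\pi)$, with $N(\pi)+N'(\pi)=|B_1|\,|B_2|$; the factor $\ooo_r^{\pi|_r}\otimes\ooo_s^{\pi|_s}$ is moreover reordered to $\ooo_s^{\pi|_s}\otimes\ooo_r^{\pi|_r}$, introducing a sign $(-1)^{e_1 e_2}$ coming from Observation~\ref{rmk:orientation and exact sequence} applied to the swap of the two factors, where $e_1,e_2$ are the real ranks computed in Observation~\ref{obs:open_rank1}. For $\Gamma_{0,k_1,k_2,1,\ldots}$ one finds $e_1\equiv k_1$, $e_2\equiv k_2$, so $|B_1|\,|B_2|\equiv e_1 e_2\equiv k_1 k_2$ and the two signs cancel; for $\Gamma_{0,k_1+1,k_2+1,0,\ldots}$ one still has $e_1e_2\equiv k_1 k_2$ but $|B_1|\,|B_2|=(k_1+1)(k_2+1)$, and the total is $(k_1+1)(k_2+1)+k_1 k_2\equiv k_1+k_2+1\pmod 2$.

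For (3) and (4), every automorphism $\sigma$ of the unlabeled graph permutes $r$-tails among themselves, $s$-tails among themselves, and fixes the root. The orientation descends iff for each labeled component with order $\pi$ one has $\sigma_*o^\pi=o^{\sigma\circ\pi}$ (and analogously for $\hatooo$). By Lemmas~\ref{lem:or_mod}(\ref{it:or_mod_cov}) and~\ref{lem:or_single_Witten}(\ref{it:or_Witten_cov}), this reduces to $(-1)^{N(\pi)}=(-1)^{N(\sigma\circ\pi)}$, together with the identity $(\sigma\circ\pi)|_r=\sigma\circ(\pi|_r)$ (and likewise for $s$). The crucial observation is that $N(\pi)$ depends only on which positions of $\pi$ are occupied by $r$- versus $s$-points, a datum $\sigma$ preserves, so in fact $N(\pi)=N(\sigma\circ\pi)$ on the nose. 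Applying this argument to the Witten-free piece alone yields (4), and combining with the covariance of $\ooo_r^{\pi|_r}$ and $\ooo_s^{\pi|_s}$ yields (3).
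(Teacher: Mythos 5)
Your proof is correct and follows essentially the same route as the paper's: checking invariance of $o^\pi$ under a single cyclic shift via Lemmas~\ref{lem:or_mod}(\ref{it:or_mod_perm}) and~\ref{lem:or_single_Witten}(\ref{it:or_Witten_perm}) together with the change in $N(\pi)$, the parity identity $e_1e_2\equiv k_1k_2$ from Observation~\ref{obs:open_rank1} for the $r$--$s$ swap, and covariance for the descent to the unlabeled moduli. Your explicit remark in (3)--(4) that one must also check $N(\pi)=N(\sigma\circ\pi)$ (which holds because $\sigma$ preserves the pattern of $r$- versus $s$-points) is a detail the paper leaves implicit, but it is correct and needed.
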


\begin{proof}
(1) Consider the case $\Gamma = \Gamma_{0,k_1+1,k_2+1,0,\{(a_i,b_i)\}_{i\in I}}^{\hat\pi}$. Change $\pi$ to the ordering $\pi'$ which induces the same cyclic order but moves the first point of $\pi$ to the end. Then, if the first point is an $r$-point, we have the following changes:
 $\tildooo$ changes by $(-1)^{k_1+k_2+1}$ by Lemma~\ref{lem:or_mod}(\ref{it:or_mod_perm}),
 $N(\pi')=N(\pi)+k_2+1$, and $\ooo^{\pi'|_r}=(-1)^{k_1}\ooo^{\pi|_r}$ and $\ooo^{\pi'|_s}=\ooo^{\pi|_s}$, by Lemma~\ref{lem:or_single_Witten}(\ref{it:or_Witten_perm}).
Thus, $o$ does not change. Similar analysis shows it does not change if the $\pi$-first point is an $s$-point.

(2) Changing the roles of $r$ and $s$ changes $N(\pi)$ to $(k_1+1)(k_2+1)-N(\pi)$ and we can compute that swapping $\cW_r,~\cW_s$ adds a sign of $k_1k_2$ as well using Equation~\eqref{eq:open_rank2}. Thus, we have that the relative orientation changes by $(-1)^{(k_1+1)(k_2+1)+k_1k_2} = (-1)^{k_1 + k_2 +1}$.

In the case where $\Gamma = \Gamma_{0,k_1,k_2,1,\{(a_i,b_i)\}_{i\in I}}^{\hat\pi}$, exchanging the roles of $r$ and $s$ changes $N(\pi)$ to $k_1k_2-N(\pi)$ and swapping $\cW_r,~\cW_s$ adds a sign of $k_1k_2$ as well. Thus, the total expression does not change.

(3) and (4).
Let $G$ be the group of automorphisms of the boundary labelings $B$ which
preserve twists. Then from Lemmas \ref{lem:or_mod}(2) and \ref{lem:or_single_Witten}(2), it follows that $G$ preserves the orientations on the bundles
$\cW\to\oCM_{0,k_1,k_2,1,\{(a_i,b_i)\}_{i\in I}}^{W,\textup{labeled}}$ and $\cW\to\oCM_{0,k_1+1,k_2+1,0,\{(a_i,b_i)\}_{i\in I}}^{W,\textup{labeled}}$, and also on the moduli space
${{\overline{\mathcal{M}}}_{0,k_1,k_2,1,\{(a_i,b_i)\}_{i\in I}}^{W,\textup{labeled}}}$.
Thus, the orientations descend to the quotients by $G$.
\end{proof}

In light of the previous proposition, we may define:
\begin{definition}\label{def:or_and_rel_or2}
We denote by $o_{0,k_1,k_2,1,\{(a_i,b_i)\}_{i\in I}}$ and $o_{0,k_1+1,k_2+1,0,\{(a_i,b_i)\}_{i\in I}}$ the relative orientations induced on the Witten bundles
over the unlabeled moduli spaces $\oCM^W_{0,k_1,k_2,1,\{(a_i,b_i)\}_{i\in I}}$ and $
\oCM^W_{0,k_1+1,k_2+1,0,\{(a_i,b_i)\}_{i\in I}}$ respectively and call them the \emph{canonical relative orientations}. Denote by $\hatooo_{0,k_1,k_2,1,\{(a_i,b_i)\}_{i\in I}}$ also the induced orientation on $\oCM^W_{0,k_1,k_2,1,\{(a_i,b_i)\}_{i\in I}}$ and call it the \emph{canonical orientation} of
$\oCM^W_{0,k_1,k_2,1,\{(a_i,b_i)\}_{i\in I}}$.
If $\Gamma$ is a smooth rooted graded $\RS$-graph, or a smooth graded $\RS$-graph with at most one unrooted component we define the \emph{canonical relative orientation}
\[
o_\Gamma=\boxtimes_{\Lambda\in\Conn(\Gamma)}o_\Lambda.
\]
\end{definition}
Observe that $o_\Gamma$ is well-defined with respect to the order of connected components. Indeed, we can compute that the dimension of the total space of the Witten bundle over a smooth rooted graded $W$-spin disk is even by using \eqref{eq:open_rank2} and the fact that the moduli space has dimension $k_1+k_2 +k_{12}+ 1 \pmod 2$.

The key point of this definition is that it constructs relative orientations
for moduli spaces associated to balanced graphs and to critcal graphs.

\subsubsection{Behavior of the canonical orientations at the boundary strata} Consider a graded $W$-spin graph that can be written in the form
$\Gamma_{0,k_1,k_2,1,\{(a_i,b_i)\}_{i\in I}}^W$. We now describe the behavior of the canonical relative orientations for the following degenerations of the smooth graph:
\begin{itemize}
\item The graphs $\Lambda \in \partial\Gamma^W_{0,k_1,k_2,1,\{(a_i,b_i)\}_{i\in I}}$ with two vertices and  a single internal edge.
\item The graphs $\Lambda \in \partial^\xch\Gamma^W_{0,k_1,k_2,1,\{(a_i,b_i)\}_{i\in I}}$ with two vertices.
\item The graphs $\Lambda \in \partial^0\Gamma^W_{0,k_1,k_2,1,\{(a_i,b_i)\}_{i\in I}}$ with two vertices and a single boundary edge, where the half-node $h$ that is adjacent to the rooted vertex is not fully twisted.
\end{itemize}
Understanding the orientation at these boundaries will be necessary to prove the subsequent results for topological recursion and wall-crossing for open FJRW invariants.

 First, consider the boundary graphs that have a single internal edge and no boundary edges.

\begin{cor}\label{thm:or_and_induced internal}
Let $\Lambda\in\partial\Gamma^W_{0,k_1,k_2,1,\{(a_i,b_i)\}_{i\in I}}$ be a graded $\RS$-graph with two vertices, $v_1$ and $v_2$. Assume that $v_1$ is open and $v_2$ is closed. Then, under the identification
\[\iota_{\Lambda}^*\det(T\oCM^W_{0,k_1,k_2,1,\{(a_i,b_i)\}_{i\in I}}) = \det(N) \otimes \left(\det(T\oCM^W_{v_1}) \boxtimes \det(T \oCM^W_{v_2})\right),\]
we have
\begin{equation}\label{eq:ind_int}\iota_{\Lambda}^*o_{0,k_1,k_2,1,\{(a_i,b_i)\}_{i\in I}}= o_N \otimes \left(o_{v_1} \boxtimes o_{v_2}\right),\end{equation}
where $o_{v_1},o_{v_2}$ are the orientations on the total space associated to the two vertices, $o_{v_2}$ is the standard complex one.
\end{cor}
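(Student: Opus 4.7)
The plan is to reduce the claim to the labeled setting via part (3) of Proposition \ref{obs:orders_well_def_and_r_s_change}, and then to compute the pullback orientation piece by piece from the definition $o_\Gamma^\pi = (-1)^{N(\pi)}\tildooo^\pi\otimes\ooo_r^{\pi|_r}\otimes\ooo_s^{\pi|_s}$ (Definition \ref{def:or_and_rel_or1}), using the existing boundary formulas for each factor. Concretely, fix any order $\pi$ of the boundary tails of $\Gamma$ compatible with the cyclic order, and let $\pi_1$ denote the induced order on the boundary tails of $v_1$. Because $v_2$ is closed and $v_1$ carries all boundary tails (the edge being internal), the sets $B, B_1$ of boundary tails coincide and hence $\pi=\pi_1$, $\pi|_r=\pi_1|_r$, $\pi|_s=\pi_1|_s$, and in particular $N(\pi)=N(\pi_1)$.

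Next, apply the boundary splitting lemmas. Lemma \ref{lem:or_mod}(\ref{it:or_mod_int}) gives
\[
\iota_\Lambda^*\tildooo^\pi = o_N\otimes\bigl(\tildooo^{\pi_1}_{v_1}\boxtimes\tildooo_{v_2}\bigr),
\]
where $\tildooo_{v_2}$ is the canonical complex orientation of $\oCM_{v_2}$. Lemma \ref{lem:or_single_Witten}(\ref{it:or_Witten_int}), applied after the forgetful map $\text{For}_{\text{spin}\neq 1}$ (and the identification $\cW_1\cong \text{For}_{\text{spin}\neq 1}^*\cW_1$ of Observation \ref{obs:forgetful}), likewise gives
\[
\iota_\Lambda^*\ooo_r^{\pi|_r} = \ooo_{r,v_1}^{\pi_1|_r}\boxtimes\ooo_{r,v_2},
\qquad
\iota_\Lambda^*\ooo_s^{\pi|_s} = \ooo_{s,v_1}^{\pi_1|_s}\boxtimes\ooo_{s,v_2},
\]
with the closed-vertex orientations being the canonical complex ones.

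The remaining task is to reorganize the tensor product of the three boxed products so that the $v_1$-factors are grouped together and the $v_2$-factors are grouped together, yielding $o_N\otimes(o_{v_1}^{\pi_1}\boxtimes o_{v_2})$ with $o_{v_1}^{\pi_1}=(-1)^{N(\pi_1)}\tildooo^{\pi_1}_{v_1}\otimes\ooo_{r,v_1}^{\pi_1|_r}\otimes\ooo_{s,v_1}^{\pi_1|_s}$ and $o_{v_2}=\tildooo_{v_2}\otimes\ooo_{r,v_2}\otimes\ooo_{s,v_2}$. This is the step one would expect to be the main obstacle, since Koszul signs generally appear whenever one commutes factors in a product of determinant lines. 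The key observation, which makes all signs vanish, is that every factor carried by the closed vertex $v_2$ has \emph{even real rank}: $\oCM_{v_2}$ is a complex orbifold, and the closed Witten bundles $\cW_{1,v_2},\cW_{2,v_2}$ are complex vector bundles (Observation \ref{obs:closed}). Consequently, moving any $v_1$-factor past any combination of $v_2$-factors introduces no sign. Combining this with $N(\pi)=N(\pi_1)$ yields $\iota_\Lambda^*o_\Gamma^\pi=o_N\otimes(o_{v_1}^{\pi_1}\boxtimes o_{v_2})$, which, by Definition \ref{def:or_and_rel_or1} and Proposition \ref{obs:orders_well_def_and_r_s_change}(3), descends to the claimed identity \eqref{eq:ind_int} on the unlabeled moduli.
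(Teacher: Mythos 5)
Your proof is correct and follows the same route as the paper, which simply cites Lemma \ref{lem:or_mod}(\ref{it:or_mod_int}) and Lemma \ref{lem:or_single_Witten}(\ref{it:or_Witten_int}) and leaves the reassembly implicit. Your added details — that $\pi=\pi_1$ so $N(\pi)=N(\pi_1)$, and that all Koszul signs vanish because every $v_2$-factor has even real rank — are exactly the points the paper's one-line proof is suppressing.
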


\begin{proof}
This is a direct consequence of the behavior under internal degenerations of $\tildooo,\ooo_r,\ooo_s$ described in Lemma \ref{lem:or_mod}~\eqref{it:or_mod_int}, and Lemma \ref{lem:or_single_Witten}\eqref{it:or_Witten_int}.
\end{proof}

Next, we consider the case where $\Lambda \in \partial^{\xch}\Gamma$ is a graded $W$-spin graph. We will use the setup where the underlying dual graph of $\Lambda$ is that of $\Gamma$ as described in Notation~\ref{nn: boundary orientation case}. Assume further that the vertex $v_2$ is the exchangeable vertex. Recall from Definition~\ref{def:graphBestiary} that there exists another graph $\Lambda' := \xch(\Lambda) \in \partial^{\xch}\Gamma$, found by swapping the $r$- and $s$-points adjacent to the exchangeable vertex. 

Moreover, for any $W$-spin curve $C$ representing a point in $\oCM^W_{\Lambda}$ there exists a $W$-spin curve $C'$ representing a point in $\oCM^W_{\Lambda'}$ so that $C \sim_{\XCH} C'$ where $\sim_{\XCH}$ is as defined in Definition~\ref{def:SIMX}.  Hence we can define the map
\begin{equation}\label{exchange moduli}
\XCH_v: \oCM^W_{\Lambda} \to \oCM^W_{\Lambda'}
\end{equation}
 given by taking the point represented by $C$ to the point represented by $C'$.

\begin{prop}\label{prop: orient exchange}
Suppose $\Gamma$ is a graph that can be written in the form
$\Gamma_{0,k_1,k_2,1,\{(a_i,b_i)\}_{i\in I}}^{W,\textup{labeled}}$.
Let $\Lambda, \Lambda' \in\partial^{\xch}\Gamma$ so that $\Lambda\sim_{\XCH} \Lambda'$ and both have exactly one boundary edge. 
Then $\XCH_v: \oCM^W_{\Lambda}\to\oCM^W_{\Lambda'}$ reverses the orientation $o_\Gamma$ when restricted to the boundary strata $\oCM^W_{\Lambda}$ and $\oCM^W_{\Lambda'}$, i.e.,
$$
\iota_{\Lambda}^* o_{\Gamma} = -\XCH_v^* \iota^*_{\Lambda'} o_{\Gamma}.
$$
This orientation reversing isomorphism descends to the unlabeled moduli
spaces.
\end{prop}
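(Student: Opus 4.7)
The plan is to work in the labeled moduli (where Lemmas \ref{lem:or_mod} and \ref{lem:or_single_Witten} apply directly), choose orderings $\pi, \pi'$ inducing the cyclic orders of $\Lambda, \Lambda'$ that differ only by transposing the two exchangeable tails, track each of the three factors in the decomposition $o_\Gamma^\pi = (-1)^{N(\pi)}\tildooo^\pi \otimes \ooo_r^{\pi|_r} \otimes \ooo_s^{\pi|_s}$ of Definition \ref{def:or_and_rel_or1}, and then descend to the unlabeled moduli via Proposition \ref{obs:orders_well_def_and_r_s_change}(3).

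I would begin by unpacking the combinatorics at the exchangeable vertex $v_2$: its three boundary tails may be labeled $h_2, t, t'$ with $\tw(h_2) = (0,0)$, $t$ an $r$-tail, and $t'$ an $s$-tail. The grading parity equation in Definition \ref{def:graph}(\ref{item}) then forces $\alt_1(h_2) = \alt_2(h_2) = 0$; correspondingly, $h_1$ is fully twisted with $\alt(h_1) = (1,1)$. Under the product decomposition $\oCM_\Lambda \cong \oCM_{v_1'} \times \oCM_{v_2}$ from Notation \ref{nn: boundary orientation case}, the map $\XCH_v$ is the identity on the first factor and the relabeling isomorphism $F$ induced by swapping $t \leftrightarrow t'$ on the second.

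The three factors then behave as follows. \emph{(i) The sign $(-1)^{N(\pi)}$:} Since the $r$-tail $t$ immediately precedes the $s$-tail $t'$ in $\pi$ and is moved past $t'$ in $\pi'$, one has $N(\pi') = N(\pi) + 1$, contributing $-1$. \emph{(ii) The orientation $\tildooo$:} Lemma \ref{lem:or_mod}(6), applied to both $\Lambda$ and $\Lambda'$, yields identical normal-bundle, forgetful-fibre, and $v_1'$-factors, while Lemma \ref{lem:or_mod}(2) gives $F^*\tildooo^{\pi'_2}_{v_2'} = \tildooo^{\pi_2}_{v_2}$; together these imply $\XCH_v^* \iota_{\Lambda'}^*\tildooo^{\pi'} = \iota_\Lambda^*\tildooo^\pi$. \emph{(iii) The Witten orientations $\ooo_r^{\pi|_r}, \ooo_s^{\pi|_s}$:} The restricted orderings satisfy $\pi|_r = \pi'|_r$ and $\pi|_s = \pi'|_s$ since each omits precisely the swapped tail. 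Moreover, because $(\tw_1, \alt_1)(h_2) = (0,0)$, the map $\text{For}_{\text{spin}\neq 2}$ removes both $h_2$ and $t'$, leaving $v_2$ unstable and forcing its contraction; the resulting $r$-spin graph is then the same for $\Lambda$ and $\Lambda'$, and symmetrically at the $s$-spin level. Hence $\XCH_v$ covers the identity on both $\oCM^{1/r}$ and $\oCM^{1/s}$, so these orientations pull back to themselves.

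Combining contributions yields $\XCH_v^* \iota_{\Lambda'}^* o_\Gamma = -\iota_\Lambda^* o_\Gamma$, and Proposition \ref{obs:orders_well_def_and_r_s_change}(3) transfers this to the unlabeled moduli. The main obstacle is step \emph{(iii)}: one must check carefully that $h_2$ is forgotten by $\text{For}_{\text{spin}\neq i}$ for both $i=1,2$, so that $\XCH_v$ genuinely becomes the identity on each rank-one spin moduli space. This is ensured by the parity-forced vanishing $\alt_1(h_2) = \alt_2(h_2) = 0$; once in place, the net sign arises solely from the single inversion in $N(\pi)$.
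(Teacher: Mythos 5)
Your proof is correct and follows essentially the same route as the paper's: both track the three factors of $o_\Gamma^\pi = (-1)^{N(\pi)}\tildooo^\pi\otimes\ooo_r^{\pi|_r}\otimes\ooo_s^{\pi|_s}$, observe that the $\tildooo$ and Witten factors are unchanged under the exchange (the exchangeable vertex moduli is a point, and each rank-one Witten orientation forgets the swapped tail of the other type, so $\pi|_r=\pi'|_r$ and $\pi|_s=\pi'|_s$), and extract the sign from $N(\pi')=N(\pi)+1$. The only blemish is a harmless index swap in your step (iii): it is $\text{For}_{\text{spin}\neq 1}$ that forgets $h_2$ together with the $s$-tail $t'$ (while $\text{For}_{\text{spin}\neq 2}$ forgets $h_2$ together with the $r$-tail $t$), but the conclusion that the exchange acts trivially on each rank-one spin moduli stands.
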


\begin{proof}
Without loss of generality, we may assume that $\Lambda$ has a single cyclic order $\hat\pi$ of its vertices. Let $\hat\pi'$ be the corresponding cyclic order on the boundary tails of $\Lambda'$. Recall the definition of standard orderings from Notation~\ref{nn:N_pi} and let $\pi^{\std}$ and $(\pi^{\std})'$ be the standard orderings of boundary tails which lift the cyclic orderings $\hat \pi$ and $\hat\pi'$.  Note that, just like everything else in this case, these orderings differ by transposing an $r$-tail and an $s$-tail.

Note that the orientations induced by $\tildooo^{\pi^{\std}},\tildooo^{(\pi^{\std})'}$ on the boundary components $\oCM^W_{\Lambda},\oCM^W_{\Lambda'}$ are the same, by applying Lemma \ref{lem:or_mod}\eqref{it:or_mod_bdry} and then using \ref{lem:or_mod}\eqref{it:or_mod_zero} in either case.  Moreover, the orientations of the $r$-spin and $s$-spin Witten bundles ignore the relative order of the $r$-points and $s$-points as they forget one or the other. As $N(\pi^{\std})=N((\pi^{\std})')+1 \pmod 2$, the identification $\XCH_v$ takes the orientation $\hat\ooo^{\pi^{\std}}$ to $-\hat\ooo^{(\pi^{\std})'}$, implying the result.
The descent to the unlabeled moduli space follows from Theorem
\ref{obs:orders_well_def_and_r_s_change}(3).
\end{proof}

Lastly, consider the boundary graphs that have no internal edges and a single boundary edge. 

\begin{nn}
\label{nn:K1 K2}
For an open vertex $v$ in a graded graph $\Gamma$, we write
\[
\kk_1(v)=k_1(v)+k_{12}(v)-1,~\kk_2(v)=k_2(v)+k_{12}(v)-1.
\]
Note that $\kk_1(v)$ and $\kk_2(v)$ include half-nodes.
\end{nn}

\begin{thm}\label{thm:or_and_induced boundary}
Let $\Lambda\in\partial\Gamma^{W, \textup{labeled}}_{0,k_1,k_2,1,\{(a_i,b_i)\}_{i\in I}}$ be a graded $\RS$-graph where the underlying dual graph is as in Notation~\ref{nn: boundary orientation case}. Suppose that the vertex $v_1$ contains the root and that the sets $\hat\Pi_{v_i}$ contain all cyclic orderings for both vertices $v_i$. Take $\iota_\Lambda$ as in Definition~\ref{def:moduliGraph}.

\begin{enumerate}
\item Suppose that $h_2$ is fully twisted and consequently $\tw(h_1)=\alt(h_1)=(0,0)$.  Then
\[\iota_{\Lambda}^*o_{0,k_1,k_2,1,\{(a_i,b_i)\}_{i\in I}}
= o_N \otimes o_{h_1} \otimes F_\Gamma^*(o_{v'_1} \boxtimes o_{v_2}),\]
where \[
F_{\Lambda}:\oCM^W_{\Gamma} \rightarrow \oCM^W_{v_1}\times\oCM^W_{v_2}\rightarrow\oCM^W_{v'_1}\times\oCM^W_{v_2}
\]
is the composed map $F_\Lambda:=\For_{h_1}\circ\Detach_e$ and $o_N$ and $o_{h_1}$ are the orientations of the bundles $N$ and $Tf_{h_1}$, respectively, as in Lemma \ref{lem:or_mod}(6).
\item\label{part 2: orientation boundary} Suppose that $h_2$ is singly twisted (and consequently so is $h_1$).
\begin{enumerate}
\item If $h_1$ is an $r$-point, then
\begin{equation}\label{eq:ind_WC_1}
\iota_{\Lambda}^*o_{0,k_1,k_2,1,\{(a_i,b_i)\}_{i\in I}}= (-1)^{\kk_2(v_2)}o_N \otimes (o_{v_1} \boxtimes o_{v_2}),
\end{equation}
\item If $h_1$ is an $s$-point, then
\begin{equation}\label{eq:ind_WC_2}
\iota_{\Lambda}^*o_{0,k_1,k_2,1,\{(a_i,b_i)\}_{i\in I}}= (-1)^{\kk_2(v_2)+1}o_N \otimes (o_{v_1} \boxtimes o_{v_2}).
\end{equation}
\end{enumerate}
\end{enumerate}
\end{thm}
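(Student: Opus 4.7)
The strategy is to unpack the relative orientation as in Definition~\ref{def:or_and_rel_or1},
\[
o_\Gamma^\pi = (-1)^{N(\pi)}\tildooo^\pi\otimes\ooo_r^{\pi|_r}\otimes\ooo_s^{\pi|_s},
\]
and restrict each factor to $\oCM^W_\Lambda$ separately. The plan is to pick an ordering $\pi$ of boundary tails adapted to the boundary decomposition in Notation~\ref{nn: boundary orientation case} (with $v_1$-tails first in an order $\pi_1$ starting just after $h_1$, then $v_2$-tails in an order $\pi_2$ starting at $h_2$), apply Lemma~\ref{lem:or_mod}(\ref{it:or_mod_bdry}) to the moduli factor and Lemma~\ref{lem:or_single_Witten}(\ref{it:or_Witten_bdry}) to the Witten factors (on the $r$- and $s$-spin graphs $\text{for}_{\text{spin}\neq i}\Lambda$ obtained via Definition~\ref{def:for spin}), and finally shift $\pi_1,\pi_2$ to the standard orderings $\pi_1^\std,\pi_2^\std$ via Corollary~\ref{cor:different_starting_pt}, comparing $(-1)^{N(\pi)}$ against $(-1)^{N(\pi_1^\std)+N(\pi_2^\std)}$.

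Part (1) is the easier case. Since $h_2$ is fully twisted, it is the new root of $v_2$, so $\pi_2=\pi_2^\std$. Since $h_1$ has twist $(0,0)$, it is forgotten in both $\text{for}_{\text{spin}\neq 1}\Lambda$ and $\text{for}_{\text{spin}\neq 2}\Lambda$, so Lemma~\ref{lem:or_single_Witten}(\ref{it:or_Witten_bdry}) applies directly to each Witten bundle; the $o_{h_1}$ factor from the first form of Lemma~\ref{lem:or_mod}(\ref{it:or_mod_bdry}) accounts for the forgetful portion of $F_\Lambda=\text{For}_{h_1}\circ\Detach_e$. One then collects the sign $(-1)^{(|B_1|-1)|B_2|}$ from Lemma~\ref{lem:or_mod}, the cross-inversion $k_2(v_1)k_1(v_2)$ in $N(\pi)$ (from $s$-tails in $v_1$ preceding $r$-tails in $v_2$), and the shift-signs obtained by transporting $\pi_1$ to $\pi_1^\std$ via Corollary~\ref{cor:different_starting_pt}(\ref{it:SameVertexOrderShift}), and verifies using the rank parities of Observation~\ref{obs:open_rank1} that the total cancels.

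Part (2) is more delicate. In case (a), $h_1$ is an $r$-point and $h_2$ an $s$-point. On the $s$-spin side $h_1$ has $(\tw_2,\alt_2)=(0,0)$, so Lemma~\ref{lem:or_single_Witten}(\ref{it:or_Witten_bdry}) applies directly. On the $r$-spin side it is $h_2$ that has $(\tw_1,\alt_1)=(0,0)$, so the hypothesis of the lemma fails at $h_1$; I remedy this by applying Corollary~\ref{cor:different_starting_pt}(\ref{it:DifferentVertexOrderShift}) with $m=|B_1|$ to reorder the $r$-spin tails so that $v_2$ precedes $v_1$, absorbing the corollary's sign $(-1)^{m(|B_1|-1)}$. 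Combining this with the change in cross-inversions in $N(\pi)$ caused by grouping $h_1$ with the $s$-tails of $v_2$ after reordering, together with the parity of $\rank\cW_{2,v_2}$ from Observation~\ref{obs:open_rank1}, yields the sign $(-1)^{\kk_2(v_2)}$. Case (b) is obtained by interchanging the roles of $r$ and $s$ in case (a); by Proposition~\ref{obs:orders_well_def_and_r_s_change}(2) this adds the sign $(-1)^{k_1(v_2)+k_2(v_2)+1}$ on the critical factor $o_{v_2}$, which simplifies to $(-1)^{\kk_2(v_2)+1}$ modulo $2$ using $k_{12}(v_2)=0$.

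The main obstacle is the sign bookkeeping in Part (2): verifying that the reordering signs from Corollary~\ref{cor:different_starting_pt}, the parities of the $r$- and $s$-spin Witten-bundle ranks on $v_2$ from Observation~\ref{obs:open_rank1}, and the cross-inversion contribution to $N(\pi)$ all collapse precisely to $(-1)^{\kk_2(v_2)}$ (resp.\ $(-1)^{\kk_2(v_2)+1}$). The parity congruences $e_i\equiv\kk_i(v)\pmod 2$ of Observation~\ref{obs:open_rank1} are essential for this simplification, along with the descent to the unlabeled moduli space guaranteed by Proposition~\ref{obs:orders_well_def_and_r_s_change}.
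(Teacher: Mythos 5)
Your proposal is correct and follows essentially the same route as the paper's proof: decompose $o^\pi_\Gamma$ into $(-1)^{N(\pi)}\tildooo^\pi\otimes\ooo_r^{\pi|_r}\otimes\ooo_s^{\pi|_s}$, apply Lemma~\ref{lem:or_mod}(\ref{it:or_mod_bdry}) and Lemma~\ref{lem:or_single_Witten}(\ref{it:or_Witten_bdry}) factor by factor, track the cross-inversion contribution to $N(\pi)$, use Corollary~\ref{cor:different_starting_pt} (in particular \eqref{eq:Witten-orientation-special-case} with $m=|B_1|$ for the $r$-spin bundle in case (2a), where the vertex roles are reversed), and deduce (2b) from (2a) by the $r\leftrightarrow s$ symmetry of Proposition~\ref{obs:orders_well_def_and_r_s_change}. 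The sign bookkeeping you flag as the main obstacle is exactly what the paper's proof carries out, including the check that the shift signs from moving the root cancel on every connected component.
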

We note that in the statement of this theorem, we use the identification
\begin{equation}
\label{eq:Wdec}
\iota_{\Lambda}^*\mathcal{W} = F_\Lambda^*(\mathcal{W}_{v_1'} \boxplus \mathcal{W}_{v_2}).
\end{equation}

\begin{proof}For the proofs below, we will use some new notation. First, recall Definition \ref{def:or_and_rel_or1}. For any $\Lambda$, we write
\[
\ooo^{\pi}_{r,s,\Lambda}=\ooo_{r,\Lambda}^{\pi|_r}\otimes
\ooo_{s,\Lambda}^{\pi|_s}.
\]
We have (suppressing the subscript $\Gamma$ everywhere)
\[
o^\pi = \hat\ooo^{\pi} \otimes \ooo_{r,s}^{\pi} = (-1)^{N(\pi)} \tildooo^{\pi} \otimes \ooo_{r,s}^{\pi}.
\]

{\bf Proof of (1).}
Consider first the connected component $\CM$ of $\CM^W_\Lambda$
with the property that for any $\Sigma\in \CM$,
the root directly follows the node $x_{h_1}$ with respect to the cyclic
order of boundary points on its $v_1$-component. In this case, the standard ordering $\pi^\std$ from Notation~\ref{nn:N_pi} coincides with the ordering $\pi$ given in Notation~\ref{nn: boundary orientation case}.

By Lemma \ref{lem:or_single_Witten}\eqref{it:or_Witten_bdry} applied to
the two Witten bundles separately, (noting that the the cyclic order
in that lemma coincides with the one used here because of the location of
the root), we have
\[
\ooo_r^{\pi^\std|_r}=\ooo_{r}^{\pi^\std_1|_{r}}\boxtimes\ooo_{r}^{\pi_2|_{r}},
\quad\ooo_s^{\pi^\std|_s}=\ooo_{s}^{\pi^\std_1|_{s}}\boxtimes \ooo_{s}^{\pi_2|_{s}},
\]
where $\pi_1^\std$ is the restriction of $\pi^\std$ to the tails of $v_1'$ as in Lemma~\ref{lem:or_single_Witten} and
$\pi_2$ is the restriction of $\pi^\std$ to the tails of $v_2$ and extending by adding $h_2$ in the beginning. We denote by $\ooo_r^{\pi^\std_1|_r}$ the orientation of the $r$-Witten bundle over $\oCM^W_{v_1}$ which corresponds to the order $\pi_1^\std$ and denote the other orientations similarly.
Thus
\begin{align*}
\ooo_{r,s}^{\pi}
= {} &
\ooo_{r}^{\pi_1^\std|_{r}}\boxtimes\ooo_{r}^{\pi_2|_{r}}
\boxtimes
\ooo_{s}^{\pi_1^\std|_{s}}\boxtimes \ooo_{s}^{\pi_2|_{s}}\\
= {} &
(-1)^{a}
\ooo_{r}^{\pi_1^\std|_{r}}\boxtimes\ooo_{s}^{\pi_1^\std|_{s}}
\boxtimes
\ooo_{r}^{\pi_2|_{r}}\boxtimes \ooo_{s}^{\pi_2|_{s}} \\
= {} & (-1)^{a}  \ooo_{r,s,v_1'}^{\pi_1^\std} \boxtimes\ooo_{r,s,v_2}^{\pi_2},
\end{align*}
where $a = \rank(\cW_{v_2,r})\rank(\cW_{v_1,s})=K_1(v_2)K_2(v_1)\pmod 2$.

Similarly, by Lemma \ref{lem:or_mod}\eqref{it:or_mod_bdry},
\[\tildooo^{\pi^\std}|_{\CM} = (-1)^{b}o_N\otimes
o_{h_1}\otimes(\tildooo_{v_1'}^{\pi^\std_1}\boxtimes
\tildooo_{v_2}^{\pi_2}),\quad b=(\kk_1(v_1)+\kk_2(v_1))(\kk_1(v_2)+\kk_2(v_2))\]
where $\pi^\std_1$ is the standard order for the vertex $v_1'$ and $\pi_2$ is the induced order of boundary markings on the vertex $v_2$, as in Notation~\ref{nn: boundary orientation case}, starting from $h_2$. We remark that $\pi_2 = \pi_2^{\std}$, as $h_2$ is a root for $v_2$ in the detaching in this case.

In addition,
\[
N(\pi^\std)=N(\pi^\std_1)+N(\pi_2)+c, \quad c = K_1(v_2)K_2(v_1).
\]
Note here that, in this special case,  $c\equiv a\mod 2$.
We then see that on the Witten bundle over $\CM$ the orientation is
\begin{align*}
o^\pi = {} & (-1)^{N(\pi^\std)} \tildooo^{\pi} \otimes \ooo_{r,s}^{\pi}
	\\
	={} & (-1)^{N(\pi^\std)+a+ b}(o_N\otimes o_{h_1}\otimes \tildooo_{v_1'}^{\pi_1^\std} \boxtimes \tildooo_{v_2}^{\pi_2}) \otimes(\ooo_{r,s,v_1'}^{\pi_1^\std} \boxtimes\ooo_{r,s,v_2}^{\pi_2}) \\
	={} & (-1)^{N(\pi^\std) + a + b + b} o_N\otimes o_{h_1}\otimes (\tildooo_{v_1'}^{\pi_1^\std}   \otimes\ooo_{r,s,v_1'}^{\pi_1^\std})\boxtimes (\tildooo_{v_2}^{\pi_2} \otimes\ooo_{r,s,v_2}^{\pi_2}) \\
	={} & (-1)^{(N(\pi^\std_1)+N(\pi_2)+a)+ a} o_N\otimes o_{h_1}\otimes (\tildooo_{v_1'}^{\pi_1^\std}   \otimes\ooo_{r,s,v_1'}^{\pi_1^\std})\boxtimes (\tildooo_{v_2}^{\pi_2} \otimes\ooo_{r,s,v_2}^{\pi_2}) \\
	={} & o_N\otimes o_{h_1}\otimes ((-1)^{N(\pi^\std_1)}\tildooo_{v_1'}^{\pi_1^\std}   \otimes\ooo_{r,s,v_1'}^{\pi_1^\std})\boxtimes ((-1)^{N(\pi_2)}\tildooo_{v_2}^{\pi_2} \otimes\ooo_{r,s,v_2}^{\pi_2}) \\
	= {} & o_N\otimes o_{h_1}\otimes o_{v_1'}^{\pi_1^\std}  \boxtimes o_{v_2}^{\pi_2}.
\end{align*}
Note the additional $(-1)^{b}$ in the third line came from swapping $\tildooo_{v_2}^{\pi_2} $ and $\ooo_{r,s,v_1'}^{\pi_1^\std}$.

We now tackle the general case. Consider a component $\CM$ of $\CM^W_\Lambda$  where, for any $\Sigma\in\CM$, if we follow the cyclic order of the boundary points on its $v_1$-component from the node $x_{h_1}$ to the root, then we pass through $m_1$ boundary points of twist $(r-2,0)$ and $m_2$ of twist $(0,s-2)$.\footnote{Note that, for our hypothesis in this case to continue to hold, the root must stay in vertex $v_1$.} Now use Corollary \ref{cor:different_starting_pt}(\ref{it:SameVertexOrderShift}) to compute how $a$ and $b$ change with respect to this new ordering. Here, we see that $a$ changes by $\kk_1(v_2)m_1+\kk_2(v_2)m_2$ by Equation~\eqref{eq:changeOrderOnTwoVertices}, after applying it to both spin bundles. The value of $b$ changes by $(m_1+m_2)(\kk_1(v_2)+\kk_2(v_2))$ by Equation~\eqref{eq:ChangeOf b}. Finally, $c$ changes by $m_2\kk_1(v_2)+m_1\kk_2(v_2)$ by examining $N(\pi^\std)$ with the shifted position of the root. Combining these alterations of the constants, we see that there is no change of sign.

\medskip 

{\bf Proof of (2a).} As before, consider first the component $\CM$ of $\CM^W_\Lambda$ with the property that, for any $\Sigma\in \CM$, the root directly follows the node $x_{h_1}$ with respect to the cyclic order of boundary points on its $v_1$-component.   Again, the standard ordering $\pi^\std$ coincides with the ordering $\pi$ given in Notation~\ref{nn: boundary orientation case}. We write $\pi_1^\std$ for the standard ordering on the half-edges of $v_1,$ starting from the root, and $\pi_2$ for the ordering of the half-edges of $v_2$ which extends the orientation on tails by putting $h_2$ first.

For the orientation $\ooo_r^{\pi^\std|_r}$, we apply Lemma~ \ref{lem:or_single_Witten}\eqref{it:or_Witten_bdry} in order to decompose the orientation into a tensor product of the orientations $\ooo^{\pi_1^\std}_{r,v_1}$ and $\ooo^{\pi_2}_{r,v_2}$;
however, to apply the lemma, we must swap the roles of $v_1$ and $v_2$ since the twist at $h_2$ is $(0,s-2)$, requiring some care with the cyclic orderings.  By Equation ~\eqref{eq:Witten-orientation-special-case} in Corollary \ref{cor:different_starting_pt}(\ref{it:DifferentVertexOrderShift}), 
we get that
\[
\ooo_r^{\pi^\std|_r} =\ooo_{r}^{\pi_1^\std|_{r}}\boxtimes\ooo_{r}^{\pi_2|_{r}}.
\]
We can directly apply Lemma~ \ref{lem:or_single_Witten}\eqref{it:or_Witten_bdry} for the $s$-spin Witten bundles, giving the decomposition $\ooo_s^{\pi^\std|_s}=\ooo_{s}^{\pi_1^\std|_{s}}\boxtimes \ooo_{s}^{\pi_2|_{s}}$. Putting this all together, we obtain
\begin{equation}\begin{aligned}\label{Case2a change in a}
\ooo_{r,s}^{\pi^\std} = {} & \ooo_{r}^{\pi_1^\std|_{r}}\boxtimes\ooo_{r}^{\pi_2|_{r}} \boxtimes \ooo_{s}^{\pi_1^\std|_{s}}\boxtimes \ooo_{s}^{\pi_2|_{s}} \\
	={} &  (-1)^{a} \ooo_{r}^{\pi_1^\std|_{r}}\boxtimes \ooo_{s}^{\pi_1^\std|_{s}}\boxtimes\ooo_{r}^{\pi_2|_{r}} \boxtimes \ooo_{s}^{\pi_2|_{s}},
\end{aligned}\end{equation}
where 
$a:= K_1(v_2)K_2(v_1)$.

Secondly, by Lemma \ref{lem:or_mod}\eqref{it:or_mod_bdry}, we have that
\begin{equation}\label{Case2b original b} \tildooo^{\pi^\std}|_{\CM} = (-1)^{b}o_N\otimes (\tildooo_{v_1}^{\pi^\std_1}\boxtimes \tildooo_{v_2}^{\pi_2}), \quad b=(\kk_1(v_1)+\kk_2(v_1)+1)(\kk_1(v_2)+\kk_2(v_2)+1).
\end{equation}
We remark to the reader that we include the half-edge ${h_2}$ in the count $\kk_2(v_2),$ and the half-edge ${h_1}$ in $\kk_1(v_1)$; however, the sets $B_1$ and $B_2$ in the statement of the lemma do not include half-edges.

Thirdly, we decompose $N(\pi^\std)$
into summands again:
\[N(\pi^\std)=N(\pi^\std_1)+N(\pi_2)+c,~c:=
K_1(v_2)(K_2(v_1)-1)-1.
\]
Indeed, we find $c$ in the following way. Note that the $r$-point $h_1$ added at the end of $\pi_1^\std$ adds $K_2(v_1)$ such pairs in the vertex $v_1$, the $s$-point $h_2$ at the beginning of $\pi_2$ adds $k_1(v_2) = K_1(v_2) + 1$ such pairs in the vertex $v_2$ and that there are $K_2(v_1)(K_1(v_2)+1)$ pairs where the $s$-point is in $v_1$ and the $r$-point is in $v_2$. Putting together, we compute
$c = -K_2(v_1)-K_1(v_2)-1+K_2(v_1)(K_1(v_2)+1)$.

Finally, we obtain that: 
\begin{align*}
o^\pi = {} & (-1)^{N(\pi^\std)} \tildooo^{\pi^\std} \otimes \ooo_{r,s}^{\pi^\std}
	\\
	={} & (-1)^{N(\pi^\std)+a+ b}(o_N\otimes \tildooo_{v_1}^{\pi_1^\std} \boxtimes \tildooo_{v_2}^{\pi_2}) \otimes(\ooo_{r,s,v_1}^{\pi_1^\std} \boxtimes\ooo_{r,s,v_2}^{\pi_2}) \\
	={} & (-1)^{N(\pi^\std) + a + b + (\kk_1(v_1)+\kk_2(v_1))(\kk_1(v_2)+\kk_2(v_2)+1)} o_N\otimes (\tildooo_{v_1}^{\pi_1^\std}   \otimes\ooo_{r,s,v_1}^{\pi_1^\std})\boxtimes (\tildooo_{v_2}^{\pi_2} \otimes\ooo_{r,s,v_2}^{\pi_2}) \\
	={} & (-1)^{(N(\pi^\std_1)+N(\pi_2)+c) + a + b + (\kk_1(v_1)+\kk_2(v_1))(\kk_1(v_2)+\kk_2(v_2)+1)} o_N\otimes (\tildooo_{v_1}^{\pi_1^\std}   \otimes\ooo_{r,s,v_1}^{\pi_1^\std})\boxtimes (\tildooo_{v_2}^{\pi_2} \otimes\ooo_{r,s,v_2}^{\pi_2}) \\
	={} & (-1)^{a + b+ c + (\kk_1(v_1)+\kk_2(v_1))(\kk_1(v_2)+\kk_2(v_2)+1) }o_N\otimes ((-1)^{N(\pi^\std_1)}\tildooo_{v_1}^{\pi_1^\std}   \otimes\ooo_{r,s,v_1}^{\pi_1^\std})\boxtimes ((-1)^{N(\pi_2)}\tildooo_{v_2}^{\pi_2} \otimes\ooo_{r,s,v_2}^{\pi_2}) \\
	= {} & (-1)^{K_2(v_2)} o_N\otimes o_{v_1}^{\pi_1^\std}  \boxtimes o_{v_2}^{\pi_2}.
\end{align*}

We now consider the general choice of component $\CM$
of $\CM^W_{\Lambda}$. As we did in the proof of Item (1), consider a component $\CM$ of $\CM^W_\Lambda$
where, for any $\Sigma\in\CM$, if we follow the cyclic order of the boundary points on its $v_1$-component from the node $x_{h_1}$ to the root, then we pass through $m_1$ boundary points of twist $(r-2,0)$ and $m_2$ of twist $(0,s-2)$.
We may now use Corollary \ref{cor:different_starting_pt} twice in
order to understand
the analogous version of Equation~\eqref{Case2a change in a}.
First, considering the $r$-spin bundle, note that the roles of the two vertices are reversed since the $h_1$ in our context has non-trivial twist, so we are in
the case of Corollary \ref{cor:different_starting_pt}(2) with $m= m_1 + |B_1|$. Here,  $B_1$ defined in the Corollary in the current context is the set of boundary marked points (excluding half-nodes) on $v_2$ with non-trivial twist $\tw_1$.
As $|B_1| = K_1(v_2) +1$, the sign changes by $(m_1 + K_1(v_2)+1)K_1(v_2)$. For the $s$-spin bundle, the roles of the two vertices are the same in both contexts, so we can straightforwardly apply Corollary \ref{cor:different_starting_pt}(1) with $m= m_2$ and $|B_2| = K_2(v_2)$. So we have that the change in $a$ from that stated in ~\eqref{Case2a change in a} is given by
 $$
 (m_1 + K_1(v_2)+1)K_1(v_2) + m_2K_2(v_2).
 $$
 Next, we provide the analogous analysis for $b$ in~\eqref{Case2b original b}. Here, we apply Equation~\eqref{orientation shift order with half-edge included} with $m = m_1+m_2$ and $|B_2| = K_1(v_2) + K_2(v_2) + 1$ to obtain that $b$ changes by
 $$
 (m_1+m_2)( K_1(v_2) + K_2(v_2) ).
 $$

 Lastly, we compute $c$. We can do this by re-computing the discrepancy between $N(\pi^\std)$ and $N(\pi^\std_1)+N(\pi_2)$. There are four different cases where one or the other counts a pair but the other quantity does not:

 \begin{enumerate}
 \item When an $s$-tail of $v_1$ is after the root but before the half-edge, then we need to count the pairs of that $s$-tail and any $r$-tail in $v_2$. This is counted in $N(\pi^\std)$ but not in $N(\pi^\std_1)+N(\pi_2)$. There are $(K_2(v_1) - m_2)( K_1(v_2) +1)$ such pairs.
 \item When an $r$-tail of $v_1$ is after the half-edge, then we need to count the pairs with that $r$-tail and any $s$-tail in $v_2$. Again, this is counted in  $N(\pi^\std)$ but not in $N(\pi^\std_1)+N(\pi_2)$. There are $m_1K_2(v_2)$ such pairs.
 \item When we take the half-edge $h_1$ and take any $s$-tail in $v_1$ that is before the half-edge and after the root. This is counted in  $N(\pi^\std_1)+N(\pi_2)$ but not in $N(\pi^\std)$. There are $( K_2(v_1) - m_2)$ such pairs.
 \item When we take the half-edge $h_2$ and any $r$-tail in $v_2$, we get a contribution as $\pi_2$ starts at the half-edge. This is counted in  $N(\pi^\std_1)+N(\pi_2)$ but not in $N(\pi^\std)$. There are $(K_1(v_2) +1)$ such pairs.
 \end{enumerate}

 This computes that
 $$
c= (K_2(v_1) - m_2)( K_1(v_2) +1) + m_1K_2(v_2) - ( K_2(v_1) - m_2) -(K_1(v_2) +1),
 $$
 so $c$ changes by
 $$
 -m_2( K_1(v_2) +1) + m_1K_2(v_2) + m_2 = -m_2K_1(v_2) + m_1 K_2(v_2).
 $$
 Combining all these changes, we therefore get that the changes of $a$, $b$ and $c$ cancel out modulo $2$, so the sign does not change.

{\bf Proof of (2b).} Suppose now that $\Gamma$ is as described in (2b). Denote by $o'_{0,k_1+1,k_2+1,0,\{(a_i,b_i)\}_{i\in I}}$ and $o'_{0,k_1,k_2,1,\{(a_i,b_i)\}_{i\in I}}$ the relative orientations of the Witten bundles with the roles of $r$ and $s$ interchanged. 
By Proposition \ref{obs:orders_well_def_and_r_s_change}, we have that $o'_{0,k_1,k_2,1,\{(a_i,b_i)\}_{i\in I}}= o_{0,k_1,k_2,1,\{(a_i,b_i)\}_{i\in I}}$, hence the orientation $o'_{v_1}=o_{v_1}$. Secondly, by the same proposition, we have that
\[
o'_{0,k_1+1,k_2+1,0,\{(a_i,b_i)\}_{i\in I}} = (-1)^{k_1+k_2+1} o_{0,k_1+1,k_2+1,0,\{(a_i,b_i)\}_{i\in I}},
\]
hence $o_{v_2}=(-1)^{\kk_1(v_2)+\kk_2(v_2)+1}o'_{v_2}$.
By symmetry we would get, using the previous case, that
\[\iota_{\Gamma}^*o'_{0,k_1+1,k_2+1,0,\{(a_i,b_i)\}_{i\in I}}= (-1)^{\kk_1(v_2)}o'_N \otimes (o'_{v_1} \boxtimes o'_{v_2}).\]
Putting together, we obtain \eqref{eq:ind_WC_2}.
\end{proof}

\section{Boundary conditions and intersection numbers}\label{sec: boundary conditions and intersection numbers}

In this section, we will define open intersection numbers associated to $W$-spin disks in the case of $W=x^r+y^s$. They will be defined as counts of zeros of
multisections of the Witten bundle on an open subset of our moduli spaces $\oCM^W_{\Gamma}$ satisfying certain boundary conditions, which we shall now explain.

\subsection{Boundary strata and intervals}

\begin{nn}\label{defn:PerturbedModuli}
For an open graded graph $\Gamma$, we define $\oPM_\Gamma$ to be the orbifold
\[
\oPM_\Gamma:=\oCM^W_\Gamma\setminus\left(\bigcup_{\substack{\Lambda\in\partial\Gamma\text{ such that }\\\Pos(\Lambda) \neq \varnothing}}\oCM^W_\Lambda\right),\]
where $\Pos(\Lambda)$, as defined in Definition~\ref{def: positive with respect to grading}, is the set of half-edges $h$ of the graph $\Lambda$ such that either $h$ or $\sigma_1(h)$ is positive for the $i^{th}$ grading for some $i$.
\end{nn}
Recall the subset of graphs $\partial^{+}\Gamma,~\partial^0\Gamma$ and $\partial^\xch\Gamma$, and the relation $\simx$, from Definitions~\ref{def:graphBestiary} and~\ref{partial 0 graph bestiary}.

\begin{definition}\label{def:positive_edge,node,PM}
Set
\begin{equation*}\begin{aligned}
\partial^+\oCM^W_\Gamma&=\bigcup_{\Lambda\in\partial^+\Gamma}\CM^W_\Lambda
\subseteq \oCM^W_{\Gamma}, \\
\partial^0\oCM^W_\Gamma&=\bigcup_{\Lambda\in\partial^0\Gamma}\CM^W_\Lambda
\subseteq \oCM^W_{\Gamma},\\
\partial^\XCH\oCM^W_\Gamma&=\bigcup_{\Lambda\in\partial^\xch\Gamma}\CM^W_\Lambda
\subseteq \oCM^W_{\Gamma}.
\end{aligned}\end{equation*}
\end{definition}
Note that $\oPM_\Gamma =\oCM^W_\Gamma\setminus \partial^+\oCM^W_\Gamma$ and that $\partial \oPM_\Gamma = \partial^0\oCM^W_\Gamma\cup\partial^\XCH\oCM^W_\Gamma$.
Indeed, for $\CM^W_{\Lambda}$ to lie in $\partial\oCM^W_{\Gamma}$, it must
either have a contracted boundary edge or a boundary edge, so $\Lambda$ must be contained in the union $\partial^+\oCM^W_\Gamma \cup  \partial^0\oCM^W_\Gamma\cup\partial^\XCH\oCM^W_\Gamma$. We remark that if $\partial^+\oCM^W_\Gamma$ is nonempty, then $\oPM_\Gamma$ is not compact. 

\begin{definition}
\label{def:PM Gamma}
Define
\[
\PM_{\Gamma}:=\oPM_{\Gamma}\setminus\partial\oPM_{\Gamma}.
\]
\end{definition}

Note that even if $\Gamma$ is smooth, there may be points of
$\PM_{\Gamma}$ corresponding to singular $W$-spin curves with
internal nodes.

\begin{ex}\label{ex:emptyboundary}
For the cases where $\Gamma=\Gamma^W_{0,m_1,m_2,1,\{(m_1,m_2)\}}$, with $m_1 < r-1,~m_2 < s-1$, and $\Gamma^W_{0,r,0,1,\emptyset}, \Gamma^W_{0,0,s,1,\emptyset}$, we now show that $\partial^0\Gamma=\emptyset$.

Consider the case where $\Gamma = \Gamma^W_{0,r,0,1,\emptyset}$.  Suppose that $\Lambda \in \partial^0\Gamma$. Note that $\Lambda$ cannot have any closed
vertices since there are no internal marked points. Thus
if $e$ is an edge in $\Lambda$ then $\smooth_e \Lambda \in \partial^0\Gamma \cup\{\Gamma\}$. So, it suffices to show that there are no $\Lambda \in \partial^0\Gamma$ such that $\Lambda$ has exactly two vertices. If such a $\Lambda$ exists, then note that one of the two vertices must have the root, which we call $v_1$. Using \eqref{eq:open_rank1} and \eqref{eq:open_rank2} in the same manner we did in Proposition~\ref{prop:balanced}(a), we can see that \[k_1(v_1) + k_{12}(v_1) - 1 \equiv r(I(v_1)) = 0 \pmod r.\] That is, there must be either $1$ or $r+1$ boundary half-edges on the vertex $v_1$ so that $(\tw_1, \alt_1) = (r-2, 1)$. Thus, the vertex $v_1$ contains at most $r$ $r$-points and one boundary half-edge, hence there are three cases: (i) $v_1$ has $r$ $r$-points and the half-edge does not have the $r$-twisting, (ii) $v_1$ has $r-1$ $r$-points and the half-edge does have the $r$-twisting, or (iii) $v_1$ has no $r$-points and the half-edge does not have the $r$-twisting. However, none of these cases will have two stable vertices, hence $\partial^0\Gamma = \emptyset$. Similarly, we have the analogous result for $\Gamma = \Gamma^W_{0,0,s,1,\emptyset}$.

Now consider the case $\Gamma = \Gamma^W_{0,m_1,m_2,1,\{(m_1,m_2)\}}$. As before, suppose that there exists a $\Lambda \in \partial^0\Gamma$ with exactly two vertices. Let $v_1$ be the vertex containing the root. Note that, as before using \eqref{eq:open_rank1} and \eqref{eq:open_rank2}, we have that 
\begin{align*}k_1(v_i) + k_{12}(v_i) - 1 &\equiv r(I(v_i)) \pmod r, \\ k_2(v_i) + k_{12}(v_i) - 1 &\equiv s(I(v_i)) \pmod s.
\end{align*} If $I(v_1) = \emptyset$ then $k_1(v_1) + k_{12}(v_1)  \equiv 1 \pmod r$ and $k_2(v_1) + k_{12}(v_1)  \equiv 1 \pmod s$. Since $m_1 < r-1$ and $m_2 < s-1$, we must then have that $k_1(v_1) = k_2(v_1) = 0$. However, this implies that the vertex $v_1$ is not stable and hence $\Lambda$ is not a boundary graph. Thus $v_1$ must have the internal marking. In this case, however, we then have that $k_1(v_1) + k_{12}(v_1) - 1 \equiv m_1 \pmod r$ and $k_2(v_1) + k_{12}(v_1) - 1 \equiv m_2 \pmod s$. We then have exactly one case that has two stable vertices, which is that of an exchangeable graph.
\end{ex}

\subsubsection{The base}
\label{subsec:base}
We now introduce one of the key concepts of this paper, the base of
a graph. This allows for the inductive structure
of boundary conditions for the canonical multisections to be introduced
in \S\ref{subsec:canonical multisections}.

\begin{definition}\label{def:base}
Let $\Gamma$ be a graded graph. We take $E^0(\Gamma)$ to be the set of boundary edges such that for each $i$, one of its half-edges has
$\tw_i=\alt_i=0$. Recall from Definition~\ref{def:for non-alt} that $\text{for}_{\text{non-alt}}$ is the operation that deletes all tails $h$ where $\tw_i = \alt_i = 0$ for all $i$.
We define the {\it base} of $\Gamma$ to be
\[\CB \Gamma = \text{for}_{\text{non-alt}} (\detach_{E^0(\Gamma)}(\Gamma)).\]

We call the moduli $\oCM^W_{\CB\Gamma}$ the \emph{base moduli} of $\oCM^W_{\Gamma}$.
We write
\[F_{\Gamma}:= (\text{For}_{\text{non-alt}}\circ\Detach_{E^0(\Gamma)})
:\CM^W_{\Gamma}\to\CM^W_{\CB\Gamma}.\]
\end{definition}
On the level of surfaces, the morphism $F_{\Gamma}$ normalizes all boundary half-edges in $E^0(\Gamma)$ and forgets all boundary marked points which have twist
zero and are non-alternating with respect to both spin structures, i.e.,  $\tw=\alt=({0},0)$.
Note that the forgetful map may create unstable connected components. This happens, for example, for exchangeable graphs. We consider the moduli of an unstable connected component to be a point.
\begin{nn}\label{nn:fGamma}
We can extend $F_{\Gamma}$ continuously to a map $\oCM^W_{\Gamma}\to\oCM^W_{\CB\Gamma}$. We use the same notation for the extended map.
If we take $\Lambda\in \partial^!\Gamma$, it  maps  $\CM^W_{\Lambda}$ under $F_{\Gamma}$ to $\CM^W_{\Lambda'}$ for some $\Lambda' \in \d^!\CB\Gamma$. We can define a function \[f_{\Gamma}:\partial^!\Gamma\to \d^!\CB\Gamma\]
so that $f_{\Gamma}(\Lambda) = \Lambda'$.
Explicitly, $\Lambda'=\text{for}_{\text{non-alt}} ( \detach_{E^0(\Gamma)}(\Lambda))$, where we use the identification of the edges of $\Gamma$ as edges of $\Lambda$.
\end{nn}

\begin{rmk}
To motivate the definition of the base moduli,
assume we are given a graded graph $\Gamma$ indexing a boundary
stratum of some moduli space $\oCM^W_{\Gamma'}$ for $\Gamma'$ a smooth
graded graph. If $\widehat\Gamma=\detach_E \Gamma$ for some subset
of edges $E$ of $\Gamma$, then the detaching map $\Detach_E:\oCM^W_{\Gamma}
\rightarrow \oCM^W_{\widehat{\Gamma}}$ allows us to relate the former
moduli space to the latter, which is a product of moduli spaces
associated to the connected components of $\Gamma$. In general,
 these connected components will not yield moduli spaces
of the sort we consider here: we wish to consider only those moduli
spaces associated to graphs whose boundary tails have twists
$(r-2,0)$, $(0,s-2)$ or $(r-2,s-2)$. 

However,
if $\Gamma\in \partial^0\Gamma'$, then all boundary half-edges
of $\Gamma$ necessarily have twists being either one of the above
three possibilities or twist $(0,0)$. Thus if we detach $\Gamma$
along all such boundary half-edges, and forget the newly acquired
boundary tails with twist $(0,0)$, obtaining $\CB\Gamma$,
the connected components of $\CB\Gamma$ are precisely graded graphs with
boundary tails of the type we are considering in this paper.
Further, as we will see in Observation \ref{obs:Witten_pulled_back_perm}, the
Witten and descendent bundles on $\oPM_{\Gamma}$
pull back from those on $\oPM_{\CB\Gamma}$, thus enabling the
construction of inductive boundary conditions. For the remainder
of the boundary conditions near the strata we delete, we rely on positivity conditions.
\end{rmk}

The following observations are straightforward but useful for later.  First, analogously to \cite[Observations 3.14 and 3.28]{PST14}, we have the following observation, which serves as a compatibility relation for boundary conditions:

\begin{obs}\label{obs:for_comp}
Let $\Gamma$ be a graded graph and take $\Gamma'\in\partial\Gamma$.  Then $F_{\Gamma}$ takes $\oCM^W_{\Gamma'}$ to $\oCM^W_{f_{\Gamma}(\Gamma')},$ and $\CB f_{\Gamma}(\Gamma')=\CB\Gamma'$.
Moreover, we have
\[F_{f_{\Gamma}(\Gamma')}\circ F_{\Gamma}|_{\oCM^W_{\Gamma'}}=F_{\Gamma'}.\]
This follows directly from the definitions.
\end{obs}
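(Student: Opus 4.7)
All three assertions in the observation are combinatorial in nature, and the geometric statement about moduli spaces follows once the corresponding statement about dual graphs has been verified. My strategy is to first establish the graph-theoretic identities and then lift them to maps of moduli via the definitions of $\Detach$ and $\text{For}_{\text{non-alt}}$.

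First I would unwind the definition of $F_{\Gamma}$ as $\text{For}_{\text{non-alt}}\circ\Detach_{E^0(\Gamma)}$. On the level of dual graphs this becomes precisely $f_\Gamma=\text{for}_{\text{non-alt}}\circ\detach_{E^0(\Gamma)}$. Thus if $\Sigma\in\oCM^W_{\Gamma'}$ has dual graph $\Gamma'$, then $F_\Gamma(\Sigma)$ has dual graph $f_\Gamma(\Gamma')$, which gives claim (1). A small point to verify here is that the boundary edges of $\Gamma$ which appear in $E^0(\Gamma)$ are among the edges of $\Gamma'$ in the canonical inclusion $E(\Gamma)\hookrightarrow E(\Gamma')$ arising from $\Gamma'\in\partial\Gamma$ (which records the degeneration $\Gamma\prec\smooth_F\Gamma'$); this is immediate from the definition of $\partial\Gamma$.

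Next I would prove claim (2) by an iterated-detaching argument. The inclusion $E(\Gamma)\hookrightarrow E(\Gamma')$ preserves the twist and alternation data on half-edges, so $E^0(\Gamma)\subseteq E^0(\Gamma')$ under this identification. Moreover, the edges of $f_\Gamma(\Gamma')$ which become $E^0$ are exactly those boundary edges of $\Gamma'$ lying in $E^0(\Gamma')\setminus E^0(\Gamma)$, since detaching along $E^0(\Gamma)$ does not alter the twists and alternations of the remaining half-edges, and forgetting non-alternating twist-zero tails only removes tails that were never part of an edge. Applying the associativity of $\detach$ (and the fact that the newly created tails at $E^0$-edges all have one side with trivial twist and alternation, hence get forgotten) gives
\[
\text{for}_{\text{non-alt}}\bigl(\detach_{E^0(f_\Gamma(\Gamma'))}\,f_\Gamma(\Gamma')\bigr)=\text{for}_{\text{non-alt}}\bigl(\detach_{E^0(\Gamma')}\,\Gamma'\bigr),
\]
i.e., $\CB f_\Gamma(\Gamma')=\CB\Gamma'$.

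Finally, claim (3) is the moduli-level counterpart of the graph-level identity just proved, and follows by the same argument at the level of $W$-spin surfaces: the composition $F_{f_\Gamma(\Gamma')}\circ F_\Gamma$ detaches successively along $E^0(\Gamma)$ and then along $E^0(f_\Gamma(\Gamma'))$, and forgets non-alternating twist-zero tails at each stage. By the associativity of detaching and the commutativity of the two forgetting steps (the non-alternating twist-zero tails forgotten in the first stage are disjoint from the ones created by the second detaching), this agrees with performing the full $F_{\Gamma'}=\text{For}_{\text{non-alt}}\circ\Detach_{E^0(\Gamma')}$ in one go. The only real issue to track, and the one place where care is needed, is checking that the intermediate $\text{For}_{\text{non-alt}}$ does not contract components in a way that changes the eventual base; but this is prevented by the rule in Definition~\ref{def:for non-alt} that unstable non-partially-stable components are contracted consistently at every stage, so the final outcome is independent of the order of operations.
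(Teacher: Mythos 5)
Your proposal is correct and is essentially the argument the paper has in mind: the paper offers no proof beyond "this follows directly from the definitions," and your careful unwinding of $F_\Gamma=\text{For}_{\text{non-alt}}\circ\Detach_{E^0(\Gamma)}$, the inclusion $E^0(\Gamma)\subseteq E^0(\Gamma')$, and the associativity of detaching is exactly the intended verification. The one subtle point — that the intermediate forgetting/contraction step does not affect the final base — is correctly identified and handled.
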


Recall that when $v$ is an exchangeable vertex of an exchangeable graph $\Gamma$, we have defined the graph $\xch_v(\Gamma)$ to be the graph obtained by reversing the cyclic orders at $v$ and a corresponding map $\XCH_v$ given in~\eqref{exchange moduli} at the level of moduli. When $\Gamma$ has a single exchangeable vertex $v$, we sometimes write $\XCH$ instead of $\XCH_v$.

Another straightforward observation is the following:
\begin{obs}\label{obs:exchangeable_and_base}
Graphs in the same $\simx$-equivalence class have the same base. Moreover, if $\Gamma$ is exchangeable and $v\in V^O(\Gamma)$ is an exchangeable vertex, then
\[F_{\xch_v(\Gamma)}\circ \XCH_v=F_\Gamma.\]
\end{obs}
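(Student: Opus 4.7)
The plan is to prove the two assertions in sequence. For the first, since the relation $\simx$ is generated by single transpositions $\xch_v$ at individual exchangeable vertices, I would reduce to showing $\CB\Gamma = \CB(\xch_v(\Gamma))$ for one such $v$. The crucial combinatorial input is that at an exchangeable vertex $v$ of a graded graph $\Gamma \in \partial^\xch(\cdot)$, the boundary half-edge $h$ connecting $v$ to the rest of the graph automatically satisfies $\tw(h) = (0,0)$ and $\alt(h) = (0,0)$. The twist claim follows from Definition~\ref{def:graphBestiary} combined with the remark on the rank-two case, which in the graded setting forces $I \cup J = [2]$, hence $\tw_k(h) = (r_k-2)\mathbf{1}_{k \notin I\cup J} = 0$ for each $k$. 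The alternation claim uses condition (\ref{it:-1}) of Definition~\ref{def:graph}, giving $\alt_i(h) + \alt_i(\sigma_1(h)) = 1$: if $\alt_i(h) = 1$ for some $i$, then $\alt_i(\sigma_1(h)) = 0$, and since $\tw_i(\sigma_1(h)) = r_i - 2 > 0$, the half-edge $\sigma_1(h)$ would be positive for the $i$-th grading, contradicting that exchangeable graphs belong to $\partial\Gamma'' \setminus \partial^+\Gamma''$.

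Given $\tw(h) = \alt(h) = (0,0)$, the edge $e$ carrying $h$ lies in $E^0(\Gamma)$, so the base operation detaches $e$. After detaching, $h$ becomes a tail with $\tw = \alt = (0,0)$ which is then forgotten by $\text{for}_{\text{non-alt}}$, leaving the $v$-component with only the two boundary tails $t, t'$ of Definition~\ref{def:graphBestiary}. That component is partially stable with $k(v) = 2$, $l(v) = 0$, and by the convention at the end of Definition~\ref{def:for non-alt} it is discarded in the moduli-theoretic output. Since $\xch_v$ only reverses the cyclic order at $v$ and leaves all twists, alternations, markings, and the cyclic orders at other vertices intact, exactly the same sequence of operations applied to $\xch_v(\Gamma)$ discards the same component and leaves identical remaining data; hence $\CB\Gamma = \CB(\xch_v(\Gamma))$, and by iterating the claim follows for the entire $\simx$-equivalence class.

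For the second assertion, by the first part both $F_\Gamma$ and $F_{\xch_v(\Gamma)} \circ \XCH_v$ are maps $\oCM^W_\Gamma \to \oCM^W_{\CB\Gamma}$. On a point $[\Sigma] \in \oCM^W_\Gamma$, each composition first performs a rearrangement of boundary marked points on the $v$-component (in the case of $\XCH_v$) and then normalizes $\Sigma$ at the nodes in $E^0$ and forgets the $\tw = \alt = (0,0)$ marked points. Away from the $v$-component both compositions act identically, because $\XCH_v$ is the identity on all other components. On the $v$-component, the base operation collapses the whole component to the empty contribution (as in Step~1), so any pre-composition by $\XCH_v$ is invisible in the image $\oCM^W_{\CB\Gamma}$. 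Hence the two maps coincide.

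The only step requiring any real verification is the identity $\alt(h) = (0,0)$ at an exchangeable vertex of a graded graph; this is what ensures the exchangeable component is ``absorbed'' by the base operation. Everything else is a direct bookkeeping consequence of the definitions of $\CB$, $\detach$, $\text{for}_{\text{non-alt}}$, and $\xch_v$, and there is no analytic or orientational subtlety here.
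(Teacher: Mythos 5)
The paper gives no argument for this Observation (it is introduced as ``another straightforward observation''), so your definitional unpacking is the natural route, and it is essentially sound. The whole content is indeed the claim that the half-edge $h$ attaching an exchangeable vertex $v$ to the rest of the graph has $\tw(h)=\alt(h)=(0,0)$: then the edge lies in $E^0$, the tail $h$ is forgotten, and the $v$-component becomes a partially stable two-tailed vertex, on which reversing the cyclic order is invisible (there is a unique cyclic order on two tails, and the moduli of a partially stable component is a point by Definition~\ref{def:for non-alt}); both assertions of the Observation follow. Your reduction to a single $\xch_v$ and the argument for the identity of maps are fine.

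The one step that does not go through as written is your derivation of $\alt(h)=(0,0)$. You rule out $\alt_i(h)=1$ by noting it would force $\alt_i(\sigma_1(h))=0$ with $\tw_i(\sigma_1(h))=r_i-2>0$, making $\sigma_1(h)$ positive; but the paper explicitly allows $r_i=2$ (the $A_{n-1}$ cases $x^n+y^2$ from the introduction), and then $\tw_i(\sigma_1(h))=0$, so $\sigma_1(h)$ is not positive and no contradiction with $\Lambda\notin\partial^+\Gamma$ arises. The conclusion is nevertheless forced, by the parity constraint of condition~(\ref{item}) of Definition~\ref{def:graph} applied at the exchangeable vertex itself: with boundary twists $(r-2,0)$, $(0,s-2)$, $(0,0)$ and no internal half-edges, one computes for $i=1$
\[
\frac{2\cdot 0+(r-2)+0+0+2}{r}=1\equiv \alt_1(t)+\alt_1(t')+\alt_1(h)=1+\alt_1(h)\pmod 2,
\]
so $\alt_1(h)=0$, and symmetrically $\alt_2(h)=0$. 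With this patch (which works uniformly in $r,s\ge 2$ and could replace the positivity argument altogether), your proof is complete.
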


\begin{definition}\label{def:abs_vertex}
Let $\mathcal{V}(\Gamma)$ be the collection of dual graphs consisting of one vertex that may appear as a connected component of $ \textup{for}_{\textup{non-alt}}(\detach_{E(\Lambda)}(\Lambda))$ for some $\Lambda\in\partial^!\Gamma\setminus\partial^+\Gamma$.
We call the elements of $\mathcal{V}(\Gamma)$ \emph{abstract vertices}.
Let $i\in I(\Gamma)$ be a labeling for an internal tail. Let $\mathcal{V}^i(\Gamma)\subset\mathcal{V}(\Gamma)$  be the collection of abstract vertices which have an internal tail labeled $i$.
For a graph $\Gamma$ with $i\in I(\Gamma)$,
denote by $v_i^*(\Gamma)$ the connected component of the graph \[ \text{for}_{\text{non-alt}}(\detach_{E(\Gamma)}(\Gamma)) \] which contains $i$ as an internal marking. As we have detached at all edges, this connected component only has a single vertex, hence $v_i^*(\Gamma) \in \mathcal{V}^i(\Gamma)$.

If $v$ is a connected component of the graph
$ \text{for}_{\text{non-alt}}(\detach_{E(\Gamma)}(\Gamma))$,
denote by $\Phi_{\Gamma,v}:\oCM^W_{\Gamma}\to\oCM^W_{v}$ the natural map,
which is the composition of
\[\text{For}_{\text{non-alt}}\circ \text{\Detach}_{E(\Gamma)}\]
with the projection to $\oCM^W_{v}$.
Write $\Phi_{\Gamma,i}$ for $\Phi_{\Gamma,v_i^*(\Gamma)}$.
\end{definition}

The key observation for the inductive structure of the Witten and
descendent bundles is then:

\begin{obs}\label{obs:Witten_pulled_back_perm}
Let $\Gamma$ be a pre-graded graph. There is a canonical isomorphism between the Witten bundle $\cW \to \oPM_{\Gamma}$ and the pullback bundle $F_{\Gamma}^*\cW$.  To construct it, we first note that over the locus of $\oPM_{\Gamma}$, all boundary half-edges are Neveu-Schwarz, hence we can apply Proposition \ref{pr:decomposition}(1).
Further, in forgetting those resulting boundary tails
with $\tw=\alt=(0,0)$, we can apply \cite[Equation 4.3]{BCT:I}.

Similarly, by Observation 3.32 of \cite{PST14}, if the component which contains the internal tails labeled $i$ in $\CB\Gamma$ is stable then $\CL_i\to\oCM^W_\Gamma$ is canonically $F_\Gamma^*\left(\CL_i\to \oCM^W_{v^*_i(\Gamma)}\right)$.
\end{obs}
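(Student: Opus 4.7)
The plan is to factor $F_{\Gamma}=\operatorname{For}_{\text{non-alt}}\circ\Detach_{E^{0}(\Gamma)}$ as in Definition~\ref{def:base}, exhibit a canonical isomorphism at each step, and then compose. For the tautological lines, I will invoke the analogous identification from \cite{PST14}. Throughout, I work on the open subset $\oPM_{\Gamma}\subseteq\oCM^{W}_{\Gamma}$, which is precisely designed so that positive boundary strata are excluded.

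First I would verify the Neveu--Schwarz hypothesis needed to apply Proposition~\ref{pr:decomposition}(i). Let $e\in E^{0}(\Gamma)$ with half-edges $h,\sigma_1(h)$, and fix $i\in[a]$. By definition of $E^{0}(\Gamma)$ there is some half-edge of $e$, say $h$, with $\tw_i(h)=\alt_i(h)=0$. The node congruence of Definition~\ref{def:graph}(vii)(a) gives $\tw_i(\sigma_1(h))\equiv r_i-2\pmod{r_i}$, and Definition~\ref{def:graph}(vii)(b),(c) together with the fact that $\sigma_1(h)$ is a boundary half-edge force $\tw_i(\sigma_1(h))=r_i-2\in\{0,\ldots,r_i-2\}$. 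Hence $e$ is Neveu--Schwarz for every $i$. Proposition~\ref{pr:decomposition}(i) then supplies a canonical identification $\Detach_e^{*}\mathcal{W}_i\cong\mathcal{W}_i$ for each $e\in E^{0}(\Gamma)$ and each $i$; iterating over all edges in $E^{0}(\Gamma)$ (the order does not matter, by the general gluing formalism of Proposition~\ref{pr:decomposition}) yields a canonical isomorphism
\[
\Detach_{E^{0}(\Gamma)}^{*}\,\mathcal{W}_{\detach_{E^{0}(\Gamma)}(\Gamma)}\;\cong\;\mathcal{W}_{\Gamma}
\]
on $\oPM_{\Gamma}$.

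Next I would handle the forgetful step. The operation $\operatorname{For}_{\text{non-alt}}:\oCM^{W}_{\detach_{E^{0}(\Gamma)}(\Gamma)}\to\oCM^{W}_{\CB\Gamma}$ forgets exactly those boundary tails with $(\tw,\alt)=(\vec 0,\vec 0)$ (namely, the new tails produced by the previous detaching, together with any such tails already present in $\Gamma$). Observation~\ref{obs:forgetful}, which is the generalization to the $W$-spin setting of \cite[(4.3)]{BCT:I}, then gives a canonical isomorphism
\[
\operatorname{For}_{\text{non-alt}}^{*}\,\mathcal{W}_{\CB\Gamma}\;\cong\;\mathcal{W}_{\detach_{E^{0}(\Gamma)}(\Gamma)}.
\]
Composing the two canonical isomorphisms produces $F_{\Gamma}^{*}\mathcal{W}\cong\mathcal{W}$ on $\oPM_{\Gamma}$, as desired.

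Finally, for the tautological line $\mathbb{L}_i$, I would apply Observation~\ref{isomorphism forgetting non-alt for descendents}(i) to the detaching step, which gives $\mathbb{L}_i^{\Gamma}=\pi^{*}\mathbb{L}_i^{\Lambda}$ where $\Lambda$ is the connected component of $\detach_{E^{0}(\Gamma)}(\Gamma)$ containing the internal tail $i$, and then (ii), together with the hypothesis that the component of $\CB\Gamma$ containing $i$ is stable (so the component is not contracted by $\operatorname{For}_{\text{non-alt}}$), to conclude that the morphism $t_{\Gamma}$ is a genuine isomorphism in this case; this identifies $\mathbb{L}_i^{\Gamma}$ canonically with $F_{\Gamma}^{*}\mathbb{L}_i^{v^{*}_i(\Gamma)}$. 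The only real obstacle is the verification of the Neveu--Schwarz condition on $E^{0}(\Gamma)$, since this is precisely what prevents the more delicate Ramond decomposition of Proposition~\ref{pr:decomposition}(ii)--(iii) from intruding and destroying the clean pullback identification.
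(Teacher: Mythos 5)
Your proposal is correct and follows essentially the same route as the paper: factor $F_{\Gamma}$ into the detaching and forgetful steps, apply Proposition \ref{pr:decomposition}(i) to the Neveu--Schwarz edges, then Observation \ref{obs:forgetful} (i.e.\ \cite[(4.3)]{BCT:I}) for the forgotten tails, and Observation \ref{isomorphism forgetting non-alt for descendents} for the cotangent lines. The only cosmetic difference is that you verify Neveu--Schwarz-ness of the edges in $E^0(\Gamma)$ directly from the twist congruence, whereas the paper observes that any Ramond boundary half-edge would be positive and hence the corresponding strata are already excised from $\oPM_{\Gamma}$; both justifications are valid.
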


In the rest of the paper, we will construct multisections of $E_\Gamma(\vecd)$
which, using the above observation,  are pulled back from the bundle
$E_{\CB\Gamma}(\vecd)$. For this reason,
it is important to understand the relationship between the dimensions of $\oPM_{\CB\Gamma}$
and $\oPM_{\Gamma}$ on the one hand and $E_{\Gamma}(\vecd)$ on the
other. We now calculate.

\begin{obs}
\label{obs:base dim yoga}
Fix a smooth graded $W$-spin graph $\Gamma$, and consider
$\Lambda \in \partial^0\Gamma\cup\partial^\xch\Gamma$ where $\Lambda$
is a relevant graph with no internal edges. Let
$\vecd=(d_i)_{i\in I(\Gamma)}$ be a descendent vector, and
$E_{\Gamma}(\vecd)$ the descendent-Witten bundle on
$\oPM_{\Gamma}$ with respect to $\vecd$. We define the following integers:
\begin{enumerate}
\item $\alpha=\rank E_{\Gamma}(\vecd)-\dim \oPM_{\Gamma}$.
\item $\nu$ is the number of (open) vertices of $\Lambda$.
\item  $\beta$ is the number of half-edges with $\tw=\alt=(0,0)$ forgotten
in passing from $\detach_{E^0(\Lambda)}\Lambda$ to $\CB\Lambda$.
\item
$\sigma$ is the number of partially stable connected components of
$\CB\Lambda$.
\end{enumerate}
Note
\[
\sigma \le \beta < \nu.
\]
Since we are assuming $\Lambda$ is relevant,
necessarily any such partially stable connected component
has two boundary tails and no interior tails. Then
\begin{equation}
\label{eq:strata dim}
\dim \oPM_{\Lambda}=\dim\oPM_{\Gamma}-(\nu-1)
\end{equation}
and
\begin{equation}
\label{eq:base dim}
\dim\oPM_{\CB\Lambda}=\dim\oPM_{\Lambda}-\beta+\sigma,
\end{equation}
bearing in mind that the moduli space associated to a partially
stable component is a point, hence one dimension higher than expected.

Denote the connected components of $\CB\Lambda$ as $\Xi_1,\ldots,\Xi_\nu$,
with the last $\sigma$ being the partially stable components.
By Observation \ref{obs:Witten_pulled_back_perm},
\[
\rank E_{\Gamma}(\vecd)= \sum_{i=1}^{\nu}\rank E_{\Xi_i}(\vecd).
\]
Combining the previous three equations and using the fact that
$\rank E_{\Xi_i}(\vecd)=0$ for any partially stable component $\Xi_i$,
we obtain
\begin{equation}
\label{eq:dim rank comparison}
\sum_{i=1}^{\nu-\sigma}\dim\oPM_{\Xi_i} = \sum_{i=1}^{\nu-\sigma} \rank E_{\Xi_i}(\vecd)
-\alpha-\beta-(\nu-\sigma)+1.
\end{equation}
\end{obs}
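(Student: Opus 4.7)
The plan is to establish the four pieces \eqref{eq:strata dim}, \eqref{eq:base dim}, the rank identity, and \eqref{eq:dim rank comparison} in turn, and then combine them; the bulk of the work is bookkeeping and nothing here is conceptually deep. First I would verify the inequalities $\sigma\le\beta<\nu$. Since $\Gamma$ is smooth and $\Lambda$ has no internal edges, $\Lambda$ contains no closed vertices (closed vertices can only be attached to the rest of the graph via internal edges), so $\Lambda$ is a connected tree of $\nu$ open vertices with exactly $\nu-1$ boundary edges. In particular $\beta=|E^0(\Lambda)|\le\nu-1<\nu$. For $\sigma\le\beta$, observe that any partially stable component of $\CB\Lambda$ arises from a vertex $v$ of $\Lambda$ which was stable in $\Lambda$ (so $k(v)+2l(v)>2$) but becomes partially stable ($k+2l=2$) only after discarding its $(\tw,\alt)=((0,0),(0,0))$ tails. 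Since these discarded tails are precisely half-edges of $E^0$-edges and distinct partially stable components use disjoint sets of them, $\sigma\le\beta$.

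Next, \eqref{eq:strata dim} is just the codimension formula \eqref{codim computation} applied to $\Lambda$: $|E^I(\Lambda)|=|H^{CB}(\Lambda)|=0$ (no internal edges, no contracted boundary by relevance) and $|E^B(\Lambda)|=\nu-1$, so $\oCM^W_\Lambda$ has real codimension $\nu-1$ in $\oCM^W_{\smooth\Lambda}=\oCM^W_\Gamma$. Passing to $\oPM$ does not change the dimension. For \eqref{eq:base dim} I would argue in two steps. Detaching a boundary edge does not change the real dimension (a nodal disk with a boundary node has the same real dimension as its pair of smoothed components each carrying the two half-nodes as new boundary tails, as is verified by the standard count $k+2l-3$), so $\dim\oPM_{\detach_{E^0(\Lambda)}\Lambda}=\dim\oPM_\Lambda$. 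Forgetting each non-alternating $(0,0)$-twist boundary tail on a stable component decreases the dimension by one, giving $-\beta$; by Definition~\ref{def:for non-alt} the moduli of a partially stable component is declared a point rather than the formally $(-1)$-dimensional space predicted by $k+2l-3=-1$, contributing $+1$ for each of the $\sigma$ partially stable components. This yields $\dim\oPM_{\CB\Lambda}=\dim\oPM_\Lambda-\beta+\sigma$.

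The rank identity $\rank E_\Gamma(\vecd)=\sum_i\rank E_{\Xi_i}(\vecd)$ is immediate from Observation~\ref{obs:Witten_pulled_back_perm}: the Witten bundle and the descendent line bundles on $\oPM_\Lambda$ are pulled back under $F_\Lambda$ from the corresponding bundles on $\oPM_{\CB\Lambda}$, which split as direct sums over the connected components. Each partially stable component has no internal tails and, by \eqref{eq:graded_open_rank} applied to $k=2$, $l=0$, a Witten bundle of rank zero, so contributes $0$ to the sum. Combining all of the pieces gives
\begin{align*}
\sum_{i=1}^{\nu-\sigma}\dim\oPM_{\Xi_i}&=\dim\oPM_{\CB\Lambda}=\dim\oPM_\Lambda-\beta+\sigma=\dim\oPM_\Gamma-(\nu-1)-\beta+\sigma\\
&=\rank E_\Gamma(\vecd)-\alpha-(\nu-1)-\beta+\sigma\\
&=\sum_{i=1}^{\nu-\sigma}\rank E_{\Xi_i}(\vecd)-\alpha-\beta-(\nu-\sigma)+1,
\end{align*}
where the sums are indexed by the stable components of $\CB\Lambda$.

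The main obstacle, such as it is, is the dimension accounting near partially stable components: one has to keep straight the convention that the moduli of a partially stable component is a point (and that its descendent--Witten bundle has rank zero), and also that the naive dimension count on the detached graph agrees with the dimension of $\oPM_\Lambda$ before forgetting. Once these conventions are consistently applied, the final equation falls out from a direct substitution, and no further input is needed.
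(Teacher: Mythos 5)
Your proposal is correct and follows the same route as the paper's own derivation (which is essentially the text of the observation itself): the codimension formula for $\oCM^W_\Lambda$, the dimension bookkeeping under detaching/forgetting with the point-convention for partially stable components, the pullback splitting of the descendent--Witten bundle from Observation~\ref{obs:Witten_pulled_back_perm}, and a direct substitution. One small inaccuracy: $\beta$ need not equal $|E^0(\Lambda)|$, since an edge whose half-edges have twists $(r-2,0)$ and $(0,s-2)$ lies in $E^0(\Lambda)$ but contributes no forgotten half-edge (only one half-edge per edge can have $\tw=\alt=(0,0)$, and it may be that neither does); the correct statement is $\beta\le|E^0(\Lambda)|\le\nu-1$, which still gives $\beta<\nu$.
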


\subsubsection{Positivity constraints}\label{subsec: positivity}

Recall that the moduli space $\oPM_{\Gamma}$ in general is not compact; indeed, we have that $\partial^+\M_{\Gamma}^W = \M_{\Gamma}^W \setminus \oPM_{\Gamma}$. In order to apply Definition~\ref{def: relative Euler class} to define open $W$-spin invariants, we will require non-vanishing of any global multisection of the Witten bundle in a region ``close'' to $\partial^+\M_{\Gamma}^W$ in $\oPM_{\Gamma}$. To this end, we proceed with the following definitions that aim to provide such multisections which we will call strongly positive.

\begin{definition}\label{def:positive_neighborhoods1}
Let $\Gamma$ be a graded $W$-spin graph and $\Lambda \in \partial^+\Gamma$. A \emph{$\Lambda$-set with respect to $\Gamma$} is an open set $U\subseteq \oCM^W_\Gamma$ whose closure intersects the strata $\CM^W_\Xi$ precisely for those graphs $\Xi\in \partial^!\Gamma$ with $\Lambda\in \partial^!\Xi$.

Now further suppose $u\in \CM^W_\Lambda\subseteq\partial^+\oCM^W_\Gamma$. A \emph{$\Lambda$-neighborhood with respect to $\Gamma$} of $u\in\oCM^W_\Gamma$ is a neighborhood $U\subseteq\oCM^W_\Gamma$ of $u$ which is a $\Lambda$-set with respect to $\Gamma$. We remark that if $\Gamma$ is smooth, we will omit the addition ``with respect to $\Gamma$" as, for any $\Lambda$, there is a unique smooth graph $\Gamma$ for which $\Lambda\in\partial^!\Gamma$.
\end{definition}

\begin{ex} Consider the moduli space given in Example~\ref{ex: hexagon} with one internal marking and three boundary markings. Let $\Gamma$ be the smooth graded graph and let $\Lambda$ be a codimension one boundary strata. A $\Lambda$-set with respect to $\Gamma$ is any open set whose closure does not intersect any other boundary strata. In Figure~\ref{Lambda Set Example}(A), we have a $\Lambda$-neighbourhood with respect to $\Gamma$ of the point $u \in \CM^W_{\Lambda}$, while in Figure~\ref{Lambda Set Example}(B), the neighbourhood's closure would intersect the codimension two strata, so it is not a $\Lambda$-neighbourhood with respect to $\Gamma$ of $u$.

\end{ex}

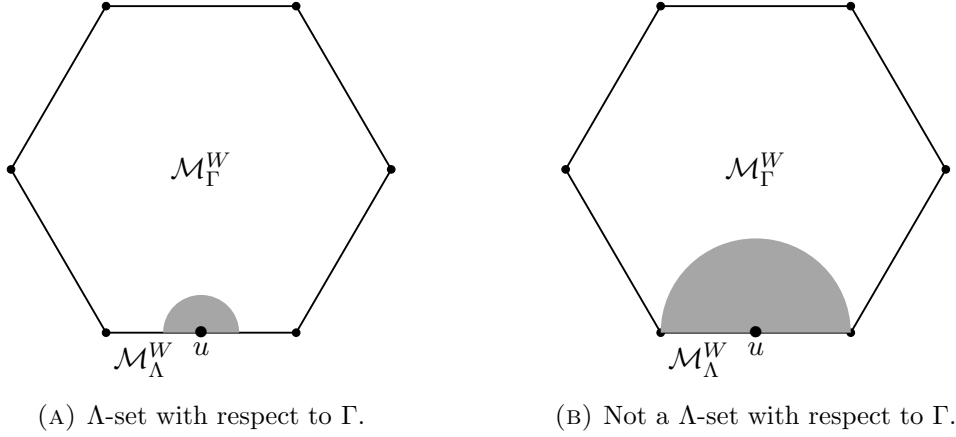
\begin{figure}
\begin{subfigure}{.45\textwidth}
  \centering
  \begin{tikzpicture}[scale=0.5]

 \newdimen\Rad
   \Rad=5cm
   \draw[line width=0.25mm] (0:\Rad) \foreach \x in {60,120,...,360} {  -- (\x:\Rad) };
   \draw (5,0)[black, fill = black] circle (.1cm);
   \draw (-5,0)[black, fill = black] circle (.1cm);
   \draw (2.5, 4.33)[black, fill = black] circle (.1cm);
      \draw (-2.5, 4.33)[black, fill = black] circle (.1cm);
         \draw (2.5, -4.33)[black, fill = black] circle (.1cm);
            \draw (-2.5, -4.33)[black, fill = black] circle (.1cm);

            \fill[gray!70]  (-1,-4.33) arc (180:0:1);
\node at (0,-4.33) {$\bullet$};
 \node[below] at (0,-4.33) {$u$};

 \node[below] at (-1.5,-4.33) {$\CM^W_{\Lambda}$};
  \node at (0,0) {$\CM^W_{\Gamma}$};
  \end{tikzpicture}
  \caption{$\Lambda$-set with respect to $\Gamma$.}
 \end{subfigure}
 \begin{subfigure}{.45\textwidth}
  \centering
  \begin{tikzpicture}[scale=0.5]

 \newdimen\Rad
   \Rad=5cm
   \draw[line width=0.25mm] (0:\Rad) \foreach \x in {60,120,...,360} {  -- (\x:\Rad) };
   \draw (5,0)[black, fill = black] circle (.1cm);
   \draw (-5,0)[black, fill = black] circle (.1cm);
   \draw (2.5, 4.33)[black, fill = black] circle (.1cm);
      \draw (-2.5, 4.33)[black, fill = black] circle (.1cm);
         \draw (2.5, -4.33)[black, fill = black] circle (.1cm);
            \draw (-2.5, -4.33)[black, fill = black] circle (.1cm);

            \fill[gray!70]  (-2.5,-4.33) arc (180:0:2.5);

            \node at (0,-4.33) {$\bullet$};
 \node[below] at (0,-4.33) {$u$};
  \node[below] at (-1.5,-4.33) {$\CM^W_{\Lambda}$};
  \node at (0,0) {$\CM^W_{\Gamma}$};
  \end{tikzpicture}
  \caption{Not a $\Lambda$-set with respect to $\Gamma$.}
 \end{subfigure}

 \caption{Two neighbourhoods of $u \in \CM^W_\Lambda$ in $\CM^W_\Gamma$.}
 \label{Lambda Set Example}
\end{figure}

\begin{definition}\label{def:interval}
Let $\Sigma$ be a (possibly nodal) graded marked disk. Write the oriented boundary $\partial \Sigma$ as a quotient space $S/{\sim}$, where $S$ is the oriented manifold $S^1$ and $\sim$ is an equivalence relation whose equivalence classes are all of size $1$ except for a finite number of classes of size $2$ and take $q:S\to \partial \Sigma$ to be the quotient map. An \emph{interval} $\II$ is the image of a connected open set in S under $q$. Note that the inverse image $q^{-1}(\II)$ is the union of an open set with a finite number of isolated points.

Suppose $\Sigma\in\CM^W_\Lambda$ and consider the boundary half-node $n_h=n_h(\Sigma)$, which corresponds to the half-edge $h\in H^B(\Lambda)$. We say that $n_h$ \emph{belongs} to the interval $\II$ if

\begin{itemize}
\item The corresponding node $n_{\mathrm{edge}(h)}$ is contained in $\II$.
\item Consider the boundary $\partial\hat{\Sigma}$ of the normalization $\hat\Sigma = \NNN^{-1}(\Sigma)$. Then we can write $\NNN^{-1}(\II)=\II_1\cup \II_2$, where $\II_1$ is a half-open interval containing boundary point $n_h$ and $\II_2$ is a half-open interval containing boundary point $n_{\sigma_1h}$. According to the canonical orientation of $\hat\Sigma$, we then have that the half-node $n_h$ is the starting point of $\II_1$ when viewed as an interval on the (oriented) real line and, consequently, the half-node $n_{\sigma_1h}$ is the endpoint of $\II_2$.
\end{itemize}
\end{definition}

We refer the reader to Figure~\ref{fig:interval} for a pictorial description of this definition.

\begin{figure}
\begin{subfigure}{.45\textwidth}
  \centering
  \begin{tikzpicture}[scale=0.6]

  \draw (0,0) circle (2cm);
  \draw (4,0) circle (2cm);
\draw [line width = 2pt,domain=0:90] plot ({2*cos(\x)}, {2*sin(\x)});
\draw [line width = 2pt,domain=90:180] plot ({2*cos(\x)+4}, {2*sin(\x)});

\node at (1.35, 0) {$n_h$};
\node at (2.9, 0) {$n_{\sigma_1 h}$};
\node at (2,0) {$\bullet$};
\node at (2,1.35) {$\II$};

  \end{tikzpicture}
  \caption{Interval $\II$ on a nodal disk $\Sigma$}
 \end{subfigure}
 \begin{subfigure}{.45\textwidth}
  \centering
  \begin{tikzpicture}[scale=0.6]

  \draw (0,0) circle (2cm);
  \draw (5,0) circle (2cm);
\draw [line width = 2pt,domain=0:90] plot ({2*cos(\x)}, {2*sin(\x)});
\draw [line width = 2pt,domain=90:180] plot ({2*cos(\x)+5}, {2*sin(\x)});

\node at (1.35, 0) {$n_h$};
\node at (3.9, 0) {$n_{\sigma_1 h}$};
\node at (2,0) {$\bullet$};
\node at (3,0) {$\bullet$};
\node at (3.3, 1.8) {$\II_2$};
\node at (1.7, 1.8) {$\II_1$};
  \end{tikzpicture}
  \caption{Preimage $\NNN^{-1}(\II) = \II_1 \cup \II_2$ under normalization}
 \end{subfigure}

 \begin{subfigure}{.45\textwidth}
  \centering
  \begin{tikzpicture}[scale=0.6]
\draw [domain=90:270] plot ({2*cos(\x)}, {2*sin(\x)});
\draw [domain=270:450] plot ({2*cos(\x)+4}, {2*sin(\x)});
\draw [domain= 0:4] plot ({\x}, {-.75*cos(90*\x)-1.25});
\draw [line width = 2pt,domain= 0:4] plot ({\x}, {.75*cos(90*\x)+1.25});
\node at (2,1.2) {$\II$};
  \end{tikzpicture}
  \caption{Interval $\II$ considered after smoothing}
 \end{subfigure}
\begin{subfigure}{.45\textwidth}
  \centering
  \begin{tikzpicture}[scale=0.6]

  \draw (0,0) circle (2cm);
  \draw (4,0) circle (2cm);
\draw [line width = 2pt,domain=0:90] plot ({2*cos(\x)}, {2*sin(\x)});
\draw [line width = 2pt,domain=90:180] plot ({2*cos(\x)+4}, {2*sin(\x)});

\draw [line width = 2pt,domain=270:360] plot ({2*cos(\x)}, {2*sin(\x)});
\draw [line width = 2pt,domain=180:270] plot ({2*cos(\x)+4}, {2*sin(\x)});

\node at (1.35, 0) {$n_h$};
\node at (2.9, 0) {$n_{\sigma_1 h}$};
\node at (2,0) {$\bullet$};
\node at (2,1.35) {$\II_{h}$};
\node at (2.1,-1.45) {$\II_{h'}$};

  \end{tikzpicture}
  \caption{Intervals $\II_h$ and $\II_{h'}$ as in Definition~\ref{family of intervals}.2(b)}
 \end{subfigure}

 \caption{Interval on boundary $\partial \Sigma$ on nodal disk $\Sigma$ that belongs to half-node $n_h$. Here, the boundary is oriented anti-clockwise.}
\label{fig:interval}
\end{figure}
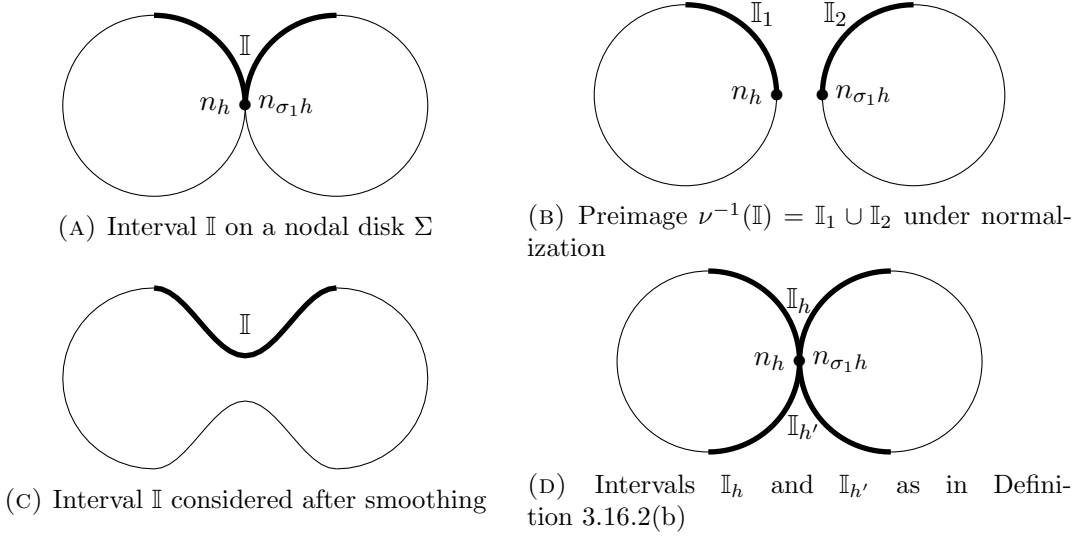

\begin{definition}\label{family of intervals}
Let $\Gamma$ be a graded $W$-spin graph and $\Lambda \in \partial^+\Gamma$. Suppose $U$ is contained in some $\Lambda$-set with respect to
$\Gamma$. A \emph{$\Lambda$-family of intervals} for $U$ is a family $\{\II_{h}(u)\}_{h\in\Pos(\Lambda),u\in U}$ such that the following hold:
\begin{enumerate}
\item
Each $\II_{h}(u)$ is an interval in $\partial\Sigma_u,$ where $\Sigma_u=\pi^{-1}(u)$ is the fiber of the universal curve $\pi:\CC\to U$ at $u$, or equivalently the graded surface which corresponds to $u$. The endpoints of each $\II_h(u)$ vary smoothly with respect to the smooth structure of the universal curve restricted to $U$.
\item Suppose $h$ and $h'$ are two half-edges of $\Lambda$.
\begin{enumerate}
\item If $h \notin  \{h',\sigma_1h'\}$, then $\II_h(u)\cap \II_{h'}(u)=\emptyset$ for any $u\in U$.
\item If $h'=\sigma_1h$, then (i) for any $u \in U\cap \CM^W_{\Xi}$ where $h\in\Pos(\Xi)$, we have $\II_h(u)\cap \II_{h'}(u) = n_{\textup{edge}(h)}$, the node in $\partial\Sigma$ corresponding to the half-edges $h$ and $\sigma_1h$ (so that their union forms an $X$-shape near that node, see Figure \ref{fig:interval}(D)); (ii) for any $u \in \CM^W_{\Xi}$ where $h\notin\Pos(\Xi)$, $\II_h(u)\cap \II_{h'}(u)=\emptyset$. Here, we use the locally defined injection $\iota:H(\Xi)\to H(\Lambda)$. 
\end{enumerate}
\item
$\II_h(u)$ contains no marked points, and
\item the  half-node $n_h$ is the only half-node which belongs to $\II_h(u)$, whenever $u\in\CM^W_\Xi\cap U$ for some partial smoothing $\Xi=\smooth_E(\Lambda)$ for some $E \subsetneq E(\Lambda)$, where $h\in\Pos(\Xi)$.
\end{enumerate}
When the point $u$ corresponds to a stable graded disk $\Sigma$, we sometimes write $\II_h(\Sigma)$ for $\II_h(u)$. 
\end{definition}

\begin{definition}\label{def:positive_section}\begin{enumerate}
\item Let $A$ be a subset of $\partial\Sigma$ not containing any alternating special points. An element $w\in({\cW_i})_\Sigma~(i=1,2)$ \emph{evaluates positively at $A$}, if for every $x\in A$ it holds that $ev_x(w)>0,$ i.e., $ev_x(w)\in (\cJ_i)^{\tilde\phi}_x$ agrees with the positive direction of the $i^{th}$-grading.

\item Let $\Gamma$ be any graded graph. Let $U$ be a $\Lambda$-set with respect to $\Gamma$ and $\mathbb{I}:=\{\mathbb{I}_h\}_{h}$ a $\Lambda$-family of intervals for $U$. A multisection $\ess$ of $\cW$ defined in a subset of $\oPM_\Gamma$ which includes $U\cap\oPM_\Gamma$ is \emph{$(U,\mathbb{I})$-positive} (with respect to $\Gamma$) if, for any $u\in U\cap \oPM_\Gamma$, $j\in \{1,2\}$ and $h\in H^+_j(\Lambda)$, the $\cW_j$-component of any branch $s_i$ of $\ess$ evaluates positively at each $\mathbb{I}_h(u)$.\footnote{Note that if a positive edge of
$\Lambda$ is also an edge of $\Gamma$, then $\oPM_{\Gamma}$ is empty.
Thus this positivity is required to hold only along intervals disjoint from
nodes, and hence makes sense.}

\item Consider a neighborhood $V\subseteq\oCM^W_{\Gamma}$ of $u\in\oCM^W_\Lambda$.
A multisection defined in $V\cap\oPM_\Gamma$ is \emph{positive near} $u$ (with respect to $\Gamma$) if there exists a $\Lambda$-neighborhood $U\subseteq V$ of $u$ and a $\Lambda$-family of intervals $\mathbb{I}$ for $U$ such that the multisection is $(U,\mathbb{I})$-positive.
\end{enumerate}
\end{definition}

Let $V$ be a neighbourhood of $\partial^+\oCM^W_{\Gamma}$ and set
\begin{equation}\label{eq:U_+}U_+=\left(V\cap\oPM_\Gamma\right)\cup \bigcup_{\Lambda\in \partial^!\Gamma~\text{is irrelevant}}{\oPM_\Lambda}
\end{equation}
(recalling the definition of an irrelevant graph from Definition \ref{def:special kind of graded graphs}). We remind the reader that if $\Lambda$ is positive, then $\oPM_{\Lambda}$
is in fact empty.

\begin{definition}\label{def: strongly positive} A multisection $\ess$ defined in a set which contains a $U_+$ as in~\eqref{eq:U_+}  is
 \emph{strongly positive (with respect to $\Gamma$)} if the following hold:
\begin{enumerate}
\item
Let $\Lambda\in\partial^!\Gamma$ satisfy condition \eqref{irrelevant:item 1} of the
definition of irrelevant, Definition \ref{def:special kind of graded graphs}.
Then  $\ess(\Sigma)$ evaluates positively with respect to the two gradings corresponding to the $r$-spin and $s$-spin structures
at every contracted boundary node of $\Sigma$ as described in Definition~\ref{def:closed grading}.
\item The multisection is positive near each point of $\partial^+\oCM^W_\Gamma$.

\item \label{strongly positive:item 3}
Let $\Lambda\in\partial^!\Gamma$ satisfy condition \eqref{irrelevant:item 3} of
Definition \ref{def:special kind of graded graphs} with respect to $i \in \{1,2\}$
and let $\Xi$ be the component of
$\detach_{E(\Lambda)}(\Lambda)$ with vertex $v$ as in that condition.
Let $\Sigma\in\CM^W_\Lambda,$ with normalization
$\NNN:\widehat{\Sigma}\to\Sigma$. Let $\widehat{\Sigma}^{\Xi}$ denote
the connected component of $\widehat{\Sigma}$ corresponding to $\Xi$.
Then:
\begin{enumerate}
\item
The $\cW_i$-component of each branch of $\NNN^*(\ess(\Sigma))$
evaluates positively on some non-empty subset of the boundary of
$\widehat{\Sigma}^{\Xi}$ (note that by ~\eqref{eq:open_rank2}, the rank of $\cW_i$ is nonzero).
\item If $\Xi$ becomes partially stable with no boundary half-edges after forgetting all boundary half-edges $t$ with $\alt(t)=\tw(t)=(0,0)$, then $\NNN^*(\ess(\Sigma))$ evaluates positively with respect to the
gradings of the two spin bundles at all of the boundary
of $\widehat{\Sigma}^{\Xi}$.
\end{enumerate}
\end{enumerate}
Again, we omit the extension ``with respect to $\Gamma$" whenever $\Gamma$ is smooth.
\end{definition}

\begin{rmk}\label{rmk: why strongly positive}
One possible alternative definition for positivity would be to require that, for all points $\Sigma$ of a boundary stratum $\CM_\Lambda$ for $\Lambda\in\partial^+\Gamma$, the evaluation of the section of the Witten bundle at any non-alternating boundary half-node with positive twist is positive. While in low codimension this idea works, Example 3.23 in \cite{BCT:II} shows that it does not hold in general. In fact, it would imply vanishing of the section at certain points in the moduli space. In \cite{BCT:II}, this issue is solved by working on $\PM_\Gamma$ rather than on $\oCM_\Gamma$. 

We note that other versions of positivity are also possible. For example, one may continue to work on $\oCM_\Gamma$ while imposing certain positivity constraints on the derivative of the sections of the Witten bundle. At any rate, both approaches will establish equivalent open analogues of the virtual class of the Witten bundle in our setting, while avoiding boundary strata with non-alternating boundary nodes of positive twists.
\end{rmk} 

\subsection{Canonical multisections}
\label{subsec:canonical multisections}   Intuitively, canonical multisections are strongly positive multisections
whose restriction to relevant (i.e., not irrelevant in
the sense of Definition \ref{def:special kind of graded graphs}) boundary strata are pulled back from the base moduli of these boundary strata. With the same intuition, we would like special canonical multisections to satisfy a more restrictive property. For any boundary stratum, the detaching of the dual graph associated to the stratum
is comprised of abstract vertices $v$ given in Definition~\ref{def:abs_vertex}.
We require that the restriction of the multisection to this boundary stratum
is pulled back from an assembling of multisections of Witten bundles on these abstract vertices. 
Below, we will make this precise.

\begin{definition}\label{def:canonical for Witten}
Let $\Gamma$ be a smooth graded $\RS$-graph with only open vertices. 
Let $U$ be an open set of $\oCM^W_{\Gamma}$ so that it contains $\partial\oPM_\Gamma\cup U_+$, where $U_+$ is a subset of $\oPM_{\Gamma}$ as defined in~\eqref{eq:U_+}. A smooth multisection $\ess$ of $\cW\to U$ is \emph{canonical} if the following both hold:
\begin{enumerate}
\item The multisection $\ess$ is strongly positive as in Definition~\ref{def:positive_section};
\item For any open $\Xi\in\mathcal{V}(\Gamma)$, there exists a multisection $\ess^\Xi\in C_m^\infty(\oPM_\Xi,\cW_\Xi)$ such that, for any $\Lambda\in
\partial^!\Gamma\setminus\partial^+\Gamma$ with only open
vertices,
\begin{equation}\label{eq:pb_Witten}
\ess|_{\oPM_\Lambda}=F_\Lambda^*(\boxplus_{\Xi\in\Conn(\CB\Lambda)}\ess^\Xi),
\end{equation}
using Notation~\ref{not: restriction notation}.
\end{enumerate}
In the case where the set $U$ is the entire moduli $\oPM_\Gamma$, we say that $\ess$ is a \emph{global canonical multisection}.
\end{definition}
Note that the requirement \eqref{eq:pb_Witten} makes sense by Observation
\ref{obs:Witten_pulled_back_perm} and is automatic if
$\Lambda=\Gamma$ and $\Gamma$ is connected.

\begin{definition}\label{def:rs graph}
Let $I\subseteq\Omega$. We define $\RS\text{-Graph}(I)$ to be the collection of
graded open $\RS$-graphs with all boundary markings being
$\emptyset$ and $I(\Gamma)\subseteq I$. When $I$ is understood, we will often omit $I$.
\end{definition}

\begin{definition}
\label{def:family of canonical}
A \emph{family of canonical multisections for $\cW$ bounded by $I$} is a collection of (global) canonical multisections $(\ess^\Gamma)_{\Gamma\in\RS\text{-Graph}(I)}$
 with $\ess^\Gamma\in C_m^\infty(\oPM_\Gamma,\cW)$, such that
for all $\Gamma\in \RS\text{-Graph}(I)$ and
$\Lambda\in \partial^0\Gamma \cup\partial^\xch\Gamma$, \eqref{eq:pb_Witten} holds with
$\ess=\ess^{\Gamma}$. If $I=\Omega$, then we will omit the phrase
``bounded by $I$."
\end{definition}

Note that since there are cases (when $v$ is closed and the anchor is Ramond) that the Witten bundle does not decompose as a direct sum, we must use coherent multisections and the assembling map $\Ass$ of \S\ref{subsec:coherent} (instead of usual multisections and direct sums).

\begin{definition}\label{def: special canonical}
A multisection $\ess$ of $\cW$ over $\oPM_{\Gamma}$
is \emph{special canonical} if $\ess$ is strongly positive and, for each abstract vertex $v\in \mathcal{V}(\Gamma)$ as defined in Definition~\ref{def:abs_vertex}, there exists data $\ess^v$ as follows:
\begin{enumerate}
\item If $v$ is a closed vertex, then $\ess^v$ is a coherent multisection in
$C_m^\infty(\TRAM_v,\cW)$;
\item If $v$ is an open vertex, then $\ess^v\in C_m^\infty(\oPM_v,\cW)$.
\end{enumerate}
This data must satisfy the decomposition formula
\[\ess|_{\oPM_\Lambda} = F_{\Lambda}^*\circ\Ass_{\CB\Lambda,E(\CB\Lambda)}((\ess^v)_{v\in \Conn(\detach(\CB\Lambda))}),\]
for any $\Lambda\in \partial^!\Gamma\setminus\partial^+\Gamma$.
\end{definition}

We can also define the analogous notions of canonical and special canonical for the cotangent line bundles $\CL_i$.

\begin{definition}\label{def:canonical for L_i}
A smooth multisection $\ess$ of $\CL_i\to\partial\oPM_{\Gamma}$,
where $\Gamma$ is smooth and graded, is called \emph{canonical} if
for any stable open $\Xi\in\mathcal{V}^i(\Gamma)$ there exists a multisection $\ess^\Xi\in C_m^\infty(\oPM_\Xi,\CL_i)$ such that for any $\Lambda\in \partial^0\Gamma\cup\partial^\xch\Gamma$ with $v_i^*(\Lambda)=\Xi$, we have
\begin{equation}\label{eq:pb}\ess|_{\oPM_{\Lambda}}=\Phi_{\Lambda,i}^*(\ess^{\Xi}),\end{equation}
where $\Phi_{\Lambda,i}$ is as defined in Definition \ref{def:abs_vertex}.
A global multisection for $\CL_i$ is canonical if its restriction to $\partial\oPM_{\Gamma}$ is canonical.

A multisection $\ess\in C_m^\infty(\oPM_\Gamma,\CL_i)$ is \emph{special canonical} if for every stable $v\in \mathcal{V}^i(\Gamma)$ (not necessarily open), there exists a multisection $\ess^v$ of $\CL_i\to\oCM^W_{v}$ such that for every $\Lambda\in\d\Gamma$ with $v=v^*_i(\Lambda)$, we have
\begin{equation}\label{cotangent special canonical} \ess|_{\oPM_{\Lambda}}=\Phi_{\Lambda,i}^*\ess^v.\end{equation}
\end{definition}
Note that equalities ~\eqref{eq:pb} and ~\eqref{cotangent special canonical} use the identification of cotangent line bundles over $\CM^W_\Gamma$ and $\CM^W_{\CB\Gamma}$ outlined in Observation \ref{obs:Witten_pulled_back_perm}. Now, as we have done above for the Witten bundle, we can define a family of canonical multisections for the cotangent line bundle as follows:

\begin{definition}\label{def: cotangent canonical}
A \emph{family of canonical multisections for $\CL_i$ bounded by $I$} is a family of canonical multisections $(\ess^\Gamma)_{\Gamma}$, where $\Gamma$ runs over all  $\Gamma \in\RS\text{-Graph}$ with $i \in I(\Gamma)$ and
$\ess^\Gamma\in C_m^\infty(\oPM_\Gamma,\CL_i),$ such that~\eqref{eq:pb} holds. If $I=\Omega$, we omit the suffix ``bounded by $I$."
\end{definition}

\begin{rmk}
It is straightforward to observe that a special canonical multisection for a Witten bundle or for a tautological line $\CL_i$ is canonical.
\end{rmk}
With the definitions of canonical and special canonical outlined for the Witten bundle and for the cotangent line bundles, we have the following for the descendent Witten bundles $E_\Gamma(\vecd)$ as in Definition~\ref{nn:Direct sums of bundles}:
\begin{definition}
\label{def:special canonical Witten descendent}
A multisection of the descendent Witten bundle $E_\Gamma(\vecd)$ is \emph{canonical} (respectively, \emph{special canonical}) if the multisection when restricted to each of its direct summands are canonical (respectively, special canonical).
\end{definition}

\begin{definition}
A \emph{family of canonical multisections for descendent Witten bundles
bounded by $I$} is a collection of multisections $(\ess^{\Gamma,\vecd})$,
where $\Gamma$ runs over elements of $\RS\text{-Graph}$ and $\vecd$
runs over elements of $\NN^{I(\Gamma)}$, satisfying the following.
We have $\ess^{\Gamma,\vecd}$ a canonical multisection of $E_{\Gamma}(\vecd)$ such that for all $\Gamma\in \RS\text{-Graph}$, $\vecd \in \NN^{I(\Gamma)}$, and
$\Lambda\in \partial^0\Gamma \cup\partial^\xch\Gamma$,
\[
\ess^{\Gamma,\vecd  }|_{\oPM_\Lambda}=F_\Lambda^*(\boxplus_{\Xi\in\Conn(\CB\Lambda)}\ess^{\Xi,
\vecd|_{I(\Xi)}}).
\]

We say $(\ess^{\Gamma,\vecd})$ is a \emph{family of special canonical multisections for descendent Witten bundles bounded by $I$} if it is a family of canonical multisections for descendent Witten bundles bounded by $I$ where each multisection $\ess^{\Gamma,\vecd}$ is special canonical.

\end{definition}

\begin{thm}\label{prop:int_numbers_exist}
\begin{enumerate}
\item
Consider $\Gamma$ a smooth graded $W$-spin graph and a descendent Witten bundle $E=E_\Gamma(\vecd)$, where $\vecd  =(d_i)_{i\in I}$ is a vector of non-negative integers and $I(\Gamma)\subseteq I$. Then one can construct a transverse special
canonical multisection for $E\to\oPM_\Gamma$.
\item
For any $I\subseteq \Universe$, one can define a family bounded by 
$I$ of transverse special canonical multisections for $E$. In particular,
families of transverse canonical multisections exist.
\item 
We may choose the family of (2) to satisfy
the following stronger transversality condition: Consider any
$\Gamma$ with $I(\Gamma)\subseteq I$, any $v\in \mathcal{V}(\Gamma)$ and any 
subbundle $E'\subseteq E_{\Gamma}(\vecd)$ of the form
\[
E'= \mathcal{W}\oplus \bigoplus_{i\in I(v),~j\in D_i}\CL_i,
\]
where, for each $i\in I(v)$, we have $D_i\subseteq[d_i]$.
Then we may require that the multisection $s^{E',v}$ obtained from $s^v$ 
 via the projection $E_{\Gamma}(\vecd)\to E'$ is transverse to $0$.
\end{enumerate}
\end{thm}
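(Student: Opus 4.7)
The strategy is induction on the complexity of $\Gamma$, say on the pair $(|I(\Gamma)|, \dim\oCM^W_\Gamma)$ ordered lexicographically. The base cases are the graphs $\Gamma$ for which $\oCM^W_\Gamma$ is zero-dimensional, where one only needs to check that a point-wise positivity condition can be met: here the Witten bundle typically has rank zero and the multisection is taken to be the canonical nonzero positive section, or, when there are no boundary tails forcing positivity, any small constant multisection. For the induction step, suppose that for every abstract vertex $v\in\mathcal{V}(\Gamma)$ we have already constructed a transverse special canonical multisection $\ess^v$ of $E_v(\vecd|_{I(v)})\to\oPM_v$ (in the case $v$ is open) or a coherent transverse multisection of $\TRAM_v$ (in the case $v$ is closed). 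This is legitimate because every $v\in\mathcal{V}(\Gamma)$ arises from detaching and forgetting operations on graphs $\Lambda\in\partial^!\Gamma\setminus\partial^+\Gamma$, and such $v$ is strictly simpler than $\Gamma$.

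Using the decomposition formula required by Definition~\ref{def: special canonical}, for every $\Lambda\in\partial^!\Gamma\setminus\partial^+\Gamma$ we set
\[
\ess|_{\oPM_\Lambda} := F_\Lambda^*\circ\Ass_{\CB\Lambda,E(\CB\Lambda)}\bigl((\ess^v)_{v\in\Conn(\detach(\CB\Lambda))}\bigr).
\]
The key compatibility check is that these definitions agree on overlaps $\oPM_{\Lambda_1}\cap\oPM_{\Lambda_2}$ when $\Lambda_1$ degenerates further to $\Lambda_2$: this follows from Observation~\ref{obs:for_comp} (which asserts $F_{f_\Gamma(\Lambda')}\circ F_\Lambda|_{\oCM^W_{\Lambda'}}=F_{\Lambda'}$) together with associativity of the assembling operation recorded after the definition of $\Ass$ in \S\ref{subsec:coherent}. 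These compatibilities ensure that we obtain a well-defined, globally specified multisection of $E$ on the closed subset of $\partial\oPM_\Gamma\cup\bigcup_{\Lambda\text{ irrelevant}}\oPM_\Lambda$.

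Next, we extend this to a global multisection on $\oPM_\Gamma$ while maintaining strong positivity. Near every contracted boundary node stratum (condition (1) of Definition~\ref{def: strongly positive}), we can prescribe the multisection explicitly using the grading there. Near $\partial^+\oCM^W_\Gamma$ (condition (2)), we choose, for each $\Lambda\in\partial^+\Gamma$, a $\Lambda$-neighborhood and a $\Lambda$-family of intervals as in Definition~\ref{family of intervals}, and specify a $(U,\II)$-positive multisection on that neighborhood; this is possible because positivity is an open condition on a contractible cone of sections. Similarly, condition (3) of Definition~\ref{def: strongly positive} on boundary strata with an $i$-untwisted vertex is imposed by partial prescription. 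Having fixed the multisection on this union of closed and open neighborhoods, we extend to a smooth multisection on all of $\oPM_\Gamma$ by the standard partition-of-unity construction for multisections of orbifold vector bundles over orbifolds with corners (see \cite[Appendix A]{PST14}, \cite[Appendix A]{BCT:II}), and then perturb it transversely using Theorem A.14 of \cite{PST14} or its noncompact analog in \cite{BCT:II}; the perturbation may be taken small enough to preserve the positivity and boundary pullback conditions, since both are open conditions. This gives (1).

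For (2), we run this construction uniformly across the indexed family $\RS\text{-Graph}(I)$, ordering graphs by the same complexity. At each stage we choose the abstract-vertex data $(\ess^v)_{v\in\mathcal{V}(\Gamma)}$ to coincide with choices made at earlier stages whenever $v$ has already appeared as an abstract vertex of a simpler graph; this is consistent because the datum of $\ess^v$ depends only on $v$ itself, not on the graph $\Gamma$ of which $v$ is an abstract vertex. The inductive compatibility argument of the previous paragraphs then automatically yields, for each $\Gamma$ and each $\Lambda\in\partial^0\Gamma\cup\partial^\xch\Gamma$, the required identity $\ess^\Gamma|_{\oPM_\Lambda}=F_\Lambda^*\bigl(\boxplus_{\Xi\in\Conn(\CB\Lambda)}\ess^{\Xi,\vecd|_{I(\Xi)}}\bigr)$. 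For (3), we observe that each of the finitely many subbundles $E'\subseteq E_\Gamma(\vecd)$ of the stated form gives rise to a transversality condition which is open and dense in the space of multisections compatible with the boundary prescription; accordingly, the perturbation step in the previous paragraph can be done simultaneously for all such $(E',v)$, yielding a multisection whose induced multisections $\ess^{E',v}$ are all transverse to zero. The principal obstacle throughout is verifying consistency of the boundary data across multiply-nested degenerations, which is handled entirely by the associativity of $\Ass$ together with Observation~\ref{obs:for_comp}.
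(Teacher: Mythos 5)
Your proposal is correct and follows essentially the same route as the paper: the paper's proof simply defers to the inductive construction of Lemma 4.10 in \cite{BCT:II}, asserting that the same induction over degenerations—with boundary values assembled from abstract-vertex data and then extended and transversally perturbed—automatically yields special canonical families in the rank-two setting. Your write-up is a fleshed-out version of exactly that induction (boundary prescription via $F_\Lambda^*\circ\Ass$, compatibility from Observation~\ref{obs:for_comp} and the associativity of $\Ass$, then extension and a small perturbation preserving the open positivity and transversality conditions).
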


\begin{proof}
This theorem is similar to Lemma 4.10 of \cite{BCT:II}, which is used to show the existence part of Theorem 3.17 in \cite{BCT:II}, and the proof is completely analogous. Here we claim, in addition, that special canonical families can be found. Although it was not claimed in Theorem 3.17 and Lemma 4.10 of \cite{BCT:II}, the inductive construction in the proof of that lemma gives rise automatically to a ``special canonical family" in our sense, and the same inductive construction extends automatically to give canonical families in rank $2$ as well. The details will be omitted.
\end{proof}

\begin{rmk} 
Note when we specify a canonical multisection $\ess$ on $\oPM_\Gamma$, we have also determined uniquely a multisection $\ess^{\CB\Lambda}$ over $\oPM_{\CB\Lambda}$ for any graph $\Lambda \in \partial^0 \Gamma\cup\partial^\xch\Gamma$.
Similarly, for any special canonical multisection, we have moreover determined uniquely multisections $\ess^v$ for $v\in\mathcal{V}(\Gamma)$ or $v\in\mathcal{V}^i(\Gamma)$ which satisfy the constraints in Definitions~\ref{def: special canonical} and~\ref{def:canonical for L_i}. For this reason, when we state that we have the information of a canonical or special canonical multisection $\ess$, we will, in turn, take this to mean we have also the data of the multisections  $\ess^{\CB\Lambda}$ and $\ess^v$ above.
\end{rmk}

\subsection{Open and closed extended FJRW invariants} 
\subsubsection{Closed extended $W$-spin theory}
\label{subsec:closed extended}
When we perform calculations of open invariants, a generalization of the usual closed $W$-spin intersection numbers naturally emerges.
For any Fermat polynomial $W=\sum_{i=1}^a x_i^{r_i}$,
this generalization is defined by allowing the anchors to have twists
$\tw_i=-1$ for $i$ in a
subset of $[a]$, as seen in Observation~\ref{obs:twist-1}. The remaining twists must be non-negative. In this situation (as in the closed situation with only non-negative twists), one can define the class $c_W$ analogously as in \cite[Section 3.1]{BCT_Closed_Extended} by the formula
\[c_W =  e\left(\left(\bigoplus_{i\in[a]}R^1\pi_*\mathcal{S}_i\right)^{\vee}\oplus \bigoplus_{j =1}^n \CL_j^{d_j}\right).\] We define {\it closed extended genus 0 $W$-spin intersection numbers} to be given by the formula
\begin{equation}\label{defn: closed ext W spin invariant}
\<\tau^{\vec{a}_1}_{d_1}\cdots\tau^{\vec{a}_n}_{d_n}\>^{\text{ext},W}_0:=
\left(\prod_{i\in[a]}r_i\right)\int_{\M^{W}_{0,\{\vec{a}_1, \ldots, \vec{a}_n\}}} \hspace{-1cm} c_W.
\end{equation}
In particular, in rank $2$ for $W=x^r+y^s$, for an index set $I$ with
descendent vector $\vecd  =(d_i)_{i\in I}$, we have
\[
c_W = e\left((R^1\pi_*\mathcal{S}_1)^{\vee}\oplus(R^1\pi_*\mathcal{S}_2)^\vee\oplus \bigoplus_{i\in I} \CL_i^{d_i}\right),
\]
and
\begin{equation}
\label{eq:closed rank two}
\<\prod_{i\in I}\tau^{(a_i,b_i)}_{d_i}\>^{\text{ext}}:=rs\int_{\M^{W}_{0,\{(a_i,b_i)\}}} \hspace{-1cm} c_W
\end{equation}
are the {\it closed extended (genus 0) $\RS$-spin intersection numbers}.

We make several observations which will be needed later.

\begin{obs}
\label{obs:closed extended}
(1) The extended invariant is non-vanishing when the complex rank of the
descendent
Witten bundle agrees with the complex dimension of the moduli space, i.e.,
in the rank two case,
\[
|I|-3= {\sum_{i\in I} a_i - (r-2)\over r}+{\sum_{i\in I} b_i - (s-2)\over
s} + \sum_{i\in I} d_i,
\]
which is equivalent to
\[
m(I,\vecd  )+2s+2r=0.
\]

(2) If we are not in the extended case, i.e., if $a_i\not=-1$ for any $i$
and $b_i\not=-1$ for any $i$, and if furthermore some $a_i=r-1$ or
$b_i=r-1$, then the intersection number \eqref{eq:closed rank two}
vanishes. This is known as Ramond vanishing, and the argument is identical
to the proof in the rank one case in \cite{BCT_Closed_Extended}, Remark 2.1.

(3) The extended topological recursion relation of \cite{BCT_Closed_Extended},
Lemma 3.6 also holds for these higher rank invariants via an identical
proof. In the rank two case, this becomes, for $|I|\ge 3$ with $1,2,3 \in I$,
\begin{align}
\label{eq:closed TRR}
\begin{split}
\<\tau_{d_1+1}^{(a_1,b_1)}\prod_{i\in I\setminus \{1\}}\tau_{d_i}^{(a_i,b_i)}\>^{\mathrm{ext}}
&=\\
\sum_{\substack{a\in\{-1,\ldots,r-1\},\\b\in\{-1,\ldots,s-1\}}}
&\sum_{\substack{A \coprod B = I\setminus\{1\}\\2,3\in A}}
\<\tau_0^{(r-2-a,s-2-b)}\prod_{i\in A} \tau_{d_i}^{(a_i,b_i)}\>^{\text{ext}}\<\tau_0^{(a,b)}\tau_{d_1}^{(a_1,b_1)}\prod_{i \in B}\tau_{d_i}^{(a_i,b_i)}\>^{\text{ext}}.
\end{split}
\end{align}
Note that if none of the $a_i$ or $b_i$ are $-1$, then by the Ramond
vanishing of (2), we may sum over only $a\in \{0,\ldots,r-2\}$
and $b\in \{0,\ldots,s-2\}$ and we obtain the usual topological recursion
relation for FJRW invariants.
\end{obs}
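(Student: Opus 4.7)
The plan is to address the three sub-observations in turn, since each relies on a different piece of input.

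For part (1), the statement is just the standard dimension count for a top Chern class. The complex dimension of $\oCM^{W}_{0,\{(a_i,b_i)\}}$ is $|I|-3$, and by Observation \ref{obs:twist-1} the ranks of $(R^1\pi_*\mathcal{S}_1)^\vee$ and $(R^1\pi_*\mathcal{S}_2)^\vee$ are given by the formula \eqref{eq:graded_closed_rank}, namely ${\sum_{i\in I} a_i-(r-2)\over r}$ and ${\sum_{i\in I} b_i-(s-2)\over s}$, while $\bigoplus_i \CL_i^{d_i}$ has rank $\sum d_i$. Setting rank equal to dimension and multiplying through by $rs$ gives
\[
rs(|I|-3)=s\sum a_i + r\sum b_i + rs\sum d_i - s(r-2)-r(s-2),
\]
which after rearranging using the definition $m(I,\vecd)=rs+\sum(sa_i+rb_i+rs(d_i-1))$ in Notation \ref{nn:r(I),s(I),m(I)} becomes $m(I,\vecd)+2s+2r=0$. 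So this part is a direct algebraic rewrite, with no geometric content beyond the fact that the integrand is a top-degree class.

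For part (2), the argument is identical to Remark 2.1 of \cite{BCT_Closed_Extended}. Suppose $a_i=r-1$ for some $i$ and no $a_j$ equals $-1$. Then at $z_i$ the bundle $\mathcal{S}_1$ is Ramond, so $\mathcal{S}_1(-[z_i])$ has the same sections on each fiber but the evaluation map at $z_i$ fits into a short exact sequence
\[
0 \to R^1\pi_*\big(\mathcal{S}_1\otimes \O(-[z_i])\big) \to R^1\pi_*\mathcal{S}_1 \to \mathbb{C}_{z_i}\to 0
\]
on the universal curve, producing a trivial quotient of $(R^1\pi_*\mathcal{S}_1)^\vee$. Hence $c_W$ contains a factor that is the Euler class of a trivial line bundle, so it is zero. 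The same argument applies for $b_i=s-1$ with $\mathcal{S}_2$.

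For part (3), the proof is formally identical to Lemma 3.6 of \cite{BCT_Closed_Extended}, now carried out for the product of two spin bundles. The idea is to pull back $\CL_1$ from $\oCM^W_{0,\{(a_j,b_j)\}_{j\ne 1}\cup\{(a_1,b_1),(0,0),(0,0)\}}$ via the forgetful map that erases the extra points, write the difference $\CL_1-\pi^*\CL_1$ as a sum of boundary divisors $D_{A\sqcup\{1\},B\sqcup\{2,3\}}$, and intersect with $c_W$. The decomposition of the Witten bundles at a Ramond or Neveu–Schwarz internal node from Proposition \ref{pr:decomposition}, together with the classical splitting of $\CL_i^{d_i}$ along boundary strata, produces the sum on the right-hand side of \eqref{eq:closed TRR}, with the twists at the node summing to $(r-2,s-2) \bmod (r,s)$ as required. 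The only part to watch is that when the internal node is Ramond (twist $r-1$ or $s-1$ on one side and $-1$ on the other), one extended factor may appear on each side of the splitting; the bookkeeping in the rank one case of \cite{BCT_Closed_Extended} carries over verbatim.

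The main (and only nontrivial) obstacle is part (3): keeping track of the Ramond/Neveu–Schwarz splitting for each of the two spin bundles simultaneously, and making sure that both $a=-1$ and $b=-1$ summands enter the sum with the correct extended invariants attached. Everything else is bookkeeping or a direct reduction.
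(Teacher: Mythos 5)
The paper offers no written proof of this observation beyond deferring parts (2) and (3) to the rank-one arguments of \cite{BCT_Closed_Extended}, and your proposal follows exactly that route, so in outline you and the paper agree. Your verification of (1) is a correct algebraic rewrite of the rank-equals-dimension condition using Notation \ref{nn:r(I),s(I),m(I)}, and your sketch of (3) --- comparing $\CL_1$ with its pullback under the map forgetting everything but $z_1,z_2,z_3$, writing the difference as the sum of boundary divisors separating $z_1$ from $\{z_2,z_3\}$, and splitting $c_W$ across each such divisor via Proposition \ref{pr:decomposition} --- is the standard argument and does carry over; the point you rightly flag is that Ramond internal nodes are responsible for the terms with $a\in\{-1,r-1\}$ and $b\in\{-1,s-1\}$, which is why the sum runs over that enlarged range.

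The one step that would fail as written is the exact sequence in (2). From the inclusion $\mathcal{S}_1\otimes\O(-[z_i])\hookrightarrow\mathcal{S}_1$ one gets, after pushing forward and using the vanishing of $R^0\pi_*$, the sequence $0\to\C\to R^1\pi_*(\mathcal{S}_1\otimes\O(-[z_i]))\to R^1\pi_*\mathcal{S}_1\to 0$: the trivial line sits inside the cohomology of the \emph{twisted-down} bundle, and this gives no sub- or quotient-line of $\cW_1$ itself. The correct move is to twist \emph{up}: since $z_i$ has twist $r-1$ (so $i\in I_0$), the bundle $\mathcal{S}_1\otimes\O([z_i])$ is precisely the twisted spin bundle with extended twist $-1$ at $z_i$; it still has no global sections on each fibre, and the sequence $0\to\mathcal{S}_1\to\mathcal{S}_1\otimes\O([z_i])\to \bigl(\mathcal{S}_1\otimes\O([z_i])\bigr)\big|_{z_i}\to 0$ exhibits the fibre line as a sub-line-bundle of $R^1\pi_*\mathcal{S}_1$, hence a line-bundle quotient of $\cW_1$. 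Note also that this quotient is not literally trivial; it has torsion first Chern class because its $r$-th power is trivialized (compare the line $\TTT$ in Proposition \ref{pr:decomposition}(iii)(b)), and that is what actually kills the Euler class in rational cohomology. With that correction, part (2) is exactly Remark 2.1 of \cite{BCT_Closed_Extended} applied to whichever of the two spin bundles is Ramond.
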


\subsubsection{Open intersection numbers}\label{subsec:open int numbers}

 Using Definition~\ref{def: relative Euler class} and Theorem~\ref{prop:int_numbers_exist}, the following definition makes sense:
 \begin{definition}
Consider the situation of Theorem~\ref{prop:int_numbers_exist} above. The \emph{open intersection number} relative to the (transverse, canonical) multisection $\mathbf{s}^\Gamma$ for a graded $W$-spin graph $\Gamma$ is
\begin{equation}\label{eqdef:int_number}\left\langle \prod_{i\in I(\Gamma)}\tau^{(a_i,b_i)}_{d_i}\sigma_1^{k_1(\Gamma)}\sigma_2^{k_2(\Gamma)}\sigma_{12}^{k_{12}(\Gamma)} \right\rangle^{\mathbf{s}^\Gamma,o}:=
\int_{\oPM_\Gamma}e(E ; \mathbf{s}^\Gamma|_{U_+\cup\partial^0\oPM_\Gamma})=\#Z(\mathbf{s}^\Gamma)\in\mathbb{Q},\end{equation}
where $e(E ; \mathbf{s}^\Gamma|_{U_+\cup\partial^0\oPM_\Gamma})$ is defined as in Definition~\ref{def: relative Euler class} and $\#Z(\mathbf{s}^\Gamma)$ as discussed in \textsection\ref{subsubsec:multisections}. We emphasize that this does not just depend on the twists but also the graph $\Gamma$ (e.g., this is defined for disconnected graphs).
In the special case when $\Gamma=\Gamma_{0,k_1, k_2, 1, \{(a_i, b_i)\}_{i \in I}}^W$, we write
\[\left\langle \prod_{i\in I}\tau^{(a_i,b_i)}_{d_i}\sigma_1^{k_1}\sigma_2^{k_2}\sigma_{12} \right\rangle^{\mathbf{s}^\Gamma,o}:=
\int_{\oPM_{0,k_1, k_2, 1, \{(a_i, b_i)\}_{i \in I}}}e(E ; \mathbf{s}^\Gamma|_{U_+\cup\partial^0\oPM_{0,k_1, k_2, 1, \{(a_i, b_i)\}_{i \in I}}}).\]
When we are in the case where $\text{rank}(E)\neq\dim\oCM^W_\Gamma$, the integral is defined to be zero.
 \end{definition}

 \begin{rmk}
The open intersection number $\left\langle \prod_{i\in I(\Gamma)}\tau^{(a_i,b_i)}_{d_i}\sigma_1^{k_1(\Gamma)}\sigma_2^{k_2(\Gamma)}\sigma_{12}^{k_{12}(\Gamma)} \right\rangle^{\mathbf{s},o}$ is independent of the choice of $U_+$.
 \end{rmk}

If there are no tautological line bundles associated to a given internal marking, we will sometimes omit the subscript $0$ in the notation $\tau^{(a_i,b_i)}_0$
inside the brackets $\<\>^{\mathbf{s},o}$. If $\ess:=(\mathbf{s}^\Gamma)$ is a canonical family of transverse multisections, then we use the notation
\begin{align*}
&\left\langle \prod_{i\in I(\Gamma)}\tau^{(a_i,b_i)}_{d_i}\sigma^{k(\Gamma)} \right\rangle^{\mathbf{s},o}:=
\left\langle \prod_{i\in I(\Gamma)}\tau^{(a_i,b_i)}_{d_i}\sigma^{k(\Gamma)} \right\rangle^{\mathbf{s}^\Gamma,o}:=\left\langle \prod_{i\in I(\Gamma)}\tau^{(a_i,b_i)}_{d_i}\sigma_1^{k_1(\Gamma)}\sigma_2^{k_2(\Gamma)}\sigma_{12}^{k_{12}(\Gamma)} \right\rangle^{\mathbf{s}^\Gamma,o},\\&
\left\langle \prod_{i\in [l]}\tau^{(a_i,b_i)}_{d_i}\sigma_1^{k_1}\sigma_2^{k_2}\sigma_{12} \right\rangle^{\mathbf{s},o}:=
\left\langle \prod_{i\in [l]}\tau^{(a_i,b_i)}_{d_i}\sigma_1^{k_1}\sigma_2^{k_2}\sigma_{12} \right\rangle^{\mathbf{s}^{\Gammar},o}.
\end{align*}
\begin{obs}\label{obs:int_dec_over_conn} The intersection numbers decompose over the connected components of the graph $\Gamma$, i.e.,
\[
\left\langle \prod_{i\in I(\Gamma)}\tau^{(a_i,b_i)}_{d_i}\sigma^{k(\Gamma)} \right\rangle^{\mathbf{s},o}=
\prod_{\Lambda\in\Conn(\Gamma)}\left\langle \prod_{i\in I(\Lambda)}\tau^{(a_i,b_i)}_{d_i}\sigma^{k(\Lambda)} \right\rangle^{\mathbf{s}^\Lambda,o}.\]
\end{obs}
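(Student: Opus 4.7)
The plan is to reduce the identity to the multiplicativity of relative Euler classes under products. First, I would note that $\oPM_\Gamma$ is, as an orbifold with corners, canonically isomorphic to $\prod_{\Lambda\in\Conn(\Gamma)}\oPM_\Lambda$ (with the convention that for a closed connected component $\Lambda^c$ we set $\oPM_{\Lambda^c}=\oCM^W_{\Lambda^c}$). Under this identification, the descendent--Witten bundle satisfies
\[
E_\Gamma(\vecd) = \boxplus_{\Lambda\in\Conn(\Gamma)} E_\Lambda(\vecd|_{I(\Lambda)}),
\]
since the spin bundles $S_1,S_2$, and hence $\cW_1,\cW_2$, as well as each tautological line $\CL_i$, all restrict to the analogous bundles on the component containing the relevant tail. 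Moreover, by Definition~\ref{def:or_and_rel_or2} the canonical relative orientation $o_\Gamma$ is itself the box-product $\boxtimes_{\Lambda} o_\Lambda$, so orientation signs are automatically compatible.

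Next, I would argue that the family of canonical multisections $(\ess^\Gamma)$ factorises as
\[
\ess^\Gamma = \boxplus_{\Lambda\in\Conn(\Gamma)} \ess^\Lambda.
\]
Indeed, since $\Gamma$ is a smooth graded $\RS$-graph, every boundary tail already has twist in $\{(r-2,0),(0,s-2),(r-2,s-2)\}$, so $E^0(\Gamma)=\emptyset$ and no tail is forgotten by $\textup{for}_{\textup{non-alt}}$; consequently $\CB\Gamma=\Gamma$ and $F_\Gamma=\mathrm{id}$. Condition~(2) of Definition~\ref{def:canonical for Witten}, applied with $\Lambda=\Gamma$, then forces $\ess^\Gamma$ to be the box-sum of the multisections $\ess^\Xi$ associated to the abstract vertices $\Xi\in\Conn(\CB\Gamma)=\Conn(\Gamma)$. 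Since each $\ess^\Xi$ coincides with the canonical multisection $\ess^\Lambda$ prescribed by the family on the connected component $\Lambda=\Xi$ (this is precisely the compatibility built into the notion of a family of canonical multisections bounded by $I$), the required factorisation follows. The same pullback argument via the projections handles each tautological line summand $\CL_i^{\oplus d_i}$ of $E_\Gamma(\vecd)$ in view of Observation~\ref{obs:Witten_pulled_back_perm}.

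Finally, I would invoke the standard multiplicativity of the relative Euler class for a transverse box-sum of multisections over a product of orbifolds with corners, which is a direct orbifold-with-corners analogue of Fubini: if $\ess=\boxplus_\Lambda \ess^\Lambda$ is transverse on $\prod_\Lambda \oPM_\Lambda$, then
\[
\#Z(\ess) = \prod_{\Lambda} \#Z(\ess^\Lambda).
\]
Combined with the additivity $\dim\oPM_\Gamma=\sum_\Lambda \dim\oPM_\Lambda$ and $\rank E_\Gamma(\vecd)=\sum_\Lambda \rank E_\Lambda(\vecd|_{I(\Lambda)})$, together with the convention that each open intersection number is zero whenever $\rank \neq \dim$ on the corresponding moduli, this handles both the balanced and unbalanced cases uniformly: if any factor on the right vanishes then so does the left. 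Since $k_1(\Gamma)=\sum_\Lambda k_1(\Lambda)$, $k_2(\Gamma)=\sum_\Lambda k_2(\Lambda)$, $k_{12}(\Gamma)=\sum_\Lambda k_{12}(\Lambda)$, and $I(\Gamma)=\bigsqcup_{\Lambda}I(\Lambda)$, the descendent monomials inside the brackets also factor componentwise, yielding the claimed identity.

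The main ``obstacle'' is purely bookkeeping: one must carefully verify that the compatibility conditions in Definition~\ref{def:canonical for Witten} and the product structure in Definition~\ref{def:or_and_rel_or2} interact correctly with the presence of possibly closed connected components (where $\ess^{\Lambda^c}$ and the relevant Euler class reduce to the ordinary Euler class paired with $c_W$). Beyond transversality of the family $(\ess^\Gamma)$, no further analytic input is needed.
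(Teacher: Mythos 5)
Your proof is correct and is exactly the justification the paper has in mind: the statement is recorded as an Observation with no proof precisely because, as you show, it follows directly from the product structure $\oPM_\Gamma=\prod_\Lambda\oPM_\Lambda$, the box-sum decomposition of $E_\Gamma(\vecd)$ and of $o_\Gamma$ (Definition~\ref{def:or_and_rel_or2}), the factorisation $\ess^\Gamma=\boxplus_\Lambda\ess^\Lambda$ forced by canonicity with $\CB\Gamma=\Gamma$, and multiplicativity of $\#Z$ for transverse box-sums. Your handling of the unbalanced case via the rank/dimension bookkeeping is also the intended one.
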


We now note that the open invariants only depend on the the boundary conditions given by the canonical multisection.

\begin{pr}\label{invariants are independent of extension of boundary}
Let $\mathbf{s}^*$ and $(\mathbf{s}')^*$ be two families of canonical multisections bounded by $I$. Suppose that $\mathbf{s}^{\Gamma}|_{\oPM_\Lambda} = (\mathbf{s}')^{\Gamma}|_{\oPM_\Lambda}$ for all boundary strata $\Lambda \in \partial^0\Gamma \cup \partial^{\XCH}\Gamma$. Then
$$
\left \langle\prod_{i \in I(\Gamma)}\tau^{(a_i,b_i)}_{d_i}\sigma_1^{k_1(\Gamma)}\sigma_2^{k_2(\Gamma)}\sigma_{12}\right\rangle^{\mathbf{s}^\Gamma,o} = \left \langle\prod_{i \in I(\Gamma)}\tau^{(a_i,b_i)}_{d_i}\sigma_1^{k_1(\Gamma)}\sigma_2^{k_2(\Gamma)}\sigma_{12}\right\rangle^{(\mathbf{s}')^\Gamma,o}.
 $$
\end{pr}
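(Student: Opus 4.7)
The plan is to build a canonical homotopy between $\mathbf{s}^\Gamma$ and $(\mathbf{s}')^\Gamma$ which is the constant homotopy on $\partial\oPM_\Gamma$ and is strongly positive near $\partial^+\oCM^W_\Gamma$, and then invoke Lemma~\ref{lem:zero diff as homotopy} to conclude that the two zero-counts agree.

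First I would construct the homotopy. Pick an open neighbourhood $V$ of $\partial^+\oCM^W_\Gamma$ so that both $\mathbf{s}^\Gamma$ and $(\mathbf{s}')^\Gamma$ are strongly positive on the corresponding set $U_+$ as in \eqref{eq:U_+}. On $U_+\cup\partial\oPM_\Gamma$ set
\[
H_0(t,x,v) := (1-t)\,\mathbf{s}^\Gamma(x,v) + t\,(\mathbf{s}')^\Gamma(x,v),
\]
interpreted as a convex combination of multisections in the sense of Definition~\ref{def:union_of_sections}. By hypothesis $\mathbf{s}^\Gamma$ and $(\mathbf{s}')^\Gamma$ coincide on every $\oPM_\Lambda$ with $\Lambda\in\partial^0\Gamma\cup\partial^\xch\Gamma$, so $H_0$ restricted to $[0,1]\times\partial\oPM_\Gamma$ is the constant homotopy equal to $\mathbf{s}^\Gamma|_{\partial\oPM_\Gamma}$, which is nowhere vanishing by the hypothesis of Definition~\ref{def: relative Euler class}. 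The strong positivity conditions of Definition~\ref{def: strongly positive} — positive evaluation in $(\cJ_i)^{\tilde\phi}_x$ at contracted boundary nodes, positive evaluation on $\Lambda$-families of intervals near $\partial^+$, and positivity along boundaries of components forced by condition~\eqref{strongly positive:item 3} — all cut out open convex cones in the fibre of the Witten bundle, so each branch of the convex combination $H_0$ remains strongly positive on $U_+$, possibly after shrinking $V$.

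Next I would extend $H_0$ to a smooth multisection $H$ of $p^*E_\Gamma(\vecd)$ over all of $[0,1]\times\oPM_\Gamma$ with $H(0,\cdot)=\mathbf{s}^\Gamma$ and $H(1,\cdot)=(\mathbf{s}')^\Gamma$, and then perturb it in the interior $(0,1)\times(\oPM_\Gamma\setminus(U_+\cup\partial\oPM_\Gamma))$ to make it transverse to the zero section without touching its values on $\{0,1\}\times\oPM_\Gamma$ or on $[0,1]\times(U_+\cup\partial\oPM_\Gamma)$; this is the standard transverse-extension-of-multisections argument (cf.\ Appendix A of \cite{BCT:II}).

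Finally, applying Lemma~\ref{lem:zero diff as homotopy} to $M=\oPM_\Gamma$, $U=U_+$, $E=E_\Gamma(\vecd)$ and the homotopy $H$ yields
\[
\int_{\oPM_\Gamma}e(E;(\mathbf{s}')^\Gamma)-\int_{\oPM_\Gamma}e(E;\mathbf{s}^\Gamma)=\#Z\bigl(H|_{[0,1]\times\partial\oPM_\Gamma}\bigr).
\]
Since $H|_{[0,1]\times\partial\oPM_\Gamma}$ is the pullback of the nowhere-vanishing multisection $\mathbf{s}^\Gamma|_{\partial\oPM_\Gamma}$ under the projection $[0,1]\times\partial\oPM_\Gamma\to\partial\oPM_\Gamma$, it has no zeros, so the right-hand side vanishes and the two open intersection numbers agree.

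The only nontrivial technical step is preserving strong positivity along the homotopy; this is routine because the defining positivity conditions are convex, so convex combinations of strongly positive multisections remain strongly positive. All other pieces — building the extension, achieving transversality without disturbing the boundary data, and invoking Lemma~\ref{lem:zero diff as homotopy} — are standard in this framework.
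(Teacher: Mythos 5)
Your proposal is correct and follows essentially the same route as the paper's proof: a linear homotopy between the two multisections, constant (hence nonvanishing) on $\partial\oPM_\Gamma$ by hypothesis, nonvanishing near $\partial^+\oCM^W_\Gamma$ by strong positivity, perturbed to transversality in the interior, and concluded via Lemma~\ref{lem:zero diff as homotopy}. The convexity remark justifying preservation of strong positivity is a detail the paper leaves implicit, but the argument is the same.
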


\begin{proof}
By viewing $\ess^\Gamma$ and $(\ess')^{\Gamma}$ as multisections from $[0,1] \times \oPM_{\Gamma}$ via pullback, we can consider the linear homotopy
 $$
  H:= t(\mathbf{s})^\Gamma + (1-t)(\mathbf{s}')^{\Gamma}.
 $$
 Note that the homotopy $H|_{\partial([0,1]\times \oPM_{\Gamma}) }$, as defined in  Notation~\ref{not: restriction notation}, is independent in time as $(\mathbf{s})^\Gamma|_{\partial \oPM_{\Gamma}}= (\mathbf{s}')^\Gamma|_{\partial \oPM_{\Gamma}}$ and non-vanishing. Moreover, since both $\mathbf{s}^{\Gamma}$ and $(\mathbf{s}')^\Gamma$ are canonical, they are also strongly positive. Using Definition~\ref{def: strongly positive}(2), we have an open neighborhood of the form in~\eqref{eq:U_+} where both $(\mathbf{s})^\Gamma$ and $(\mathbf{s}')^\Gamma$ are positive, hence there exists an open neighborhood $V$ of $\partial^+\CM_\Gamma $ in $\CM_\Gamma $ so that $H$ is non-vanishing on $[0,1]\times U $ where
 $$
 U := (V \cap \oPM_{\Gamma}) \cup \partial\oPM_\Gamma
 $$
 Perturb $H$ inside $[0,1]\times (\oPM_\Gamma\setminus U)$ to obtain a homotopy $H'$ so that $H'$ is transverse and does not vanish in $[0,1]\times U$. By applying Lemma \ref{lem:zero diff as homotopy}, the claim follows.
\end{proof}

\subsection{Canonical homotopies}

Recall Definition~\ref{nn:Direct sums of bundles} for the descendent Witten bundle $E_{\Gamma}(\vecd  )$ with respect to the descendent vector
$\vecd   = (d_i)_{i\in I}$ where $I (\Gamma)\subset I$.
We now define a canonical homotopy between two multisections of
$E_{\Gamma}(\vecd  )$.
The definition essentially emulates Definition~\ref{def:canonical for Witten} of a canonical multisection above, but with the unit interval added.

\begin{definition}\label{def:canonical homotopy}
A \emph{canonical homotopy} between two multisections $s_0$ and $s_1$ of $E_\Gamma(\vecd  )$ is a multisection
\[H\in C_m^\infty([0,1]\times \oPM_\Gamma,\pi^*E_\Gamma(\vecd  )),~\text{such that } H(0,-)=s_0,~H(1,-)=s_1,\]
where $\pi:[0,1]\times \oPM_\Gamma\to \oPM_\Gamma$ is the projection to 
the second factor with the following properties:
\begin{enumerate}
\item For any $t\in [0,1]$, the multisection $H(t,-)$ is strongly positive.
\item\label{it:2_for_hom} For any $\Lambda\in\partial^!\Gamma\setminus\partial^+\Gamma
$ with only open vertices, there exists a positive integer $N_\Lambda$ and homotopies $H_1^{\Xi,\Lambda}, \ldots, H_{N_\Lambda}^{\Xi,\Lambda} \in
C_m^\infty([0,1]\times\oPM_{\Xi},\pi^*E_\Xi(\vecd  ))$ for each $\Xi\in\Conn(\CB\Lambda)$, such that
\begin{equation}\label{eq:pb_Witten_H}H|_{[0,1]\times\oPM_\Lambda}=F_\Lambda^*\left(\biguplus_{i=1}^{N_\Lambda}\left(\boxplus_{\Xi\in\Conn(\CB\Lambda)}H_i^{\Xi,\Lambda}\right)\right).\end{equation}
\end{enumerate}

We say that a canonical homotopy $H$  between two multisections $s_0$ and $s_1$ is \emph{transverse} if it is transverse as a multisection  in $C_m^\infty([0,1]
\times\oPM_\Gamma,\pi^*E_\Gamma(\vecd  ))$.
The homotopy is said to be \emph{simple} if $N_\Lambda=1$ for all $\Lambda$ and the multisections $H_1^{\Xi,\Lambda}$ depend only on $\Xi$ and not on $\Lambda$. In this case, we denote $H_i^{\Xi,\Lambda}$ by $H^\Xi$.
\end{definition}
\begin{definition}\label{def:family of canonical homotopy}
A \emph{family of simple canonical homotopies} (bounded by $I\subseteq \Universe$) is a collection of simple canonical homotopies $H^\Lambda\in C_m^\infty([0,1]
\times\oPM_\Lambda,\pi^* E_\Lambda(\vecd  ))$ for every graded smooth graph $\Lambda$ (with $I(\Lambda)\subseteq I$) such that all $H_i^{\Xi,\Lambda}$ appearing in \eqref{eq:pb_Witten_H} equal $H^\Xi$.
\end{definition}

First we note that there exists a canonical homotopy between two given
canonical sections.

\begin{lemma}\label{lem:transverse_homotopy}
Transverse canonical homotopies between any transverse canonical
multisections $\ess_0,\ess_1\in C_m^\infty(\oPM_\Gamma,E_\Gamma(\vecd))$ exist.
\end{lemma}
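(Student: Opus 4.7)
The plan is to mimic the inductive template used in Theorem \ref{prop:int_numbers_exist} (and Lemma 4.10 of \cite{BCT:II}) to construct canonical multisections. Given the two transverse canonical multisections $\ess_0,\ess_1$ of $E_\Gamma(\vecd)$, canonicity specifies, for every abstract vertex $v\in\mathcal{V}(\Gamma)$, endpoint data $\ess_0^v,\ess_1^v$ on $\oPM_v$ via the pullback relations \eqref{eq:pb_Witten}. I will construct, by induction on $\dim\oPM_v$, transverse homotopies
\[
H^v\in C^\infty_m([0,1]\times\oPM_v,\pi^*E_v(\vecd|_{I(v)})),\qquad H^v(j,-)=\ess_j^v,\quad j=0,1,
\]
for every such $v$, satisfying the analogue of \eqref{eq:pb_Witten_H} on the boundary strata of $v$. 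Setting $H:=H^\Gamma$ then produces the required transverse canonical homotopy, since this assignment is in particular simple-canonical-compatible, so it also fits the general (non-simple) notion in Definition \ref{def:canonical homotopy}.

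For the inductive step at a vertex $v$, first define $H^v$ on $[0,1]\times(\partial^0\oPM_v\cup\partial^\xch\oPM_v)$ by pulling back, via $F_\Lambda$ and the assembling operation, the previously constructed homotopies $H^\Xi$ for $\Xi\in\Conn(\CB\Lambda)$ as prescribed by \eqref{eq:pb_Witten_H}. Next, extend $H^v$ to a strongly positive multisection on a neighborhood of the form $U_+$ around $\partial^+\oCM^W_v$: since both endpoints $\ess_0^v$, $\ess_1^v$ are strongly positive and the conditions of Definition \ref{def: strongly positive} are convex (the positive directions in each $\widetilde\phi$-invariant fiber form a convex cone), the linear interpolation $t\ess_1^v+(1-t)\ess_0^v$ remains strongly positive on $U_+$, and a cut-off glues it smoothly to the already-defined boundary data. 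Finally, extend arbitrarily to a smooth multisection over $[0,1]\times\oPM_v$, and then perturb on the interior $(0,1)\times\PM_v$ (keeping the boundary and the time endpoints fixed) to make $H^v$ transverse, using the standard transversality theorem for multisections from Appendix A of \cite{PST14}. Transversality of the restrictions to $\{0,1\}\times\oPM_v$ follows from the assumed transversality of $\ess_0,\ess_1$.

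The principal obstacle is combinatorial rather than analytic: ensuring that the boundary definitions of $H^v$ arising from different strata $\Lambda_1,\Lambda_2\in\partial^0 v\cup\partial^\xch v$ agree on the intersection $\oPM_{\Lambda_1}\cap\oPM_{\Lambda_2}$. This is controlled by Observation \ref{obs:for_comp}, whose identity $F_{f_v(\Lambda')}\circ F_v|_{\oCM^W_{\Lambda'}}=F_{\Lambda'}$ together with the inductive hypothesis that each $H^\Xi$ itself satisfies the analogous pullback relations guarantees that the prescriptions agree on overlaps. In the rare corner configurations where the formulas fail to coincide literally (for instance, at strata with multiple exchangeable vertices where $\simx$-equivalent data must be reconciled), the extra flexibility built into Definition \ref{def:canonical homotopy}(2) — allowing $N_\Lambda>1$ and taking a union $\biguplus$ of several assembled homotopies — absorbs the discrepancy via a partition of unity across the corner, at the cost of sacrificing simplicity of the homotopy for those specific boundary strata but without affecting canonicity.
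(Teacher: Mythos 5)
Your proposal follows essentially the same route as the paper, which simply defers to Lemma 4.11 of \cite{BCT:II}: induct over the bases of boundary strata, prescribe the homotopy there by pullback as in \eqref{eq:pb_Witten_H}, extend strongly positively over a set $U_+$ near $\partial^+\oCM^W_\Gamma$, and then extend transversally to the interior; compatibility on overlapping strata is exactly the content of Observation \ref{obs:for_comp}. So the architecture is right.

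The one genuine gap is your justification of the strongly positive extension. Conditions (1), (2) and (3)(b) of Definition \ref{def: strongly positive} are indeed fibrewise half-space (hence convex) conditions, but condition (3)(a) is not: it only asks that each branch of $\NNN^*(\ess(\Sigma))$ evaluate positively on \emph{some} non-empty subset of the boundary of $\widehat{\Sigma}^{\Xi}$, and the subsets witnessing this for $\ess_0$ and for $\ess_1$ may be disjoint. A convex combination of a section positive only on one arc and a section positive only on a complementary arc can fail to be positive anywhere (it can even vanish identically on the boundary circle). This is precisely why the paper's Lemma \ref{lem:partial_homotopy} does not use linear interpolation near irrelevant strata of type \eqref{irrelevant:item 3}, but instead invokes the auxiliary everywhere-positive section $s_+$ of \cite{BCT:II}, Observation 7.6, together with cut-off functions (equivalently, Steps 1--3 of Lemma 7.5 of \cite{BCT:II}); your argument should be repaired by the same device. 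A smaller imprecision: at the corners you propose to absorb discrepancies ``via a partition of unity,'' but the flexibility in Definition \ref{def:canonical homotopy}(2) is a weighted union $\biguplus$ of multisections, not a convex combination, and these are different operations; in fact no such repair is needed, since Observation \ref{obs:for_comp} and the inductive hypothesis already force the boundary prescriptions to agree on overlaps.
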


\begin{proof}
The proof is similar to the proof of Lemma 4.11 in \cite{BCT:II} 
with $E_1=0,~E_2=E_\Gamma(\vecd),~A=\partial^0\Gamma$. While the bundle $E_\Gamma(\vecd)$ is different in our context rather than the context in \cite{BCT:II}, one can still use Proposition 6.2(3) of \cite{BCT:II} to obtain a finite set of multisections $w_j$ of $E_2$ that span each fibre as desired in the proof. The rest of the proof is the same and hence will be omitted.
\end{proof}

We will now show that there exists a family of \emph{simple} canonical homotopies that, under certain dimensionality and boundary marking constraints, is transverse to the zero multisection of the bundle $\pi^*E_{\Lambda}(\vecd)$. We will also be able to impose other properties on various boundary strata that will help us compute open invariants in later sections. Before we start the lemma, we will require the following observation regarding parities of the Witten bundle and the dimension of the moduli space when one has few roots and no internal tails.

\begin{lemma}\label{obs:ind=-1andk=0}
Suppose $\Lambda$ is a stable, smooth and graded $W$-spin graph without internal tails and with at most $3$ boundary tails with twist $(r-2,s-2)$. Then $\rank\cW_\Lambda\neq \dim\oPM_\Lambda+1$.
\end{lemma}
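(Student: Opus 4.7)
The plan is to translate the hypothesis $\rank\cW_\Lambda=\dim\oPM_\Lambda+1$ into a numerical identity among $k_1,k_2,k_{12}$ and derive a contradiction using the integrality/parity constraints \eqref{eq:open_rank1} and \eqref{eq:open_rank2} of Observation~\ref{obs:open_rank_pre_graded_W}. Since $\Lambda$ has no internal tails, a single open vertex, and $k_{12}\in\{0,1,2,3\}$ by hypothesis, with $\dim\oPM_\Lambda=k_1+k_2+k_{12}-3$ and $\rank\cW_\Lambda=e_1+e_2$, the target equation is
\[
e_1+e_2=k_1+k_2+k_{12}-2.
\]

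First I would take this mod $2$ and compare with \eqref{eq:open_rank2}. The left side satisfies $e_1+e_2\equiv(k_1+k_{12}-1)+(k_2+k_{12}-1)\equiv k_1+k_2\pmod 2$, while the right side is $\equiv k_1+k_2+k_{12}\pmod 2$. Hence $k_{12}$ must be even, and the hypothesis $k_{12}\le 3$ leaves only $k_{12}\in\{0,2\}$.

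For $k_{12}=0$, I would substitute $e_i=(k_i-1)(r_i-2)/r_i$ from \eqref{eq:open_rank1} (with $r_1=r,r_2=s$) into the rank equation, clear denominators, and observe that after cancellation of the $rs(k_1+k_2-2)$ terms on both sides one obtains $s(k_1-1)+r(k_2-1)=0$; since $r,s>0$ and $k_i\ge 0$, this forces $k_1=k_2=1$, giving only $2$ boundary tails and contradicting stability of $\Lambda$.

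For $k_{12}=2$, the analogous substitution reduces the rank equation to the clean identity
\[
\frac{k_1+1}{r}+\frac{k_2+1}{s}=1.
\]
Positivity forces $1\le k_1+1\le r-1$ and $1\le k_2+1\le s-1$. The integrality condition $e_1=(k_1+1)(r-2)/r\in\Z$ is equivalent to $r\mid 2(k_1+1)$; if $r$ is odd this gives $r\mid k_1+1$, contradicting $k_1+1<r$, while if $r$ is even it forces $k_1+1=r/2$. By symmetry (the analogous argument for $s$), we may assume $r$ and $s$ are both even and $k_1+1=r/2$, $k_2+1=s/2$. But then the parity condition $e_1\equiv k_1+k_{12}-1\equiv k_1+1\pmod 2$ reads $(r-2)/2\equiv r/2\pmod 2$, i.e., $-1\equiv 0\pmod 2$, a contradiction. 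Hence no stable $\Lambda$ satisfies the hypothesis, as required. The argument is essentially a finite case-check; the only mildly tricky point is organizing the parity step cleanly so that the two-sided divisibility-plus-parity squeeze in the $k_{12}=2$ branch comes out transparently.
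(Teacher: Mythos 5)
Your overall strategy is the same as the paper's: combine the integrality and parity constraints \eqref{eq:open_rank1}--\eqref{eq:open_rank2} with the dimension formula, reduce to $k_{12}\in\{0,2\}$ by a mod~$2$ argument, and kill each case. Your mod~$2$ step and your entire $k_{12}=2$ branch are correct. The problem is in the $k_{12}=0$ branch: the identity $s(k_1-1)+r(k_2-1)=0$ together with $r,s>0$ and $k_i\ge 0$ does \emph{not} force $k_1=k_2=1$, because the two summands need not be separately non-negative. For instance, whenever $r\mid s$ the pair $k_1=0$, $k_2=1+s/r$ solves the equation (e.g.\ $r=2$, $s=4$, $k_1=0$, $k_2=3$). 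These spurious solutions are eliminated only by again invoking the integrality of $e_i=(k_i-1)(r_i-2)/r_i$ and the parity condition $e_i\equiv k_i-1\pmod 2$ (equivalently, by the non-negativity of the ranks when $r_i>2$, plus the parity condition when $r_i=2$) --- exactly the tools you deploy correctly in the $k_{12}=2$ branch. As written, the step would fail.

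The cleanest repair, and the route the paper takes, is to use the two constraints \emph{before} touching the rank equation: together they give $r\mid k_1+k_{12}-1$ and $s\mid k_2+k_{12}-1$, so one may write $k_1+k_{12}=rm_1+1$ and $k_2+k_{12}=sm_2+1$ with $m_1,m_2\in\NN$, whence $\rank\cW_\Lambda=(r-2)m_1+(s-2)m_2$ and $\dim\oPM_\Lambda=rm_1+sm_2-k_{12}-1$. The hypothesis $\rank=\dim+1$ then reads $k_{12}=2m_1+2m_2$, forcing $k_{12}\in\{0,2\}$; $k_{12}=0$ gives $m_1=m_2=0$, hence an unstable vertex, and $k_{12}=2$ forces some $m_i=0$, contradicting $k_i+k_{12}=1$ with $k_{12}=2$. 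With this parametrization both cases close in one line and the gap in your $k_{12}=0$ argument never arises.
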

\begin{proof}
As usual, we let $k_{12} = k_{12}(\Lambda)$ and $k_i=k_i(\Lambda)$ (see Definition~\ref{def:open_W_graded2}(2) or the start of Subsection~\ref{subsec:Rank 2}). Then, by using ~\eqref{eq:open_rank1} and~\eqref{eq:open_rank2} in a similar way as done in Proposition~\ref{prop:balanced}(a), we have that
\begin{equation}\label{eqn: obs on parity of with small number of roots}
k_1+k_{12} = rm_1 + 1; \qquad k_2 + k_{12} = sm_2 + 1
\end{equation}
for some $m_1, m_2 \in \Z_{\geq 0}$ and that
 \[\rank\cW_\Lambda = (r-2)m_1+(s-2)m_2.\]
Recall that the dimension formula for the open moduli space in Equation~\eqref{real dim of open Moduli of discs} gives us that \[\dim \oPM_\Lambda = k_1+k_2+k_{12}-3=rm_1+sm_2-k_{12}-1.\]
Suppose for the sake of contradiction that $\rank\cW_\Lambda= \dim\oPM_\Lambda+1$. Then
\[k_{12}= 2m_1+2m_2.\] This implies that $k_{12}$ is even and, since we have assumed that $k_{12}\leq 3$, we then have that it is either $0$ or $2$. If $k_{12}=0$ then $m_1=m_2=0$, hence $\Lambda$ is not a stable graph. If $k_{12}=2$, then either $m_1$ or $m_2$ is $0$; however, if $m_i=0$, then $k_i+k_{12}\geq k_{12}=2>1$, contradicting Equation~\eqref{eqn: obs on parity of with small number of roots}.
\end{proof}

\begin{lemma}\label{lem:partial_homotopy}
Fix $I\subseteq \Universe$. Suppose   $\ess_0$ and $\ess_1$ are two families of transverse canonical multisections bounded by $I$. 
Then one can construct a family $H^*$ of simple canonical homotopies bounded by $I$
and for each $A\subseteq I$ a distinct choice of time $t_A\in (0,1)$ between $\ess_0$ and $\ess_1$
with the following properties. 

Let $\Gamma$ be any smooth, graded
$W$-spin graph with only open vertices and $I(\Gamma)\subseteq I$. Fix $\vecd  =(d_i)_{i\in I}$ a vector of descendents.
\begin{enumerate}
\item
Suppose that $\Gamma$ satisfies one of the following hypotheses:
\begin{enumerate}
\item $\Gamma$ is irrelevant;
\item $\Gamma$ is connected and
$\rank E_\Gamma(\vecd  ) \ge \dim \oPM_\Gamma + 2$;
\item $\Gamma$ contains two connected components $\Xi_1, \Xi_2$ with
distinct internal marking sets, i.e., $I(\Xi_1) \neq I(\Xi_2)$, such that $\rank E_{\Xi_i}(\vecd) = \dim \oPM_{\Xi_i} + 1$ and $k_{12}(\Xi_i)\le 2$.
\end{enumerate}
Then $H^\Gamma$ is nowhere vanishing.
\item\label{it:hom_invariants}
If $\Gamma$ is  connected and one of the following holds:
\begin{enumerate}
\item  $\rank(E_\Gamma(\vecd))= \dim\oPM_\Gamma+1$ and $k_{12}(\Gamma)\leq 2,$ or
\item $\rank(E_\Gamma(\vecd))= \dim\oPM_\Gamma$ and $k_{12}(\Gamma)\leq 1$,
\end{enumerate}
then $H^\Gamma$ is transverse  when restricted to any stratum of $[0,1]\times
\oPM_\Gamma$, that is, $H^{\Gamma}|_{[0,1]\times\oPM_\Lambda }$ is transverse to the zero multisection of $\pi^*E_{\Lambda}(\vecd  ) \to [0,1]\times\oPM_{\Lambda}$ for any $\Lambda \in \partial^0 \Gamma \cup\partial^\xch\Gamma$. 
\item\label{it:hom_WC}
If $\Gamma$ is connected, 
we have:
\begin{enumerate}
\item
If $\rank(E_\Gamma(\vecd  ))= \dim\oPM_\Gamma+1$ and $k_{12}(\Gamma)\leq 2$ then $H^\Gamma$ will be non-zero in 
$([0,1]\setminus\{t_{I(\Gamma)}\})\times\oPM_\Gamma$. 
Note by Lemma \ref{obs:ind=-1andk=0}, if $I(\Gamma)=\emptyset$,
this case does not occur.
\item
If $\rank(E_\Gamma(\vecd  ))= \dim\oPM_\Gamma$ and $k_{12}(\Gamma)\leq 1$, then for any $A\subseteq I$ we have
\[
H^\Gamma|_{\{t_A\} \times (\oPM_\Gamma\setminus\partial\oPM_\Gamma)}
\pitchfork 0
\]
when considered as a multisection of
\[
E_\Gamma(\vecd)\to \{t_A\} \times
(\oPM_\Gamma\setminus\partial\oPM_\Gamma)
\simeq
\oPM_\Gamma\setminus\partial\oPM_{\Gamma}.
\]
(Note that the point here is that while the multisection $H^{\Gamma}$
is transversal to the zero-section of $E_{\Gamma}(\vecd  )$ over
$[0,1]\times \oPM_{\Gamma}$, there will be a finite number of times
$t$ for which the restriction of $H^{\Gamma}$ to $\{t\}\times
\oPM_{\Gamma}$
is not transversal to the zero-section, and we want the set of such  times to be disjoint from the set $\{t_A\}_{A\subseteq I}$.)
\end{enumerate} 
\end{enumerate}

\end{lemma}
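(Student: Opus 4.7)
The plan is to carry out an induction on the number $|I(\Gamma)|$ of internal markings, together with a secondary induction on $\dim\oPM_\Gamma$, simultaneously constructing the homotopies $H^\Xi$ on all abstract vertices $\Xi\in\mathcal{V}(\Gamma)$ and the ``bad'' times $t_A$. For the base case, where $\Xi$ is a smooth graded graph whose base is a single stable open vertex with $I(\Xi)$ minimal, one uses Lemma \ref{lem:transverse_homotopy} to produce a transverse canonical homotopy between $\ess_0^\Xi$ and $\ess_1^\Xi$. Strong positivity of $\ess_0,\ess_1$ at contracted boundary nodes and near $\partial^+$-strata can be extended to the time direction by a convex interpolation of local positive branches, so property (1) of Definition \ref{def:canonical homotopy} can be arranged from the outset. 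For larger $\Xi$, first define $H^\Xi$ on $[0,1]\times\partial\oPM_\Xi$ via the simple-canonical decomposition formula \eqref{eq:pb_Witten_H} using $H^{\Xi'}$ for abstract vertices of lower complexity (already constructed by induction), and then extend into $[0,1]\times\oPM_\Xi$ by standard relative transversality, keeping the extension strongly positive on the neighborhood $U_+$.

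Once each $H^\Xi$ exists, define $H^\Gamma$ for a smooth $\Gamma$ with $I(\Gamma)\subseteq I$ by assembling via pullback from the base on each boundary stratum, and extending to the interior transversally. Property \emph{simple} holds because the multisections $H^\Xi$ depend only on the abstract vertex $\Xi$ and not on the ambient graph. To obtain the non-vanishing assertions in part (1), observe: for irrelevant $\Gamma$, strong positivity forces non-vanishing of at least one Witten-bundle component along every $\widehat\Sigma^\Xi$, hence $H^\Gamma$ is nowhere zero (Definition \ref{def: strongly positive}(3)); when $\rank E_\Gamma(\vecd)\ge \dim\oPM_\Gamma+2$, a generic transverse multisection of a rank-$(n+2)$ bundle over an $(n+1)$-dimensional space $[0,1]\times\oPM_\Gamma$ has empty vanishing locus; and in the split case (1)(c) the vanishing locus on each factor $\{t\}\times\oPM_{\Xi_i}$ is zero-dimensional, so the product is zero-dimensional inside the $(\dim\oPM_{\Xi_1}+\dim\oPM_{\Xi_2}+1)$-dimensional parameter, and the difference $I(\Xi_1)\ne I(\Xi_2)$ lets us independently perturb $H^{\Xi_1}$ and $H^{\Xi_2}$ so their vanishing times are disjoint.

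For parts (2) and (3), the rank and dimension hypotheses mean $H^\Gamma$ is a multisection of an orbifold bundle whose rank equals either $\dim([0,1]\times\oPM_\Gamma)$ (case (2)(a)/(3)(a)) or $\dim([0,1]\times\oPM_\Gamma)-1$ (case (2)(b)/(3)(b)). Transversality over each stratum of $[0,1]\times\oPM_\Gamma$ (property (2)) follows by the stratified version of Thom's transversality, applied separately on each stratum once the boundary conditions have been fixed inductively; crucially, the constraint $k_{12}\le 2$ (resp.\ $\le 1$) excludes boundary strata where the pullback decomposition has too small a rank to allow a transverse extension. The existence of the distinct bad times $t_A$ is the last and most delicate point: in case (3)(b) the set of $t$ at which $H^\Gamma|_{\{t\}\times\PM_\Gamma}$ fails to be transverse to the zero-section is, by Sard applied to the projection of the transverse vanishing locus onto $[0,1]$, a finite subset of $(0,1)$, and we simply declare $t_A$ to be any point avoiding this finite set, done compatibly and distinctly across all finitely many $A\subseteq I$. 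In case (3)(a) the rank equals the ambient dimension, so the zero locus of $H^\Gamma$ in $[0,1]\times\oPM_\Gamma$ is a zero-dimensional set; choose $t_{I(\Gamma)}$ to be any time in $(0,1)$ different from the finitely many projections of these zeros and from the $t_A$ already chosen for $A\ne I(\Gamma)$, then further perturb $H^\Gamma$ rel boundary (and rel the other strata already handled) so that all its zeros cluster at the chosen $t_{I(\Gamma)}$.

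The main obstacle is the bookkeeping needed to guarantee that all the $t_A$ are simultaneously distinct while the inductive decomposition \eqref{eq:pb_Witten_H} is preserved: every time a new $H^\Xi$ is constructed, we must choose the corresponding $t_{I(\Xi)}$ before the pullback propagates the homotopy up to larger graphs $\Gamma$ whose base strata include $\Xi$, and we must ensure the already-chosen $t_{A}$ for $A\supsetneq I(\Xi)$ remain compatible. This is handled by ordering the induction so that abstract vertices are treated before any graph in which they occur, and by exploiting that the choice of $t_A$ is constrained only by the finite union (over subsets $A'\subseteq A$) of bad-time sets of strictly smaller graphs, each of which is finite; transversality of the still-free top-dimensional perturbation of $H^\Gamma$ lets us slide the ``top'' zero into $t_{I(\Gamma)}$ without disturbing the inductively fixed boundary data.
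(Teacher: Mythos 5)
The overall strategy you describe — induction, assembling $H^\Xi$ on abstract vertices, cutoff-blending against a positive section for irrelevant graphs, and a diffeomorphism-style reparametrization to cluster zeros at $t_{I(\Gamma)}$ — is in the same spirit as the paper's proof (which inducts on $\dim\oPM_\Gamma$ and defines $\widetilde H^\Gamma$ on the boundary before extending). However, there is a genuine gap in your treatment of boundary transversality, which is the heart of the argument for parts (2) and (3).

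You claim that ``transversality over each stratum of $[0,1]\times\oPM_\Gamma$ (property (2)) follows by the stratified version of Thom's transversality, applied separately on each stratum once the boundary conditions have been fixed inductively.'' This cannot work as stated: once the boundary data is forced by the pullback formula \eqref{eq:pb_Witten_H}, the restriction of $H^\Gamma$ to $[0,1]\times\oPM_\Lambda$ for $\Lambda\in\partial^0\Gamma\cup\partial^\xch\Gamma$ is \emph{already determined} by the lower-level homotopies $H^\Xi$ on the components $\Xi$ of $\CB\Lambda$, so there is no room left to perturb. Transversality on the boundary must therefore be established as a consequence of properties already proved for the lower-dimensional building blocks, not imposed afresh. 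What actually makes this work is the dimension/rank bookkeeping encoded in Observation \ref{obs:base dim yoga} (formula \eqref{eq:dim rank comparison}) combined with a pigeonhole argument over the components of $\CB\Lambda$: for each relevant codimension-one $\Lambda$, one shows that either some component $\Xi_i$ has $\dim\oPM_{\Xi_i}\le\rank E_{\Xi_i}(\vecd)-2$ — in which case the inductively-known non-vanishing of $H^{\Xi_i}$ (part (1)(b)) forces the pullback to be non-vanishing, hence trivially transverse — or else the components arrange themselves so that parts (3)(a) and (3)(b) can be applied directly. Your proposal does not carry out this case analysis, nor does it explain why the parity constraint $k_{12}\le 2$ (resp.\ $\le 1$) interacts with Observation \ref{obs:open_rank1}/\eqref{eq:open_rank2} to control the ranks of the base pieces; simply asserting that the $k_{12}$ bound ``excludes boundary strata where the pullback decomposition has too small a rank to allow a transverse extension'' does not capture the mechanism. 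Relatedly, your treatment of choosing $t_{I(\Gamma)}$ in case (3)(a) has the logic slightly backwards: one does not avoid the finitely many projected zero-times and then cluster zeros there; rather, one first establishes that the boundary restriction is nowhere-vanishing (via the pigeonhole argument) so that the zeros are all interior, then reparametrizes the time direction via a diffeomorphism fixing the boundary to move all those zeros to the preassigned $t_{I(\Gamma)}$.

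Your argument for (1)(c) is also missing the decisive observation: once (3)(a) is known, the zeros of $H^{\Xi_1}$ occur only at time $t_{I(\Xi_1)}$ and those of $H^{\Xi_2}$ only at $t_{I(\Xi_2)}$, so $I(\Xi_1)\ne I(\Xi_2)$ together with the distinctness of the $t_A$ immediately forces $H^{\Xi_1}\boxplus H^{\Xi_2}$ to be nowhere zero — no further independent perturbation is needed (nor is one available, since these homotopies are already fixed at this stage of the induction). You should also flag that for the induction to close you need Lemma \ref{obs:ind=-1andk=0} to rule out the problematic case $I(\Gamma)=\emptyset$ in (3)(a), a point the paper uses explicitly.
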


\begin{proof}
Let $\ess_0,\ess_1$ be as in the statement. We will construct a family of simple canonical homotopies ${H}^*$ which satisfies the requirements of the various items.

Note that for disconnected $\Gamma$, the homotopies $H^{\Gamma}$ are
determined by \eqref{eq:pb_Witten_H} (using simplicity)
and the corresponding homotopies
$H^{\Lambda}$ for $\Lambda\in\Conn(\Gamma)$.  Thus we will construct homotopies
${H}^\Gamma$ for smooth connected graded $\Gamma$ with $I(\Gamma)\subseteq I$ by induction on $n=\dim\oPM_\Gamma$. If Items (2) and (3) are established,
note that (1)(c) then follows from (3)(a).

The case $n=-1$ is trivial.
Suppose we have constructed a homotopy for any $\Gamma$ such that
$\dim\oPM_\Gamma<n$.
Now take $\Gamma$ satisfying $\dim \oPM_{\Gamma}=n$.
We will analyze boundary strata of $\oPM_{\Gamma}$ indexed by
$\Lambda\in\partial^0\Gamma\cup \partial^{\xch}\Gamma$ with no internal
edges.
If $\Lambda$ has a contracted boundary edge, we define $H^\Lambda$ as the positive, linear homotopy \[H^\Lambda(t,-)=(1-t)\ess^{\Lambda}_0+
t\ess^{\Lambda}_1.\]
If $\Lambda$ does not have a contracted boundary edge, the simple canonical homotopy $H^\Lambda$ is determined
inductively by \eqref{eq:pb_Witten_H} to be
\begin{equation}
\label{eq:H lambda def inductive}
{H}^\Lambda=F_\Lambda^*\big(\boxplus_{i=1}^v H^{\Xi_i}\big),
\end{equation}
where $\Xi_1,\ldots,\Xi_v$ are the components of $\CB\Lambda$, so that $\oPM_{\CB\Lambda} = \prod_{i=1}^v\oPM_{\Xi_i}$.
Thus, the homotopy we are trying to construct is determined at times $t=0$ and
$1$ and on the boundary strata of $\oPM_{\Gamma}$. Hence, we now can define a multisection $\widetilde H^{\Gamma}$ on the boundary \[\partial([0,1]\times
\oPM_\Gamma) =\big(\{0,1\}\times \oPM_\Gamma \big) \cup 
\big([0,1]\times\partial(\oPM_\Gamma) \big)\] via:
\[\widetilde{H}^\Gamma|_{\{\eps\}\times\oPM_\Gamma}(\eps,-)=\ess_\eps^\Gamma,~\epsilon\in \{0,1\},~\widetilde{H}^\Gamma|_{[0,1]\times\oPM_\Lambda}=H^{\Lambda},~\Lambda\in\partial^0\Gamma.\]
Extend $\widetilde H^{\Gamma}$ to a set $U_+$ of the form
\begin{equation}
\label{eq:U plus homotopy extension}
U_+:=\big(V\cap([0,1]\times \oPM_{\Gamma})\big) \cup \partial([0,1]\times\oPM_{\Gamma}),
\end{equation}
where
$V$ is a neighborhood of $[0,1]\times\partial^+\oCM^W_\Gamma$ in $[0,1]\times
\oCM^W_\Gamma$.\footnote{This $U_+$ is analogous to that in Equation~\eqref{eq:U_+} but is not the same, due to the interval for the homotopy.}
This extension can be done by the same argument of Lemma 6.5 in \cite{BCT:II}, steps 1-3.
We may do this in such a way so
that $\widetilde{H}^{\Gamma}(t,-)$ is strongly positive.

Now that we have established a provisional function $\widetilde H^\Gamma$ on the set $U_+$ containing the boundary of $\oPM_{\Gamma}\times [0,1]$,
 we will adapt $\widetilde H^{\Gamma}$ on the interior of $\oPM_\Gamma\times [0,1]$ to obtain a homotopy $H^\Gamma$ that will satisfy the properties described in the Lemma. This will require a case-by-case analysis depending on the properties of $\Gamma$.

\medskip

{\bf Case 1: $\Gamma$ irrelevant.} Suppose $\Gamma$ is irrelevant.
Since $\Gamma$ is smooth and connected, cases
\eqref{irrelevant:item 1} and~\eqref{irrelevant:item 2} of
Definition~\ref{def:special kind of graded graphs} do not apply
and we are thus in case~\eqref{irrelevant:item 3}. Thus $\Gamma$ consists of a single vertex $v$ and
there is an $i\in \{1,2\}$ such that $\tw_i=\alt_i=0$ for all boundary tails
adjacent to $v$.

If $\Lambda \in \partial^0\Gamma$, then each edge
of $\Lambda$ has exactly one half-edge with $\tw_i=\alt_i=0$.
Thus $\Lambda$  will have one fewer half-edge with $\tw_i\not=0$ than
it has vertices.
So by the pigeon-hole principle, $\Lambda$ must contain a vertex
with all adjacent half-edges having $\tw_i=\alt_i=0$, and $\Lambda$
is also irrelevant.
Hence there will be some connected component of $\CB\Lambda$ such that each boundary tail has $\tw_i = \alt_i = 0$ for some $i \in \{1,2\}$. By the condition of Definition~\ref{def: strongly positive}\eqref{strongly positive:item 3} of strongly positive,
we have that for any $(t,\Sigma)\in U_+$ the $\cW_i$-component of $\widetilde{H}^\Gamma(t,\Sigma)$ evaluates positively on at least one boundary point of $\partial\Sigma$.

We can extend $\widetilde H^\Gamma$ continuously from $U_+$
to an open neighbourhood $U$ of $U_+$
which contains a neighborhood $W$ of $\partial([0,1]\times \oPM_{\Gamma})$.
Further, by continuity,
we may do this in such a way so that
$\widetilde H^\Gamma(t,\Sigma)$ evaluates positively on at least one boundary point of $\partial\Sigma$ for each $(t,\Sigma)\in U$,
after possibly shrinking $W$.

Now let $U'$ be another open neighborhood of $U_+$ whose closure is contained in $U$, and take two non-negative functions $\rho_0$ and $\rho_1$ over the moduli $\oPM_\Gamma$ that sum to the constant function $1$ and so that
 \begin{equation} \begin{aligned}
 \rho_0|_{U'}&=1 \text{ and has support }\text{Supp}(\rho_0)\subseteq U \text{ while } \\
 \rho_1|_{([0,1]\times \oPM_\Gamma)\setminus U}&=1 \text{ and has support }\text{Supp}(\rho_1)\subseteq ([0,1]\times\oPM_\Gamma)\setminus U'.
 \end{aligned}\end{equation}

By \cite{BCT:II}, Observation 7.6, we may find a section
$s_+$ of $C^\infty(\oPM_\Gamma,\cW_i)$ which is nowhere vanishing
on $\PM_{\Gamma}$ (see Definition \ref{def:PM Gamma}),
and with the property that for $\Sigma\in \PM_{\Gamma}$,
$s_+(\Sigma)$ evaluates positively on $\partial\Sigma$.\footnote{In \cite{BCT:II}, such a section was constructed on the moduli
of smooth disks, but there is no difficulty in constructing the section
on the whole moduli away from the boundary, i.e., extending the
construction across singular disks with only internal nodes.}
Viewing $s_+$ as a section of $C^\infty(\oPM_\Gamma,E_\Gamma(\vecd  ))$, we may pull it back to a section $\tilde s_+$ of $C^{\infty}([0,1] \times \oPM_\Gamma, E_{\Gamma}(\vecd))$ and define the homotopy \[H^\Gamma = \rho_0 \widetilde{H}^\Gamma+\rho_1\tilde{s}_+.
\]
This homotopy satisfies our requirements, namely, it is nowhere vanishing.
In particular, statement (1)(a) of the lemma is satisfied
if $\Gamma$ is irrelevant and connected. If $\Gamma$ is irrelevant
but not connected, then one of its connected components is irrelevant and (1)(a)
still holds.

{\bf Case 2: $\Gamma$ relevant.} Suppose now that $\Gamma$ is relevant.
Recall that we are proceeding by induction on $\dim\oPM_\Gamma$. We
take special care in three cases for which the statement of the lemma
requires additional structure, namely
\begin{enumerate}[(a)]
\item $\rank E_\Gamma(\vecd  ) \ge \dim\oPM_\Gamma+2$;
\item $\rank E_\Gamma(\vecd  )= \dim\oPM_\Gamma + 1$ and $k_{12}(\Gamma)\leq 2$;
\item $\rank E_\Gamma(\vecd  )=\dim\oPM_\Gamma$ and $k_{12}(\Gamma)\leq 1$.
\end{enumerate}
For all other relevant graphs $\Gamma$,
we will extend $\widetilde{H}^\Gamma$ to an arbitrary homotopy $H^\Gamma$
on $[0,1]\times\oPM_{\Gamma}$ at the end of each induction step.

We will go through the three cases consecutively, extending $\widetilde H^\Gamma$ to a homotopy $H^\Gamma$ that satisfies the required properties given in the statement of the lemma. In doing so, we will be considering boundary
strata corresponding to
$\Lambda \in \partial^0\Gamma\cup\partial^\xch\Gamma$ with no internal
edges, and
study the vanishing properties of $\widetilde H^{\Gamma}|_{[0,1]\times
\oPM_{\Lambda}}$.
If $\Lambda$ is irrelevant, then it follows inductively from (1)(a)
that $\widetilde H^{\Gamma}$ does not vanish on $\oPM_{\Lambda}$. We note that the property that the homotopy is nonvanishing is a stronger condition than any other condition that we require in the statement of the lemma. Hence
we will be able to restrict attention below to the case
that $\Lambda$ is relevant.   In particular,
Observation \ref{obs:base dim yoga} applies, and we use the notation $\alpha,
\beta,\nu$ and $\sigma$ of that observation.

\begin{enumerate}[(a)]
\item
$\alpha\ge 2$. In this case, we  extend $\widetilde H^\Gamma$
to $H^{\Gamma}$ so that  $H^\Gamma$ is nonvanishing: 
 from \eqref{eq:dim rank comparison} and
the pigeonhole principle, as $\alpha+\beta-1\ge 1$,
there must be a connected component
$\Xi$ of $\CB\Lambda$ such that $\dim\oPM_\Xi\leq\rank E_{\Xi}(\vecd  )-2$.
By the inductive hypothesis of (1)(b) of the statement of the lemma,
$H^\Xi$ is nowhere vanishing, and therefore also $\widetilde{H}^\Gamma$ will not vanish at $[0,1]\times \oPM_{\Lambda}$. Extend $\widetilde{H}^\Gamma$ transversally to all $[0,1]\times\oPM_\Gamma,$ and call the result $H^\Gamma$. This extension
may be constructed as in Lemma 3.54 of \cite{PST14} or Lemma 4.11 of
\cite{BCT:II}.
Note that, in this case, since $\dim ([0,1]\times\oPM_\Gamma) < \rank E_\Gamma(\vecd  )$, the transversality assures that $H^\Gamma$ is nowhere vanishing, proving the inductive step for the case given in (1)(b) of the statement of the lemma.

\item $\alpha=1$ and $k_{12}(\Gamma)\leq 2$.  Again consider a relevant
$\Lambda\in \partial^0\Gamma\cup \partial^{\xch}\Gamma$. Suppose first
that $\beta\ge 1$. Then $\alpha+\beta-1\ge 1$ and
\eqref{eq:dim rank comparison} shows again there is a stable component
$\Xi_i$ of $\CB\Lambda$ with $\dim\oPM_{\Xi_i}\le \rank E_{\Xi_i}(\vecd  )-2$.
Thus $\widetilde H_{\Gamma}$ again inductively does not vanish on
$\oPM_{\Lambda}$.

Next, assume $\beta=0$, so that necessarily $\sigma=0$. There are now two possible
subcases:
(i) $\dim\oPM_{\Xi_i}\leq\rank E_\Xi(\vecd  )-2$ for some $i$;
(ii) $\dim\oPM_{\Xi_i}=\rank E_{\Xi_i}(\vecd  )-1$ for all $i$.
In case (i), then inductively as before $\widetilde H^{\Gamma}$ is
non-vanishing on $\oPM_{\Lambda}$.

In case (ii), first note that in any event $\beta=0$ implies
that no half-node of $\Lambda$ has twist
$(r-2,s-2)$, i.e., is fully twisted.
Indeed, if such a half-edge $h$ exists, then the opposite half-edge
$\sigma_1(h)$ has $\tw=\alt=(0,0)$,
and hence $\sigma_1(h)$ is forgotten in $\CB\Lambda$, i.e., $\beta\ge 1$.

By \eqref{eq:open_rank2}, we have for each $i$,
\begin{align}\label{homotopy parity computation}
\begin{split}
k_1(\Xi_i)+k_2(\Xi_i) +k_{12}(\Xi_i) - 3 \equiv {} & \dim \oPM_{\Xi_i}\\
 \equiv  {} &
\rank E_{\Xi_i}(\vecd  )
 -1\\
 \equiv {} & k_1(\Xi_i) + k_2(\Xi_i) - 1 \pmod 2.
\end{split}
\end{align}
As $k_{12}(\Gamma)$ is assumed to be at
most $2$, we thus see that each component of $\CB\Lambda$ has either
zero or two fully twisted boundary points and at most one component can have two fully twisted boundary points.
Note by Lemma \ref{obs:ind=-1andk=0} that
$I(\Xi_i)\not=\emptyset$ for any $i$, and necessarily the $I(\Xi_i)$
are disjoint. Thus (1)(c) applies inductively to conclude that
$H^{\Lambda}$ is nowhere vanishing in this case also.

Thus over $U_+$, we have that the function $\widetilde{H}^\Gamma$ is transverse to the zero section, so we may then extend it as before to a global transverse homotopy. It will have a finite number of zeroes.
By slightly perturbing the homotopy, we may assume that there is no $x\in\oPM_\Gamma$ with two different times $t_1,t_2$ such that (local branches of)
the homotopy vanishes at $(t_1,x),(t_2,x)$.

We further modify the homotopy to guarantee that all zeroes occur
at time $t_{I(\Gamma)}$. Let $\pi_i$, $i=1,2$ be the projection
of $[0,1]\times \oPM_{\Gamma}$ onto the $i^{th}$ factor.
Let $S$ be the collection of zeroes of branches where the homotopy vanishes. Choose a diffeomorphism $\psi:[0,1]\times \oPM_\Gamma\to [0,1]\times
\oPM_\Gamma$ which satisfies
\begin{enumerate}[(i)]
\item $\pi_2\circ\psi=\pi_2$;
\item $\psi|_{([0,1]\times \partial\oPM_\Gamma)\cup U_+}=\textup{id}$;
\item $\pi_1(\psi(S))=t_{I(\Gamma)}$.
\end{enumerate}
We define ${H}^\Gamma = \widetilde{H}^\Gamma\circ \psi^{-1},$ where we use the canonical trivialization of $E_\Gamma(\vecd  )|_{[0,1]\times x},$ for any $x\in\oPM_\Gamma$. This proves the inductive steps for the cases given in (2)(a) and (3)(a).

\item
$\rank E_\Gamma(\vecd  )=
\dim\oPM_\Gamma$
 and $k_{12}(\Gamma)\leq 1$. By the analogous parity computation as in~\eqref{homotopy parity computation}, we have that $k_{12}(\Gamma) =1$.
Now let $\Lambda\in\partial^0\Gamma\cup\partial^\xch\Gamma$ be relevant
as before. Again, if $\dim\oPM_{\Xi_i}\le \rank E_{\Xi_i}(\vecd  )-2$
for some connected component $\Xi_i$ of $\CB\Lambda$, then necessarily $\widetilde H_{\Gamma}$
is non-vanishing on $\oPM_{\Lambda}$. So assume now this does not
occur. Then we have a further subdivision of cases, with notation as
in \eqref{eq:dim rank comparison}.
\begin{enumerate}[(i)]
\item $\beta=0$;
\item $\beta\ge 1$.
\end{enumerate}

In case (i), necessarily $\sigma=0$, and \eqref{eq:dim rank comparison} then
gives
$\dim \oPM_\Lambda = \rank E_{\Gamma}(\vecd  ) - (\nu-1)$. Without loss of generality, we assume that $k_{12}(\Xi_1) = 1$ and $k_{12}(\Xi_j ) = 0$ for all $ 2 \le j \le \nu$. By a parity computation similar to ~\eqref{homotopy parity computation} above, we have that
\begin{equation}
\label{eq:another dim relation}
\dim \oPM_{\Xi_j} = \rank E_{\Xi_j}(\vecd  ) + k_{12}(\Xi_j) - 1 +2p_j
\end{equation}
for some integers $p_j \in \Z$ and with the constraint that $\sum_j p_j = 0$.
If any of these $p_j$ are nonzero, then necessarily we have
$\dim \oPM_{\Xi_i}\le \rank E_{\Xi_i}(\vecd  )-2$ for some $i$, which we
assumed did not occur.
Thus we may assume that $p_j = 0$ for all $j$.
If $\nu\ge 3$, then $\Xi_2,\Xi_3$ satisfy
$\dim \oPM_{\Xi_j} + 1 = \rank E_{\Xi_j}(\vecd  )$.
Thus we see inductively from statement (1)(c) of the lemma
that $\widetilde H^{\Gamma}$ restricted to $[0,1]\times \oPM_{\Lambda}$ is
nowhere vanishing, hence again transversal. Next, suppose $\nu=2$.
We know that $H^{\Xi_2}$ on $[0,1]\times \oPM_{\Xi_2}$ 
is only zero when $t=t_A$, where $A=I(\Xi_2)$.
Thus when $t\not= t_A$, the homotopy $\widetilde H^{\Gamma}$
restricted to $[0,1]\times\oPM_{\Lambda}$ is non-vanishing,
hence again transversal.
On the other hand, the homotopy $H^{\Xi_1}$ on $[0,1]\times\oPM_{\Xi_1}$, 
restricted to $\{t_A\}\times\oPM_{\Xi_1}$, is transverse to the
zero section.
Thus the homotopy $\widetilde H^{\Gamma}$ restricted to $[0,1]\times\oPM_{\Lambda}$ 
is transverse to the zero section at time $t_A$,
and so is transverse at all times.

In case (ii),
$\beta=\sum_{j=1}^{\nu-\sigma} k_{12}(\Xi_j) - 1$
(bearing in mind that $k_{12}(\Gamma)=1$ and none of the partially
stable components contain a root).
Note that \eqref{eq:another dim relation} still holds for
$1\le j\le \nu-\sigma$ 
for some integers $p_j \in \Z$.
Thus by \eqref{eq:dim rank comparison} 
we obtain $\sum_{j=1}^{\nu-\sigma} (k_{12}(\Xi_j) -1 + 2p_j) =  -\beta
 - (\nu-\sigma)+1$.
Simplifying, we have that
\[
\sum_{j=1}^{v-\sigma} (k_{12}(\Xi_j) + 2p_j) = -\beta+1.
\]
Consequently, if $\beta>1$, by the pigeonhole principle,  there must exist a
$j$ such that $\dim \oPM_{\Xi_j} \le  \rank E_{\Xi_j}(\vecd  )  -2$,
again contradicting our assumption.
If $\beta=1$ and $\nu-\sigma\ge 2$,
we must have either again
$\dim \oPM_{\Xi_j}\le \rank E_{\Xi_j}(\vecd  )-2$
for some $j$ or $\dim \oPM_{\Xi_j}= \rank E_{\Xi_j}(\vecd  )-1$
for two choices of $j$. Thus by (1)(b) or (c)
again the restriction of $\widetilde H^{\Gamma}$ to $[0,1]\times\oPM_{\Lambda}$
will not vanish. Finally, if $\beta=1$ and $\nu-\sigma=1$, we necessarily
have $k_{12}(\Xi_1)=2$ and
$\dim \oPM_{\Xi_1} = \rank E_{\Xi_1}(\vecd  )-1$, so by Item (2)(a)
of the statement of the lemma applied inductively, $\widetilde H^{\Gamma}$
is transverse to the zero section on $[0,1]\times\oPM_{\Lambda}$ in this case also.
Thus, $\widetilde{H}^\Gamma$ is transverse on $\partial([0,1]
\times\oPM_\Gamma)$, showing item (2)(b) of the lemma.

Finally, to ensure (3)(b) of the lemma, we proceed as follows.
For any \[t\in T:=
\{t_{I(\Gamma)} \ | \  I(\Gamma) \subseteq I, I(\Gamma) \neq \emptyset\}\] and a boundary point $(t,x)$ in which $\widetilde H^{\Gamma}$ vanishes, extend $\widetilde{H}_\Gamma$ to a neighborhood $U_{t,x}$ of $(t,x)$ in
$[0,1]\times\oPM_{\Gamma}$
in a transverse way.
There is no difficulty in choosing the extension such that the
restriction of this extension to
$U_{t,x}\cap(\{t\}\times
(\oPM_\Gamma\setminus\partial\oPM_\Gamma))$ is transverse.
Now, extend $\widetilde{H}_\Gamma$ to each ${\{t\}\times\oPM_\Gamma}$
for $t\in T$ so that, as a multisection of $E_{\Gamma}(\vecd  )$
on $\{t\}\times\oPM_{\Gamma}$, it is transversal on $\{t\}\times\CM^W_{\Gamma}$.
Again extend $\widetilde{H}^\Gamma$ transversally to the whole space 
$[0,1]\times\oPM_\Gamma$. We call the result $H^\Gamma$.
\end{enumerate}

Lastly, as stated above, for the remaining graded, connected, relevant graphs $\Gamma$ that do not fit in the above cases, we extend $\widetilde{H}^\Gamma$ to an arbitrary homotopy $H^\Gamma$, as the claim requires no additional structure for them.
\end{proof}

Finally, we note that there is a family of transverse simple canonical homotopies between multisections of tautological bundles.

\begin{lemma}\label{lem:hom_trr}
Suppose $d_1>0$ and let $\vecd'=\vecd-(1,0\ldots,0)$ be a vector of descendents. Take $\Lambda\in\partial^0\Gamma\cup\partial^\xch\Gamma$ to be a graph associated to a boundary strata. Let $\ess$ be a family of canonical transverse multisections for $E_\Lambda(\vecd')\to\oPM_\Lambda$, and let $(\ess_0^\Lambda)_\Lambda,(\ess_1^\Lambda)_\Lambda$ be families of transverse multisections of $\CL_1\to\oPM_\Lambda$. Assume that $\ess_0^*$ is a canonical family. Then one can find another family $\rho^*$ of
canonical multisections of $\CL_1$
such that the family of homotopies $H^*$ between $\ess^*\boxplus \ess_0^*$ and $\ess^*\boxplus \ess_1^*$
\begin{equation}\label{eq:form_of_hom}H^\Gamma=\ess^\Gamma\boxplus\left((1-t)\ess_0^\Gamma+t(1-t)\rho^\Gamma+t\ess_1^\Gamma\right)\end{equation} is transverse when restricted to any stratum of $[0,1]\times\oPM_\Gamma$ with $I(\Gamma)\subseteq I,~\rank E_\Gamma(\vecd)\geq\dim([0,1]\times\oPM_\Gamma)$.
\end{lemma}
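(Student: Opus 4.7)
The plan is to construct the family $\rho^*$ by induction on $n = \dim\oPM_\Gamma$, in parallel with producing the homotopy $H^\Gamma$ of the form~\eqref{eq:form_of_hom}. The inductive set-up is the same one used in the proof of Lemma~\ref{lem:partial_homotopy} and \cite[Lemma 4.11]{BCT:II}. The key simplification here is that only the $\CL_1$-summand of $H^\Gamma$ is being varied in time; the $\ess^\Gamma$-summand is fixed and already transverse.

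First I would fix a smooth graded $\Gamma$ with $1\in I(\Gamma)$ and assume $\rho^\Xi$ has been defined for all $\Xi$ with $\dim\oPM_\Xi<n$, so that the corresponding homotopy is transverse on each stratum satisfying the rank hypothesis. Canonicity (Definition~\ref{def:canonical for L_i}) forces
\[
\rho^\Gamma|_{\oPM_\Lambda}=\Phi_{\Lambda,1}^{*}\rho^{v^{*}_{1}(\Lambda)}
\]
for every $\Lambda\in\partial^0\Gamma\cup\partial^\xch\Gamma$, and together with the inductive data this determines $H^\Gamma$ on all of $\partial([0,1]\times\oPM_\Gamma)$. On $\{0,1\}\times\oPM_\Gamma$ the homotopy equals $\ess^\Gamma\boxplus\ess_{\varepsilon}^\Gamma$, which is transverse by hypothesis; on $[0,1]\times\oPM_\Lambda$ for $\Lambda$ a stratum satisfying the rank bound, transversality follows from the inductive hypothesis together with the pull-back description of the descendent--Witten bundle in Observation~\ref{obs:Witten_pulled_back_perm}.

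Next I would extend $\rho^\Gamma$ from $\partial\oPM_\Gamma$ into a neighbourhood of $\partial^+\oPM_\Gamma$ while preserving strong positivity of the intermediate section
\[
\eta^\Gamma(t)=(1-t)\ess_0^\Gamma+t(1-t)\rho^\Gamma+t\ess_1^\Gamma.
\]
Because $t(1-t)\le \tfrac14$ on $[0,1]$ and vanishes at the endpoints, and because strong positivity is a convex open condition satisfied by both $\ess_0^\Gamma$ and $\ess_1^\Gamma$ (canonicity of $\ess_0^*$ and of $\ess_1^*$, together with Definition~\ref{def: strongly positive}), any sufficiently bounded extension of $\rho^\Gamma$ keeps $\eta^\Gamma(t)$ strongly positive uniformly in $t$. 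Gluing this local extension to an arbitrary smooth extension over the rest of $\oPM_\Gamma$ via a partition of unity, exactly as in the construction behind Theorem~\ref{prop:int_numbers_exist}, yields a provisional canonical extension $\rho^\Gamma_0$ that realises the prescribed boundary behaviour.

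Finally I would apply a parametric transversality argument to perturb $\rho^\Gamma_0$ in the interior of $\oPM_\Gamma$ so that $H^\Gamma$ becomes transverse on every stratum satisfying $\rank E_\Gamma(\vecd)\ge \dim([0,1]\times\oPM_\Gamma)$. The crucial feature of~\eqref{eq:form_of_hom} is that $\rho^\Gamma$ enters only the $\CL_1$-component of $H^\Gamma$, with coefficient $t(1-t)$ which is strictly positive on $(0,1)$; so on $(0,1)\times(\oPM_\Gamma\setminus\partial\oPM_\Gamma)$ the universal evaluation map $(t,x,\rho)\mapsto H^\Gamma(t,x;\rho)$ is submersive onto the zero section, and Sard--Smale yields a generic $\rho^\Gamma$ for which $H^\Gamma$ is transverse on each stratum. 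The main obstacle will be reconciling this transversality argument with strong positivity near $\partial^+\oPM_\Gamma$; as in Lemma~\ref{lem:partial_homotopy}, this is handled by performing the perturbation only in the interior region where $\eta^\Gamma(t)$ is already non-vanishing and strongly positive by construction, so that the positivity is not disturbed by the perturbation. This completes the inductive step and constructs the required family.
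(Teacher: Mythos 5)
Your outline is correct and is essentially the paper's argument: the paper simply defers to Lemma~4.11 of \cite{BCT:II} (applied with $E_1=E_\Gamma(\vecd')$, $E_2=\CL_1$, $A=\partial^0\Gamma\cup\partial^\xch\Gamma$, adapted to families), and that lemma's proof is exactly your inductive construction of $\rho^*$ over strata, with the boundary values forced by canonicity and the interior perturbation made possible because $\rho^\Gamma$ enters only the $\CL_1$-summand with the coefficient $t(1-t)$, which is positive on $(0,1)$. The one slip is your appeal to ``canonicity of $\ess_1^*$'' --- the lemma does not assume this (and in the application $\ess_1=t_w$ is not canonical) --- but it is harmless, since strong positivity of $H^\Gamma(t,-)$ only constrains the Witten components, which sit inside the fixed canonical summand $\ess^\Gamma$ and do not vary with $t$.
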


\begin{proof}
The proof here is identical to the proof of Lemma 4.11 in \cite{BCT:II} in the case where $E_1= E_\Gamma(\vecd'), E_2=\CL_1$, and $A=\partial^0\Gamma\cup\partial^\xch\Gamma$. There, the claim only involved a single graph and did not involve families, but adapting the proof for families adds no difficulty.
\end{proof}
\subsection{Chamber Indices and Symmetric canonical multisections}
\label{subsec:chamber indices}

In this subsection, we give the notation necessary to discuss the
set of all possible tuples of
open FJRW invariants. In particular, we first recall
Definition~\ref{def:balanced,critical,etc} that $\INT(I,\vecd)$ is defined to be the collection of connected, rooted,  graphs with internal markings $I$ balanced with respect to $\vecd$. We recall from
\eqref{eq: def Inv I} that for
$I\subseteq\Universe$ we set
\[
\Inv(I)=\prod_{J\subseteq I} \prod_{\vecd\in \NN^J} \Q^{\INT(J,\vecd)}.
\]
We may now define the crucial expression $\mathcal{A}(J,\vecd,\CI)$.
While it may appear intimidating, it arises naturally from the period
integrals discussed in the next section.

\begin{nn}
\label{nn:A(J)}
Let  $\CI:=(\CI_{\Gamma,\vecd}) \in \Inv(J)$.
We define $\mathcal{A}(J,\vecd,\CI)$ to be
\begin{equation*}
\sum_{h=1}^{|J|}\frac{1}{h!}\sum_{J_1\sqcup\cdots\sqcup J_h=J}\sum_{\substack{\{k_1(i)\}_{i=1}^{h},~k_1(i)=r(J_i)~(mod~r),\\
\{k_2(i)\}_{i=1}^{h},~k_2(i)=s(J_i)~(mod~s),\\sk_1(i)+rk_2(i)=m(J_i,\vecd)}}
\frac{\Gamma(\frac{1+\sum_{i=1}^hk_1(i)}{r}) \Gamma(\frac{1+\sum_{i=1}^h k_2(i)}{s})}{\Gamma(\frac{1+r(J)}{r}) \Gamma(\frac{1+s(J)}{s})}
\prod_{i=1}^h \CI_{\Gamma_{0, k_1(i), k_2(i), 1, \{(a_i,b_i)\}_{i\in J_j},\vecd|_{J_i}}},
\end{equation*}
where the numbers $r(J),s(J)$ and $m(J,\vecd)$ are as defined in Notation \ref{nn:r(I),s(I),m(I)}, the graph $\Gamma_{0, k_1(i), k_2(i), 1, J_i}$ is as defined in Notation~\ref{balanced rank 2 gammas}, and $\Gamma(\cdot)$ denotes the Gamma function.
\end{nn}

In general for an element $\CI\in \Inv(I)$, $\CI_{\Gamma,\vecd}$
may depend on the individual labels of the internal legs of $\Gamma$
and not just their twists. However, let $\sigma:I\rightarrow I$ be a
twist-preserving bijection, i.e., satisfying $\tw\circ \sigma=\tw$.
We define for any graph $\Gamma$ with
$I(\Gamma) \subseteq I$ the graph $\sigma(\Gamma)$ to be the same as $\Gamma$
but with the internal label $i$ replaced with $\sigma(i)$ for all $i \in
I(\Gamma)$. We  define $\vecd \circ \sigma$ to be the analogous
action on the descendent vectors in $\Z^{I(\Gamma)}_{\ge 0}$.
This induces a permutation transformation
\[
\sigma^*:\Inv(I)\rightarrow\Inv(I)
\]
via $\sigma^*(\CI)_{\Gamma,\vecd}=\CI_{\sigma(\Gamma),\vecd\circ\sigma}$.
We then define
\[
\InvSym(I)\subseteq \Inv(I)
\]
to be the subset of elements fixed under $\sigma^*$ for all
twist-preserving bijections $\sigma:I\rightarrow I$.

For any family of canonical multisections $\ess$ bounded by $I$,
we obtain $\CI^{\ess}\in \Inv(I)$ via
\begin{equation}
\label{eq:CI s def}
\CI^{\ess}_{\Gamma,\vecd}=
\left\langle \prod_{i\in I(\Gamma)}\tau^{(a_i,b_i)}_{d_i}\sigma_1^{k_1(\Gamma)}\sigma_2^{k_2(\Gamma)}\sigma_{12}^{k_{12}(\Gamma)} \right\rangle^{\mathbf{s},o}.
\end{equation}

\begin{definition}
A family $\ess$ of canonical multisections bounded by $I$ is \emph{symmetric}
if, for any $\sigma:I\rightarrow I$ a twist preserving bijection, and
$\Gamma$ with $I(\Gamma)\subseteq I$, $\vecd\in\NN^{I(\Gamma)}$,
the multisections $\ess^\Gamma$ and $\ess^{\sigma(\Gamma)}$ of
$E_{\Gamma}(\vecd)$ and $E_{\sigma(\Gamma)}(\vecd\circ\sigma)$
agree under the natural isomorphism $\Psi_{\sigma}:
\oPM_{\Gamma}\cong \oPM_{\sigma(\Gamma)}$.
\end{definition}

\noindent Note that if $\ess$ is a symmetric canonical family bounded by $I$, then
$\CI^{\ess}\in \InvSym(I)$.

It is elementary and useful to know that symmetric families exist:

\begin{obs}\label{symmetric invariants exists}
Given any family $\widetilde{\mathbf{s}}^*$ of canonical multisections
bounded by a finite set $I$, define a family of multisections as follows. For
$\Gamma\in \INT(J,\vecd)$ for $J\subseteq I$, define the multisection
$\ess^{\Gamma}$ of $E_{\Gamma}(\vecd)$ via
\[
\mathbf{s}^\Gamma := \biguplus_{\sigma \in \MAPS(I,I)} \Psi_{\sigma}^*\widetilde{\mathbf{s}}^{\Gamma}
\]
where $\MAPS(I,I)$ is the set of twist-preserving bijections.
Then $\ess$ is a symmetric family of canonical multisections.
\end{obs}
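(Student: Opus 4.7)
The plan is to verify the three defining properties in turn, reading the formula as $\ess^\Gamma := \biguplus_{\sigma\in\MAPS(I,I)} \Psi_\sigma^*\widetilde{\ess}^{\sigma(\Gamma)}$ (the natural choice that makes $\Psi_\sigma^*$ land in multisections of $E_\Gamma(\vecd)$, since $\Psi_\sigma:\oPM_\Gamma\to\oPM_{\sigma(\Gamma)}$).

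First, symmetry follows from a group-theoretic reindexing. For any twist-preserving bijection $\tau:I\to I$,
\[
\Psi_\tau^*\ess^{\tau(\Gamma)} = \biguplus_{\sigma\in\MAPS(I,I)} \Psi_\tau^*\Psi_\sigma^*\widetilde{\ess}^{\sigma(\tau(\Gamma))} = \biguplus_\sigma \Psi_{\sigma\tau}^*\widetilde{\ess}^{(\sigma\tau)(\Gamma)} = \ess^\Gamma,
\]
where the last equality is the substitution $\sigma'=\sigma\tau$ together with the group structure on $\MAPS(I,I)$.

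Next, I would argue that each individual summand $\Psi_\sigma^*\widetilde{\ess}^{\sigma(\Gamma)}$ is a canonical multisection of $E_\Gamma(\vecd)$. The map $\Psi_\sigma$ is an isomorphism of orbifolds with corners which preserves all combinatorial data (twists, alternations, cyclic orderings, and markings modulo $\sigma$); in particular it carries $\oCM^W_\Lambda$ to $\oCM^W_{\sigma(\Lambda)}$, carries abstract vertices $v\in\mathcal{V}(\Gamma)$ to abstract vertices $\sigma(v)\in\mathcal{V}(\sigma(\Gamma))$, and intertwines the base construction via the identity $\Psi_\sigma\circ F_\Lambda = F_{\sigma(\Lambda)}\circ\Psi_\sigma$. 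Using Observation~\ref{obs:Witten_pulled_back_perm} to pull back the box-plus decomposition of the Witten bundle, one sees that $\Psi_\sigma^*$ carries strongly positive multisections to strongly positive ones, and sends the inductive condition \eqref{eq:pb_Witten} for $\widetilde{\ess}^{\sigma(\Gamma)}$ on $\oPM_{\sigma(\Lambda)}$ to the same condition for $\Psi_\sigma^*\widetilde{\ess}^{\sigma(\Gamma)}$ on $\oPM_\Lambda$.

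Finally, the $\biguplus$ operation preserves both strong positivity and the inductive boundary condition. For strong positivity, the local branches of $\biguplus_i \rho_i$ are, by Equation \eqref{eq:union_of_sections}, precisely the branches of the $\rho_i$ with rescaled weights, so the pointwise positivity conditions in Definition \ref{def: strongly positive} (positive evaluation at a contracted boundary node, positive evaluation on a $\Lambda$-family of intervals, and positive evaluation on some non-empty boundary segment in condition (3)) pass termwise through $\biguplus$. For the inductive boundary condition, the key point is that $F_\Lambda^*$ and $\boxplus$ both commute with $\biguplus$ on local branches, so the identity $\ess^\Gamma|_{\oPM_\Lambda} = F_\Lambda^*\big(\boxplus_{\Xi\in\Conn(\CB\Lambda)}\ess^{\Xi,\vecd|_{I(\Xi)}}\big)$ reduces to the corresponding identity for each $\Psi_\sigma^*\widetilde{\ess}^{\sigma(\Gamma)}$ that was established in the previous paragraph.

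No genuine obstacle arises in this proof: the whole point is that every structure involved (twists, alternations, positivity, base, $F_\Lambda$, box-plus decomposition) is natural with respect to relabeling of internal markings, so the averaging is forced to preserve canonicity. The only small bookkeeping step that deserves care is checking that the base $\CB$ and the map $F_\Lambda$ are equivariant under $\Psi_\sigma$, which is immediate from the definition of these in terms of detaching and forgetting $(0,0)$-tails, operations that commute with relabeling.
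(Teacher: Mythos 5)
The paper offers no proof of this observation (it is flagged as ``elementary''), and your averaging strategy is clearly the intended one: the reindexing argument for symmetry and the naturality of $\Psi_\sigma$ with respect to twists, alternations, the base construction and $F_\Lambda$ are all correct and are exactly what the authors have in mind. However, one assertion in your final paragraph is false and hides the only real subtlety: $\boxplus$ does \emph{not} commute with $\biguplus$. For multisections, $\biguplus$ averages weight functions while $\boxplus$ multiplies them, so
\[
(\ess_1\uplus\ess_1')\boxplus(\ess_2\uplus\ess_2')
=\tfrac14\bigl(\ess_1\boxplus\ess_2\,\uplus\,\ess_1\boxplus\ess_2'\,\uplus\,\ess_1'\boxplus\ess_2\,\uplus\,\ess_1'\boxplus\ess_2'\bigr)
\neq \tfrac12\bigl(\ess_1\boxplus\ess_2\,\uplus\,\ess_1'\boxplus\ess_2'\bigr),
\]
because of the cross terms. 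Consequently, the restriction of your averaged $\ess^\Gamma$ to a stratum $\oPM_\Lambda$ whose base $\CB\Lambda$ has more than one connected component is $\biguplus_\sigma F_\Lambda^*\bigl(\boxplus_{\Xi\in\Conn(\CB\Lambda)}\Psi_\sigma^*\widetilde\ess^{\sigma(\Xi)}\bigr)$, which is a $\biguplus$ of pulled-back box-sums and is \emph{not} in general of the single-box-sum form $F_\Lambda^*(\boxplus_\Xi\rho^\Xi)$ demanded verbatim by condition (2) of Definition~\ref{def:canonical for Witten} and by \eqref{eq:pb_Witten}. So your claim that the boundary identity ``reduces to the corresponding identity for each $\Psi_\sigma^*\widetilde{\ess}^{\sigma(\Gamma)}$'' does not follow as written.

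This does not sink the observation, but it needs to be addressed rather than asserted away. Two honest fixes: either (i) note that the boundary condition should be read in the weaker form already used for homotopies in Definition~\ref{def:canonical homotopy}(2), namely as a $\biguplus$ of box-sums each pulled back from the base, which is all that the zero-counting and positivity arguments ever use (strong positivity and non-vanishing on $U_+$ do pass termwise through $\biguplus$, as you correctly argue); or (ii) symmetrize more carefully so that the cross terms are themselves of the required form, e.g.\ by checking that each cross term $F_\Lambda^*(\boxplus_\Xi\Psi_{\sigma_\Xi}^*\widetilde\ess^{\sigma_\Xi(\Xi)})$ with independently chosen $\sigma_\Xi$ is again a pulled-back box-sum of canonical vertex multisections, so the full restriction is a $\biguplus$ of admissible terms. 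Everything else in your write-up — the group-theoretic reindexing for symmetry, the equivariance $\Psi_\sigma\circ F_\Lambda=F_{\sigma(\Lambda)}\circ\Psi_\sigma$, and the termwise preservation of the positivity conditions of Definition~\ref{def: strongly positive} under $\biguplus$ — is correct.
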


We can now define several key subsets of $\Inv(I)$ and $\InvSym(I)$.

\begin{definition}
We define
\begin{align*}
\OFJRW(I):={} & \{\CI^{\ess}\,|\, \hbox{$\ess$ is a family of canonical
multisections bounded by $I$}\},\\
\OFJRWSym(I):={} & \{\CI^{\ess}\,|\, \hbox{$\ess$ is a symmetric
family of canonical multisections bounded by $I$}\}.
\end{align*}
\end{definition}

\begin{prop}\label{symmetrised chamber indices}
When $I$ is finite, we have that $\OFJRWSym(I)=\OFJRW(I)\cap\InvSym(I)$.
\end{prop}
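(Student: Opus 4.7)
The inclusion $\OFJRWSym(I) \subseteq \OFJRW(I) \cap \InvSym(I)$ is essentially definitional: a symmetric canonical family is in particular canonical, so produces an element of $\OFJRW(I)$, and the bookkeeping observation that symmetric families yield symmetric chamber indices (immediate from~\eqref{eq:CI s def} because the two sides correspond under $\Psi_\sigma$) gives the membership in $\InvSym(I)$. I would dispose of this inclusion in one sentence and focus on the reverse.

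For the reverse inclusion, let $\CI \in \OFJRW(I) \cap \InvSym(I)$ and pick any canonical family $\widetilde{\mathbf{s}}$ bounded by $I$ with $\CI^{\widetilde{\mathbf{s}}} = \CI$. Mimicking Observation~\ref{symmetric invariants exists}, I would define the symmetrized family by
\[
\mathbf{s}^\Gamma := \biguplus_{\sigma \in \MAPS(I,I)} \Psi_\sigma^* \widetilde{\mathbf{s}}^{\sigma(\Gamma)},
\]
where $\Psi_\sigma : \oPM_\Gamma \xrightarrow{\sim} \oPM_{\sigma(\Gamma)}$ is the natural isomorphism induced by relabeling internal markings (and the descendent component $d_i$ is transported as $d_{\sigma^{-1}(i)}$). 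Since $\MAPS(I,I)$ is finite by finiteness of $I$, the union $\biguplus$ makes sense as a multisection. Each term $\Psi_\sigma^* \widetilde{\mathbf{s}}^{\sigma(\Gamma)}$ is canonical (the isomorphisms $\Psi_\sigma$ commute with the base maps $F_\Gamma$, the detaching and forgetful operations, and they intertwine the Witten bundles up to the canonical identifications), and both strong positivity and the pull-back compatibility from the base in Definition~\ref{def:family of canonical} are preserved by $\biguplus$ by~\eqref{eq:union_of_sections} and the linearity built into the definition of a multisection. Applying the averaging within the averaging shows that $\mathbf{s}^\Gamma$ is invariant under all $\Psi_\sigma$, so the family is symmetric.

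To compute $\CI^{\mathbf{s}}$, I would use that if $H$ is a transverse extension of $\mathbf{s}^\Gamma$ into $\oPM_\Gamma$, then by linearity of weighted zero counts under $\biguplus$ (cf.\ Definition~\ref{def:union_of_sections} and the formula for $\#Z$ in~\eqref{def: weight}--\eqref{def: weighted cardinality}, applied termwise),
\[
\#Z(\mathbf{s}^\Gamma) = \frac{1}{|\MAPS(I,I)|} \sum_{\sigma \in \MAPS(I,I)} \#Z\bigl(\Psi_\sigma^* \widetilde{\mathbf{s}}^{\sigma(\Gamma)}\bigr).
\]
Each $\Psi_\sigma$ is an orientation-preserving diffeomorphism with respect to the canonical orientations of Definition~\ref{def:or_and_rel_or2} (which descend from labelings by Proposition~\ref{obs:orders_well_def_and_r_s_change}(3)), so $\#Z(\Psi_\sigma^* \widetilde{\mathbf{s}}^{\sigma(\Gamma)}) = \#Z(\widetilde{\mathbf{s}}^{\sigma(\Gamma)}) = \CI^{\widetilde{\mathbf{s}}}_{\sigma(\Gamma),\vecd \circ \sigma^{-1}}$. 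By symmetry of $\CI$ under the induced action $\sigma^\ast$ on $\Inv(I)$, each summand equals $\CI_{\Gamma,\vecd}$, and the average collapses to $\CI_{\Gamma,\vecd}$. Hence $\CI^{\mathbf{s}} = \CI$ and $\CI \in \OFJRWSym(I)$.

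The main subtle point, and the one I would write out carefully, is the preservation of the canonical structure under both the pullback $\Psi_\sigma^*$ and the union $\biguplus$: one must check compatibility with the inductive base-moduli structure of Definition~\ref{def:family of canonical}, since a canonical family is not merely a collection of strongly positive multisections but must arise from pullbacks along $F_\Lambda$ on every relevant boundary stratum for \emph{all} subgraphs simultaneously. This reduces to the functoriality $\Psi_\sigma \circ F_\Gamma = F_{\sigma(\Gamma)} \circ \Psi_\sigma$ on the overlap and to the fact, already used in~\eqref{eq:union_of_sections}, that $\biguplus$ commutes with pullback along $F_\Lambda$. No transversality issues arise because we argue at the level of the relative Euler class (Definition~\ref{def: relative Euler class}), which is invariant under the choice of transverse extension.
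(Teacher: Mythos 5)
Your proposal is correct and follows essentially the same route as the paper: the forward inclusion is immediate, and for the reverse you symmetrize a realizing canonical family via the averaging construction of Observation~\ref{symmetric invariants exists} and use the assumed symmetry of $\CI$ to see the invariants are unchanged. The paper compresses the zero-count computation to ``it is immediate that $\CI^{\ess}=\CI^{\widetilde\ess}$,'' while you spell out the averaging of $\#Z$ and the preservation of canonicity under $\Psi_\sigma^*$ and $\biguplus$ — the same argument, written in more detail.
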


\begin{proof}
The forward inclusion is trivial. For the reverse inclusion,
let $\ess$ be a canonical family of multisections with $\CI^{\ess}
\in \OFJRW(I)\cap\InvSym(I)$. Let $\widetilde\ess$ be the symmetric
family constructed in Observation \ref{symmetric invariants exists}.
Then it is immediate
that $\CI^{\ess}=\CI^{\widetilde{\ess}}$, hence the result.
\end{proof}

One of the key results of the paper (Corollary~\ref{chambers are enumerative}) will be a characterization of the sets $\OFJRW(I)$, $\OFJRWSym(I)$. They will coincide with the set of
chamber indices:

\begin{definition}\label{def:chamberIndex}
A \emph{chamber index} is a point
\[
\CI=(\CI_{\Gamma,\vecd})\in\Inv(I)
\]
 where the following hold:
\begin{enumerate}
\item $\CI_{\Gamma,\mathbf{0}} = 1$ if $|I(\Gamma)| = 1$  and $\CI_{\Gamma,\mathbf{0}} = -1$ if $|I(\Gamma)| = 0$ for all $\Gamma \in \INT(\subseteq I)$.
\item For $I'\subseteq I$ with $|I'|=1$, $\vecd = (d)\in \NN^{I'}$,
we have
\[
\mathcal{A}(I',\vecd,\CI) = (-1)^d.
\]
\item For any $I'\subseteq I$ with $|I'|\ge 2$, and $\vecd\in \NN^{I'}$ with $m(I',\vecd)\geq sr(I')+rs(I')$, we have
\[
\mathcal{A}(I',\vecd,\CI)=0.
\]
\end{enumerate}
We define
\begin{align*}
\ChamberIndices(I):={} &
\{\CI\in \Inv(I)\,|\, \hbox{$\CI$ is a chamber index}\}\\
\ChamberIndicesSym(I):={} & \ChamberIndices(I)\cap \InvSym(I).
\end{align*}
We call a chamber index $\nu \in \ChamberIndicesSym(I)$ \emph{symmetric}.
\end{definition}

\section{The $B$-Model}
\label{sec:B model}
In this section, we describe the enumerative theory associated to a Landau-Ginzburg $B$-model, due to Saito and Givental. We use the description given  in the case of the $B$-model of FJRW theory 
by He, Li, Li, Saito, Shen and Webb in \cite{LLSS} and
\cite{HeLiShenWebb}. We describe the $B$-model in terms
of oscillatory integrals, reviewing some of the details from
\cite{GKTdim1}, and show how to interpret these oscillatory
integrals in terms of the $\mathcal{A}(J,\vecd,\CI)$ of
Notation \ref{nn:A(J)}. In doing so, we will see how the
set $\ChamberIndices(I)$ defined in Definition \ref{def:chamberIndex}
plays the role of tuples of mirror $B$-model invariants.
We will then explore the wall-crossing group and show that
$\ChamberIndices(I)$ is a torsor for this group, hence giving a clear
structure to this set.

\subsection{The state space}
\label{subsec:state space}
We consider the Landau-Ginzburg model
\[
\left(X=\C^n, W=\sum_{i=1}^n x_i^{r_i}\right).
\]
 We do not allow for a group of symmetries. We shall quickly review
Saito-Givental theory in this context, sending the reader to
\cite{GKTdim1} for more details. For a much more in-depth
exposition, see \cite{kansas}, Chapter 2 and references therein.

A principal object of study is the twisted de Rham complex
\begin{equation}
\label{eq:standardLG}
(\Omega_X^\bullet, d + \hbar^{-1}dW\wedge - ),
\end{equation}
where $\Omega_X^i$ is the sheaf of algebraic
$i$-forms on $X$ and $\hbar\in \C^*$ is an auxiliary parameter.

We then have the following calculation of the hypercohomology of
the twisted de Rham complex (see \cite{GKTdim1}, Propositon 2.1):

\begin{prop}\label{hypercoh for fermat}
Consider the Landau-Ginzburg model $(X,W)$ in \eqref{eq:standardLG}. Then the hypercohomology group $\mathbb{H}^n(X, (\Omega_X^\bullet, d + \hbar^{-1}dW\wedge -))$ has dimension $\prod_{i=1}^n (r_i-1)$ and is generated by the basis
$$
M = \left\{\prod_{i=1}^n x_i^{a_i} d\mathbf{x} \ \middle| \ 0 \leq a_i \leq r_i - 2\right\},
$$
where $ d\mathbf{x} = dx_1 \wedge \cdots \wedge dx_n$.
\end{prop}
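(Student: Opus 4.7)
The plan is to reduce the computation to a product of one-dimensional calculations via an algebraic Künneth formula.

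\medskip

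First I would use that $X = \C^n$ is affine and the sheaves $\Omega_X^i$ are quasi-coherent, so that
\[
\mathbb{H}^n(X,(\Omega_X^\bullet, d + \hbar^{-1}dW\wedge)) = H^n\bigl(\Omega^\bullet(X),\, d + \hbar^{-1}dW\wedge\bigr),
\]
reducing the problem to the cohomology of a complex of $\C$-vector spaces. This avoids any derived-category subtleties.

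\medskip

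Next I would observe that because $W$ is a sum of single-variable polynomials, $W = \sum_{i=1}^n W_i(x_i)$ with $W_i(x_i) = x_i^{r_i}$, the complex factors as a tensor product
\[
\bigl(\Omega^\bullet(\C^n), d + \hbar^{-1}dW\wedge\bigr) \;\cong\; \bigotimes_{i=1}^n \bigl(\Omega^\bullet(\C_{x_i}), d + \hbar^{-1}dx_i^{r_i}\wedge\bigr),
\]
where the tensor product is taken over $\C$. One verifies this by checking that $dW\wedge$ acts as a derivation compatible with the Leibniz rule coming from the tensor decomposition $\Omega^\bullet(\C^n) = \bigotimes_i \Omega^\bullet(\C_{x_i})$, and that $d$ does as well.

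\medskip

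I would then handle the $n=1$ case by direct computation. For $r \ge 2$ and $\hbar \in \C^*$, I claim
\[
H^0\bigl(\Omega^\bullet(\C_x), d + \hbar^{-1}d(x^r)\wedge\bigr) = 0, \qquad H^1 \cong \bigoplus_{a=0}^{r-2} \C\cdot x^a\, dx.
\]
For $H^0$: any polynomial $f = \sum a_k x^k$ satisfying $f' + \hbar^{-1} r x^{r-1} f = 0$ forces each $a_k = 0$ by comparing lowest-degree terms (the term $x^{k-1}$ from $f'$ can only be cancelled by a strictly higher-degree contribution, which is impossible). For $H^1$: the identity $d(x^k) + \hbar^{-1} r x^{r-1+k}\, dx = (k x^{k-1} + \hbar^{-1} r x^{r+k-1})\, dx$ yields, modulo the image, the reduction relation $x^{r+k-1}\, dx \equiv -\hbar k r^{-1} x^{k-1}\, dx$, so every monomial of degree $\ge r-1$ can be rewritten in terms of $\{1,x,\ldots,x^{r-2}\}\cdot dx$; linear independence of these classes follows from the fact that reduction strictly decreases a well-chosen filtration degree.

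\medskip

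Finally, I would apply the Künneth formula for complexes of vector spaces over the field $\C$, which requires no flatness hypotheses:
\[
H^n\Bigl(\bigotimes_i C_i^\bullet\Bigr) \;=\; \bigoplus_{k_1+\cdots+k_n = n} \bigotimes_i H^{k_i}(C_i^\bullet).
\]
Since each factor has vanishing $H^0$, only the summand with $k_i = 1$ for all $i$ contributes, yielding
\[
\mathbb{H}^n(X,(\Omega_X^\bullet, d+\hbar^{-1}dW\wedge)) \;\cong\; \bigotimes_{i=1}^n \Bigl(\bigoplus_{a_i=0}^{r_i-2} \C\cdot x_i^{a_i}\,dx_i\Bigr),
\]
of dimension $\prod_i (r_i-1)$ with the claimed basis $\{x^{\mathbf{a}}\,\Omega\}$. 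The main point to be careful about is ensuring the tensor decomposition of the complex is an honest isomorphism of complexes (which is where Fermat-plus-structure is used) and that the rank-one computation is done over $\C$ with $\hbar$ treated as a nonzero scalar; everything else is formal.
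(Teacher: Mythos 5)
Your argument is correct, and it is a complete, self-contained proof. Note that the paper itself does not prove this proposition; it simply cites Proposition 2.1 of \cite{GKTdim1}, so there is no in-text argument to compare against line by line. The route usually taken in the literature (and implicitly behind that citation) is to compare the twisted de Rham complex with the Koszul complex of the regular sequence $(\partial_1 W,\ldots,\partial_n W)$ via the Hodge filtration, concluding that the hypercohomology is concentrated in degree $n$ and equals the Jacobian ring $\C[x_1,\ldots,x_n]/(\partial_1 W,\ldots,\partial_n W)\cdot\Omega$; for Fermat $W$ this is $\C[x]/(x_1^{r_1-1},\ldots,x_n^{r_n-1})\cdot\Omega$, which is exactly the claimed basis. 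Your K\"unneth decomposition is a genuinely different reduction: it exploits the fact that $W$ is a sum of univariate polynomials to factor the complex as an external tensor product, so you only ever need the $n=1$ computation plus K\"unneth over a field. What you lose is generality (the Koszul argument works for any $W$ with isolated critical points, your factorization does not), but for the Fermat case your approach is cleaner and more elementary.

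One cosmetic point in your $n=1$ computation of $H^0$: the phrase about the ``lowest-degree term'' is slightly off, since for $f$ with nonzero constant term the lowest-degree comparison does not immediately terminate (one must iterate). The cleanest version is to look at the \emph{top}-degree term: if $\deg f = k$, then $f' + \hbar^{-1} r x^{r-1} f$ has leading term $\hbar^{-1} r a_k x^{r-1+k}$ with $r-1+k > k-1$, which cannot cancel, so the kernel is zero. This does not affect the validity of your overall argument.
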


There is a homology group dual to the hypercohomology group  $\mathbb{H}^n(X, (\Omega_X^\bullet,  d+\hbar^{-1}dW\wedge-))$. Consider the relative homology
$$
H_n(X, \op{Re} W/ \hbar \ll 0; \C).
$$
There is a natural perfect pairing
\begin{equation}\begin{aligned}
H_n(X, \op{Re} W/ \hbar \ll 0; \C) \times \mathbb{H}^n(X, (\Omega_X^\bullet,
d+\hbar^{-1} dW\wedge-)) &\longrightarrow \C, \\
(\Xi, \omega) &\longmapsto \int_{\Xi} e^{W / \hbar} \omega.
\end{aligned}\end{equation}

Thus there must be a dual basis for $H_n(X, \op{Re} W/ \hbar \ll 0; \C)$ for any basis of the hypercohomology group.
In fact, in the case of $W=x^r$, Li, Li, Saito and Shen in \cite{LLSS} define a \emph{good basis} for
$H_n(X, \op{Re} W/ \hbar \ll 0; \C)$ as a basis of cycles
\[
\{\Xi_{\mu}\,|\, \text{$0\le \mu \le r-2$}\}
\]
satisfying the condition
\[
\int_{\Xi_{\mu}} x^{\mu'} e^{x^r/\hbar} d\mathbf{x}
=\delta_{\mu\mu'} \, \text{ for all $ 0 \le \mu, \mu' \le r-2$},
\]
where $\delta$ denotes the Kronecker delta function. In
\cite{GKTdim1}, we give a concrete description of this good basis
in the case where $W=x^r$, but only its existence is relevant here.
If instead $W=\sum_i x_i^{r_i}$, and $\Xi^{r_i}_0,\ldots,\Xi^{r_i}_{r_i-2}$
is the good basis for $x_i^{r_i}$, then one can bootstrap from the good basis given in \cite{LLSS}, as is done in \cite{HeLiShenWebb}. Consider the following basis for $H_n(X, \op{Re} W/ \hbar \ll 0; \C)$:
\[
\{\Xi_{\mu}:=
\Xi^{r_1}_{\mu_1}\times\cdots\times\Xi^{r_n}_{\mu_n}\,|\, \mu\in D\}.
\]
 where
\[
D=\{\mu=(\mu_1,\ldots,\mu_n)\,|\, \hbox{$0\le \mu_i\le r_i-2$ for
$1\le i \le n$}\}.
\]
This is a good basis for $W$. We remark that the basis $\Xi_{\mu}$ varies with $\hbar$.
Note that there is a clear bijection between $D$ and $M$ given by mapping $\mu$ to $x^\mu d\mathbf{x}$ where $x^\mu:=\prod_i x_i^{\mu_i}$ for $\mu\in D$. We then
have the identity
\begin{equation}\label{dual bases oscillatory integrals}
\int_{\Xi_{\mu}} x^{\mu'} e^{(x_1^{r_1} + \cdots + x_n^{r_n}) / \hbar} d\mathbf{x} = \delta_{\mu \mu'}.
\end{equation}

The following more general integrals will also be important for us,
see also \cite{GKTdim1}, Lemma 2.4:

\begin{lemma}
\label{lem:int by parts}
For any cycle $\Xi\in H_1(\C,\re x^r/\hbar;\C)$, and
for all $r,n \in \mathbb{N}$, $k \in \NN$, we have:
\begin{align}\label{r spin integration by parts}
\begin{split}
\int_{\Xi}  x^{nr+k} e^{x^r/\hbar}dx = {} &
(-1)^n\hbar^n \left(\prod_{i=1}^n (i -1+ \frac{k+1}{r})\right) \int_{\Xi}  x^{k} e^{x^r/\hbar}dx\\
= {} & (-1)^n\hbar^n \frac{\Gamma\left(n+\frac{k+1}{r}\right)}{
\Gamma\left(\frac{k+1}{r}\right)}\int_{\Xi}  x^{k} e^{x^r/\hbar}dx.
\end{split}
\end{align}
\end{lemma}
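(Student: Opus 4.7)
\smallskip

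The plan is to prove this by integration by parts and induction on $n$. The key computation is the identity
\[
\frac{d}{dx}\bigl(x^{(n-1)r+k+1}\, e^{x^r/\hbar}\bigr) = \bigl((n-1)r+k+1\bigr)\, x^{(n-1)r+k}\, e^{x^r/\hbar} + \frac{r}{\hbar}\, x^{nr+k}\, e^{x^r/\hbar},
\]
which follows from the product rule together with $\frac{d}{dx}e^{x^r/\hbar} = \frac{r x^{r-1}}{\hbar} e^{x^r/\hbar}$.

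Integrating this identity over $\Xi$, the boundary term vanishes. This uses the hypothesis $\Xi\in H_1(\C,\re(x^r/\hbar)\ll 0;\C)$: the cycle $\Xi$ can be represented (up to boundaries in the appropriate relative chain complex) by a cycle whose ends escape to infinity in directions where $\re(x^r/\hbar)\to -\infty$, so the integrand $x^{(n-1)r+k+1}e^{x^r/\hbar}$ decays to zero exponentially at those ends. (If one prefers, one may note that $d(x^{(n-1)r+k+1}e^{x^r/\hbar}dx) = 0$ trivially and rewrite the desired identity in terms of exactness in the twisted de Rham complex; but the direct integration by parts is the cleanest presentation.) This yields the one-step recursion
\[
\int_{\Xi} x^{nr+k} e^{x^r/\hbar}\,dx = -\hbar\left(n-1+\tfrac{k+1}{r}\right)\int_{\Xi} x^{(n-1)r+k} e^{x^r/\hbar}\,dx,
\]
after simplifying $\tfrac{(n-1)r+k+1}{r} = n-1+\tfrac{k+1}{r}$.

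Iterating this recursion $n$ times gives the first equality of the lemma:
\[
\int_{\Xi} x^{nr+k}e^{x^r/\hbar}dx = (-\hbar)^n \prod_{i=1}^n\left(i-1+\tfrac{k+1}{r}\right)\int_{\Xi} x^k e^{x^r/\hbar}\,dx.
\]
For the second equality, applying the functional equation $\Gamma(z+1) = z\Gamma(z)$ inductively gives $\prod_{i=1}^n(i-1+\alpha) = \Gamma(n+\alpha)/\Gamma(\alpha)$ with $\alpha = (k+1)/r$, completing the proof.

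The argument has no real obstacles, the only genuinely non-formal point being the vanishing of the boundary term, which is where the choice of homology class for $\Xi$ is used; the remainder is elementary calculus plus the Gamma function identity.
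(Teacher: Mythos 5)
Your proof is correct and is exactly the argument the paper intends: the paper's entire proof reads ``Integration by parts,'' and your write-up simply supplies the details (the exact derivative identity, the vanishing of the boundary term along the ends of $\Xi$ where $\re(x^r/\hbar)\to-\infty$, the iteration, and the $\Gamma(z+1)=z\Gamma(z)$ step). Nothing further is needed.
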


\begin{proof}
Integration by parts.
\end{proof}

Now write $W_0=\sum_i x_i^{r_i}$.
Recall that the elements $x^{\mu}$ for $\mu \in D$ are a basis for the Milnor ring viewed as a vector space. We introduce coordinates $y_\mu$ on the universal unfolding of $W_0$, parameterized
by a germ $\mathcal{M}$ of the origin in a vector space with basis $\{
x^{\mu}\,|\,\mu\in D\}$.
The versal deformation $W$ for $W_0$ on $\mathcal{M} \times X$ is then
given by
$$
W =  x_1^{r_1} + \cdots + x_n^{r_n} + \sum_{\mu \in D} y_{\mu} x^{\mu}.
$$

Given $f\in \C[x_1, \dots, x_n][[y_\mu]]$,
we can consider the following collection of oscillatory integrals, each of
which we view as a formal power series:
$$
\int_{\Xi_{\mu}}  e^{W / \hbar}f d\mathbf{x}  = \sum_{j=-\infty}^\infty \varphi_{\mu, j}(\mathbf{y}) \hbar^{-j}
$$
where $\varphi_{\mu,j}(\mathbf{y}) \in \C [[y_\mu]]$.

The following is an oversimplification of Saito's theory of primitive
forms, but will be sufficient for our purposes:

\begin{definition}
If $\varphi_{\mu,j}\equiv 0$ for all $j<0$ and
$\varphi_{\mu,0}=\delta_{\mu\mathbf{0}}$, then we say
that $fd\mathbf{x}$ is a \emph{primitive form}. Further, in this case,
$\varphi_{\mu,1}$ form a set of coordinates on the universal unfolding
$\mathcal{M}$ called \emph{flat coordinates}.
\end{definition}

For more details and explicit examples in the one-dimensional case,
see \cite{GKTdim1}.

\begin{nn}
We typically will use the variables $t_\mu := \varphi_{\mu, 1}$ for the flat coordinates.
\end{nn}

\begin{rmk}
In recent papers proving Landau-Ginzburg mirror symmetry
\cite{LLSS, HeLiShenWebb}, the authors use
Saito's general framework for constructing Frobenius manifold structures
on the universal unfoldings of potentials
(see \cite{Sai81, Sai83} or Section III.8 of \cite{Manin}) to construct
the $B$-model $W$-spin Frobenius manifold. Our definition of flat
coordinates coincides with the definition given in
Equation (13) in \S2.2.2 of \cite{HeLiShenWebb}, with somewhat
different notation. For explicit examples in the one-dimensional case,
see \cite{GKTdim1}.
\end{rmk}

\subsection{Period integrals in the rank $2$ case}

In the prequel \cite{GKTdim1}, we consider period integrals in the $r$-spin case.
Here we  consider period integrals in the rank two case. We will first
consider the not-necessarily symmetric case.

\begin{definition}
Fix a finite
marking set for internal points $I\subseteq\Omega$, and define the ring
\[
A_I:=\Q[\{u_{i,j}\,|\, i\in I, j\in \Z_{\ge 0}\}]/\langle u_{i,j}u_{i,j'}
\,|\,i\in I, j,j'\in \Z_{\ge 0}\rangle.
\]
Let $\CI=(\CI_{\Gamma,\vecd})\in \Inv(I)$.  Then we define the potential with
descendents associated to $\CI$ to be
\[
W^{\CI}:=
\sum_{\substack{J\subseteq I,\, \vecd\in \Z_{\ge 0}^J\\
\,\,\Gamma\in
\INT(J, \vecd)}} (-1)^{|J|-1}
\CI_{\Gamma,\vecd}
x^{k_1(\Gamma)}y^{k_2(\Gamma)}\prod_{i\in J} u_{i,d_i}\in A_I[[x,y]].
\]
Given a canonical family of multisections $\mathbf{s}$ bounded by $I$,
we define
\[
W^{\mathbf{s}}:=W^{\CI^{\ess}},
\]
where $\CI^{\ess}$ is defined in \eqref{eq:CI s def}.
\end{definition}

\begin{rmk}
\label{rmk:formal potential}
We remark that for a fixed $k_1, k_2$, it follows
from Proposition \ref{prop:balanced}(b) and the definition of $m(J,\vecd)$
that there are only a finite number of $J\subseteq I$ and $\vecd\in \NN^J$
such that the set $\{\Gamma\in\INT(J,\vecd)\,|\,k_i(\Gamma)=k_i, i=1,2\}$
is finite. Hence the coefficient of
$x^{k_1}y^{k_2}$ is a polynomial, so the potential is indeed
an element of $A_I[[x,y]]$.
\end{rmk}

\begin{definition}
\label{def:d(J)}
Let $J\subseteq I$ be a finite index set, $\vecd\in\NN^J$. Then we
define
\[
d(J,\vecd):={sr(J)+rs(J)-m(J,\vecd)\over rs}-1.
\]
\end{definition}

We now have a direct calculation of the period integral, which motivates
the definition of $\mathcal{A}(J,\vecd,\CI)$ from Notation~\ref{nn:A(J)}:

\begin{thm}
\label{thm:period integral}
If $W^{\CI}=x^r+y^s\mod \langle u_{i,j}\,|\,i\in I, j\in \Z_{\ge 0}
\rangle$, then for $0\le a \le r-2$, $0\le b \le s-2$,
\[
\int_{\Xi_{(a,b)}} e^{W^{\CI}/\hbar}d\mathbf{x}
=\delta_{a,0}\delta_{b,0}+
\sum_{\substack{J\subseteq I\\J\not=\varnothing}}\sum_{\vecd\in \NN^J}
(-1)^{|J|}(-\hbar)^{-d(J,\vecd)-2}
\mathcal{A}(J,\vecd,\CI)\delta_{r(J), a}\delta_{s(J), b}
\prod_{j\in J} u_{j,d_j}.
\]
\end{thm}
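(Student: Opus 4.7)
The plan is a direct computation: expand the exponential, evaluate each oscillatory integral via Lemma~\ref{lem:int by parts}, and recognize the resulting bookkeeping as the definition of $\mathcal{A}(J,\vecd,\CI)$.

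First I would decompose $W^{\CI} = (x^r+y^s) + \widetilde W$, where $\widetilde W$ collects the summands indexed by non-empty $J$, and factor the integrand as $e^{(x^r+y^s)/\hbar}\cdot e^{\widetilde W/\hbar}$. Taylor expanding the second factor is the main combinatorial step: the relations $u_{i,j}u_{i,j'}=0$ in $A_I$ force, in each power $\widetilde W^h$, the internal marking sets $J_1,\dots,J_h$ appearing in the $h$ factors to be non-empty and pairwise disjoint. Regrouping by $J := \bigsqcup_a J_a$, the $1/h!$ from the Taylor expansion matches the $1/h!$ appearing in Notation~\ref{nn:A(J)}, and the $x^{k_1}y^{k_2}$-coefficient of $e^{\widetilde W/\hbar}$ becomes an ordered sum over partitions $J_1\sqcup\cdots\sqcup J_h = J$ and admissible tuples $(k_1(a),k_2(a))$ subject to $k_1(a)\equiv r(J_a)\pmod r$, $k_2(a)\equiv s(J_a)\pmod s$, and $sk_1(a)+rk_2(a)=m(J_a,\vecd|_{J_a})$.

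Next I would evaluate the oscillatory integrals termwise. Iterating Lemma~\ref{lem:int by parts} in each of $x$ and $y$ and then applying the good-basis identity \eqref{dual bases oscillatory integrals}, one obtains
\[
\int_{\Xi_{(a,b)}} x^{k_1}y^{k_2} e^{(x^r+y^s)/\hbar}\,\Omega \;=\; (-\hbar)^{\frac{k_1-a}{r}+\frac{k_2-b}{s}}\,\frac{\Gamma(\frac{k_1+1}{r})\,\Gamma(\frac{k_2+1}{s})}{\Gamma(\frac{a+1}{r})\,\Gamma(\frac{b+1}{s})}\,\delta_{a,\,k_1\bmod r}\,\delta_{b,\,k_2\bmod s}.
\]
The only subtle case is $k_1\equiv r-1\pmod r$ (and symmetrically for $k_2$): the reduction terminates at $\int_{\Xi_a} x^{r-1}e^{x^r/\hbar}\,dx = \tfrac{\hbar}{r}\int_{\Xi_a} d(e^{x^r/\hbar}) = 0$ by Stokes, since $e^{x^r/\hbar}\to 0$ at the non-compact ends of the cycle. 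This vanishing is harmlessly absorbed by $\delta_{a,r-1}=0$ for $a\le r-2$, so the displayed formula holds uniformly.

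To finish, three bookkeeping checks remain. With $k_1=\sum_a k_1(a)$ and $k_2=\sum_a k_2(a)$, the congruence constraints propagate to $k_1\equiv r(J)\pmod r$ and $k_2\equiv s(J)\pmod s$, giving the deltas $\delta_{a,r(J)}\delta_{b,s(J)}$. For the $\hbar$-exponent, the single identity needed is
\[
\sum_{a=1}^{h} m(J_a,\vecd|_{J_a}) \;=\; m(J,\vecd) + (h-1)rs,
\]
immediate from Notation~\ref{nn:r(I),s(I),m(I)}; it gives $(k_1-r(J))/r+(k_2-s(J))/s = h-d(J,\vecd)-2$, so combining with the $\hbar^{-h}$ from the exponential yields $(-1)^h(-\hbar)^{-d(J,\vecd)-2}$. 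The $h$ signs $(-1)^{|J_a|-1}$ from the definition of $W^{\CI}$ then combine with this $(-1)^h$ to give the required overall $(-1)^{|J|}$. The residual product of Gamma-function ratios together with the sum over partitions and admissible $(k_1(a),k_2(a))$ assembles precisely into $\mathcal{A}(J,\vecd,\CI)$, and the $h=0$ term of the expansion contributes $\delta_{a,0}\delta_{b,0}$. The main ``obstacle'' is purely bookkeeping: tracking signs, powers of $\hbar$, and Gamma factors through the expansion. The only substantive analytic inputs are Lemma~\ref{lem:int by parts} and the exactness argument for $k\equiv r-1\pmod r$. Convergence is not an issue, since $A_I$ is nilpotent in each $u_{i,j}$ — the expansion truncates and the identity holds term-by-term on each $u$-monomial (see Remark~\ref{rmk:formal potential}).
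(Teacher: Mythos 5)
Your proposal is correct and follows essentially the same route as the paper's proof: expand $e^{(W^{\CI}-x^r-y^s)/\hbar}$, use the nilpotency of the $u_{i,j}$ to organize the terms by partitions $J_1\sqcup\cdots\sqcup J_h=J$, evaluate termwise via Lemma~\ref{lem:int by parts} and the good-basis identity, and track the $\hbar$-power through the identity $\sum_a m(J_a,\vecd|_{J_a})=m(J,\vecd)+(h-1)rs$. Your explicit Stokes argument for the residual case $k_1\equiv r-1\pmod r$ is a small point the paper leaves implicit, but otherwise the two arguments coincide.
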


\noindent We remark that this equality holds when both sides are viewed as Laurent series in the $\hbar$ variable with coefficients in $A_I$.

\begin{proof}
Let $J\subseteq I$ and $\vecd\in \NN^J$.
Using Notation \ref{nn:r(I),s(I),m(I)}, Proposition \ref{prop:balanced} shows
that the graph $\Gamma = \Gamma_{0, k_1, k_2, 1, \{(a_j, b_j)\}_{j\in J}}$
is balanced with respect to $\vecd$ if and only if
\begin{equation}\label{original balance}
k_1 \equiv r(J) \pmod r, \quad k_2 \equiv s(J) \pmod s, \text{ and } sk_1 + rk_2 = m(J, \vecd).
\end{equation}
By definition of $r(J),s(J)$, there exists nonnegative integers $\ell_1(J), \ell_2(J)
\in \Z$ so that
\[
\sum_{j\in J} a_j = r(J) + \ell_{1}(J)r,\quad \sum_{j \in J} b_j= s(J) +
\ell_{2}(J)s.
\]

We may write
\begin{equation}\label{Exponentiating the perturbation 2 dim}
\int_{\Xi_{(a,b)}}  e^{W^{\CI} / \hbar}d\mathbf{x}  = \int_{\Xi_{(a,b)}}  \left(\sum_{h\geq 0} \frac{ (W^{\CI} -x^r-y^s)^h}{h!\hbar^h}\right) e^{(x^r+y^s) / \hbar} d\mathbf{x}.
\end{equation}

Noting by assumption that
$W^{\CI}-x^r-y^s \equiv 0 \mod \langle u_{i,j}\rangle$,
 we may now expand the summation.
We can see that if $J\not=\varnothing$, then
the coefficient of $u_{J,\vecd} := \prod_{j \in J} u_{j,d_j}$
in \eqref{Exponentiating the perturbation 2 dim} is
\begin{align}\label{lambda uj}
\begin{split}
\Lambda_{J,\vecd}
:= {} & \int_{\Xi_{(a,b)}} \sum_{h=1}^{|J|} \frac{1}{h!} \sum_{J_1\sqcup
\cdots\sqcup J_h=J}
\quad\sum_{\substack{\{k_1(j)\}_{j=1}^{h},~k_1(j)=r(J_j)~(mod~r),\\
\{k_2(j)\}_{j=1}^{h},~k_2(j)=s(J_j)~(mod~s),\\sk_1(j)+rk_2(j)=m(J_j,\vecd)}}\\
&\Bigg(
 x^{\sum_{j} k_1(j)}y^{\sum_j k_2(j)}
\hbar^{-h} \prod_{j=1}^h (-1)^{|J_j|-1}
\CI_{\Gamma_{0,k_1(j),k_2(j),1,\{(a_i,b_i)\}_{i\in J_j}},\vecd|_{J_j}}
\Bigg)e^{(x^r+y^s)/\hbar}d\mathbf{x}.
\end{split}
\end{align}

Performing integration by parts using
Lemma \ref{lem:int by parts} and using \eqref{dual bases oscillatory integrals},
we can see that
the integral is $0$ unless $(a,b)=(r(J),s(J))$. As the monomial in
$x,y$ occurring in the integrand is $x^{\sum_j k_1(j)}y^{\sum_j k_2(j)}$,
the integration by parts produces a power $\hbar^{n_1+n_2}$, where
\[
\sum_j k_1(j)=r(J)+n_1r, \quad \sum_j k_2(j)=s(J)+n_2s.
\]
Putting this together with Lemma \ref{lem:int by parts} and
Equation~\eqref{dual bases oscillatory integrals}, the coefficient
is given by
\begin{align}
\label{eq:yet another integral expression}
\begin{split}
\Lambda_{J,\vecd}  = {} & \sum_{h=1}^{|J|} \frac{1}{h!}
\sum_{J_1\sqcup\cdots\sqcup J_h=J}\quad
\sum_{\substack{\{k_1(j)\}_{j=1}^{h},~k_1(j)=r(J_j)~(mod~r),\\
\{k_2(j)\}_{j=1}^{h},~k_2(j)=s(J_j)~(mod~s),\\sk_1(j)+rk_2(j)=m(J_j,\vecd)}}
(-1)^{|J|-h+n_1+n_2}
\\
&\Bigg(\frac{\Gamma(\tfrac{1+ \sum_{j} k_1(j)}{r}) \Gamma(\tfrac{1 + \sum_j k_2(j)}{s})}{\Gamma(\tfrac{1+r(J)}{r}) \Gamma(\tfrac{1+s(J)}{s})} \hbar^{n_1 + n_2 - h}
\prod_{j=1}^h
\CI_{\Gamma_{0,k_1(j),k_2(j),1,\{(a_i,b_i)\}_{i\in J_j}},\vecd|_{J_j}}
\Bigg)
\delta_{r(J), a}\delta_{s(J), b}
\end{split}
\end{align}

Using \eqref{eq:r(I),s(I),m(I)}, note that
\[
\sum_{j=1}^h m(J_j, \vecd) = (h-1) rs + m(J, \vecd).
\]
From $sk_1(j) + rk_2(j) = m(J_j, \vecd)$, we obtain
\begin{align*}
n_1+n_2 = & \frac{\sum_j k_1(j) - r(J)}{r} + \frac{\sum_j k_2(j)-s(J)}{s} = \frac{(\sum_j sk_1(j) + rk_2(j)) - sr(J) - rs(J)  } { rs}\\
= & \frac{\sum_j m(J_j,\vecd) -sr(J)-rs(J)}{rs} = \frac{(h-1)rs + m(J, \vecd)  -sr(J)-rs(J)}{rs}.
\end{align*}
Hence
\begin{equation}\label{the hbar numbers in terms of the internals}
(n_1+n_2 - h + 1)rs =  m(J, \vecd) -sr(J) - rs(J).
\end{equation}
In particular, $n_1+n_2-h$ only depends on $J$, and we may
rewrite \eqref{eq:yet another integral expression} as
\begin{equation}\label{relating lambda uj with chamber index}
\Lambda_{J, \vecd}=(-1)^{|J|} (-\hbar)^{(m(J,\vecd)-sr(J)-rs(J)-rs)/rs}
\mathcal{A}(J,\vecd,\CI)\delta_{r(J), a}\delta_{s(J), b}.
\end{equation}
This agrees with the stated result via the definition of $d(J,\vecd)$.
On the other hand, the contribution to the integral from the case
$J=\varnothing$ follows directly  the definition of a good basis to be
$\delta_{(a,b),(0,0)}$. This gives the desired result.
\end{proof}

We next consider the symmetric case.

\begin{definition}
Let $I\subseteq \N\subseteq\Omega$ be a finite set of labels.
Set
\[
A_{I,\mathrm{sym}}:=\Q[t_{\alpha,\beta,d}\,|\,0\le \alpha\le r-2, 0\le
\beta \le s-2,
d\in \Z_{\ge 0}]/\mathrm{Ideal}(I),
\]
where $\mathrm{Ideal}(I)$ is the ideal generated by monomials of the form
\[
\prod_d t_{\alpha,\beta,d}^{n_d}
\]
such that $\sum n_d > |(\tw|_I)^{-1}(\alpha,\beta)|$.
\end{definition}

\begin{definition}\label{AAD}
For each $l\ge 0$, let $\mathcal{A}_l$ denote the collection of
multi-sets of size $l$ of the form
\[
\{(\alpha_1,\beta_1,d_1),\ldots, (\alpha_l,\beta_l,d_l)\}
\]
with $0\le \alpha_i\le r-2$, $0\le \beta_i\le s-2$, and $d_i\ge 0$.
For $A\in \mathcal{A}_l$,
denote by $\Aut(A)$ the group of permutations $\sigma:[l]\rightarrow [l]$
such that $(\alpha_i,\beta_i,d_i)=(\alpha_{\sigma(i)},
\beta_{\sigma(i)},d_{\sigma(i)})$ for all $i\in [l]$.

For $A=\{(\alpha_i,\beta_i,d_i)\}\in \mathcal{A}_l$ such that there exists 
distinct $j_1,\ldots,j_l\in I$ with $\tw(j_i)=(\alpha_i,\beta_i)$, we
write for $\CI\in\InvSym(I)$
\[
\mathcal{A}(A,\CI) := \mathcal{A}(J,\vecd,\CI),\quad\quad d(A):=d(J,\vecd),
\]
where $J=\{j_1,\ldots,j_l\}$ and $\vecd=(d_i)_{1\le i \le l}$. Note
that since $\CI$ is symmetric, the first quantity is independent of 
the choice of $j_i$, while the second is independent by definition.
\end{definition}

\begin{definition}
Let $\CI=(\CI_{\Gamma,\vecd})\in \InvSym(I)$.
We define the potential with descendents
\begin{equation}\label{def:flat family in 2 dim}
W^{\CI,\mathrm{sym}} = \sum_{k_1,k_2 \geq 0, l \geq 0} \, \,
\sum_{A= \{(\alpha_i, \beta_i,d_i)\} \in \mathcal{A}_l}
(-1)^{l-1} \frac{
\CI_{\Gamma_{0,k_1,k_2,1,\{(\alpha_i,\beta_i)\}},(d_i)}
}{|\Aut(A)|}
(\textstyle{\prod}_{i=1}^l t_{\alpha_i,\beta_i,d_i} ) x^{k_1}y^{k_2}
\in A_{I,\mathrm{sym}}[[x,y]].
\end{equation}
Given a symmetric canonical family of multisections $\mathbf{s}$,
we define
\[
W^{\mathbf{s},\mathrm{sym}}=W^{\CI^{\ess},\mathrm{sym}}.
\]
\end{definition}

Again, similarly as in Remark \ref{rmk:formal potential}, there are only a finite
number of non-zero terms containing a fixed monomial $x^{k_1}y^{k_2}$, so that indeed $W^{\CI,\mathrm{sym}} 
\in A_{I,\mathrm{sym}}[[x,y]].$

\begin{definition}
Given a finite subset $I\subseteq\N$ we define a map
\begin{align*}
\psi_I:A_{I,\mathrm{sym}}\rightarrow& A_I\\
t_{\alpha,\beta,d} \mapsto & \sum_{i\in I:(a_i,b_i)=(\alpha,\beta)} u_{i,d}.
\end{align*}
This also induces a map
\[
\psi_I:A_{I,\mathrm{sym}}[[x,y]] \rightarrow A_I[[x,y]].
\]
\end{definition}

\begin{lemma}
The homomorphism $\psi_I$ is well-defined and injective. Further,
let $A\in\mathcal{A}_l$ with $A=\{(\alpha_i,\beta_i,d_i)\}$. Then
\begin{equation}
\label{eq:psi I formula}
\psi_I\left({\prod_{i=1}^l t_{\alpha_i,\beta_i,d_i}\over |\textup{Aut}(A)|}\right)
=
\sum_{\substack{J\subseteq I,\\ \vecd'\in S}} \prod_{j\in J} u_{j,d'_j},
\end{equation}
where the sum is over all subsets $J\subseteq I$ and $\vecd'\in S$ where $S \subseteq
\NN^J$ is the subset of descendent vectors $\vecd' := (d_i')$ for which there exists a bijection $\phi:J\rightarrow [l]$ with
$(a_j,b_j)=(\alpha_{\phi(j)},\beta_{\phi(j)})$ and $d'_j=d_{\phi(j)}$.
\end{lemma}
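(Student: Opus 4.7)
My plan is to prove the three claims in order: well-definedness, the explicit formula \eqref{eq:psi I formula}, and injectivity. The map $\psi_I$ is first defined on generators $t_{\alpha,\beta,d}$ of the polynomial ring $\Q[\{t_{\alpha,\beta,d}\}]$ and extends uniquely to a ring homomorphism into $A_I$ by universality. The task for well-definedness is to check that the generators of $\mathrm{Ideal}(I)$ are sent to zero. Take a generator $\prod_d t_{\alpha,\beta,d}^{n_d}$ with $\sum_d n_d > |(\tw|_I)^{-1}(\alpha,\beta)|$. Its image in $A_I$ is
\[
\prod_d \bigg(\sum_{\substack{j\in I \\ (a_j,b_j)=(\alpha,\beta)}} u_{j,d}\bigg)^{n_d},
\]
which, when expanded, is a sum of monomials of total degree $\sum_d n_d$, each involving only variables $u_{j,\cdot}$ for $j$ in the finite set $(\tw|_I)^{-1}(\alpha,\beta)$. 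Since $u_{j,k}u_{j,k'}=0$ in $A_I$ for all $k,k'$, only terms using distinct indices $j$ survive; pigeonhole then forces at least one repetition, so every term vanishes.

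For the explicit formula, I would expand directly:
\[
\psi_I\Big(\prod_{i=1}^{l} t_{\alpha_i,\beta_i,d_i}\Big)
=\sum_{(j_1,\ldots,j_l)} \prod_{i=1}^{l} u_{j_i,d_i},
\]
where the sum runs over tuples $(j_1,\ldots,j_l)\in I^l$ with $(a_{j_i},b_{j_i})=(\alpha_i,\beta_i)$. Surviving terms are those where all $j_i$ are distinct; these correspond bijectively to pairs $(J,\phi)$ with $J\subseteq I$ of size $l$ and $\phi:J\to[l]$ a bijection satisfying $(a_j,b_j)=(\alpha_{\phi(j)},\beta_{\phi(j)})$ for all $j\in J$. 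The contribution of such a pair, rewritten with $d'_j:=d_{\phi(j)}$, is $\prod_{j\in J} u_{j,d'_j}$, so $(J,\vecd')$ lies in the set described in the lemma. The key combinatorial step is counting how many pairs $(J,\phi)$ give rise to a given $(J,\vecd')$: two bijections $\phi,\phi':J\to[l]$ yield the same $\vecd'$ precisely when $\sigma:=\phi\circ(\phi')^{-1}\in S_l$ preserves the triples $(\alpha_i,\beta_i,d_i)$, i.e., $\sigma\in\Aut(A)$. Hence each $(J,\vecd')$ is hit exactly $|\Aut(A)|$ times, and dividing by $|\Aut(A)|$ yields \eqref{eq:psi I formula}.

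For injectivity, I would use the $\Q$-basis of $A_{I,\mathrm{sym}}$ consisting of monomials $m_A=\prod_{i=1}^{l} t_{\alpha_i,\beta_i,d_i}$ indexed by multisets $A$ whose $(\alpha,\beta)$-multiplicity (summed over $d$) does not exceed $|(\tw|_I)^{-1}(\alpha,\beta)|$ for each $(\alpha,\beta)$. By the formula above, $\psi_I(m_A)=|\Aut(A)|\sum_{(J,\vecd')} \prod_{j\in J} u_{j,d'_j}$, a sum supported on square-free monomials in the $u$-variables; by the multiset validity of $A$ this sum is nonempty. Moreover, any single monomial $\prod_{j\in J} u_{j,d'_j}$ occurring in $\psi_I(m_A)$ determines the multiset $\{(a_j,b_j,d'_j):j\in J\}=A$ uniquely. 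Thus distinct basis monomials $m_A$ produce images supported on disjoint sets of square-free monomials in $A_I$, proving linear independence of $\{\psi_I(m_A)\}$ and hence injectivity.

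The plan involves no serious obstacle — every step is a direct combinatorial expansion. The most delicate point is the orbit-counting in the proof of \eqref{eq:psi I formula}: one must correctly identify $\Aut(A)$ as the stabilizer acting on bijections $\phi$ rather than mistakenly overcount by some larger symmetry group, and distinguish carefully between multisets, ordered tuples, and decorated subsets.
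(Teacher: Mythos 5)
Your proposal is correct and follows essentially the same route as the paper: expand $\psi_I$ of the monomial on the polynomial ring, use $u_{i,j}u_{i,j'}=0$ to reduce to distinct indices, count the bijections $\phi$ yielding a fixed $(J,\vecd')$ by the $\Aut(A)$-orbit argument, and deduce injectivity from the disjointness of the supports of images of distinct monomials. The only cosmetic difference is that the paper packages well-definedness and injectivity together as the single identity $\ker\psi_I=\mathrm{Ideal}(I)$, whereas you verify the two inclusions separately; the underlying combinatorics is identical.
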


\begin{proof}
We begin with the second statement by viewing $\psi_I$ as
a clearly well-defined map $\Q[\{t_{\alpha,\beta,d}\}]\rightarrow A_I$.
We have
\begin{align*}
\psi_I\left({\prod_{i=1}^l t_{\alpha_i,\beta_i,d_i}\over |\Aut(A)|}\right)
= {} & |\Aut(A)|^{-1} \prod_{i=1}^l\sum_{j\in I:(a_j,b_j)=(\alpha_i,\beta_i)}
u_{j,d_i}\\
= {} & |\Aut(A)|^{-1}\sum_{\substack{J\subseteq I\\
\phi:J\rightarrow [l]
}}
\prod_{j\in J} u_{j,d_{\phi(j)}}
\end{align*}
where we sum over all bijections $\phi:J\rightarrow [l]$ such that
for all $j\in J$, $(a_j,b_j)=(\alpha_{\phi(j)}, \beta_{\phi(j)})$; here
we use the fact that $u_{i,j}u_{i,j'}=0$ in $A_I$ for all
$i\in I, j,j'\in \Z_{>0}$. Given such a bijection $\phi$, we may
define $\vecd'\in \NN^J$ by $d'_j=d_{\phi(j)}$. Thus we may rewrite
the above expression as
\[
|\Aut(A)|^{-1}\sum_{\substack{J\subseteq I, \vecd'\in\NN^J\\
\phi:J\rightarrow [l]
}}
\prod_{j\in J} u_{j,d'_{j}},
\]
where now the sum is over bijections $\phi$ satisfying
$(a_j,b_j)=(\alpha_{\phi(j)},\beta_{\phi(j)})$, $d'_j=d_{\phi(j)}$. Now
assuming such a bijection exists, there are precisely $|\Aut(A)|$
such bijections. Thus summing over all such bijections gives the desired
formula \eqref{eq:psi I formula}.

To finish the proof, we just need to show $\ker\psi_I=\mathrm{Ideal}(I)
\subseteq \Q[\{t_{\alpha,\beta,d}\}]$. Note that if
$M=\prod_{i=1}^l t_{\alpha_i,\beta_i,d_i}$, then $\psi_I(M)=0$ if and only
if the sum in the right-hand side of ~\eqref{eq:psi I formula} is empty, i.e., there is no
$J\subseteq I$ and bijection
$\phi:J\rightarrow [l]$ with $(a_j,b_j)=(\alpha_{\phi(j)},
\beta_{\phi(j)})$. However, this is the case if and only if there
is some $i\in [l]$ such that $|\{j\in [l]\,|\, (\alpha_i,\beta_i)=(\alpha_j,\beta_j)\}|
> |(\tw|_I)^{-1}(\alpha_i,\beta_i)|$. Put another way, this occurs
if $I$ does not contain enough elements $j$ with $\tw(j)=(\alpha_i,\beta_i)$.
However, by definition of $\mathrm{Ideal}(I)$, this occurs if and only if
$M\in \mathrm{Ideal}(I)$. In particular $\mathrm{Ideal}(I)
\subseteq\ker {\psi_I}$.

We now note that given two distinct monomials $M_1, M_2$ in the
$t_{\alpha_i,\beta_i,d_i}$, the sets of monomials appearing on the
right-hand side of \eqref{eq:psi I formula} are disjoint. Thus
if $f=\sum a_i M_i\in A_{I,\mathrm{sym}}$ with $a_i \in \Q$, $M_i$ monomials,
and $\psi_I(f)=0$,
we must have $\psi_I(M_i)=0$ for each $i$. But then each $M_i$, hence $f$,
lies in $\mathrm{Ideal}(I)$. This proves $\ker\psi_I=\mathrm{Ideal}(I)$.
\end{proof}

The following two corollaries are then immediate consequences of the
above lemma, the formulas for $W^{\CI,\mathrm{sym}}$,
$W^{\CI}$, and the formula of Theorem \ref{thm:period integral}.

\begin{cor}
\label{cor:sym to asym}
Let $\CI=(\CI_{\Gamma,\vecd})\in \InvSym(I)$. Then
\[
\psi_I(W^{\CI,\mathrm{sym}})=W^{\CI}.
\]
\end{cor}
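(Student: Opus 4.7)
The strategy is to expand $\psi_I(W^{\CI,\mathrm{sym}})$ term-by-term using the formula for $\psi_I\bigl(|\Aut(A)|^{-1}\prod_i t_{\alpha_i,\beta_i,d_i}\bigr)$ furnished by the preceding lemma, and then to re-index the resulting double sum so as to match the definition of $W^{\CI}$. Concretely, after applying $\psi_I$ to each summand of \eqref{def:flat family in 2 dim}, the contribution indexed by the multiset $A = \{(\alpha_i,\beta_i,d_i)\}_{i=1}^l \in \mathcal{A}_l$ expands into a sum over pairs $(J,\vecd')$, with $J \subseteq I$ of size $l$ and $\vecd' \in \NN^J$, for which there exists a bijection $\phi\colon J \to [l]$ identifying the twists $(a_j,b_j)$ of $j \in J$ with $(\alpha_{\phi(j)},\beta_{\phi(j)})$ and the descendents $d'_j$ with $d_{\phi(j)}$.

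The next step is to swap the order of summation and collect the coefficient of the monomial $x^{k_1}y^{k_2}\prod_{j\in J} u_{j,d'_j}$ for each fixed $(J,\vecd',k_1,k_2)$. For each such tuple there is a \emph{unique} multiset in $\mathcal{A}_{|J|}$ that can contribute, namely $A_{J,\vecd'} := \{(a_j,b_j,d'_j) \mid j\in J\}$; indeed, the monomial $\prod_{j\in J} u_{j,d'_j}$ appears in the expansion of $\psi_I(\prod_i t_{\alpha_i,\beta_i,d_i}/|\Aut(A)|)$ for exactly this $A$. After this bookkeeping, the coefficient in $\psi_I(W^{\CI,\mathrm{sym}})$ reduces to $(-1)^{|J|-1}\,\CI_{\Gamma_{0,k_1,k_2,1,\{(\alpha_i,\beta_i)\}},(d_i)}$, where $(\alpha_i,\beta_i,d_i)$ enumerate $A_{J,\vecd'}$ with internal tails labelled by $[|J|]$.

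Because $\CI \in \InvSym(I)$, any twist-preserving bijection between $J$ and $[|J|]$ identifies this value with $\CI_{\Gamma_{0,k_1,k_2,1,\{(a_j,b_j)\}_{j\in J}},\vecd'}$, whose internal tails are labelled by $J$. This is precisely the coefficient of $x^{k_1}y^{k_2}\prod_{j\in J} u_{j,d'_j}$ in $W^{\CI}$, after recalling the convention $\CI_{\Gamma,\vecd}=0$ whenever $\Gamma \notin \INT(J,\vecd)$, which reconciles the fact that the sum defining $W^{\CI}$ is restricted to balanced graphs while the sum defining $W^{\CI,\mathrm{sym}}$ is not. Summing over all $(J,\vecd',k_1,k_2)$ then yields the equality $\psi_I(W^{\CI,\mathrm{sym}}) = W^{\CI}$.

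The proof is therefore purely combinatorial; the only non-trivial step is the symmetry-based identification of the two graph labellings in the last paragraph, and this is where the hypothesis $\CI\in\InvSym(I)$ (rather than merely $\Inv(I)$) is essential. No geometric or analytic input is required beyond the lemma.
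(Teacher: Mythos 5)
Your proof is correct and follows exactly the route the paper intends: the paper declares this corollary an immediate consequence of the lemma computing $\psi_I\bigl(|\Aut(A)|^{-1}\prod_i t_{\alpha_i,\beta_i,d_i}\bigr)$ together with the two definitions, and you have simply written out the term-by-term expansion, re-indexing, and symmetry-based relabelling that make it immediate. The two points you flag explicitly --- the uniqueness of the contributing multiset $A_{J,\vecd'}$ and the convention that $\CI_{\Gamma,\vecd}$ vanishes off $\INT(J,\vecd)$ --- are exactly the right details to check.
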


\begin{cor}
\label{cor:symmetric integal}
Let $\CI$ be as in Corollary \ref{cor:sym to asym}.
If 
\[
W^{\CI,\mathrm{sym}}=x^r+y^s\mod
\langle t_{\alpha,\beta,d}\,|\,0\le \alpha \le r-2, 0\le \beta\le s-2, d\in \Z_{\ge 0} \rangle,
\]
then
\begin{align*}
&\int_{\Xi_{(a,b)}} e^{W^{\CI,\mathrm{sym}}/\hbar}dx\wedge dy \\
= {} & \delta_{a,0}\delta_{b,0} +
\sum_{l\ge 1}
\sum_{A=\{(\alpha_i,\beta_i,d_i)\}\in \mathcal{A}_l}(-1)^l
(-\hbar)^{-d(A)-2}
\mathcal{A}(A,\CI)\delta_{r(A), a}\delta_{s(A), b}
\left({\prod_{j=1}^l t_{\alpha_j,\beta_j,d_j}\over |\mathrm{Aut}(A)|}\right).
\end{align*}
\end{cor}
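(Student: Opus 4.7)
The plan is to deduce this statement from Theorem \ref{thm:period integral} (the non-symmetric version) via the homomorphism $\psi_I:A_{I,\mathrm{sym}}\to A_I$, exploiting that $\psi_I$ is injective and compatible with the potentials by Corollary \ref{cor:sym to asym}.

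First I would observe that $\psi_I$ extends coefficient-wise to a continuous ring homomorphism $A_{I,\mathrm{sym}}[[x,y]]\to A_I[[x,y]]$, and hence commutes with both the exponential series and the oscillatory integral (which is formally a sum of coefficient-wise pairings with period cycles). Since $\psi_I(x^r+y^s)=x^r+y^s$ and $\psi_I$ carries the ideal $\langle t_{\alpha,\beta,d}\rangle$ into the ideal $\langle u_{i,j}\rangle$, the hypothesis $W^{\CI,\mathrm{sym}}\equiv x^r+y^s$ modulo $\langle t_{\alpha,\beta,d}\rangle$ combined with Corollary \ref{cor:sym to asym} gives $W^{\CI}\equiv x^r+y^s$ modulo $\langle u_{i,j}\rangle$. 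Hence Theorem \ref{thm:period integral} applies, and
\[
\psi_I\!\left(\int_{\Xi_{(a,b)}}\! e^{W^{\CI,\mathrm{sym}}/\hbar}\Omega\right)
=\int_{\Xi_{(a,b)}} e^{W^{\CI}/\hbar}\Omega
=\delta_{a,0}\delta_{b,0}+\!\!\sum_{\varnothing\neq J\subseteq I}\sum_{\vecd\in\NN^J}\!(-1)^{|J|}(-\hbar)^{-d(J,\vecd)-2}\mathcal{A}(J,\vecd,\CI)\delta_{r(J),a}\delta_{s(J),b}\prod_{j\in J}u_{j,d_j}.
\]

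Next I would apply $\psi_I$ directly to the right-hand side claimed in the corollary and use formula \eqref{eq:psi I formula}. For each $l\ge 1$ and each multiset $A=\{(\alpha_i,\beta_i,d_i)\}\in\mathcal{A}_l$, the formula expresses $\psi_I\!\left(\prod_i t_{\alpha_i,\beta_i,d_i}/|\mathrm{Aut}(A)|\right)$ as the sum $\sum_{(J,\vecd')}\prod_{j\in J}u_{j,d'_j}$ ranging over pairs $(J\subseteq I,\vecd'\in\NN^J)$ admitting a bijection $\phi:J\to[l]$ with $(a_j,b_j,d'_j)=(\alpha_{\phi(j)},\beta_{\phi(j)},d_{\phi(j)})$. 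Given $(J,\vecd')$, there is a unique such multiset $A(J,\vecd'):=\{(a_j,b_j,d'_j)\}_{j\in J}$, and the definitions in \ref{AAD} give $\mathcal{A}(A(J,\vecd'),\CI)=\mathcal{A}(J,\vecd',\CI)$, $d(A(J,\vecd'))=d(J,\vecd')$, $r(A(J,\vecd'))=r(J)$, $s(A(J,\vecd'))=s(J)$, and $|A(J,\vecd')|=|J|$. Reindexing the sum over $(A,\text{matching }(J,\vecd'))$ as a sum over $(J,\vecd')$ (with the associated $A=A(J,\vecd')$) converts the image under $\psi_I$ of the proposed right-hand side into precisely the right-hand side of Theorem \ref{thm:period integral} displayed above.

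Finally, by injectivity of $\psi_I$ (which extends coefficient-wise to $A_{I,\mathrm{sym}}[[x,y]]\hookrightarrow A_I[[x,y]]$, and more generally to the Laurent series rings in $\hbar$ in which both sides of the asserted formula live), the identity in $A_I[[x,y]][\hbar,\hbar^{-1}]\!]$ proved in the previous paragraph lifts to the claimed identity in $A_{I,\mathrm{sym}}[[x,y]][\hbar,\hbar^{-1}]\!]$. There is no analytic obstacle here since everything reduces to formal manipulations once Theorem \ref{thm:period integral} is granted; the only step requiring care is the combinatorial bookkeeping that $\psi_I$ applied to the symmetric sum over multisets $A$ produces precisely the asymmetric sum over pairs $(J,\vecd')$, and the factor $1/|\mathrm{Aut}(A)|$ is exactly what is needed to cancel the $|\mathrm{Aut}(A)|$ bijections $\phi$ contributing to each $(J,\vecd')$ in \eqref{eq:psi I formula}. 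This matching of combinatorial weights is the main (and only nontrivial) obstacle to overcome.
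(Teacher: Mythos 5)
Your proposal is correct and is exactly the argument the paper intends: the paper declares this corollary an "immediate consequence" of the lemma on $\psi_I$ (injectivity plus \eqref{eq:psi I formula}), Corollary \ref{cor:sym to asym}, and Theorem \ref{thm:period integral}, and you have simply written out that derivation, including the correct combinatorial matching of the $1/|\Aut(A)|$ weight. No gaps.
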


We now can see immediately that  a symmetric chamber index $\nu$ from Definition~\ref{def:chamberIndex} provides flat coordinates for the Landau-Ginzburg model $W$. This will be used in proving the last paragraph of Theorem~\ref{thm:intro mirror theorem}.

\begin{cor}
\label{cor:flat coordinates}
Let $\CI$ be a symmetric chamber index bounded by $I$. Then we have
\[
\int_{\Xi_{(a,b)}} e^{W^{\CI,\mathrm{sym}}/\hbar}dx\wedge dy =
\delta_{(a,b),(0,0)} +
\hbar^{-1} t_{a,b,0} + O(\hbar^{-2})
\mod \langle t_{(\alpha,\beta,d)}\,|\, 0\le \alpha\le r-2, 0\le
\beta\le s-2, d>0\rangle.
\]
In particular, $dx\wedge dy$ is a primitive form and
the $t_{a,b,0}$ are flat coordinates.
\end{cor}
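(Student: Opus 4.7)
The plan is to apply Corollary~\ref{cor:symmetric integal} directly, after first checking that its hypothesis holds, and then to read off the three leading orders in $\hbar^{-1}$ from the three defining axioms of a chamber index in Definition~\ref{def:chamberIndex}. So the argument is essentially bookkeeping once one recognises that each axiom pins down exactly one order in $\hbar^{-1}$.

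First I would verify $W^{\CI,\mathrm{sym}} \equiv x^r+y^s \pmod{\langle t_{\alpha,\beta,d}\rangle}$ so that Corollary~\ref{cor:symmetric integal} applies. The $t$-independent part of the defining sum \eqref{def:flat family in 2 dim} comes from $l=0$, for which $\vecd$ is empty; by Proposition~\ref{prop:balanced}(b) the only balanced connected rooted graphs with no internal markings are $\Gamma_{0,r,0,1,\emptyset}$ and $\Gamma_{0,0,s,1,\emptyset}$. Since these have $|I(\Gamma)|=0$, condition~(1) of Definition~\ref{def:chamberIndex} gives $\CI_{\Gamma,\mathbf{0}} = -1$, and combined with the prefactor $(-1)^{l-1}|_{l=0}=-1$ this produces exactly $x^r+y^s$.

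Next I would apply Corollary~\ref{cor:symmetric integal} and partition the resulting sum by $l=|A|$, working in the quotient by $\mathfrak{a}:=\langle t_{\alpha,\beta,d} : d>0\rangle$ so that only multisets of the form $A=\{(\alpha_i,\beta_i,0)\}$ contribute. The $l=0$ term is $\delta_{(a,b),(0,0)}$. For $l=1$, a direct computation using Notation~\ref{nn:r(I),s(I),m(I)} gives $d(\{(\alpha,\beta,0)\})=-1$, condition~(2) of Definition~\ref{def:chamberIndex} gives $\mathcal{A}(\{(\alpha,\beta,0)\},\CI)=1$, and the Kronecker deltas $\delta_{r(A),a}\delta_{s(A),b}$ collapse the sum to
\[
(-1)^{1}(-\hbar)^{-(-1)-2}\,t_{a,b,0}\;=\;\hbar^{-1}t_{a,b,0}.
\]
For $l\ge 2$, condition~(3) of Definition~\ref{def:chamberIndex}, rewritten using $d(A)=(sr(A)+rs(A)-m(A,\vecd))/rs-1$, kills every term with $d(A)\le -1$; the surviving terms all have $d(A)\ge 0$ and hence contribute a factor $\hbar^{-d(A)-2}$ of order $\hbar^{-2}$ or smaller. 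Assembling the three cases yields the displayed expansion of the integral modulo $\mathfrak{a}$.

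Finally, comparing with the definition of a primitive form and of flat coordinates in \S\ref{subsec:state space}, the vanishing of all $\hbar^{-j}$ coefficients with $j<0$, together with $\varphi_{(a,b),0}=\delta_{(a,b),(0,0)}$ and $\varphi_{(a,b),1}=t_{a,b,0}$, is precisely the primitivity of $\Omega=dx\wedge dy$ and the identification of the $t_{a,b,0}$ as flat coordinates. I do not expect a genuine obstacle here: the real content is packaged in Corollary~\ref{cor:symmetric integal} and in the three axioms of a chamber index, and the work is matching each axiom to its corresponding order in $\hbar^{-1}$.
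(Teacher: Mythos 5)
Your proof is correct and follows essentially the same route as the paper: verify the hypothesis of Corollary~\ref{cor:symmetric integal} from Condition~(1) of Definition~\ref{def:chamberIndex}, then read off the $\hbar^0$, $\hbar^{-1}$, and $O(\hbar^{-2})$ orders from the three axioms (the paper invokes Condition~(1) rather than Condition~(2) for the $l=1$ coefficient, but the two are interchangeable here). Your extra verification that the only balanced graphs with no internal markings are $\Gamma_{0,r,0,1,\emptyset}$ and $\Gamma_{0,0,s,1,\emptyset}$ is a correct, slightly more explicit version of the paper's appeal to Condition~(1).
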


\begin{proof}
We first note that by Condition (1) of Definition \ref{def:chamberIndex}
that the hypothesis on $W^{\CI,\mathrm{sym}}$ of
Corollary \ref{cor:symmetric integal} holds, and
hence we may use the formula for the integral given there. 
Further, by Condition (3)
of Definition \ref{def:chamberIndex}, there is no contribution
of the form $(-\hbar)^{-d(A)-2}\mathcal{A}(A,\CI)$ with 
$d(A)<0$ and $|A|\ge 2$. On the other hand, if $A=\{(a,b,0)\}$, 
then it follows from Condition (1) of Definition \ref{def:chamberIndex}
and the formula for $\mathcal{A}(A,\CI)$ that the coefficient of
$\hbar^{-1}$ is the stated one.
\end{proof}

\begin{ex}
Let $W = x^4 + y^5$. Working modulo any cubic term in the $t_{(\alpha, \beta, 0)}$ variables and any $t_{(\alpha, \beta, d)}$ with $d > 0$, we can write a deformation $W^{\CI, \mathrm{sym}}$. To compress notation, we write $t_{ij} := t_{(i,j,0)}$. From the constraints given in Definition~\ref{def:chamberIndex}, we can see that

\begin{align*}
   W^{\CI, \mathrm{sym}} &= x^4 + y^5 + t_{23} x^2y^3 + t_{22} x^2y^2 + t_{21} x^2y + t_{20}x^2 + t_{13} xy^3 + t_{12} xy^2 + t_{11} xy + t_{10}x \\ &\qquad + t_{03} y^3 + t_{02} y^2 + t_{01} y + t_{00}+ {c_x} t_{22}t_{23}x^4 + {c_y} t_{22}t_{23}y^5 + d_x t_{23}^2 x^4y + d_y t_{23}^2 y^6 \\
   &\qquad + \tfrac14 t_{23}t_{21} y^4 + \tfrac14 t_{23}t_{20} y^3 + \tfrac25 t_{23}t_{13}x^3y + \tfrac15 t_{23}t_{12}x^3 + \tfrac25 t_{23}t_{03} x^2y + \tfrac15 t_{23}t_{02} x^2 \\
   &\qquad + \tfrac18 t_{22}^2 y^4 + \tfrac14 t_{22}t_{21}y^3 + \tfrac14 t_{22}t_{20} y^2 + \tfrac15 t_{22}t_{13}x^3 + \tfrac15 t_{22}t_{03} x^2  + \tfrac18 t_{21}^2 y^2 + \tfrac14 t_{21}t_{20} y \\
   &\qquad + \tfrac18 t_{20}^2 + \tfrac15 t_{13}^2 x^2y + \tfrac15 t_{13}t_{12} x^2 + \tfrac25t_{13}t_{03}xy + \tfrac15 t_{13}t_{02} x + \tfrac15 t_{12}t_{03} x  + \tfrac{2}{5}t_{03}^2 y + \tfrac15t_{03}t_{02},
\end{align*}
subject to the constraints 
\begin{equation}\label{eq: W12 constraints}
\tfrac{c_x}{4} + \tfrac{c_y}{5} = \tfrac{1}{20} \text{ and } \tfrac{d_x}{4} + \tfrac{2d_y}{5} = \tfrac{1}{20}; \quad \text{ with $c_x, c_y, d_x, d_y \in \Q$}.
\end{equation}
Note that any choice of $\Q$-solutions to~\eqref{eq: W12 constraints} will give a chamber index. 

Comparing with \cite[Appendix A, Type $W_{12}$]{LLSS}, the authors start with a versal deformation, and then find a primitive form that in our notation is of the form
$$
\zeta = dx\wedge dy (1 - \tfrac{1}{20}t_{22}t_{23} - \tfrac{1}{20}t_{23}^2 y + \cdots).
$$
One can apply Lemma~\ref{r spin integration by parts} and the constraint~\eqref{eq: W12 constraints} to see that
\begin{equation*}\begin{aligned}
\int_{\Xi_{(a,b)}} &e^{W^{\CI,\mathrm{sym}}/\hbar}dx\wedge dy= \\& \int_{\Xi_{(a,b)}} e^{(W^{\CI,\mathrm{sym}}- ({c_x} t_{22}t_{23}x^4 + {c_y} t_{22}t_{23}y^5 + d_x t_{23}^2 x^4y + d_y t_{23}^2 y^6))/\hbar}( 1 - \tfrac{1}{20}t_{22}t_{23} - \tfrac{1}{20}t_{23}^3 y + \ldots) dx\wedge dy
\end{aligned}\end{equation*}
since 
$$
e^{({c_x} t_{22}t_{23}x^4 + {c_y} t_{22}t_{23}y^5 + d_x t_{23}^2 x^4y + d_y t_{23}^2 y^6)/\hbar} dx\wedge dy = (1 - \tfrac{1}{20}t_{22}t_{23} - \tfrac{1}{20}t_{23}^3 y + \ldots) dx\wedge dy
$$
in the hypercohomology $\mathbb{H}^n(X, (\Omega_X^\bullet,
d+\hbar^{-1} dW\wedge-))$
We note that the potential
$$
W^{\CI,\mathrm{sym}}- ({c_x} t_{22}t_{23}x^4 + {c_y} t_{22}t_{23}y^5 + d_x t_{23}^2 x^4y + d_y t_{23}^2 y^6)
$$
would be equal to the \emph{flat} deformation (up to order 2) found by the approach in \cite{LLSS} once viewed in $\mathbb{H}^n(X, (\Omega_X^\bullet,
d+\hbar^{-1} dW\wedge-))$. Essentially, our choice of deformation of $W$ has yielded a less complicated primitive form. 
\end{ex}

\subsection{The Landau-Ginzburg wall-crossing Group}
\label{subsec:LG wall crossing}
To motivate the discussion of this subsection, let us consider
a rank $n$ Fermat potential $W=\sum_{i=1}^n x_i^{r_i}$, and let
$A$ be some coefficient ring parameterizing a perturbation
of $W$. Fix a maximal ideal $\mathfrak{m}\subseteq A$ and
let $W'\in A[[x_1,\ldots,x_n]]$ be such that $W\equiv W'\mod \mathfrak{m}$.
We are interested in changes to $W'$ which do not affect the
oscillatory integrals $\int e^{W'/\hbar} dx_1
\wedge\cdots\wedge dx_n$. Indeed, take an automorphism
$\psi \in \text{Aut}_{A} (A[[x_1,\ldots, x_n]])$
which is the identity modulo the ideal $\mathfrak{m}$ and satisfies the relation
\[
\psi^*(dx_1 \wedge \cdots \wedge dx_n)
=
dx_1 \wedge \cdots \wedge dx_n.
\]
Then
\begin{equation}
\label{eq:invariance of osc int}
\int_{\Xi} e^{W'/\hbar} dx_1\wedge\cdots\wedge dx_n
=\int_{\Xi} \psi^*\left(e^{W'/\hbar} dx_1\wedge\cdots\wedge dx_n\right)
=\int_{\Xi} e^{\psi(W')/\hbar} dx_1\wedge\cdots\wedge dx_n.
\end{equation}
Note that we have not changed the cycle of integration $\Xi$ and in
fact should use $\psi^{-1}(\Xi)$ in the second and third integrals,
but because $\psi$ is just an infinitesimal extension of the identity,
this has no effect on the integral.
Thus we may change $W'$ by applying $\psi$ without changing
the integrals.

We now study the Lie algebra of the group of such $\psi$. Some care
is necessary to handle the completion, so let
\[
R_k:=A[x_1,\ldots,x_n]/(x_1,\ldots,x_n)^{k+1},
\]
so that $A[[x_1,\ldots,x_n]]$ is the inverse limit of the $R_k$.

Consider the
module $\Theta_k$ of derivations of
$R_k$ over $A$:
\[
\Theta_k:=\bigoplus_{i=1}^n R_k \partial_{x_i}.
\]
This module comes equipped with the usual Lie bracket, i.e.,
\begin{equation}
\label{eq:Lie bracket}
\left[\left(\prod_i x_i^{n_i} \right)\partial_{x_j},
\left(\prod_i x_i^{n_i'}\right) \partial_{x_k}\right]
=
\left(n_j' \prod_{i=1}^n x_i^{n_i+n_i'-\delta_{ij}}\right)\partial_{x_k}
- \left(n_k \prod_{i=1}^n x_i^{n_i+n_i'-\delta_{ik}}\right)\partial_{x_j}.
\end{equation}
We next define a linear subspace of $\Theta_k$,
\begin{align}
\begin{split}
\mathfrak{g}_{A,k}:= {} &
\bigoplus_{(a_1, \ldots,a_n)} \mathfrak{g}_{A,(a_1, \ldots, a_n)}\\
:= {} &
\bigoplus_{\substack{(a_1, \ldots, a_n) \in \NN^n \setminus \{0\}\\
\sum a_i \le k}} \sum_{i=1}^n \mathfrak{m} \cdot (x_1^{a_1}\cdots x_n^{a_n} ((a_i+1) x_1 \partial_{x_1} - (a_1+1) x_i\partial_{x_i})).
\end{split}
\end{align}
To show $\mathfrak{g}_{A,k}$ is closed under Lie bracket,
one tediously calculates
\begin{align}
\label{eq:tedious commutator}
\begin{split}
&\left[\left(\prod_{i=1}^n x_i^{a_i}\right)((a_j+1)x_1\partial_{x_1}-(a_1+1)
x_j\partial_{x_j}),
\left(\prod_{i=1}^n x_i^{b_i}\right)((b_\ell+1)x_1\partial_{x_1}-(b_1+1)
x_\ell\partial_{x_\ell})\right]\\
= {} &
\left(\prod_{i=1}^n x_i^{a_i+b_i}\right)
\big((a_j+b_j+1)\alpha+(a_\ell+b_\ell+1)\beta\big)x_1\partial_{x_1}
-(a_1+b_1+1)(\alpha x_j\partial_{x_j}+\beta x_\ell\partial_{x_\ell}),
\end{split}
\end{align}
with
\[
\alpha= {-(b_\ell+1)(a_1+1)a_1+(b_1+1)(a_1+1)a_\ell \over a_1+b_1+1},
\quad
\beta = {(a_j+1)(b_1+1)b_1-(a_1+1)(b_1+1)b_j \over a_1+b_1+1}.
\]
Further,
if $v\in \mathfrak{g}_{A,k}$,
then the Lie derivative ${\mathcal L}_v(dx_1\wedge\cdots\wedge dx_n)=0$.
In detail, since $dx_1\wedge\cdots \wedge dx_n$ is a closed form, we have
\[
{\mathcal L}_v(dx_1\wedge\cdots\wedge dx_n)=
d(\iota(v)(dx_1\wedge\cdots\wedge dx_n)).
\]
Taking $v=x_1^{a_1}\cdots x_n^{a_n}((a_i+1)x_1\partial_{x_1}
-(a_1+1)x_i\partial_{x_i})$, we see this latter quantity is
\begin{align*}
&d\left(x_1^{a_1}\cdots x_n^{a_n}((a_i+1)x_1 dx_2\wedge\cdots \wedge dx_n-(-1)^{i-1}(a_1+1)x_idx_1\wedge \cdots\wedge
\widehat{dx_i}\wedge\cdots\wedge dx_n\right)\\
= {} & x_1^{a_1}\cdots x_n^{a_n}
((a_1+1)(a_i+1)-(a_i+1)(a_1+1))dx_1\wedge\cdots \wedge dx_n\\
= {} & 0.
\end{align*}
Thus vector fields in $\mathfrak{g}_{A,k}$ indeed preserve $dx_1\wedge
\cdots \wedge dx_n$.

We note that this is not the algebra of all vector fields preserving
$dx_1\wedge\cdots \wedge dx_n$. Indeed, vector fields of the form
$f(x_1,\ldots,\widehat{x_i},\ldots,x_n)\partial_{x_i}$ also preserve
$dx_1\wedge\cdots \wedge dx_n$. But such a vector field will not play a role in our theory.
This is a non-trivial point, which in the rank two case is in fact
enforced by Condition (3)(a) of the definition of strongly positive,
Definition \ref{def: strongly positive}.

\begin{definition}\label{Wall Crossing Group}
As $\mathfrak{g}_{A,k}$ is a nilpotent Lie algebra, we may define a group
$G_{A,k}:=\exp(\mathfrak{g}_{A,k})$. This is a group
whose underlying set is $\mathfrak{g}_{A,k}$ but where
multiplication is given by the Baker-Campbell-Hausdorff formula.
We may then define $G_A$ as the pro-nilpotent Lie group given as
the inverse limit of the $G_{A,k}$, identified as a set with
$\mathfrak{g}_A$, the inverse limit of the $\mathfrak{g}_{A,k}$.
We call $G_A$ the \emph{Landau-Ginzburg wall-crossing group of rank $n$
(over $A$)}.
\end{definition}

We note that $G_A$ acts by automorphisms of
$A[[x_1,\ldots,x_n]]$ which preserve $dx_1\wedge\cdots
\wedge dx_n$ via the exponential: i.e., if $v\in \mathfrak{g}_A$, then
$\exp(v)$ acts on elements of $A[[x_1,\ldots,x_n]]$ by
\begin{equation}
\label{eq:action}
f\mapsto \sum_{n=0}^{\infty} {v^n(f)\over n!},
\end{equation}
where $v^n$ denotes differentiating with respect to the vector field
$n$ times.

\begin{rmk}
A similar group of automorphisms of $A[x_1^{\pm 1},\ldots, x_n^{\pm 1}]$
preserving $d\log x_1\wedge\cdots\wedge d\log x_n$
or a symplectic holomorphic form
has been studied extensively in the literature and applied
in many different settings, with initial study in
the two-dimensional case by Kontsevich and Soibelman in \cite{KontSoib} and
in higher dimension by Gross and Siebert in \cite{GSAnnals}. The particular use
here follows closely the application of the wall-crossing group
by Gross in \cite{GrossP2}.
\end{rmk}

Restricting now to the two-dimensional case, we may write
\begin{equation}
\mathfrak{g}_{A,k} = \bigoplus_{\substack{(a,b) \in \NN^2 \setminus \{(0,0)\}
\\ a+b\le k}} \mathfrak{m} \cdot (x^ay^b ((b+1)x \partial_x - (a+1)y\partial_y)) = \bigoplus_{ \substack{(a,b) \in \NN^2 \setminus \{(0,0)\}
\\ a+b\le k}} \mathfrak{g}_{A,(a,b)}.
\end{equation}
 The action of $G_A$ on $A[[x,y]]$ can be described explicitly through the following computation:

\begin{lem}\label{WC:closedform}
For $(a,b)\in \NN^2\setminus \{(0,0)\}$, consider
\[
v=g x^ay^b((b+1)x\partial_x-(a+1)y\partial_y)\in
\mathfrak{g}_{A,(a,b)},
\]
with $g\in \mathfrak{m} \subseteq A$.
\begin{enumerate}
\item
If $a\not=b$,
then $\exp(v)$ acts on $A[[x,y]]$ via
\begin{equation}
\label{eq:exp v}
\begin{aligned}
x &\longmapsto x(1+(b-a) g x^ay^b)^{(b+1)/(b-a)}\\
y &\longmapsto y(1+(b-a) g x^ay^b)^{(a+1)/(a-b)}.
\end{aligned}\end{equation}
\item If $a=b$, then $\exp(v)$ acts on $A[[x,y]]$ via
\begin{equation}
\label{eq:exp v equal case}
\begin{aligned}
x &\longmapsto x\exp((a+1) g x^ay^b)\\
y &\longmapsto y\exp(-(a+1) g x^ay^b).
\end{aligned}\end{equation}
\end{enumerate}
\end{lem}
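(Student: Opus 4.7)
\medskip

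The plan is to verify Lemma~\ref{WC:closedform} by a direct power-series computation: unwind $\exp(v)$ via its defining series and recognise the result as a binomial (or exponential) series. Throughout, write $u = gx^ay^b \in \mathfrak{m}\cdot A[[x,y]]$, so that $v = u\bigl((b+1)x\partial_x - (a+1)y\partial_y\bigr)$. Since $u$ lies in the maximal ideal, $v$ is locally nilpotent on $A[[x,y]]$ modulo any power of $\mathfrak{m}+(x,y)$, so the exponential series \eqref{eq:action} converges in the $(\mathfrak{m}+(x,y))$-adic topology and defines an automorphism of $A[[x,y]]$.

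The first step is to compute the iterated action of $v$ on the coordinates $x$ and $y$. Using the elementary rule $v(x^{\alpha}y^{\beta}) = \bigl((b+1)\alpha - (a+1)\beta\bigr)gx^{\alpha+a}y^{\beta+b}$, an easy induction shows that
\begin{equation*}
v^n(x) = c_n\,g^n x^{na+1}y^{nb}, \qquad v^n(y) = d_n\,g^n x^{na}y^{nb+1},
\end{equation*}
where $c_0 = d_0 = 1$ and the coefficients satisfy the recurrences
\begin{equation*}
c_{n+1} = \bigl(n(a-b) + (b+1)\bigr)\,c_n, \qquad d_{n+1} = \bigl(n(a-b) - (a+1)\bigr)\,d_n.
\end{equation*}
Consequently $c_n = \prod_{k=0}^{n-1}\bigl(k(a-b)+(b+1)\bigr)$ and similarly for $d_n$.

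When $a\ne b$, rewrite these as Pochhammer symbols: $c_n = (a-b)^n\bigl(\tfrac{b+1}{a-b}\bigr)_n$ and $d_n = (a-b)^n\bigl(-\tfrac{a+1}{a-b}\bigr)_n$. Summing $\exp(v)(x) = \sum_{n\geq 0} v^n(x)/n!$ and factoring out $x$ reduces the computation to the classical binomial identity $\sum_{n\geq 0} \tfrac{(\alpha)_n}{n!}z^n = (1-z)^{-\alpha}$ with $z = (a-b)u$, which gives
\begin{equation*}
\exp(v)(x) = x\bigl(1 + (b-a)gx^ay^b\bigr)^{(b+1)/(b-a)},
\end{equation*}
and analogously for $y$, matching \eqref{eq:exp v}. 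When $a = b$, one has $c_n = (b+1)^n$ and $d_n = (-(a+1))^n$, so the series sum to ordinary exponentials, yielding \eqref{eq:exp v equal case}.

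The only mild subtlety — really the only place one has to think rather than calculate — is to justify interpreting the right-hand side of \eqref{eq:exp v} as a well-defined element of $A[[x,y]]$, since the exponent $(b+1)/(b-a)$ need not be an integer. This is handled by interpreting $(1+w)^{\lambda}$ for $w\in \mathfrak{m}\cdot A[[x,y]]$ via the formal binomial series $\sum_{n\geq 0}\binom{\lambda}{n}w^n$, which converges in the $(\mathfrak{m}+(x,y))$-adic topology because $w\in \mathfrak{m}$; the identity of power series then follows from the above Pochhammer manipulation since $(a-b)^n\binom{(b+1)/(b-a)}{n}$ equals $(-1)^n\bigl(\tfrac{b+1}{a-b}\bigr)_n/n!$ up to the right sign. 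No further input is required.
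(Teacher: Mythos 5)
Your proof is correct, and it takes a genuinely different route from the paper's. You compute $v^n(x)$ and $v^n(y)$ explicitly, identify the coefficients $c_n=\prod_{k=0}^{n-1}\bigl(k(a-b)+(b+1)\bigr)$ as rescaled Pochhammer symbols, and sum the exponential series via the binomial identity; I checked the recursions and the reduction to $\sum_n \tfrac{(\alpha)_n}{n!}z^n=(1-z)^{-\alpha}$ with $z=(a-b)gx^ay^b$, and they are right, as is your closing remark reconciling the Pochhammer coefficients with the formal binomial expansion of $(1+(b-a)gx^ay^b)^{(b+1)/(b-a)}$. The paper instead argues structurally: it first observes that $\psi=\exp(v)$ must send $x\mapsto xf_x$, $y\mapsto yf_y$ with $f_x,f_y$ power series in $q=x^ay^b$, uses $v(x^{a+1}y^{b+1})=0$ to write $f_x=f^{b+1}$, $f_y=f^{-(a+1)}$ for a single $f(q)$, then imposes $\psi^*(dx\wedge dy)=dx\wedge dy$ to get the ODE $f^{b-a}+(a-b)qf'f^{b-a-1}=1$, whose solutions for $a\neq b$ are $f=(1+Cq)^{1/(b-a)}$; the constant $C=(b-a)g$ is pinned down by matching the first-order term $\psi(x)=x+(b+1)gx^{a+1}y^b+\cdots$. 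What each buys: the paper's argument avoids summing any series but is vacuous when $a=b$ (the ODE imposes no condition), so that case must be done by direct calculation anyway; your computation treats both cases uniformly and needs no appeal to volume-form preservation, at the cost of the Pochhammer bookkeeping. Your convergence remark is slightly loose as stated — the cleanest justification is that $v^n(f)\in\mathfrak{m}^n\cdot(x,y)^{n}\cdot A[[x,y]]$ for $(a,b)\neq(0,0)$, so the exponential series converges term-by-term in each $R_k$ and hence in the inverse limit — but this is exactly the setting the paper sets up, so it is not a gap.
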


\begin{proof}
We first note the calculation of (2) is straightforward, using the
fact that $v$ vanishes on $(xy)^n$ for any $n$.
For (1), note that given the description \eqref{eq:action}
for the action of $\exp(v)$,
we can rewrite the automorphism $\psi= \exp(v)$ on the elements $x,y \in A[[x,y]]$ as
\begin{align*}
\psi(x)= {} & x\left(1+\sum_{n=1}^{\infty} f_{x,n}(x^ay^b)^n\right)
=xf_x\\
\psi(y)= {} & y\left(1+\sum_{n=1}^{\infty} f_{y,n}(x^ay^b)^n\right)
=yf_y
\end{align*}
for some $f_{x,n}, f_{y,n}\in A$ defining $f_x,f_y\in A[[x^ay^b]]$
with $f_x,f_y\equiv 1 \mod \mathfrak{m}$. Further, since $v(x^{a+1}y^{b+1})=0$,
$\psi(x^{a+1}y^{b+1})=x^{a+1}y^{b+1}$, so $f_x^{a+1}=f_y^{-(b+1)}$. Thus by taking roots as formal
power series, there is a unique function $f\in
A[[q]]$ so
that $f_x=f(x^ay^b)^{b+1}$, $f_y=f(x^ay^b)^{-(a+1)}$ and $f\equiv 1
\mod q$.
Now recall that $\psi^*(dx\wedge dy)=dx\wedge dy$. 
Writing $f'$ for the derivative of $f$ with respect to $q$, and
substituting $q=x^ay^b$, we can compute that
\[
dx \wedge dy = \psi^*(dx\wedge dy)=
(f^{b-a}+(a-b)x^ay^bf'f^{b-a-1})dx\wedge dy.
\]
Thus, with $q=x^ay^b$, we require that
\[
f^{b-a}+(a-b)qf' f^{b-a-1}=1.
\]
If $a=b$, we obtain no restriction on $f$, and hence the explicit
calculation is necessary. However, if $a\not=b$, then any solution
to this differential equation is of the form
$f(q)=(1+C q)^{1/(b-a)}$ for $C$ a constant, which we may
view as an element of $A$. This now gives the
form \eqref{eq:exp v} up to determining the constant $C$. However,
we may expand $\psi(x)$ as a Taylor series in $C$ to first order
to obtain
\[
\psi(x)=x(1+{(b+1)C\over b-a}x^ay^b+\cdots).
\]
Noting that
\[
\psi(x)=x+v(x)+\cdots=x+(b+1) g x^{a+1} y^b+\cdots,
\]
we see we must have $C= (b-a) g$, as desired.
\end{proof}

We now fix a Landau-Ginzburg model $W= x^r + y^s$. We will
fix a finite $I\subseteq \Omega$ and take $A=A_I$
or $A=A_{I,\mathrm{sym}}$. In both cases, we take $\mathfrak{m}$ to
be the ideal generated by the variables (the $u_{i,d}$
or $t_{\alpha,\beta,d}$ in the two cases). We thus obtain the groups
$G_{A_I}$ or $G_{A_{I,\mathrm{sym}}}$.

\begin{rmk}
The morphism $\psi_I:A_{I,\mathrm{sym}}\rightarrow A_I$ induces a
homomorphism of nilpotent Lie algebras $\widetilde\psi_I:
\mathfrak{g}_{A_{I,\mathrm{sym}},k} \rightarrow\mathfrak{g}_{A_I,k}$
and a homomorphism of
groups $\widetilde\psi_I:G_{A_{I,\mathrm{sym}}}
\rightarrow G_{A_I}$ which is compatible with the action of these
two groups on $A_{I,\mathrm{sym}}[[x,y]]$ and $A_I[[x,y]]$ respectively,
i.e., for $g\in G_{A_{I,\mathrm{sym}}}$, $W\in A_{I,\mathrm{sym}}[[x,y]]$,
we have
\begin{equation}
\label{eq:intertwining}
\psi_I(g(W))=\widetilde\psi_I(g)(\psi_I(W)).
\end{equation}
\end{rmk}

The choice of $W$ determines natural subgroups as follows. 
We want to emphasize that these subgroups consist of direct summands 
that are parameterised by critical graphs.

\begin{definition}
Fix $I\subseteq \Universe$ a finite set of markings.
\begin{enumerate}
\item
For $J\subseteq I$, $\vecd\in \NN^J$,
we define
\[
u_{J,\vecd}:=\prod_{j\in J}u_{j,d_j}.
\]
Define for $k_1,k_2\ge 0$,
$(k_1,k_2)\not= (0,0)$,
\begin{equation}
\label{eq:special subalgebra}
\mathfrak{g}_{A_I,(k_1,k_2)}^{r,s}:=\bigoplus_{\substack{\varnothing\not=J\subseteq I,
\vecd\in\NN^J\\
k_1=r(J)\text{ mod }r,\quad k_2=s(J)\text{ mod } s\\
sk_1+rk_2=m(J,\vecd)-rs}}
u_{J,\vecd}x^{k_1}y^{k_2}((k_2+1)x\partial_x-(k_1+1)y\partial_y)\Q
\subseteq \mathfrak{g}_{(k_1,k_2)}.
\end{equation}
Set
\[
\mathfrak{g}_{A_I,k}^{r,s}
=\bigoplus_{\substack{k_1,k_2\ge 0\\ 0< k_1+k_2 \le k}}
\mathfrak{g}^{r,s}_{(k_1,k_2)}.
\]

\item
Define
\[
\mathfrak{g}_{A_{I,\mathrm{sym}},(k_1,k_2)}^{r,s}:=
\widetilde\psi_I^{-1}(\mathfrak{g}^{r,s}_{A_I,(k_1,k_2)})
\]
and
\[
\mathfrak{g}_{A_{I,\mathrm{sym}},k}^{r,s}
:=\widetilde\psi_I^{-1}(\mathfrak{g}^{r,s}_{A_I,k}).
\]
\end{enumerate}
\end{definition}

\begin{rmk}
The introduction gave different descriptions of the groups
$G_{A_{I, \mathrm{sym}}}$ and $G_{A_{I,\mathrm{sym}}}^{r,s}$. 
It is not difficult to see that the explicit constructions
of these groups here agree with the characterizations given in
\S\ref{sec:wall-crossing-intro}. Indeed, for $G_{A_{I, \mathrm{sym}}}$,
automorphisms which are
the identity modulo $\mathfrak{m}\subseteq A_{I,\mathrm{sym}}$
are given as exponentials of derivations which are zero modulo
$\mathfrak{m}$. A computation of the Lie bracket
$\mathcal{L}_v(dx\wedge dy)$ shows that any derivation preserving
$dx\wedge dy$ must be a linear combination of derivations of
the form $x^ay^b((b+1)x\partial_x
-(a+1)y\partial_y)$ with $a,b\ge -1$, $(a,b)\not= (-1,-1)$. However
the requirement that the derivation take $xy$ into the ideal generated by
$xy$ forces $a,b\ge 0$. Thus these three conditions yield the group
$G_{A_{I,\mathrm{sym}}}$. The additional restrictions for 
$\mathfrak{g}^{r,s}_{A_{I,
\mathrm{sym}},k}$ arise as follows. The condition that $[E,\cdot]$
acts by multiplication by $rs$, where $E$ is the Euler 
field of \eqref{eq:Euler}, imposes the condition that $sk_1+sk_2=
m(J,\vecd)-rs$. Finally invariance under the action 
$x\mapsto \xi_r x$, $y\mapsto \xi_s, y$, $t_{a,b,d}\mapsto 
\xi^{-a}_r \xi^{-b}_s t_{a,b,d}$ imposes the condition
$k_1=r(J)\mod r$, $k_2=s(J)\mod r$.

We omit the details as
we will not need the description given in the introduction.
\end{rmk}

\begin{prop}
The subalgebra $\mathfrak{g}^{r,s}_{A,k}$ is closed under Lie bracket in
either case $A=A_I$ or $A_{I,\mathrm{sym}}$.
\end{prop}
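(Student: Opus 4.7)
The plan is to establish closure under Lie bracket first in the case $A = A_I$, and then deduce the symmetric case by functoriality. Take two generators of $\mathfrak{g}^{r,s}_{A_I,k}$ labelled by data $(J_\alpha, \vecd^\alpha, k_1^\alpha, k_2^\alpha)$ for $\alpha = 1, 2$. Specializing \eqref{eq:tedious commutator} to rank two with $j = \ell = 2$ (so that $x_j = x_\ell = y$), the two $y\partial_y$ summands $\alpha x_j\partial_{x_j} + \beta x_\ell\partial_{x_\ell}$ collapse to $(\alpha+\beta) y\partial_y$, and similarly the $x\partial_x$ coefficient is $(k_2^1+k_2^2+1)(\alpha+\beta)$, yielding
\[
[v_1, v_2] = (\alpha+\beta)\, x^{k_1^1+k_1^2}y^{k_2^1+k_2^2}\bigl((k_2^1+k_2^2+1)x\partial_x - (k_1^1+k_1^2+1)y\partial_y\bigr).
\]
This already has the special shape appearing in \eqref{eq:special subalgebra}. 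On the coefficient side, $u_{J_1,\vecd^1}u_{J_2,\vecd^2} = 0$ in $A_I$ unless $J_1 \cap J_2 = \emptyset$, in which case the product equals $u_{J,\vecd}$ for $J = J_1 \sqcup J_2$ and $\vecd = \vecd^1 \sqcup \vecd^2$. Thus $[v_1,v_2]$ is either zero or proportional to a candidate generator with data $(J,\vecd,k_1^1+k_1^2,k_2^1+k_2^2)$.

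It then remains to check this tuple satisfies the defining conditions of $\mathfrak{g}^{r,s}_{A_I,(k_1^1+k_1^2,k_2^1+k_2^2)}$. The congruences $k_1^1+k_1^2 \equiv r(J_1)+r(J_2) \equiv r(J)\pmod r$ and its $s$-analogue follow immediately from the additivity of $r(-)$ and $s(-)$ modulo $r$ and $s$ respectively. For the remaining identity $s(k_1^1+k_1^2)+r(k_2^1+k_2^2) = m(J,\vecd)-rs$, Notation~\ref{nn:r(I),s(I),m(I)} gives $m(J,\vecd) = m(J_1,\vecd^1)+m(J_2,\vecd^2)-rs$, and adding the two original equations $sk_1^\alpha+rk_2^\alpha = m(J_\alpha,\vecd^\alpha)-rs$ yields exactly the desired identity.

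For the symmetric case, the ring map $\psi_I\colon A_{I,\mathrm{sym}} \to A_I$ induces a Lie algebra homomorphism $\widetilde\psi_I\colon \mathfrak{g}_{A_{I,\mathrm{sym}},k} \to \mathfrak{g}_{A_I,k}$, so the preimage of any Lie subalgebra is again a Lie subalgebra; applying this to $\mathfrak{g}^{r,s}_{A_I,k}$ together with the definition $\mathfrak{g}^{r,s}_{A_{I,\mathrm{sym}},k} = \widetilde\psi_I^{-1}(\mathfrak{g}^{r,s}_{A_I,k})$ yields closure in the symmetric case. The only non-routine point in the argument is the collapse of the two $y\partial_y$ summands in \eqref{eq:tedious commutator} into a single term of the precise form $(k_2+1)x\partial_x - (k_1+1)y\partial_y$, which is what makes the bracket stay inside the one-dimensional summands appearing in \eqref{eq:special subalgebra}; this is the main, if modest, obstacle in the proof.
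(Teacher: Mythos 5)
Your proof is correct and follows essentially the same route as the paper's: reduce to $A=A_I$, specialize \eqref{eq:tedious commutator} to see the bracket of two generators is a rational multiple of a generator of the same shape, and verify the congruences and the identity $s(k_1+k_1')+r(k_2+k_2')=m(J\cup J',\vecd'')-rs$ from additivity of $m$. Your explicit identification of the scalar as $\alpha+\beta$ and your justification of the symmetric case via preimages of subalgebras under $\widetilde\psi_I$ merely spell out details the paper leaves implicit.
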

\begin{proof}
It is sufficient to
check this in the case where $A=A_I$. Using \eqref{eq:tedious commutator},
we have, for $J,J'\subseteq I$ disjoint, $k_1,k_2, \vecd, \vecd'$
satisfying the conditions in \eqref{eq:special subalgebra},
\begin{align*}
& [u_{J,\vecd} x^{k_1}y^{k_2}((k_2+1)x\partial_x-(k_1+1)y\partial_y,
u_{J',\vecd'} x^{k'_1}y^{k'_2}((k'_2+1)x\partial_x-(k'_1+1)y\partial_y]\\
= {} & u_{J,\vecd}u_{J',\vecd'}x^{k_1+k_1'}x^{k_2+k_2'} C
((k_2+k_2'+1)x_1\partial_{x_1}-(k_1+k_1'+1)x_2\partial_{x_2})
\end{align*}
for some $C\in \Q$.
If $\vecd''\in \NN^{J\cup J'}$ denotes the
descendent vector agreeing with $\vecd$ on $J$ and $\vecd'$ on $J'$,
then we just need to check (1) $k_1+k_1'=r(J\cup J') \mod r$,
$k_2+k_2'=s(J\cup J') \mod s$, which is automatic, and (2)
$s(k_1+k_1')+r(k_2+k_2')=m(J\cup J',\vecd'')-rs$. But this follows
immediately from the definition of $m$.
\end{proof}

\begin{definition}
We write $G_A^{r,s}\subseteq G_A$ for the subgroups obtained from
the system of Lie algebras $\mathfrak{g}^{r,s}_{A,k}$ for $A=A_I$
or $A_{I,\mathrm{sym}}$.
\end{definition}

The key observation is then:

\begin{thm}
\label{thm:G action}
Let $I\subseteq\Omega$ be finite.
There is a well-defined action of the group $G_{A_I}^{r,s}$ on
the set of chamber indices
$\ChamberIndices(I)$ (see Definition \ref{def:chamberIndex}) given by
\[
g(W^{\CI})=W^{g(\CI)}.
\]
This action is faithful and transitive. Similarly, there is a well-defined
action of the group $G^{r,s}_{A_{I,\mathrm{sym}}}$ on
$\ChamberIndicesSym(I)$ defined via the same formula. This action is
faithful and transitive.
\end{thm}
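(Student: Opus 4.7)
I treat the asymmetric version first; the symmetric statement follows from the intertwining~\eqref{eq:intertwining} and Corollary~\ref{cor:sym to asym}, applied to $\mathrm{Sym}(I)$-invariant subspaces. The first step is well-definedness. The map $\CI \mapsto W^\CI$ is injective, since Proposition~\ref{prop:balanced}(b) bijects balanced graphs $\Gamma \in \INT(J, \vecd)$ with the admissible exponent pairs $(k_1(\Gamma), k_2(\Gamma))$ labeling the monomials of $W^\CI$. A Lie algebra generator
\[
v = u_{J'',\vecd''} x^{K_1} y^{K_2}((K_2+1) x\partial_x - (K_1+1) y\partial_y) \in \mathfrak{g}^{r,s}_{A_I},
\]
with $(K_1,K_2)$ attached to a critical graph via Proposition~\ref{prop:critical m(I)}, preserves the subspace of potentials because for any balanced monomial $x^{k_1}y^{k_2}u_{J',\vecd'}$ with $J'\cap J''=\emptyset$ the sum
\[
s(K_1+k_1) + r(K_2+k_2) = (m(J'',\vecd'')-rs) + m(J',\vecd') = m(J'\sqcup J'',\vecd'\sqcup\vecd'')
\]
witnesses the balance relation for $(J'\sqcup J'', \vecd'\sqcup\vecd'')$; hence $g(W^\CI) = W^{\CI'}$ for a unique $\CI'\in\Inv(I)$. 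Chamber-index condition~(1) for $\CI'$ is immediate since every element of $\mathfrak{g}^{r,s}$ lies in $\mathfrak{m}\cdot\Theta$ and so fixes the $u=0$ part. For conditions~(2),~(3), invariance of $dx\wedge dy$ under $g$ (Equation~\eqref{eq:invariance of osc int}) yields $\int e^{W^{\CI'}/\hbar}\Omega = \int e^{W^\CI/\hbar}\Omega$; applying Theorem~\ref{thm:period integral} to both sides and comparing coefficients of $u_{J,\vecd}(-\hbar)^{-d(J,\vecd)-2}$ gives $\mathcal{A}(J,\vecd,\CI') = \mathcal{A}(J,\vecd,\CI)$ for every $(J,\vecd)$, transferring both remaining conditions.

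The crux is a common infinitesimal calculation underlying both faithfulness and transitivity. For the generator attached to $\Lambda_{J,p}\in\CRIT(J,\vecd)$ of Notation~\ref{Concrete bal and crit graphs with prescribed internals},
\[
v(x^r+y^s) = r(K_2+1) u_{J,\vecd} x^{K_1+r} y^{K_2} - s(K_1+1) u_{J,\vecd} x^{K_1} y^{K_2+s},
\]
and substituting $(K_1,K_2) = (r(J)+(p-1)r, s(J)+(N-p)s)$ identifies these monomials with those of the balanced graphs $\Gamma_{J,p}$ and $\Gamma_{J,p-1}$, each with a non-zero scalar coefficient. Faithfulness follows at once: the leading $\mathfrak{m}$-adic component of any non-trivial $\log g$ perturbs $W^\CI$ non-trivially for any $\CI\in\ChamberIndices(I)$ (non-emptiness is established by an independent inductive construction solving the linear constraints~(2),~(3) at each $\mathfrak{m}$-order). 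For transitivity, given $\CI_0,\CI_1\in\ChamberIndices(I)$, I inductively produce $g_l\in G^{r,s}_{A_I}$ with $W^{g_l\CI_0}\equiv W^{\CI_1}\pmod{\mathfrak{m}^{l+1}}$ and take the inverse limit in the pro-nilpotent group. The step $g_{l+1} = \exp(v_{l+1})g_l$ with $v_{l+1}$ of pure order $l+1$ reduces modulo $\mathfrak{m}^{l+2}$ to $v_{l+1}(x^r+y^s) \equiv W^{\CI_1}-W^{g_l\CI_0}$ (all other contributions are of order $\geq l+2$). At fixed $(J,\vecd)$ with $|J|=l+1$, the $N$ generators attached to $\CRIT(J,\vecd)$ act on the $(N+1)$-dimensional coefficient space $\{\CI_{\Gamma_{J,p}}\}_{p=0}^N$ via an explicit bidiagonal matrix of full rank $N$; by the first paragraph this image lies inside the kernel of the chamber-index form $\mathcal{A}(J,\vecd,-)$, and since that kernel is itself $N$-dimensional (its coefficients are non-zero $\Gamma$-function ratios) the two coincide. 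As both $\CI_1$ and $g_l\CI_0$ are chamber indices, their order-$(l+1)$ difference lies in this kernel and can be realized by $v_{l+1}$.

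The main obstacle is the dimension-matching step: showing the $N$-dimensional image of the bidiagonal tangent action coincides with the $N$-dimensional kernel of $\mathcal{A}(J,\vecd,-)$ on $\Q^{N+1}$. Both non-degeneracies (bidiagonal full rank and non-vanishing $\Gamma$-function coefficients) are explicit, but hinge on the bijective correspondence between the enumerations of $\INT(J,\vecd)$ (size $N+1$) and $\CRIT(J,\vecd)$ (size $N$) recorded in Notation~\ref{Concrete bal and crit graphs with prescribed internals}. The symmetric version runs the entire argument on $\mathrm{Sym}(I)$-invariant subspaces: since $\mathfrak{g}^{r,s}_{A_{I,\mathrm{sym}}} = \widetilde\psi_I^{-1}(\mathfrak{g}^{r,s}_{A_I})$ is $\mathrm{Sym}(I)$-equivariant and $\psi_I$ injects the symmetric potentials into the asymmetric ones, each step restricts compatibly, yielding symmetric faithfulness and transitivity.
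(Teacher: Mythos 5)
Your proof follows the paper's own three-step strategy: well-definedness via preservation of the balancing constraints combined with invariance of the oscillatory integrals under the action; faithfulness via the leading-order computation of $v(x^r+y^s)$; and transitivity by order-by-order correction using the generators attached to the critical graphs $\Lambda_{J,p}$, with the last coefficient in each block $\{\CI_{\Gamma_{J,p},\vecd}\}_{p=0}^N$ forced by the constraint $\mathcal{A}(J,\vecd,\CI)=\mathcal{A}(J,\vecd,\CI')$. The organizational differences are cosmetic: the paper inducts on $sk_1+rk_2$ rather than on the $\mathfrak{m}$-adic order $|J|$, and it solves the bidiagonal system by explicit forward substitution (matching $p=0,\dots,N-1$ one at a time and letting the $\mathcal{A}$-constraint handle $p=N$) rather than by your image-equals-kernel dimension count; these are equivalent.

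One justification is too quick. For chamber-index condition (1) you argue that elements of $\mathfrak{g}^{r,s}_{A_I}$ have all coefficients in $\mathfrak{m}$ and ``so fix the $u=0$ part.'' That disposes of the $|I(\Gamma)|=0$ coefficients, but condition (1) also constrains the coefficients $\CI_{\Gamma,\mathbf{0}}$ with $|I(\Gamma)|=1$, i.e.\ the terms of $W^{\CI}$ that are exactly linear in the variables $u_{j,0}$, and a vector field with coefficients in $\mathfrak{m}$ applied to $x^r+y^s$ can a priori produce precisely such terms. What saves you is that $\mathfrak{g}^{r,s}_{A_I}$ contains no generator of the form $u_{j,0}\,x^{k_1}y^{k_2}(\cdots)$: the defining constraint $sk_1+rk_2=m(\{j\},\mathbf{0})-rs=sa_j+rb_j-rs$ together with $k_1\equiv a_j\pmod r$, $k_2\equiv b_j \pmod s$ and $k_1,k_2\ge 0$ forces $rs(n_1+n_2)=-rs$ for nonnegative integers $n_1,n_2$, which is impossible; equivalently, $\CRIT(\{j\},\mathbf{0})=\emptyset$. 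This is exactly the observation the paper supplies at this point, and with it inserted your argument is complete.
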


\begin{proof}
{\bf Step 1. Existence of the action}.
We first consider the non-symmetric case.
There is an injection from $\ChamberIndices(I)$ into
$A_I[[x,y]]$ given by $\CI\mapsto W^{\CI}$.
Given $g\in G_{A_I}^{r,s}$, we thus obtain $g(W^{\CI})\in A_I[[x,y]]$. To
obtain the desired action, we need to show that there exists a chamber index
$\CI'$ such that $W^{\CI'}=g(W^{\CI})$.

Since elements of the form $\exp(v)$ for $v\in
\mathfrak{g}^{r,s}_{A_I,(k_1,k_2)}$ generate the group $G^{r,s}_{A_I}$,
it is enough to check this statement for
\begin{equation}
\label{eq:our automorphism}
g=\exp(Cu_{J,\vecd}
x^{k_1}y^{k_2}((k_2+1)x\partial_x-(k_1+1)y\partial_y)), \quad
C\in \Q,
\end{equation}
the exponential of a generator of $\mathfrak{g}^{r,s}_{A_I,(k_1,k_2)}$. Note that since $g$ corresponds to a generator of $\mathfrak{g}^{r,s}_{A_I,(k_1,k_2)}$, we have
\begin{equation}\label{local conditions of critical}
k_1=r(J) \pmod r,\quad k_2=s(J) \pmod s, \text{ and }sk_1+rk_2=m(J,\vecd)-rs.
\end{equation}
So let $C' u_{J',\vecd'}x^{k_1'}y^{k_2'}$ be a term in $W^{\CI}$.
By the definition of chamber index, this term is non-zero only if
$\Gamma_{0,k_1',k_2',1,J'}$ is balanced with respect to $\vecd'$,
i.e.,
\begin{equation}\label{local conditions of balanced}
k_1'=r(J') \pmod r,\quad  k_2'=s(J') \pmod s, \text{ and }sk_1'+rk_2'=m(J',\vecd').
\end{equation}
Applying
$g$ to this term produces no new terms if $J\cap J'\not=\varnothing$,
as then $u_{J,\vecd}u_{J',\vecd'}=0$. Otherwise,
we see that,
by applying Equation~\eqref{eq:action} and noting $u_{J,\vecd}^2=0$,
we obtain a new term of the form
$$C''u_{J\cup J',\vecd''}x^{k_1+k_1'}y^{k_2+k_2'}$$ for some $C''\in \Q$. Now
$k_1+k_1'=r(J)+r(J')=r(J\cup J') \pmod r$ and
$k_2+k_2'=s(J)+s(J')=s(J\cup J') \pmod s$. Furthermore, using~\eqref{local conditions of critical} and~\eqref{local conditions of balanced}
\[
m(J\cup J',\vecd'')=m(J,\vecd)+m(J',\vecd')-rs=
sk_1+rk_2+rs + sk_1'+rk_2' - rs = s(k_1+k_1')+r(k_2+k_2'),
\]
showing that $\Gamma_{0,k_1+k_1',k_2+k_2',1,J\cup J'}$ is balanced with
respect to $\vecd''$.
This shows that $g(W^{\CI})$ arises from an element $g(\CI)=(g(\CI)_{\Gamma,\vecd})
\in \Inv(I)$.

We now only need to show that $g(\CI)$
satisfies the conditions of Definition \ref{def:chamberIndex}, still
with $g$ as given in \eqref{eq:our automorphism}.
First we show Condition (1) of the definition.
Suppose that $\vecd=0$. Then necessarily $|J|\ge 2$. Indeed,
using Notation \ref{Concrete bal and crit graphs with prescribed internals}, there are $\ell_1+\ell_2-|J|+1$
choices of $k_1,k_2\ge 0$ satisfying the conditions in~\eqref{local conditions of critical}. However, if $J=\{j\}$, then
$r(J)=a_j, s(J)=b_j$, and thus $\ell_1=\ell_2=0$ and there are no such
$k_1,k_2$. Thus in this case,
$g$ is the identity modulo $\mathfrak{m}^2$, and
we have $g(\CI)_{\Gamma,\mathbf{0}}=\CI_{\Gamma,\mathbf{0}}$ when
$|I(\Gamma)|=0$ or $1$, as desired.

For Conditions (2) and (3) of Definition \ref{def:chamberIndex}, consider the two potentials $W^\nu$ and $W^{g(\nu)}$. For any $I' \subseteq I$ and $\vecd'\in \NN^{I'}$, it follows from \eqref{eq:invariance of osc int} that
\[
\int_{\Xi_{r(I'), s(I')}} e^{W^{\nu}/\hbar}dx\wedge dy = \int_{\Xi_{r(I'), s(I')}} e^{W^{g(\nu)}/\hbar}dx \wedge dy,
\]
which implies by Theorem \ref{thm:period integral} that
$\mathcal{A}(I',\vecd',\CI) = \mathcal{A}(I',\vecd', g(\CI))$,
and hence the required value of $\mathcal{A}(I',\vecd',g(\CI))$
follows from the same requirement for $\mathcal{A}(I',\vecd',\CI)$.

For the symmetric case, we note that if $g\in G^{r,s}_{A_{I,\mathrm{sym}}}$,
then $\widetilde\psi_I(g)$ acts on $\CI\in \InvSym(I)$ as above
giving $\widetilde\psi_I(g)(\CI) \in \Inv(I)$. However, since
$\widetilde\psi_I(g)(\psi_I(W^{\CI}))=\psi_I(g(W^{\CI}))$ by
\eqref{eq:intertwining}, we see in fact that $\widetilde\psi_I(g)(\CI)
\in \InvSym(I)$. As we have already seen above this action takes
chamber indices to chamber indices, we get the desired action on
$\ChamberIndicesSym(I)$.

\medskip

{\bf Step 2. Faithfulness of the action}.
Take $A=A_I$ or $A_{I,\mathrm{sym}}$, and suppose that $\CI\in
\ChamberIndices(I)$ or $\ChamberIndicesSym(I)$ in the two cases. Suppose
$g=\exp(v)\in G_A^{r,s}$ satisfies $g(\CI)=\CI$. Let $k\ge 2$ be the smallest
integer such that $v \equiv 0 \mod \mathfrak{m}^k$ but
$v\not\equiv 0 \mod \mathfrak{m}^{k+1}$. Then working modulo
$\mathfrak{m}^{k+1}$, we have
$g(W^{\CI})=W^{\CI}+v(W^{\CI})=W^{\CI}+v(x^r+y^s)$.
Thus it suffices to check that $v(x^r+y^s)=0$
implies $v =0$, still modulo $\mathfrak{m}^{k+1}$.

Now $v$ is a linear combination
of vector fields of the form given in \eqref{eq:special subalgebra}.
Let $k_1$ be minimal so that $v$ has a non-zero summand in
$\mathfrak{g}^{r,s}_{A,(k_1,k_2)}$ for some $k_2$,
and let $k_2$ be minimal among such choices
of $k_2$ fixing $k_1$. In other words, we consider the minimal pair
$(k_1,k_2)$ occuring in $v \mod \mathfrak{m}^{k+1}$ with respect to the
lexicographic ordering. Thus we may write the
$\mathfrak{g}^{r,s}_{A,(k_1,k_2)}$ summand of $v$ as
$v'=Cx^{k_1}y^{k_2} ((k_2+1)x\partial_x-(k_1+1)y\partial_y)$
for some $C\in \mathfrak{m}^k\setminus \mathfrak{m}^{k+1}$.
Now
\[
v'(x^r+y^s)= C\left(r(k_2+1)x^{k_1+r}y^{k_2}
-s(k_1+1) x^{k_1} y^{k_2+s}\right).
\]
Note that neither of the two terms in this expression is zero.
On the other hand,
because of the minimality of the pair $(k_1,k_2)$, this expression
does not cancel with any other terms appearing in $v(x^r+y^s)$. This
shows $C=0$, hence $v=0\mod \mathfrak{m}^{k+1}$, contradicting the
definition of $k$. Hence $v=0$.

\medskip

{\bf Step 3. Transitivity of the action.}
We will consider the case $A=A_I$, the symmetric case being similar.
Let $\CI$, $\CI'\in \ChamberIndices(I)$.
We proceed inductively, showing that
for any positive integer $k$, there exists an element $g\in G^{r,s}_{A_I}$
such that $g(\CI)_{\Gamma,\vecd}=\CI'_{\Gamma,\vecd}$ for any $\Gamma\in\INT(J,\vecd)$ with $sk_1(\Gamma)+rk_2(\Gamma)<k$.
Note that the base case of $k=0$ is automatic,
as we may take $g$ to be the identity. Define an ideal $\mathcal{I}_k := \langle x^{k_1}y^{k_2} \in A_I[[x,y]] \ | \ sk_1 + rk_2 \ge k \rangle$.

Now assume we have found a $g$ which works for a given $k$. By
replacing $\CI$ with $g(\CI)$, we may assume that $W^{\CI}=W^{\CI'}
\mod \mathcal{I}_k$. Now consider $J\subseteq I, \vecd\in \NN^J$
such that there exists a
$\Gamma\in\INT(J,\vecd)$ with $sk_1(\Gamma)+rk_2(\Gamma)=k$.
Consider the coefficient of $u_{J,\vecd}$ in $W^{\CI'}-W^{\CI}$, namely
\[
(-1)^{|J|-1}\sum_{\Gamma=\Gamma_{0,k_1,k_2,1,J}\in \INT(J,\vecd)}
(\CI'_{\Gamma,\vecd}-\CI_{\Gamma,\vecd}) x^{k_1}y^{k_2}
\]
Recall from Notation~\ref{Concrete bal and crit graphs with prescribed internals}
that we have a complete classification of graphs $\Gamma_{J, p} \in
\INT(J,\vecd)$ where $\Gamma_{J,p}$ is the unique balanced graph with $k_1(\Gamma_{J,p}) = r(J) + pr$ and $k_2(\Gamma_{J,p}) = s(J) + (N-p) s$, and $I(\Gamma_{J,p}) = J$ for $p = 0, \ldots, N = \frac{\sum_{i\in J} a_i -r(J)}{r} + \frac{\sum_{i\in J} b_i - s(J) }{s} -|J| + 1 + \sum_{j\in J} d_j $.
Note that $sk_1(\Gamma_{J,p})+rk_2(\Gamma_{J,p})$ is independent of $p$,
and hence is always $k$.

In particular, we may write
the $u_{J,\vecd}$ coefficient of $W^{\CI'}-W^{\CI}$ as
\[
(-1)^{|J|-1}\sum_{p=0}^N (\CI'_{\Gamma_{J,p},\vecd}-\CI_{\Gamma_{J,p},\vecd})
x^{r(J)+pr}
y^{s(J)+(N-p)s}.
\]
Let $p'\ge 0$ be the smallest $p$ such that $\CI'_{\Gamma_{J,p},\vecd}\not=
\CI_{\Gamma_{J,p}, \vecd}$.
Now consider
\begin{align*}
v=(-1)^{|J|}u_{J,\vecd}{\CI'_{\Gamma_{J,p'},\vecd}-\CI_{\Gamma_{J,p'},
\vecd}
\over s(r(J)+p'r+1)}
&
x^{r(J)+p'r}y^{s(J)+(N-p'-1)s}\\
&((s(J)+(N-p'-1)s+1)x\partial_x
-(r(J)+p'r+1)y\partial_y).
\end{align*}
One checks easily that this lies in $\mathfrak{g}^{r,s}_{A_I}$ and corresponds to the critical graph $\Lambda_{J, p'+1}$ in Notation~\ref{Concrete bal and crit graphs with prescribed internals}.
Modulo $\mathcal{I}_{k+1}$, $\exp(v)(W^{\CI})$ differs from
$W^{\CI}$ only in the coefficients of $u_{J,\vecd}x^{r(J)+p'r}y^{s(J)+(N-p')s}$
and $u_{J,\vecd}x^{r(J)+(p'+1)r}y^{s(J)+(n-p'-1)s}$, and further
the coefficient of $u_{J,\vecd}x^{r(J)+p'r}y^{s(J)+(N-p')s}$  in $\exp(v)(W^{\CI})$ agrees with
the coefficient of the same monomial in $W^{\CI'}$. Thus by replacing
$W^{\CI}$ with $\exp(v)(W^{\CI})$ and repeating this process inductively on $p'$, we may assume that
the coefficient of $u_{J,\vecd}$ in $W^{\CI'}-W^{\CI}$ is
\[
(-1)^k(\CI'_{\Gamma_{J,N},\vecd}-\CI_{\Gamma_{J,N},\vecd}) x^{r(J)+Nr}
y^{s(J)},
\]
i.e., there is at most one $\Gamma$ with $I(\Gamma)=J$ with
$\CI_{\Gamma,\vecd}\not=\CI'_{\Gamma,\vecd}$.
Further, inductively we
have assumed $\CI_{\Gamma',\vecd'}=\CI'_{\Gamma',\vecd'}$
whenever $I(\Gamma')\subsetneq J$.
Since $\Gamma$ is balanced for $\vecd$, we have $m(J,\vecd)=
sk_1(\Gamma)+r k_2(\Gamma)=sr(J)+rs(J)+rsN \ge sr(J)+rs(J)$,
so if $|J|\ge 2$,
the inequality of Condition (3) of Definition \ref{def:chamberIndex}
holds and $\mathcal{A}(J,\vecd,\CI)=\mathcal{A}(J,\vecd,\CI')=0$. On
the other hand, if $|J|=1$, Condition (2) similarly implies that $\mathcal{A}(J,\vecd,\CI)=\mathcal{A}(J,\vecd,\CI')$.
Looking at the form of these expressions, one notes they agree
except for the contribution from the terms $\CI_{\Gamma,\vecd}$
and $\CI'_{\Gamma,\vecd}$,
and that implies equality of these two terms.

Repeating this for all $J,\vecd$ such that there exists
a $\Gamma\in\INT(J,\vecd)$ with $sk_1(\Gamma)+rk_2(\Gamma)=k$ then gives
the induction step. Note there are a finite number of $J\subseteq I$ and graphs $\Gamma$ with $I(\Gamma) =J$  with $sk_1(\Gamma) + rk_2(\Gamma) \le k$ and balanced with respect to some descendent vector $\vecd$.
Thus by working modulo $\mathcal{I}_{k+1}$ the automorphism we build only involves a
composition of a finite number of automorphisms.
\end{proof}

We end this section with a result that will be of use in \S\ref{sec:WC} and clarifies the computational result from Corollary~\ref{cor:symmetric integal}.

\begin{cor}\label{cor:CIs same result}
Let $I$ be a finite set of $\Omega$. Take $\CI$ and $\CI'$ to be two chamber indices bounded by $I$. Then,
\[
\mathcal{A}(J,\vecd,\CI) = \mathcal{A}(J,\vecd,\CI')
\]
for all $J \subseteq I$ and $\vecd \in \NN^J$.
\end{cor}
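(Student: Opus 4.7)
The plan is to derive the equality from the transitivity of the wall-crossing action on chamber indices together with the invariance of oscillatory integrals under the wall-crossing group.

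First, by Theorem \ref{thm:G action}, the group $G^{r,s}_{A_I}$ acts transitively on $\ChamberIndices(I)$, so there exists $g \in G^{r,s}_{A_I}$ with $g(\CI) = \CI'$, and hence $g(W^{\CI}) = W^{g(\CI)} = W^{\CI'}$. Since $g \in G^{r,s}_{A_I} \subseteq G_{A_I}$ is the exponential of a derivation preserving $dx \wedge dy$ and is an infinitesimal deformation of the identity, the discussion around \eqref{eq:invariance of osc int} yields
\[
\int_{\Xi_{(a,b)}} e^{W^{\CI}/\hbar}\,\Omega \;=\; \int_{\Xi_{(a,b)}} e^{g(W^{\CI})/\hbar}\,\Omega \;=\; \int_{\Xi_{(a,b)}} e^{W^{\CI'}/\hbar}\,\Omega
\]
for every $0 \le a \le r-2$ and $0 \le b \le s-2$.

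Next I would verify that $W^{\CI}$ satisfies the hypothesis of Theorem \ref{thm:period integral}, and similarly for $W^{\CI'}$. The constant term of $W^{\CI}$ (that is, its image modulo $\langle u_{i,j}\rangle$) comes only from summands with $J = \varnothing$. By Proposition \ref{prop:balanced}, the balanced graphs in $\INT(\varnothing)$ are exactly $\Gamma_{0,r,0,1,\varnothing}$ and $\Gamma_{0,0,s,1,\varnothing}$, and Condition (1) of Definition \ref{def:chamberIndex} gives $\CI_{\Gamma,\mathbf{0}} = -1$ in both cases. The factor $(-1)^{|J|-1} = -1$ then produces $W^{\CI} \equiv x^r + y^s \pmod{\langle u_{i,j}\rangle}$, and the same holds for $\CI'$.

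Finally, by Theorem \ref{thm:period integral} the two integrals admit expansions
\[
\delta_{a,0}\delta_{b,0} + \sum_{\varnothing \ne J \subseteq I}\sum_{\vecd \in \NN^J} (-1)^{|J|}(-\hbar)^{-d(J,\vecd)-2}\,\mathcal{A}(J,\vecd,\CI^{(\prime)})\,\delta_{r(J),a}\delta_{s(J),b}\,u_{J,\vecd},
\]
where the monomials $u_{J,\vecd} = \prod_{j\in J} u_{j,d_j}$ for distinct pairs $(J,\vecd)$ are $\Q$-linearly independent in $A_I$ (the relations $u_{i,j}u_{i,j'} = 0$ never collapse two such monomials since each $u_{j,d_j}$ appears to the first power with a distinct index $j$), and $d(J,\vecd)$ depends only on $J,\vecd$. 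Choosing $(a,b) = (r(J),s(J))$ and comparing coefficients of $u_{J,\vecd}\,\hbar^{-d(J,\vecd)-2}$ yields $\mathcal{A}(J,\vecd,\CI) = \mathcal{A}(J,\vecd,\CI')$ for every $J \subseteq I$ with $J \ne \varnothing$ and $\vecd \in \NN^J$; the case $J = \varnothing$ is vacuous from the definition. The only real content of the argument is the existence of $g$, which is already proved in Theorem \ref{thm:G action}, so no further obstacle is anticipated.
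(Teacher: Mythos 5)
Your proof is correct and follows essentially the same route as the paper: transitivity of the $G^{r,s}_{A_I}$-action from Theorem \ref{thm:G action}, invariance of the oscillatory integrals under the group via \eqref{eq:invariance of osc int}, and comparison of coefficients in the period-integral expansion. Your added verifications (that $W^{\CI}\equiv x^r+y^s$ modulo $\langle u_{i,j}\rangle$ via Condition (1) of Definition \ref{def:chamberIndex}, and the linear independence of the monomials $u_{J,\vecd}$) are details the paper leaves implicit, and your use of Theorem \ref{thm:period integral} rather than the symmetric Corollary \ref{cor:symmetric integal} is the more precise citation for arbitrary chamber indices.
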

\begin{proof}
By Theorem~\ref{thm:G action}, there exists an element $g \in G_{A_I}^{r,s}$ so that $g\cdot \CI = \CI'$. Next, note that, by~\eqref{eq:invariance of osc int}, since $\CI$ and $\CI'$ are in the same orbit of the wall-crossing group action we have that
$$
\int_{\Xi} e^{W^{\CI}/\hbar} dx \wedge dy = \int_{\Xi} e^{W^{\CI'}/\hbar} dx \wedge dy.
$$
The result then follows from comparing coefficients in the result from Corollary~\ref{cor:symmetric integal}.
\end{proof}

\section{Invariance, wall-crossing, mirror symmetry, and open topological recursion}
\label{sec:invariants}

\subsection{Invariance}
Unlike the open $r$-spin invariants defined in \cite{BCT:I, BCT:II}, the open $W$-spin invariants for $W = x^r + y^s$ defined here \emph{do depend on the choice of the canonical multisection}. However, some simple invariants are independent of these choices:

\begin{thm}\label{thm:simple invariants}
We have
\[
\langle\sigma_1^{r}\sigma_{12}\rangle^{\mathbf{s},o}=\langle\sigma_2^{s}\sigma_{12}\rangle^{\mathbf{s},o}=-1.
\]
For $0 \leq m_1\leq r-1, 0 \leq m_2\leq s-1$, we have
\[
\langle\tau^{(m_1,m_2)}_0\sigma_1^{m_1}\sigma_2^{m_2}\sigma_{12}\rangle^{\mathbf{s},o}=1.
\]
In particular these numbers are independent of
the choice of $\mathbf{s}$ and 
$W^{\CI,\mathrm{sym}}$ satisfies \eqref{eq:needed cong}.
\end{thm}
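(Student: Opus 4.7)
My plan is to verify well-definedness and independence from $\ess$, then to compute the three families of invariants by reducing to the rank-one open $r$-spin invariants of \cite{BCT:I,BCT:II}.

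First, by Proposition \ref{prop:balanced} each of the graphs $\Gamma_{0,r,0,1,\emptyset}^W$, $\Gamma_{0,0,s,1,\emptyset}^W$, and $\Gamma_{0,m_1,m_2,1,\{(m_1,m_2)\}}^W$ is balanced with respect to the zero descendent vector, so the corresponding relative Euler class of $\cW$ is a well-defined rational number. Example \ref{ex:emptyboundary} shows $\partial^0\Gamma=\emptyset$ in every case, so $\partial\oPM_\Gamma=\partial^\xch\oPM_\Gamma$.

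To establish independence from $\ess$, I would pick a transverse canonical homotopy $H$ between two transverse canonical multisections $\ess_0,\ess_1$ using Lemma \ref{lem:transverse_homotopy}. By Lemma \ref{lem:zero diff as homotopy}, the difference of the intersection numbers equals $\#Z(H|_{[0,1]\times\partial\oPM_\Gamma})$. When $\partial^\xch\Gamma$ is empty (which holds whenever only one of the singly twisted types $\sigma_1,\sigma_2$ occurs among the boundary tails), this is automatically zero. Otherwise, exchangeable boundary strata pair up under $\simx$: by Observation \ref{obs:exchangeable_and_base} the map $\XCH$ identifies the common base $\CB\Lambda=\CB(\xch(\Lambda))$, so the multisection data pulled back from the base agrees on $\oPM_\Lambda$ and $\oPM_{\xch(\Lambda)}$, while Proposition \ref{prop: orient exchange} shows that $\XCH$ reverses the canonical relative orientation; hence the paired contributions cancel.

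For the computation, I would reduce to rank one via $\text{For}_{\text{spin}\neq i}$. For $\<\sigma_1^r\sigma_{12}\>^{\mathbf{s},o}$, formula \eqref{eq:graded_open_rank} gives $\rank\cW_2=0$, and Observation \ref{obs:forgetful} identifies the remaining Witten bundle $\cW_1$ with $\text{For}_{\text{spin}\neq 1}^*\cW_1^{1/r}$. Since the $s$-spin structure is rigid given the twists (Proposition \ref{prop:existence_stable}) and no internal nodes arise, $\text{For}_{\text{spin}\neq 1}$ is an orbifold isomorphism on the open part, identifying the invariant with the rank-one open invariant $\<\sigma^{r+1}\>^{1/r,o}=-1$ of \cite{BCT:I,BCT:II}; the argument with $r$ and $s$ swapped yields $\<\sigma_2^s\sigma_{12}\>=-1$. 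For $\<\tau_0^{(m_1,m_2)}\sigma_1^{m_1}\sigma_2^{m_2}\sigma_{12}\>^{\mathbf{s},o}$, the Witten bundle decomposes as $\cW_1\oplus\cW_2$ with each summand pulled back from the corresponding rank-one moduli space, and I would choose a multisection of the form $\ess_1\oplus\ess_2$ with each $\ess_i$ pulled back from a canonical rank-one multisection. The total signed zero count then factors as the product $\<\tau_0^{m_1}\sigma^{m_1+1}\>^{1/r,o}\cdot\<\tau_0^{m_2}\sigma^{m_2+1}\>^{1/s,o}=1\cdot 1=1$, with the extended rank-one invariants of \cite{BCT_Closed_Extended} covering the Ramond cases $m_1=r-1$ or $m_2=s-1$.

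The main obstacle will be orientation bookkeeping. The canonical relative orientation $o_\Gamma$ of Definition \ref{def:or_and_rel_or1} carries the nontrivial sign $(-1)^{N(\pi)}$ recording the interleaving of $r$- and $s$-tails in the boundary cyclic order, and the induced splittings of the Witten bundles in Proposition \ref{pr:decomposition} respect orientations only up to signs determined by the ranks involved. Verifying that the pullback along $\text{For}_{\text{spin}\neq i}$ preserves the canonical orientation and that no extra sign appears in the product decomposition of the third case should follow from Proposition \ref{obs:orders_well_def_and_r_s_change} combined with Lemmas \ref{lem:or_mod} and \ref{lem:or_single_Witten}, but the case-by-case verification is where the substance of the computation lies.
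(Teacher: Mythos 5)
Your strategy matches the paper's: show independence of $\ess$ by comparing two canonical multisections via a canonical homotopy, observe $\partial^0\Gamma=\emptyset$ from Example~\ref{ex:emptyboundary}, cancel the exchangeable contributions by the orientation-reversal of Proposition~\ref{prop: orient exchange}, and then compute the invariants by reducing to the rank-one open $r$-spin invariants of \cite{BCT:I,BCT:II}. The reductions you sketch (rank $0$ of $\cW_2$ for $\Gamma^W_{0,r,0,1,\emptyset}$, and the product structure with both factors pulled back via $\text{For}_{\text{spin}\neq i}$ for $\Gamma^W_{0,m_1,m_2,1,\{(m_1,m_2)\}}$) are exactly what the paper does, and the orientation compatibility you flag as the ``main obstacle'' is actually built into Definition~\ref{def:or_and_rel_or1}, so there is less to verify there than you fear. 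Your side remark about needing extended rank-one invariants when $m_1=r-1$ is unnecessary: internal twist $r-1$ is a perfectly ordinary twist in the open $r$-spin moduli of Proposition~\ref{prop:existence_stable}, not an extended $-1$ twist, and the citation to BCT:II Theorem~1.2 covers it.

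There is, however, a real gap in the independence step. You construct the homotopy with Lemma~\ref{lem:transverse_homotopy}, and then cancel $\#Z(H|_{[0,1]\times\partial^\XCH\oPM_\Gamma})$ by pairing codimension-one exchangeable strata via $\XCH$. But $\partial^\xch\Gamma$ for $\Gamma=\Gamma^W_{0,m_1,m_2,1,\{(m_1,m_2)\}}$ generally contains strata of codimension $\ge 2$ (e.g.\ two exchangeable vertices bubbling off simultaneously), where Proposition~\ref{prop: orient exchange} does not apply (it is stated for graphs with a single boundary edge) and where a canonical homotopy, being constrained to pull back from the base, is not free to be perturbed into transverse position. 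The paper resolves this by using the carefully constructed family of homotopies from Lemma~\ref{lem:partial_homotopy}(2), whose items (1)(b) and (1)(c) guarantee non-vanishing on exactly these higher-codimension strata; this is precisely the hypothesis of Lemma~\ref{lem:exchange vanishes}. With the generic homotopy from Lemma~\ref{lem:transverse_homotopy} you have no such control, so your cancellation argument is incomplete. You should replace Lemma~\ref{lem:transverse_homotopy} with Lemma~\ref{lem:partial_homotopy}(2) and invoke Lemma~\ref{lem:exchange vanishes} directly.

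One smaller inaccuracy: the product you write, $\langle\tau^{m_1}_0\sigma^{m_1+1}\rangle^{1/r,o}\cdot\langle\tau^{m_2}_0\sigma^{m_2+1}\rangle^{1/s,o}$, is stated in the labelled-boundary conventions of \cite{BCT:I,BCT:II}; with the unlabelled conventions of this paper the count acquires denominators $m_1!\,m_2!$, which the paper explicitly records. Keeping track of this is necessary to land on the value $1$ rather than $m_1!\,m_2!$.
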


\begin{rmk}
We note that this justifies item (1) in the definition of chamber
index, see Definition~\ref{def:chamberIndex}. 
\end{rmk}

\begin{proof}[Proof of Theorem \ref{thm:simple invariants}.]
Let $\mathbf{s}_0,\mathbf{s}_1$ be two canonical multisections for $E\to\oPM_\Gamma,$ for a smooth rooted graded (possibly disconnected) $\Gamma$ that
is balanced with respect to a descendent vector $\vecd$.
By Lemma \ref{lem:partial_homotopy}\eqref{it:hom_invariants} we can find a canonical transverse homotopy $H\in C_m^\infty([0,1]\times\oPM_\Gamma,\pi^*E)$ with $H(\eps,-)=\mathbf{s}_\eps,$ for $\eps=0,1$, and it holds, by Lemma \ref{lem:zero diff as homotopy}, that
\begin{equation}\label{eq:diff_of_can}\left\langle \prod_{i\in I(\Gamma)}
\tau^{(a_i,b_i)}_{d_i}\sigma^{k(\Gamma)} \right\rangle^{\mathbf{s}_1^\Gamma,o}-\left\langle \prod_{i\in I(\Gamma)}\tau^{(a_i,b_i)}_{d_i}\sigma^{k(\Gamma)} \right\rangle^{\mathbf{s}_0^\Gamma,o}=\#Z(H|_{([0,1]\times\partial^0\oPM_\Gamma)\cup([0,1]\times\partial^\XCH\oPM_\Gamma)}).\end{equation}
Recall that we showed in Example~\ref{ex:emptyboundary} that if $\Gamma\in\{\Gamma_{0,m_1,m_2,1,\{(m_1,m_2)\}},\Gamma_{0,r,0,1,\emptyset},\Gamma_{0,0,s,1,\emptyset}\}$, then $\partial^0\Gamma=\emptyset$. Here, the set $\partial^\XCH\oPM_\Gamma$ need not be empty; however, by Lemma~\ref{lem:exchange vanishes} below, we have that $\#Z(H|_{[0,1]\times\partial^\XCH\oPM_\Gamma}) = 0$. 
Thus, the quantity in \eqref{eq:diff_of_can} vanishes and we conclude that the intersection number is independent of the choice of the canonical multisection.

When $\Gamma=\Gamma_{0,m_1,m_2,1,\{(m_1,m_2)\}}$, let
$\Gamma_i=\text{for}_{\text{spin}\neq i}(\Gamma)$.
Note that $\Gamma_1$ is the graded $r$-spin graph with a single internal tail with twist $m_1$ and $m_1+1$ boundary tails, and $\Gamma_2$ is the similarly defined graded $s$-spin graph.
We may then take $\mathbf{s}=\text{For}_{\text{spin}\neq 1}^*\zeta_1\oplus
\text{For}_{\text{spin}\neq 2}^*\zeta_2$,
where $\zeta_i$ is a transverse global canonical multisection of the Witten bundle over $\oPM_{\Gamma_i}$. By transversality we may assume that it vanishes only in $\CM^W_{\Gamma_i}$.
Using \cite[Appendix A]{BCT:II}, and specifically Notation A.4 there, we can write
\[Z(\zeta_i)=\sum_{p_j^i\in Z(\zeta_i)}\varepsilon_j^i p_j^i,\] where $\varepsilon_j^i$ is the weight of $p_j^i$ in the zero locus of $\zeta_i.$
Hence,
\[\#Z(\mathbf{s})=\sum_{p_{j_1}^1\in Z(\zeta_1)}\sum_{p_{j_2}^2\in Z(\zeta_2)}\varepsilon_{j_1}^1\varepsilon_{j_2}^2\#\left(\pi_1^{-1}(p_{j_1}^1)\cap \pi_2^{-1}(p_{j_2}^2)\right).
\]
For smooth moduli points $p^i\in \CM^W_{\Gamma_i}$, $i=1,2$,
$\pi_1^{-1}(p^1)\cap \pi_2^{-1}(p^2)$ is a singleton set, the unique point of $\CM^W_\Gamma$ whose projection to each $\CM^W_{\Gamma_i}$ is $p^i$.\footnote{As both projections share a common internal and boundary marking given by the unique internal marking and the root, we may represent each point $p^1$ and $p^2$ as the unit disk model with the internal marking at $0$ and root marking at $1$.  This kills off all automorphisms given by the action of $PSL(2, \mathbb{R})$. Thus, knowing the points $p^i\in\oPM_{\Gamma_i}$, $i=1,2$ is sufficient to
identify the point $p\in \oPM_{\Gamma}$ with $\pi_i(p)=p^i$, $i=1,2$.}
Hence
\[\#Z(\mathbf{s})=
\sum_{p_{j_1}^1\in Z(\zeta_1)}\sum_{p_{j_2}^2\in Z(\zeta_2)}\varepsilon_{j_1}^1\varepsilon_{j_2}^2 =
\frac{\langle\tau_0^{m_1}\sigma^{m_1+1}\rangle^{1/r,o}}{m_1!}\frac{\langle\tau_0^{m_2}\sigma^{m_2+1}\rangle^{1/s,o}}{m_2!}.
\]
We note that the division by $m_1!m_2!$ is because of our convention that we do not mark the boundary points, unlike \cite{BCT:I,BCT:II}. By Theorem~1.2
of \cite{BCT:II},
\[\#Z(\mathbf{s})=1.\]

When $\Gamma=\Gamma_{0,r,0,1,\emptyset}$, the $s$-Witten bundle is
rank $0$.
Hence, the multisection $\mathbf{s}$ is a pull back of a canonical multisection of $\cW\to\oPM_{\Gamma_1},$ where $\Gamma_1=\text{for}_{\text{spin}\neq 1}(\Gamma)$
is the smooth graded $r$-spin graph with no internal tails and $r+1$ boundary tails. Consequently,
\[\langle\sigma_1^r\sigma_{12}\rangle^{\mathbf{s},o}=\langle\sigma^{r+1}\rangle^{1/r,o}/r!=-1,\]
where the last equality used Theorem~1.2 of \cite{BCT:II}. The same argument shows $\langle\sigma_2^s\sigma_{12}\rangle^{\mathbf{s},o}=-1$.
\end{proof}

The reason open $W$-spin invariants depend on the choice of canonical section is that homotopies between these canonical sections may have zeroes on boundary strata. However, we can now check that this does not occur at exchangeable strata.

\begin{lemma}\label{lem:exchange vanishes}
Let $\Gamma=\Gamma_{0,k_1,k_2,k_{12},\{(a_i,b_i)\}_{i\in I}}$ be a smooth, connected, graded $W$-spin graph which is
balanced with respect to
$\vecd$.  Let $\mathbf{s}_0,\mathbf{s}_1$ be two canonical multisections for $E_{\Gamma}(\vecd)\to\oPM_\Gamma$. Take $H$ to be a canonical homotopy with $H(\epsilon,-) = \mathbf{s}_\epsilon$ for $\epsilon \in \{0,1\}$ that is a part of a family of canonical homotopies as built in Lemma~\ref{lem:partial_homotopy}. Then
$$
\#Z(H|_{[0,1]\times\partial^\xch\oPM_\Gamma}) = 0.
$$
\end{lemma}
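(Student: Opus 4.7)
The strategy is to exhibit a sign-reversing involution on the zeroes of $H|_{[0,1]\times \partial^\xch \oPM_\Gamma}$ via the exchange operation $\xch_v$. First, I would observe that the only strata $\CM^W_\Lambda$ for $\Lambda\in\partial^\xch\Gamma$ contributing to the count are codimension-one strata of $\oCM^W_\Gamma$: since $\Gamma$ is balanced for $\vecd$ we have $\rank E_\Gamma(\vecd)=\dim\oPM_\Gamma$, so $H$ is a generic multisection of a bundle of rank $\dim([0,1]\times\oPM_\Gamma)-1$ and only vanishes with finite multiplicity on top-dimensional pieces of the boundary. Such $\Lambda$ has exactly one boundary edge and a canonical exchangeable vertex $v$, putting us squarely in the setting of Proposition~\ref{prop: orient exchange}.

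Next I would verify that $H$ is compatible with the exchange diffeomorphism. Because $H$ belongs to the family of simple canonical homotopies supplied by Lemma~\ref{lem:partial_homotopy}, its restriction to $[0,1]\times\oPM_\Lambda$ equals $F_\Lambda^*\bigl(\boxplus_{\Xi\in\Conn(\CB\Lambda)} H^\Xi\bigr)$, and the analogous formula holds for $\xch_v(\Lambda)$. Observation~\ref{obs:exchangeable_and_base} gives $\CB\Lambda = \CB{\xch_v(\Lambda)}$ and $F_{\xch_v(\Lambda)}\circ \XCH_v=F_\Lambda$, so the diffeomorphism $\id\times \XCH_v:[0,1]\times\oPM_\Lambda \to [0,1]\times\oPM_{\xch_v(\Lambda)}$ intertwines the two restrictions of $H$. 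Consequently this map produces a bijection between the zeroes of $H|_{[0,1]\times \CM^W_\Lambda}$ and those of $H|_{[0,1]\times \CM^W_{\xch_v(\Lambda)}}$, matching local branches and multiplicities.

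Finally, I would invoke Proposition~\ref{prop: orient exchange}, which asserts that $\XCH_v$ reverses the canonical relative orientation $o_\Gamma$ restricted to the paired strata. The unsigned zero count on $[0,1]\times \CM^W_\Lambda$ therefore contributes with opposite sign to the corresponding count on $[0,1]\times\CM^W_{\xch_v(\Lambda)}$ in the weighted formula \eqref{def: weighted cardinality}. Grouping the contributing strata into pairs $\{\Lambda,\xch_v(\Lambda)\}$ under the involution $\xch_v$ yields total cancellation. In the degenerate case where $\Lambda$ and $\xch_v(\Lambda)$ are isomorphic as graphs, the same argument applies internally: $\XCH_v$ defines an orientation-reversing self-involution on $\CM^W_\Lambda$ which preserves $H|_{[0,1]\times \CM^W_\Lambda}$, so the weighted zero count on that single stratum already vanishes.

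The main obstacle, and the key technical input, is the sign reversal coming from Proposition~\ref{prop: orient exchange}; without that orientation statement the exchange-compatibility of $H$ would merely identify strata without forcing cancellation. Its proof, carried out using the careful bookkeeping of canonical orientations in \S\ref{subsec:or}, is exactly what converts the $\sim_\xch$ symmetry at boundary strata into the desired vanishing.
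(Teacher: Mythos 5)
Your proof is correct and follows essentially the same route as the paper's: reduce to the codimension-one exchangeable strata, use canonicity together with Observation~\ref{obs:exchangeable_and_base} to see that $\XCH_v$ intertwines the restrictions of $H$, and invoke Proposition~\ref{prop: orient exchange} for the orientation reversal that forces the paired contributions to cancel. The only cosmetic difference is that you dismiss the codimension-$\ge 2$ strata by stratum-wise transversality of $H$ and a dimension count (which is justified by Lemma~\ref{lem:partial_homotopy}(2)(b), since a balanced graph is rooted and so $k_{12}=1$), whereas the paper does so via the pigeon-hole argument on the connected components of the base together with the non-vanishing statements of Lemma~\ref{lem:partial_homotopy}(1)(b),(c).
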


\begin{proof}
Suppose $\Xi \in \partial^\xch \Gamma$ and $\dim \oPM_\Xi \leq \dim \oPM_\Gamma - 2$. Then by the pigeon-hole principle and \eqref{eq:dim rank comparison},
there either exists a connected component $\Lambda \in \Conn(\CB\Xi)$ so that $\dim \CM^W_\Lambda
\le  \rank E_{\Lambda}(\vecd) - 2$, or two connected components
$\Lambda_1,\Lambda_2\in \Conn(\CB\Xi)$ so that $\dim\CM^W_{\Lambda_i}
=\rank E_{\Lambda_i}(\vecd)-1$ for $i=1,2$. In either case,
the canonical homotopy $H$ will not vanish on the strata $[0,1]\times\oPM_{\CB\Xi}$ by Lemma~\ref{lem:partial_homotopy}(1)(b) or (c). In turn, by the canonicity of $H$, it will also not vanish on $\oPM_\Xi$. Thus, in order for $H$ to vanish, we must have that $\oPM_\Xi$ has codimension one. 
Thus, we can write
\[\#Z(H|_{[0,1]\times\partial^\XCH\oPM_\Gamma})=\sum_{\substack{\Xi\in\partial^\xch\Gamma \\ \dim \oPM_{\Xi} = \dim \oPM_{\Gamma} - 1}}\#Z(H|_{[0,1]\times
\oPM_\Xi}).\]

Recall that $\Gamma$ contains all cyclic orderings of the vertex, and
that the graphs $\Xi\in\partial^\xch\Gamma$ of codimension one have two open vertices: a rooted vertex and an exchangeable vertex $v$. This exchangeable vertex has one half-edge $h$ and two boundary tails $t_1$ and $t_2$. 
 Note that $v$ has two possible cyclic orderings on its half-edges, namely $\pi_{1}:=(h \to t_1 \to t_2)$ and $\pi_{2} := (h \to t_2 \to t_1)$, which correspond to two collections of connected components of $\oPM_{\Xi}$.

We now make two new $W$-spin graphs $\Xi_{1}$ and $\Xi_2$ that are the same as $\Xi$, but the only cyclic ordering on $v$ for $\Xi_i$ is $\pi_i$.
The moduli $\oPM_{\Xi_1}$ and $\oPM_{\Xi_2}$ are related by the (involutive) map $\XCH_v$. Since $H$ is a canonical homotopy, we have by Observation~\ref{obs:exchangeable_and_base} that \[|\#Z( H_{[0,1]\times\oPM_{\Xi_1}]})|=|\#Z(H_{[0,1]\times \oPM_{\XCH_v(\Xi_1)}})| =|\#Z([H_{[0,1]\times\oPM_{\Xi_2}})| ,\]while Proposition \ref{prop: orient exchange} shows that their orientations as boundary strata in $\oPM_\Gamma$ are opposite. Thus, for any $\Xi \in\partial^\xch\Gamma$ of codimension one, we have that
\[\#Z(H|_{[0,1]\times\oPM_\Xi})=\#Z(H|_{[0,1]\times\oPM_{\Xi_1}})+\#Z(H|_{[0,1]\times\oPM_{\Xi_2}})=0.\]
\end{proof}

The main theorem of this subsection is:

\begin{thm}\label{thm:A_mod_invs}
$\mathcal{A}(I,\vecd,\CI^{\ess})$ is independent of the choice of family
of canonical multisections $\mathbf{s}$.
\end{thm}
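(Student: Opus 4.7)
My plan is to fix a family $H^*$ of simple canonical homotopies between $\ess_0^*$ and $\ess_1^*$ provided by Lemma~\ref{lem:partial_homotopy} and analyse the difference
\[
\Delta \;:=\; \mathcal{A}(I,\vecd,\CI^{\ess_1})-\mathcal{A}(I,\vecd,\CI^{\ess_0})
\]
as a signed count of zeros of the homotopies $H^\Gamma$ on boundary strata. For each partition $J=J_1\sqcup\cdots\sqcup J_h$ appearing in Notation~\ref{nn:A(J)} and each factor graph $\Gamma_i=\Gamma_{0,k_1(i),k_2(i),1,J_i}\in\INT(J_i,\vecd|_{J_i})$, Lemma~\ref{lem:zero diff as homotopy} rewrites the factor $\CI^{\ess_1}_{\Gamma_i,\vecd|_{J_i}}-\CI^{\ess_0}_{\Gamma_i,\vecd|_{J_i}}$ as $\#Z(H^{\Gamma_i}|_{[0,1]\times\partial\oPM_{\Gamma_i}})$. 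Zeros on exchangeable strata cancel by Lemma~\ref{lem:exchange vanishes}, and by the dimension pigeon-hole arguments of Lemma~\ref{lem:partial_homotopy}(1), the only strata on which $H^{\Gamma_i}$ can vanish are the critical boundary strata of Definition~\ref{def:critical}.

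Each critical boundary stratum, after detaching its unique boundary edge, splits into a critical vertex $v_0$ and a balanced vertex $v_1$. Canonicity of $H^*$ (Definition~\ref{def:canonical homotopy}(2)) pulls $H^\Lambda$ back from the base $\oPM_{\CB\Lambda}$, so the local zero count factors as a wall-crossing count $w_{v_0}$ attached to the critical vertex times the open invariant $\CI^{\ess_0}$ attached to the balanced side. Reading off signs via Theorem~\ref{thm:or_and_induced boundary}(2), one sees that a single critical graph $\Lambda_{J_0,p}\in\CRIT(J_0,\vecd|_{J_0})$ contributes its wall-crossing count $w_{\Lambda_{J_0,p}}$ to both neighbouring balanced graphs $\Gamma_{J_0,p-1}$ and $\Gamma_{J_0,p}$ of Notation~\ref{Concrete bal and crit graphs with prescribed internals}.

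The expected main obstacle is the combinatorial matching needed to conclude $\Delta=0$. The plan is to reinterpret the total contribution of $\Lambda_{J_0,p}$ to $\Delta$ as the coefficient of $x^{r(I)}y^{s(I)}\prod_{i\in I}u_{i,d_i}$ in the result of applying the generator $v_{\Lambda_{J_0,p}}\in\mathfrak{g}^{r,s}_{A_I,(k_1(\Lambda_{J_0,p}),k_2(\Lambda_{J_0,p}))}$ (from \S\ref{subsec:LG wall crossing}) to the potential $W^{\CI^{\ess_0}}$, scaled by $w_{\Lambda_{J_0,p}}$. Once this identification is in place, $\Delta$ vanishes by the infinitesimal form of the invariance argument underlying Step~1 of the proof of Theorem~\ref{thm:G action}: derivations in $\mathfrak{g}^{r,s}_{A_I}$ preserve $dx\wedge dy$ and hence preserve every oscillatory integral $\int_{\Xi_{(a,b)}} e^{W^{\CI}/\hbar}\Omega$ of Theorem~\ref{thm:period integral}, which forces each coefficient $\mathcal{A}(I,\vecd,-)$ to be invariant. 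The computational crux of the identification is a matching of the Gamma-factor weights in Notation~\ref{nn:A(J)} across the shift $(k_1,k_2)\mapsto(k_1+r,k_2-s)$ that moves from $\Gamma_{J_0,p-1}$ to $\Gamma_{J_0,p}$, driven by $\Gamma(x+1)=x\Gamma(x)$ and the relation $sk_1+rk_2=m(J_0,\vecd)-rs$ of Proposition~\ref{prop:critical m(I)}(b).
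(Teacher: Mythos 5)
Your first two paragraphs reproduce the architecture of the paper's Parts (A)--(D): identifying the contributing strata via the dimension and parity arguments of Lemma~\ref{lem:partial_homotopy}, discarding exchangeable strata by Lemma~\ref{lem:exchange vanishes}, and factoring the boundary zero counts through the base with signs from Theorem~\ref{thm:or_and_induced boundary}(2). Where you genuinely diverge is the final cancellation. The paper's Part~(E) is a self-contained combinatorial argument: it reorganizes the critical boundary contributions according to their bases $\Lambda_Q$ for $Q\in\overline{\mathcal{Q}}(I,\vecd)$ and cancels $A^1_Q\cup A^2_Q$ against $A^1_{Q^{+r}}$ and $A^2_{Q^{+s}}$ using \eqref{eq:wc3}--\eqref{eq:wc2} and $\Gamma(x+1)=x\Gamma(x)$, never invoking the $B$-model. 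You instead want to package the total change as an element of $G^{r,s}_{A_I}$ acting on the potential and then use invariance of the oscillatory integral. That route is viable and non-circular: it is essentially the paper's own Theorem~\ref{thm: group actions only} (whose proof relies only on Parts (A)--(D), not on the conclusion of the present theorem), followed by \eqref{eq:invariance of osc int} and coefficient comparison via Theorem~\ref{thm:period integral}. It buys a conceptual explanation for why the Gamma-weights in Notation~\ref{nn:A(J)} are the right normalization; it costs you the exact group-action statement, which needs the distinct times $t_A$ of Lemma~\ref{lem:partial_homotopy}(3) to order the zeros and make the composition of exponentials well-defined (the "infinitesimal" framing is only legitimate because $u_{J,\vecd}^2=0$ in $A_I$ turns each $\exp(v)$ into $\mathrm{Id}+v$).

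Two steps need repair before this closes. First, the identification in your last paragraph is misstated: $\mathcal{A}(I,\vecd,\CI)$ is not the coefficient of $x^{r(I)}y^{s(I)}\prod_{i\in I}u_{i,d_i}$ in the potential, but (up to sign and a power of $\hbar$) the coefficient of $\prod_{i\in I}u_{i,d_i}$ in $\int_{\Xi_{(r(I),s(I))}}e^{W^{\CI}/\hbar}\Omega$, which aggregates \emph{all} the monomials $x^{r(I)+pr}y^{s(I)+qs}\prod u_{i,d_i}$ with Gamma weights. You must therefore carry out the identification at the level of the potential --- prove $W^{\ess_1}=g(W^{\ess_0})$ --- and only then compare coefficients of the integral. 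Relatedly, a critical graph $\Lambda_{J_0,p}$ does not only feed into $\Gamma_{J_0,p-1}$ and $\Gamma_{J_0,p}$: it occurs as the critical component of boundary strata of every balanced $\Gamma_{0,k_1,k_2,1,J}$ with $J\supseteq J_0$, which is precisely why the change of the potential is a first-order differential operator applied to all of $W^{\ess_0}$ and not just to $x^r+y^s$. Second, $\Delta$ is a difference of sums of \emph{products} of invariants, so applying Lemma~\ref{lem:zero diff as homotopy} factor by factor does not directly yield $\Delta$; you should work with the disconnected graphs $\Gamma_P$ and the box-sum homotopies, as in \eqref{goalsum for invariants}, or supply a telescoping argument.
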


We will now assemble some of the key notation for keeping track of
terms in ${\mathcal A}(I,\vecd,\CI^{\ess})$
and critical boundary graphs which
play a role in changes of the individual open invariants appearing
in ${\mathcal A}(I,\vecd,\CI^{\ess})$. 
(See Definition~\ref{def:critical} for critical boundary graphs).
This notation will be used both in
the proof of the above theorem and in the proof of the open topological
recursion formula of the next subsection. 

\begin{nn}\label{P sets of balanced}
Let $I$ be a set of internal markings with $|I|=l$.
For $h\le l$, write $\mathcal{P}_h(I,\vecd)$ for the collection of sets
of the form
\[
\{(I_1,k_1(1),k_2(1)),\ldots, (I_h,k_1(h),k_2(h))\}
\]
where $I_1,\ldots, I_h$ is an ordered partition of $I$ into $h$ non-empty pieces, $k_1(i),k_2(i)\geq 0$ for all $i$ and
\begin{equation}
\label{eq:balancing requirements}
k_1(i) = r(I_i)~(\text{mod}~r),~k_2(i) = s(I_i)~(\text{mod}~s),~\text{and } sk_1(i)+rk_2(i)=m(I_i,\vecd).
\end{equation}
 Write $\mathcal{P}(I,\vecd)=\bigcup_{h\in[l]}\mathcal{P}_h(I,\vecd)$.
For $P={\{(I_1,k_1(1),k_2(1)),\ldots, (I_h,k_1(h),k_2(h))\}}\in\mathcal{P}_h(I,\vecd),$ let $\Gamma_P$ be the following disjoint union of graded rooted
graphs:
\begin{equation}\label{def: GammaP}
\Gamma_P:=
\bigsqcup_{j=1}^h \Gamma_{P,j}:=
\bigsqcup_{j=1}^h \Gamma_{0,k_1(j),k_2(j),1, \{(a_i,b_i)\}_{i\in I_j}}.
\end{equation}
\end{nn}

The graphs $\Gamma_P$ for $P\in \mathcal{P}(I,\vecd)$ are precisely the smooth graphs which correspond to the product of open invariants in
Notation \ref{nn:A(J)} for a given summand. With notation as in the first paragraph of the proof of Theorem
\ref{thm:simple invariants}, note that $E=E_{\Gamma_P}(\vecd)$
naturally decomposes
as $E=\boxplus_{\Lambda\in\Conn(\Gamma_P)}E_\Lambda(\vecd)$. 
The requirement \eqref{eq:balancing requirements} then guarantees that
for each $\Lambda\in\Conn(\Gamma_P)$,
$\rank E_\Lambda(\vecd)=\dim\oPM_\Lambda$, i.e., $\Lambda$ is balanced,
see Proposition \ref{prop:balanced}.

\begin{nn}
\label{not:Gamma mess of subscripts}
We now consider notation for accounting for critical boundaries of
graphs $\Gamma_P$.
Let $P \in \mathcal{P}_h(I,\vecd)$ and take $(j, \eps, I_0, k_1(0), k_2(0), R_0, S_0)$ so that $1 \le j \le h$,  $I_0 \subseteq I_j$,  $\eps \in \{1,2\}$, and
\begin{equation}\label{eq:index constraints}\begin{aligned}
0 &\le k_1(0) \le k_1(j), &0 &\le k_2(0) \le k_2(j),
\\ 0 &\le R_0 \le k_1(j) - k_1(0), &0&\le S_0 \le k_2(j) - k_2(0).
\end{aligned} \end{equation}
We will construct a boundary graph $\Gamma_{P, j, \epsilon, I_0, k_1(0),k_2(0),R_0,S_0}\in\partial^0\Gamma_P\cup \partial^{\xch}\Gamma_P$.
This graph keeps all but the $j$th connected component of $\Gamma_P$ the same, but replaces $\Gamma_{P,j}$ with a graph
consisting of
\begin{itemize}
\item two open vertices $v_0$ and $v_j$, where $v_j$ is rooted;
\item one boundary edge $e$ consisting of two half-edges $h_0$ and $h_j$ belonging to the vertices $v_0$ and $v_j$ respectively;
\item the vertex $v_0$ has $k_1(0) + 1$ $r$-points and $k_2(0)+1$ $s$-points, including the half-edge $h_0$ above; where
\begin{itemize}
\item if $\epsilon =1$, the half-edge $h_0$ has $\tw(h_0) = (0,s-2)$ and $\alt(h_0)=(0,1)$; 
\item if $\epsilon =2$, the half-edge $h_0$ has $\tw(h_0) = (r-2,0)$ and $\alt(h_0)=(1,0)$;

\end{itemize}
\item the vertex $v_j$ has $(k_1(j)- k_1(0))$ $r$-points and $(k_2(j)-k_2(0))$ $s$-points, including the half-edge $h_j$;
\item the set $\hat\Pi$ of cyclic orderings consists of all cyclic orderings so that the following two conditions hold:
\begin{itemize}
\item the number of $r$-points in $v_0$ following the root but before the half-edge $h_0$ is $R_0$;
\item  the number of $s$-points in $v_0$ following the root but before the half-edge $h_0$ is $S_0$.
\end{itemize}
\end{itemize}
\end{nn}

If $\Gamma_{P, j, \epsilon, I_0,k_1(0),k_2(0), R_0,S_0}$ is a critical
boundary graph in the sense of Definition~\ref{def:critical},
then it corresponds to a codimension one boundary strata of the moduli space $\oPM_{\Gamma_P}$ where a canonical homotopy between two multisections may vanish.
This will correspond to wall-crossing phenomena for the open invariants
discussed throughout the rest of the paper.

\begin{figure}

  \centering

\begin{tikzpicture}[scale=0.5]
  \draw (0,0) circle (2cm);
  \draw (4,0) circle (2cm);
  \node (c) at (-2,0) {$\times$};
  \node (a) at (2,0) {$\bullet$};
  \node (b) at (1.5, 0) {\tiny$h_j$};
  \node (d) at (2.5, 0) {\tiny$h_0$};
  \node (0) at (-2, 2) {\tiny $v_j$};
  \node (j) at (6,2) {\tiny $v_0$};
  \node (ij) at (0,0) {\tiny $I_j\setminus I_0$};
  \node (i0) at (4,0) {\tiny $I_0$};
  \node (r) at (-2.9,0) {\tiny Root};

  \node (r1) at (5.73,-1) {\tiny $\bullet$};
    \node (r2) at (5.29,-1.532) {\tiny $\bullet$};
   \node (r1) at (4.347,-1.9696) {\tiny $\bullet$};
   \node (r1) at (5.36,1.46) {\tiny $\bullet$};
   \node (r1) at (2.71,-1.532) {\tiny $\bullet$};

   \node (r1) at (-0.347,-1.9696) {\tiny $\bullet$};
   \node (r1) at (0,2) {\tiny $\bullet$};

\end{tikzpicture}

  \caption{Stable disk corresponding to the $j$th component of the critical boundary graph $\Gamma_{P, j, \epsilon, I_0, k_1(0),k_2(0), R_0,S_0}$.}
\end{figure}
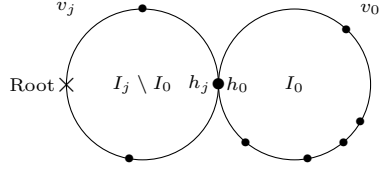

\begin{nn}
\label{nn:Q notation}
We next introduce an analogue of the set $\mathcal{P}_h(I, \vecd)$ that corresponds to bases of critical boundaries.  For $h\ge 0$, let $\mathcal{Q}_h(I, \vecd)$ be the collection of sets
$$
\{(I_0, k_1(0)+1,k_2(0)+1), \ldots, (I_h, k_1(h),k_2(h))\}
$$
so that the following hold:
\begin{itemize}
\item $\{I_0, \ldots, I_h\}$ is an ordered partition of $[h]$ such that $I_0$ is nonempty and at most one $I_j$ is empty;
\item $k_1(i) \equiv r(I_i) \pmod r$ for all $i$;
\item $k_2(i) \equiv s(I_i) \pmod s$ for all $i$; and
\item $sk_1(i)+rk_2(i) = m(I_i, \vecd) - \delta_{0i} rs$ for all
$i$.
\end{itemize}
These conditions arise from Propositions~\ref{prop:balanced} and~\ref{prop:critical m(I)}. Set
$$
\mathcal{Q}(I,\vecd) = \bigcup_{h=1}^l \mathcal{Q}_h(I, \vecd)
$$
with $l=|I|$.
Given a $Q = \{(I_0, k_1(0)+1,k_2(0)+1), \ldots, (I_h, k_1(h),k_2(h))\}\in \mathcal{Q}(I,\vecd)$, we can construct a graph $\Lambda_Q$ with $h+1$ connected components so that
\begin{itemize}
\item one connected component is the critical graph
\[
\Lambda_{Q,0}:=\Gamma_{0, k_1(0)+1, k_2(0)+1, 0, \{(a_i,b_i)\}_{i \in I_0}};
\]
\item $h$ connected components are the balanced graphs
\[
\Lambda_{Q,j}:=\Gamma_{0, k_1(j), k_2(j),1,\{(a_i,b_i)\}_{i \in I_j}}
\]
for $1\leq j\leq h$.
\end{itemize}
These graphs are the graphs corresponding to the \emph{bases of critical boundaries} for some graph $\Gamma_P$ associated to $P\in \mathcal{P}_h(I,\vecd)$.
In other words, $\CB{\Gamma_{P, j, \epsilon, I_0,k_1(0),k_2(0), R_0,S_0}}$
is a critical boundary graph of $\Gamma_P$ if and only if
$\Lambda_Q \succ \CB{\Gamma_{P, j, \epsilon, I_0,k_1(0),k_2(0), R_0,S_0}}$
for some $Q$,
using the notation in Definition~\ref{defn:subordinate}.
We remark that there is not necessarily a unique $P$ which satisfies this property, and indeed, it is the combinatorics of how the same base graph
$\Lambda_Q$ contributes to critical boundaries of different
$\oPM_{\Gamma_P}$ that is key to many of the arguments in the rest of
this paper. This kind of argument first appears in Part (B) of the proof of
Theorem \ref{thm:A_mod_invs}.

Consider $Q\in \mathcal{Q}_h(I,\vecd)$ with
$I_j=\emptyset$ for some $1\le j \le h$. Then the graph $\Lambda_{Q,j}$ is rooted and has either $k_1(j) = r, k_2(j) = 0$ or $k_1(j) =0, k_2(j) = s$, by Proposition~\ref{prop:balanced}(a) and (b).  Thus, we can partition the set $\mathcal{Q}_h(I,\vecd)$ into three subsets $\mathcal{Q}^{r}_h(I,\vecd)\cup \mathcal{Q}^{s}_h(I,\vecd)\cup \mathcal{Q}^{\neq\emptyset}_h(I,\vecd)$
defined as follows:
\begin{enumerate}
\item  $\mathcal{Q}^{r}_h(I,\vecd)$ is made of partitions with some $1\le j\le h$ so that $I_j = \emptyset$, $k_1(j)=r,  k_2(j) = 0$,
\item $\mathcal{Q}^{s}_h(I,\vecd)$ is made of partitions which contain some empty $I_j$ with $k_1(j) =0, k_2(j)=s$, and
\item $\mathcal{Q}^{\neq\emptyset}_h(I,\vecd)$ is made of partitions where $I_j$ is nonempty for all $j$.
\end{enumerate}
We define
\[
{\mathcal Q}^{\not=\emptyset}(I,\vecd) = \bigcup_{i=1}^l {\mathcal Q}_i^{\not=
\emptyset}(I,\vecd), \qquad
{\mathcal Q}^{\not=\emptyset}_{\le h}(I,\vecd) = \bigcup_{i=1}^h {\mathcal Q}_i^{\not=
\emptyset}(I,\vecd),
\]
and similar notation for ${\mathcal Q}^r(I,\vecd)$ and
${\mathcal Q}^s(I,\vecd)$.

We then also have various maps between these sets. First, we have
\begin{align}
\label{eq:Q r Q s}
\begin{split}
\mathcal{Q}_h^{\not=\emptyset}(I,\vecd)
\rightarrow \mathcal{Q}_{h+1}^r(I,\vecd),\quad\quad &
Q\mapsto Q^{+r}\\
\mathcal{Q}_h^{\not=\emptyset}(I,\vecd)
\rightarrow \mathcal{Q}_{h+1}^s(I,\vecd),\quad\quad&
Q\mapsto Q^{+s}
\end{split}
\end{align}
with
\begin{equation}\begin{aligned}\label{+r and +s definition}
{Q}^{+r} &=Q\cup \{(\emptyset,r,0)\} \\
{Q}^{+s} &=Q\cup \{(\emptyset,0,s)\},
\end{aligned}\end{equation}
with the convention on ordering the elements of $Q^{+r}$ or $Q^{+s}$
that the additional element is added at the end. We may define the inverse
maps
\[
Q^r_{h+1}(I,\vecd)\rightarrow \mathcal{Q}_h^{\not=\emptyset}(I,\vecd),
\quad\quad Q\mapsto Q^-
\]
via, if $Q\in\mathcal{Q}^{r}_{h+1}(I,\vecd)$ with $I_i=\emptyset$,
the element $Q^-$ is given by
\begin{equation}
\label{def:Q minus}
(I^{Q^{-}}_j,k_1^{Q^{-}}(j),k_2^{Q^{-}}(j))=\begin{cases}(I^{Q}_j,k_1^{Q}(j),k_2^{Q}(j)),~~\text{if $j<i$,}\\
(I^{Q}_{j+1},k_1^{Q}(j+1),k_2^{Q}(j+1)),~~\text{otherwise}.
\end{cases}
\end{equation}
Similarly we obtain
\[
Q^s_{h+1}(I,\vecd)\rightarrow \mathcal{Q}_h^{\not=\emptyset}(I,\vecd),
\quad\quad Q\mapsto Q^-.
\]

\end{nn}

\begin{ex}
    Consider the LG model $(x^4 + y^5, \mu_4 \times \mu_5)$. Take $|I| = 2$ where $(a_1, b_1) = (2,2)$ and $(a_2, b_2) = (2,3)$. Then we have examples where 
    \[ 
    \{(I, 1,1), (\emptyset, 4,0)\} \in \mathcal{Q}^{r}_1(I,\mathbf{0}); \qquad \{(I, 1,1), (\emptyset, 0,5)\} \in \mathcal{Q}^{s}_1(I,\mathbf{0}).
    \]
    The graphs corresponding to these partitions are the detachings of the graphs found in Figure~\ref{fig:balanced and crit graphs}(B) and (D). If we instead take $|I| = 3$ where $(a_1, b_1) = (1,1)$, $(a_2, b_2) = (2,2)$, and $(a_3, b_3) = (2,3)$, then we have 
    \[
    \{ (\{2,3\}, 1,1), (\{1\}, 1, 1) \} \in \mathcal{Q}^{\ne0}_1(I,\mathbf{0}).
    \]
    This graph corresponding to this partition is the detaching of the graph found in Figure~\ref{fig:exchange_and positive}(D). We remark that the vertex $v_2$ in the figure above serves as $v_0$ in the present notation.
\end{ex}

\begin{proof}[Proof of Theorem \ref{thm:A_mod_invs}]
We outline the proof by explaining the five steps we will take:
\begin{enumerate}[(A)]
\item We identify the strata on which a canonical homotopy may vanish: these are precisely those strata which correspond to critical boundary graphs in Definition~\ref{def:critical}. We then show these coincide with the graphs introduced
in Notation \ref{not:Gamma mess of subscripts}.
\item  We classify all the ways that we can glue the bases of critical boundaries to create critical boundaries so that they can be smoothed to a balanced graph.
\item We compute how much an introduction of a zero by a homotopy for a critical graph will change the invariants associated to such a moduli problem.
\item We provide closed formulas for how the invariants change with respect to the number of zeros in a given homotopy.
\item We show that, for any introduction of a zero through the homotopy for a critical graph, $\mathcal{A}(I,\vecd,\CI^{\ess})$ remains invariant.
\end{enumerate}

{\bf Part (A). Identification of strata where the homotopy may vanish
and classification of critical boundary graphs.}
Given two families of canonical multisections $\mathbf{s}_1$ and $\mathbf{s}_2$, we may compare them via the family of simple canonical homotopies $H$ constructed in Lemma~\ref{lem:partial_homotopy}. In this context,
as we will use Lemma \ref{lem:zero diff as homotopy} to understand
changes in open invariants appearing in
$\mathcal{A}(I,\vecd,\CI^{\ess})$, we identify boundary strata where $H$
may vanish.

To this end, start with a smooth balanced connected graph $\Gamma$
with respect to the descendent vector $\vecd$.
We now identify all boundary graphs $\Lambda\in \partial^0\Gamma$ such that the homotopy $H^{\Gamma}$ between $\mathbf{s}_1^{\Gamma}$ and
$\mathbf{s}_2^{\Gamma}$ may vanish in $[0,1]\times\CM^W_\Lambda $.

First, if $\Lambda \in \partial^0\Gamma$ is irrelevant, then
as in the first paragraph of Case 1 of the proof of Lemma~\ref{lem:partial_homotopy} a connected component of $\CB\Lambda$ is irrelevant as well. By Lemma~\ref{lem:partial_homotopy}(1)(a) and canonicity, $H^{\Gamma}$ will then not vanish on $[0,1]\times
\oPM_{\Lambda}$.

Second, suppose $\Lambda\in\partial^0\Gamma$ is relevant.
We claim that, for $H^{\Gamma}|_{[0,1]\times\CM^W_{\Lambda}}$ to vanish,
we must have that $\dim \oPM_\Lambda = \dim \oPM_\Gamma - 1$. Indeed, if $\dim \oPM_\Lambda < \dim \oPM_\Gamma - 1$, then in the notation of
Observation \ref{obs:base dim yoga}, $\nu\ge 3$. By the fact that
$\Gamma$ is balanced for $\vecd$, we also always have $\alpha=0$ and $\beta\ge\sigma$ in \eqref{eq:dim rank comparison}, and hence by
the pigeon-hole principal,
there exists a connected component $\Xi$ of $\CB\Lambda$ such that
$\dim \CM^W_\Xi \le  \rank E_{\Xi}(\vecd) - 2$ or there exists
two connected components $\Xi_1,\Xi_2$ of $\CB\Lambda$ such that
$\dim\CM^W_{\Xi_i}=\rank E_{\Xi_i}(\vecd)-1$ for $i=1,2$. In either case,
$H^{\CB\Lambda}$ does
not vanish on $\oPM_{\CB\Lambda}$ by Lemma~\ref{lem:partial_homotopy}(1)(b)
or (c),
and, in turn, $H^{\Gamma}$ does not vanish on $\oPM_\Lambda$.

Note that the only codimension one relevant boundary strata $\CM^W_{\Lambda}$ of $\oPM_{\Gamma}$ are those with a single boundary node.
Hence if $\Lambda\in\partial^0\Gamma$ is such that $H^{\Gamma}$ vanishes somewhere on $[0,1]\times\CM^W_{\Lambda}$, then $\Lambda$ consists of two open vertices connected by a boundary edge, and a number of boundary tails. Take $v_1$ to be the rooted vertex with corresponding boundary half-edge $h_1$ and let $v_2$ be the additional vertex with half-edge $h_2$.

Since $\Lambda$ is relevant we only have the following possibilities for the twist at the boundary edge:
\begin{enumerate}[(i)]
\item $\tw(h_1)= (r-2,0), \alt(h_1) = (1,0), \tw(h_2) = (0,s-2),$ and $\alt(h_2)=(0,1)$;
\item $\tw(h_1)= (0,s-2), \alt(h_1) = (0,1), \tw(h_2) = (r-2,0),$ and $\alt(h_2)=(1,0)$;
\item $\tw(h_1)= (r-2,s-2), \alt(h_1) = (1,1), \tw(h_2) = (0,0),$ and $\alt(h_2)=(0,0)$;
\item $\tw(h_1)= (0,0), \alt(h_1) = (0,0), \tw(h_2) = (r-2,s-2),$ and $\alt(h_2)=(1,1)$.
\end{enumerate}

When we pass to the base $\CB\Lambda$ in case (iii) (resp.\ (iv)), we forget the half-edge $h_2$ (resp.\ $h_1$), so $\beta=1$. Here we have two cases, depending on whether forgetting the half-edge will keep the corresponding vertex stable or make it partially stable, i.e., if $\sigma=0$ or $1$ in the notation of
Observation \ref{obs:base dim yoga}.
If the corresponding vertex remains stable, so that $\sigma=0$ in
\eqref{eq:dim rank comparison}, we again have one connected component $\Xi$ of $\CB\Lambda$ with
$\dim \CM^W_\Xi \le  \rank E_{\Xi}(\vecd) - 2$ or two connected components $\Xi_i$ of $\CB\Lambda$ with $\dim \CM^W_{\Xi_i} \le  \rank E_{\Xi_i}(\vecd) - 1$. Hence again by Lemma~\ref{lem:partial_homotopy}(1)(b)
or (c), 
$H^{\Gamma}$ is nowhere vanishing on $\oPM_{\Lambda}$.
If on the other hand $\sigma=1$, we may describe the partially stable vertex. Indeed, if it had two $r$-points or two $s$-points then the constraints given for a pre-graded $\RS$-spin structure in Observation~\ref{obs:open_rank1} do not hold. Consequently, it must have exactly one $r$-point and one $s$-point, which implies that 
$\Lambda$ is an exchangeable critical boundary graph. In this case, we have by Lemma~\ref{lem:exchange vanishes} that while $H^{\Gamma}$ may vanish on
$\oPM_{\Lambda}$, this vanishing will not contribute to a change
of the total number of zeroes between $\mathbf{s}_1^{\Gamma}$ and
$\mathbf{s}_2^{\Gamma}$. Thus we may ignore any contributions from these strata.\footnote{One could view these strata as irrelevant relevant boundary strata.}

It is left to analyze cases (i) and (ii) above. Indeed, both these cases can happen, but we can deduce further restrictions on the vertices $v_1$ and $v_2$, since $v_1$ is the vertex which has a root.
Using~\eqref{eq:open_rank2} we obtain
\[\rank\, E_{v_1}(\vecd)\equiv \dim\CM^W_{v_1} \pmod 2,~\rank\, E_{v_2}(\vecd)\equiv\dim\CM^W_{v_2}+1 \pmod 2.\]
 By Lemma \ref{lem:partial_homotopy}, this requires that
\[\rank \, E_{v_1}(\vecd)=\dim\CM^W_{v_1},~\rank\, E_{v_2}(\vecd)=\dim\CM^W_{v_2}+1\]
as otherwise $H$ will not vanish on $[0,1]\times\oPM_{\Lambda}$.

We summarize the above discussion. Suppose $H^{\Gamma}$ vanishes on
$[0,1]\times\oPM_{\Lambda}$ with $\Lambda$ not an exchangeable graph. 
Take the two connected components $\Xi_1$ and $\Xi_2$ of $\CB\Lambda$
corresponding to the vertices $v_1$ and $v_2$. Then $\Xi_1$ is a balanced graph and $\Xi_2$ is a critical graph, so $\Lambda$ is a critical boundary graph in the terminology of Definitions~\ref{def:balanced,critical,etc} and \ref{def:critical}. 

\medskip

We now remove the connectedness assumption on $\Gamma$. In this case we may
write $\Gamma = \Gamma_P$ for some $P \in\mathcal{P}_h(I,\vecd)$ as defined in Equation~\eqref{def: GammaP} in Notation~\ref{P sets of balanced}.
Let $\Lambda$ be a boundary graph for $\Gamma$. In order for $H^{\Gamma}$ to vanish on $[0,1]\times\oPM_\Lambda$, each connected component of $\Lambda$ must be either smooth or a critical boundary graph as deduced above. Moreover,
if there exists two connected components $\Lambda_1,\Lambda_2$ of $\Lambda$ that are critical boundary graphs, then by Lemma~\ref{lem:partial_homotopy}(1)(c),
$H^{\Gamma}$ is non-vanishing on $[0,1]\times\oPM_{\Lambda}$. 
Hence $\Lambda$ has at most one critical boundary component, thus
has only one boundary edge. This implies that, if the graph $\Lambda$ is not exchangeable and $H$ vanishes on $[0,1]\times\oPM_\Lambda$, then the graph $\CB \Lambda$ consists of $h$ graded, balanced, rooted vertices and one critical graded open vertex, and all the boundary half-edges of $\Lambda$ which are not tails are singly twisted.

From this description, it is now immediately clear that, given
$P\in \mathcal{P}_h(I,\vecd)$, any non-exchangeable critical boundary graph
for $P$ is of the form
$\Gamma_{P,j,\epsilon,I_0,k_1(0),k_2(0),R_0,S_0}$ of Notation
\ref{not:Gamma mess of subscripts}
with indices as given subject to the constraints of
\eqref{eq:index constraints}.\footnote{We remark that $k_i(0)$ must be non-negative as the graph $\Gamma_{P,j,\epsilon,I_0,k_1(0),k_2(0),R_0,S_0}$ must be relevant. Otherwise, the graph would satisfy condition (3) of Definition~\ref{def:special kind of graded graphs} and be irrelevant. } Further, in order for such a graph
to be a critical boundary graph, the vertex $v_0$ must be critical
and the vertex $v_j$ must be balanced. In particular, we then
have $\Lambda_Q\succ\CB\Gamma_{P,j,\epsilon,I_0,k_1(0),k_2(0),R_0,S_0}$
where, using the notation of Notation~\ref{nn:Q notation}, if
$P=\{(I_1,k_1(1),k_2(1)),\ldots,(I_h,k_1(h),k_2(h))\}$,
\begin{align}
\label{eq:Q mess of subscripts}
\begin{split}
Q=Q_{P,j,I_0,k_1(0),k_2(0)}:=\{(I_0,&k_1(0)+1,k_2(0)+1),(I_1,k_1(1),k_2(1)),\ldots,\\
&(I_j\setminus I_0, k_1(j)-k_1(0),k_2(j)-k_2(0)),\ldots,(I_h,k_1(h),k_2(h))\}
\in \mathcal{Q}_h(I,\vecd).
\end{split}
\end{align}

We write
\begin{equation}
\label{eq:crit def}
\Critrit(P):=\{\Lambda\in \partial\Gamma_P\,|\,\hbox{$\Lambda$ is a critical
boundary graph}\}.
\end{equation}
Elements of $\Critrit(P)$ are all of the form $\Gamma_{P, j, \epsilon, I_0,k_1(0),k_2(0), R_0, S_0}$ . We then define
\[
\Critrit(I, \vecd) := \bigcup_{P \in \mathcal{P}(I, \vecd)} \Critrit(P)
\]
to be the set of all critical boundary graphs that have the set $I$ of internal markings with descendent vector $\vecd$.

\medskip

{\bf Part (B). Classification of smooth graded balanced graphs whose bases of critical boundaries include a fixed critical graph.}
We now classify, given $Q\in\mathcal{Q}_h(I,\vecd)$, the critical boundary graphs with base $\Lambda_Q$. In particular, define
\[
A^\epsilon_Q:=\{\Gamma_{P, j, \epsilon,k_1(0),k_2(0), I_0, R_0, S_0}\,|\,
\CB\Gamma_{P, j, \epsilon, I_0,k_1(0),k_2(0), R_0, S_0} \prec \Lambda_Q\}.
\]
Note that $P\in{\mathcal P}_h(I,\vecd)$ is not fixed here. We may describe $A^{\epsilon}_Q$
depending on whether $Q$ lies in $\mathcal{Q}^r_h(I,\vecd)$,
$\mathcal{Q}^s_h(I,\vecd)$ or $\mathcal{Q}^{\not=\emptyset}_h(I,\vecd)$.

Given $Q \in \mathcal{Q}^{r}_h(I,\vecd)$,
there exists a unique $j\in [h]$ so that $(I_j, k_1(j), k_2(j)) = (\emptyset, r, 0)$.
Define
\begin{equation}
\label{eq:Qjhat r}
\begin{split}
\widehat Q_j:=
\{(I_1,k_1(1),k_2(1)),\ldots, &(I_{j-1},k_1(j-1),k_2(j-1)),\\
&(I_0,k_1(0)+r,k_2(0)),\ldots, (I_h,k_1(h),k_2(h))\}\in \mathcal{P}_h(I,
\vecd).
\end{split}
\end{equation}
We may take the graph $\Lambda_Q$ and attach an $s$-tail on the vertex $v_0$ to an $r$-tail on the vertex $v_j$ in order to obtain the critical boundary graph $\Gamma_{\widehat Q_j, j, 1, I_0,k_1(0),k_2(0), R_0,S_0}$ introduced
in Notation \ref{not:Gamma mess of subscripts}, where
\begin{itemize}
\item $R_0\in \{0, \ldots, r-1\}$, indicating how many $r$-points
lie after the root and before the node in the cyclic order associated
to the graph; and
\item $S_0=0$ as  there are no $s$-tails attached to $v_j$.
\end{itemize}
Note that the smoothing of $\Gamma_{\widehat Q_j, j, 1, I_0,k_1(0),k_2(0), R_0,S_0}$
agrees with $\Gamma_{\widehat Q_j}$ modulo cyclic orders.
Therefore, for $Q \in \mathcal{Q}_h^{r}(I,\vecd)$, we have that $A^{1}_Q$ is the collection of such critical boundary graphs $\Gamma_{\widehat Q_j, j, 1, I_0,k_1(0),k_2(0),  R_0,S_0}$ above and $A^2_Q = \varnothing$. Note that by the above discussion, the moduli space $\oPM_{\Gamma_{\widehat Q_j}}$ has $r$ boundary strata of the form $\oPM_{\Xi}$, with $\Xi \in A_Q^{1}$,
depending on the choice of $R_0$.

For $Q \in \mathcal{Q}_h^{s}(I,\vecd)$, there exists a unique $j \in [h]$ so that $(I_j, k_1(j), k_2(j)) = (\emptyset, 0, s)$. In this case, we have that $A^{2}_Q$ is the analogous collection of critical boundary graphs $\Gamma_{\widehat Q_j, j, 2, I_0,k_1(0),k_2(0), R_0,S_0}$ as above and $A^1_Q=\varnothing$,
with $R_0=0$, and $S_0 \in \{0, \ldots, s-1\}$.
Similarly as before,
\begin{equation}
\label{eq:Qjhat s}
\begin{split}
\widehat Q_j:=
\{(I_1,k_1(1),k_2(1)),\ldots, &(I_{j-1},k_1(j-1),k_2(j-1)),\\
&(I_0,k_1(0),k_2(0)+s),\ldots, (I_h,k_1(h),k_2(h))\}\in \mathcal{P}_h(I,
\vecd).
\end{split}
\end{equation}
The moduli space $\oPM_{\Gamma_{\widehat Q_j}}$ has $s$ boundary strata
$\oPM_{\Xi}$, with $\Xi \in A_Q^{2}$ depending on the choice of $S_0$.

Lastly, given $Q=\{(I_0,k_1(0)+1,k_2(0)+1),\ldots, (I_h,k_1(h),k_2(h))\}\in\mathcal{Q}^{\neq\emptyset}_h(I,\vecd)$, we may attach the vertex $v_0$ to any other vertex $v_1, \ldots, v_h$. We thus
define, for any $j \in [h]$,
 \begin{equation}\label{defn: hatQj}
\begin{split}
\widehat Q_j:=  \{(I_1,k_1(1),k_2(1)),\ldots, (I_0\cup I_j,k_1(0)+k_1(j),&
k_2(0)+k_2(j)),\\
&\ldots, (I_h,k_1(h),k_2(h))\}\in \mathcal{P}_h(I,\vecd).
\end{split}
 \end{equation}
This keeps all triplets the same from $1$ to $h$ except the $j$th one. There are potentially many ways to attach the vertices $v_0$ and $v_j$, depending on the number of $r$-points and $s$-points on the $j$th disk.
\begin{itemize}
\item If $k_1(j)>0$ then there is an $r$-point on $v_j$ to attach to an $s$-point on $v_0$, hence there is a critical boundary graph $\Gamma_{\widehat Q_j, j, 1, I_0,k_1(0),k_2(0), R_0, S_0}$ where $0\le R_0 \le k_1(j) - 1$ and $0 \le S_0 \le k_2(j)$. There are $k_1(j)(k_2(j)+1)$ such boundaries, forming
the set $A^1_Q$.
\item Analogously, if $k_2(j)>0$, then there is an $s$-point on $v_j$ to attach to an $r$-point on $v_0$, hence there is a critical boundary graph $\Gamma_{\widehat Q_j, j, 2, I_0,k_1(0),k_2(0), R_0, S_0}$ where $0\le R_0\le k_1(j)$ and $0\le S_0\le k_2(j) -1$. There are $(k_1(j) + 1)k_2(j)$ such boundaries,
forming the set $A^2_Q$.
\end{itemize}

Note now that
\begin{equation}\label{all the crits}
\bigcup_{Q \in \mathcal{Q}(I,\vecd), \epsilon \in \{1,2\}} A_Q^\epsilon = \Critrit(I, \vecd).
\end{equation}
The containment of the first set into the second is by our discussion above. The opposite containment is by the discussion in Part (A).

It will be useful to partition the sets $A^{\epsilon}_Q$ as follows.
Define the sets
\begin{equation}\begin{aligned}\label{defn of A^1QjR}
A^1_{Q, j, R_0} &= \{ \Gamma_{\widehat Q_j, j, 1,k_1(0),k_2(0), R_0, S_0} :  0 \leq S_0 \leq k_2(j)\};\\
A^2_{Q, j, S_0} &= \{ \Gamma_{\widehat Q_j, j, 2, k_1(0),k_2(0),R_0, S_0} :  0 \leq R_0 \leq k_1(j)\}.
\end{aligned} \end{equation}
Here, the set $A^1_{Q, j, R_0}$ (resp. $A^2_{Q, j, S_0}$) enumerates all possible choices of $S_0$ (resp. $R_0$) once we fix $Q$, $j$ and $R_0$ (resp. $S_0$).

To summarize:
\begin{itemize}
\item If $Q \in \mathcal{Q}^r_h(I, \vecd)$ and $I_j = \varnothing$, then we have that
\begin{equation}\label{A1Q for Qr}
A^1_Q = \bigcup_{R_0=0}^{r-1} A^1_{Q, j, R_0}; \quad A^2_Q = \varnothing.
\end{equation}
\item If $Q \in \mathcal{Q}^s_h(I, \vecd)$ and $I_j = \varnothing$, then we have that
\begin{equation}\label{A1Q for Qs}
A^1_Q = \varnothing; \quad A^2_Q = \bigcup_{S_0=0}^{s-1} A^2_{Q, j, S_0}.
\end{equation}
\item If $Q \in \mathcal{Q}^{\ne 0}_h(I, \vecd)$, then we have that
\begin{equation}\label{A1Q for Qne0}
A^1_Q = \bigcup_{j = 1}^h \bigcup_{R_0 = 0}^{k_1(j)-1} A^1_{Q, j, R_0}; \quad A^2_Q = \bigcup_{j=1}^h \bigcup_{S_0=0}^{k_2(j)-1} A^2_{Q, j, S_0}.
\end{equation}
\end{itemize}

We lastly reorganize the description of critical boundaries
using the notation $Q^{+r}$, $Q^{+s}$ of \eqref{+r and +s definition}.
We then have
\begin{equation}\label{partition of critrit}
\bigsqcup_{P \in \mathcal{P}(I, \vecd)} \Critrit(P) = \bigsqcup_{Q \in \mathcal{Q}^{\not=\emptyset}(I,\vecd)}
(A_Q^1 \sqcup A_Q^2 \sqcup A_{Q^{+r}}^1 \sqcup A_{Q^{+s}}^2).
\end{equation}

\medskip

{\bf Part (C) {Zeros of homotopies, from boundary to base}.}
Denote by $\oPM_{P, j, 1, I_0, k_1(0), k_2(0), R_0}$ (respectively $\oPM_{P, j, 2, I_0,k_1(0), k_2(0),  S_0}$) the following union of boundary strata of $\oPM_{\Gamma_{P}}$:
\begin{equation}\label{PartDUnions}
\oPM_{P, j, 1, I_0, k_1(0), k_2(0), R_0}:=  \bigcup_{\Xi \in A^1_{Q, j, R_0}} \oPM_{\Xi} \qquad \left(\textrm{resp. }\oPM_{P, j, 2, I_0, k_1(0), k_2(0), S_0}:=
\bigcup_{\Xi \in A^2_{Q, j, S_0}} \oPM_{\Xi}  \right),
\end{equation}
with $Q=Q_{P,j,I_0,k_1(0),k_2(0)}$ given in \eqref{eq:Q mess of subscripts}.

Recall that for $\Gamma_{P,j,1,I_0,k_1(0),k_2(0),R_0,S_0}\in A^1_Q$, we have
$\CB\Gamma_{P,j,1,I_0,k_1(0),k_2(0),R_0,S_0} \prec \Lambda_Q$.
Hence, if
$Q \in  \mathcal{Q}_h^r(I, \vecd) \cup  \mathcal{Q}_h^{\ne 0}(I, \vecd)$, we have a map
$$
F_{P, j, 1, I_0,k_1(0),k_2(0), R_0}:  \oPM_{P, j, 1, I_0,k_1(0),k_2(0), R_0}  \rightarrow  \oPM_{\Lambda_Q},
$$
which is defined to be $F_{P, j, 1, I_0, k_1(0),k_2(0),R_0}|_{\oPM_\Xi} = F_{\Xi}$ for any $\Xi \in A^1_{Q, j, R_0}$.
Here $F_{\Xi}$ is the map defined in Definition \ref{def:base}.
 The map $F_{P, j, 2, I_0,k_1(0),k_2(0), S_0}$ is defined analogously.

We can see that the map $F_{P, j, 1, I_0,k_1(0),k_2(0), R_0}$ is surjective. Indeed, an element of $\oPM_{\Lambda_Q}$ is a disjoint union of
graded $W$-spin disks $\cup_{i=0}^h \Sigma_i$. We may then obtain
an element of the domain of $F_{P, j, 1, I_0,k_1(0),k_2(0),R_0}$ which maps to this given
element of $\oPM_{\Lambda_Q}$ as follows.
In the cyclic order of points on
$\Sigma_j$, we glue the $(R_0+1)^{st}$ $r$-point on $\Sigma_j$ after the root to
some choice of $s$-point on $\Sigma_0$. This, in turn, determines the number $S_0$ of $s$-points that follow the root and precede the glued node.
Then the resulting glued disk  lies in
\[
\oPM_{\Gamma_{P, j, 1, I_0,k_1(0),k_2(0), R_0, S_0}} \subset \oPM_{P, j, 1, I_0,k_1(0),k_2(0), R_0}.
\]
Since we had $k_2(0)+1$ choices of $s$-point on the vertex $v_0$ to attach to the vertex $v_j$, this map is of degree $\pm (k_2(0)+1)$. Analogously,  $F_{P, j, 2, I_0,k_1(0),k_2(0), S_0}$ is surjective with degree $\pm (k_1(0) + 1)$.

We now aim to relate the orientation of the total space of the Witten bundle over $\oPM_{\Gamma_P}$ near the boundary $\oPM_{{P,j,1,I_0,k_1(0),k_2(0),R_0}}$ to the orientation of the Witten bundle over  $\oPM_{\Lambda_Q}$.
To do so, we consider the following diagram:
\begin{equation}\begin{tikzcd}\label{eq:instead_of_product}
\oPM_{{P,j,1,I_0,k_1(0),k_2(0),R_0}}  \arrow{d}{F_{{P,j,1,I_0,k_1(0),k_2(0),R_0}}} \arrow{r}{\iota_{\Gamma_P}} &  \oPM_{\Gamma_P}   \\
\oPM_{\Lambda_Q} &
\end{tikzcd}
\end{equation}

Here, Theorem \ref{thm:or_and_induced boundary}(\ref{part 2: orientation boundary})  relates the orientation on the total space of the Witten bundle over $\oPM_{{P,j,1,I_0,k_1(0),k_2(0),R_0}}$ to that over $\oPM_{\Lambda_Q}$. This is done by inducing the orientation on the total space of the Witten bundle over $\oPM_{{P,j,1,I_0,k_1(0),k_2(0),R_0}}$ by contracting $\iota_{\Gamma_P}^*o_{\Gamma_P}$ with the outward pointing normal for the inclusion $\iota_{\Gamma_P}$. Note that the term $K_2(v_2)$ appearing in Theorem \ref{thm:or_and_induced boundary}(\ref{part 2: orientation boundary}) is defined in Notation \ref{nn:K1 K2}, and here takes the value $k_2(0)$.

We then can conclude that if we consider a homotopy $H^{\Gamma}$ where
\begin{equation}\label{eq:from boundary to base}
H^\Gamma|_{[0,1]\times\oPM_{\Gamma_{P,j,\eps,I_0,k_1(0),k_2(0),R_0,S_0}}}=(
\mathrm{id}_{[0,1]}\times F_{\Gamma_{P,j,\eps,I_0,k_1(0),k_2(0),R_0,S_0}})^*\boxplus_{\Lambda\in\Conn(\CB\Gamma_{P,j,\eps,I_0,k_1(0),k_2(0),R_0,S_0})} H^{\Lambda},\end{equation}
then, by taking union over boundary strata given in Equation~\eqref{PartDUnions}, we see that
\begin{equation} \begin{aligned}\label{eqn for zeros in the homotopy}
\#Z(H^\Gamma|_{[0,1]\times\oPM_{{P,j,1,I_0,k_1(0),k_2(0),R_0}}}) &= (-1)^{k_2(0)} (k_2(0) + 1)\#Z(H^{\Lambda_Q}),\\
\#Z(H^\Gamma|_{[0,1]\times\oPM_{{P,j,2,I_0,k_1(0),k_2(0),S_0}}}) &=  (-1)^{k_2(0)-1}(k_1(0) + 1)\#Z(H^{\Lambda_Q}).
\end{aligned}\end{equation}

\medskip

{\bf Part (D): Number of zeros for a given simple homotopy.} 
Recall that we have chosen the family of transverse homotopies
$H^*$ between the given
families of  transverse multisections $\mathbf{s}_1$ and $\mathbf{s}_2$
to be simple.
By Part (A) of this proof, we know that this homotopy may only vanish in two cases: exchangeable critical boundaries and boundaries corresponding to graphs $\Gamma_{P, j, \epsilon, I_0,k_1(0),k_2(0), R_0,S_0}$. By Lemma~\ref{lem:exchange vanishes}, we know that 
$$
\#Z(H|_{[0,1]\times\partial^\xch\oPM_\Gamma}) = 0.
$$
Thus, from here until the end of the proof we will only need to consider homotopies of the form given in ~\eqref{eq:from boundary to base}. Thus we are in a context where we may use~\eqref{eqn for zeros in the homotopy} above.

We now claim that, in the notation of
\eqref{def:Q minus},
given $Q \in\mathcal{Q}^{r}_{h+1}(I, \vecd) \cup \mathcal{Q}^{s}_{h+1}(I, \vecd)$ where $I_i = \varnothing$,
\begin{equation}\label{comparing with the simple deletion}
\#Z(H^{\Lambda_Q})=-\#Z(H^{\Lambda_{Q^-}}).
\end{equation}
Indeed, recall from Example \ref{ex:emptyboundary} that, for the graphs $\Xi=\Gamma_{0,r,0,1,\emptyset}$ and $\Xi=\Gamma_{0,0,s,1,\emptyset}$, the boundary  $\partial^0\oPM_\Xi=\emptyset$, hence
$\#Z(H^\Xi|_{\{t\}\times\oPM_\Xi})$ is independent of $t$  by Lemma~\ref{lem:zero diff as homotopy}. By Theorem \ref{thm:simple invariants},  the constant invariant associated to this component is
\[\langle\sigma_1^{r}\sigma_{12}\rangle^{o}=\langle\sigma_2^{s}\sigma_{12}\rangle^{o}=-1.\]
Thus $\#Z(H^{\Lambda_Q}) = \#Z(H^{\Xi}|_{\{t\}\times\oPM_{\Xi} })\cdot \#Z(H^{\Lambda_{Q^-}}) = - \#Z(H^{\Lambda_{Q^-}})$, proving \eqref{comparing with the simple deletion}.

We now give closed formulas for combinations of
$\# Z(H^{\Xi})$ for various graphs
$\Xi$ which contribute to changes in the open invariants appearing
in $\mathcal{A}(I,\vecd,\CI^{\ess})$.
For the formulas below, take $\bullet$ to mean the smoothing $\smooth(\Xi)=\smooth_{E(\Xi)}\Xi,$ for any critical boundary graphs $\Xi$ that can be found in Equation~\eqref{all the crits} above.

Consider any $Q=\{(I_0,k_1(0)+1,k_2(0)+1),\ldots, (I_{h+1},k_1(h+1),k_2(h+1))\}\in\mathcal{Q}^{r}_{h+1}(I,\vecd)$. Then  $I_j = \emptyset$ for some $1\le j \le h+1$ and we can use Equations~\eqref{eqn for zeros in the homotopy} and~\eqref{comparing with the simple deletion} to see that

\begin{equation}\begin{aligned}\label{eq:wc3}\sum_{\Xi\in A^1_Q}\#Z(H^\bullet|_{[0,1]\times\oPM_\Xi}) &= \sum_{R_0 = 0}^{r-1} \   \sum_{\Xi\in A^1_{Q, j, R_0}} \#Z(H^\bullet|_{[0,1]\times\oPM_\Xi})
\\
	&= \sum_{R_0 = 0}^{r-1}  (-1)^{k_2(0)}(k_2(0)+1)\cdot\#Z(H^{\Lambda_Q})\\
	&=(-1)^{k_2(0)+1}\cdot r(k_2(0)+1)\cdot \#Z(H^{\Lambda_{Q^-}}).\end{aligned}\end{equation}

Similarly, for
any $Q=\{(I_0,k_1(0)+1,k_2(0)+1),\ldots, (I_{h+1},k_1(h+1),k_2(h+1))\}\in\mathcal{Q}^{s}_{h+1}(I,\vecd)$,

\begin{equation}\label{eq:wc4}\sum_{\Xi\in A^2_Q}\#Z(H^\bullet|_{[0,1]\times\oPM_\Xi})=(-1)^{k_2(0)}\cdot s(k_1(0)+1)\cdot\#Z(H^{\Lambda_{Q^-}}).\end{equation}

For any $Q=\{(I_0,k_1(0),k_2(0)),\ldots, (I_h,k_1(h),k_2(h))\}\in\mathcal{Q}^{\neq\emptyset}_h(I,\vecd)$, then the contributions from critical boundary graphs in $A^1_Q$ are
\begin{equation}\begin{aligned}\label{eq:wc1}
\sum_{\Xi\in A^1_Q}\#Z(H^\bullet|_{[0,1]\times\oPM_\Xi}) &= \sum_{j = 1}^h \ \sum_{R_0 = 0}^{k_1(j)-1}  \ \sum_{\Xi \in A^1_{Q, j, R_0}} \#Z(H^\bullet|_{[0,1]\times\oPM_\Xi}) \\
	&= \sum_{j = 1}^h\ \sum_{R_0 = 0}^{k_1(j)-1}  \left((-1)^{k_2(0)}(k_2(0)+1)\right)\#Z(H^{\Lambda_Q}) \\
	&=(-1)^{k_2(0)}(k_2(0)+1)\left(\sum_{j=1}^h k_1(j)\right)\#Z(H^{\Lambda_Q}).
\end{aligned}\end{equation}

Similarly,  for $A^2_Q$ we have
\begin{equation} \label{eq:wc2}
\sum_{\Xi\in A^2_Q}\#Z(H^\bullet|_{[0,1]\times\oPM_\Xi}) = (-1)^{k_2(0)+1}(k_1(0)+1)\left(\sum_{j=1}^h k_2(j)\right)\#Z(H^{\Lambda_Q}).
\end{equation}

\medskip

{\bf Part (E): Showing that $\mathcal{A}(I,\vecd,\CI^{\ess})$ is independent of $\mathbf{s}$.}  We now show that
\begin{equation}\label{goal for invariants}
\mathcal{A}(I,\vecd, \CI^{\ess_1}) - \mathcal{A}(I,\vecd, \CI^{\ess_2}) = 0.
\end{equation}
for any two canonical families of multisections $\mathbf{s}_1$ and $\mathbf{s}_2$. This is equivalent to showing that $\mathcal{A}(I,\vecd, \CI^{\ess})$ remains invariant under the family of homotopies $H$.
Only the invariants corresponding to graphs $\Gamma_P$ for $P\in \mathcal{P}(I,\vecd)$ where there exists a critical boundary $\Gamma_{P, j, \eps, I_0, k_1(0),k_2(0),R_0, S_0} \in \Critrit(I,\vecd)$ may change. In particular, we can see that the left-hand side of Equation~\eqref{goal for invariants} is of the form
\begin{align}\label{goalsum for invariants}
\begin{split}
\sum_{\substack{\Gamma_{P,j,\eps,I_0,k_1(0),k_2(0),R_0,S_0}\in \Critrit(I,\vecd),\\ P=\{(I_1,k_1(1),k_2(1)),\ldots,(I_h,k_1(h),k_2(h))\}\in  \mathcal{P}(I,\vecd)}}  \frac{1}{h!}&\frac{\Gamma(\frac{1+\sum_{i=1}^{h}k_1(i)}{r}) \Gamma(\frac{1+\sum_{i=1}^h k_2(i)}{s})}{\Gamma(\frac{1+r(I)}{r}) \Gamma(\frac{1+s(I)}{s})}\cdot\\
&\cdot
\#Z(H^{\Gamma_P}|_{[0,1]\times
\oPM_{\Gamma_{P,j,\eps,I_0,k_1(0),k_2(0),R_0,S_0}}}).
\end{split}
\end{align}

We see that this summation breaks down further into summands that correspond to different possible bases of critical boundaries. Take $\overline{\mathcal{Q}}(I,\vecd)$ to be the collection of \[Q=\{(I_0,k_1(0)+1,k_2(0)+1),\ldots,(I_h,k_1(h),k_2(h))\} \in {\mathcal Q}_h(I,\vecd), \qquad h\geq0\] such that
\begin{itemize}
\item $I_j \ne \varnothing $ for all $0 \le j \le h$;
\item the elements $(I_1,k_1(1),k_2(1)),\ldots,(I_h,k_1(h),k_2(h))$ are ordered in a way so that, for any two indices $i$ and $j$, we have that $\text{min}(I_i) < \text{min}(I_j)$ if and only if $i<j$.
\end{itemize}
 In the case $h=0$, we just have the data of a critical graph with internal markings $I$. Note that if $h\geq 1$, then $Q$ is an element of $\mathcal{Q}^{\neq\emptyset}(I,\vecd)$.
Moreover, in this case, if we consider the action of the symmetric group
$S_h$ on $\mathcal{Q}^{\neq\emptyset}_h(I,\vecd)$ given by permuting
the last $h$ elements in the ordered partitions in
$\mathcal{Q}^{\neq\emptyset}_h(I,\vecd)$, we can see that
$\overline{\mathcal{Q}}(I,\vecd)$ contains a unique element of each
orbit of this $S_h$-action.
Thus $\overline{\mathcal{Q}}(I,\vecd)$ has precisely one element
for each graph $\Lambda_Q$, with the order of connected
components not taken into account.

Recall the notation, from \eqref{+r and +s definition},
$Q^{+r} \in \mathcal{Q}^{r}(I,\vecd)$ and $Q^{+s} \in \mathcal{Q}^{s}(I,\vecd)$ for any $Q \in \overline{\mathcal{Q}}(I,\vecd)$.  We see for any $Q \in \overline{\mathcal{Q}}(I,\vecd)$ that, by combining Equations~\eqref{eq:wc3} and~\eqref{eq:wc1},
 \begin{equation}
 \sum_{\Xi\in A^1_Q}\#Z(H^{\smooth_{E(\Xi)}(\Xi)}|_{[0,1]\times\oPM_\Xi})+\frac{\sum_{i=1}^hk_1(i)}{r}\sum_{\Xi\in A^1_{Q^{+r}}}\#Z(H^{\smooth_{E(\Xi)}(\Xi)}|_{[0,1]\times\oPM_\Xi}) = 0,
\end{equation}
and, by combining Equations~\eqref{eq:wc4} and~\eqref{eq:wc2},
 \begin{equation}
 \sum_{\Xi\in A^2_Q}\#Z(H^{\smooth_{E(\Xi)}(\Xi)}|_{[0,1]\times\oPM_\Xi})+\frac{\sum_{i=1}^hk_2(i)}{s}\sum_{\Xi\in A^2_{Q^{+s}}}\#Z(H^{\smooth_{E(\Xi)}(\Xi)}|_{[0,1]\times\oPM_\Xi}) = 0.
\end{equation}
Consequently, using the above two equations and \eqref{eq:wc3}, \eqref{eq:wc4}
applied with $Q=Q^{+r}$ or $Q=Q^{+s}$, we get a new relation
\begin{align}\label{eq:wc_local_cancellation}
\sum_{\Xi\in A^1_Q\cup A^2_Q}\#Z(H^{\smooth_{E(\Xi)}(\Xi)}&|_{[0,1]\times\oPM_\Xi})+\frac{1+k_1(0)+ \sum_{i=1}^hk_1(i)}{r}\sum_{\Xi\in A^1_{Q^{+r}}}\#Z(H^{\smooth_{E(\Xi)}(\Xi)}|_{[0,1]\times\oPM_\Xi})\\\notag&+ \frac{1+k_2(0) + \sum_{i=1}^hk_2(i)}{s}\sum_{\Xi\in A^2_{Q^{+s}}}\#Z(H^{\smooth_{E(\Xi)}(\Xi)}|_{[0,1]\times\oPM_\Xi})  \\
\notag&= ((-1)^{k_2(0)}+(-1)^{k_2(0)+1})(k_1(0)+1)(k_2(0)+1)\#Z(H^{\Lambda_{Q}})=0.
\end{align}

Thus, the difference expressed in Equation~\eqref{goalsum for invariants} simplifies to:
\begin{align*}
&\sum_{\substack{Q\in\overline{\mathcal{Q}}(I,\vecd),\\
Q=\{(I_0,k_1(0),k_2(0)),\ldots,(I_h,k_1(h),k_2(h))\}}}\frac{\Gamma(\frac{1+\sum_{i=0}^{h}k_1(i)}{r}) \Gamma(\frac{1+\sum_{i=0}^h k_2(i)}{s})}{\Gamma(\frac{1+r(I)}{r}) \Gamma(\frac{1+s(I)}{s})}\left(\sum_{\Xi\in A^1_Q\cup A^2_Q}\#Z(H^\bullet|_{[0,1]\times\oPM_\Xi})\right.
\\&\left.+\frac{1+\sum_{i=0}^hk_1(i)}{r}\sum_{\Xi\in A^1_{Q^{+r}}}\#Z(H^\bullet|_{[0,1]\times\oPM_\Xi})+ \frac{1+\sum_{i=0}^hk_2(i)}{s}\sum_{\Xi\in A^1_{Q^{+s}}}\#Z(H^\bullet|_{[0,1]\times\oPM_\Xi})\right)=0,
\end{align*}
where $\bullet =\smooth_{E(\Xi)}\Xi$.
The fact that the expression on the left-hand side above agrees with that of 
\eqref{goalsum for invariants}
follows by: (1) using the partition found in~\eqref{all the crits};
(2) noting that we are summing the $k_\epsilon(i)$ from $0$ to $h$ rather
than $1$ to $h$ because we are summing over possible $Q$ rather than $P$;
(3) the standard relation $\Gamma(x+1)=x\Gamma(x)$. The
stated vanishing uses the relation in~\eqref{eq:wc_local_cancellation}. Therefore, the expression in Notation~\ref{nn:A(J)} is invariant with respect to choice of family of canonical multisections $\mathbf{s}$.
\end{proof}

\noindent We now show that the open FJRW invariants corresponding to two families of canonical multisections are related by the action of an element in the Landau-Ginzburg wall-crossing group.

\begin{thm}\label{thm: group actions only}
Let $\mathbf{s}_0,\mathbf{s}_1$ be two families of canonical multisections of the descendent Witten bundles.
Then there exists an element $g\in G_{A_I}^{r,s}$ such that
\[
W^{\mathbf{s}_1}=g(W^{\mathbf{s}_0}).
\]
\end{thm}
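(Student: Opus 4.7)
The plan is to adapt the transitivity argument from Step 3 of the proof of Theorem \ref{thm:G action} in order to construct $g\in G^{r,s}_{A_I}$ with $g\cdot \CI^{\mathbf{s}_0} = \CI^{\mathbf{s}_1}$, or equivalently $g(W^{\mathbf{s}_0})=W^{\mathbf{s}_1}$. Two earlier results provide the essential inputs: (i) Theorem \ref{thm:simple invariants}, which forces $\CI^{\mathbf{s}}_{\Gamma,\mathbf{0}}\in\{1,-1\}$ independently of $\mathbf{s}$ whenever $|I(\Gamma)|\le 1$; and (ii) Theorem \ref{thm:A_mod_invs}, which gives $\mathcal{A}(J,\vecd,\CI^{\mathbf{s}_0})=\mathcal{A}(J,\vecd,\CI^{\mathbf{s}_1})$ for every $J\subseteq I$ and $\vecd\in\NN^J$. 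Fact (i) handles the base case $|J|\le 1$, $\vecd=\mathbf{0}$, while (ii) will supply the single equation that closes each inductive step.

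The key organizational choice is to order the induction by $|J|$, rather than by the $(x,y)$-degree used in Step 3 of Theorem \ref{thm:G action}. This choice is forced by the structure of $\mathfrak{g}^{r,s}_{A_I}$: a generator in the summand indexed by a critical graph $\Lambda$ carries a factor $u_{I(\Lambda),\vecd}$, and because $u_{i,d}u_{i,d'}=0$ in $A_I$, such a generator only affects coefficients of monomials $u_{J',\vecd'}x^{k_1}y^{k_2}$ with $|J'|\ge |I(\Lambda)|$. Thus wall-crossing corrections made at stage $|J|$ never perturb entries indexed by smaller $|I(\Gamma)|$, and distinct pairs $(J,\vecd)$ sharing the same $|J|$ decouple from each other. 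At stage $|J|$ we may therefore treat each $(J,\vecd)$ separately.

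For fixed $(J,\vecd)$ at stage $|J|$, enumerate $\INT(J,\vecd)=\{\Gamma_{J,0},\dots,\Gamma_{J,N}\}$ and $\CRIT(J,\vecd)=\{\Lambda_{J,1},\dots,\Lambda_{J,N}\}$ as in Notation \ref{Concrete bal and crit graphs with prescribed internals}. Copying the $p'$-step of Step 3 of Theorem \ref{thm:G action}, we successively compose with the exponentials of the one-parameter subgroups corresponding to $\Lambda_{J,1},\dots,\Lambda_{J,N}$, forcing $(g\cdot\CI^{\mathbf{s}_0})_{\Gamma_{J,p},\vecd}=\CI^{\mathbf{s}_1}_{\Gamma_{J,p},\vecd}$ successively for $p=0,1,\dots,N-1$; modulo the deferred higher-$|J|$ terms, each such generator touches only the two consecutive coefficients indexed by $\Gamma_{J,p'}$ and $\Gamma_{J,p'+1}$, as in the explicit calculation in the proof of Theorem \ref{thm:G action}.

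The hard part is closing the induction at $p=N$, which relies crucially on input (ii). In the expansion of $\mathcal{A}(J,\vecd,\CI)$ given in Notation \ref{nn:A(J)}, every ordered partition $J=J_1\sqcup\cdots\sqcup J_h$ with $h\ge 2$ satisfies $|J_i|<|J|$ for each $i$, so the corresponding sub-invariants have already been equalized by the inductive hypothesis and the $h\ge 2$ contributions to $\mathcal{A}(J,\vecd,\CI^{g\cdot\mathbf{s}_0})-\mathcal{A}(J,\vecd,\CI^{\mathbf{s}_1})$ vanish. What remains is the $h=1$ term, a linear combination $\sum_{p=0}^N c_p\,\CI_{\Gamma_{J,p},\vecd}$ with strictly positive Gamma-ratio coefficients $c_p$; since the differences at $p=0,\dots,N-1$ have been killed by the wall-crossing steps, input (ii) forces the remaining difference at $p=N$ to vanish. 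Assembling the generators used across all stages and passing to the inverse limit in $G^{r,s}_{A_I}=\varprojlim_k G^{r,s}_{A_I,k}$ (only finitely many generators appear modulo each $\mathcal{I}_k$) produces the desired element $g$.
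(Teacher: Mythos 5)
Your proposal is correct, but it takes a genuinely different route from the paper's. The paper proves this theorem \emph{geometrically} and \emph{constructively}: it invokes Lemma \ref{lem:partial_homotopy} to build a family of simple canonical homotopies whose wall-crossing zeroes are localized at distinct times $t_A$, defines the Lie-algebra coefficients of $g$ to be the signed zero counts $\Delta(\Lambda_{Q,0})=\#Z(H^{\Lambda_{Q,0}})$ on the critical strata, and then verifies $W^{\mathbf{s}_{t_i^+}}=g_i(W^{\mathbf{s}_{t_i^-}})$ by a direct coefficient comparison using Lemma \ref{lem:zero diff as homotopy} and the boundary counting formulas \eqref{eq:wc1}--\eqref{eq:wc4}; it never uses Theorem \ref{thm:A_mod_invs}. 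You instead outsource all of the geometry to Theorem \ref{thm:A_mod_invs} and Theorem \ref{thm:simple invariants} and rerun the purely formal transitivity mechanism of Step 3 of Theorem \ref{thm:G action}. Your argument is sound: the decoupling by $|J|$ is correct (a generator carrying $u_{J_0,\vecd_0}$ only touches coefficients of $u_{J'}$ with $J_0\subseteq J'$), the $h\ge 2$ terms of $\mathcal{A}(J,\vecd,\cdot)$ involve only proper subsets of $J$ and are handled by induction, and the strictly positive Gamma-ratio coefficient $c_N$ closes the step at $p=N$; convergence in the pro-nilpotent limit is as you say. The one point you should make explicit is that the partially constructed $g$ preserves $\mathcal{A}(J,\vecd,\cdot)$ -- this follows from \eqref{eq:invariance of osc int} and Theorem \ref{thm:period integral}, exactly as in Step 1 of Theorem \ref{thm:G action}, and is needed before you can invoke Theorem \ref{thm:A_mod_invs} to kill the last difference. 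The trade-off: your route is shorter and cleanly separates the geometric input from the algebra, but it is non-constructive, whereas the paper's proof identifies each $\mathbb{Q}$-summand coefficient of $g$ with an actual count of zeroes escaping through the corresponding critical boundary, which is the enumerative content of the wall-crossing group that the rest of the paper relies on conceptually.
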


\begin{proof}
By Lemma \ref{lem:partial_homotopy}, there exists a family of simple canonical homotopies $H$ from $\ess_0$ to $\ess_1$ that satisfies the properties stated therein. By item (3) of that lemma,
we have for each non-empty $A\subseteq I$ a time $t_A\in (0,1)$
so that properties (3)(a) and (b) are satisfied. Further, all these times are
distinct. Order the non-empty subsets $A_1,\ldots,A_{2^{|I|}-1}$ of $I$
so that $t_{A_1}<\cdots < t_{A_{2^{|I|}-1}}$. We write $t_i:=t_{A_i}$.

For any descendent
vector $\vecd\in \NN^{A_i}$ and $\Gamma\in \CRIT(A_i,\vecd)$, we define
\begin{equation}\label{defn delta local}
\Delta(\Gamma):= \#Z(H^{\Gamma}).
\end{equation}
Note by property (3)(a), these zeroes occur at time $t_i$. We also remark that the descendent vector $\vecd$ is implicitly used and fixed in the construction of $H^\Gamma$ so we omit it in the notation.

Now set
\begin{align*}
g_i:=
\exp\Bigg( \sum_{\vecd\in \NN^{A_i}} & \sum_{\{(A_i,k_1(0)+1,k_2(0)+1)\}
\in \mathcal{Q}_0(A_i,\vecd)} (-1)^{|A_i|+k_2(0)}\Delta(\Lambda_{Q,0})\cdot\\
&\cdot\Bigg(\prod_{j\in A_i} u_{j,d_j}\Bigg)x^{k_1(0)}y^{k_2(0)}
((k_2(0)+1)x\partial_x - (k_1(0)+1)y\partial_y)\Bigg).
\end{align*}
Note that  $g_i\in G^{r,s}_{A_I}$ by the definition of
$\mathcal{Q}_0(A_i, \vecd)$.
We will ultimately show that the following choice of $g$ gives the desired
result:
\[
g:= g_{2^{|I|}-1}\circ\cdots \circ g_1.
\]
For any $t \in [0,1]$, we write $\mathbf{s}_t$
for the family of canonical sections obtained by
restricting $H$ to time $t$.

We first claim that, for any
\[
t\in \tau:=[0,1]\setminus \{t_i\,|\,1\le i\le 2^{|I|}-1\},
\]
and $\Gamma$ a balanced graph with respect to some descendent
vector $\vecd$,  $\mathbf{s}^{\Gamma}_t|_{\partial\oPM_{\Gamma}}$
is non-vanishing, and hence the degree of the Euler class
\[
Z^{\Gamma}(t):=\int_{\oPM_{\Gamma}} e(E;\ess^{\Gamma}_t|_{U_+
\cup \partial\oPM_{\Gamma}})
\]
is well-defined. 

To show the claim, we  now fix some $J\subseteq I$, $\vecd\in \NN^J$ and
$\Gamma\in \INT(J,\vecd)$. Thus $\Gamma = \Gamma_{P}$ for some $P = \{ (J, k_1(\Gamma), k_2(\Gamma))\} \in \mathcal{P}_1(J, \vecd)$. The analysis of the proof of Theorem \ref{thm:A_mod_invs}
tells us that the zeros of $H^{\Gamma}|_{[0,1]\times\partial\oPM_{\Gamma}}$
occur only on two sorts of codimension one boundary strata. 

The
first type are codimension one exchangeable boundary strata. If $\Lambda
\in \partial^{\xch}\Gamma$ represents a codimension one stratum,
then $\Lambda$ has two vertices $v_0$ and $v_1$, with $v_0$ exchangeable,
so that $I(v_0)=\varnothing$, and thus $I(v_1)=J$. In particular,
by canonicity of $H^{\Gamma}$, $H^{\Gamma}|_{[0,1]\times\oPM_{\Lambda}}$
can vanish only when $H^{v_1}$ vanishes, which occurs at time
$t_{J}$.

The second type are codimension one boundary strata corresponding to graphs $\Lambda\in\partial^0\Gamma$ described as follows. For each $Q\in \mathcal{Q}_1(J,\vecd)$,
we constructed sets $A^1_Q, A^2_Q$ of critical boundary graphs.
All boundary zeroes of $H^{\Gamma}$ on non-exchangeable strata
lie on some strata $\oPM_{\Lambda}\subseteq \oPM_{\Gamma}$ where $\Lambda\in
A^1_Q\cup A^2_Q$ for some $Q\in Q_1(J,\vecd)$ with $\widehat{Q}_1=P$.
In particular, we consider those $Q$ of the form
\begin{equation}
\label{eq:special Q}
Q=\{(I_0, k_1(0)+1,k_2(0)+1), (I_1,k_1(1),k_2(1))\}
\end{equation}
with $I_0\not=\varnothing$, $k_i(0)+k_i(1)=k_i(\Gamma)$,
and $I_0\cup I_1=J$.
Then by Lemma \ref{lem:partial_homotopy}(3)(a) and canonicity of $H^*$,
if $\Lambda\in A^1_Q\cup A^2_Q$, then
$H^{\Lambda}$ only vanishes at time $t_{I_0}$. Thus
we have now seen in particular that ${\bf s}^\Gamma_t|_{\partial\oPM_{\Gamma}}$
is non-vanishing whenever $t\in\tau$, hence showing the claim.

Note that for those $t$ for which $\ess^{\Gamma}_t$ is
transversal to the zero section, we have
\[
Z^{\Gamma}(t)=\#Z(\ess^{\Gamma}_t).
\]
For each $t\in \tau$, we obtain
a potential $W^{\mathbf{s}_t}$, and by Lemma \ref{lem:zero diff as homotopy},
this will only depend on the connected component of $\tau$ containing
$t$ by the claimed non-vanishing of such $\ess^{\Gamma}_t$ on
$\partial\oPM_{\Gamma}$.
For each $i$, choose $t_i^{\pm}$
with $t_{i-1}<t_i^-<t_i<t_i^+<t_{i+1}$ (taking $t_0=0$ and $t_{2^{|I|}}=1$).
Then it will be sufficient to show that
$W^{\mathbf{s}_{t_i^+}}=g_i(W^{\mathbf{s}_{t_i^-}})$.

By Lemma \ref{lem:zero diff as homotopy},
\[
Z^\Gamma(t_i^+)=Z^{\Gamma}(t_i^-)+\#
Z(H^{\Gamma}|_{[t_i^-,t_i^+]\times\partial\oPM_{\Gamma}
}).
\]
Now $\partial\oPM_{\Gamma}=\partial^0\oPM_{\Gamma}\cup
\partial^{\xch}\oPM_{\Gamma}$, and
by Lemma~\ref{lem:exchange vanishes} we have that
\[
\#Z(H^\Gamma|_{[t_i^-,t_i^+]\times\partial^\xch\oPM_\Gamma}) = 0,
\]
hence
\begin{equation}\label{eq:WC_basic}
Z^\Gamma(t_i^+)=Z^\Gamma(t_i^-)+
\#Z(H^\Gamma|_{[t_i^-,t_i^+]\times\partial^0\oPM_\Gamma}).
\end{equation}
Thus we only get contributions to $Z^\Gamma(t_i^+)-Z^\Gamma(t_i^-)$ from
those $Q$ as in \eqref{eq:special Q} with $I_0=A_i$.

Thus, we can use \eqref{eq:wc3}, \eqref{eq:wc4},  \eqref{eq:wc1}, and  \eqref{eq:wc2},
and compute that, with the summation over $Q$
ranging over those $Q$ as in \eqref{eq:special Q}
with $I_0=A_i$,
\begin{align}
\label{eq:Z diff}
\begin{split}
Z^{\Gamma}(t_i^+)-Z^{\Gamma}(t_i^-)= {} &
\#Z(H^{\Gamma}|_{[t_i^-,t_i^+]\times\partial^0\oPM_{\Gamma}})\\
= {} &
\sum_Q\sum_{\Xi\in A^1_Q\cup A^2_Q} \#Z(H^{\Gamma}|_{[t_i^-,t_i^+]\times
\oPM_{\Xi}})\\
= {} &
\sum_Q\bigg((-1)^{k_2(0)}(k_2(0)+1)k_1(1)
+(-1)^{k_2(0)+1}(k_1(0)+1)k_2(1)\bigg)\#Z(H^{\Lambda_Q}).
\end{split}
\end{align}
 
 Using the definition of $\Delta$ in~\eqref{defn delta local} and the fact that
$H^{\Lambda_{Q,1}}$ does not have any zeros near time $t_i$ on the
boundary of $\partial \oPM_{\Lambda_{Q,1}}$ as
$A_i\not\subseteq I(\Lambda_{Q,1})$, we can see that
\begin{equation}
\label{eq:Z diff2}
\#Z(H^{\Lambda_Q})=\#Z(H^{\Lambda_{Q,0}}) \cdot \#Z({\bf s}^{\Lambda_{Q,1}}_{t_i})
=\Delta(\Lambda_{Q,0}) Z^{\Lambda_{Q,1}}(t_i^-),
\end{equation}

We now calculate the coefficient of $x^{k_1(\Gamma)}y^{k_2(\Gamma)}\prod_{j\in
J} u_{j,d_j}$ in $g_i(W^{\mathbf{s}_{t_i^-}})$ and show it coincides
with the corresponding coefficient in $W^{\mathbf{s}_{t_i^+}}$.
We write $g_i=\exp(v_i)$. As the monomial $\prod_{j\in A_i} u_{j,d_j}$ has square zero, $v_i \circ v_i=0$, hence $g_i=\mathrm{Id}+v_i$.
The term with monomial $x^{k_1(\Gamma)} y^{k_2(\Gamma)}
\prod_{j\in J} u_{j,d_j}$ in $W^{\mathbf{s}_{t_i^-}}$ is
\[
(-1)^{|J|-1} Z^{\Gamma}(t_i^-)
x^{k_1(\Gamma)} y^{k_2(\Gamma)}
\prod_{j\in J} u_{j,d_j},
\]
and applying $\mathrm{Id}$ to this gives the same term.
All other terms involving the same monomial arise from applying a summand
of $v_i$ arising from $Q_0:=\{(A_i,k_1(0)+1,k_2(0)+1)\}\in \mathcal{Q}_0(A_i,
\vecd)$ to a term of $W^{\mathbf{s}_{t^-_i}}$ arising from
a balanced (with respect to $\vecd$) graph $\Gamma_{P_1}$ with
\[
P_1= \{(J\setminus A_i, k_1(1),k_2(1))\}
\]
with $k_i(0)+k_i(1)=k_i(\Gamma)$. Thus $Q=Q_0\cup P_1
\in \mathcal{Q}_1(J,\vecd)$ and $\widehat{Q}_1=P$, and all such $Q$ with
$I_0=A_i$ and $\widehat{Q}_1=P$ occur in this way.
Thus the coefficient of $x^{k_1(\Gamma)}y^{k_2(\Gamma)}\prod_{j\in J}
u_{j,d_j}$, after applying $g_i$, becomes, with again the sum over
$Q$ ranging over those $Q$ as in \eqref{eq:special Q} with $I_0=A_i$,
\begin{align*}
&(-1)^{|J|-1} Z^{\Gamma}(t_i^-) +\\
&\quad\quad\quad
+\sum_Q
\Big((-1)^{(|A_i|+k_2(0))+(|J\setminus A_i|-1)}\big(
(k_2(0)+1)k_1(1)
 -(k_1(0)+1)k_2(1)\big)
\Delta(\Lambda_{Q,0})Z^{\Lambda_{Q,1}}(t_i^-) \Big)\\
= {} &
(-1)^{|J|-1}\left( Z^{\Gamma}(t_i^-) +
\sum_Q
\Big((-1)^{k_2(0)}\big( (k_2(0)+1)k_1(1) -(k_1(0)+1)k_2(1)\big)
\Delta(\Lambda_{Q,0})Z^{\Lambda_{Q,1}}(t_i^-) \Big)\right)\\
= {} & (-1)^{|J|-1} Z^{\Gamma}(t_i^+)
\end{align*}
by \eqref{eq:Z diff} and \eqref{eq:Z diff2}. This proves the desired result.
\end{proof}

\subsection{Open topological recursion relations and mirror symmetry: statement
of results}

This subsection states the open topological recursion
relations and proves their immediate corollaries. This is expressed in terms of the quantities
${\mathcal A}(I,\vecd, \CI^{\ess})$, which by Theorem~\ref{thm:A_mod_invs} is known to be independent of $\ess$, so we will now refer to it as just $\mathcal{A}(I, \vecd)$. For ease of notation in stating the topological recursion relations, we take the index set of interior
markings $I$ to coincide with $[\ell]$. For a descendent vector $\vecd=(d_1,\ldots,d_{\ell})$,
we write ${\bf e}_1=(1,0,\ldots,0)$, so that
$\vecd+{\bf e}_1=(d_1+1,d_2,\ldots,d_{\ell})$.

\begin{thm}
\label{thm:open TRR}
Given a canonical family of multisections ${\bf s}$, the following
identities hold. If $|I|\ge 1$, then
\begin{align}\label{eq:calculation_1}
\mathcal{A}&(I,\vecd+\eee_1)=\\
&\notag\quad\quad\sum_{\substack{a\in\{-1,\ldots,r-2\}\\b\in\{-1,\ldots,s-2\}}}\sum_{\substack{A \coprod B = \{2,\ldots,l\}\\ A\not=\varnothing}}\left\langle \tau_0^{(a,b)}\tau_{d_1}^{(a_1,b_1)}\prod_{i \in A}\tau_{d_i}^{(a_i,b_i)}\right\rangle^{\textup{ext}}
\mathcal{A}(B\cup\{z_{a,b}\},\vecd)-
\mathcal{A}(I,\vecd).
\end{align}
If $|I|\ge 2$, then
\begin{equation}\begin{aligned}\label{eq:calculation_2}
\mathcal{A}&(I, \vecd + \eee_1)\\
&=\sum_{\substack{a\in\{-1,\ldots,r-2\}\\b\in\{-1,\ldots,s-2\}}}\sum_{\substack{A \coprod B = \{2,\ldots,l\},2\in B\\ A\not=\varnothing}}\left\langle \tau_0^{(a,b)}\tau_{d_1}^{(a_1,b_1)}\prod_{i \in A}\tau_{d_i}^{(a_i,b_i)}\right\rangle^{\textup{ext}} \mathcal{A}(B\cup\{z_{a,b}\},\vecd).
\end{aligned}\end{equation}
Here $z_{a,b}$ is an additional internal marking with twist
$(r-2-a,s-2-b)$ and no descendent. We extend $\vecd$ in a natural way so that $d_{z_{a,b}} = 0$.
\end{thm}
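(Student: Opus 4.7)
The plan is to follow the strategy used for open $r$-spin topological recursion in \cite{BCT:II} but adapted to the rank-two setting where two spin bundles, wall-crossing, and critical boundaries intervene. Fix any family $\ess$ of transverse canonical multisections bounded by $I$; by Theorem \ref{thm:A_mod_invs} the quantities $\mathcal{A}(I,\vecd)$ are independent of this choice, so we are free to pick $\ess$ conveniently. The overall strategy is to compute $\mathcal{A}(I,\vecd+\eee_1)$ in two ways: first by constructing a non-canonical multisection of $E_{\Gamma_P}(\vecd+\eee_1)$ for each $P \in \mathcal{P}(I,\vecd+\eee_1)$ whose zeros have a transparent geometric description in terms of bubbling of a closed component carrying the first interior marking, and then comparing this count, via a homotopy, to the canonical count appearing in the definition of $\mathcal{A}(I,\vecd+\eee_1)$.

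First, I would build a distinguished multisection $\xi$ of the tautological line $\CL_1$ over $\oPM_{\Gamma_P}$. Using the comparison map $t_\Gamma\colon \text{For}_{\text{non-alt}}^*\CL_1^{\Gamma'}\to \CL_1^{\Gamma}$ of Observation \ref{isomorphism forgetting non-alt for descendents}(ii) together with a generic section of the pulled-back line, the zero locus of $\xi$ becomes exactly the union of strata on which a closed component carrying $z_1$ has bubbled off the disk. Such strata are indexed by splittings $A\sqcup B=\{2,\ldots,\ell\}$ (with the $A$-markings migrating to the sphere together with marking $1$) and by the twist $(r-2-a,s-2-b)$ at the new internal node. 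Tensoring $\xi$ with a transverse canonical multisection of $E_{\Gamma_P}(\vecd)$ produces a transverse multisection $\widetilde{\ess}$ of $E_{\Gamma_P}(\vecd+\eee_1)$ whose zero count decomposes into interior sphere-bubbling contributions and boundary contributions on $\partial\oPM_{\Gamma_P}$. On each bubbling stratum, Proposition \ref{pr:decomposition}\eqref{it:internalRamondWitten} together with Observation \ref{isomorphism forgetting non-alt for descendents}(i) splits $E_{\Gamma_P}(\vecd)$ into the closed (extended) descendent-Witten bundle on the sphere direct-summed with the open descendent-Witten bundle on the disk carrying one extra internal tail $z_{a,b}$. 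The local zero count accordingly factors as a closed extended FJRW invariant times an open intersection; summing over all $P$, all splittings, and all $(a,b) \in \{-1,\ldots,r-2\}\times\{-1,\ldots,s-2\}$, and repackaging via the Gamma-factor identities of Notation \ref{nn:A(J)}, yields the main sum on the right-hand side of \eqref{eq:calculation_1}.

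Next I would homotope $\widetilde{\ess}$ to the canonical multisection $\ess^{\Gamma_P}$ using Lemma \ref{lem:hom_trr}, with $\CL_1$-component of the form $(1-t)\xi+t(1-t)\rho+t\,\ess_1^{\Gamma_P}$. By Lemma \ref{lem:zero diff as homotopy}, the discrepancy in total zero counts equals the signed count of zeros of the homotopy restricted to $[0,1]\times\partial\oPM_{\Gamma_P}$. The analysis of Part (A) in the proof of Theorem \ref{thm:A_mod_invs} shows these boundary zeros are supported only on exchangeable strata, which cancel in pairs by Lemma \ref{lem:exchange vanishes} and Proposition \ref{prop: orient exchange}, and on critical boundary graphs. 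The contribution of the critical strata is then reassembled, by exactly the $\mathcal{P}/\mathcal{Q}$-bookkeeping of Parts (B)--(E) of that proof, into the single correction $-\mathcal{A}(I,\vecd)$. Combining the sphere-bubbling sum with this boundary correction gives \eqref{eq:calculation_1}. For \eqref{eq:calculation_2} the same argument runs with $\xi$ refined so that marking $2$ is forced onto the disk component; this geometrically removes the sphere-bubbling strata with $2\in A$, and a matching modification of the boundary bookkeeping cancels out the $-\mathcal{A}(I,\vecd)$ term that otherwise appears.

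The hardest part will be the third step. The homotopy produced by Lemma \ref{lem:hom_trr} is only transverse, not canonical in the sense of Definition \ref{def:canonical homotopy}, so along critical boundaries the $\xi$-summand may vanish in ways not immediately covered by the combinatorics used in Theorem \ref{thm:A_mod_invs}. The auxiliary correction $\rho$ must be engineered so that the effective zero contribution on each critical stratum factorizes through its base $\CB$ and thus falls into the $\mathcal{P}/\mathcal{Q}$-framework, and a careful orientation computation on each critical and exchangeable stratum, using Theorem \ref{thm:or_and_induced boundary}\eqref{part 2: orientation boundary}, is needed both to verify that exchangeable pairs continue to cancel in the presence of the extra $\CL_1$ factor and to match all of the signs produced by the Gamma-factor repackaging with those implicit in the definition of $\mathcal{A}$.
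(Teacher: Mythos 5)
The high-level strategy you outline — build a multisection of $\CL_1$ whose zeros localize on sphere-bubbling strata, factor the resulting count into closed extended times open invariants, then homotope to a canonical multisection and track boundary contributions — is indeed the right one, and it matches the skeleton of the paper's argument. However, two core technical ideas are missing, and one of your claims is actually false, so the proof as sketched would not close.

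First, you classify the boundary strata where the homotopy may vanish as exchangeable strata and critical boundary graphs, and then assert that the critical-stratum contributions ``reassemble into the single correction $-\mathcal{A}(I,\vecd)$''. This is wrong on both counts. Because your multisection $\xi$ of $\CL_1$ is not canonical, there is a third kind of boundary contribution that does not appear in the proof of Theorem~\ref{thm:A_mod_invs}: the strata $\mathcal{G}_{\mathrm{Cont}}$ (Lemma~\ref{lem: homotopy TRR contributions classify}) where the rooted vertex carries $z_1$ and a $\tw=(0,0)$ half-edge, so that passing to the base $\CB$ forgets that half-node and leaves a balanced (not critical) graph. Over such strata $\xi$ winds once around the boundary circle $\hat\Phi_p$ (Observation~\ref{obs:cont_on_fiber}), and it is these strata — not the critical ones — that produce the $-\mathcal{A}(I,\vecd)$ term via Lemma~\ref{lem:contribution_eqs}. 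The critical (wall-crossing) contributions, by contrast, must \emph{cancel}, not reassemble, and this cancellation is not automatic.

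Second, the cancellation of the wall-crossing contributions does not hold for a single choice of pointing section. The paper needs four specific sections $t_r$, $t_s$, $t_{\times}$, $t_{z_2}$ (pointing at $r$-points, $s$-points, the root, and $z_2$), each of which restricts to boundary strata in an explicitly computable way (Observations~\ref{obs:restriction_of_t_to_WC} and~\ref{obs:restriction_of_t_to_Cont}). One then takes for each $P\in\mathcal{P}(I,\vecd+\eee_1)$ a rational linear combination $a^P t_r + b^P t_s + c^P t_{\times} + d^P t_{z_2}$ with coefficients constrained by Property {\bf Q1} (for \eqref{eq:calculation_1}) or Property {\bf Q2} (for \eqref{eq:calculation_2}), as in Definition~\ref{4tuplesQ}. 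It is precisely these constraints, propagated through the $\widehat{Q}_j$, $\widehat{Q}^{+r}$, $\widehat{Q}^{+s}$ combinatorics, that force the coefficients of every variable $X_{Q,\bullet}$ in the wall-crossing sum to vanish (Lemma~\ref{lem:cancelation_eqs}) while simultaneously normalizing the $\mathrm{Cont}$ sum to $-\mathcal{A}(I,\vecd)$ or $0$ (Lemma~\ref{lem:contribution_eqs}). A generic section of the pulled-back line, as you propose, has no such controlled boundary restriction and gives no handle on these sums. Without this device of weighted combinations indexed by $P$, the boundary terms cannot be matched to the right-hand sides of \eqref{eq:calculation_1} and \eqref{eq:calculation_2}.
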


The next section will be devoted to the proof of this result. Now, we state and prove two key corollaries that establish open mirror symmetry for the Landau-Ginzburg mirror pair.

\begin{cor}\label{thm:mirrorA}
\begin{enumerate}
\item
If $I= \{i\}$ is a singleton, then $\mathcal{A}(I, \vecd) = (-1)^{d_i}$.
\item If $|I| \geq 2$, then
\begin{equation}\label{eq:mirrorEqA}
\mathcal{A}(I,\vecd) =\sum_{\substack{a\in\{-1,\ldots,r-2\}\\b\in\{-1,\ldots,s-2\}}}\sum_{A \coprod B = I\setminus\{1,2\}}\left\langle \tau_0^{(a,b)}\prod_{i \in A\cup\{1,2\}}\tau_{d_i}^{(a_i,b_i)}\right\rangle^{\textup{ext}}
\mathcal{A}\big(B\cup\{z_{a,b}\},\vecd\,\big).
\end{equation}
\item Suppose $|I|\ge 2$.
Let $d(I,\vecd)$ be as defined in Definition \ref{def:d(J)}. Then
\[
\mathcal{A}(I,\vecd)=
\begin{cases}
0 & d(I,\vecd)<0\\
\left\langle \tau_{d(I,\vecd)}^{(r-r(I)-2,s-s(I)-2)}\prod_{i\in I}\tau_{d_i}^{(a_i,b_i)}
\right\rangle^{\textup{ext}}&d(I,\vecd)\ge 0.
\end{cases}
\]
\end{enumerate}
\end{cor}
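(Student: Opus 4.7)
The plan is to prove the three parts in sequence, with Theorem~\ref{thm:open TRR} as the main engine and an induction on $|I|$ for part~(3).

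For \emph{part (1)}, I induct on $d_i$. When $d_i=0$, Notation~\ref{nn:A(J)} applied to $J=\{i\}$ only admits $h=1$; the constraints $k_1(1)\equiv a_i\pmod r$, $k_2(1)\equiv b_i\pmod s$, $sk_1(1)+rk_2(1)=m(\{i\},0)=sa_i+rb_i$ force $(k_1(1),k_2(1))=(a_i,b_i)$, collapsing the Gamma ratio to $1$. Thus $\mathcal{A}(\{i\},0)=\CI^{\ess}_{\Gamma_{0,a_i,b_i,1,\{i\}},0}$, which equals $1$ by Theorem~\ref{thm:simple invariants}. For the inductive step, by the relabeling symmetry of Notation~\ref{nn:A(J)} I may take $i=1$, apply \eqref{eq:calculation_1} with $I=\{1\}$ and $\vecd=(d_1-1)$, and observe that the inner sum ranges over $A\sqcup B=\emptyset$ with $A\neq\emptyset$, which is vacuous; hence \eqref{eq:calculation_1} reduces to $\mathcal{A}(\{1\},d_1)=-\mathcal{A}(\{1\},d_1-1)$, closing the induction.

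For \emph{part (2)}, I exploit the shared left-hand side of the two TRR identities. Subtracting \eqref{eq:calculation_2} from \eqref{eq:calculation_1} yields
\[
\mathcal{A}(I,\vecd)=\sum_{\substack{a\in\{-1,\ldots,r-2\}\\b\in\{-1,\ldots,s-2\}}}\sum_{\substack{A\sqcup B=\{2,\ldots,l\}\\ 2\in A}}\left\langle \tau_0^{(a,b)}\tau_{d_1}^{(a_1,b_1)}\prod_{i\in A}\tau_{d_i}^{(a_i,b_i)}\right\rangle^{\text{ext}}\mathcal{A}(B\cup\{z_{a,b}\},\vecd),
\]
since $2\in A$ automatically implies $A\neq\emptyset$. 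Setting $A=A'\sqcup\{2\}$, so that $A'\sqcup B=I\setminus\{1,2\}$, and then renaming $A'$ to $A$, produces formula~\eqref{eq:mirrorEqA}.

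For \emph{part (3)}, I induct on $|I|$. For the base $|I|=2$, formula~(2) degenerates to a single outer sum; part~(1) sets $\mathcal{A}(\{z_{a,b}\},0)=1$, and the dimension condition for a three-point closed extended invariant forces $(a,b)=(r-r(I)-2,s-s(I)-2)$ uniquely and further demands $\ell_1=\ell_2=d_1=d_2=0$, which is precisely the case $d(I,\vecd)=0$; otherwise the closed extended invariant vanishes, matching $d(I,\vecd)<0$. For the inductive step with $|I|\geq 3$, I substitute the inductive hypothesis into formula~(2): each factor $\mathcal{A}(B\cup\{z_{a,b}\},\vecd)$ is either zero (when $d(\cdot)<0$) or an explicit closed extended invariant. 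The resulting right-hand side is a sum of products of two closed extended invariants of exactly the shape appearing in the closed extended topological recursion relation \eqref{eq:closed TRR}; inverting that recursion collapses the sum into the single closed extended invariant $\bigl\langle \tau_{d(I,\vecd)}^{(r-r(I)-2,s-s(I)-2)}\prod_i\tau_{d_i}^{(a_i,b_i)}\bigr\rangle^{\text{ext}}$ predicted by the statement.

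The main obstacle will be the careful bookkeeping of descendent degrees, twist residues, and signs across the two sides. Concretely, one must verify that the integer $d(B\cup\{z_{a,b}\},\vecd)$ produced by the induction matches the descendent expected on the internal leg in \eqref{eq:closed TRR}, that the twist $(r-r(B\cup\{z_{a,b}\})-2,s-s(B\cup\{z_{a,b}\})-2)$ matches the ``cut'' twist, and that the signs $(-1)^{d(\cdot)-1}$ assemble with the summation signs to reproduce $(-1)^{d(I,\vecd)-1}$. Dimension accounting, as in Observation~\ref{obs:closed extended}, is what allows one to restrict the outer sum over $(a,b)$ to the finitely many values where both factors in each product are nonzero, and what enforces the vanishing claim when $d(I,\vecd)<0$.
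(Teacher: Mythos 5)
Your proposal is correct and follows essentially the same route as the paper: part~(1) via repeated use of~\eqref{eq:calculation_1} (phrased by you as induction on $d_i$, which is the same thing), part~(2) by equating the right-hand sides of the two recursions, and part~(3) by inducting on $|I|$ and invoking the closed extended topological recursion together with Ramond vanishing and the dimension constraint of Observation~\ref{obs:closed extended}(1). The only cosmetic difference is that you treat $|I|=2$ as an explicit base case while the paper folds it into the three-way case split on the sign of $d(I,\vecd)$ (indeed for $|I|=2$ one always has $d(I,\vecd)\le 0$, so the split degenerates); the relation $d(B\cup\{z_{a,b}\},\vecd)=d(I,\vecd)-1$ and the sign accounting you flag as the remaining obstacle are precisely the bookkeeping the paper's proof carries out in Equations~\eqref{eq:nonvanishing need}--\eqref{eq:Bprime I rel}.
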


\begin{proof}
For (1), note that
if $I$ is a singleton $\{i\}$, then we obtain
\[
\mathcal{A}(I,\vecd)=(-1)^{d_i}\mathcal{A}(I,\mathbf{0})=(-1)^{d_i}\langle\tau_0^{(a_1,b_1)}\sigma_1^{a_1}\sigma_2^{b_2}\sigma_{12}\rangle^o=(-1)^{d_i}.
\]
Here the first equality follows from repeated use of
\eqref{eq:calculation_1} as the first summand on the right-hand side of
\eqref{eq:calculation_1} vanishes. Then the second equality comes from
the definition of $\mathcal{A}(I,\mathbf{0})$ and the third
equality is Theorem \ref{thm:simple invariants}.

For (2), we may equate the right-hand sides of \eqref{eq:calculation_1}
and ~\eqref{eq:calculation_2} and solve for $\mathcal{A}(I,\vecd)$ to obtain
\begin{equation}\begin{aligned}\label{eq:calculation_3}
\mathcal{A}(I, \vecd)={} &\sum_{\substack{a\in\{-1,\ldots,r-2\},\\b\in\{-1,\ldots,s-2\}}}\sum_{\substack{A \coprod B = \{2,\ldots,l\}\\ A\neq \emptyset}}\left\langle \tau_0^{(a,b)}\tau_{d_1}^{(a_1,b_1)}\prod_{i \in A}\tau_{d_i}^{(a_i,b_i)}\right\rangle^{\text{ext}}
\mathcal{A}(B\cup\{z_{a,b}\},\vecd)\\
&-\sum_{\substack{a\in\{-1,\ldots,r-2\},\\b\in\{-1,\ldots,s-2\}}}\sum_{\substack{A \coprod B = \{2,\ldots,l\},2\in B \\ A \neq \emptyset}}\left\langle \tau_0^{(a,b)}\tau_{d_1}^{(a_1,b_1)}\prod_{i \in A}\tau_{d_i}^{(a_i,b_i)}\right\rangle^{\text{ext}} \mathcal{A}(B\cup\{z_{a,b}\},\vecd) \\
={} &\sum_{\substack{a\in\{-1,\ldots,r-2\},\\b\in\{-1,\ldots,s-2\}}}\sum_{A \coprod B = \{2,\ldots,l\},2\in A}\left\langle \tau_0^{(a,b)}\tau_{d_1}^{(a_1,b_1)}\prod_{i \in A}\tau_{d_i}^{(a_i,b_i)}\right\rangle^{\text{ext}} \mathcal{A}(B\cup\{z_{a,b}\},\vecd).
\end{aligned}\end{equation}
This gives (2).

Finally, we prove (3). Suppose $I$ is an index set with $|I|\ge 2$
and $1,2\in I$.
Assume by induction the result
for all $I'$ with $2\le |I'|<|I|$.
To prove (3), we aim to apply \eqref{eq:mirrorEqA}, but, to do so effectively, we must establish a relationship between $d(B\cup\{z_{a,b}\},\vecd)$ and $d(I, \vecd)$. Let us consider those sets $B\subseteq I\setminus
\{1,2\}$ for which the contribution in the summation of
\eqref{eq:mirrorEqA} is non-zero,
i.e.,
\begin{equation}
\label{eq:extended nonvanishing}
\left\langle \tau_0^{(a,b)}\prod_{i \in I\setminus B}\tau_{d_i}^{(a_i,b_i)}\right\rangle^{\text{ext}} \not=0.
\end{equation}
By Observation \ref{obs:closed extended}, (1), this requires
\begin{equation}
\label{eq:nonvanishing need}
0=m((I\setminus B)\cup\{(a,b)\},\vecd)+2r+2s=m(I\setminus B,\vecd)+
sa+rb-rs+2r+2s.
\end{equation}
With $B':=B\cup\{z_{a,b}\}$ where $z_{a,b}$ is as in Theorem~\ref{thm:open TRR}, we will now show
that~\eqref{eq:nonvanishing need} implies
\begin{equation}
\label{eq:Bprime I rel}
d(B',\vecd)=d(I,\vecd)-1.
\end{equation}

We note that
\[
m(I\setminus B,\vecd)=m(I,\vecd)-m(B,\vecd)+rs
\]
and
\[
m(B',\vecd)=m(B,\vecd)+r(s-b-2)+s(r-a-2)-rs.
\]
Putting this together, we see that \eqref{eq:nonvanishing need} holds
if and only if
\[
m(B',\vecd)=m(I,\vecd)+rs.
\]
Note further that \eqref{eq:extended nonvanishing} also implies from
\eqref{eq:graded_closed_rank} that
$a+2+\sum_{i\in I\setminus B}a_i \equiv 0 \mod r$, and thus
\begin{equation}\label{r(I) vs r(B')}
r(I)\equiv (a+2)+(r-a-2)+\sum_{i\in I} a_i \equiv
(r-a-2)+\sum_{i\in B} a_i \equiv
r(B')
\mod r,
\end{equation}
giving the equality $r(I)=r(B')$. Similarly, $s(I)=s(B')$.
Thus we conclude that
\[
m(B',\vecd)-(sr(B')+ rs(B')) = m(I,\vecd)-(sr(I)+rs(I))+rs,
\]
showing~\eqref{eq:Bprime I rel}.

Thus, if $B$ is non-empty, so that $|I|>|B'|\ge 2$,
then by the induction hypothesis,~\eqref{eq:Bprime I rel}, and ~\eqref{r(I) vs r(B')},
\[
{\mathcal A}(B',\vecd)=
\left\langle\tau_{d(I,\vecd)-1}^{(r-r(I)-2,s-s(I)-2)}\tau_0^{(r-2-a,s-2-b)}
\prod_{i\in B}\tau_{d_i}^{(a_i,b_i)}\right\rangle^{\text{ext}},
\]
where for convenience we interpret this as $0$ if $d(I,\vecd)-1<0$ and
are using \eqref{eq:Bprime I rel} to write this formula
using $d(I,\vecd)$ instead of $d(B',\vecd)$.

We first consider the case that $d(I,\vecd)<0$, so that $d(B',\vecd)< -1$ if
\eqref{eq:extended nonvanishing} holds.
Then by the induction hypothesis, the only terms in
\eqref{eq:mirrorEqA} which are not obviously zero arise with $B=\varnothing$,
but we claim that in this case, the invariant in
\eqref{eq:extended nonvanishing} vanishes. Indeed, in this case
$m(B',\vecd)=s(r-2-a)+r(s-2-b)=sr(B')+rs(B')$, hence $d(B', \vecd) = -1$, contradicting
 $d(B',\vecd)<-1$.
Thus all terms in \eqref{eq:mirrorEqA} vanish, showing the desired
inductive vanishing of ${\mathcal A}(I,\vecd)$ in this case.

If, on the other hand, $d(I,\vecd)=0$, then again by induction all terms
in the double summation of \eqref{eq:mirrorEqA} vanish except
when $B=\varnothing$. Moreover, a term in the first summation can only be non-zero if
$a=r-r(I)-2, b=s-s(I)-2$
by Observation \ref{obs:closed}.  Thus, after using $\mathcal{A}(\{z_{a,b}\},\vecd)=1$ by
(1), we obtain
\[
\mathcal{A}(I,\vecd)=\left\langle \tau_0^{(r-r(I)-2,s-s(I)-2)}
\prod_{i\in I}\tau_{d_i}^{(a_i,b_i)}\right\rangle^{\text{ext}}
\]
as desired.

Finally, if $d(I,\vecd)>0$,
we write \eqref{eq:mirrorEqA} by splitting the sum according
to whether $B$ is empty or not, use item (1), and the induction hypothesis
to get:
\begin{align*}
{\mathcal A}(I,\vecd)= {} &
\sum_{\substack{a\in\{-1,\ldots,r-2\},\\b\in\{-1,\ldots,s-2\}}}
\left\langle\tau_0^{(a,b)}\prod_{i\in I}\tau_{d_i}^{(a_i,b_i)}
\right\rangle^{\text{ext}}\\
&+
\sum_{\substack{a\in\{-1,\ldots,r-2\},\\b\in\{-1,\ldots,s-2\}}}
\sum_{A \coprod B = I\setminus\{1,2\},B\not=\varnothing}\left\langle \tau_0^{(a,b)}\prod_{i \in A\cup\{1,2\}}\tau_{d_i}^{(a_i,b_i)}\right\rangle^{\text{ext}}
\cdot\\
&\quad\quad\quad\quad\quad\cdot
\left\langle\tau_{d(I,\vecd)-1}^{(r-r(I)-2,s-s(I)-2)}\tau_0^{(r-2-a,s-2-b)}
\prod_{i\in B}\tau_{d_i}^{(a_i,b_i)}\right\rangle^{\text{ext}}.
\end{align*}
By the closed extended topological recursion and Ramond vanishing of
Observation \ref{obs:closed extended}(2),(3), this now agrees with
\[
\sum_{\substack{a\in\{-1,\ldots,r-2\},\\b\in\{-1,\ldots,s-2\}}}
\left\langle\tau_0^{(a,b)}\prod_{i\in I}\tau_{d_i}^{(a_i,b_i)}
\right\rangle^{\text{ext}}
+ \left\langle \tau_{d(I,\vecd)}^{(r-r(I)-2,s-s(I)-2)}\prod_{i\in I}
\tau_{d_i}^{(a_i,b_i)}\right\rangle^{\text{ext}}.
\]
However, a term in the first summation can only be non-zero if
$a=r-r(I)-2, b=s-s(I)-2$ by Observation \ref{obs:closed}. Using this prescribed twist and the definition of $d(I,\vecd)$, one can compute that $m(I \cup \{z\}, \vecd) +2r +2s = -rsd(I, \vecd) < 0$ where $z$ is an internal marking with twist $(a,b)$. Thus, by Observation \ref{obs:closed extended}(1), all the terms in the first summation vanish, giving the result.
\end{proof}

\begin{cor}[Open Mirror Symmetry for 2-dimensional Fermat polynomials]
\label{cor:open MS}
If $\mathbf{s}$ is a canonical symmetric family of multisections bounded by a finite index set $I$, then
\begin{align*}
&\int_{\Xi_{a,b}} e^{W^{\CI,\mathrm{sym}}/\hbar} dx \wedge dy
= {}
\delta_{a,0}\delta_{b,0} + \sum_{d\ge 0} t_{a,b,d}\hbar^{d-1}
\\
{} & +\sum_{l\ge 2}
\sum_{\substack{A\in \mathcal{A}_l\\
d(A)\ge 0}} (-1)^{l}
(-\hbar)^{-d(A)-2}{\delta_{r(A),a}\delta_{s(A),b}\over |\textup{Aut}(A)|}
 \left\langle \tau_{d(A)}^{(r-r(A)-2,s-s(A)-2)}\prod_{(\alpha_i,\beta_i,d_i)
\in A}\tau_{d_i}^{(\alpha_i,\beta_i)}
\right\rangle^{\textup{ext}}
\prod_{(\alpha_i,\beta_i,d_i)\in A} t_{\alpha_j,\beta_j,d_j}.
\end{align*}
 as an equality of Laurent series in the variable $\hbar$ with coefficients in $A_{I, \textrm{sym}}$.

In particular, working modulo $\langle t_{a,b,d} \,|\, 0 \le a \le r-2,
0\le b \le s-2,d>0\rangle$, $dx\wedge dy$ is a primitive form
and the $t_{a,b,0}$ are flat coordinates.
\end{cor}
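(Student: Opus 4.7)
The plan is to combine two results already established: Corollary~\ref{cor:symmetric integal}, which expands the oscillatory integral in terms of the universal quantities $\mathcal{A}(A,\CI)$, and Corollary~\ref{thm:mirrorA}, which identifies these quantities with closed extended FJRW invariants. First I would verify the hypothesis of Corollary~\ref{cor:symmetric integal} for $\CI=\CI^{\ess}$: by Theorem~\ref{thm:simple invariants}, specifically the congruence~\eqref{eq:needed cong}, we have $W^{\CI^{\ess},\mathrm{sym}}\equiv x^r+y^s$ modulo the ideal generated by the $t_{\alpha,\beta,d}$. Applying Corollary~\ref{cor:symmetric integal} then writes the integral as $\delta_{a,0}\delta_{b,0}$ plus a sum over $l\geq 1$ and $A=\{(\alpha_i,\beta_i,d_i)\}\in\mathcal{A}_l$ with coefficient $(-1)^l(-\hbar)^{-d(A)-2}\mathcal{A}(A,\CI^{\ess})\delta_{r(A),a}\delta_{s(A),b}$.

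Next I would split this sum according to $l=1$ and $l\geq 2$. For $l=1$, a singleton $A=\{(a,b,d)\}$ gives $r(A)=a$, $s(A)=b$, and a direct computation from the definitions in Notation~\ref{nn:r(I),s(I),m(I)} and Definition~\ref{def:d(J)} shows $d(A)=-d-1$; combining with $\mathcal{A}(A,\CI^{\ess})=(-1)^d$ from Corollary~\ref{thm:mirrorA}(1), the $l=1$ contributions collapse to $\sum_{d\geq 0}t_{a,b,d}\hbar^{d-1}$. For $l\geq 2$, Corollary~\ref{thm:mirrorA}(3) asserts that $\mathcal{A}(A,\CI^{\ess})$ vanishes when $d(A)<0$ (so those terms drop out of the sum) and equals a signed closed extended FJRW invariant when $d(A)\geq 0$; substituting this into the expansion yields exactly the sum over $A\in\mathcal{A}_l$ with $d(A)\geq 0$ appearing in the statement, up to tracking the powers of $-\hbar$ and absorbing the sign factor $(-1)^{d(A)-1}$ produced by the open--closed correspondence.

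For the final assertion about primitive forms and flat coordinates, I would verify that $\CI^{\ess}$ satisfies the three conditions of Definition~\ref{def:chamberIndex}: condition (1) on the values of $\CI^{\ess}_{\Gamma,\mathbf{0}}$ for $|I(\Gamma)|\leq 1$ follows from Theorem~\ref{thm:simple invariants}; condition (2), $\mathcal{A}(I',\vecd,\CI^{\ess})=(-1)^d$ for $|I'|=1$, is Corollary~\ref{thm:mirrorA}(1); and condition (3), the vanishing of $\mathcal{A}$ when $m(I',\vecd)\geq sr(I')+rs(I')$, which is equivalent to $d(I',\vecd)<0$, is part of Corollary~\ref{thm:mirrorA}(3). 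Hence $\CI^{\ess}\in\ChamberIndicesSym(I)$, and Corollary~\ref{cor:flat coordinates} immediately yields that $\Omega=dx\wedge dy$ is a primitive form modulo the ideal $\langle t_{\alpha,\beta,d}\mid d>0\rangle$ and that the $t_{a,b,0}$ serve as flat coordinates.

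I do not anticipate a substantive obstacle: all the enumerative content—the open topological recursion and the identification of $\mathcal{A}$ with extended closed invariants—is already encoded in Corollary~\ref{thm:mirrorA}, and Corollary~\ref{cor:symmetric integal} has already reduced the oscillatory integral to these quantities. The main work consists of careful bookkeeping of signs and powers of $-\hbar$ to match the two expansions, plus the observation that the axioms of a symmetric chamber index are exactly the identities we have already proved for any family of canonical multisections.
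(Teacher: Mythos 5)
Your proposal is correct and follows essentially the same route as the paper: apply Corollary~\ref{cor:symmetric integal}, split the sum into $l=1$ and $l\ge 2$ and substitute Corollary~\ref{thm:mirrorA}(1) and (3) (noting $d(A)=-d-1$ for singletons), then invoke Corollary~\ref{cor:flat coordinates} for the final statement. The only addition you make is to spell out the verification that $\CI^{\ess}$ satisfies the chamber index axioms, which the paper leaves implicit but which is indeed exactly Corollary~\ref{thm:mirrorA} together with Theorem~\ref{thm:simple invariants}.
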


\begin{proof}
The formula follows from Corollary \ref{cor:symmetric integal}.
Here, for the summation over $l=1$ and $l\ge 2$, we use Corollary
\ref{thm:mirrorA},(1) and (3) respectively
for the value of $\mathcal{A}(A,\vecd)$. Note that for $l=1$,
if $A=\{(a_1,b_1,d_1)\}$, then $r(A)=a_1, s(A)=b_1$, and thus
$d(A)=-d_1-1$. The final statement is then just Corollary \ref{cor:flat coordinates}.
\end{proof}

\subsection{The action of the wall-crossing group on open FJRW invariants}\label{sec:WC}

In this subsection we will prove the following result:

\begin{thm}\label{last containment for chambers}
We have the containment $\ChamberIndices(I)\subseteq \OFJRW(I)$.  That is, every chamber index bounded by $I$ can be realized as the open FJRW invariants for some canonical family of multisections bounded by $I$.
\end{thm}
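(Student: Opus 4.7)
The plan is to reduce the theorem to a statement about realizing group actions and then to argue that each generator of the wall-crossing group can be produced by a controlled deformation of canonical multisections. First I would record that the reverse inclusion $\OFJRW(I)\subseteq \ChamberIndices(I)$ already follows from what we have proved: condition~(1) of Definition~\ref{def:chamberIndex} is Theorem~\ref{thm:simple invariants}, while conditions~(2) and~(3) follow from parts~(1) and~(3) of Corollary~\ref{thm:mirrorA}, using that the inequality $m(I',\vecd)\ge sr(I')+rs(I')$ is equivalent to $d(I',\vecd)\le -1$. Since $\OFJRW(I)\ne\varnothing$ by Theorem~\ref{prop:int_numbers_exist}, we may fix a canonical family $\ess_0$ bounded by $I$ and set $\nu_0=\CI^{\ess_0}\in \OFJRW(I)\cap\ChamberIndices(I)$. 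By transitivity of the $G^{r,s}_{A_I}$-action on $\ChamberIndices(I)$ (Theorem~\ref{thm:G action}), every $\nu\in\ChamberIndices(I)$ has the form $g\cdot\nu_0$ for a unique $g\in G^{r,s}_{A_I}$, so it suffices to show that for each such $g$ there exists a canonical family $\ess_1$ bounded by $I$ with $W^{\ess_1}=g(W^{\ess_0})$.

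The Lie algebra $\mathfrak{g}^{r,s}_{A_I}$ decomposes as a direct sum of one-dimensional $\Q$-subspaces indexed by critical graphs $\Lambda$ with $I(\Lambda)\subseteq I$; let $v_\Lambda$ be a generator of the summand associated to $\Lambda$. Since $G^{r,s}_{A_I}$ is pro-nilpotent, the exponential map provides a bijection with $\mathfrak{g}^{r,s}_{A_I}$, and, following the factorization $g=g_{2^{|I|}-1}\circ\cdots\circ g_1$ used in the proof of Theorem~\ref{thm: group actions only} (indexed by the ordering of non-empty subsets $A_1,\ldots,A_{2^{|I|}-1}$ of $I$), each $g_i$ is the exponential of an element of the subalgebra spanned by those $v_\Lambda$ with $I(\Lambda)=A_i$. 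By induction on $i$, it is therefore enough to show that for any canonical family $\ess$ and any single factor $g_i$ there exists a canonical family $\ess'$ with $W^{\ess'}=g_i(W^{\ess})$.

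To produce such an $\ess'$ we run the proof of Theorem~\ref{thm: group actions only} in reverse: the factor $g_i$ is uniquely determined by a tuple of rational numbers $\bigl(\Delta(\Lambda)\bigr)_{\Lambda\in\CRIT(A_i,\vecd)}$, and our task is to build a transverse canonical family of simple homotopies $H$ from $\ess$ to some new canonical family $\ess'$ whose zero counts on each critical boundary stratum $\oPM_{\Lambda}$ with $I(\Lambda)=A_i$ equal the prescribed $\Delta(\Lambda)$, and whose zero counts on all other critical strata vanish. Given such an $H$, the analysis of Part~(E) in the proof of Theorem~\ref{thm:A_mod_invs} shows the chamber-index conditions are preserved, and the wall-crossing computation in the proof of Theorem~\ref{thm: group actions only} shows $W^{\ess'}=g_i(W^{\ess})$. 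Such an $H$ is built inductively over the partially ordered set of strata of the base moduli: for each $\Lambda\in\CRIT(A_i,\vecd)$ one constructs a transverse multisection $t_\Lambda$ of $E_{\CB\Lambda}(\vecd)$ on $\oPM_{\CB\Lambda}$ with exactly $\Delta(\Lambda)$ signed zeros (possible because $\rank E_{\CB\Lambda}(\vecd)=\dim\oPM_{\CB\Lambda}+1$ by criticality, giving one free parameter), pulled back via $F_\Lambda$ to a perturbation supported in a small neighbourhood $U_\Lambda$ of $\oPM_{\Lambda}$ and cut off by a bump function; one then adds this perturbation to $\ess$ coherently across all smooth balanced graphs containing $\Lambda$ as a boundary stratum, using Observation~\ref{obs:Witten_pulled_back_perm}.

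The main obstacle is the final step: constructing the perturbations $t_\Lambda$ coherently over all the different ambient graphs $\Gamma$ in which $\Lambda$ appears as a critical boundary stratum, while simultaneously preserving strong positivity, not introducing zeros on exchangeable or positive strata, and keeping transversality on the $2^{|I|}-1$ independent wall-crossing times. The delicate point is that a single base graph $\Lambda$ contributes to critical boundaries of many different smooth $\Gamma_P$ for $P\in\mathcal{P}(I,\vecd)$ (as enumerated in Part~(B) of the proof of Theorem~\ref{thm:A_mod_invs}), so one must check that the canonical compatibility constraints of Definition~\ref{def:family of canonical} are consistent with prescribing $\Delta(\Lambda)$ once and for all. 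This consistency is exactly what makes the counts $\Delta(\Lambda)$ collate into a well-defined element of $\mathfrak{g}^{r,s}_{A_I}$ on the one hand and what allows a single base perturbation on $\oPM_{\CB\Lambda}$ to be pulled back to all relevant $\oPM_{\Gamma_P}$ on the other; modulo this bookkeeping, the existence of the required homotopy families follows from the same partition-of-unity and genericity arguments used to prove Lemma~\ref{lem:partial_homotopy} and Theorem~\ref{prop:int_numbers_exist}.
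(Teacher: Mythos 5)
Your overall strategy---fix a base family $\ess_0$, use transitivity of the $G^{r,s}_{A_I}$-action on $\ChamberIndices(I)$ (Theorem~\ref{thm:G action}) to write $\nu=g\cdot\CI^{\ess_0}$, factor $g$ into generators supported on single marking sets, and realize each generator by reversing the wall-crossing computation of Theorem~\ref{thm: group actions only}---is a legitimate reorganization of the argument; the paper instead constructs the family realizing $\nu$ directly by induction and only invokes the group action afterwards. But the two routes share the same technical core, and it is precisely the part you defer as ``bookkeeping'' that constitutes the actual proof. Concretely: a single critical graph $\Lambda_{J,i}$ appears as a connected component of $\CB\Lambda'$ for boundary strata $\Lambda'$ of \emph{many} ambient balanced graphs, and the canonicity constraint of Definition~\ref{def:family of canonical} forces the modified boundary datum $(\ess')^{\Lambda_{J,i}}$ to propagate to all of them simultaneously. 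To make this consistent one must (a) order the construction so that modifying $\ess^{\Lambda_{J,i}}$ never retroactively changes an invariant already fixed, and (b) verify that among the graphs already constructed, the only ones affected are $\Gamma_{J,i-1}$ and $\Gamma_{J,i}$ (in the notation of Notation~\ref{Concrete bal and crit graphs with prescribed internals}). The paper achieves this with a triple induction --- on $|J|$, then on the weighted boundary count $n(\Gamma)=sk_1+rk_2+(r+s)k_{12}$, then \emph{downward} on $k_{12}(\Gamma)$ --- together with an explicit enumeration of the possible rootless vertices with small $n(v)$. Without some such argument your ``add the perturbation coherently across all smooth balanced graphs containing $\Lambda$'' is not known to terminate in a well-defined canonical family, and this is not a routine partition-of-unity point.

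Two further issues. First, there is a counting mismatch you do not address: for fixed $(J,\vecd)$ there are $N+1$ balanced graphs $\Gamma_{J,0},\dots,\Gamma_{J,N}$ whose invariants must be adjusted but only $N$ critical graphs $\Lambda_{J,1},\dots,\Lambda_{J,N}$ available as knobs. Solvability requires that the $N+1$ discrepancies satisfy one linear relation, which comes from $\mathcal{A}(J,\vecd,\CI^{\ess})=\mathcal{A}(J,\vecd,\nu)$ (Corollary~\ref{cor:CIs same result}, using that both are chamber indices); in your formulation this is hidden in the unproved assertion that the prescribed $\Delta(\Lambda)$ actually reproduce $g_i(W^{\ess})$ exactly rather than only up to the image of the wall-crossing formula. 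Second, a minor but real misstatement: since $\rank E_{\CB\Lambda}(\vecd)=\dim\oPM_{\CB\Lambda}+1$ for $\Lambda$ critical, a transverse multisection of $E_{\CB\Lambda}(\vecd)$ has \emph{no} zeros; what Lemma~\ref{lem:changing_winding} lets you prescribe is the signed zero count of a \emph{homotopy} between the old and new nowhere-vanishing boundary multisections, not of a multisection itself. Finally, your factorization $g=g_{2^{|I|}-1}\circ\cdots\circ g_1$ with each $g_i$ supported on a single marking set $A_i$ is asserted but not proved; it is true, but requires a short filtration argument since the marking-set decomposition of $\mathfrak{g}^{r,s}_{A_I}$ is not a Lie-algebra grading.
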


In particular, we now show that Theorem~\ref{introthm: torsor} will follow from Theorem~\ref{last containment for chambers}.
\begin{cor}\label{chambers are enumerative}
We have 
$$
\OFJRW(I)= \ChamberIndices(I).
$$
Moreover, for a finite set $I\subseteq\Universe$,
$$
\OFJRWSym(I)=\ChamberIndicesSym(I).
$$
In particular, when $I$ is finite, the group $G^{r,s}_{A_I}$ (resp. $G^{r,s}_{A_{I,\mathrm{sym}}}$) acts faithfully and transitively
on $\OFJRW(I)$ (resp. $\OFJRWSym(I)$). 
\end{cor}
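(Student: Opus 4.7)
The plan is to derive this corollary from three previously established results: Theorem~\ref{last containment for chambers} (giving the inclusion $\ChamberIndices(I)\subseteq \OFJRW(I)$), Theorem~\ref{thm:simple invariants} together with Corollary~\ref{thm:mirrorA} (for the reverse inclusion), and Theorem~\ref{thm:G action} (for the group-action structure).

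The main step is to establish the reverse inclusion $\OFJRW(I) \subseteq \ChamberIndices(I)$. Given a canonical family $\ess$ bounded by $I$, I would verify that $\CI^{\ess}$ satisfies the three conditions of Definition~\ref{def:chamberIndex}. For Condition~(1), when $|I(\Gamma)| = 0$, Proposition~\ref{prop:balanced} forces $\Gamma \in \{\Gamma_{0,r,0,1,\varnothing},\, \Gamma_{0,0,s,1,\varnothing}\}$, each contributing $-1$ by Theorem~\ref{thm:simple invariants}; when $|I(\Gamma)| = 1$ with twist $(a_1,b_1)$, balance forces $k_1 = a_1,\, k_2 = b_1$, and Theorem~\ref{thm:simple invariants} gives $\CI^{\ess}_{\Gamma,\mathbf{0}} = 1$. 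Condition~(2) is exactly Corollary~\ref{thm:mirrorA}(1). For Condition~(3), observe via Definition~\ref{def:d(J)} that $m(I',\vecd) \ge sr(I') + rs(I')$ is equivalent to $d(I',\vecd) \le -1$, so in particular $d(I',\vecd) < 0$, and then Corollary~\ref{thm:mirrorA}(3) yields the required vanishing $\mathcal{A}(I',\vecd,\CI^{\ess}) = 0$.

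Combining this with Theorem~\ref{last containment for chambers} proves $\OFJRW(I) = \ChamberIndices(I)$. For the symmetric statement (with $I$ finite), I would intersect both sides with $\InvSym(I)$: the left-hand side becomes $\OFJRWSym(I)$ by Proposition~\ref{symmetrised chamber indices}, while the right-hand side is $\ChamberIndicesSym(I)$ by definition.

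Finally, the faithful transitive action of $G^{r,s}_{A_I}$ on $\OFJRW(I)$ (respectively of $G^{r,s}_{A_{I,\mathrm{sym}}}$ on $\OFJRWSym(I)$) is immediate from Theorem~\ref{thm:G action}, which established this structure on $\ChamberIndices(I)$ (respectively $\ChamberIndicesSym(I)$), now identified with the open-FJRW side via the equalities just obtained. No substantive obstacle arises in this corollary itself: the serious mathematics has been carried out in Theorem~\ref{last containment for chambers}, in the wall-crossing analysis of Theorem~\ref{thm:G action}, and in the open topological recursion underpinning Corollary~\ref{thm:mirrorA}.
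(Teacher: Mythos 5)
Your proposal is correct and follows essentially the same route as the paper: Theorem~\ref{last containment for chambers} for one inclusion, Theorem~\ref{thm:simple invariants} plus Corollary~\ref{thm:mirrorA} for the other, Proposition~\ref{symmetrised chamber indices} for the symmetric case, and Theorem~\ref{thm:G action} for the group action. The only difference is that you spell out the verification of the three chamber-index conditions, which the paper compresses into a one-line citation of Corollary~\ref{thm:mirrorA}.
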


\begin{proof} 
Note that the equalities $\OFJRW(I)= \ChamberIndices(I)$ and $\OFJRWSym(I)=\ChamberIndicesSym(I)$ imply the rest of the statement in the corollary. This is because Theorem \ref{thm:G action} shows that $G_{A_I}^{r,s}$ (resp. $G^{r,s}_{A_{I,\mathrm{sym}}}$) acts faithfully and transitively on
$\ChamberIndices(I)$ (resp. $\ChamberIndicesSym(I)$) via the formula
\begin{equation}\label{sec 7 action}
g(W^\nu) = W^{g(\nu)}.
\end{equation}
We remark the formula for the action given here is in line with the relation between open FJRW invariants associated to two canonical families of multisections bounded by $I$ given in  Theorem~\ref{thm: group actions only}. 

 Moreover, note that Corollary~\ref{thm:mirrorA} implies the containments $\OFJRW(I)\subseteq\ChamberIndices(I)$ and
$\OFJRWSym(I)\subseteq \ChamberIndicesSym(I)$. Thus by using Theorem~\ref{last containment for chambers}, we have the equality $\OFJRW(I)= \ChamberIndices(I)$.
Lastly, note that, by Proposition~\ref{symmetrised chamber indices}, the containment $\ChamberIndices(I) \subseteq \OFJRW(I)$ and equality $\ChamberIndicesSym(I)=\ChamberIndices(I)\cap\InvSym(I)$ imply that $\ChamberIndicesSym(I) \subseteq \OFJRWSym(I)$.
\end{proof}

We need a couple of lemmas before we start to prove Theorem~\ref{last containment for chambers}. The first ingredient in the proof is the following lemma:

\begin{lemma}\label{lemma:extension}
Let $\Gamma$ be a smooth connected open graded $W$-spin graph.
Let $\zeta$ be a multisection of $E_{\Gamma}(\vecd)$ defined over $\partial \oPM_{\Gamma}$.
Suppose that, for any $\Lambda\in\partial\Gamma\setminus\partial^+\Gamma$, we can write
\[\zeta|_{\oPM_\Lambda}=F_\Lambda^*\zeta^{\CB\Lambda},\]
where $\zeta^{\CB\Lambda}\in C_m^\infty(\oPM_{\CB\Lambda},E_{\CB\Lambda}(\vecd))$ is a transverse multisection that is strongly positive  with respect to $\CB\Lambda$,  and $\textup{Aut}(\CB\Gamma)$-invariant.
Then one can extend $\zeta$ to a transverse, strongly positive, $\textup{Aut}(\Gamma)$-invariant
multisection of $E_\Gamma(\vecd)$. 
\end{lemma}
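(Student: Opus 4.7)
The proof will follow the standard collar-extension technique used in \cite{PST14, BCT:I, BCT:II}, adapted here to simultaneously handle the positivity conditions of Definition~\ref{def: strongly positive} and the $\Aut(\Gamma)$-equivariance. I would organize it into four stages as follows.

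First, I would extend $\zeta$ from $\partial\oPM_\Gamma$ to an open neighborhood $N$ of $\partial\oPM_\Gamma$ in $\oPM_\Gamma$. For each $\Lambda\in\partial\Gamma\setminus\partial^+\Gamma$, one chooses a tubular neighborhood of $\oPM_\Lambda$ in $\oPM_\Gamma$ and lets $\zeta$ be defined by pulling back $\zeta^{\CB\Lambda}$ via a smooth extension of the base map $F_\Lambda$ to this neighborhood, using Observation~\ref{obs:Witten_pulled_back_perm} to identify $E_\Gamma(\vecd)$ with $F_\Lambda^*E_{\CB\Lambda}(\vecd)$ there. Compatibility of these local choices at deeper strata follows from Observation~\ref{obs:for_comp} (\emph{viz.}\ $F_{f_\Gamma(\Lambda')}\circ F_\Lambda|_{\oPM_{\Lambda'}}=F_{\Lambda'}$), after which a partition of unity glues the local extensions into a global one on $N$.

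Second, I would extend across a neighborhood $V$ of $\partial^+\oCM^W_\Gamma$, enforcing conditions (1)--(3) of Definition~\ref{def: strongly positive}. For each $\Lambda\in\partial^+\Gamma$, fix a $\Lambda$-set $V_\Lambda\subseteq\oCM^W_\Gamma$ together with a $\Lambda$-family of intervals $\II$; then define the extension on $V_\Lambda\cap\oPM_\Gamma$ to land in the open convex cone of multisections whose branches evaluate positively on the intervals of $\II$, exactly as in \cite[Lemma~7.5]{BCT:II}. The limiting values of such sections on adjacent strata $\oPM_{\Lambda'}$ (with $\Lambda\in\partial^!\Lambda'$) are precisely the $F_{\Lambda'}$-pullbacks of strongly positive sections on $\oPM_{\CB\Lambda'}$, which matches the hypothesis on $\zeta^{\CB\Lambda'}$; hence the extension from Step~1 can be glued to this one via another partition of unity subordinate to the cover of $N\cup V$. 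Positivity at contracted boundary nodes and the pullback-positivity condition (3) are handled in the same way using the canonical positive directions supplied by the gradings.

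Third, on the compact complement $K:=\oPM_\Gamma\setminus(N\cup V)$, the extension problem reduces to a generic perturbation question for multisections of an orbibundle, which admits a transverse solution by the standard argument of \cite[Appendix~A]{PST14} and \cite[Appendix~A]{BCT:II}. The perturbation can be taken to be small and supported away from $\partial(N\cup V)\cap K$, so that positivity established on $N\cup V$ is undisturbed; this yields a global, strongly positive, transverse extension.

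Finally, to secure $\Aut(\Gamma)$-invariance, average the extension over the finite group $\Aut(\Gamma)$ (which acts on $\oPM_\Gamma$ and on $E_\Gamma(\vecd)$, and, by the given $\Aut(\CB\Gamma)$-invariance of the $\zeta^{\CB\Lambda}$ together with the naturality of $F_\Lambda$, fixes $\zeta$ on $\partial\oPM_\Gamma$). Averaging preserves strong positivity since each positivity requirement is a convex condition on branches of the multisection; a small further $\Aut(\Gamma)$-equivariant perturbation, obtainable by the same genericity argument within the space of invariant multisections, restores transversality. The main technical subtlety will be Step~2, namely arranging the positivity cones on the various $V_\Lambda$ to be mutually compatible where positive strata meet in higher codimension; this is where the inductive structure of the hypothesis---strong positivity of every $\zeta^{\CB\Lambda}$ with respect to $\CB\Lambda$---is used essentially, as it guarantees that the boundary values at such intersections already lie in the intersection of the relevant positive cones.
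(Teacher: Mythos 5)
Your proposal is correct and is essentially the argument the paper intends: the paper's own proof consists of the single remark that the lemma is analogous to Lemma~7.5 of \cite{BCT:II} and that the proof is the same, and your four stages (collar extension by pullback along $F_\Lambda$, positivity cones near $\partial^+$, generic transverse extension on the compact complement, and averaging over $\Aut(\Gamma)$) are exactly a spelled-out version of that standard argument. The only minor remark is that the final equivariant re-perturbation is not needed, since a $\uplus$-average of transverse multisections is already transverse branch by branch.
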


\begin{proof}
The lemma is analogous to Lemma 6.5 in \cite{BCT:II}.
The proof is the same.
\end{proof}

\begin{rmk} 
Note that Lemma~\ref{lemma:extension} also holds when $\partial\oPM_{\Gamma}$ is the empty set. In this case, the lemma is restating the existence of a transverse, strongly positive, $\Aut(\Gamma)$-invariant multisection of $E_{\Gamma}(\vecd)$.
\end{rmk}

The second lemma constructs a homotopy between multisections of orbifold bundles that will have a desired number of zeros.

\begin{lemma}\label{lem:changing_winding}
Let $E\to M$ be an oriented orbifold bundle over an oriented orbifold $M$ with corners, such that the generic isotropy of $E \to M$ is trivial and $\rank_\R E=\dim_\R M +1=n$. 
Let $s\in C_m^\infty(M,E)$ be a nowhere vanishing multisection and $c\in\Q$. Then one can construct a multisection $s'$ which agrees with $s$ away from a compact set $K\subset \textup{Interior}(M)$ and a transverse homotopy $H:[0,1]\times M
\to E$, so that
\[H(0,-)=s,H(1,-)=s',~H(t,p)=s(p) \text{ for all }p\in M\setminus K\]
and $\#Z(H)=c$.
\end{lemma}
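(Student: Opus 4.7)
The approach is to reduce to a purely local construction in an orbifold chart with trivial isotropy, where the problem becomes a standard question about the signed zero count of an extension of a boundary map.

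First, I would choose a point $p_0$ in the interior of $M$ with trivial isotropy (which exists since by hypothesis the generic isotropy of $E \to M$ is trivial), together with a smooth orbifold chart $U$ around $p_0$ diffeomorphic to a ball in $\R^{n-1}$ on which both $M$ and $E$ are represented without isotropy, and pick a trivialization $E|_U \cong U \times \R^n$ compatible with the orientations. After shrinking $U$, the section $s|_U$ may be assumed to be a $C^0$-small perturbation of a constant nonzero vector $v_0 \in \R^n$. Choose closed balls $K' \subsetneq \operatorname{Interior}(K) \subsetneq K \subset U$, with $K$ the ``compact set'' of the statement.

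Next, I would construct the homotopy $H$ so that $H(t,p) = s(p)$ outside $[0,1] \times K'$ and at $t \in \{0,1\}$, and so that $s' := H(1,-)$ agrees with $s$ everywhere (in particular outside $K$). Under the trivialization of $p^*E|_{[0,1]\times K}$, the space $[0,1]\times K'$ is an oriented $n$-ball whose boundary is mapped by $H$ into $\R^n \setminus \{0\}$ (since $s$ is nowhere vanishing). The total number of zeros of any transverse multisection extension of this boundary condition into the interior is a well-defined ``relative degree'', and may be taken to be any integer via the standard construction: one can produce a smooth map $f : B^n \to \R^n$ equal to $v_0$ on $\partial B^n$ and with a single transverse zero of prescribed sign at the center, for example by interpolating between $v_0$ and a linear isomorphism via a radial cutoff. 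Disjoint-union placement of such building blocks in $[0,1]\times K'$ realizes any integer value.

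To achieve an arbitrary rational value $c = p/q$, I would assemble a multisection $H$ with $q$ local branches on $[0,1]\times K$, each carrying weight $1/q$: take $|p|$ branches agreeing with the ``single transverse zero'' model above (all zeros sitting in $[0,1]\times K'$, of sign $\operatorname{sgn}(p)$) and $q - |p|$ branches identically equal to $s$; outside $[0,1]\times K'$ all branches are set equal to $s$ so the multisection glues to the given section on the complement. Using the weighting convention for multisection zero counts recalled in \S\ref{subsubsec:multisections}, the signed count equals $\tfrac{1}{q}\cdot|p|\cdot\operatorname{sgn}(p) = c$, as required.

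The main thing to check carefully is step two, the construction of a local integer-count building block compatible with the prescribed nonvanishing boundary condition; once that is set up the orbifold issue is inert (because the chart was chosen with trivial isotropy) and the rational count follows by the standard multisection rescaling argument. Transversality of $H$ is automatic from the explicit local form, and the verification that $H$, $s$, $s'$ and $K$ satisfy the stated properties is direct.
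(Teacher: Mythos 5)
Your proposal follows essentially the same route as the paper's proof: localize at a point of trivial isotropy in a chart $U\cong B\subset\R^{n-1}$, trivialize $E|_U\cong U\times\R^n$, modify the branches of the multisection inside a compact set so as to introduce a prescribed signed zero count for the connecting homotopy, and use branch weights of the form $1/q$ to realize rational values. Two small slips should be patched. First, $s|_U$ is a multisection and may already have several distinct local branches with unequal weights, so before you can speak of ``$q$ branches each of weight $1/q$'' you must refine the given branch decomposition $s|_U=\biguplus n_i s_i$ into a common denominator (the paper passes to $b\sum n_i$ equally weighted branches via \eqref{eq:union_of_sections}); ``setting a branch equal to $s$'' is not meaningful when $s$ itself is multivalued. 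Second, your allocation of ``$|p|$ branches with one zero each and $q-|p|$ unchanged branches'' only makes sense when $|p|\le q$, i.e.\ $|c|\le 1$; for $|c|>1$ you need to concentrate several (signed) zeros on a single branch, which your own building-block remark already permits and which is how the paper proceeds (it puts the entire degree $a\sum n_i$ on one branch via a sphere map). With these adjustments the argument is the paper's.
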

\begin{proof}
Write $c=a/b$ for some $a\in\Z$ and $b \in \mathbb{N}$.
Choose both a point $x\in M$ with trivial isotropy and an open neighborhood $U$ of
$x$ such that there is a diffeomorphism $\phi:U\rightarrow B$ with $B\subseteq
\R^{n-1}$ being the unit ball. 
We can write the multisection $s|_U=\biguplus_{i=1}^m n_i s_i,$ where $s_i\in C^\infty(U,E)$ and $n_i\in \N$.
Now we can rewrite $s$ in the neighborhood $U$ as
\[s =\biguplus_{i=1}^{b\sum_{i=1}^m n_i} \tilde{s}_j,~~\tilde{s}_j = s_i~\text{ when }b\sum_{q=1}^{i-1}n_q< j\leq b\sum_{q=1}^{i}n_q.\]

We may choose a trivialization $(E|_U\to U)\simeq (\R^n\times U \to U)$ so that $\tilde{s}_1|_U$ is mapped to the constant section $e_1=(1,0,\ldots,0)$. From now on, we use this trivialization to identify the fibers of $E|_U \to U$ with $\R^n$. Denote by $S^{n-1}\subset\R^n$ the unit sphere. Take $g:S^{n-1}\to S^{n-1}$ to be a degree $a(\sum_{i=1}^m n_i)$ map that is chosen so that the lower hemisphere is mapped to $e_1$. Let $\pi:S^{n-1}\to \bar B$ be the projection to the first $n-1$ coordinates and $\pi':B\to S^{n-1}$ be its inverse projection to the upper hemisphere, i.e., $\pi\circ\pi'=id$ and $\text{Image}(\pi')$ has positive last coordinate.
We define $s'$ so that \[s'|_{(M\setminus U)}=s|_{(M\setminus U)},\text{ and }s'|_U = \biguplus_{i=1}^{b\sum_{i=1}^m n_i} \tilde{s}'_j,\]
where
\[\tilde{s}'_j(y) = \begin{cases} g(\pi'(\phi(y))), & \text{ if $j=1$}, \\ \tilde{s}_j, & \text{ otherwise}. \end{cases}\]
The multisection $s'$ glues to give a multisection. Finally, there must exist a transverse homotopy $H$ from $s'$ to $s$ that is constant for all except the first branch, that is constant on the first branch outside of $U$ and satisfies $\#Z(H)=\frac{a\sum_{i=1}^m n_i}{b\sum_{i=1}^m n_i}=c$.\footnote{Here, $\#Z(H)$ is computed using the discussion above Notation A.4 in \cite{BCT:II}.}
\end{proof}

\begin{proof}[Proof of Theorem~\ref{last containment for chambers}]
Fix a chamber index  $\nu := (\CI_{\Gamma,\vecd})\in \ChamberIndices(I)$. We will construct a family of canonical multisections $\mathbf{s}$ of the bundles $E_{\Gamma}(\vecd)$ bounded by $I$ so that
\begin{equation} \label{eqn:chamber realised}
\left\langle \prod_{i\in I(\Gamma)}\tau^{(a_i,b_i)}_{d_i}\sigma_1^{k_1(\Gamma)}\sigma_2^{k_2(\Gamma)}\sigma_{12}^{k_{12}(\Gamma)} \right\rangle^{\mathbf{s},o} = \nu_{\Gamma,\vecd}
\end{equation}
for all graphs $\Gamma$ contained in some $\INT(J,\vecd)$ for some $J \subseteq I$ and $\vecd \in \Z_{\ge 0}^{J}$. We do so via induction on the number $l = |J|$ of internal markings.

First, when $l = 0$, consider any family of canonical multisections $\mathbf{s}$ of $\cW$ bounded by $I$. Such a family exists by Theorem~\ref{prop:int_numbers_exist}. Then by Theorem~\ref{thm:simple invariants}, the condition in ~\eqref{eqn:chamber realised} holds.

Let  $\RS\text{-Graph}(l)$ be the set of all graded open $\RS$-graphs $\Gamma$ where $I(\Gamma) \subset I$ and $|I(\Gamma)| \le l$. For the induction step, assume that we have constructed a family of canonical multisections $(\mathbf{s}^\Gamma)_{\Gamma \in \RS\text{-Graph}(l)}$ so that
\begin{itemize}
\item If $\Gamma\in \INT(I(\Gamma),\vecd)$ for some $\vecd\in \Z_{\ge 0}^{I(\Gamma)}$
 and $|I(\Gamma)| < l$, then ~\eqref{eqn:chamber realised} holds.
\item If $\Gamma \in \RS\text{-Graph}(l)$ then
$$
\mathbf{s}^\Gamma=F_\Gamma^*(\boxplus_{\Xi\in\Conn(\CB\Gamma)}\mathbf{s}^\Xi).
$$
\end{itemize}
Let $J \subseteq I$ be a set of size $l$. We will construct a canonical family of multisections $\mathbf{s}$ bounded by $J$ using induction on a new number
$$
n(\Gamma): = sk_1(\Gamma) + rk_2(\Gamma) + (r+s)k_{12}(\Gamma),
$$
a weighted count of boundary markings on the graph $\Gamma$. Note $n(\Gamma) \ge  0$, so we may vacuously take the base case of this induction to be
$n(\Gamma)=-1$. Inside this induction step, i.e., once we fix the number $n(\Gamma)$, we now perform downward induction on the number $k_{12}(\Gamma)$. Here, we may take the base case of the induction to be $k_{12}(\Gamma) = \lfloor \tfrac{n(\Gamma)}{(r+s)} \rfloor + 1$, which will be vacuous as well.

Next, suppose we have constructed the canonical multisection $\ess$  for any graded graph $\Gamma'$ that satisfies any of the following:
\begin{enumerate}
\item[(i)] $I(\Gamma') \subsetneq I(\Gamma)$,
\item[(ii)] $I(\Gamma')= I(\Gamma)$, and $n(\Gamma')<n(\Gamma)$, or
\item[(iii)] $I(\Gamma')= I(\Gamma)$, $n(\Gamma')=n(\Gamma)$,  and $k_{12}(\Gamma') > k_{12}(\Gamma)$.
\end{enumerate}
We now construct a canonical multisection $\ess^{\Gamma}$ that will be in our canonical family of multisections bounded by $J$. We  do this via a case-by-case analysis in order to respect the canonical property.

If $\Gamma$ is not connected, then each connected component $\Xi$ of $\Gamma$
satisfies either $I(\Xi) \subsetneq I(\Gamma)$ or $n(\Xi) < n(\Gamma)$, hence the  multisection $\mathbf{s}^{\Xi}$ is already defined and we write $\mathbf{s}^\Gamma=\boxplus_{\Xi\in\Conn(\Gamma)}\mathbf{s}^\Xi$.

Analogously, if $\Gamma \neq \CB \Gamma$ and $\Gamma$ does not have an exchangeable vertex, then any connected component $\Xi$ of $\CB\Gamma$ has either the property (i) or (ii) in the inductive hypotheses above. Thus, the multisection $\mathbf{s}^{\Xi}$ is already constructed, so we construct  $\mathbf{s}^\Gamma$ as \begin{equation}\label{eqref:canonical boundary}
\mathbf{s}^\Gamma=F^*_{\Gamma}(\boxplus_{\Xi\in\Conn(\CB\Gamma)}\mathbf{s}^\Xi).
\end{equation} Lastly, if $\Gamma$ has an exchangeable vertex $v$ with incident edge $e$, then $\detach_e \Gamma$ has two connected components, the exchangeable vertex and the rest of the graph $\Gamma'$.
Further $\Gamma'$ has the property (iii) above, so we can construct $\mathbf{s}^{\Gamma}$ as in~\eqref{eqref:canonical boundary} by induction.

Otherwise, we define the multisection $\mathbf{s}^{\Gamma}$ as follows. We start by constructing the multisection at the boundary. First note that, for any $\Lambda\in \partial^0\Gamma \cup \partial^\xch\Gamma$ and $\Gamma = \smooth_E \Lambda$, we have the relation
\begin{equation}\label{n Gamma relation}
n(\Gamma) = \sum_{v \in \mathrm{Vert}(\Lambda)} n(v) - (r+s) |E|.
\end{equation}
Thus, if $\Lambda \in\partial^0\Gamma\cup \partial^\XCH\Gamma$ then $\CB\Lambda$ consists of connected components $\Xi_1, \ldots, \Xi_f$ such that, for each $i$, one of the following holds:
\begin{enumerate}
\item[(i)] $I(\Xi_i)\subsetneq J$,
\item[(ii)] $I(\Xi_i)=J$ and $n(\Xi_i)<n(\Gamma)$, or
\item[(iii)] $I(\Xi_i)=J$, $n(\Xi_i)=n(\Gamma)$ and $k_{12}(\Xi_i)> k_{12}(\Gamma)$.
\end{enumerate}
Thus we can inductively define the transverse multisections $\mathbf{s}^{\Lambda}$ for all $\Lambda  \in \partial^0\Gamma \cup \partial^\XCH\Gamma$ via the formula~\eqref{eqref:canonical boundary}. We then can use Lemma~\ref{lemma:extension} to extend the multisection over the entire moduli space.  By Proposition~\ref{invariants are independent of extension of boundary}, we have that the intersection number
\[\left\langle\prod_{i \in I(\Gamma)}\tau^{(a_i,b_i)}_{d_i}\sigma_1^{k_1(\Gamma)}\sigma_2^{k_2(\Gamma)}\sigma_{12}\right\rangle^{\ess,o}\]
is independent of the choice of the extension $\mathbf{s}^{\Gamma}$ from the boundary.

Consider the corresponding  chamber index $\nu = (\nu_{\Gamma, \vecd})$ given above and the chamber index $\nu^\ess = (\nu^{\ess}_{\Gamma, \vecd})$ corresponding to the family of multisections $\mathbf{s}$ (bounded by $J$) as defined in ~\eqref{eq:CI s def}. For any $\vecd \in \NN^{J}$ and $\Gamma \in \INT(J, \vecd)$,
we now write $$\Delta_{\Gamma,\vecd}(\ess) := \nu^{\ess}_{\Gamma, \vecd}-\nu_{\Gamma,\vecd}.$$  For any $\Gamma\in \INT(\subseteq J,\vecd)$ where $|I(\Gamma) | < l$, we have by induction that $\Delta_{\Gamma,\vecd}(\ess) = 0$. The goal now is to construct a new family of multisections $\mathbf{s}'$ bounded by $J$ so that $\Delta_{\Gamma,\vecd}(\ess')$ vanishes for all $\vecd \in \NN^J$ and $\Gamma \in \INT(J,\vecd)$.

Consider the quantities $\mathcal{A}(J, \vecd, \nu^\ess)$ and $\mathcal{A}(J, \vecd, \nu)$ defined in Notation~\ref{nn:A(J)}. 
 Using that $\nu$ is a chamber index by assumption
and that $\nu^\ess$ is a chamber index by Corollary \ref{thm:mirrorA}(3),
we have  from Corollary~\ref{cor:CIs same result} that
\begin{equation}\label{topContributionInduction}
0 = \mathcal{A}(J, \vecd,\nu^\ess) - \mathcal{A}(J, \vecd,\nu) =
\sum_{\Gamma\in \INT(J,\vecd)} \frac{\Gamma(\tfrac{1+k_1(\Gamma)}{r})\Gamma(\tfrac{1+k_2(\Gamma)}{s})}{\Gamma(\tfrac{1+r(J)}{r})\Gamma(\tfrac{1+s(J)}{s})}\Delta_{\Gamma,\vecd}(\ess).
\end{equation}

Recall the notation for all balanced and critical graphs given in Notation~\ref{Concrete bal and crit graphs with prescribed internals} of balanced graphs and critical graphs with internal markings $J$.   If $N=0$, then $\INT(J,\vecd)=\{\Gamma_0\}$ and the equality $\Delta_{\Gamma_{J,0}}=0$ follows directly from~\eqref{topContributionInduction}. If $N>0$ then we can classify the type of top critical boundary graphs corresponding to each $\Gamma_{J,i}$. Recall from Definition~\ref{top boundary, subsets for Bal and Crit} that a top boundary graph will be a critical boundary graph with two vertices $v_0$ and $v_1$ with a boundary edge $e$ so that
\begin{itemize}
\item the vertex $v_1$ is rooted, balanced and has no internal markings; and
\item the vertex $v_0$ is of the form $\Lambda_{J,p} \in \CRIT(J,\vecd)$ from Notation~\ref{Concrete bal and crit graphs with prescribed internals}.
\end{itemize}
Given a balanced graph $\Gamma_{J,i}$, there are only two possible types of top critical boundaries: (i) when $v_1$ is $\Gamma^W_{0,r,0,1,\emptyset}$  and $v_0$ is $\Lambda_{J,i}$, and (ii) when $v_1$ is $\Gamma_{0,0,s,1,\emptyset}^W$ and $v_0$ is $\Lambda_{J, i+1}$. For $\Gamma_{J,0}$, there are not enough $r$-points to have type (i) and, for $\Gamma_{J,N}$, there are not enough $s$-points to have type (ii). Otherwise, for $1\leq i\leq N-1$, the graph $\Gamma_{J,i+1}$ has precisely two types of top critical boundaries,
such that both have two vertices and one boundary edge and one of its half-edges is a $r$-point and one is a $s$-point. As a quick remark, we note that type (i) (respectively type (ii)) corresponds to critical boundary graphs with $\epsilon =1$ (respectively $\epsilon =2$).

Suppose we change the multisection $\ess^{\Lambda_{J,i}}$ to another multisection $(\ess')^{\Lambda_{J,i}}$, which agrees with $\ess^{\Lambda_{J,i}}$ near $\partial\oCM_{\Lambda_{J,i}}$.  
Given a transverse homotopy $H\in C_m^\infty\big([0,1]\times\oPM_{\Lambda_{J,i}}, \pi^*E_{\Lambda_{J,i}}(\vecd)\big)$ which is constant in time near  $[0,1]\times\partial\oCM_{\Lambda_{J,i}}$ 
we define
\[h_{i,\vecd}: =h_{i,\vecd}(\mathbf{s}^{\Lambda_{J,i}},(\mathbf{s}')^{\Lambda_{J,i}}):=  \int_{[0,1]\times\oPM_{\Lambda_{J,i}}}e(\pi^*E_{\Lambda_{J,i}}(\vecd);H|_V),\]
where $V$ can be taken to be the union of $\{0,1\}\times\oPM_{\Lambda_{J,i}}$ with the locus where $H$ is constant in time.

We claim that the quantity $h_{i,\vecd}$ depends only on the multisections $\mathbf{s}^{\Lambda_{J,i}}$ and $(\mathbf{s}')^{\Lambda_{J,i}}$, but not on the homotopy chosen between them. Choose another homotopy $H'\in C_m^\infty([0,1]\times
\oPM_{\Lambda_{J,i}},\pi^*E_{\Lambda_{J,i}}(\vecd))$
which is the constant in time near $[0,1]\times
\partial\oCM_{\Lambda_{J,i}}$. While $H$ and $H'$ will have different neighborhoods $V$ and $V'$ of $[0,1]\times\partial\oCM_{\Lambda_{J,i}}$ where they are constant in time, we will have that $H|_{V\cap V'}=H'|_{V\cap V'}$. Thus, by an argument analogous to that in the proof of Proposition~\ref{invariants are independent of extension of boundary} we have
\begin{align*}
\int_{[0,1]\times\oPM_{\Lambda_{J,i}}}e(\pi^*E_{\Lambda_{J,i}}(\vecd);H|_V)=
{} & \int_{[0,1]\times\oPM_{\Lambda_{J,i}}}e(\pi^*E_{\Lambda_{J,i}}(\vecd);H|_{V\cap V'})\\
={} &\int_{[0,1]\times\oPM_{\Lambda_{J,i}}}e(\pi^*E_{\Lambda_{J,i}}(\vecd);H'|_{V'}).
\end{align*}

We now construct this new family of canonical multisections $\mathbf{s}'$ bounded by $J$ by taking $\ess$, but then replacing the multisection $\mathbf{s}^{\Lambda_{J,i}}$ for the graphs $\Lambda_{J,i}$ with $\mathbf{s'}^{\Lambda_{J,i}}$. This will require us to change the canonical multisections $\ess^\Gamma$ for all $\Gamma$ such that there exists a boundary strata $\Lambda \in \partial^0\Gamma \cup \partial^\xch \Gamma$ for which $\Lambda_{J,i}$ is a connected component of $\CB \Lambda$. We now see which balanced graphs have this property up to the induction step where we construct the multisection for the graph $\Gamma_{J,i}$.

We can use ~\eqref{n Gamma relation} to see that for 
$$
n(\Lambda_{J,i}) = n(\Gamma_{J,i}) - n(\Gamma^W_{0,r,0,1,\emptyset}) + (r+s) = n(\Gamma_{J,i}) -rs.
$$
Since $n(\Lambda_{J,i})<n(\Gamma_{J,i})$, we are modifying the
sections $\ess^{\Lambda_{J,i}}$ which was already determined at a previous
step of the induction. Take a graded graph $\Gamma$ whose multisection has already been constructed and so that there exists $\Lambda \in \partial^0\Gamma\cup \partial^{\xch}\Gamma$ with $\Lambda_{J,i}$ a connected component of $\CB\Lambda$. Thus $I(\Gamma) \subseteq J$ but if $\Lambda_{J,i}$ is a vertex in the graph $\Gamma$ then $I(\Gamma) = J$. 

Since $I(\Gamma) = J$, all other vertices of $\Lambda$ aside from $\Lambda_{J,i}$ must have no internal markings. Moreover, since $\Gamma$ is graded, all other vertices must be stable and have no untwisted boundary tails. For any vertex $v$ in $\Lambda$ with no internal markings, we must have by combining  \eqref{eq:open_rank1} and \eqref{eq:open_rank2} as in the proof of Proposition~\ref{prop:balanced}(a) that 
\begin{equation}\label{integrality in induction}
r \ | \ k_1(v) + k_{12}(v)-1 \text{ and } s \  |  \ k_2(v) + k_{12}(v) - 1.
\end{equation}
Since we are at the induction step for the graph  $\Gamma_{J,i}$, we know that $n(\Gamma) \le n(\Gamma_{J, i})$. We now can start to order all possible vertices with no internal markings in increasing order with respect to the weighted number $n(v)$. The first two such vertices $v$ that satisfy ~\eqref{integrality in induction} have $n(v) = r+s$ and have the following number of boundary half-edges 
\begin{itemize}
\item  $k_1(v) = k_2(v) = 0$ and $k_{12}(v) = 1$;
\item  $k_1(v) = k_2(v) = 1$ and $k_{12}(v) = 0$.
\end{itemize}
To guarantee stability, any vertex $v$ with this number of boundary half-edges with non-trivial twists will require at least one additional boundary half-edge adjacent to $v$ that is untwisted. Suppose $\Lambda$ has one of these vertices.
Then $\Lambda$ must have additional vertices with no internal markings and a fully twisted half-edge on which we glue the untwisted boundary half-edge. If such an additional vertex $v'$ has $n(v')=r+s$, then we can repeat this argument. Thus, eventually, we must find a vertex $v$ with no internal markings and $n(v)>r+s$. After these two cases, we have the graphs $\Gamma^W_{0,r,0,1,\emptyset}$ and $\Gamma^W_{0,0,s,1,\emptyset}$ in the ordering with respect to $n(v)$, where $n(\Gamma^W_{0,r,0,1,\emptyset}) = n(\Gamma^W_{0,0,s,1,\emptyset}) = rs + (r+s)$. Since $n(\Gamma) \le n(\Gamma_{J,i})$ this means that $\CB\Lambda$ must consist of $\Lambda_{J,i}$, one of the graphs $\Gamma^W_{0,r,0,1,\emptyset}$ and $\Gamma^W_{0,0,s,1,\emptyset}$, and possibly some vertices $v$ with $n(v) =r+s$ by \eqref{n Gamma relation}. However, if we have any vertices $v$ with $n(v) = r+s$, we will then have $n(\Gamma) =n(\Gamma_{J,i})$ with $k_{12}(\Gamma) < k_{12}(\Gamma_{J,i})$. Otherwise, $\Gamma$ coincides with $\Gamma_{J,i-1}$ or $\Gamma_{J,i}$.

Thus we have proved the following: if $\Gamma$ is any graph for which
there exists $\Lambda \in \partial^0\Gamma\cup \partial^{\xch}\Gamma$
with $\Lambda_{J,i}$ a connected component of $\CB\Lambda$, then
either (i) $J\subsetneq I(\Gamma)$;  (ii) $n(\Gamma)>n(\Gamma_{J,i})$;
(iii) $n(\Gamma)=n(\Gamma_{J,i})$ but $k_{12}(\Gamma)<k_{12}(\Gamma_{J,i})$;
or (iv)
$\Gamma=\Gamma_{J,i-1}$ or $\Gamma = \Gamma_{J,i}$. Therefore, changing $\ess^{\Lambda_{J,i}}$
will have no further effect on the inductive construction.
 
 Replace all $\mathbf{s}^{\Lambda_{J,i}}$ with $(\mathbf{s'})^{\Lambda_{J,i}}$, and then apply Lemma~\ref{lemma:extension} to define $(\ess')^{\Gamma}$ at this induction step for those $\Gamma$ with boundary graphs $\Lambda \in \partial^0 \Gamma \cup \partial^\xch\Gamma$ so that $\Lambda_{J,i}$ is a connected component of $\CB\Lambda$. Here, the intersection numbers corresponding to the balanced graphs $\Gamma_{J,i}$ will change. Define
$$
o'_{J,p,\vecd} := \left \langle\prod_{i \in J} \tau^{(a_i,b_i)}_{d_i}\sigma_1^{k_1(\Gamma_{J,p})}\sigma_2^{k_2(\Gamma_{J,p})}\sigma_{12}\right\rangle^{(\mathbf{s}')^{\Gamma_{J,p}},o}.
$$
Using Lemma \ref{lem:zero diff as homotopy} along with ~\eqref{eqn for zeros in the homotopy}, ~\eqref{eq:wc3}, and~\eqref{eq:wc4}, we compute the new invariants $o'_{J,p,\vecd}$ to be as follows:
\begin{itemize}
\item For $p=0$, it will change to
\begin{equation}\label{New f gamma 0}
o'_{J,0,\vecd} = \nu_{\Gamma_{J,0}, \vecd} + \Delta_{\Gamma_{J,0}, \vecd} (\ess)+ (-1)^{k_2(\Lambda_{J,1})}sk_1(\Lambda_{J,1}) h_{1,\vecd};
\end{equation}
\item For  $0<p < N$, it will change to
\begin{equation}\label{New f gamma i}
o'_{J,p,\vecd} =  \nu_{\Gamma_{J,p},\vecd} + \Delta_{\Gamma_{J,p},\vecd}(\ess) + (-1)^{k_2(\Lambda_{J,p})+1} rk_2(\Lambda_{J,p}) h_{p,\vecd}+ (-1)^{k_2(\Lambda_{J,p+1})}sk_1(\Lambda_{J,p+1}) h_{p+1,\vecd}; \text{ and }
\end{equation}
\item For $p=N$, it will change to
\begin{equation}
o'_{J,N,\vecd} = \nu_{\Gamma_{J,N},\vecd} + \Delta_{\Gamma_{J,N},\vecd}(\ess) + (-1)^{k_2(\Lambda_{J,N})+1}rk_2(\Lambda_{J,N}) h_{N,\vecd}.
\end{equation}
\end{itemize}

By Lemma~\ref{lem:changing_winding}, there exists a multisection $(\mathbf{s}')^{\Lambda_{J,1}}$ where $c$ in the lemma is taken to be $h_{1,\vecd}=(-1)^{k_2(\Lambda_{J,1})+1}\frac{\Delta_{\Gamma_{J,0}, \vecd}(\ess)}{sk_1(\Lambda_{J,1})}$. Then we will have that $o'_{J,0,\vecd} = \nu_{\Gamma_{J,0},\vecd}$.

We inductively choose $h_{p,\vecd}$ so that we obtain $o_{J,p,\vecd}' = \nu_{\Gamma_{J,p},\vecd}$ for all $0 \le p < N$. This is done by assuming
given $h_{1,\vecd}, \ldots, h_{p,\vecd}$ so that $o_{J,p-1,\vecd}' =\nu_{\Gamma_{J,p-1},\vecd}$. Then take
$$
h_{p+1, \vecd} = \frac{ - \Delta_{\Gamma_{J,p},\vecd}(\ess) + (-1)^{k_2(\Lambda_{J,p})}rk_2(\Lambda_{J,p})h_{p,\vecd}}{(-1)^{k_2(\Lambda_{J,p+1})} sk_1(\Lambda_{J,p+1})}.
$$
Then the expression for $o_{J,p,\vecd}'$ in Equation~\eqref{New f gamma i} will simplify to $o_{J,p,\vecd}' = \nu_{\Gamma_{J,p},\vecd}$. Again, the multisection $(\mathbf{s}')^{\Lambda_{J,p}}$ is guaranteed to exist by Lemma~\ref{lem:changing_winding}.
Note that this does not change the boundary behavior for these multisections and that these critical boundaries all have internal markings $J$, so they do not affect any previously specified open intersection numbers in the inductive step.

Now we have that $\Delta_{\Gamma_{J,p},\vecd}(\ess)=0$ for $p=0,\ldots, N-1$. Combining this with Equation~\eqref{topContributionInduction}, we obtain that $\Delta_{\Gamma_{J,N},\vecd}(\ess)=0$. We perform the above modifications
in this way for all graphs $\Gamma$ with internal markings $J$ and descendent vectors $\vecd \in \NN^J$ such that $\Gamma$ is balanced with respect
to $\vecd$. It is reassuring to note that the top boundary critical graphs with different descendent vectors will all have independent inductive structures at top boundary strata, so the steps above will not rely on each other. Continue to do the above for all $J \subseteq I$ with $|J| = l$ to complete the induction step. Thus we have found that $\nu^\ess = \nu$, hence $\nu \in \OFJRW(I)$. 
\end{proof}

\section{Proof of the open Topological Recursion Relations}
\label{subsec:proof of open TRR}

We now outline the proof of Theorem \ref{thm:open TRR}.
By Theorem \ref{thm:A_mod_invs}, $\mathcal{A}(I,\vecd)$ is independent of the canonical family used to define it. We prove Theorem~\ref{thm:open TRR} by calculating $\mathcal{A}(I,\vecd+{\bf e}_1)$ in two ways, with respect to a family
of transverse special canonical multisections.

For convenience in this subsection, we assume that $\Gamma$ is a rooted smooth graded $W$-spin graph $\Gamma$ with $I(\Gamma)=[l]$, with $l >0$.  Let
\begin{equation}\label{E1 definition}
E = E_\Gamma(\vecd+{\bf e}_1) \text{ and }E_1 = E_{\Gamma}(\vecd)
\end{equation}
 be the descendent Witten bundles with respect to descendent vectors $\vecd+
{\bf e}_1$ and $\vecd$ respectively. Suppose that $\Gamma$ is balanced with respect to the descendent vector $\vecd+{\bf e}_1$.
Consider the following two  transverse special canonical multisections for $E$ and $E_1$ respectively:
\begin{equation}
\label{eq:ss}
\mathbf{s} = \mathbf{s}^\Gamma = s^\Gamma\oplus\bigoplus_{i\in\left[l\right],~j\in \left[d_i+\delta_{i=1}\right]}s_{ij}^\Gamma
\end{equation}
and
\begin{equation}\label{eq:ss1}
\srest=\srest^\Gamma=s^\Gamma\oplus\bigoplus_{\substack{i\in[l],~j\in[d_i]}}s_{ij}^\Gamma,
\end{equation}
where $s^\Gamma$ is the $\cW$-component of $\mathbf{s}^\Gamma$ and $s_{ij}^\Gamma$ is the projection to the $j^{th}$ $\CL_i$ summand.

Since $\mathbf{s}$ and $\srest$ are canonical, they are strongly positive, so there exists a set $U_+=U_+^\Gamma$ as in \eqref{eq:U_+} on which $s^\Gamma$ does not vanish.
Our goal is to compute $\int_{\oPM_\Gamma} e(E;\mathbf{s}^\Gamma|_{U_+\cap\partial^0\oPM_\Gamma})$ in terms of $\srest$; however, doing this directly is difficult. Instead, given a marked point $w$ other than the first internal marked
point $z_1$, we will look at a specific
transverse multisection, $\srest \oplus t_w$, with a distinguished multisection $t_w\in C^\infty(\oPM_\Gamma,\CL_1)$ that we will define below. This multisection, however, will not be canonical so we must take a homotopy to a canonical multisection. 

A priori, we would like to find the invariant
$$\int_{\oPM_\Gamma}e(E;\mathbf{s}^\Gamma|_{U_+\cap\partial^0\oPM_\Gamma})$$
 by computing $\int_{\oPM_\Gamma}e(E;\mathbf{s}_1^\Gamma\oplus t_w|_{U_+\cap\partial^0\oPM_\Gamma})$, and then finding the difference of the two integrals as  the number of zeros introduced by the homotopy between the two sections.
This naive approach fails, as the integrals become unwieldy.
However, it turns out that we can control the contributions of certain linear combinations of these integrals. A key technique introduced here is that we can use two cleverly chosen special linear combinations of multisections that are multisums corresponding to distinguished multisections of $E$ to prove the recurrence relations of Theorem~\ref{thm:open TRR}.

\medskip

\subsection{The distinguished  multisections $t_w$ of $\CL_1$}

\begin{nn}
\label{nn:pointing at}
Consider a graded graph $\Gamma$ with at least two tails, one of which is internal. We can define the distinguished multisection $ \ttt=\ttt_\XXX \in C^\infty(\oPM_\Gamma,\CL_1)$ mentioned above as follows.  Consider a smooth graded $W$-spin disk $\Sigma$ with an internal marking $z_1$ and additional marking $w$. After identifying $\Sigma$ with the upper half-plane, we set
\begin{equation}\label{eq:t}
\tilde \ttt\left(\Sigma\right) = dz\left.\left(\frac{1}{z - \XXX}-\frac{1}{z-\bar{z}_1}\right)\right|_{z = z_1} \in T_{z_1}^*\Sigma.
\end{equation}
This provides a section of $\CL_1$ over $\CM_{\Gamma}$. Furthermore, this
extends to a section of $\CL_1$ over $\oPM_{\Gamma}$ via exactly
the same argument as given in \cite{PST14}, \S4.3, which we briefly
review. If $\Sigma$ is singular, let $C$ be the underlying complex
curve. Then there is a unique meromorphic differential $\omega$
on the normalization
of $C$ with the following properties. 

First, it has simple poles with residue
$+1$ and $-1$ respectively at $w$ and $\bar z_1$. Second, consider a path in $C$ from $w$ to $\bar z_1$ that passes through the minimal number of irreducible components and nodes. Since $C$ has genus 0, there is a unique choice of nodes and irreducible components for the path to traverse.  For any node that the path traverses through,
the two preimages have at most simple poles, and the residues at these poles
sum to zero. If we normalise at such a node, then one node will be in the connected component associated to $w$ and the other in the connected component containing $\bar z_1$. The residue at the half-node sharing the connected component with $w$ is $-1$ and the residue for the half-node sharing the connected component with $\bar z_1$ is $+1$. For all other irreducible components where the path does not pass through, $\omega$ is holomorphic and hence vanishes. One then defines
$\tilde t(\Sigma)$ to be the value of $\omega$ at $z_1$.

We call $\ttt_\XXX$ the section (of $\CL_1$) \emph{pointing at $\XXX$}.
\end{nn}

\begin{rmk}
\label{rem:degenerate pointing}
One special case of a section $t_w$ occurs when the graded $W$-spin disk
 $\Sigma$ contains a boundary node $n$ and the normalization
of $\Sigma$ at this node $n$ is $\Sigma_1 \sqcup \Sigma_2$ with
$z_1\in\Sigma_1$ and $w\in\Sigma_2$. Write $n_{i}\in\Sigma_i$ for the half-nodes
corresponding to $n$. Then it is easy to see from
the description of Notation \ref{nn:pointing at} in the singular case that
$t_w$ coincides with $t_{n_1}$, the section of $\CL_1$ pointing at
$n_1$.
\end{rmk}

\begin{nn}
\label{not:Gamma ab}
Let $\Gamma$ be a smooth graded $W$-spin graph with at least two tails, one of which is internal. Let $I_1$ be the set of internal tails in the connected component containing the internal tail $1$.

Given the data of a subset $A\subseteq I_1 \setminus\{1\}$ and two numbers $a\in\{-1,\ldots,r-2\}$ and $b\in\{-1,\ldots,s-2\}$, we denote by $\Gamma^{A,a,b}$ the graded graph obtained from $\Gamma$ by replacing the vertex containing internal tail $1$ with another graph with one open vertex $v_o$ and one closed vertex $v_c$ so that:
\begin{itemize}
\item the vertices $v_o$ and $v_c$ are attached with one edge. Here, the half-edge belonging to $v_c$ has twist $(a,b)$ and the half-edge belonging to $v_o$ has twist $(r-2-a,s-2-b)$;
\item the closed vertex $v_c$ has internal tails labeled by $\{1\}\cup A$ with the same twists as the original tails on $\Gamma$;
\item the open vertex $v_o$ has internal tails labeled by $I_1\setminus (A\cup\{1\})$ with the same twists as the original tails on $\Gamma$ and the boundary tails with the same twists as in $\Gamma$.
\end{itemize}
\end{nn}
\noindent Such graphs $\Gamma^{A,a,b}$ correspond to the stable disks seen in Figure~\ref{fig:types of recursion}. We note that $a$ is allowed to take
the value $-1$ instead of $r-1$ as the corresponding half-node will be an
anchor (and similarly for $b$).

Let $\Gamma$ be a rooted smooth graded $W$-spin graph $\Gamma$ with internal markings $I$ with $1 \in I$. As in \eqref{E1 definition}, let $E = E_\Gamma(\vecd+{\bf e}_1)$  and $E_1 = E_{\Gamma}(\vecd)$ be the two descendent Witten bundles with respect to $\vecd + \mathbf{e}_1$ and $\vecd$, where $\vecd \in \Z^I_{\ge 0}$ and ${\bf e}_1 = (\delta_{i1})_{i\in I} \in \Z_{\ge0}^I$. Consider the two special canonical multisections $\ess$ and $\ess_1$ for $E$ and $E_1$ given in~\eqref{eq:ss} and \eqref{eq:ss1}
The first observation is the following, which will be the only place where we use the fact that $\ess_1$ is special canonical rather than canonical.

\begin{lem}\label{lem:closed_contribution}
Suppose $\Gamma = \Gamma_{0,k_1, k_2, k_{12},\{(a_i, b_i)\}_{i\in I}}$ is a smooth connected graded $W$-spin graph with at
least two tails, one of which is internal. Write $\ttts = \srest\oplus\ttt_\XXX$ and $k_\bullet=k_\bullet(\Gamma)$. If $A \subseteq I$, write $z_{A,a,b}$ for an internal marking labelled by the union of labels of the marked points in $A$ with twist $(r-2-a,s-2-b)$. Take $v_{A,a,b}$ to be the graph $ \Gamma_{0,k_1,k_2,k_{12},\{(a_i, b_i)\}_{i \in B\cup\{z_{A,a,b}\}}}$ where $A \sqcup B = I$.
\begin{enumerate}
\item If $\XXX$ is a boundary marking, then the integral $\int_{\oPM_\Gamma} e(E;\ttts|_{U_+\cap\partial^0\oPM_\Gamma})$ equals 
\begin{equation}\label{eq:closed_cont_trr_x1}
\sum_{\substack{A \sqcup B = I \\ 1 \in A \\ a\in\{-1,\ldots,r-2\},\\b\in\{-1,\ldots,s-2\}}}\left\langle \tau_0^{(a,b)}\prod_{i \in A}\tau_{d_i}^{(a_i,b_i)}\right\rangle^{\textup{ext}}
\left\langle \tau_0^{(r-2-a,s-2-b)}\prod_{i\in B}\tau^{(a_i,b_i)}_{d_i}\sigma_1^{k_1}\sigma_2^{k_2}\sigma_{12}^{k_{12}}\right\rangle^{\ess^{v_{A,a,b}},o}.
\end{equation}
\item  If $\XXX$ is an internal marking, say $z_2,$ then the integral $\int_{\oPM_\Gamma} e(E;\ttts)$ equals
\begin{equation}\label{eq:closed_cont_trr_z2}
\sum_{\substack{ A \sqcup B = I \\ 1 \in A, \ 2 \in B \\a\in\{-1,\ldots,r-2\},\\b\in\{-1,\ldots,s-2\}}}\left\langle \tau_0^{(a,b)}\prod_{i \in A}\tau_{d_i}^{(a_i,b_i)}\right\rangle^{\textup{ext}}
\left\langle \tau_0^{(r-2-a,s-2-b)}\prod_{i\in B}\tau^{(a_i,b_i)}_{d_i}\sigma_1^{k_1}\sigma_2^{k_2}\sigma_{12}^{k_{12}}\right\rangle^{\ess^{v_{A,a,b}},o}.
\end{equation}
\end{enumerate}
\end{lem}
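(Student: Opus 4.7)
The plan is to reduce the computation to an integration over the zero locus of the single section $\ttt_\XXX$, exploiting the special canonical structure of $\srest$. First I would examine the vanishing of $\ttt_\XXX$: by the description in Notation~\ref{nn:pointing at}, $\ttt_\XXX(\Sigma)$ is the value at $z_1$ of the unique meromorphic differential $\omega$ on the normalisation of $C = \Sigma \cup \bar\Sigma$ with simple poles of residues $\pm 1$ at $\XXX$ and $\bar z_1$. On any genus-zero irreducible component not lying on the minimal path between $\XXX$ and $\bar z_1$ in the dual graph, $\omega$ must be holomorphic with vanishing residues at every node, hence vanishes identically. Consequently $\ttt_\XXX$ vanishes transversally to order one on the union of boundary strata of the form $\oCM^W_{\Gamma^{A,a,b}}$ in which $z_1$ sits on a closed bubble carrying internal markings $\{1\} \cup A$, attached to the rest of the disk by an internal node of extended twist $(a,b) \in \{-1,\ldots,r-2\} \times \{-1,\ldots,s-2\}$. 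In case (1) any $A \subseteq I \setminus \{1\}$ of cardinality at least one is admissible (smaller $A$ yield unstable bubbles and zero contribution), whereas in case (2) we additionally require $z_2 \notin A$, because otherwise the path from $z_2$ to $\bar z_1$ passes through the bubble.

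Next, since $\srest$ is strongly positive and thus nowhere zero on $U_+ \cup \partial^0 \oPM_\Gamma$, the integral reduces, via a standard relative Euler-class argument, to the sum
\[
\int_{\oPM_\Gamma} e(E; \ttts|_{U_+ \cap \partial^0\oPM_\Gamma}) = \sum_{(A,a,b)} \int_{\oCM^W_{\Gamma^{A,a,b}}} e\bigl(E_1|_{\oCM^W_{\Gamma^{A,a,b}}};\, \srest|_{\oCM^W_{\Gamma^{A,a,b}}}\bigr).
\]
On each stratum the map $F_{\Gamma^{A,a,b}}$ from Definition~\ref{def:base} factors the moduli through $\oCM^W_{v_c} \times \oPM_{v_o}$, where $v_c, v_o$ denote the closed and open vertices of $\detach(\Gamma^{A,a,b})$. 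By Proposition~\ref{pr:decomposition} applied to the internal node, the restriction of the Witten-bundle part of $E_1$ splits (up to a trivial line bundle that contributes nothing to the top Chern class) into a closed summand pulled back from $v_c$ and an open summand pulled back from $v_o$; the tautological line summands split likewise by Observation~\ref{isomorphism forgetting non-alt for descendents}. By the special canonical hypothesis, $\srest|_{\oCM^W_{\Gamma^{A,a,b}}}$ decomposes compatibly as the assembling of $\srest^{v_c}$ and $\srest^{v_o}$ via $\Ass$ of \S\ref{subsec:coherent}. The integral over the product therefore factorises as a closed factor on $\oCM^W_{v_c}$ times an open factor on $\oPM_{v_o}$.

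The closed factor is by construction the pairing of $c_W$ against the closed moduli, which by \eqref{eq:closed rank two} equals $\tfrac{1}{rs}\langle \tau_0^{(a,b)} \prod_{i \in A} \tau_{d_i}^{(a_i,b_i)}\rangle^{\text{ext}}$; the prefactor $1/(rs)$ is absorbed by the ghost automorphism contribution at the internal node from Observation~\ref{obs:automorphism}. The open factor is by definition the open intersection number $\langle \tau_0^{(r-2-a,s-2-b)} \prod_{i \in B} \tau_{d_i}^{(a_i,b_i)} \sigma_1^{k_1}\sigma_2^{k_2}\sigma_{12}^{k_{12}}\rangle^{\ess^{v_{A,a,b}}, o}$, and summing over $(A,a,b)$ yields \eqref{eq:closed_cont_trr_x1} and \eqref{eq:closed_cont_trr_z2}. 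The main obstacle I anticipate is the careful bookkeeping of orientations and automorphism factors, particularly in the Ramond cases: Corollary~\ref{thm:or_and_induced internal} describes the induced orientation on $\oCM^W_{\Gamma^{A,a,b}}$ as an internal-node boundary, and one must match this against the product of the canonical complex orientation on the closed factor and the canonical relative orientation of Definition~\ref{def:or_and_rel_or2} on the open factor. When the node is Ramond for one or both spin bundles, the Witten bundle does not split as a direct sum and the coherent multisection/assembling formalism must be invoked through Proposition~\ref{pr:decomposition}\eqref{it:internalRamondWitten}; this is precisely the mechanism by which the extended twists $a = -1$ or $b = -1$ enter the closed invariant on the right-hand side. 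The overall strategy closely parallels that of Lemma~5.1 in \cite{BCT:II}.
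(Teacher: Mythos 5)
Your proposal follows essentially the same route as the paper's proof (which is itself only a sketch deferring to Lemma~4.14 of \cite{BCT:II}): identify the zero locus of $\ttt_\XXX$ as the strata where $z_1$ bubbles onto a closed component (with $2$ forced onto the open side when $\XXX=z_2$), restrict there using the non-vanishing of $\srest$, and factor the integral through the detaching map via the special canonical/assembling decomposition, with orientations controlled by Corollary~\ref{thm:or_and_induced internal}. The one quantitative point you elide is the multiplicity: $\ttt_\XXX$ does not vanish ``to order one'' in the weighted orbifold sense --- its zero locus carries multiplicity $d=\mathrm{lcm}(r,s)$ from the $\mu_d$-stabilizer at the internal node --- and this, combined with the degree $rs/d$ of $\Detach_e:\oPM_\Lambda\to\oPM_{v^o_\Lambda}\times\oCM_{v^c_\Lambda}$, is precisely what produces the factor $rs$ cancelling the $1/(rs)$ in \eqref{eq:closed rank two}; your phrase ``absorbed by the ghost automorphism contribution'' is the right idea, but your displayed reduction formula omits the factor $d$, so the constants only come out correctly once that bookkeeping is done.
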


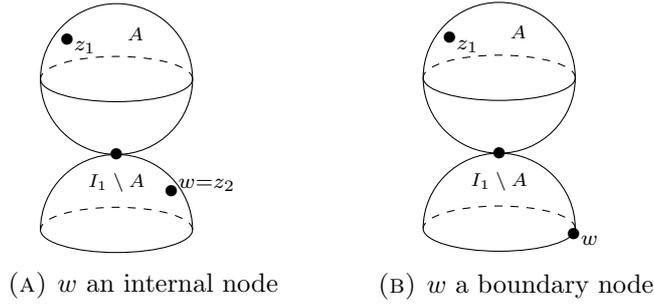
\begin{figure}

\begin{subfigure}{.3\textwidth}
  \centering

\begin{tikzpicture}[scale=0.5]
  \draw (0,0) circle (2cm);
  \draw (-2,0) arc (180:360:2 and 0.6);
  \draw[dashed] (2,0) arc (0:180:2 and 0.6);

  \draw (2,-4) arc (0:180:2);
  \draw (-2,-4) arc (180:360:2 and 0.6);
  \draw[dashed] (2,-4) arc (0:180:2 and 0.6);
\node (c) at (-1,.95) {$\bullet_{z_1}$};
\node (a) at (.5, 1.2) {\tiny $A$};
\node (b) at (0, -2.75) {\tiny $I_1\setminus A$};
\node (c) at (0, -2) {$\bullet$};
\node (d) at (2.2, -2.9) {$\bullet^{w = z_2}$};
\end{tikzpicture}

  \caption{$w$ an internal node}
\end{subfigure}
\begin{subfigure}{.3\textwidth}
  \centering

\begin{tikzpicture}[scale=0.5]
  \draw (0,0) circle (2cm);
  \draw (-2,0) arc (180:360:2 and 0.6);
  \draw[dashed] (2,0) arc (0:180:2 and 0.6);

  \draw (2,-4) arc (0:180:2);
  \draw (-2,-4) arc (180:360:2 and 0.6);
  \draw[dashed] (2,-4) arc (0:180:2 and 0.6);
\node (c) at (-1,.95) {$\bullet_{z_1}$};
\node (a) at (.5, 1.2) {\tiny $A$};
\node (b) at (0, -2.75) {\tiny $I_1\setminus A$};
\node (c) at (0, -2) {$\bullet$};
\node (d) at (2.2,-4.2) {$\bullet_w$};

\end{tikzpicture}

  \caption{$w$ a boundary node}
\end{subfigure}

\caption{Stable disks associated to $\Gamma^{A,a,b}$ for the two choices of $w$ and the boundaries contributing in Lemma~\ref{lem:closed_contribution}}
\label{fig:types of recursion}
\end{figure}

\begin{proof}
The proof of this lemma is completely analogous to the proof of Lemma 4.14 in \cite{BCT:II} (which in turn is inspired by the proof of Lemma 4.7 in \cite{PST14}) and will be only be sketched.

Let ${\CG_{oc}} \subset \partial \Gamma$ be the collection of graphs $\Lambda$ with the following properties:
\begin{itemize}
\item $\Lambda$ has exactly one edge $e$ which connects an open vertex $v^o_\Lambda$ and a closed vertex $v^c_\Lambda$,
\item the internal tail $z_1$ is adjacent to $v^c_{\Lambda}$, i.e.,  $1 \in I(v^c_\Lambda)$, and
\item  if $\XXX$ is an internal marking, say $\XXX=z_2$, then $~2\in I(v^o_\Lambda)$.
\end{itemize}
We refer the reader to the corresponding stable disks in Figure~\ref{fig:types of recursion}.
 By Lemma 4.13 of \cite{BCT:II}, the zero locus of $\ttt_w$ is $\bigcup_{\Lambda \in {\CG_{oc}}} \oPM_{\Lambda}$. By adapting the proof with the number of automorphisms as computed in Observation~\ref{obs:automorphism} in mind, we can see that the zero locus has multiplicity $d$.

Moreover, by Corollary~\ref{thm:or_and_induced internal}, the induced relative orientation on $\cW$ restricted to the zero locus is the one induced from the canonical complex orientation for the closed part, and the canonical relative orientation for the open part. See Lemma 4.13 of \cite{BCT:II} for more details. Note that, by Observation~\ref{obs:automorphism} and
Proposition~\ref{prop:existence_stable}, the map
\[\Detach_e:\oPM_\Lambda \to \oPM_{v^o_\Lambda} \times \oCM_{v^c_\Lambda}\]
has degree $rs/d$.

Following the argument of Lemma 4.14 in \cite{BCT:II}, we can restrict to the zero locus of $t_w$ with the correct multiplicity, and in turn, use the detaching map with the decomposition properties of the special canonical multisection
$\mathbf{s}_1$ (see Definition \ref{def:special canonical Witten descendent}) to see that
\begin{equation}\begin{aligned}
\int_{\oPM_\Gamma} e(E;\ttts|_{U_+\cap\partial^0\oPM_\Gamma}) &= \sum_{\Lambda \in \mathcal{G}_{oc}} \int_{\oPM_\Lambda} d \cdot e(E_1;\srest|_{\oPM_\Lambda}) \\
	&= \sum_{\Lambda \in \mathcal{G}_{oc}} \left(rs\int_{\PM_{v_\Lambda^c}}e(E_1; \srest|_{\oPM_{v^c_{\Lambda}}}) \right)\cdot \int_{\PM_{v_\Lambda^o}} e(E_1; \srest|_{\oPM_{v^o_{\Lambda}}}).
\end{aligned}\end{equation}
Lastly, using the definition of closed extended invariants in \eqref{defn: closed ext W spin invariant}, we obtain \eqref{eq:closed_cont_trr_x1} and \eqref{eq:closed_cont_trr_z2}.
\end{proof}

Note that in Lemma \ref{lem:closed_contribution}(1), when $\XXX$ is a boundary point, the integral is independent of the choice of the specific boundary point, hence item (1) of the lemma will also hold if we replace $\tilde\ttt_\XXX$ by a $\uplus$-sum of similar sections defined using different boundary marked points in the same connected component.

\begin{nn}\label{not: descendent multisections pointing to special points}
We denote by $\ttt_r$ the multisection of $\CL_1 \to \oPM_{\Gamma}$ that is the $\uplus$-sum of multisections pointing to all boundary markings in the connected component of $z_1$ that are $r$-points.
Define similarly $\ttt_s$. Lastly, let $\ttt_\Root$  be the multisection which points to the root of the connected component of $z_1$.
\end{nn}

\begin{nn}\label{not: pullback multisections to create homotopies}
Consider a section $t_w$ of any of the above types. Recall that $\srest$ is a multisection of $E_1$ and we write $\ttts = \srest\oplus \ttt_w$ for a multisection of $E$. Note that $\ttt_w$ need not be canonical, hence $\ttts$ need not be. As a consequence, we will need a homotopy from
$\ttt_w$ to a canonical multisection.
Write
\begin{equation} \label{Decomp Witten}
E = E_1 \oplus \CL_1
\end{equation}
with $E_1$ as in \eqref{E1 definition}. Let $\s_2$ be a canonical multisection of $\CL_1$. Then we can consider the canonical multisection $\s = \srest \boxplus \s_2$ of $E$.

We apply Lemma \ref{lem:hom_trr} with respect to $\mathbf{s}$ and $\mathbf{s}_1\oplus t_w$ to find a canonical family of multisections $\rho^*$
of $\CL_1$, such that, for each balanced $\Gamma$, the homotopy given at time
$u$ by
\begin{equation}\label{trr homotopy for the bullets}
H^\Gamma(t_w)(u,-):= \srest \boxplus \left((1-u)t_w+u(1-u)\rho^\Gamma+u \mathbf{s}_2\right)
\end{equation}
is transverse. Here, as we did in Proposition~\ref{invariants are independent of extension of boundary}, we are viewing multisections $\ess_1, t_w, \ess_2$ with base $\oPM_{\Gamma}$ as multisections with base $[0,1] \times \oPM_{\Gamma}$ via pulling back the bundle under the projection.
\end{nn}

\begin{nn}\label{Graphs for OTRR}
Let $\Gamma$ be a smooth graded $W$-spin graph with an internal tail $1 \in I(\Gamma)$, and let $\Xi_1, \ldots, \Xi_f$ be the connected components of $\Gamma$. Suppose that $\Xi_j$ contains the tail  $1$. Consider graded graphs $\Lambda \in \partial \Gamma$ where
\begin{itemize}
\item $\Lambda$ has connected components $\Lambda_1, \ldots, \Lambda_f$ where $\Lambda_j \in \partial \Xi_j$ and $\Lambda_i = \Xi_i$ for $i \not= j$;
\item $\Lambda_j$ has two open vertices $v_0$ and $v_j$, with one edge $e$ between them;
\end{itemize}
\begin{enumerate}[(a)]
\item Denote by ${\mathcal G}_{WC}$ the collection of
\emph{wall-crossing graphs}, i.e., all $\Lambda$ as above where the following are also satisfied:
\begin{itemize}
\item the vertex $v_0$ contains the internal tail $1$ and is critical
with respect to the descendent vector $\vecd+{\bf e}_1$;
\item the vertex $v_j$ is rooted and balanced with respect to the descendent vector $\vecd+{\bf e}_1$;
\item the half edges $h_0$ and $h_j$ of $e$ must fall into one of the following two cases:
\begin{enumerate}[(i)]
\item $\tw(h_1)= (r-2,0), \alt(h_1) = (1,0), \tw(h_2) = (0,s-2),$ and $\alt(h_2)=(0,1)$;
\item $\tw(h_1)= (0,s-2), \alt(h_1) = (0,1), \tw(h_2) = (r-2,0),$ and $\alt(h_2)=(1,0)$;
\end{enumerate}
\end{itemize}
\item Denote by $\mathcal{G}_{\XCH}$ the set of all $\Lambda$ as above  where the following are satisfied:
\begin{itemize}
\item the vertex $v_0$ is rooted and has the internal tail $1$;
\item the vertex $v_j$ is an exchangeable vertex; and
\item the half-edges satisfy the following:
$$\tw(h_0)= (r-2,s-2), \alt(h_0) = (1,1), \tw(h_j) = (0,0),\text{ and }\alt(h_j)=(0,0).$$
\end{itemize}

\item Denote by ${\mathcal G}_{\Cont}$ the set of all $\Lambda$ as above where the following are satisfied:
\begin{itemize}
\item the vertex $v_0$ is rooted and has the internal tail $1$;
\item  $\CB\Lambda$ is balanced with respect to the descendent vector $\vecd$;
\item the half-edges satisfy the following: $$\tw(h_0)= (0,0), \alt(h_0) = (0,0), \tw(h_j) = (r-2,s-2),\text{ and }\alt(h_j)=(1,1).$$
\end{itemize}
\end{enumerate}
\end{nn}

Recall the definition of a degeneration of a pre-graded $W$-spin graph from Definition~\ref{def:smoothingraph}(4). If we let $\Gamma:= \Gamma_P$ for some $P\in \mathcal{P}_h(I, \vecd+\eee_1)$ (see Notation \ref{P sets of balanced}),
then we denote by
\begin{equation}\begin{aligned}
{\mathcal G}_{\mathrm{WC}}(P) &:= \{ \Lambda \in  {\mathcal G}_{\mathrm{WC}} \ | \ \Lambda \text{ is a degeneration of } \Gamma_P\},\\
{\mathcal G}_{\XCH}(P) &:= \{ \Lambda \in  {\mathcal G}_{\XCH} \ | \ \Lambda \text{ is a degeneration of } \Gamma_P\},\\
{\mathcal G}_{\Cont}(P) &:= \{ \Lambda \in  {\mathcal G}_{\Cont} \ | \ \Lambda \text{ is a degeneration of } \Gamma_P\}.
\end{aligned}\end{equation}

\begin{rmk}\label{rmk: boundary components}
The graphs in the collection ${\mathcal G}_{WC}(P)$ are precisely the subcollection of all graphs of the form $\Gamma_{P,j,\eps,I_0,k_1(0),k_2(0),R_0,S_0}$ described in Part (B) of the proof of Theorem \ref{thm:A_mod_invs}, with the additional constraint that $1\in I_0$.
\end{rmk}

Using this notation, we can provide a new formula for $\left\langle\prod_{i\in {I(\Gamma)}}\tau^{(a_i,b_i)}_{d_i+\delta_{1i}}\sigma_1^{k_1(\Gamma)}\sigma_2^{k_2(\Gamma)}\sigma_{12}^{k_{12}(\Gamma)}\right\rangle^{\mathbf{s}^\Gamma,o}$:

\begin{lemma}\label{lem: homotopy TRR contributions classify}
Let $P\in\mathcal{P}_h(I,\vecd+\eee_1)$. Take $\Gamma := \Gamma_P$ as in \eqref{def: GammaP}.
Let $\srest$ be a canonical multisection of $E_1$ and
$\s_2$ a canonical multisection of $\CL_1$. Take $\s = \srest + \s_2$ and $\ttts_\XXX = \srest + t_\XXX$ and consider the homotopy $H:=H^\Gamma(t)$ between them, as in Equation~\eqref{trr homotopy for the bullets}. Then
\begin{equation}\label{eq:int_num_in_terms_of_trr}
\left\langle\prod_{i\in {I(\Gamma)}}\tau^{(a_i,b_i)}_{d_i+\delta_{1i}}\sigma_1^{k_1(\Gamma)}\sigma_2^{k_2(\Gamma)}\sigma_{12}^{k_{12}(\Gamma)}\right\rangle^{\mathbf{s}^\Gamma,o}=
\int_{\oPM_\Gamma}e(E,\ttts_\XXX)+Z_{WC}^\Gamma(\ttts_\XXX)+Z_{\Cont}^\Gamma(\ttts_\XXX)+Z_{\XCH}^\Gamma(\ttts_\XXX),
\end{equation}
where
\begin{equation}\begin{aligned}\label{all the contributions in the TRR homotopy}
Z^\Gamma_{WC}(\ttts_\XXX)&:=\sum_{\Lambda\in {\mathcal G}_{WC}(P)}\#Z(H|_{[0,1]\times\oPM_\Lambda});\\
Z^\Gamma_{\Cont}(\ttts_\XXX)&:=\sum_{\Lambda\in {\mathcal G}_{\Cont}(P)}\#Z(H|_{[0,1]\times\oPM_\Lambda}); \\
Z^\Gamma_{\XCH}(\ttts_\XXX)&:=\sum_{\Lambda\in {\mathcal G}_{\XCH}(P)}\#Z(H|_{[0,1]\times\oPM_\Lambda}).
\end{aligned}\end{equation}
\end{lemma}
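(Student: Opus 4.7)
The plan is to apply Lemma \ref{lem:zero diff as homotopy} directly to the transverse homotopy $H = H^\Gamma(\ttt)$ interpolating from $\ttts_\XXX = \srest + \ttt_\XXX$ at $u=0$ to $\mathbf{s} = \srest + \s_2$ at $u=1$, and then to classify precisely which boundary strata of $\oPM_\Gamma$ contribute to the boundary zero count. By construction \eqref{trr homotopy for the bullets} is transverse when restricted to any stratum (Lemma \ref{lem:hom_trr}); and since $\srest$ is canonical while the $\CL_1$-summand depends only on the single descendent at $z_1$, the whole family $H$ remains strongly positive and non-vanishing on a set $U_+$ of the form \eqref{eq:U plus homotopy extension}. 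Lemma \ref{lem:zero diff as homotopy} then yields
\[
\#Z(\mathbf{s}^\Gamma) - \int_{\oPM_\Gamma} e(E;\ttts_\XXX) \;=\; \#Z(H|_{[0,1]\times\partial\oPM_\Gamma}),
\]
whose left hand side is exactly the open invariant appearing on the left of \eqref{eq:int_num_in_terms_of_trr}.

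The bulk of the work is then to decompose $\partial\oPM_\Gamma = \bigcup_\Lambda \CM^W_\Lambda$, with $\Lambda$ running over $\partial^0\Gamma \cup \partial^\XCH\Gamma$, and to show that $H|_{[0,1]\times\oPM_\Lambda}$ has empty zero locus unless $\Lambda \in \mathcal{G}_{WC}(P) \cup \mathcal{G}_{Cont}(P) \cup \mathcal{G}_{\XCH}(P)$. Since each component of $\Gamma_P$ is balanced for $\vecd+\eee_1$ we have $\rank E = \dim \oPM_\Gamma$, so for dimensional reasons the contributing $\Lambda$ must have a single boundary edge $e$. I would split by the twists at the half-edges of $e$ and by the location of $z_1$. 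Strong positivity at every $u \in [0,1]$ rules out all contracted-boundary strata and the irrelevance condition of Definition \ref{def:special kind of graded graphs}(3), paralleling Case 1 of the proof of Lemma \ref{lem:partial_homotopy}. Among relevant strata, the edge type $(r-2,0)/(0,s-2)$ with $z_1$ on the critical side is exactly $\mathcal{G}_{WC}(P)$; the edge type $(r-2,s-2)/(0,0)$ with the $(0,0)$ half-edge on the $z_1$ side gives $\mathcal{G}_{Cont}(P)$, the shift from balancing for $\vecd+\eee_1$ to balancing for $\vecd$ on the base being accounted for precisely by the extra $\CL_1$ direction in $E$ over $E_1$; and the remaining exchangeable configurations contribute $\mathcal{G}_{\XCH}(P)$, with any pairs of $\sim_{\XCH}$-equivalent strata not meeting these constraints cancelling by the orientation-reversal argument of Lemma \ref{lem:exchange vanishes}.

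The main obstacle is the dimension and parity bookkeeping used to eliminate all non-listed relevant strata. Concretely, for a relevant boundary graph $\Lambda$ with single boundary edge, writing $\CB\Lambda = \bigsqcup_i \Xi_i$ and using the notation of Observation \ref{obs:base dim yoga}, one must verify via \eqref{eq:dim rank comparison} together with the parity constraint \eqref{eq:open_rank2} and a pigeonhole argument that unless $\Lambda$ is of one of the three listed types, either some $\Xi_i$ satisfies $\dim \CM^W_{\Xi_i} \le \rank E_{\Xi_i}(\vecd+\eee_1) - 2$ or two components simultaneously meet the critical bound $\dim = \rank - 1$ with incompatible placement of the $z_1$-component; in either case Lemma \ref{lem:partial_homotopy}(1) guarantees $H$ is non-vanishing on $[0,1]\times\oPM_\Lambda$. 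This analysis parallels Part (A) of the proof of Theorem \ref{thm:A_mod_invs}, but is complicated here by the asymmetric role of the descendent direction $\CL_1$ and by the fact that the non-canonical piece $\ttt_\XXX$ of $H$ has its own vanishing locus (identified via Lemma \ref{lem:closed_contribution} and Remark \ref{rem:degenerate pointing}) which must be reconciled with the stratum classification. Once the case analysis is complete, the surviving contributions assemble into $Z^\Gamma_{WC} + Z^\Gamma_{Cont} + Z^\Gamma_{\XCH}$ of \eqref{all the contributions in the TRR homotopy}, establishing \eqref{eq:int_num_in_terms_of_trr}.
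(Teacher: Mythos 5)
Your proposal is correct and follows essentially the same route as the paper's proof: apply Lemma \ref{lem:zero diff as homotopy} to the transverse, strongly positive homotopy of \eqref{trr homotopy for the bullets}, reduce to codimension-one relevant boundary strata with a single boundary edge, and eliminate all but the three listed families by the same parity/dimension pigeonhole used in Part (A) of Theorem \ref{thm:A_mod_invs}, splitting by the edge twists and the location of the tail $1$. The only point you leave implicit — how to handle the stratum with $\tw(h_1)=(r-2,s-2)$, $\tw(h_2)=(0,0)$ and $1\in I(v_1)$, where $t_\XXX$ is not itself pulled back from the base and the paper instead pushes $H$ forward along the finite covering $F$ to conclude non-vanishing — is flagged in your last paragraph and does not change the structure of the argument.
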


\begin{proof} By Lemma~\ref{lem:hom_trr},
the homotopy $H$ of
\eqref{trr homotopy for the bullets} from $\ttts_w$ to $\ess$ is transverse to zero on each stratum $[0,1]\times\CM^W_\Lambda$ and the projection of $H$ to $E_1$ equals $\srest$ at all times. Moreover, $H$ is non-vanishing on $[0,1]\times
U_+$, where $U_+$ is as in \eqref{eq:U_+}, as $\s_1$ is strongly positive
(see Definition~\ref{def: strongly positive}).
Lemma~\ref{lem:zero diff as homotopy} then gives
\begin{equation}\label{eq:rsH}
\int_{\oPM_\Gamma} e(E;\mathbf{s}) - \int_{\oPM_\Gamma} e(E;\ttts_w) = \# Z(H|_{[0,1]\times(\partial\oPM_{\Gamma})}).
\end{equation}

We will now show that $H$ only vanishes on those boundary strata indexed by graphs that are in the collections ${\mathcal G}_{WC}$, ${\mathcal G}_{\Cont}$, and ${\mathcal G}_{\XCH}$, yielding \eqref{eq:int_num_in_terms_of_trr}. For $\Lambda\in\partial^0\Gamma$ with $\dim\CM^W_\Lambda\leq\dim\CM^W_\Gamma-2$,
$H$ will not vanish as it is a transverse homotopy. If instead $\Lambda
\in\partial^0\Gamma$ is irrelevant, then $\s_1$ is non-vanishing
on $\CM^W_{\Lambda}$ as $\s_1$ is strongly positive,
and hence $H$ is non-vanishing on $\CM^W_{\Lambda}$.
We thus may assume that $\dim\CM^W_\Lambda=\dim\CM^W_\Gamma-1$ and $\Lambda$ is relevant.

We can then describe $\Lambda$. Write $\Xi_1, \ldots, \Xi_f$ for the connected components of $\Lambda$.
Using the assumption that $\dim\CM^W_\Lambda=\dim\CM^W_\Gamma-1$, we see
$\Lambda$ must have a single boundary edge $e$ which connects open vertices
$v_1$ and $v_2$. Thus we may assume that $\Xi_1$ contains the edge $e$
and vertices $v_1,v_2$, and the remaining connected components $\Xi_2,\ldots,
\Xi_f$ are rooted smooth open graded graphs.
Then one can  decompose
\begin{equation}\label{eqn: Witten bundle for conn components}
E_1 = (E_1)_{v_1}\boxplus{(E_1)_{v_2}}\boxplus \Big((E_1)_2\boxplus\cdots
\boxplus (E_1)_f\Big),\quad\quad E = E_{v_1}\boxplus{E_{v_2}}\boxplus \Big(E_2\boxplus\cdots\boxplus E_f\Big),
\end{equation}
where $(E_1)_{v_i}=E_{v_i}(\vecd),~E_{v_i}=E_{v_i}(\vecd+{\bf e}_1)$ and  $(E_1)_{i}=E_{\Xi_i}(\vecd),~E_{i}=E_{\Xi_i}(\vecd+{\bf e}_1)$.

We next observe that for $H$ to vanish on this stratum, the internal tail
$1$ must be contained in $\Xi_1$. Indeed, if not, the component of
$H$ in $E_{v_1}\boxplus E_{v_2}$ is constant, coinciding with the
$E_{v_1}\boxplus E_{v_2}$ component of $\s$, and hence by transversality
has no zeros. Thus for $i\neq 1$, the bundle $E_i$ is the same as $(E_1)_i$.

As in Part (A) of the proof of Theorem~\ref{thm:A_mod_invs}, there are four cases which we must discuss, depending on the different potential twists for the half-edges in the first connected component. Without loss of generality take $v_1$ to be the rooted vertex of
$\Xi_1$ with corresponding boundary half-edge $h_1$ and $v_2$ to be the additional vertex with half-edge $h_2$. We can have:
\begin{enumerate}[(i)]
\item $\tw(h_1)= (r-2,0), \alt(h_1) = (1,0), \tw(h_2) = (0,s-2),$ and $\alt(h_2)=(0,1)$;
\item $\tw(h_1)= (0,s-2), \alt(h_1) = (0,1), \tw(h_2) = (r-2,0),$ and $\alt(h_2)=(1,0)$;
\item $\tw(h_1)= (r-2,s-2), \alt(h_1) = (1,1), \tw(h_2) = (0,0),$ and $\alt(h_2)=(0,0)$;
\item $\tw(h_1)= (0,0), \alt(h_1) = (0,0), \tw(h_2) = (r-2,s-2),$ and $\alt(h_2)=(1,1)$.
\end{enumerate}

Recall from Equation~\eqref{eq:open_rank2} that
the rank of the Witten bundle associated to a given vertex $v$
satisfies
\begin{equation}
\label{eq:rank parity}
\rank E_v \equiv k_1(v) + k_2(v) \pmod 2.
\end{equation}
On the other hand,
\begin{equation}
\label{eq:dim parity}
\dim \CM^W_v \equiv 1+\#\hbox{(boundary half-edges incident to $v$)}\pmod 2.
\end{equation}

Note that since $H$ agrees
with $\s$ on $E_i$, $2\le i\le f$ and $\s$ is transversal,
in order for $H$ to vanish we need that
\begin{equation}
\label{eq:rank E f}
\rank E_i \le \dim\CM^W_{\Xi_i},\quad 2\le i \le f.
\end{equation}
Since $\rank E=\dim \CM_{\Lambda}^W + 1$ by the fact that $\Gamma$ is
balanced with respect to $\vecd+\mathbf{e}_1$, we must also have
\begin{equation}
\label{eq:rank dim_2}
\rank E_{v_1}+\rank E_{v_2} +\sum_{i=2}^f \rank E_i = 1+
\dim\CM_{v_1}^W+\dim \CM_{v_2}^W+\sum_{i=2}^f \dim \CM_{\Xi_i}^W.
\end{equation}

We now carry out a more careful analysis of ranks and dimensions
for the vertices $v_1$ and $v_2$ in the four cases to determine when
$H$ may vanish.
\begin{enumerate}
\item[(i) \& (ii)] Cases (i) and (ii) are similar to one another. 
By \eqref{eq:rank parity} and \eqref{eq:dim parity}, we have
\begin{align}
\label{eq:parity relation}
\begin{split}
\rank E_{v_1}\equiv {} & k_1(v_1)+k_2(v_1)\equiv k_1(v_1)+k_2(v_1)+k_{12}(v_1)+1
\equiv \dim \CM^W_{v_1}
\pmod 2;\\
\rank E_{v_2}\equiv{} & k_1(v_2)+k_2(v_2)\equiv k_1(v_2)+k_2(v_2)+1-1\equiv \dim \CM^W_{v_2}-1
\pmod 2.
\end{split}
\end{align}

If $1 \in I(v_1)$, then the homotopy $H$ will also be constant in time for the
$v_2$ component.
Thus, in order for $H$ to vanish, we need by transversality that
$\rank E_{v_2}\leq \dim\CM^W_{v_2}$.
By \eqref{eq:parity relation}, we know then that $\rank E_{v_2}< \dim\CM^W_{v_2}$, but the parities of $\rank E_{v_1}$ and $\dim \CM^W_{v_1}$ agree.
Thus by \eqref{eq:rank dim_2},
we then must have $\rank E_{v_1}> \dim\CM^W_{v_1}$, and hence
by parity,
$\rank E_{v_1} \geq \dim \CM^W_{v_1} + 2$. By transversality, the $v_1$-component of the homotopy $H|_{[0,1]\times\CM^W_{\Lambda}}$ cannot vanish, hence $H|_{[0,1]\times\CM^W_{\Lambda}}$ does not vanish.

When $1\in I(v_2)$ and $H$ vanishes, we claim that the graph $\Lambda$ is a critical boundary graph with respect to the descendent vector $\vecd+{\bf e}_1$.
Indeed, note that now $H$ is constant
on the $v_1$ component, so necessarily $\rank E_{v_1}\le \dim\CM_{v_1}^W$. Thus by \eqref{eq:rank dim_2}, $\rank E_{v_2}\ge
\dim \CM_{v_2}^W+1$. If the inequality is strict, then $H$ will
non-vanishing on $\CM_{\Lambda}^W$ by transversality. Thus in this
case we have $\rank E_i=\dim\CM_{\Xi_i}^W$ for $2\le i \le f$,
$\rank E_{v_1}=\dim \CM_{v_1}^W$ and $\rank E_{v_2}=\dim \CM_{v_2}^W + 1$,
i.e., $\Lambda$ is a critical boundary.
We denote the collection of graphs which parameterize such boundaries by ${\mathcal G}_{WC}$, as in Notation~\ref{Graphs for OTRR}.

Case (i) here corresponds to the case where $\eps = 1$ and case (ii) corresponds to the case where $\eps = 2$ in Remark \ref{rmk: boundary components}.

\item[(iii)] In case (iii), when we pass to the base, we will forget the half-edge $h_2$. As in \eqref{eq:strata dim} and \eqref{eq:base dim},
we must have $\dim\CM^W_{\CB\Lambda}=\dim\CM^W_\Gamma-2+\sigma$, where
$\sigma=1$ if $v_2$ becomes partially stable after forgetting $h_2$
and $\sigma=0$ otherwise.

Suppose $\sigma=1$. Then, as discussed in Case (iii) of Part (A) of the proof of Theorem~\ref{thm:A_mod_invs}, since $\Lambda$ is assumed to be relevant, the vertex
$v_2$ must be exchangeable.  In this case, $\Lambda$ is one of the
the exchangeable graphs in the set ${\mathcal G}_\XCH$, as defined in Notation~\ref{Graphs for OTRR}.

Now suppose $\sigma=0$. We have two subcases.
\begin{enumerate}
\item[(a)]
$1 \in I(v_1)$. 
Recall the map $F_\Lambda: \oPM_\Lambda^W \to \oPM_{\CB\Lambda}^W$.
Here by definition, $F_{\Lambda}=
\text{For}_{\text{non-alt}}\circ\Detach_{E^0(\Lambda)}$. We write
$\Xi=\detach_{E^0(\Lambda)}\Lambda$ and $F=\Detach_{E^0(\Lambda)}$. There is a canonical identification of the bundles $\CL_1$ on the moduli spaces $\oPM_\Lambda$ and $\oPM_{\CB\Lambda}$ under the map $F_{\Lambda}$ by Observation~\ref{isomorphism forgetting non-alt for descendents}(i).
Note that
$F$ is a finite covering map. Indeed, given two $W$-spin disks $\Sigma_1,\Sigma_2$
representing elements of $\oPM_{v_1}$ and $\oPM_{v_2}$ respectively, 
we may obtain an element of $\oPM_{\Lambda}$ in two different ways,
by attaching the unique boundary point of $\Sigma_2$ with twist $(0,0)$
to one of the two boundary points of $\Sigma_1$ with twist $(r-2,s-2)$.
Recall $t_{\times}$ is the section of $\CL_1$ pointing towards the root.
Then $t_{\times}$ is not the pull-back of a section of $\CL_1$ on
$\oPM_{\Xi}$ as on $\oPM_{\Xi}$ we cannot in general
distinguish between the root
and the boundary point corresponding to $h_1$. Similarly, if $t_{h_1}$
denotes the section on $\oPM_{\Lambda}$ pointing from $z_1$ to the 
half-node corresponding to $h_1$, $t_{h_1}$ is not the pull-back of a 
section from $\oPM_{\Xi}$. Lastly, $t_r, t_s$ or $t_w$ for
$w\in I(v_2)$
may contain $t_{h_1}$ as a weighted summand, see Remark 
\ref{rem:degenerate pointing}. Thus it is possible that none of these
sections are pulled back from $\oPM_{\Xi}$. On the other hand, recalling
the definition of pushforward from Notation \ref{not:pushforward},
it is easy to see that $F_*t_{\times} = F_*t_{h_1} = t_{(r,s)}$,
the $\uplus$-sum of multisections pointing to all boundary markings
which are fully twisted. Further, $t_{(r,s)}$ is then pulled back from
$\oPM_{\CB\Lambda}$. Hence in any event, $F_*t_w$ is pulled back from
$\oPM_{\CB\Lambda}$. Now, the remaining multisections used to construct
the homotopy $H$ in Lemma~\ref{lem:hom_trr} are canonical, so we have that 
$F_*H$ is also pulled back from $[0,1]\times \oPM_{\CB\Lambda}$. 
Since $\dim ([0,1]\times\oPM^W_{\CB \Lambda})  = \dim \oPM^W_{\Gamma} -1$, we have by transversality that $F_* H$ does not vanish, and hence 
$H$ itself does not vanish.
 Thus we do not get any contributions to the number of zeroes of the homotopy from this case.
\item[(b)] $1 \in I(v_2)$.
Let $k_{\varnothing}(v_i)$ denote the number of boundary half-edges where $(\tw_i, \alt_i) = (0,0)$ for all $i$. We use  \eqref{eq:rank parity} and \eqref{eq:dim parity} to see that
\begin{align}
\label{eq:parity relation_case iii}
\begin{split}
\rank E_{v_1}\equiv {} k_1(v_1) + k_2(v_1) \equiv {} & k_1(v_1)+k_2(v_1)+k_{12}(v_1) + 1 +1 \\
\equiv {} &
 \dim \CM^W_{v_1}+1
\pmod 2;\\
\rank E_{v_2} \equiv{}   k_1(v_2) + k_2(v_2) \equiv {} & k_1(v_2) + k_2(v_2) + k_{\varnothing}(v_2) + 1\\
 \equiv {} & \dim \CM^W_{v_2}
\pmod 2.
\end{split}
\end{align}

Combining the facts that $H$ is constant on the $v_1$ component with the parities computed in~\eqref{eq:parity relation_case iii}, we must have $\rank E_{v_1}< \dim\CM_{v_1}^W$ if $H|_{[0,1]\times\CM_\Lambda}$ is to vanish. Thus, by applying~\eqref{eq:rank E f} and~\eqref{eq:rank dim_2},  we have that $\rank E_{v_2} > \dim \CM_{v_2}^W$. In turn, \eqref{eq:parity relation_case iii} implies that
 $$
 \rank E_{v_2} \ge \dim \CM_{v_2}^W + 2.
 $$
Thus, by transversality, the $v_2$-component of the homotopy $H|_{[0,1]\times
\CM^W_{\Lambda}}$ cannot vanish, hence $H|_{[0,1]\times\CM^W_{\Lambda}}$ does not vanish.
\end{enumerate}

\item[(iv)] We again can compute using  \eqref{eq:rank parity} and \eqref{eq:dim parity} that in case (iv)
\begin{align}
\label{eq:parity relation_case iv}
\begin{split}
\rank E_{v_1}\equiv {} &
 \dim \CM^W_{v_1}+1
\pmod 2;\\
\rank E_{v_2} \equiv{} &  \dim \CM^W_{v_2}
\pmod 2.
\end{split}
\end{align}
Note that the parities in case (iv) are the same as in \eqref{eq:parity relation_case iii}. Thus, if $1 \in I(v_2)$, we can apply~\eqref{eq:parity relation_case iv}, ~\eqref{eq:rank E f} and~\eqref{eq:rank dim_2} as before to imply that
$$
\rank E_{v_2} \ge \dim \CM_{v_2}^W + 2.
$$
This again implies by transversality that the $v_2$-component of the homotopy $H|_{[0,1]\times\CM^W_{\Lambda}}$ cannot vanish, hence $H|_{[0,1]\times\CM^W_{\Lambda}}$ does not vanish.

If $1 \in I(v_1)$, then it is possible that $\CB\Lambda$ is balanced with respect to $\vecd$. Indeed, note that the base $\CB\Lambda$  forgets the half-edge $h_1$. Then the rank of $(E_1)_{v_1}=E_{v_1}(\vecd)$ has the same parity as $\CM^W_{\CB v_1}$, hence one can have that $\rank (E_1)_{v_1}=\dim
\CM^W_{\CB v_1}$, and so
$$
\quad \qquad \rank E_{v_1} = \dim \CM^W_{\CB v_1} + 2, \quad \rank E_{v_2} =  \dim \CM^W_{ v_2}, \text{ and } \rank E_i = \dim\CM^W_{\Xi_i},\quad 2\le i \le f.
$$
These graphs make up the collection
${\mathcal G}_{\Cont}$. 
\end{enumerate}

Using the analysis of contributing boundaries and \eqref{eq:rsH}, we
conclude the lemma.
\end{proof}

The bulk of the analysis involves treating each of the summands
on the right-hand side of \eqref{eq:int_num_in_terms_of_trr}
separately. We begin with the easy case:

\begin{lemma}\label{lem:xch_vanishing_invariants}
$Z^\Gamma_{\XCH}(\ttts_\XXX)=0$.
\end{lemma}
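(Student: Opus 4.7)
The plan is to adapt the cancellation argument already used in Lemma~\ref{lem:exchange vanishes} to the present setting. The graphs $\Lambda\in\mathcal{G}_{\XCH}(P)$ have a distinguished open vertex $v_j$ which is exchangeable, carrying two cyclic orders on its three boundary half-edges (the half-edge $h_j$ of the boundary edge together with two boundary tails with trivial twist/alternation). The involutive operation $\xch_{v_j}$ of Definition~\ref{def:graphBestiary} pairs up the connected components of $\oPM_{\Lambda}$ coming from the two cyclic orders, yielding graphs $\Lambda_1$ and $\Lambda_2=\xch_{v_j}(\Lambda_1)$ each having a single cyclic order, and an isomorphism
\[
\XCH_{v_j}:\oPM_{\Lambda_1}\xrightarrow{\sim}\oPM_{\Lambda_2}
\]
as in \eqref{exchange moduli}.

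First, I would observe that by Observation~\ref{obs:exchangeable_and_base} the bases agree, $\CB\Lambda_1=\CB\Lambda_2$, and that the maps $F_{\Lambda_1}$ and $F_{\Lambda_2}\circ\XCH_{v_j}$ coincide. The homotopy $H$ of \eqref{trr homotopy for the bullets} is built from: (i) the canonical multisection $\srest$ of $E_1$, (ii) the canonical multisection $\mathbf{s}_2$ of $\CL_1$, (iii) the canonical multisection $\rho^\Gamma$ provided by Lemma~\ref{lem:hom_trr}, and (iv) the multisection $t_\XXX$, which on a boundary stratum in $\mathcal{G}_{\XCH}$ reduces to a multisection pulled back from the base by Remark~\ref{rem:degenerate pointing} (since the internal marking $1$ and the point $\XXX$ lie on opposite sides of the exchangeable node, the section $t_\XXX$ on each of $\oPM_{\Lambda_1}$ and $\oPM_{\Lambda_2}$ coincides with the section pointing to the half-node corresponding to $h_0$, which is pulled back from the base via $F_{\Lambda_i}$). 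Consequently $H|_{[0,1]\times\oPM_{\Lambda_1}}$ and $H|_{[0,1]\times\oPM_{\Lambda_2}}$ are canonically identified under $\mathrm{id}_{[0,1]}\times\XCH_{v_j}$, so their unsigned zero counts coincide.

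Next, I would invoke Proposition~\ref{prop: orient exchange}: the map $\XCH_{v_j}$ reverses the canonical relative orientation $o_\Gamma$ when restricted to the boundary strata, i.e.,
\[
\iota_{\Lambda_1}^* o_\Gamma = -\XCH_{v_j}^*\iota_{\Lambda_2}^* o_\Gamma.
\]
Combining the identification of multisections from the previous paragraph with this orientation reversal gives
\[
\#Z(H|_{[0,1]\times\oPM_{\Lambda_1}}) = -\#Z(H|_{[0,1]\times\oPM_{\Lambda_2}}).
\]
Summing over all such pairs $\{\Lambda_1,\Lambda_2\}$ in $\mathcal{G}_{\XCH}(P)$ yields $Z_{\XCH}^\Gamma(\ttts_\XXX)=0$.

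The only subtle point—and the main thing to check carefully—is that the exchangeable strata in $\mathcal{G}_{\XCH}(P)$ genuinely come in $\XCH$-pairs with the homotopy $H$ restricting to the same (up to orientation) multisection on each member of the pair. This relies on the observation that at an exchangeable stratum, the point $\XXX$ (whether a boundary marking as in $t_r,t_s,t_\Root$ or an internal marking $z_2$) lies on the vertex $v_0$ opposite to the exchangeable vertex $v_j$, so that $t_\XXX$ descends to the base of $\Lambda_i$ via the identification of Remark~\ref{rem:degenerate pointing}; the remaining summands of $H$ descend by canonicity. Once this is established, the proof is concluded by the orientation-reversal cancellation above.
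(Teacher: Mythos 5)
Your proposal is correct and follows essentially the same route as the paper: pair each $\Lambda$ with $\xch(\Lambda)$, note that all the multisections entering $H$ (the canonical ones by Observation~\ref{obs:exchangeable_and_base}, and $t_\XXX$ because it is insensitive to the cyclic order at the exchangeable vertex) are identified under $\XCH$, and then cancel the two contributions using the orientation reversal of Proposition~\ref{prop: orient exchange}. One small imprecision: since the exchangeable vertex $v_j$ carries no internal tails, the restriction of $t_\XXX$ is not always the section pointing at the half-node $h_0$ (that happens only for summands of $t_r,t_s$ attached to markings on $v_j$; for markings on $v_0$, for the root, and for $z_2$ it is the corresponding section on $v_0$), but every case is $\XCH$-invariant, which is all the argument requires.
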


\begin{proof}
The argument is similar to the corresponding argument in Lemma~\ref{lem:exchange vanishes}.  Recall that the homotopy $H^\Gamma(t_w)$
of \eqref{trr homotopy for the bullets}
depends only on the multisections $\srest, \rho, \mathbf{s}_2$, and $t_w$. For any $\Lambda \in \mathcal{G}_{\XCH}(P)$, we have the corresponding graph $\Lambda' := \xch(\Lambda) \in \mathcal{G}_{\XCH}(P)$. By Observation~\ref{obs:exchangeable_and_base}, we know that the canonical multisections $\srest, \rho$, and $\mathbf{s}_2$ are the same on $\Lambda$ and $\Lambda'$. Moreover, the multisection $t_w$ is independent of the cyclic ordering on the exchangeable graph, so we have that $\XCH^* t_w = t_w$. Thus we have that
\[|\#Z(H|_{[0,1]\times\oPM_{\Lambda}})|=|\#Z(H|_{[0,1]\times
\oPM_{\Lambda'}})|.\]
Recalling that, by Proposition~\ref{prop: orient exchange}, the map $\XCH: \oPM_\Lambda \to \oPM_{\Lambda'}$ is orientation reversing, we conclude that
\[\#Z(H|_{[0,1]\times\oPM_{\Lambda}})=-\#Z(H|_{[0,1]\times\oPM_{\Lambda'}}).\]
Summing over all $\Lambda\in \mathcal{G}_{\XCH}(P)$ implies the result.\end{proof}

We now restrict our attention to computing $Z^\Gamma_{WC}(\ttts_\XXX)$. This requires understanding the boundary graphs $\Lambda\in \mathcal{G}_{WC}(P)$ for a fixed $P \in \mathcal{P}_h(I, \vecd+ \eee_1)$. As said in Remark~\ref{rmk: boundary components}, the graphs in the collection ${\mathcal G}_{WC}(P)$ are precisely the subcollection of all graphs of the form $\Gamma_{P,j,\eps,I_0,k_1(0),k_2(0),R_0,S_0}$ in Notation \ref{not:Gamma mess of subscripts}, with the additional constraint that $1\in I_0$. Recall that such a graph $\Gamma_{P,j,\eps,I_0,k_1(0),k_2(0),R_0,S_0} \in \mathcal{G}_{WC}(P)$  will have the property that 
$$\CB\Gamma_{P,j,\eps,I_0,k_1(0),k_2(0),R_0,S_0} \prec \Lambda_Q$$
 for some
\begin{equation}\label{redefining Q in trr}
Q = \{(I_0,k_1(0),k_2(0)),\ldots, (I_h,k_1(h),k_2(h))\} \in \mathcal{Q}_h(I, \vecd+\eee_1),
\end{equation}
where $1 \in I_0$ and $\widehat{Q}_j = P$ where $\widehat Q_j$ is as in~\eqref{defn: hatQj}. We can then classify all graphs in ${\mathcal G}_{WC}(P)$ as
\begin{equation} \label{eqn:allWCContribsToP}
{\mathcal G}_{WC}(P)  = \bigcup_{\substack{ Q \in \mathcal{Q}(I, \vecd+\eee_1)  \\ \text{where $1\in I_0$ and} \\ P= \widehat{Q}_j \text{ for some $j$}}} \left(\bigcup_{R_0=0}^{k_1(j) - 1} A^1_{Q, j, R_0} \right) \cup \left( \bigcup_{S_0=1}^{k_2(j)-1} A^2_{Q, j, S_0}\right).
\end{equation}
with notation as in \eqref{defn of A^1QjR}.

\begin{obs}\label{obs:restriction_of_t_to_WC}
Suppose $\Lambda=\Gamma_{P,j,\eps,I_0,k_1(0),k_2(0),R_0,S_0}\in \mathcal{G}_{WC}$ where $\CB\Lambda = \Lambda_Q$ with $Q$ as in~\eqref{redefining Q in trr}. This implies that the $j$th connected component in the critical boundary graph consists of two open vertices $v_0$ and $v_j$ sharing a boundary edge so that
\begin{itemize}
\item $1 \in I(v_0)$;
\item $v_0$ is a critical vertex;
\item $v_j$ is a balanced vertex;
\item $h_0$ is the boundary half-edge in vertex $v_0$;
\item $h_j$ is the boundary half-edge in vertex $v_j$.
\end{itemize}
By definition, if $\eps = 1$ then $h_0$ is an $s$-point and if $\eps = 2$, then $h_0$ is an $r$-point. Consider the multisections of $\CL_1\to\oCM^W_{v_0}$ given by $\ttt_r$, $\ttt_s$, $\ttt_\Root$ and $\ttt_{z_2}$ from Notation~\ref{not: descendent multisections pointing to special points}. By using the canonical identification of $\CL_1\to\oCM^W_{v_0}$ and $(\CL_1\to\oCM^W_\Gamma)|_{{\oCM^W_\Lambda}}$ and Definition \ref{def:union_of_sections} and Remark~\ref{rem:degenerate pointing}, we compute that
\begin{equation}\begin{aligned}
\ttt_r|_{\oPM_\Lambda}  &=  (k_1(0) + 1)\ttt^{v_0}_r \uplus (k_1(j) -1) \ttt^{v_0}_{h_0}; \\
\ttt_s|_{\oPM_\Lambda} &=  (k_2(0) + 1) \ttt^{v_0}_s \uplus (k_2(j) -1) \ttt^{v_0}_{h_0}; \\
\ttt_\Root|_{\oPM_\Lambda} &= \ttt^{v_0}_{h_0};\\
\ttt_{z_2}|_{\oPM_\Lambda}&=\begin{cases} \ttt^{v_0}_{z_2}, &  \text{if $2\in I(v_0)$;}\\  \ttt^{v_0}_{h_0}, &\text{if $2 \in I(v_j)$.}\end{cases}
\end{aligned}
\end{equation}
\end{obs}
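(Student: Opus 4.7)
The proof will be a direct calculation combining four ingredients: Definition~\ref{def:union_of_sections} for the $\uplus$-sum of multisections, Remark~\ref{rem:degenerate pointing} for the behaviour of $t_w$ at a boundary node, the structural data of $\Lambda$ in Notation~\ref{not:Gamma mess of subscripts}, and the canonical identification of $\CL_1$ on $\oCM^W_\Lambda$ with its pullback from $\oCM^W_{v_0}$ that is provided by Observation~\ref{isomorphism forgetting non-alt for descendents}(i). No deep input is needed; the content is entirely bookkeeping.

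The key local statement is the following. A point of $\oPM_\Lambda$ corresponds to a nodal disk obtained by attaching a disk $\Sigma_0$ (corresponding to $v_0$, containing $z_1$) to a disk $\Sigma_j$ (corresponding to $v_j$) along their half-nodes $n_{h_0}$ and $n_{h_j}$. For any boundary marking $w$ of the smooth graph $\Gamma=\Gamma_P$ that lies in the connected component of $z_1$, the restriction $t_w|_{\oPM_\Lambda}$ is determined by which side of the node $w$ sits on. If $w$ is a marking of $v_0$, then $t_w|_{\oPM_\Lambda}$ agrees with the pullback of $\ttt^{v_0}_w$ from $\oCM^W_{v_0}$. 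If instead $w$ is a marking of $v_j$, then $z_1$ and $w$ lie on opposite sides of the boundary node corresponding to the edge $e$, and Remark~\ref{rem:degenerate pointing} yields $t_w|_{\oPM_\Lambda}=\ttt^{v_0}_{h_0}$.

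From this local computation, each of the four identities becomes a counting exercise. The formulas for $\ttt_\Root$ and $\ttt_{z_2}$ are immediate: the root is a marking of $v_j$, so $\ttt_\Root|_{\oPM_\Lambda}=\ttt^{v_0}_{h_0}$, while for $\ttt_{z_2}$ we split on whether $2\in I(v_0)$ (giving $\ttt^{v_0}_{z_2}$) or $2\in I(v_j)$ (giving $\ttt^{v_0}_{h_0}$). For $\ttt_r$, and analogously $\ttt_s$, we partition the branches appearing in the defining $\uplus$-sum from Notation~\ref{not: descendent multisections pointing to special points} according to whether the underlying marking lies on $v_0$ or on $v_j$. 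Under the identifications above, the branches of the first type regroup into a scalar multiple of $\ttt^{v_0}_r$, while those of the second type collapse into a scalar multiple of $\ttt^{v_0}_{h_0}$. Combining these via Definition~\ref{def:union_of_sections} yields the stated expression.

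The main obstacle is the case analysis on $\eps$. When $\eps=2$ the half-edge $h_0$ is itself an $r$-point of $v_0$ viewed as a standalone graph, so the section $\ttt^{v_0}_r$ already incorporates a summand pointing at $h_0$; by contrast, when $\eps=1$ the half-edge $h_0$ is an $s$-point and contributes only to $\ttt^{v_0}_s$. A parallel subtlety occurs for $h_j$ on the $v_j$ side. Keeping careful track of these contributions, following the convention of Notation~\ref{not:Gamma mess of subscripts} for how $h_0$ and $h_j$ are counted among the $r$- and $s$-points of $v_0$ and $v_j$, and normalising according to Definition~\ref{def:union_of_sections}, reproduces the stated weights $(k_1(0)+1)$, $(k_1(j)-1)$, $(k_2(0)+1)$, and $(k_2(j)-1)$ in the claimed $\uplus$-sums.
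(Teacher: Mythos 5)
Your proposal is correct and matches the paper's intended argument: the paper states this as an Observation following from direct inspection (citing exactly the ingredients you use — Definition~\ref{def:union_of_sections}, Remark~\ref{rem:degenerate pointing}, and the canonical identification of the cotangent lines), and your bookkeeping, including the case analysis on whether $h_0$ and $h_j$ are $r$- or $s$-points depending on $\eps$, correctly reproduces the uniform weights $(k_1(0)+1)$, $(k_1(j)-1)$, $(k_2(0)+1)$, $(k_2(j)-1)$.
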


While almost all multisections that factor into the construction in a homotopy $H(t_\bullet)$ are canonical, the multisection $t_{h_0}$ on the boundary is not, but we can prove the following technical lemma that deals with this issue. 

\begin{lem}\label{th0 technical annoyance}
Suppose $P \in \mathcal{P}(I, \vecd)$ and $\Lambda:=\Gamma_{P,j,\eps,I_0,k_1(0),k_2(0),R_0,S_0}\in \mathcal{G}_{WC}(P)$ so that $\CB\Lambda = \Lambda_Q$ for some $Q \in \mathcal{Q}(I, \vecd)$. Take the multisection $t_{h_0}^{\Lambda}$ pointing at the half-node corresponding to the half-edge $h_0$ on the graph $\Lambda$. Then
$$
\# Z(H(t^{\Lambda}_{h_0})|_{[0,1]\times\oPM_\Lambda} ) =\begin{cases}  \#Z(H (t_s^{v_0})|_{[0,1]\times\oPM_\Lambda}) & \text{ if $\eps = 1$}, \\  \#Z(H(t_r^{v_0})|_{[0,1]\times\oPM_\Lambda} ) & \text{ if $\eps = 2$}.\end{cases}
$$
\end{lem}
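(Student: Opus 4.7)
The plan is to treat the case $\eps = 1$; the case $\eps = 2$ is entirely symmetric. Using the canonical identification $\CL_1|_{\oPM_\Lambda} \cong F_\Lambda^*(\CL_1|_{\oPM_{v_0}})$ from Observation~\ref{isomorphism forgetting non-alt for descendents}(i), we identify $t_{h_0}^\Lambda$ with (the pullback of) $t_{h_0}^{v_0}$. First I would reduce to a point-by-point comparison: since the construction \eqref{trr homotopy for the bullets} is compatible with the $\uplus$-operation on its input section, we have $H(t_s^{v_0}) = \biguplus_{w} H(t_w^{v_0})$, where $w$ runs over the $s$-tails of $v_0$, so
\[
\#Z(H(t_s^{v_0})|_{[0,1]\times \oPM_\Lambda}) = \tfrac{1}{k_2(0)+1}\sum_w \#Z(H(t_w^{v_0})|_{[0,1]\times \oPM_\Lambda}).
\]
It thus suffices to show that $\#Z(H(t_w^{v_0})|_{[0,1]\times \oPM_\Lambda}) = \#Z(H(t_{h_0}^{v_0})|_{[0,1]\times \oPM_\Lambda})$ for each individual $s$-tail $w$ on $v_0$.

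Next I would construct an interpolation and cobordism. Both $w$ and the (image of the) half-node $h_0$ are $s$-points lying on the oriented boundary of the component of the disk containing $z_1$. Fiberwise over $\oPM_{v_0}$, choose a continuous arc $\gamma_\Sigma\subset\partial\Sigma$ from $w$ to $h_0$ avoiding all other special points, and let $t_\eta$ be the section of Notation~\ref{nn:pointing at} pointing to $\gamma_\Sigma(\eta)$. By Remark~\ref{rem:degenerate pointing} this extends continuously across strata and is nowhere vanishing. Then, using a family analog of Lemma~\ref{lem:hom_trr}, construct a transverse, strongly positive two-parameter multisection
\[
\widetilde H(\eta,u,-) = \s_1 + \bigl((1-u)t_\eta + u(1-u)\rho_\eta + u\s_2\bigr)
\]
on $[0,1]_\eta \times [0,1]_u \times \oPM_\Lambda$, with $\widetilde H(0,u,-)=H(t_w^{v_0})(u,-)$ and $\widetilde H(1,u,-)=H(t_{h_0}^{v_0})(u,-)$.

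Since $\dim([0,1]^2 \times \oPM_\Lambda) = \rank E + 1$, the transverse zero set of $\widetilde H$ is a compact oriented $1$-manifold, so its oriented boundary sum vanishes, yielding the relation
\begin{align*}
0 = {} & \#Z(H(t_{h_0}^{v_0})|_{[0,1]\times \oPM_\Lambda}) - \#Z(H(t_w^{v_0})|_{[0,1]\times \oPM_\Lambda}) \\
& - \#Z(\widetilde H|_{[0,1]_\eta \times \{0\}_u \times \oPM_\Lambda}) + \#Z(\widetilde H|_{[0,1]_\eta \times \{1\}_u \times \oPM_\Lambda}) \\
& + \#Z(\widetilde H|_{[0,1]^2 \times \partial\oPM_\Lambda}).
\end{align*}
The extra terms vanish: at $u=1$ we have $\widetilde H=\s$, independent of $\eta$, which has no zeros on $\oPM_\Lambda$ because $\rank E > \dim \oPM_\Lambda$; at $u=0$ we have $\widetilde H = \s_1 + t_\eta$, but $t_\eta$ is nowhere vanishing, so zeros would require simultaneous vanishing of both summands, which cannot occur.

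The main obstacle will be controlling the boundary-of-boundary term $\#Z(\widetilde H|_{[0,1]^2 \times \partial\oPM_\Lambda})$. The plan there is to choose $\rho_\eta$ and the arcs $\gamma_\Sigma$ compatibly with the canonical decomposition of Observation~\ref{obs:Witten_pulled_back_perm} and the decomposition of the Witten bundle (Proposition~\ref{pr:decomposition}), so that on each lower stratum the interpolation $t_\eta$ again has the form "section pointing to an $s$-point on the critical vertex". Inductively on the depth of the stratification, the same dimension/transversality argument that killed the $u=0$ and $u=1$ contributions applies, together with the cancellation across exchangeable strata as in Lemma~\ref{lem:exchange vanishes}, forcing this term to vanish. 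Combining these gives the desired equality.
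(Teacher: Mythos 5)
Your overall strategy (interpolate the pointing target along the boundary and run a two-parameter cobordism) is not the paper's argument, and as written it has a genuine gap at exactly the step you defer. The paper does not prove the pointwise statement you reduce to, namely $\#Z(H(t_w^{v_0})|_{[0,1]\times\oPM_\Lambda})=\#Z(H(t_{h_0}^{v_0})|_{[0,1]\times\oPM_\Lambda})$ for each individual $s$-tail $w$; it proves only an averaged version. The mechanism there is that $F_\Lambda\colon\oPM_\Lambda\to\oPM_{\Lambda_Q}$ is a covering of degree $\pm(k_2(0)+1)$ under which the half-node becomes indistinguishable from the $s$-points in its $\Z/(k_2(0)+1)\Z$-orbit, so $(F_\Lambda)_*t_{h_0}^{v_0}=\biguplus_{x\in T(\CM)}t_x$, and the equality with $\#Z(H(t_s^{v_0}))$ only emerges after summing over the orbit of connected components under the rotation action (with stabilizers accounted for). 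Your reduction asks for something strictly stronger than this symmetry provides, and you give no argument for it beyond the cobordism whose boundary terms you do not control.

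Concretely, two of your vanishing claims fail. First, $t_\eta$ is \emph{not} nowhere vanishing on $\oPM_\Lambda$: a section pointing at a boundary point vanishes identically on the strata where $z_1$ lies on a sphere component separated from the boundary (this is precisely the content of Lemma~\ref{lem:closed_contribution} and the description in Notation~\ref{nn:pointing at}), so the $u=0$ face is not zero-free for the reason you state; you would need an extra transversality statement for $\s_1$ restricted to those internal-node strata. Second, and fatally, the term $\#Z(\widetilde H|_{[0,1]^2\times\partial\oPM_\Lambda})$ lives on a space of dimension $2+(\dim\oPM_\Lambda-1)=\rank E$, so it is a count of isolated points with no dimensional reason to vanish; the substrata of $\partial\oPM_\Lambda$ include further critical boundaries, which is exactly where homotopies between non-canonical and canonical multisections pick up zeros throughout this paper (cf.\ Part~(A) of the proof of Theorem~\ref{thm:A_mod_invs}). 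Asserting that ``the same dimension/transversality argument'' kills this term is not a proof, and since $t_\eta$ restricted to such a stratum genuinely depends on $\eta$ (the target can migrate across the new node), there is no canonicity to invoke either. You should replace the cobordism by the covering-space argument: push $H(t_{h_0}^{v_0})$ forward under $F_\Lambda$, identify the pushforward with $H(\uplus_{x\in T(\CM)}t_x)$, and sum the resulting identity over the rotation orbit of connected components to recover $t_s^{v_0}$.
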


\begin{proof}
Without loss of generality, we assume that the graph $\Lambda$ is connected. If it were disconnected, then the homotopy as defined in \eqref{trr homotopy for the bullets} restricted to the components not containing the tail $1$ is constant so the number of zeroes is fixed and we reduce to the connected case.

Now consider a fixed connected component $\CM$ of $\oPM_{\Lambda}$. Recall from \S\ref{or:background in rank one}, since we are in the unlabeled case, there can be multiple cyclic orderings of the boundary tails that correspond to the same connected component. We say two cyclic orderings are equivalent if they correspond to the same connected component in $\oPM_{\Lambda}$. Note the data encoded in each equivalence class $[\hat\pi]$ of cyclic orderings may be viewed as the
pattern of $r$- and $s$-points following the root in the anti-clockwise
direction on the boundary of the stable disk.

Any cyclic ordering $\hat\pi$ in the equivalence class $[\hat\pi]$ induces a cyclic ordering $\hat\pi_0$ on all
half-edges attached to $v_0$ in $\Lambda$.
This further induces an ordering $\pi_0$ on the boundary half-edges of the vertex $v_0$ by putting the half-edge $h_0$ at the beginning of the ordering (as seen in Notation~\ref{nn: boundary orientation case} and Figure~\ref{fig:boundary edge orient}). Analogously as above, we say two orderings on $v_0$ are equivalent if they differ by a permutation of the half-edges of $v_0$ preserving $r$- points and $s$-points. 

Let us assume that $\eps = 1$ and take the group $G:= \Z / (k_2(0)+1)\Z$. It acts on the set
$$
\Conn(\oPM_{\Lambda}):= \{ \CM \ | \ \CM \text{ a connected component of $\oPM_{\Lambda}$}\}
$$
as follows. Given a connected component $\CM$ of $\oPM_{\Lambda}$, there is an equivalence class of orderings $[\pi_0]$ as above.
Take $n\in\{0,\ldots,k_2(0)\}$ representing an element of $\Z/(k_2(0) + 1)\Z$.
Let $s(n)$ be the positive integer such that $\pi_0(s(n)+1)$ is the $n$th
$s$-point in the ordering $\pi_0$, not counting the initial half-node.
In particular, we take $s(0)=0$, i.e., the half-node is in the $0^{th}$ location in the ordering.
There exists a connected component $n \cdot \CM$ of $\oPM_{\Lambda}$ where the analogous equivalence class of orderings $[\pi_n]$ on the boundary half edges of $v_0$ has the property that
$$
\tw(\pi_0(i+s(n))) = \tw(\pi_n(i))
$$
for any $i$ (viewed modulo $k_2(0) + 1$) and representatives $\pi_0$ and $\pi_n$ of $[\pi_0]$ and $[\pi_n]$. Geometrically, in the unlabeled case, this can be viewed as changing which $s$-point on the disk corresponding to $v_0$ is the half-node by `rotating to the $n$th $s$-point found anti-clockwise.' Note that there is a canonical isomorphism
$$
\rho_n: \CM \to {n \cdot \CM}
$$
 given by changing the half-node on the disk corresponding to the vertex $v_0$.

As orbits of the group action partition the set of connected components, we treat each orbit of connected components of $\oPM_\Lambda$ separately to prove the lemma. Fix an orbit $O(\CM)$ of $\oPM_{\Lambda}$. The orbit has cardinality $\#O(\CM)$ where $\#O(\CM) | (k_2(0)+1)$. If $\#O(\CM) \ne k_2(0)+1$, then this means that there is a rotational symmetry in the cyclic ordering $\hat\pi_0$ that preserves the pattern of $r$- and $s$-points on the vertex $v_0$. Take the minimal representative $\tau \in \{1, \ldots, k_2(0)+1\}$ of a generator of the stabilizer subgroup $G_\CM \subseteq \Z/(k_2(0)+1)\Z$ for $\CM$. This means
$$
\tw(\pi_0(i+s(n))) = \tw(\pi_n(i)) \text{ for all $i$} \Longleftrightarrow n = k\tau.
$$

Take the map to the base   given in Definition~\ref{def:base} 
$$
F_{\Lambda}: \oPM_\Lambda \to \oPM_{\Lambda_Q}.
$$
As in Observation~\ref{isomorphism forgetting non-alt for descendents}(i), there is a canonical identification for the cotangent bundle $\CL_1$ on the moduli spaces above.
Recall that the map $F_\Lambda$ is finite with degree $(-1)^{k_2(0)}(k_2(0)+1)$.

Consider the multisection $t_{h_0}^{v_0}$ on $\CM$. Once we detach this edge, the half-edge $h_0$ will become indistinguishable from the tails that are $s(k\tau) \, (\text{mod } k_1(0)+k_2(0) + 2)$ places away from $h_0$ for any $k \in \Z$. Define the following subset of boundary marked points $$T(\CM) = \{ x_{s(k\tau)} \ | \  0 \le k < \tfrac{k_2(0) + 1}{\tau} \}.$$ Thus we have that
$$
(F_{\Lambda})_* (t_{h_0}^{v_0}) = \biguplus_{x \in T(\CM)} t_{x},
$$
with the pushforward as defined in Notation \ref{not:pushforward}.
Using this fact and that $\rho, \mathbf{s}_1$, and $\mathbf{s}_2$ are pulled back from the base, we can see that the pushforward of $H(t_{h_0}^{v_0})$ on this connected component of the boundary stratum $\oPM_{\Lambda}$ is
\begin{equation}\label{pushforward of th0}
(F_{\Lambda})_* ( H(t_{h_0}^{v_0})|_{\CM})= H(\uplus_{x\in T(\CM)} t_{x})|_{F_{\Lambda}( \CM)}.
\end{equation}
 Note that we also have that
$$
F_{\Lambda}^*  \big( H(\uplus_{x\in T(\CM)} t_{x})|_{F_{\Lambda}( \CM)}\big) = H(\uplus_{x \in T(\CM)} t_{x})|_{\CM}.
$$
 This means that the number of zeros of the homotopy $ H(\uplus_{x \in T(\CM)} t_{x})|_{ \CM}$ will not depend on which half-edge of $v_0$ corresponds to the half-node, i.e., for all $n \in G$ we have
\begin{equation}\label{action leaves the tail pointing invariant}
H(\uplus_{x\in T(\CM)} t_{x})|_{ \CM} = \rho_n^*( H(\uplus_{x \in T(\CM)} t_x)|_{(n \cdot \CM)}).
\end{equation}
By applying the pullback $F_\Lambda^*$ to Equation \eqref{pushforward of th0} and taking zeros, we have that
\begin{equation}\label{def Z CM}
Z(\CM) := \# Z(H(t_{h_0}^{v_0})|_{ \CM} ) = \#Z(H(\uplus_{x\in T(\CM)} t_{x})|_{ \CM}).
\end{equation}
Note that we have a natural bijection between the $s$-points on $v_0$ and the set $\bigcup_{\CM' \in O(\CM)} T(\CM') $. Using this bijection and the fact that $|T(\CM)| = |T(\CM')|$ for all $\CM' \in O(\CM)$, we have that
\begin{equation}\label{partitioning pointing towards s}
\biguplus_{\CM' \in O(\CM)} \biguplus_{x \in T(\CM')} t_{x} = t_s.
\end{equation}
We then see from ~\eqref{action leaves the tail pointing invariant}, \eqref{def Z CM}, and~\eqref{partitioning pointing towards s} that
\begin{align*}
\sum_{\CM' \in O(\CM)} Z(\CM')  & = \sum_{\CM' \in O(\CM)}\#Z(H(\uplus_{x\in T(\CM')} t_{x})|_{ \CM'})\\
	&= \sum_{\CM' \in O(\CM)} \sum_{\CM'' \in O(\CM)} \frac{1}{|O(\CM)|}\#Z(H(\uplus_{x\in T(\CM')} t_{x})|_{ \CM''})\\
	&=\sum_{\CM''\in O(\CM)} \#Z(H(t_{s})|_{ \CM''}). \\
\end{align*}
We can perform the same computation for all orbits of the action on the set $\Conn(\oPM_{\Lambda})$ to prove the $\eps=1$ case.   The case where $\eps = 2$ is the same.
\end{proof}

\noindent Using Lemma~\ref{th0 technical annoyance}, we now give a closed form expression for the contribution of $\mathcal{G}_{WC}$ for $H(t_{\bullet})$. 

\begin{lemma}\label{lem:WC_cont_for_sections}
Let $P\in\mathcal{P}_h(I,\vecd)$. Take $\Gamma := \Gamma_P$ as in \eqref{def: GammaP}. Let $t_\bullet$ be one of the multisections of $\CL_1 \to \oPM_\Gamma$ defined in Notation~\ref{not: descendent multisections pointing to special points} or $t_{z_2}$ for an internal marking $z_2$. Take the corresponding homotopy $H^\Gamma(t_\bullet)$ as defined in~\eqref{trr homotopy for the bullets}.  Then
\begin{equation}\begin{aligned}
Z^\Gamma_{WC}(\ttts_\bullet)&= \sum_{\substack{ Q \in \mathcal{Q}(I, \vecd+\eee_1)  \\ Q=  \{(I_0,k_1(0),k_2(0)),\ldots, (I_h,k_1(h),k_2(h))\} \\  \text{where $1\in I_0$ and} \\ P= \widehat{Q}_j \text{ for some $j$}}} (-1)^{k_2(0)}\prod_{i=1}^h  \left\langle\prod_{\ell\in I_i}\tau^{(a_\ell,b_\ell)}_{d_\ell}\sigma_1^{k_1(i)}\sigma_2^{k_2(i)}\sigma_{12}\right\rangle^{(\mathbf{s}^{\Gamma_{0,k_1(i),k_2(i),1,I_i}}),o} \\
& \cdot \Big( (k_2(0) + 1)k_1(j)\alpha^{\Gamma_{0,k_1(0)+1, k_2(0)+1,0,I_0}}_{\bullet, 1}- (k_1(0) + 1)k_2(j)\alpha^{\Gamma_{0,k_1(0)+1, k_2(0)+1,0,I_0}}_{\bullet, 2} \Big),
\end{aligned} \end{equation}
where $\alpha^{\Gamma_{0,k_1(0)+1, k_2(0)+1,0,I_0}}_{\bullet, \eps}$
is defined as follows depending on $\eps$ and $\bullet\in \{r, s, \Root, z_2\}$. Taking $\Xi := \Gamma_{0,k_1(0)+1, k_2(0)+1,0,I_0}$, we have:

\begin{equation}\begin{aligned}
\alpha^{\Xi}_{r,1} &=\frac{k_1(0) + 1}{{k_1(0) + k_1(j)}}\#Z(H^{\Xi}(\ttt^{\Xi}_r))+\frac{ k_1(j) -1}{{k_1(0) + k_1(j)}}\#Z(H^{\Xi}(\ttt^{\Xi}_{s})) \\
\alpha^{\Xi}_{s,1} &=\#Z(H^{\Xi}(\ttt^{\Xi}_{s})) \\
\alpha^{\Xi}_{\Root,1} &=\#Z(H^{\Xi}(\ttt^{\Xi}_{s})) \\
\alpha^{\Xi}_{z_2,1} &=\begin{cases} \#Z(H^{\Xi}(\ttt^{\Xi}_{z_2})) & \text{ if $2 \in I_0$} \\
 \#Z(H^{\Xi}(\ttt^{\Xi}_{s})) & \text{ if $2 \in I_j$}\end{cases} \\
\alpha^{\Xi}_{r,2} &=\#Z(H^{\Xi}(\ttt^{\Xi}_{r})) \\
\alpha^{\Xi}_{s,2} &=\frac{ k_2(j) -1}{{k_2(0)+ k_2(j)}}\#Z(H^{\Xi}(\ttt^{\Xi}_r)) +\frac{ k_2(0) + 1}{{k_2(0)+k_2(j)}}\#Z(H^{\Xi}(\ttt^{\Xi}_{s})) \\
\alpha^{\Xi}_{\Root,2} &=\#Z(H^{\Xi}(\ttt^{\Xi}_{r})) \\
\alpha^{\Xi}_{z_2,2} &=\begin{cases} \#Z(H^{\Xi}(\ttt^{\Xi}_{z_2})) & \text{ if $2 \in I_0$} \\
 \#Z(H^{\Xi}(\ttt^{\Xi}_{r})) & \text{ if $2 \in I_j$}\end{cases}
\end{aligned}\end{equation}
\end{lemma}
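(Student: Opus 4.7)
\medskip

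The plan is to exploit the partition in \eqref{eqn:allWCContribsToP} to reorganise $Z^\Gamma_{WC}(\ttts_\bullet)$ as a sum indexed by pairs $(Q,j)$ with $Q=\{(I_0,k_1(0)+1,k_2(0)+1),(I_1,k_1(1),k_2(1)),\ldots,(I_h,k_1(h),k_2(h))\}\in\mathcal{Q}(I,\vecd+\eee_1)$, $1\in I_0$, and $\widehat Q_j=P$. For such $Q$ and $j$, the contributions come in two batches: the boundary strata $\oPM_\Xi$ with $\Xi\in A^1_{Q,j,R_0}$ for $R_0\in\{0,\ldots,k_1(j)-1\}$, and the boundary strata with $\Xi\in A^2_{Q,j,S_0}$ for $S_0\in\{0,\ldots,k_2(j)-1\}$.

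First, I fix such a $(Q,j,\epsilon,R_0,S_0)$ and write $\Lambda$ for the associated critical boundary graph. Using canonicity of $\mathbf{s}_1$, $\rho$ and $\mathbf{s}_2$ together with Observation \ref{obs:Witten_pulled_back_perm}, the restriction $H^\Gamma(t_\bullet)|_{[0,1]\times\oPM_\Lambda}$ agrees outside the $\CL_1$-summand with the pullback $F_\Lambda^*$ of a canonical homotopy on $\oPM_{\Lambda_Q}=\oPM_\Xi\times\prod_{i\ne 0}\oPM_{\Lambda_{Q,i}}$, where $\Xi=\Lambda_{Q,0}=\Gamma_{0,k_1(0)+1,k_2(0)+1,0,I_0}$. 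In the $\CL_1$-direction the restriction of $t_\bullet$ is given by the weighted union formulas in Observation \ref{obs:restriction_of_t_to_WC}. Hence, by additivity of zero counts over $\uplus$-sums, $\#Z(H^\Gamma(t_\bullet)|_{[0,1]\times\oPM_\Lambda})$ splits as a weighted combination of $\#Z(H^\Lambda(\ttt_r^{v_0}))$, $\#Z(H^\Lambda(\ttt_s^{v_0}))$, $\#Z(H^\Lambda(\ttt_{z_2}^{v_0}))$, and $\#Z(H^\Lambda(\ttt_{h_0}^\Lambda))$, with the $t_{h_0}^\Lambda$-term eliminated via Lemma \ref{th0 technical annoyance} in favour of $\ttt_s^{v_0}$ when $\epsilon=1$ and $\ttt_r^{v_0}$ when $\epsilon=2$.

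Second, I push forward along $F_\Lambda:\oPM_\Lambda\to\oPM_{\Lambda_Q}$. This is a finite covering map of degree $k_2(0)+1$ when $\epsilon=1$ and degree $k_1(0)+1$ when $\epsilon=2$, and Theorem \ref{thm:or_and_induced boundary}(2) supplies the orientation sign $(-1)^{k_2(0)}$ in the first case and $(-1)^{k_2(0)+1}$ in the second. Combining this with the product structure of the base moduli and the fact that $\rank E_{\Lambda_{Q,i}}(\vecd+\eee_1)=\dim\oPM_{\Lambda_{Q,i}}$ for $i\ne 0$, so that the factors other than $\Xi$ contribute the balanced intersection numbers $\langle \prod_{\ell\in I_i}\tau^{(a_\ell,b_\ell)}_{d_\ell}\sigma_1^{k_1(i)}\sigma_2^{k_2(i)}\sigma_{12}\rangle^{\ess,o}$, I isolate the $\Xi$-factor of the zero count as $\#Z(H^\Xi(\cdot))$ of the appropriate section of $\CL_1\to\oPM_\Xi$. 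Summing over $R_0$ (resp.\ $S_0$) produces the prefactor $k_1(j)$ (resp.\ $k_2(j)$) in front of each term.

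The main obstacle will be bookkeeping the combinatorial weights in the $\bullet\in\{r,s\}$ cases. Concretely, when $\bullet=r$ and $\epsilon=1$, Observation \ref{obs:restriction_of_t_to_WC} gives $\ttt_r|_{\oPM_\Lambda}=(k_1(0)+1)\ttt_r^{v_0}\uplus(k_1(j)-1)\ttt_{h_0}^{v_0}$; after Lemma \ref{th0 technical annoyance} converts the $\ttt_{h_0}^{v_0}$-piece to $\ttt_s^{v_0}$, the $\uplus$-sum normalisation divides by $(k_1(0)+1)+(k_1(j)-1)=k_1(0)+k_1(j)$, yielding the mixed coefficient $\alpha_{r,1}^\Xi=\tfrac{k_1(0)+1}{k_1(0)+k_1(j)}\#Z(H^\Xi(\ttt_r^\Xi))+\tfrac{k_1(j)-1}{k_1(0)+k_1(j)}\#Z(H^\Xi(\ttt_s^\Xi))$ stated in the lemma, and similarly for the other entries of the table. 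Once these weights are verified case by case and the orientation sign $(-1)^{k_2(0)}$ is factored out globally, assembling everything yields the claimed closed-form expression for $Z^\Gamma_{WC}(\ttts_\bullet)$.
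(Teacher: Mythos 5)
Your proposal is correct and follows essentially the same route as the paper: the partition \eqref{eqn:allWCContribsToP}, the restriction formulas of Observation \ref{obs:restriction_of_t_to_WC} combined with Lemma \ref{th0 technical annoyance} to eliminate the $\ttt_{h_0}$-pieces, the degree/orientation factors $(-1)^{k_2(0)}(k_2(0)+1)$ and $(-1)^{k_2(0)+1}(k_1(0)+1)$ from the pushforward to $\oPM_{\Lambda_Q}$, and the summation over $R_0$ (resp.\ $S_0$) producing $k_1(j)$ (resp.\ $k_2(j)$) are exactly the paper's steps. The only cosmetic difference is that the paper attributes the covering degree to the union of strata over $S_0$ (resp.\ $R_0$) rather than to a single $\oPM_\Lambda$, but your final bookkeeping of the $\uplus$-weights and signs lands on the same formulas.
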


\begin{proof}[Proof of Lemma~\ref{lem:WC_cont_for_sections}]

The proof is a direct computation using the definition of $H^{\Gamma_P}(t_\bullet)$.
We start by recalling that by using \eqref{eqn:allWCContribsToP} we have that
\begin{equation}\begin{aligned}\label{first pass WC contrib}
Z^\Gamma_{WC}(\ttts_\XXX)&:=\sum_{\Lambda\in {\mathcal G}_{WC}(P)}\#Z(H(t_w)|_{[0,1]\times\oPM_\Lambda}) \\
	= &\sum_{\substack{ Q \in \mathcal{Q}(I, \vecd+\eee_1)  \\Q=  \{(I_j,k_1(j),k_2(j))\} \\ \text{where $1\in I_0$ and} \\ P= \widehat{Q}_j \text{ for some $j$}}} \left(\sum_{R_0=0}^{k_1(j) - 1} \sum_{\Xi:=\Gamma_{P, j, 1, I_0, k_1(0), k_2(0), R_0, S_0} \in A^1_{Q, j, R_0}} \#Z(H(t_w)|_{[0,1]\times\oPM_{\Xi}}) \right. \\ &\quad\qquad\qquad\qquad + \left. \sum_{S_0=1}^{k_2(j)-1} \sum_{\Xi:=\Gamma_{P, j, 2, I_0, k_1(0), k_2(0), R_0, S_0} \in A^2_{Q, j, S_0}}\#Z(H(t_w)|_{[0,1]\times\oPM_{\Xi}}) \right).
\end{aligned} \end{equation}
Recall that for any $\Gamma_{P, j, 1, I_0, k_1(0), k_2(0), R_0, S_0} \in A^1_{Q, j, R_0}$ or $\Gamma_{P, j, 2, I_0,  k_1(0), k_2(0),R_0, S_0} \in A^2_{Q, j, S_0}$, we have that
\[
\CB \Gamma_{P, j, \epsilon, I_0, k_1(0), k_2(0), R_0, S_0}\prec \Lambda_Q,
\]
where $\eps\in \{1,2\}$ and  $\Lambda_Q$ is the graph given in Notation \ref{nn:Q notation}. 

Let $\Lambda_\eps := \Gamma_{P, j, \eps, I_0,  k_1(0), k_2(0),R_0, S_0}$. 
 While $\ess_1, \ess_2,$ and $\rho$ can all be pulled back from the base, $t_w$ may not be and hence the homotopy $H^{\Gamma_P}(t_\bullet)|_{\Lambda_i}$ given in~\eqref{trr homotopy for the bullets} may not be pulled back from the base in the summations in~\eqref{first pass WC contrib}. 
However, we can use Lemma~\ref{th0 technical annoyance} to see that the quantities in ~\eqref{first pass WC contrib} are equal to those that can be pulled back from the base as follows: 

\begin{equation}\begin{aligned}\label{th0 conclusion equation}
\#Z(H(t_r)|_{[0,1]\times\oPM_{\Lambda_1}}) &= \frac{k_1(0) + 1}{k_1(0)+k_1(j)} \#Z(H(\ttt^{v_0}_r)|_{[0,1]\times\oPM_{\Lambda_1}})\\ &\quad + \frac{k_1(j) -1}{k_1(0)+k_1(j)}  \#Z(H(\ttt^{v_0}_s)|_{[0,1]\times \oPM_{\Lambda_1}}); \\
\#Z(H(t_r)|_{[0,1]\times\oPM_{\Lambda_2}}) &=  \#Z(H(\ttt^{v_0}_r)|_{[0,1]\times\oPM_{\Lambda_2}}); \\
\#Z(H(t_s)|_{[0,1]\times\oPM_{\Lambda_1}}) &= \#Z(H(\ttt^{v_0}_s)|_{[0,1]\times\oPM_{\Lambda_1}}); \\
\#Z(H(t_s)|_{[0,1]\times\oPM_{\Lambda_2}}) &=\frac{ k_2(j) -1}{{k_2(0)+k_2(j)}} \#Z(H(\ttt^{v_0}_r)|_{[0,1]\times\oPM_{\Lambda_2}}) \\ & \quad +\frac{ k_2(0) + 1}{{k_2(0)+k_2(j)}}\#Z(H(\ttt^{v_0}_s)|_{[0,1]\times\oPM_{\Lambda_2}});\\
\#Z(H(t_\Root)|_{[0,1]\times\oPM_{\Lambda_1}}) &= \#Z(H(\ttt^{v_0}_s)|_{[0,1]\times \oPM_{\Lambda_1}}); \\
\#Z(H(t_\Root)|_{[0,1]\times\oPM_{\Lambda_2}}) &= \#Z(H(\ttt^{v_0}_r)|_{[0,1]\times\oPM_{\Lambda_2}}); \\
\#Z(H(t_{z_2})|_{[0,1]\times\oPM_{\Lambda_1}}) &= \begin{cases} \#Z(H^{\Xi}(\ttt^{\Xi}_{z_2})|_{[0,1]\times\oPM_{\Lambda_1}})& \text{ if $2 \in I_0$}, \\
\#Z(H(\ttt^{v_0}_s)|_{[0,1]\times\oPM_{\Lambda_1}})& \text{ if $2 \in I_j$};\end{cases} \\
\#Z(H(t_{z_2})|_{[0,1]\times\oPM_{\Lambda_2}}) &= \begin{cases} \#Z(H^{\Xi}(\ttt^{\Xi}_{z_2})|_{[0,1]\times\oPM_{\Lambda_2}}) & \text{ if $2 \in I_0$}, \\
  \#Z(H(\ttt^{v_0}_r)|_{[0,1]\times\oPM_{\Lambda_2}}) & \text{ if $2 \in I_j$}.\end{cases} \\
\end{aligned}\end{equation}

Thus we have reduced the computation of~\eqref{first pass WC contrib} to the homotopies $H(\ttt^{v_0}_r), H(\ttt^{v_0}_s),$ and $H(\ttt^{v_0}_{z_2})$, which all can be pulled back from the base.
Take $H$ to be any of these three homotopies. Recall that the homotopy $H$ is constant in time on the connected components which do not contain the internal tail $1$, that is, the graphs $\Lambda_{Q, \ell} : = \Gamma_{0, k_1(\ell), k_2(\ell),1,(a_i,b_i)_{i \in I_\ell}}$ for $1\le \ell \le h$. We have the decomposition
$$
[0,1]\times \oPM_{\Lambda_Q} = \prod_{i=1}^h \oPM_{\Lambda_{Q, i}} \times
([0,1]\times \oPM_{\Lambda_{Q, 0}}).
$$
In turn, we may decompose the homotopy $H=\left(\boxplus_{ i=1}^h H_i\right) \boxplus H^{v_0}$ with $H_i={\bf s}^{\Lambda_{Q, i}}$ and $$H^{v_0}(t_\bullet)(u,-)=F_{v_0}^*(\mathbf{s}_1^{ v_0}) \boxplus \left( (1-u)t^{v_0}_\bullet+u(1-u)\rho^{v_0}+u\mathbf{s}_2^{ v_0}\right).$$

By an analogous argument to that used to obtain Equation~\eqref{eqn for zeros in the homotopy} and using~\eqref{eqn for zeros in the homotopy}, we have that
\begin{align}\label{being able to use degree in GWC}
\begin{split}
\sum_{\Gamma_{P, j, 1, I_0,  k_1(0), k_2(0),R_0, S_0} \in A^1_{Q, j, R_0}} & \#Z(H|_{[0,1]\times\oPM_{\Gamma_{P, j, 1, I_0, k_1(0),k_2(0),R_0, S_0}}})\\
 = & (-1)^{k_2(0)}(k_2(0)+1) \cdot \#Z(H^{v_0}) \cdot \prod_{i=1}^h \#Z({\bf s}^{\Lambda_{Q, j}});\\
\sum_{\Gamma_{P, j, 2, I_0, k_1(0),k_2(0),R_0, S_0} \in A^2_{Q, j, S_0}}&\#Z(H|_{[0,1]\times\oPM_{\Gamma_{P, j, 2, I_0, k_1(0), k_2(0), R_0, S_0}}})  \\
=  &(-1)^{k_2(0)+1}(k_1(0)+1) \cdot \#Z(H^{v_0}) \cdot \prod_{i=1}^h \#Z({\bf s}^{\Lambda_{Q, j}}).
\end{split}\end{align}

Note that
\begin{equation}\label{Oj rest}
 \prod_{i=1}^h \#Z({\bf s}^{\Lambda_{Q, j}}) = \prod_{i=1}^h \left\langle\prod_{\ell\in I_i}\tau^{(a_\ell,b_\ell)}_{d_\ell}\sigma_1^{k_1(i)}\sigma_2^{k_2(i)}\sigma_{12}\right\rangle^{\mathbf{s}^{\Gamma_{0,k_1(i),k_2(i),1,I_i}},o}
\end{equation}
Moreover, the quantities in~\eqref{being able to use degree in GWC} do not depend on $R_0$ or $S_0$ respectively. Using Observation~\ref{obs:restriction_of_t_to_WC} and \eqref{th0 conclusion equation}, we then also have that
\begin{equation}\label{alpha deduction}
\#Z(H^{v_0}(\ttt^{v_0}_{\bullet})) =\begin{cases}  \alpha^{\Gamma_{0,k_1(0)+1, k_2(0)+1,0,I_0}}_{\bullet, 1} & \text{ if $\Lambda \in A^1_{Q, j, R_0}$} \\
\alpha^{\Gamma_{0,k_1(0)+1, k_2(0)+1,0,I_0}}_{\bullet, 2} & \text{ if $\Lambda \in A^2_{Q, j, S_0}$} \end{cases}
\end{equation}

By applying Equations~\eqref{being able to use degree in GWC},~\eqref{Oj rest}, and ~\eqref{alpha deduction} to simplify~\eqref{first pass WC contrib}, we obtain the claim.
\end{proof}

As we can see above, the expression for $\#Z_{WC}^\Gamma(\ttts_\XXX)$ is complicated for an arbitrary $\ttts_\XXX$.  We instead find specific families of non-trivial linear combinations of $\#Z_{WC}^\Gamma(\ttts_\XXX)$ for different graphs $\Gamma$ and points $\XXX$ which vanish.

\begin{nn}
Let $Q= \{(I_0,k_1(0)+1,k_2(0)+1),\ldots, (I_h,k_1(h),k_2(h))\} \in \mathcal{Q}^{\ne 0}_h (I, \vecd+\eee_1)$, defined in Notation \ref{nn:Q notation}.
There are $j+2$ different elements of $\mathcal{P}(I, \vecd+\eee_1)$ which we can naturally build from $Q$. First, recall from \eqref{defn: hatQj} that
$$
 \widehat{Q}_j :=  \{(I_1,k_1(1),k_2(1)),\ldots, (I_0\cup I_j,k_1(0)+k_1(j),k_2(0)+k_2(j)),\ldots, (I_h,k_1(h),k_2(h))\} \in \mathcal{P}_h(I,\vecd+\eee_1)\\
$$
for all $1 \le j \le h$. Second, we define the following:
\begin{equation}
\label{eq:hat Q r}
\begin{aligned}
 \widehat{Q}^{+r} &:= \{(I_1,k_1(1),k_2(1)),\ldots, (I_h,k_1(h),k_2(h)), (I_0, k_1(0)+r, k_2(0))\} \in \mathcal{P}_{h+1}(I,\vecd+\eee_1);\\
  \widehat{Q}^{+s} &:= \{(I_1,k_1(1),k_2(1)),\ldots, (I_h,k_1(h),k_2(h)), (I_0, k_1(0), k_2(0)+s)\} \in \mathcal{P}_{h+1}(I,\vecd+\eee_1).
  \end{aligned}\end{equation}
 Note that $ \widehat{Q}^{+r} = \widehat{Q^{+r}}_{h+1}$ and  $ \widehat{Q}^{+s} = \widehat{Q^{+s}}_{h+1}$, where $Q^{+r}$ and $Q^{+s}$ are as defined in \eqref{+r and +s definition}.
\end{nn}

\subsection{Special linear combinations of the distinguished multisections}

\begin{definition}\label{4tuplesQ}
Let $\{(a^P, b^P, c^P, d^P)\}_{P\in \mathcal{P}(I,\vecd+\eee_1)}$ be a family of 4-tuples of rational numbers so that the 4-tuples do not depend on the order of the elements of $P$ and $a^P + b^P + c^P + d^P = 1$.
\begin{enumerate}
\item The family of 4-tuples satisfies \emph{Property {\bf Q1}} if the family
further satisfies the following conditions:
\begin{enumerate}
\item For all $P$, $d^P = 0$.
\item For any $h\ge 0$ and  $Q= \{(I_0,k_1(0)+1,k_2(0)+1),\ldots, (I_h,k_1(h),k_2(h))\}  \in \mathcal{Q}^{\ne 0}_h (I, \vecd)$ with $1 \in I_0$, we have that
\begin{equation}\label{property Q1}
\frac{1+\sum_{i=0}^hk_1(i)}{k_1(0)+r}a^{\widehat{Q}^{+r}} +\frac{1+\sum_{i=0}^hk_2(i)}{k_2(0)+s}b^{\widehat{Q}^{+s}} = 1 + \sum_{j=1}^h\left(\frac{k_1(j)}{k_1(0)+k_1(j)}a^{\widehat{Q}_j} +\frac{k_2(j)}{k_2(0)+k_2(j)}b^{\widehat{Q}_j}\right).
\end{equation}
\end{enumerate}
\item The family of 4-tuples satisfies \emph{Property {\bf Q2}} if the family
further satisfies the following conditions:
\begin{enumerate}
\item If $P$ satisfies $\{1, 2\} \subseteq I_j$ for some $j$ then
$d^P=1$ and $a^P=b^P=c^P=0$. Otherwise $d^P=0$.
\item For any $h\ge 0$ and  $Q = \{(I_0,k_1(0)+1,k_2(0)+1),\ldots, (I_h,k_1(h),k_2(h))\}  \in \mathcal{Q}^{\ne 0}_h (I, \vecd)$ where $1 \in I_0$ and $2 \notin I_0$, we have that
\begin{equation}\label{property Q2}
\frac{1+\sum_{i=0}^hk_1(i)}{k_1(0)+r}a^{\widehat{Q}^{+r}} +\frac{1+\sum_{i=0}^hk_2(i)}{k_2(0)+s}b^{\widehat{Q}^{+s}} = 1 + \sum_{\substack{j=1 \\ 2 \notin I_j}}^h\left(\frac{k_1(j)}{k_1(0)+k_1(j)}a^{\widehat{Q}_j} +\frac{k_2(j)}{k_2(0)+k_2(j)}b^{\widehat{Q}_j}\right).
\end{equation}
\end{enumerate}
\end{enumerate}
\end{definition}

\begin{rmk}
Note that, in the case $h=0$ in Definition~\ref{4tuplesQ} and in the proofs below, we remark that we will often make use of the summation from $j=1$ to $h$ being empty and equal to zero.
\end{rmk}

\begin{lemma}\label{existence of 4tuples}
 There exists two families of 4-tuples $\{(a^P, b^P, c^P, d^P)\}_{P\in \mathcal{P}(I,\vecd+\eee_1)}$ of rational numbers, one of which satisfies Property {\bf Q1} and another which satisfies Property {\bf Q2}.
\end{lemma}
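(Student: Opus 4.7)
The plan is to construct both families explicitly by making a simple ansatz and solving the resulting reduced system inductively. In each case the constraint $a^P + b^P + c^P + d^P = 1$ together with the prescription of $d^P$ determines $c^P$ once $a^P, b^P$ are chosen, so the task reduces to producing rational numbers $a^P, b^P$ satisfying the respective linear identity.

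I would take the ansatz $a^P = 0$ for every $P$ on which $a^P$ is not already forced to be $0$ by the statement of the property. Under this ansatz, \eqref{property Q1} and \eqref{property Q2} reduce to
\[
\frac{1+\sum_{i=0}^h k_2(i)}{k_2(0)+s}\, b^{\widehat{Q}^{+s}}
= 1 + \sum_{\substack{j=1 \\ (\text{with }2\notin I_j\text{ for }\mathbf{Q2})}}^{h} \frac{k_2(j)}{k_2(0)+k_2(j)}\, b^{\widehat{Q}_j},
\]
and I solve this by downward induction on $|I_0^P|$. For the base case $|I_0^P|=|I|$, the partition $P$ has only one part and arises as $\widehat{Q}^{+s}$ for a unique $Q$ with $h=0$; the empty sum on the right-hand side determines $b^P = (k_2(0)+s)/(1+k_2(0))$. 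For the inductive step, each $\widehat{Q}_j$ satisfies $I_0^{\widehat{Q}_j} = I_0^Q \cup I_j^Q \supsetneq I_0^Q$, so the $b^{\widehat{Q}_j}$ appearing on the right-hand side have already been determined at an earlier stage of the induction; since $Q\mapsto \widehat{Q}^{+s}$ is injective, each $b^{\widehat{Q}^{+s}}$ appearing on the left-hand side is constrained by a unique equation, and the inversion is well-defined. For those $P$ which never arise as $\widehat{Q}^{+s}$ (equivalently $k_2^P(0)<s$), I would set $b^P=0$.

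For Property \textbf{Q2} the additional bookkeeping is to verify that the prescription $d^P = 1$, $a^P=b^P=c^P=0$ whenever $\{1,2\}\subseteq I_j^P$ for some $j$ does not conflict with the construction above. The key point is that in an equation \eqref{property Q2} the hypothesis $1\in I_0^Q$, $2\notin I_0^Q$ guarantees that both $\widehat{Q}^{+r}$ and $\widehat{Q}^{+s}$ still have $1\in I_0$, $2\notin I_0$; the restriction of the sum on the right-hand side to indices $j$ with $2\notin I_j$ further guarantees the same for each $\widehat{Q}_j$. Hence none of the partitions with $d=1$ actually appear in any Q2-equation, and the inductive construction proceeds on the remaining partitions exactly as in Property \textbf{Q1}.

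The step I expect to be the main obstacle is precisely the verification that injectivity of $Q\mapsto \widehat{Q}^{+s}$, combined with the downward induction, produces a consistent assignment: a single partition $P$ at level $|I_0^P|$ may later reappear as $b^{\widehat{Q}'_j}$ in an equation at a strictly lower level, and one must check that the value of $b^P$ already fixed by the equation at its own level is compatible with its re-use (which it is, by construction, since each subsequent use only consumes the already-fixed value). In summary, the whole argument rests on carefully tracking how the parts of a partition grow under the two operations $Q\mapsto \widehat{Q}^{+s}$ and $Q\mapsto \widehat{Q}_j$, and on the strict monotonicity of $|I_0|$ under $Q\mapsto \widehat{Q}_j$ (which is guaranteed because $Q\in \mathcal{Q}^{\neq\emptyset}$ forces $I_j \neq \emptyset$).
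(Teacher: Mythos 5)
Your proof is correct and follows essentially the same strategy as the paper's: both treat \eqref{property Q1}/\eqref{property Q2} as a triangular linear system, exploiting the injectivity of $Q\mapsto\widehat{Q}^{+r},\widehat{Q}^{+s}$ so that each unknown occurs on the left-hand side of at most one equation while the right-hand side involves only previously determined (or freely chosen) values. The only immaterial differences are that you induct downward on $|I_0^P|$ where the paper inducts upward on the number of parts $h$, and you resolve the residual one-parameter freedom in each equation by the explicit ansatz $a^P=0$, a choice the paper leaves unspecified.
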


\begin{proof}
We first prove the case of Property {\bf Q1} inductively on $h$.
The inductive hypothesis is: there are choices of $a^P,b^P$ for all
$P\in \mathcal{P}_{\le h+1}(I, \vecd)$, with $a^P,b^P$ not depending
on the order of elements $P$, such that \eqref{property Q1} holds
for all $Q \in {\mathcal Q}^{\not=\emptyset}_{\le h}(I, \vecd)$.

 The case where $h=-1$ is vacuously true. Now suppose the inductive hypothesis is true for $h$. In doing so, we have determined the constants $a^P, b^P$ for all $P \in \mathcal{P}_{\le h+1}(I, \vecd)$. Given $Q \in \mathcal{Q}^{\not=\emptyset}_{h+1}(I, \vecd)$, the right-hand side of its corresponding equation~\eqref{property Q1} is then determined as $\widehat Q_j \in \mathcal{P}_{h+1}(I, \vecd)$ for all $j$. Now note that, given $P \in \mathcal{P}_{h+2}(I, \vecd)$, there is at most one $Q \in \mathcal{Q}^{\not=\emptyset}_{h+1}(I, \vecd)$ so that $a^P$ is in the equation ~\eqref{property Q1}, namely $P = \widehat{Q}^{+r}$. Similarly, $b^P$  appears in~\eqref{property Q1} for some $Q \in \mathcal{Q}^{\not=\emptyset}_{h+1}(I, \vecd)$ only if $P = \widehat{Q}^{+s}$. Thus each variable $a^P, b^P$ appears in at most one equation in the system
\eqref{property Q1} (where we run over all $Q$).
This determines the values of $a^P, b^P$ for those elements of
$\mathcal{P}_{h+2}(I, \vecd)$ of the form $\widehat Q^{+r}$ or
$\widehat Q^{+s}$. We remark here that if we permute the order of the tuples in $Q$, the summation on the right-hand side of \eqref{property Q1} will remain invariant by the induction hypothesis, hence the values of $a^P$ and $b^P$ for $P =\widehat Q^{+r}, \widehat Q^{+s}$ can be chosen to not depend on the order of its tuples. We may then choose arbitrary values for all other $a^P, b^P$
for $P\in \mathcal{P}_{h+2}(I, \vecd)$ subject to the constraint of
being independent of ordering. Further, we define $c^P=1-a^P-b^P$.
Thus there exists such a family that has Property {\bf Q1}.

An analogous inductive argument using \eqref{property Q2} yields a family
satisfying Property {\bf Q2}.
\end{proof}

We now have the following lemma:

\begin{lemma}\label{lem:cancelation_eqs}
Let $\{(a^P, b^P, c^P, d^P)\}_{P\in \mathcal{P}(I,\vecd)}$ be a family of 4-tuples of rational numbers that satisfies either property {\bf Q1} or {\bf Q2}. Then
\begin{align}\label{eq:cancellation_wc_trr}
\sum_{h=1}^{|I|}\frac{1}{h!}&\sum_{\substack{P\in\mathcal{P}_h(I,\vecd),\\
P=\{(I_1,k_1(1),k_2(1)),\ldots,(I_h,k_1(h),k_2(h))\}}}\frac{\Gamma(\frac{1+\sum_{i=1}^hk_1(i)}{r}) \Gamma(\frac{1+\sum_{i=1}^h k_2(i)}{s})}{\Gamma(\frac{1+r(I)}{r}) \Gamma(\frac{1+s(I)}{s})}\cdot\\
\notag&\quad\quad\cdot
\left(a^PZ_{WC}^{\Gamma_P}(\ttts_r)+b^PZ_{WC}^{\Gamma_P}(\ttts_s)+c^PZ_{WC}^{\Gamma_P}(\ttts_\Root)+d^PZ^{\Gamma_P}_{WC}(\ttts_{z_2})\right)=0.
\end{align}
\end{lemma}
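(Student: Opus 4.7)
The approach is to substitute the formula from Lemma~\ref{lem:WC_cont_for_sections} into the LHS of \eqref{eq:cancellation_wc_trr}, and then swap the order of summation so as to sum over $Q\in\mathcal{Q}(I,\vecd+\eee_1)$ first rather than $P\in\mathcal{P}(I,\vecd+\eee_1)$. This is analogous in spirit to Part~(E) of the proof of Theorem~\ref{thm:A_mod_invs}: each $Q$ organizes a bouquet of critical boundaries sharing the common base $\Lambda_Q$, and the proposed cancellation will be verified $Q$-by-$Q$. First, I would restrict attention to $Q\in\mathcal{Q}^{\neq\emptyset}_h(I,\vecd+\eee_1)$ with $1\in I_0$, and identify the three types of $(P,j)$ in Lemma~\ref{lem:WC_cont_for_sections} that involve this $Q$: the ``diagonal'' contributions $P=\widehat{Q}_j$, $j=1,\ldots,h$, and the two ``stabilizing'' contributions $P=\widehat{Q}^{+r}$ and $P=\widehat{Q}^{+s}$, which arise from the formulas in Lemma~\ref{lem:WC_cont_for_sections} applied with $Q^{+r}\in\mathcal{Q}^r_{h+1}$ and $Q^{+s}\in\mathcal{Q}^s_{h+1}$, taking $j$ to be the index of the empty tuple.

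The identity $\Gamma(x+1)=x\Gamma(x)$ yields the key simplifications
\[
\Gamma_{r,s}(\widehat{Q}^{+r})=\tfrac{1+K_1}{r}\,\Gamma_{r,s}(Q),\qquad \Gamma_{r,s}(\widehat{Q}^{+s})=\tfrac{1+K_2}{s}\,\Gamma_{r,s}(Q),\qquad \Gamma_{r,s}(\widehat{Q}_j)=\Gamma_{r,s}(Q),
\]
where $K_\epsilon:=\sum_{i=0}^h k_\epsilon(i)$ and $\Gamma_{r,s}$ denotes the Gamma-factor appearing in \eqref{eq:cancellation_wc_trr}. At the same time, the stabilizing contributions carry an extra factor of $\langle\sigma_1^r\sigma_{12}\rangle^o=-1$ or $\langle\sigma_2^s\sigma_{12}\rangle^o=-1$ by Theorem~\ref{thm:simple invariants}, and the specialization $k_1(h+1)=r,\,k_2(h+1)=0$ (respectively $k_1(h+1)=0,\,k_2(h+1)=s$) in the formula $(k_2(0)+1)k_1(j)\alpha^\Xi_{\bullet,1}-(k_1(0)+1)k_2(j)\alpha^\Xi_{\bullet,2}$ kills one of the two $\alpha$-terms. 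After these replacements, all three contributions share the common factor $\Gamma_{r,s}(Q)(-1)^{k_2(0)}\prod_{i=1}^h\langle\cdots\rangle_i$, and one is left with a coefficient that is a linear combination of $Z_r^{\Xi},Z_s^{\Xi}$, and (in the Q2 setting) $Z_{z_2}^{\Xi}$, with coefficients built from the $a^P,b^P,c^P,d^P$.

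Next, I would expand $\alpha^{\Xi}_{\bullet,\epsilon}$ using Lemma~\ref{lem:WC_cont_for_sections} and collect coefficients of $Z_r^\Xi$, $Z_s^\Xi$, $Z_{z_2}^\Xi$ separately. For Property~{\bf Q1}, where $d^P=0$ and $c^P=1-a^P-b^P$, partial-fraction identities such as $1-\tfrac{k_2(j)-1}{k_2(0)+k_2(j)}=\tfrac{k_2(0)+1}{k_2(0)+k_2(j)}$ reduce the coefficient of each $Z_?^\Xi$ to a multiple of the difference of the two sides of \eqref{property Q1}, which vanishes by hypothesis. For Property~{\bf Q2}, the $d^P=1$ case is supported precisely on those $\widehat{Q}_j$ with $\{1,2\}\subseteq I_0\cup I_j$; using Observation~\ref{obs:restriction_of_t_to_WC}, $t_{z_2}$ then restricts on such critical boundaries either to $t_{z_2}^{v_0}$ (when $2\in I_0$) or to the half-node section $t_{h_0}$, which is what makes those $j$'s with $2\in I_j$ behave like the $\Root$ case and justifies their exclusion from the sum in \eqref{property Q2}. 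This reduces the verification to exactly the relation \eqref{property Q2}.

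The main obstacle is the careful algebraic bookkeeping in the last step: one must identify, after the cancellations coming from $a^P+b^P+c^P+d^P=1$ and the partial-fraction identities, that the coefficients of $Z_r^\Xi$, $Z_s^\Xi$, and (in Q2) $Z_{z_2}^\Xi$ are simultaneously killed by the single scalar relation \eqref{property Q1} or \eqref{property Q2}. A secondary subtlety, which makes the proof work at all, is that the specializations $\alpha^\Xi_{r,1}|_{j=h+1}=\tfrac{k_1(0)+1}{k_1(0)+r}Z_r^\Xi+\tfrac{r-1}{k_1(0)+r}Z_s^\Xi$ and $\alpha^\Xi_{s,2}|_{j=h+1}=\tfrac{s-1}{k_2(0)+s}Z_r^\Xi+\tfrac{k_2(0)+1}{k_2(0)+s}Z_s^\Xi$ produce precisely the denominators $k_1(0)+r$ and $k_2(0)+s$ featured on the left-hand side of Properties~{\bf Q1} and~{\bf Q2}, while the factors $\tfrac{k_1(j)}{k_1(0)+k_1(j)}$ and $\tfrac{k_2(j)}{k_2(0)+k_2(j)}$ appearing on their right-hand side arise from the diagonal ($j\le h$) contributions.
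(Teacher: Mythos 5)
Your proposal is correct and follows essentially the same route as the paper's proof: reorganize the sum over $P$ into a sum over base graphs $Q$, normalize the $\widehat{Q}^{+r}$ and $\widehat{Q}^{+s}$ contributions using $\Gamma(x+1)=x\Gamma(x)$ together with $\langle\sigma_1^r\sigma_{12}\rangle^o=\langle\sigma_2^s\sigma_{12}\rangle^o=-1$, treat the quantities over the critical vertex as independent variables, and check that the coefficients of $Z_r^{\Xi}$, $Z_s^{\Xi}$ and $Z_{z_2}^{\Xi}$ each reduce (after substituting $c^P=1-a^P-b^P$) to the defining relation \eqref{property Q1} or \eqref{property Q2}. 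The paper carries out exactly this bookkeeping, confirming in particular your observation that both the $Z_r$ and $Z_s$ coefficients collapse to the same scalar identity.
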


\begin{proof}
First, we recall the collection $\Critrit(P)$
of \eqref{eq:crit def}.
Second, we recall the collection $\overline{\mathcal{Q}}(I,\vecd)$
defined in Part (E) of the proof of Theorem~\ref{thm:A_mod_invs} of ordered sets
\[Q=\{(I_0,k_1(0)+1,k_2(0)+1),\ldots,(I_h,k_1(h),k_2(h))\}\in
\mathcal{Q}_h(I,\vecd),  \ h\geq0\]
  so that
\begin{itemize}
\item $I_j \ne \varnothing $ for all $0 \le j \le h$;
\item the elements $(I_1,k_1(1),k_2(1)),\ldots,(I_h,k_1(h),k_2(h))$ are ordered in a way so that, for any two indices $i$ and $j$, we have that $\text{min}(I_i) < \text{min}(I_j)$ if and only if $i<j$.
\end{itemize}
As seen in~\eqref{all the contributions in the TRR homotopy}, the quantity  $Z^{\Gamma_P}_{WC}(H^{\Gamma_P}(t_\bullet))$ only sums over $ \mathcal{G}_{WC}(P)$, which are those graphs $\Lambda \in  \Critrit(P)$ where $1 \in I_0$. If $Q\in \overline{\mathcal Q}(I,\vecd)\cap {\mathcal Q}_0(I,\vecd)$,
then $\Lambda_Q$ is not the base of any critical boundary graph.
If $h>0$ then $Q \in \mathcal{Q}^{\neq\emptyset}(I,\vecd)$ so $\widehat{Q}_j$ is defined for all $1\le j\le h$.

Given $Q\in\overline{\mathcal{Q}}(I,\vecd)
,\bullet\in\{\Root,r,s,z_2\}$, we will write
\[
O_i(Q):=
\left\langle\prod_{\ell\in I_i}\tau^{(a_\ell,b_\ell)}_{d_\ell}\sigma_1^{k_1(i)}\sigma_2^{k_2(i)}\sigma_{12}\right\rangle^{\mathbf{s}^{\Gamma_{0,k_1(i),k_2(i),1,I_i}},o}
\]
and
\[X_{Q,\bullet}:=
\#Z(H^{\Gamma_{0,k_1(0)+1,k_2(0)+1,0,I_0}}(\ttt^{\Gamma_{0,k_1(0)+1,k_2(0)+1,0,I_0}}_{\bullet}))\prod_{i=1}^h O_i(Q).
\]
We note that the quantity $X_{Q, \bullet}$ is invariant with respect to permuting the last $h$ triplets of $Q$.

For any $Q\in \overline{\mathcal{Q}}_h(I,\vecd)$, we
recall the definition of $Q^{+r}$ and $Q^{+s}$ from \eqref{+r and +s definition}.
We define $\overline{\mathcal{Q}}^r_{h+1}(I,\vecd)$ and $\overline{\mathcal{Q}}^s_{h+1}(I,\vecd)$ to be the collections of tuples that can be written in the form $Q^{+r}$ and $Q^{+s}$
for $Q\in\overline{\mathcal{Q}}_h(I,\vecd)$. Take $$\widetilde{\mathcal{Q}}_h(I,\vecd) : = \overline{\mathcal{Q}}_h(I,\vecd) \cup \overline{\mathcal{Q}}^r_h(I,\vecd) \cup \overline{\mathcal{Q}}^s_h(I,\vecd).$$
Define $\widetilde{\mathcal{Q}}(I,\vecd) : = \bigcup_{h} \widetilde{\mathcal{Q}}_h(I,\vecd)$.
As in ~\eqref{comparing with the simple deletion}, we have that
\begin{equation}\label{eqn:rel Q and Q pluses TRR}
X_{Q, \bullet} = -X_{Q^{+r}, \bullet} = - X_{Q^{+s}, \bullet}.
\end{equation}

We consider the quantities $X_{Q,\bullet}$ for all $Q\in\overline{\mathcal{Q}}(I,\vecd)$ and $\bullet\in\{\Root,r,s,z_2\}$ as independent linear variables. By Lemma~\ref{lem:WC_cont_for_sections} and Equation~\eqref{eqn:rel Q and Q pluses TRR}, we can write any of $Z^{\Gamma}_{WC}(H^\Gamma(t_r))$, $Z^{\Gamma}_{WC}(H^\Gamma(t_s))$, $Z^{\Gamma}_{WC}(H^\Gamma(t_\Root))$, or $Z^{\Gamma}_{WC}(H^\Gamma(t_{z_2}))$, as a linear combination of the variables $X_{Q, \bullet}$. Thus, we can write the left hand side of Equation~\eqref{eq:cancellation_wc_trr} as a linear combination of the variables $X_{Q, \bullet}$.

We now compute the coefficient of $X_{Q, \bullet}$ in Equation~\eqref{eq:cancellation_wc_trr} for all  $Q\in\overline{\mathcal{Q}}(I,\vecd)$ and $\bullet\in\{\Root,r,s,z_2\}$.
Using Lemma~\ref{lem:WC_cont_for_sections}, we can see that the left hand side of~\eqref{eq:cancellation_wc_trr} can be expanded to be:
\begin{equation}\begin{aligned}
\label{simplified WC TRR first}
\sum_{h=1}^{|I|}&\sum_{P\in\mathcal{P}_h(I,\vecd+\eee_1)} \sum_{\substack{ Q \in \mathcal{Q}_h(I, \vecd+\eee_1)  \\ Q=  \{(I_0,k_1(0),k_2(0)),\ldots, (I_h,k_1(h),k_2(h))\} \\  \text{where $1\in I_0$ and} \\ P= \widehat{Q}_j \text{ for some $j$}}}
 \frac{1}{h!} \frac{\Gamma(\frac{1+\sum_{i=0}^hk_1(i)}{r})\Gamma(\frac{1+\sum_{i=0}^h k_2(i)}{s})}{\Gamma(\frac{1+r(I)}{r}) \Gamma(\frac{1+s(I)}{s})}\cdot
\prod_{i=1}^h O_i(Q) \cdot \\
&(-1)^{k_2(0)} \left( a^P    \Big( (k_2(0) + 1)k_1(j)\alpha^{\Gamma_{0,k_1(0)+1, k_2(0)+1,0,I_0}}_{r, 1}- (k_1(0) + 1)k_2(j)\alpha^{\Gamma_{0,k_1(0)+1, k_2(0)+1,0,I_0}}_{r, 2} \Big)\right. \\
& +  b^P  \Big( (k_2(0) + 1)k_1(j)\alpha^{\Gamma_{0,k_1(0)+1, k_2(0)+1,0,I_0}}_{s, 1}- (k_1(0) + 1)k_2(j)\alpha^{\Gamma_{0,k_1(0)+1, k_2(0)+1,0,I_0}}_{s, 2} \Big)  \\
& +  c^P   \Big( (k_2(0) + 1)k_1(j)\alpha^{\Gamma_{0,k_1(0)+1, k_2(0)+1,0,I_0}}_{\Root, 1}- (k_1(0) + 1)k_2(j)\alpha^{\Gamma_{0,k_1(0)+1, k_2(0)+1,0,I_0}}_{\Root, 2} \Big)  \\
& + d^P \left. \Big( (k_2(0) + 1)k_1(j)\alpha^{\Gamma_{0,k_1(0)+1, k_2(0)+1,0,I_0}}_{z_2, 1}- (k_1(0) + 1)k_2(j)\alpha^{\Gamma_{0,k_1(0)+1, k_2(0)+1,0,I_0}}_{z_2, 2} \Big) \right).
\end{aligned}\end{equation}

We can change the summations over $\mathcal{P}_h(I,\vecd+\eee_1)$ and special subsets of $\mathcal{Q}_h(I, \vecd+\eee_1)$ to a summation over $\widetilde{Q}_h(I, \vecd+\eee_1)$ using the partition given in Equation~\eqref{partition of critrit} and the fact that there are $h!$ elements of $\mathcal{Q}_h (I, \vecd)$ represented by a given element in $\widetilde{\mathcal{Q}}_h(I, \vecd + \eee_1)$. We may then rewrite the summation
given in~\eqref{simplified WC TRR first} as:
\begin{equation}\begin{aligned}\label{simplified WC TRR}
&\sum_{\substack{ Q \in \widetilde{\mathcal{Q}}(I, \vecd+\eee_1)  \\ Q=  \{(I_0,k_1(0),k_2(0)),\ldots, (I_h,k_1(h),k_2(h))\} \\  \text{where $1\in I_0$ } }} WC(Q)
\end{aligned}\end{equation}
where $WC(Q)$ is defined as follows.
\begin{itemize}
\item
If $Q \in \overline{Q}_h(I, \vecd+\eee_1)$, then
\begin{equation}\begin{aligned}
WC(Q) := &(-1)^{k_2(0)} \sum_{j=1}^h
 \frac{\Gamma(\frac{1+\sum_{i=0}^hk_1(i)}{r})\Gamma(\frac{1+\sum_{i=0}^h k_2(i)}{s})}{\Gamma(\frac{1+r(I)}{r}) \Gamma(\frac{1+s(I)}{s})}\cdot
\prod_{i=1}^h  O_i(Q) \cdot\\
& \left(  (k_2(0) + 1)k_1(j)\left(a^{\widehat{Q}_j}  \alpha^{\Gamma_{0,k_1(0)+1, k_2(0)+1,0,I_0}}_{r, 1} + b^{\widehat{Q}_j}  \alpha^{\Gamma_{0,k_1(0)+1, k_2(0)+1,0,I_0}}_{s, 1}+ \right.\right.
\\ &\qquad \left. \left. c^{\widehat{Q}_j}  \alpha^{\Gamma_{0,k_1(0)+1, k_2(0)+1,0,I_0}}_{\Root, 1}+ d^{\widehat{Q}_j}  \alpha^{\Gamma_{0,k_1(0)+1, k_2(0)+1,0,I_0}}_{z_2, 1}\right)\right. \\
&-   (k_1(0) + 1)k_2(j)\left(a^{\widehat{Q}_j}  \alpha^{\Gamma_{0,k_1(0)+1, k_2(0)+1,0,I_0}}_{r, 2} + b^{\widehat{Q}_j}  \alpha^{\Gamma_{0,k_1(0)+1, k_2(0)+1,0,I_0}}_{s, 2}+ \right.
\\ &\qquad\left. \left. c^{\widehat{Q}_j}  \alpha^{\Gamma_{0,k_1(0)+1, k_2(0)+1,0,I_0}}_{\Root, 2}+ d^{\widehat{Q}_j}  \alpha^{\Gamma_{0,k_1(0)+1, k_2(0)+1,0,I_0}}_{z_2, 2}\right) \right).
\end{aligned} \end{equation}
\item
If $Q^{+r} \in \overline{Q}_h^{r}(I, \vecd+\eee_1)$, then we can use the facts that $k_1(h) = r$, $k_2(h) = 0$, $O_h(Q^{+r})=\langle \sigma_1^r\sigma_{12} \rangle = -1$ and see that
\begin{equation}\begin{aligned}
WC(Q^{+r}) := &- (-1)^{k_2(0)}
 \frac{\Gamma(\frac{1+\sum_{i=0}^hk_1(i)}{r})\Gamma(\frac{1+\sum_{i=0}^h k_2(i)}{s})}{\Gamma(\frac{1+r(I)}{r}) \Gamma(\frac{1+s(I)}{s})}\cdot \prod_{i=1}^{h-1}
O_i(Q) \cdot \\
& (k_2(0) + 1)r\left( a^{\widehat Q^{+r}}   \alpha^{\Gamma_{0,k_1(0)+1, k_2(0)+1,0,I_0}}_{r, 1}\right. +  b^{\widehat Q^{+r}}  \alpha^{\Gamma_{0,k_1(0)+1, k_2(0)+1,0,I_0}}_{s, 1}  \\
& \qquad\qquad\qquad +  c^{\widehat Q^{+r}}    \alpha^{\Gamma_{0,k_1(0)+1, k_2(0)+1,0,I_0}}_{\Root, 1}   + d^{\widehat Q^{+r}} \left. \alpha^{\Gamma_{0,k_1(0)+1, k_2(0)+1,0,I_0}}_{z_2, 1} \right).
\end{aligned} \end{equation}
\item
If $Q^{+s} \in \overline{Q}_h^{s}(I, \vecd+\eee_1)$, then we can use the facts that $k_1(h) = 0$, $k_2(h) = s$, $O_h(Q^{+s})=\langle \sigma_2^s\sigma_{12} \rangle = -1$ and see that
\begin{equation}\begin{aligned}
WC(Q^{+s}) := &
(-1)^{k_2(0)} \frac{\Gamma(\frac{1+\sum_{i=0}^hk_1(i)}{r})\Gamma(\frac{1+\sum_{i=0}^h k_2(i)}{s})}{\Gamma(\frac{1+r(I)}{r}) \Gamma(\frac{1+s(I)}{s})}\cdot \prod_{i=1}^{h-1} O_i(Q)
\cdot \\
&(k_1(0) + 1)s \left( a^{\widehat Q^{+s}}  \alpha^{\Gamma_{0,k_1(0)+1, k_2(0)+1,0,I_0}}_{r, 2} \right. +  b^{\widehat Q^{+s}}   \alpha^{\Gamma_{0,k_1(0)+1, k_2(0)+1,0,I_0}}_{s, 2}   \\
& \qquad \qquad \qquad +  c^{\widehat Q^{+s}}   \alpha^{\Gamma_{0,k_1(0)+1, k_2(0)+1,0,I_0}}_{\Root, 2} + d^{\widehat Q^{+s}} \left.\alpha^{\Gamma_{0,k_1(0)+1, k_2(0)+1,0,I_0}}_{z_2, 2}  \right).
\end{aligned} \end{equation}
\end{itemize}

Using the definition of $X_{Q, \bullet}$ and $\alpha^{\Gamma_{0,k_1(0)+1, k_2(0)+1,0,I_0}}_{\bullet, \eps}$ given in Lemma~\ref{lem:WC_cont_for_sections}, we can see that
\begin{equation}\begin{aligned}
WC(Q) =&(-1)^{k_2(0)}\sum_{j=1}^h \frac{\Gamma(\frac{1+\sum_{i=0}^hk_1(i)}{r})\Gamma(\frac{1+\sum_{i=0}^h k_2(i)}{s})}{\Gamma(\frac{1+r(I)}{r}) \Gamma(\frac{1+s(I)}{s})}\cdot \\ &  \left[ (k_2(0)+1) k_1(j)\cdot \left[ a^{\widehat Q_j} \left( \frac{k_1(0)+1}{k_1(0) + k_1(j)} X_{Q, r} + \frac{k_1(j) - 1}{k_1(0) + k_1(j)} X_{Q,s}\right) + b^{\widehat Q_j} X_{Q, s} +\right.\right. \\
&\qquad \qquad \qquad \left. c^{\widehat{Q}_j} X_{Q, s} + \begin{cases} d^{\widehat{Q}_j} X_{Q, z_2} & \text{ if $2 \in I_0$} \\ d^{\widehat{Q}_j} X_{ Q, s} & \text{ if $2 \in I_j$} \\ 0 & \text{ otherwise} \end{cases} \right] \\
&- (k_1(0)+1) k_2(j)\cdot  \left[ a^{\widehat Q_j} X_{Q, r} + b^{\widehat Q_j} \left(\frac{k_2(j) -1}{k_2(0) + k_2(j)} X_{Q, r} + \frac{k_2(0) +1}{k_2(0) + k_2(j)} X_{Q, s}\right) \right. \\
&\qquad \qquad \qquad\left.\left. + c^{\widehat{Q}_j} X_{Q, r} + \begin{cases} d^{\widehat{Q}_j} X_{Q, z_2} & \text{ if $2 \in I_0$} \\ d^{\widehat{Q}_j} X_{ Q, r} & \text{ if $2 \in I_j$} \\ 0 & \text{ otherwise}\end{cases} \right]\right].
 \end{aligned}\end{equation}
 Moreover, we have:
\begin{equation}\begin{aligned}
WC&(Q^{+r}) = -(-1)^{k_2(0)} \frac{\Gamma(\frac{1+r+\sum_{i=0}^hk_1(i)}{r})\Gamma(\frac{1+\sum_{i=0}^h k_2(i)}{s})}{\Gamma(\frac{1+r(I)}{r}) \Gamma(\frac{1+s(I)}{s})}\cdot  (k_2(0)+1) r \cdot \\& \left[ a^{\widehat Q^{+r}} \left( \frac{k_1(0)+1}{k_1(0) + r} X_{Q, r} + \frac{r - 1}{k_1(0) +r } X_{Q,s}\right) + b^{\widehat Q^{+r}} X_{Q, s} + c^{\widehat Q^{+r}} X_{Q, s} + \begin{cases} d^{\widehat Q^{+r}} X_{Q, z_2} & \text{ if $2 \in I_0$} \\ 0 & \text{ otherwise} \end{cases} \right].
\end{aligned}\end{equation}
Lastly, we similarly have
\begin{equation}\begin{aligned}
WC&(Q^{+s}) = (-1)^{k_2(0)} \frac{\Gamma(\frac{1+\sum_{i=0}^hk_1(i)}{r})\Gamma(\frac{1+s+ \sum_{i=0}^h k_2(i)}{s})}{\Gamma(\frac{1+r(I)}{r}) \Gamma(\frac{1+s(I)}{s})}\cdot  (k_1(0)+1) s \cdot \\ & \left[ a^{\widehat Q^{+s}} X_{Q, r} + b^{\widehat Q^{+s}} \left(\frac{s -1}{k_2(0)+s} X_{Q, r} + \frac{k_2(0) +1}{ k_2(0)+s} X_{Q, s}\right) + c^{\widehat Q^{+s}} X_{Q, r} + \begin{cases} d^{\widehat Q^{+s}} X_{Q, z_2} & \text{ if $2 \in I_0$} \\ 0 & \text{ otherwise} \end{cases} \right].
\end{aligned}\end{equation}

We can then see that the coefficient of
$X_{Q,r}$ in \eqref{eq:cancellation_wc_trr} is proportional to,\footnote{After dividing by $(-1)^{k_2(0)}\frac{\Gamma\left(\frac{1+\sum_{i=0}^hk_1(i)}{r}\right) \Gamma\left(\frac{1+\sum_{i=0}^h k_2(i)}{s}\right)}{\Gamma\left(\frac{1+r(I)}{r}\right) \Gamma\left(\frac{1+s(I)}{s}\right)}$. Note that the numerators for the $\Gamma$ functions in the $WC(Q^{+r})$ and $WC(Q^{+s})$ cases are different and have an extra factor.}
\begin{align}\label{eq:Q_coef_WC_r}
\sum_{j=1}^h &\left(
\left((k_2(0)+1)k_1(j) \frac{k_1(0)+1}{k_1(0)+k_1(j)} -(k_1(0)+1)k_2(j) \right)a^{\widehat{Q}_j}
-(k_1(0)+1)k_2(j) \frac{k_2(j)-1}{k_2(0)+k_2(j)}b^{\widehat{Q}_j}\right.\\\notag&\left.
-(k_1(0)+1)k_2(j)c^{\widehat{Q}_j}
-\mathbf{1}_{2\in I_j}(k_1(0)+1)k_2(j)d^{\widehat{Q}_j}
\right)
\\\notag&-(k_2(0)+1)\frac{k_1(0)+1}{k_1(0)+r}(1+\sum_{i=0}^hk_1(i)) a^{\widehat{Q}^{+r}}
\\\notag&+(k_1(0)+1)(1+\sum_{i=0}^hk_2(i)) \left(a^{\widehat{Q}^{+s}}
+\frac{s-1}{k_2(0)+s}b^{\widehat{Q}^{+s}}
+c^{\widehat{Q}^{+s}}\right).
\end{align}

Similarly, the coefficient of $X_{Q,s}$ is proportional to
\begin{align}\label{eq:Q_coef_WC_s}
\sum_{j=1}^h &\left(
(k_2(0)+1)k_1(j) \frac{k_1(j)-1}{k_1(0)+k_1(j)}a^{\widehat{Q}_j}
+\left( (k_2(0)+1)k_1(j) -(k_1(0)+1)k_2(j)\frac{k_2(0)+1}{k_2(0)+k_2(j)} \right)b^{\widehat{Q}_j}\right.\\\notag&\left.
+(k_2(0)+1)k_1(j)c^{\widehat{Q}_j}
+\mathbf{1}_{2\in I_j}(k_2(0)+1)k_1(j)d^{\widehat{Q}_j}
\right)\\
\notag&-
 (k_2(0)+1)(1+\sum_{i=0}^hk_1(i)) \left(\frac{r-1}{k_1(0)+r}a^{\widehat{Q}^{+r}}
+b^{\widehat{Q}^{+r}}
+c^{\widehat{Q}^{+r}}\right)\\
\notag&+(k_1(0)+1)\frac{k_2(0)+1}{k_2(0)+s}(1+\sum_{i=0}^hk_2(i)) b^{\widehat{Q}^{+s}}.
\end{align}

Note that the coefficient for $X_{Q, \times}$ is always $0$. The coefficient of $X_{Q,z_2}$ is $0$ if $2\notin I_0$ and otherwise it is proportional to
\begin{align}\label{eq:Q_coef_WC_d}
\sum_{j=1}^h &\Big((k_2(0)+1)k_1(j)-(k_1(0)+1)k_2(j)
\Big)d^{\widehat{Q}_j}\\
\notag&-(k_2(0)+1)(1+\sum_{i=0}^hk_1(i))d^{\widehat{Q}^{+r}}
+(k_1(0)+1)(1+\sum_{i=0}^hk_2(i))d^{\widehat{Q}^{+s}}.
\end{align}

Consider a family $\{(a^P, b^P, c^P, d^P)\}_{P\in \mathcal{P}(I,\vecd)}$ that satisfies Property {\bf Q1} (which we know exists by Lemma~\ref{existence of 4tuples}). Then $d^P = 0$ and $a^P+b^P+c^P=1$ for all $P$. Thus we have the constraint  $c^P = 1- a^P - b^P$ for all $P$, so we may substitute this into \eqref{eq:Q_coef_WC_r} above and simplify. After factoring out $(k_1(0)+1)(k_2(0)+1)$, we can compute that the vanishing of \eqref{eq:Q_coef_WC_r} is equivalent to the equation
\begin{equation}\label{eq:Q_coef_WC_r_simplified}
\frac{1+\sum_{i=0}^hk_1(i)}{k_1(0)+r}a^{\widehat{Q}^{+r}} +\frac{1+\sum_{i=0}^hk_2(i)}{k_2(0)+s}b^{\widehat{Q}^{+s}} = 1 + \sum_{j=1}^h\left(\frac{k_1(j)}{k_1(0)+k_1(j)}a^{\widehat{Q}_j} +\frac{k_2(j)}{k_2(0)+k_2(j)}b^{\widehat{Q}_j}\right).
\end{equation}
But this is exactly Equation~\eqref{property Q1} so \eqref{eq:Q_coef_WC_r} vanishes. One can additionally check that after the same substitution, the vanishing of ~\eqref{eq:Q_coef_WC_s} is also equivalent to satisfying the equation ~\eqref{eq:Q_coef_WC_r_simplified}. Lastly, note that the coefficient of $X_{Q, z_2}$ immediately vanishes when $d^P =0$ for all $P$. In summary, the coefficients of $X_{Q, \bullet}$ vanish for all $Q \in \overline{\mathcal Q}(I, \vecd)$ if $\{(a^P, b^P, c^P, d^P)\}_{P\in \mathcal{P}(I,\vecd)}$ satisfies Property {\bf Q1}, proving Item (1) of the Lemma.

Next, consider  a family $\{(a^P, b^P, c^P, d^P)\}_{P\in \mathcal{P}(I,\vecd)}$ that satisfies Property {\bf Q2}. Given $Q \in \overline{\mathcal{Q}}(I, \vecd)$, note that we must have that $1 \in I_0$ and $2 \in I_0 \cup I_1$ due to the convention of the ordering of the tuples.  If $2 \in I_0$, then $d^{\widehat{Q}_j} = 1$ and $a^{\widehat{Q}_j}=b^{\widehat{Q}_j}=c^{\widehat{Q}_j}=0$ for all $j$. In turn, we have that $d^{\widehat{Q}^{+r}} = d^{\widehat{Q}^{+s}} =1$ and $a^{\widehat{Q}^{+r}}=b^{\widehat{Q}^{+r}}=c^{\widehat{Q}^{+r}}=a^{\widehat{Q}^{+s}}=b^{\widehat{Q}^{+s}}=c^{\widehat{Q}^{+s}}=0$. Note that when $2 \in I_0$, there are no summands with $d^{\widehat Q_j}$ in Equations~\eqref{eq:Q_coef_WC_r} or~\eqref{eq:Q_coef_WC_s}, so the corresponding coefficients of $X_{Q, r}$ and $X_{Q, s}$ vanish. Also, one can compute that the quantity in~\eqref{eq:Q_coef_WC_d} vanishes when $d^{\widehat Q_j} = d^{\widehat{Q}^{+r}} = d^{\widehat{Q}^{+s}} =1$.

If $2 \in I_1$, then the coefficient of $X_{Q, z_2}$ vanishes, as stated above. In this case, we can see that $d^{\widehat{Q}_1} = 1$ and $a^{\widehat{Q}_1}=b^{\widehat{Q}_1}=c^{\widehat{Q}_1}=0$. On the other hand, $d^{\widehat{Q}_j} = 0$ for all $1< j \le h$ and $d^{\widehat{Q}^{+r}} = d^{\widehat{Q}^{+s}} =0$. We then can substitute using constraint that $c^P = 1- a^P - b^P$ for all $P \in \{\widehat{Q}_2, \ldots, \widehat{Q}_h, \widehat{Q}^{+r}, \widehat{Q}^{+s}\}$ to simplify \eqref{eq:Q_coef_WC_r} in terms of $a^P$ and $b^P$. After doing so and factoring out $(k_1(0)+1)(k_2(0)+1)$, we can compute that the vanishing of \eqref{eq:Q_coef_WC_r} is equivalent to the equation
\begin{equation}\label{eq:Q_coef_WC_r_simplified2}
\frac{1+\sum_{i=0}^hk_1(i)}{k_1(0)+r}a^{\widehat{Q}^{+r}} +\frac{1+\sum_{i=0}^hk_2(i)}{k_2(0)+s}b^{\widehat{Q}^{+s}} = 1 + \sum_{j=2}^h\left(\frac{k_1(j)}{k_1(0)+k_1(j)}a^{\widehat{Q}_j} +\frac{k_2(j)}{k_2(0)+k_2(j)}b^{\widehat{Q}_j}\right).
\end{equation}
This is exactly Equation~\eqref{property Q2}, so \eqref{eq:Q_coef_WC_r} vanishes. One can additionally check that after the same substitution, that Equation~\eqref{eq:Q_coef_WC_s} vanishes.
\end{proof}

We now turn our attention to $\Lambda\in \mathcal{G}_{\Cont}(P)$.
\begin{nn}
Let $\mathcal{P}_{Cont, h}(I,\vecd+\eee_1)$ be the collection of all tuples
$$\{(I_0,k_1(0),k_2(0)),\ldots,(I_h,k_1(h),k_2(h))\}$$
such that
\begin{itemize}
\item $1 \in I_0$
\item $I_j$ are disjoint,
\item  $\sqcup_{j=0}^h I_j = I$ with at most one empty set, and
\item the graphs $\Gamma_{0, k_1(j), k_2(j), 1, I_j}$ are balanced with respect to $\vecd$ for all $j$.
\end{itemize}
We write $\mathcal{P}_{Cont,h}^{\not=\emptyset}(I,\vecd+\eee_1)$
to be the set of those tuples in $\mathcal{P}_{Cont, h}(I,\vecd+\eee_1)$
with all $I_j\not=\emptyset$.

Consider a tuple $P^{\Cont} =\{(I_0,k_1(0),k_2(0)),\ldots,(I_h,k_1(h),k_2(h))\}\in \mathcal{P}_{Cont, h}(I,\vecd+\eee_1)$. If for all $i \neq  j$, $I_i$ is nonempty, then we can define the element $\widehat{P}^{\Cont}_j  \in \mathcal{P}_h(I, \vecd + \eee_1)$ given by
\[\widehat{P}_j^{\Cont}:=\{(I_1,k_1(1),k_2(1)),\ldots,(I_0\cup I_j,k_1(0)+k_1(j),k_2(0)+k_2(j))\ldots,(I_h,k_1(h),k_2(h))\},\]
for any $1 \le j \le h$.
In the case where $P^{\Cont} \in \mathcal{P}_{Cont, h}(I,\vecd+\eee_1)$ has the property that all $I_j$ are nonempty, we may define,
similar to \eqref{+r and +s definition},
two new elements in $\mathcal{P}_{Cont, h+1}(I,\vecd+\eee_1)$
\begin{equation}\begin{aligned}\label{PContr and PConts}
P^{Cont,+r}&=\{(I_0,k_1(0),k_2(0)),\ldots,(I_h,k_1(h),k_2(h)), (\emptyset, r, 0)\}; \\
P^{Cont,+s}&=\{(I_0,k_1(0),k_2(0)),\ldots,(I_h,k_1(h),k_2(h)), (\emptyset, 0, s)\}.
\end{aligned}\end{equation}
Similar to \eqref{eq:hat Q r}, we define the
elements in $\mathcal{P}_{h+1}(I,\vecd+\eee_1)$
$$
\widehat{P}^{Cont,+r} = (\widehat{P^{Cont,+r}})_{h+1}; \qquad \widehat{P}^{Cont,+s} = (\widehat{P^{Cont,+s}})_{h+1}.
$$

For $P^{\Cont} \in \mathcal{P}_{Cont, h}(I,\vecd+\eee_1)$, there is a corresponding graph
\begin{equation}\label{def: GammaPCont}
\Gamma_{P^{\Cont}}:=\bigsqcup_{j=0}^h \Gamma_{0,k_1(j),k_2(j),1, \{(a_i,b_i)\}_{i\in I_j}}.
\end{equation}
\end{nn}

Let $\Gamma = \Gamma_P$ with $P \in \mathcal{P}_h(I, \vecd+\eee_1)$ be a smooth graded balanced graph with an internal tail $1 \in I(\Gamma)$. Suppose $\Lambda \in \mathcal{G}_{\Cont}(P)$ as in Notation~\ref{Graphs for OTRR}. That is,
the graph $\Lambda$ has connected components $\Xi_1, \ldots, \Xi_h$ and there is some $j$ such that $\Xi_j$ consists of a graph with two open vertices $v_0$ and $v_j$ with one edge between them so that:
\begin{itemize}
\item The vertex $v_0$ is rooted and has the internal tail $1$.
\item  $\CB\Lambda$ is balanced with respect to the descendent vector $\vecd$.
\item The half-edges $h_0$ and $h_j$ corresponding to the edge $e$ satisfy the following: $$\tw(h_0)= (0,0), \alt(h_0) = (0,0), \tw(h_j) = (r-2,s-2),\text{ and }\alt(h_j)=(1,1).$$
\end{itemize}
Set $I_0 := I(v_0)$ and $I_j := I(v_j)$. Note that $\CB v_0 = \Gamma_{0, k_1(v_0), k_2(v_0), 1, \{(a_i, b_i)\}_{i \in I_0}}$ and $\CB v_j = \Gamma_{0, k_1(v_j), k_2(v_j), 1, \{(a_i, b_i)\}_{i \in I_j}}$.  Moreover, the graph $\Xi_j$ is a degeneration of the smooth vertex graph $\Gamma_{0, k_1(v_0) + k_1(v_j), k_2(v_0) + k_2(v_j), 1, \{(a_i, b_i)\}_{i \in I_0\cup I_j}}$, hence $\CB \Lambda$ corresponds to some tuple $P^{\Cont} \in \mathcal{P}_{Cont, h}(I,\vecd+\eee_1)$ and $P = \widehat{P}_j^{\Cont}$.

Note that there are precisely $(k_1(v_0) + 1)(k_2(v_0) + 1)$ graphs in $\mathcal{G}_{\Cont}(P)$ that have the base $\CB\Lambda$ and are a degeneration of $\Gamma$. Indeed, the graphs in $\mathcal{G}_{\Cont}(P)$ with such properties are completely determined by two numbers:
\begin{itemize}
\item $R_0$  the number of $r$-points in $v_0$ that follow its root and precede the half-edge with respect to the cyclic order and
\item $S_0$ the number of $s$-points in $v_0$ that follow its root and precede the half-edge with respect to the cyclic order.\footnote{We alert the reader that in this definition of
$R_0,S_0$, the role of $v_0$ and $v_j$ are the opposite of that used
for critical boundary graphs of $\mathcal{G}_{WC}(P)$. The reason for this is that we
prefer to keep the internal tail $1$ attached to $v_0$ for consistency.}
\end{itemize}
We will denote the corresponding boundary graph by $\Gamma_{P, j, I_0, k_1(0), k_2(0), R_0, S_0}^{\Cont}$, where $0 \le R_0 \le k_1(v_0)$ and $0 \le S_0 \le k_2(v_0)$.

The fiber $\Phi_p$ of the map
\begin{equation}\label{eq:F_p}\bigsqcup_{R_0, S_0}F_{\Gamma_{P, j, I_0,  k_1(0), k_2(0),R_0, S_0}^{\Cont}}:\bigsqcup_{R_0, S_0}\oPM_{\Gamma_{P, j, I_0,  k_1(0), k_2(0),R_0, S_0}^{\Cont}}\to\oPM_{\CB\Lambda}\end{equation}
over a point $p\in\CM_{\CB\Lambda}$ is the union of
$k_1(v_0) + k_2(v_0)+1$ closed intervals.
Indeed, the additional information of the location of the
forgotten non-twisted half-node requires a choice of a point between boundary markings.
The boundary $\partial \Phi_p$ corresponds to the stable disks obtained by the forgotten half-node approaching a boundary marking. We pair endpoints corresponding to stable disks where one is obtained by the node approaching the marking from the right and the other one is obtained by letting the node approach the same marking from the left. Denote by $\hat \Phi_p$ the quotient space of $\Phi_p$ obtained by identifying the boundary points which are paired.
Thus, $\hat \Phi_p$ is homeomorphic to $S^1$ and is canonically identified with the boundary of the connected component indexed by $v_0$
of the stable disk $\Sigma_p$ corresponding to $p\in \oPM_{\CB\Lambda}$.

 Observations 4.9 and 4.10 in \cite{PST14} imply the following observation.
\begin{obs}\label{obs:trivial_on_fiber}\label{obs:canonical_const}
Note that when we forget the half-edge on $v_0$, the disk remains stable hence does not contract. By Observation~\ref{isomorphism forgetting non-alt for descendents}(ii), the restriction of the tautological line $\CL_i|_{\hat{\Phi}_p}$ is canonically trivial as
this line bundle is pulled back from a point of $\oPM_{\CB v_0}$.
With respect to this trivialization a canonical multisection of $\CL_i|_{\Phi_p}$ as well as $\ttt_r^{v_0},\ttt_s^{v_0},\ttt_\Root^{v_0}$, and $\ttt_{z_2}^{v_0}$ (if $z_2\in I_0$) restricted to $\Phi_p$ are constant multivalued functions.
Indeed, in all these cases, these sections are pulled back from $\oPM_{\CB v_0}$.
\end{obs}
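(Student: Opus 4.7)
The plan is to establish the two assertions of the observation separately: first, the canonical triviality of the restriction $\CL_i|_{\hat{\Phi}_p}$, and second, the constancy on $\Phi_p$ of the listed multisections with respect to that trivialization. Both will rest on the single geometric fact that the forgetful map $\text{For}_{\text{non-alt}}:\oPM_{v_0}\to\oPM_{\CB v_0}$ does not contract the irreducible component of a generic stable disk that carries $z_i$, because $v_0$ is rooted and contains the internal marking labeled $1$, so that component retains at least three special points after the non-alternating half-node $h_0$ is forgotten.

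For the first claim, I would apply Observation~\ref{isomorphism forgetting non-alt for descendents}(ii) to this forgetful map: the canonical morphism $t_{v_0}:\text{For}_{\text{non-alt}}^*\CL_i^{\CB v_0}\to \CL_i^{v_0}$ is an isomorphism, since by the above no vanishing occurs. Restricting to the fiber $\Phi_p$, which is a disjoint union of closed intervals all mapping to the single point $\bar p\in\oPM_{\CB v_0}$, presents $\CL_i|_{\Phi_p}$ as the pullback of the one-dimensional vector space $\CL_i|_{\bar p}$, and so canonically trivial. To descend this trivialization to the quotient $\hat{\Phi}_p\simeq S^1$, I would check that across each paired endpoint of $\Phi_p$, where $h_0$ approaches a boundary marking from two sides, the universal spin data extend continuously; this identifies the fibers of $\CL_i$ at paired endpoints compatibly with the pullback trivialization. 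This is exactly the rank-one statement of Observations 4.9 and 4.10 of \cite{PST14}, adapted here to the $W$-spin setting.

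For the second claim, I would observe that a canonical multisection of $\CL_i$ on $\oPM_\Lambda$ is, by Definition~\ref{def:canonical for L_i}, the pullback along $\Phi_{\Lambda,i}$ of a multisection on the moduli of the abstract vertex $v_i^*(\Lambda)$; since $\Phi_{\Lambda,i}$ factors through the forgetful map defining $\Phi_p$ as fiber, such a multisection is constant on $\Phi_p$ under the trivialization above. For each of $\ttt_r^{v_0}, \ttt_s^{v_0}, \ttt_\Root^{v_0}$, and (when $z_2\in I_0$) $\ttt_{z_2}^{v_0}$, the section is specified in Notation~\ref{not: descendent multisections pointing to special points} and Notation~\ref{nn:pointing at} by a distinguished special point of $v_0$ other than $h_0$, namely a boundary marking of a prescribed twist, the root, or the internal marking $z_2$. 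Each of these special points is preserved under forgetting $h_0$ and is independent of the position of $h_0$ within $\Phi_p$; unravelling the explicit formula for $\tilde{\ttt}$ then shows that each such section descends from a multisection on $\oPM_{\CB v_0}$ and is therefore constant on $\Phi_p$ with respect to the trivialization of the first part.

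The main obstacle I expect is the careful verification across the paired endpoints of $\Phi_p$, where $h_0$ collides with a boundary marking and a new node is acquired; one must check that the canonical trivializations defined on the two incident intervals glue consistently, and that the formulas for $\ttt^{v_0}_\bullet$ behave well under this limit (in particular that pointing at a boundary marking close to $h_0$ does not produce a discontinuity). This is precisely where the cited \cite{PST14} observations enter, and the $W$-spin case should present no additional complications because every datum involved (the cotangent line, the twist of $h_0$, and the pointing-at construction) is insensitive to the additional spin bundle $S_2$.
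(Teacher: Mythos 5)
Your proposal is correct and follows essentially the same route as the paper, which justifies the observation by noting that forgetting $h_0$ does not contract the component (so Observation~\ref{isomorphism forgetting non-alt for descendents}(ii) gives the canonical triviality over the point $\bar p\in\oPM_{\CB v_0}$), that all the listed sections are pulled back from $\oPM_{\CB v_0}$, and by citing Observations 4.9 and 4.10 of \cite{PST14} for the behaviour across the paired endpoints of $\Phi_p$. Your write-up simply spells out these same steps in more detail.
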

On the other hand, Observation 4.11 and Lemma 4.12 in \cite {PST14} imply that:
\begin{obs}\label{obs:cont_on_fiber}
The section $\ttt^{v_0}_{h_0}|_{\Phi_p}$ descends to a continuous function $\theta_p : \hat \Phi_p \to \C^\times$. The winding number of $\theta_p$ is $-1$.
\end{obs}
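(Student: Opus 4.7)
The plan is to reduce the statement almost entirely to the corresponding rank-one result, namely \cite[Observation 4.11 and Lemma 4.12]{PST14}, and then identify explicitly why the argument goes through verbatim in the current setting. The point is that the section $t^{v_0}_{h_0}$ is defined in terms of meromorphic differentials on the underlying orbifold Riemann surface (via Notation \ref{nn:pointing at}), and none of the spin or lifting data entered into that definition; the decomposition properties of the Witten bundle enforced by the canonicity of $\ess_1, \s_2, \rho$ were already used to reduce to this fibre calculation in Observation \ref{obs:canonical_const}, so what is left is a purely conformal/topological question on the $v_0$-component.

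First I would verify that $t^{v_0}_{h_0}|_{\Phi_p}$ descends to $\hat\Phi_p$. Recall that $\Phi_p$ is a disjoint union of $k_1(v_0)+k_2(v_0)+1$ closed intervals, one for each arc of $\partial\Sigma^{v_0}_p$ between two consecutive boundary marked points of $v_0$, and that endpoints are paired when the half-node $h_0$ collides with a marked point from the two sides. At such an endpoint the stable disk acquires a boundary node between $v_0$ and a new partially stable bubble carrying $h_0$ and the marked point. By Remark \ref{rem:degenerate pointing}, the section $t^{v_0}_{h_0}$ at this limit coincides with $t^{v_0}_{n_0}$, where $n_0$ is the half-node on the $v_0$-side of the newly formed node; but $n_0$ is independent of the direction of approach, so the two limits agree. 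This yields a continuous function $\theta_p\colon \hat\Phi_p \to \C$ via the canonical trivialisation of $\CL_1|_{\Phi_p}$ given by Observation \ref{obs:canonical_const}.

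Next I would check that $\theta_p$ lands in $\C^\times$. Using an upper half-plane uniformisation of the $v_0$-component, with $z_1 = a+bi$, $b>0$, and $h_0$ varying on $\R\cup\{\infty\} \cong \partial\Sigma^{v_0}_p$, the explicit formula \eqref{eq:t} gives
\[
\tilde t(\Sigma) \;=\; \left(\frac{1}{z_1-h_0} - \frac{1}{z_1-\bar z_1}\right)dz\big|_{z_1}.
\]
A zero would force $h_0 = \bar z_1$, which is impossible since $\bar z_1$ lies in the lower half-plane while $h_0 \in \R$. Under degeneration to a nodal disk, the same argument applies on the normalisation by Remark \ref{rem:degenerate pointing}; hence $\theta_p$ is nowhere vanishing.

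The main content is the winding number computation, and this is exactly where I expect the argument to require the most care (though it is not a novel difficulty; it is the same computation as in the rank-one open $r$-spin case). The strategy is the one used in \cite[Lemma 4.12]{PST14}: working in the upper half-plane model above, the term $\tfrac{1}{z_1-\bar z_1}$ is a nonzero constant, and the map $h_0 \mapsto \tfrac{1}{z_1-h_0}$ sends the oriented circle $\partial \Sigma^{v_0}_p \cong \R\cup\{\infty\}$ to a circle in $\C$ passing through $0$ traversed once with orientation opposite to that of $\partial\Sigma^{v_0}_p$ (since $z_1 - h_0$ traverses a horizontal line in the upper half-plane once in the positive direction, and inversion reverses orientation). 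Translating by the nonzero constant $-\tfrac{1}{z_1-\bar z_1}$, and using that the resulting circle still encloses the origin once (because $\tfrac{1}{z_1-\bar z_1}$ lies inside the image circle, as one sees by taking $h_0=a$, i.e., directly below $z_1$, which realises the point of the image circle furthest from the origin in the vertical direction), the winding number of $\theta_p$ around $0$ is $-1$. The sign conventions match those of \cite{PST14} since no part of the construction of $t_{h_0}$ was altered when passing from rank one to rank two, so the same conclusion holds.
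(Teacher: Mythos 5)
Your proof is correct and follows the same route the paper takes: the paper simply cites Observation 4.11 and Lemma 4.12 of \cite{PST14}, and your argument is a faithful reconstruction of that computation (descent via the nodal limit of Remark \ref{rem:degenerate pointing}, nonvanishing from the explicit upper half-plane formula, and the winding number $-1$ from the clockwise traversal of the circle obtained by inverting the horizontal line through $z_1-h_0$ and translating by $-1/(z_1-\bar z_1)$, which is precisely the centre of that circle). The only cosmetic slip is the phrase about $z_1-h_0$ traversing the line ``in the positive direction'' (it moves in the direction of decreasing real part as $h_0$ increases), but this does not affect the sign you obtain at the end.
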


The next observation, analogous to Observation \ref{obs:restriction_of_t_to_WC}, follows again from direct inspection.

\begin{obs}\label{obs:restriction_of_t_to_Cont}
Let $P=\{(I_i,k_1(i),k_2(i))\}\in {\mathcal P}_h(I,\vecd+{\bf e}_1)$ with
$1\in I_j$ for some $j$.
Let $\Lambda=\Gamma_{P, j, I_0, k_1(0), k_2(0), R_0, S_0}^{\Cont}\in \mathcal{G}_{\Cont}(P)$
and $\Gamma=\Gamma_P$.
Consider the multisections of $\CL_1\to\oCM^W_{v_0}$ given by $\ttt_r$, $\ttt_s$, $\ttt_\Root$ and $\ttt_{z_2}$ from Notation~\ref{not: descendent multisections pointing to special points}.
 Then, by using the canonical identification of
Observation \ref{isomorphism forgetting non-alt for descendents},(i) of the pullback of $\CL_1\to\oCM^W_{v_0}$ to $\oCM^W_{\Lambda}$ with $(\CL_1\to\oCM^W_\Gamma)|_{\oCM^W_\Lambda}$, we have
\begin{enumerate}
\item\label{it:rest_WC_r}
$\ttt_r|_{\oPM_\Lambda} = k_1(v_0)\ttt^{v_0}_r \uplus k_1(v_j)\ttt^{v_0}_{h_0}$.
\item\label{it:rest_WC_s}
$\ttt_s|_{\oPM_\Lambda} = k_2(v_0)\ttt^{v_0}_s \uplus k_2(v_j)\ttt^{v_0}_{h_0}$.
\item\label{it:rest_WC_root}
$\ttt_\Root|_{\oPM_\Lambda}=\ttt^{v_0}_{\Root}$.
\item\label{it:rest_WC_z2}
If $2\in I_j$,
$\ttt_{z_2}|_{\oPM_\Lambda}=\begin{cases} \ttt^{v_0}_{z_2} & \text{if $2\in I_0$,} \\    \ttt^{v_0}_{h_0} & \text{if $2 \in I_j \setminus I_0$}.\end{cases}$
\end{enumerate}
\end{obs}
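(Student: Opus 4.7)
The proof will proceed by direct inspection, following the same pattern used to establish the analogous Observation~\ref{obs:restriction_of_t_to_WC}. The essential tool is Remark~\ref{rem:degenerate pointing}: whenever a stable graded disk $\Sigma$ representing a point of $\oPM_\Lambda$ has a boundary node $n$ whose normalization separates $z_1$ from a marked point $w$, the section $\ttt_w(\Sigma)$ coincides with $\ttt_{n_1}(\Sigma)$, where $n_1$ is the half-node of $\Sigma_1$, the component of the normalization containing $z_1$. In our situation, the unique boundary node in $\Xi_j$ has its $z_1$-side half-node equal to $n_{h_0}$, so the degeneration always outputs $\ttt^{v_0}_{h_0}$.

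For item~(1), I would unpack the definition of $\ttt_r$ given in Notation~\ref{not: descendent multisections pointing to special points} as the $\uplus$-sum of sections $\ttt_x$, where $x$ ranges over the $k_1(v_0)+k_1(v_j)$ $r$-points in the connected component containing $z_1$. Partition these $r$-points according to whether they lie on $v_0$ or on $v_j$. The $k_1(v_0)$ of them lying on $v_0$ contribute, via the canonical identification of $\CL_1$ from Observation~\ref{isomorphism forgetting non-alt for descendents}(i), exactly the branches of $\ttt^{v_0}_r$; the remaining $k_1(v_j)$, lying on $v_j$, each degenerate to $\ttt^{v_0}_{h_0}$ by the paragraph above. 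Summing and using Definition~\ref{def:union_of_sections} yields the claimed decomposition. Item~(2) is identical with the roles of $r$ and $s$ exchanged. For item~(3), the root lies on $v_0$ (since $v_0$ is the rooted vertex of $\Xi_j$), so $\ttt_\Root$ is itself a section of the first type and restricts directly to $\ttt^{v_0}_\Root$. For item~(4), the argument is the same case analysis: if $2\in I_0$ then $z_2$ sits on $v_0$ and we obtain $\ttt^{v_0}_{z_2}$ directly, while if $2\in I_j\setminus I_0$ then $z_2$ sits on $v_j$ and the degeneration gives $\ttt^{v_0}_{h_0}$.

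There is no substantive obstacle to carrying this out; the argument is essentially bookkeeping. The only point requiring a small amount of care is to verify, in each of the four cases, that the canonical isomorphism of $\CL_1$ between $\oCM^W_{v_0}$ and the pullback along the detaching/forgetting maps (provided by Observation~\ref{isomorphism forgetting non-alt for descendents}(i)) is compatible with the explicit meromorphic-differential description of $\ttt_w$ given in Notation~\ref{nn:pointing at}. This amounts to noting that, in the meromorphic differential defining $\ttt_w(\Sigma)$, the only irreducible component of the normalization on which the form is non-zero at $z_1$ is the one containing $z_1$, and hence the value at $z_1$ is computed purely from data on $v_0$ together with the residue information at $n_{h_0}$.
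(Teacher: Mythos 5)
Your proof is correct and follows exactly the route the paper intends: the paper states this observation as following "by direct inspection," and your argument is precisely that inspection, carried out via Remark~\ref{rem:degenerate pointing} (degeneration of $\ttt_w$ to $\ttt^{v_0}_{h_0}$ when $w$ lies on $v_j$), the $\uplus$-sum bookkeeping of Definition~\ref{def:union_of_sections}, and the identification of $\CL_1$ from Observation~\ref{isomorphism forgetting non-alt for descendents}(i). No gaps.
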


 We now compute the contributions of this case when we take the homotopy to a canonical multisection.
\begin{lemma}\label{lem:TRR_cont_for_sections}
Let $P \in\mathcal{P}_h(I,\vecd+\eee_1)$, and
write $\Gamma := \Gamma_P$ as in \eqref{def: GammaP}. Let $t_\bullet$ be one of the multisections of $\CL_1 \to \oPM_\Gamma$ defined in Notation~\ref{not: descendent multisections pointing to special points}. Take the corresponding homotopy $H^\Gamma(t_\bullet)$ as defined in~\eqref{trr homotopy for the bullets}.  Then \[Z^{\Gamma_P}_{\Cont}(\ttts_\bullet)=
\sum_{\substack{P^{\Cont}\in\mathcal{P}_{Cont,h}(I,\vecd+\eee_1),\\ P^{\Cont} =\{(I_i,k_1(i),k_2(i))\}_{i=0}^h\\  \text{where $\widehat{P}^{\Cont}_j=P$}\\\text{ for some $j\in [h]$ and $1 \in I_0$}}}\left(\prod_{i=0}^h \left\langle\prod_{j\in I_i}\tau_{d_j}^{(a_j,b_j)}\sigma_1^{k_1(i)}\sigma_2^{k_2(i)}\sigma_{12}\right\rangle^{\mathbf{s}^{\Gamma_{0,k_1(i),k_2(i),1,I_i}},o}\right)
\beta_{P^{\Cont}}(\ttts_{\bullet}),\]
where
\begin{equation}\label{eqn:ContMultisection}
\beta_{P^{\Cont}}(\ttts_{\bullet}):=
\begin{cases}
\frac{k_1(j)}{k_1(0)+k_1(j)} &\text{if~}\bullet=r,\\
\frac{k_2(j)}{k_2(0)+k_2(j)} &\text{if~}\bullet=s,\\
0 &\text{if }\bullet=\Root, \\
0 &\text{if }\bullet=z_2,~\text{and }2\in I_0,\\
1 &\text{if }\bullet = z_2,~\text{and }2\in I_j\setminus I_0.
\end{cases}
\end{equation}
\end{lemma}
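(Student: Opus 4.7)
The plan is to group the sum defining $Z^{\Gamma_P}_{Cont}(\ttts_\bullet)$ according to the base of the boundary stratum. Every $\Lambda \in \mathcal{G}_{Cont}(P)$ is of the form $\Gamma^{\mathrm{Cont}}_{P,j,I_0,k_1(0),k_2(0),R_0,S_0}$, its base $\CB\Lambda$ depends only on $P^{Cont} \in \mathcal{P}_{Cont,h}(I,\vecd+\eee_1)$ with $1\in I_0$ and $\widehat{P}^{Cont}_j=P$ for some $j$, and for each such $P^{Cont}$ the remaining data $(R_0,S_0)$ parametrise $(k_1(v_0)+1)(k_2(v_0)+1)$ strata that together form the fibration \eqref{eq:F_p}, whose fibre over $p\in\oPM_{\CB\Lambda}$ is a union of closed intervals that glues to the circle $\hat\Phi_p$ identified with the boundary of the $v_0$-component of $\Sigma_p$. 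Thus
\[
Z^{\Gamma_P}_{Cont}(\ttts_\bullet)
=\sum_{P^{Cont}}\;\sum_{R_0,S_0}\#Z\!\left(H\big|_{[0,1]\times\oPM_{\Gamma^{\mathrm{Cont}}_{P,j,I_0,k_1(0),k_2(0),R_0,S_0}}}\right),
\]
with the outer sum as in the statement.

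For a fixed $P^{Cont}$, I will decompose $E|_{\oPM_\Lambda}$ along the boundary node and the forgotten boundary tail using Proposition~\ref{pr:decomposition} and Observation~\ref{obs:Witten_pulled_back_perm}. The descendent bundle splits as a box-sum $E_{\CB v_0}(\vecd|_{I_0}+\eee_1)\boxplus\bigboxplus_{i\neq 0}E_{\CB v_i}(\vecd|_{I_i})$, of which all summands except the $\CL_1$-factor attached to $v_0$ are balanced on their respective bases. Since $\srest,\rho,\mathbf{s}_2$ are all canonical, hence pulled back from $\oPM_{\CB\Lambda}$, the only time-dependent and non-basic ingredient of the homotopy \eqref{trr homotopy for the bullets} is $t_\bullet$ inside the $\CL_1$ factor. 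A Fubini argument using \eqref{eqn for zeros in the homotopy} and the transversality/non-vanishing properties of $\srest$ on the other components gives
\[
\sum_{R_0,S_0}\#Z\!\left(H\big|_{[0,1]\times\oPM_{\Gamma^{\mathrm{Cont}}_{\cdots}}}\right)
=\left(\prod_{i=0}^{h}\left\langle\prod_{j\in I_i}\tau_{d_j}^{(a_j,b_j)}\sigma_1^{k_1(i)}\sigma_2^{k_2(i)}\sigma_{12}\right\rangle^{\ess^{\Gamma_{0,k_1(i),k_2(i),1,I_i}},o}\right)\cdot N(\ttts_\bullet),
\]
where $N(\ttts_\bullet)$ is the fibrewise zero count of the $\CL_1$-component of the homotopy on $[0,1]\times\hat\Phi_p$.

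To compute $N(\ttts_\bullet)$, I would use the canonical trivialisations of Observation~\ref{obs:canonical_const}: the sections $\srest|_{\CL_1},\rho,\mathbf{s}_2,\ttt^{v_0}_r,\ttt^{v_0}_s,\ttt^{v_0}_\Root$, as well as $\ttt^{v_0}_{z_2}$ when $z_2\in I_0$, are all pulled back from $\oPM_{\CB v_0}$ and therefore constant on each circle $\hat\Phi_p$, whereas by Observation~\ref{obs:cont_on_fiber} the section $\ttt^{v_0}_{h_0}|_{\Phi_p}$ descends to a continuous function of winding number $-1$ on $\hat\Phi_p$. Since the homotopy is non-vanishing at the endpoints $u=0,1$ of the interval and can be reduced, via the usual winding-number calculation for sections of a trivialised line bundle on a cylinder $[0,1]\times S^1$ as in \cite[\S4.3]{PST14}, to the difference of winding numbers between $t_\bullet$ at $u=0$ and $\mathbf{s}_2$ at $u=1$, and since the winding number of a $\uplus$-sum equals the weighted sum of winding numbers of its branches, the computation reduces to reading off the weight of the $\ttt^{v_0}_{h_0}$ summand in each restriction given in Observation~\ref{obs:restriction_of_t_to_Cont}. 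This produces exactly the coefficients $\beta_{P^{Cont}}(\ttts_\bullet)$ in \eqref{eqn:ContMultisection}: the weights $k_1(j)/(k_1(0)+k_1(j))$, $k_2(j)/(k_2(0)+k_2(j))$ for $\bullet=r,s$, $0$ for $\bullet=\Root$, and $0$ or $1$ for $\bullet=z_2$ according to whether $2\in I_0$ or $2\in I_j$.

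The main obstacle is bookkeeping of signs and orientations: one must verify that the induced boundary orientation on $\oPM_\Lambda\subset\oPM_{\Gamma_P}$, the orientation of the fibre $\Phi_p$ as a boundary piece of $\hat\Phi_p\cong S^1$, and the sign convention built into $\#Z$ via \eqref{def: weight}--\eqref{def: weighted cardinality} all conspire to convert the winding number $-1$ of $\ttt^{v_0}_{h_0}$ (together with the weights prescribed by the $\uplus$-decomposition) into the positive rational $\beta_{P^{Cont}}(\ttts_\bullet)$ stated. Once this sign check is executed using the orientation results of \S\ref{subsec:or}, in the same spirit as the analogous computation in \cite[Lemma 4.14]{BCT:II}, summation over $P^{Cont}$ yields the claimed formula.
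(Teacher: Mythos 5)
Your proof takes essentially the same route as the paper's: group the strata of $\mathcal{G}_{Cont}(P)$ by their base $\Gamma_{P^{Cont}}$, split the homotopy as $\pi^*\srest\oplus H_2$, push the count into the one-dimensional fibre $\Phi_p\to\hat\Phi_p\cong S^1$, and compute the fibrewise zero count as a winding-number difference via Observations~\ref{obs:canonical_const}, \ref{obs:cont_on_fiber}, and \ref{obs:restriction_of_t_to_Cont}. One small inaccuracy: you invoke \eqref{eqn for zeros in the homotopy} for the Fubini step, but that equation governs the $\mathcal{G}_{WC}$ case (discrete fibres); the $\mathcal{G}_{Cont}$ version requires the weighted-zero decomposition over $p\in Z(\srest^{\CB\Lambda})$ with the one-dimensional fibre, as in the paper's Equation~\eqref{eq:HG} using Theorem~\ref{thm:or_and_induced boundary}(1) — conceptually what you describe, but not the equation you cite.
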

\begin{proof}
Fix $\Gamma=\Gamma_P$. First recall the definition of $Z^\Gamma_{\Cont}(\ttts_\XXX)$ and the classification of graphs in $\mathcal{G}_{\Cont}(P)$. Note that, if $\Lambda= \Gamma_{P, j, I_0,  k_1(0), k_2(0),R_0, S_0}^{\Cont}  \in \mathcal{G}_{\Cont}(P)$, then $\CB\Lambda=\Gamma_{P^{\Cont}}$
as defined in \eqref{def: GammaPCont}
where $P^{\Cont} =\{(I_i,k_1(i),k_2(i))\}_{i=0}^h \in \mathcal{P}_{Cont,h}(I,\vecd+\eee_1)$. Then we have that

\begin{equation}\begin{aligned}
Z^\Gamma_{\Cont}(\ttts_\XXX)&:=\sum_{\Lambda\in {\mathcal G}_{\Cont}(P)}\#Z(H|_{[0,1]\times\oPM_\Lambda})\\
	&= \sum_{\substack{P^{\Cont}\in\mathcal{P}_{Cont,h}(I,\vecd+\eee_1),\\ P^{\Cont} =\{(I_i,k_1(i),k_2(i))\}_{i=0}^h\\  \text{where $\widehat{P}^{\Cont}_j=P$}\\\text{ for some $j\in [h]$}}} \sum_{R_0, S_0} \#Z(H|_{[0,1]\times\oPM_{\Gamma_{P, j, I_0,  k_1(0), k_2(0),R_0, S_0}^{\Cont}} }).
\end{aligned}\end{equation}

Since $\srest$ is canonical, we have that
\begin{equation}\label{eq:ZeroesBase}\# Z\left(\srest^{\CB\Lambda}\right) = \prod_{i=0}^h\langle\prod_{j\in I_i}\tau_{d_j}^{(a_j,b_j)}\sigma_1^{k_1(i)}\sigma_2^{k_2(i)}\sigma_{12}\rangle^{\mathbf{s}^{\Gamma_{0,k_1(i),k_2(i),1,I_i}},o}.
\end{equation}
Write
\begin{equation}\label{eq:ZeroesBase2}
\# Z\left(\srest^{\CB\Lambda}\right) = \sum_{p \in Z\left(\srest^{\CB\Lambda}\right)} \eps_p,
\end{equation}
where $\eps_p$ is the weight of the zero $p$ defined in \eqref{def: weight}. Recall that since $H$ is of the form~\eqref{trr homotopy for the bullets}, the homotopy decomposes as $H = \pi^*\srest \oplus H_2$, where
$$H_2(u) := \left((1-u)t_\bullet+u(1-u)\rho^\Gamma+u \mathbf{s}_2\right)\in C^\infty(\CL_1).$$
We can use this decomposition, along with the definition~\eqref{eq:F_p} of $\Phi_p$ and the orientation of the boundary given in Theorem \ref{thm:or_and_induced boundary}(1),
to see that, for $\Lambda \in \mathcal{G}_{\Cont}(P)$:

\begin{align}\label{eq:HG}
&\sum_{R_0, S_0} \# Z\left(H|_{[0,1]\times\oPM_{\Gamma_{P, j, I_0,  k_1(0), k_2(0),R_0, S_0}^{\Cont}}}\right) = \sum_{R_0, S_0}\# \left(Z\left(\pi^* F_{\Gamma_{P, j, I_0, k_1(0), k_2(0), R_0, S_0}^{\Cont}}^* \srest^{\CB\Lambda}\right) \cap Z(H_2)\right) \\
\notag&\quad\quad = \# \left(\left(\bigcup_{R_0, S_0}Z\left(\pi^* F_{\Gamma_{P, j, I_0,  k_1(0), k_2(0),R_0, S_0}^{\Cont}}^* \srest^{\CB\Lambda}\right)\right) \cap Z(H_2)\right)= \sum_{p \in Z\left(\srest^{\CB\Lambda}\right)}\eps_p\cdot \# Z\left(H_2|_{[0,1]\times\Phi_p }\right).
\end{align}

Note that the multisection $H_2|_{[0,1]\times\Phi_p}$ descends to a multisection $\hat H_{2,p}$ on the quotient space ${[0,1]\times\hat \Phi_p }$. Indeed, by Observation~\ref{obs:canonical_const}, the multisections $\rho$, $\mathbf{s}_2$, $\ttt_r^{v_0},\ttt_s^{v_0},\ttt_\Root^{v_0}$, and $\ttt_{z_2}^{v_0}$ are constant for a fixed time and thus descend. Further,  Observation~\ref{obs:cont_on_fiber} implies that $t_{h_0}^{v_0}$ descends. We remark that as $H_2$ is transverse on all boundary strata by Lemma~\ref{lem:hom_trr}, we have that it does not vanish at the endpoints of the intervals in $\Phi_p$. By Observation~\ref{obs:restriction_of_t_to_Cont}, $t_\bullet$ is a multisum of the above multisections hence $H_2$  descends as the components $\ttt_{\bullet}$,
$\rho$ and $\mathbf{s}_2$ descend.  Thus,  $\hat H_{2,p}$ is a multisection of the trivial rank 1 $\C$-bundle on
$[0,1]\times\hat\Phi_p$ and $\hat H_{2,p}$ can be viewed as a
$\C$-multivalued function on $[0,1]\times\hat\Phi_p$.
As a consequence,
\begin{equation}\label{eq:HhH}
\#Z\left(H_2|_{[0,1]\times\Phi_p }\right) = \#Z\left(\hat H_{2,p}\right).
\end{equation}
By Observation~\ref{obs:restriction_of_t_to_Cont}, we have the following
case by case description of $\hat H_{2,p}|_{\{0\}\times \hat \Phi_p}$:
\begin{enumerate}
\item if $\bullet = r$, then $\hat H_{2,p}|_{\{0\}\times\hat \Phi_p} = k_1(0) \ttt_r \uplus k_1(j) \theta_p$;
\item if $\bullet = s$, then $\hat H_{2,p}|_{\{0\}\times\hat \Phi_p} = k_2(0) \ttt_s \uplus k_2(j) \theta_p$;
\item if $\bullet = \Root$, then $\hat H_{2,p}|_{\{0\}\times\hat \Phi_p} = \ttt_\Root$;
\item if $\bullet = z_2$ and $2 \in I_0$, then  $\hat H_{2,p}|_{\{0\}\times\hat \Phi_p} = \ttt_{z_2}$; and
\item if $\bullet = z_2$ and $2 \in I_j\setminus I_0$, then $\hat H_{2,p}|_{\{0\}\times\hat \Phi_p} = \theta_p$.
\end{enumerate}

Therefore, by Observations~\ref{obs:canonical_const}
and~\ref{obs:cont_on_fiber}, we have
\begin{equation}\label{eq:Hwinding}
\#Z\left(\hat H_{2,p}\right) = \beta_{P^{\Cont}}(\ttts_\bullet).
\end{equation}
where $\beta_{P^{\Cont}}(\ttts_\bullet)$ is as in~\eqref{eqn:ContMultisection}. Putting together equations \eqref{eq:ZeroesBase},~\eqref{eq:ZeroesBase2},~\eqref{eq:HG},~\eqref{eq:HhH} and~\eqref{eq:Hwinding}, we obtain the statement of the lemma.
\end{proof}

As in the case for the wall-crossing contributions given by the $Z_{WC}^{\Gamma_P}(\ttts_\bullet)$, computing a closed form expression for $Z_{\Cont}^{\Gamma_P}(\ttts_\bullet)$ is hard, but the special linear combinations in Definition~\ref{4tuplesQ} will be meaningful.

\begin{lemma}\label{PCont Properties}
Let $\{(a^P, b^P, c^P, d^P)\}_{P\in \mathcal{P}(I,\vecd+\eee_1)}$ be a family of 4-tuples of rational numbers so that the 4-tuples do not depend on the order of the elements of $P$ and $a^P + b^P + c^P + d^P = 1$.
\begin{enumerate}
\item Suppose that $\{(a^P, b^P, c^P, d^P)\}_{P\in \mathcal{P}(I,\vecd+\eee_1)}$ satisfies Property {\bf Q1}. Then for any $h\ge 0$ and  $P^{\Cont} =  \{(I_0,k_1(0),k_2(0)),\ldots, (I_h,k_1(h),k_2(h))\}  \in\mathcal{P}_{Cont,h}(I,\vecd+\eee_1)$ with $1 \in I_0$, we have that
\begin{align}\label{property P1}
\begin{split}
\frac{1+\sum_{i=0}^hk_1(i)}{r+k_1(0)}a^{\widehat{P}^{Cont,+r}}+&\frac{1+\sum_{i=0}^hk_2(i)}{s+k_2(0)}b^{\widehat{P}^{Cont,+s}}\\
&=1+\sum_{j=1}^h\left(\frac{k_1(j)}{k_1(0)+k_1(j)}a^{\widehat{P}^{\Cont}_j}+  \frac{k_2(j)}{k_2(0)+k_2(j)}b^{\widehat{P}^{\Cont}_j}\right).
\end{split}
\end{align}
\item Suppose that $\{(a^P, b^P, c^P, d^P)\}_{P\in \mathcal{P}(I,\vecd+\eee_1)}$ satisfies Property {\bf Q2}. Then for any $h\ge 0$ and  $P^{\Cont}=  \{(I_0,k_1(0)+1,k_2(0)+1),\ldots, (I_h,k_1(h),k_2(h))\}  \in \mathcal{P}_{Cont,h}(I,\vecd+\eee_1)$ with $1 \in I_0$ and $2 \notin I_0$, we have that
\begin{align}\label{property P2}
\begin{split}
\frac{1+\sum_{i=0}^hk_1(i)}{r+k_1(0)}a^{\widehat{P}^{Cont,+r}}&+\frac{1+\sum_{i=0}^hk_2(i)}{s+k_2(0)}b^{\widehat{P}^{Cont,+s}}\\
&=1+ \sum_{\substack{j=1 \\ 2 \notin I_j}}^h\left(\frac{k_1(j)}{k_1(0)+k_1(j)}a^{\widehat{P}^{\Cont}_j}+  \frac{k_2(j)}{k_2(0)+k_2(j)}b^{\widehat{P}^{\Cont}_j}\right).
\end{split}
\end{align}
\end{enumerate}
\end{lemma}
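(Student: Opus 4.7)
The plan is to exploit the observation that Equations~\eqref{property P1} and~\eqref{property P2} are formally identical to Equations~\eqref{property Q1} and~\eqref{property Q2} under a natural correspondence. Given $P^{Cont}=\{(I_0,k_1(0),k_2(0)),(I_1,k_1(1),k_2(1)),\ldots,(I_h,k_1(h),k_2(h))\}\in\mathcal{P}_{Cont,h}(I,\vecd+\eee_1)$ with $1\in I_0$ and all $I_j$ nonempty (the latter being needed for $\widehat{P}^{Cont,+r}$ and $\widehat{P}^{Cont,+s}$ to be defined per~\eqref{PContr and PConts}), I will associate the tuple
\[
Q:=\{(I_0,k_1(0)+1,k_2(0)+1),(I_1,k_1(1),k_2(1)),\ldots,(I_h,k_1(h),k_2(h))\}
\]
and deduce~\eqref{property P1} and~\eqref{property P2} from~\eqref{property Q1} and~\eqref{property Q2} applied to $Q$.

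The first step is to verify $Q\in\mathcal{Q}^{\not=\emptyset}_h(I,\vecd+\eee_1)$. The key identity is $m(I_0,\vecd+\eee_1)=m(I_0,\vecd)+rs$, which holds because $1\in I_0$ and is immediate from the defining formula of $m$ in Notation~\ref{nn:r(I),s(I),m(I)}. This converts the balanced condition $sk_1(0)+rk_2(0)=m(I_0,\vecd)$ on the first triple of $P^{Cont}$ into the critical condition $sk_1(0)+rk_2(0)=m(I_0,\vecd+\eee_1)-rs$ required by Proposition~\ref{prop:critical m(I)} for $(I_0,k_1(0)+1,k_2(0)+1)$ to be a valid critical triple. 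For $j\ge 1$, $1\notin I_j$ so $m(I_j,\vecd+\eee_1)=m(I_j,\vecd)$ and the balanced condition transfers verbatim. The second step is to unwind Definitions~\eqref{+r and +s definition}, \eqref{eq:hat Q r}, \eqref{defn: hatQj}, and~\eqref{PContr and PConts} to establish the identifications $\widehat{P}^{Cont,+r}=\widehat{Q}^{+r}$, $\widehat{P}^{Cont,+s}=\widehat{Q}^{+s}$, and $\widehat{P}^{Cont}_j=\widehat{Q}_j$ for $1\le j\le h$, as elements of $\mathcal{P}(I,\vecd+\eee_1)$. Each identity follows from the same combinatorial recipe applied to the same numerical values $k_1(0),k_2(0),k_1(j),k_2(j)$: for instance, the last triple of both $\widehat{Q}^{+r}$ and $\widehat{P}^{Cont,+r}$ is by definition $(I_0,k_1(0)+r,k_2(0))$, and the combined triple in both $\widehat{Q}_j$ and $\widehat{P}^{Cont}_j$ is $(I_0\cup I_j,k_1(0)+k_1(j),k_2(0)+k_2(j))$.

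With these identifications in hand, Property~{\bf Q1} applied to $Q$ reads verbatim as Equation~\eqref{property P1} for $P^{Cont}$, proving Part~(1). For Part~(2), the additional hypothesis $2\notin I_0$ on $P^{Cont}$ transfers directly to $Q$ (they share $I_0$), and the restricted sum $\sum_{j=1,\,2\notin I_j}^h$ in~\eqref{property P2} matches exactly the restricted sum in~\eqref{property Q2}, so Property~{\bf Q2} applied to $Q$ yields~\eqref{property P2}. I do not foresee a substantive obstacle; the argument is essentially bookkeeping. The one point requiring care is tracking the descendent vector through the correspondence: balancedness in $\mathcal{P}_{Cont,h}(I,\vecd+\eee_1)$ is stated with respect to $\vecd$ rather than $\vecd+\eee_1$, and the $\vecd$-vs-$\vecd+\eee_1$ shift created by the internal tail $1\in I_0$ is precisely what makes the translation from the balanced constraint on $(I_0,k_1(0),k_2(0))$ to the critical constraint on $(I_0,k_1(0)+1,k_2(0)+1)$ work out.
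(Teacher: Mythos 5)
Your proof is correct and takes exactly the same approach as the paper's: the paper's proof is the short observation that $P^{\mathrm{Cont}}\leftrightarrow Q$ gives a bijection under which $\widehat{Q}_j = \widehat{P}^{\mathrm{Cont}}_j$, $\widehat{Q}^{+r}=\widehat{P}^{\mathrm{Cont},+r}$, $\widehat{Q}^{+s}=\widehat{P}^{\mathrm{Cont},+s}$, so that \eqref{property Q1} and \eqref{property Q2} become \eqref{property P1} and \eqref{property P2} verbatim. Your writeup simply unpacks the phrase ``it is then straightforward to see'' by supplying the identity $m(I_0,\vecd+\eee_1)=m(I_0,\vecd)+rs$ to confirm that the balanced condition on $(I_0,k_1(0),k_2(0))$ is the critical condition on $(I_0,k_1(0)+1,k_2(0)+1)$, which is a correct and helpful detail but not a different route.
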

\begin{proof}
Notice that
$$
P^{\Cont}:=  \{(I_0,k_1(0),k_2(0)),\ldots, (I_h,k_1(h),k_2(h))\}  \in \mathcal{P}_{Cont,h}(I,\vecd+\eee_1)
$$
if and only if
$$
Q :=  \{(I_0,k_1(0)+1,k_2(0)+1),\ldots, (I_h,k_1(h),k_2(h))\}  \in \mathcal{Q}_h(I,\vecd+\eee_1).
$$
Moreover, we note that $\widehat Q_j = \widehat{P}^{\Cont}_j$ and, if $Q \in \mathcal{Q}^{\ne0}_h(I,\vecd+\eee_1)$, then $\widehat{Q}^{+r} = \widehat{P}^{Cont, +r}$ and $\widehat{Q}^{+s} = \widehat{P}^{Cont, +s}$. Note that
$d^{\widehat{Q}_j}=0$ if and only if $d^{\widehat{P}^{\Cont}_j}=0$.
 It is then straightforward to see that \eqref{property Q1} is equivalent to \eqref{property P1} and \eqref{property Q2} is equivalent to \eqref{property P2}.
\end{proof}

\begin{lemma}\label{lem:contribution_eqs}
Let $\{(a^P, b^P, c^P, d^P)\}_{P\in \mathcal{P}(I,\vecd+\eee_1)}$ be a family of 4-tuples of rational numbers so that the 4-tuples do not depend on the order of the elements of $P$ and $a^P + b^P + c^P + d^P = 1$.
\begin{enumerate}
\item
Suppose that $\{(a^P, b^P, c^P, d^P)\}_{P\in \mathcal{P}(I,\vecd+\eee_1)}$ satisfies
Property {\bf Q1}. Then
\begin{align}\label{eq:contribution_eq1_trr}
\sum_{h=1}^{|I|}\frac{1}{h!}&\sum_{\substack{P\in\mathcal{P}_h(I,\vecd+\eee_1),\\
P=\{(I_1,k_1(1),k_2(1)),\ldots,(I_h,k_1(h),k_2(h))\}}}\frac{\Gamma(\frac{1+\sum_{i=1}^hk_1(i)}{r}) \Gamma(\frac{1+\sum_{i=1}^h k_2(i)}{s})}{\Gamma(\frac{1+r(I)}{r}) \Gamma(\frac{1+s(I)}{s})}\cdot\\
\notag&\quad\quad\cdot
\left(a^PZ_{\Cont}^{\Gamma_P}(\ttts_r)+b^PZ_{\Cont}^{\Gamma_P}(\ttts_s)+c^PZ_{\Cont}^{\Gamma_P}(\ttts_\Root)+d^PZ_{\Cont}^{\Gamma_P}(\ttts_{z_2})\right)=-\mathcal{A}(I,\vecd).
\end{align}
\item
Suppose that $\{(a^P, b^P, c^P, d^P)\}_{P\in \mathcal{P}(I,\vecd+\eee_1)}$ satisfies
Property {\bf Q2}. Then
\begin{align}\label{eq:contribution_eq2_trr}
\sum_{h=1}^{|I|}\frac{1}{h!}&\sum_{\substack{P\in\mathcal{P}_h(I,\vecd+\eee_1),\\
P=\{(I_1,k_1(1),k_2(1)),\ldots,(I_h,k_1(h),k_2(h))\}}}\frac{\Gamma(\frac{1+\sum_{i=1}^hk_1(i)}{r}) \Gamma(\frac{1+\sum_{i=1}^h k_2(i)}{s})}{\Gamma(\frac{1+r(I)}{r}) \Gamma(\frac{1+s(I)}{s})}\cdot\\
\notag&\quad\quad\cdot
\left(a^PZ_{\Cont}^{\Gamma_P}(\ttts_r)+b^PZ_{\Cont}^{\Gamma_P}(\ttts_s)+c^PZ_{\Cont}^{\Gamma_P}(\ttts_\Root)+d^PZ_{\Cont}^{\Gamma_P}(\ttts_{z_2})\right)=0.
\end{align}
\end{enumerate}
\end{lemma}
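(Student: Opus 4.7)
The plan is to expand $Z_{Cont}^{\Gamma_P}(\ttts_\bullet)$ via Lemma~\ref{lem:TRR_cont_for_sections}, swap the order of summation to index by $P^{Cont}$ first, and then invoke Lemma~\ref{PCont Properties} to collapse the $P$-dependent coefficients. Since $\beta_{P^{Cont}}(\ttts_\Root) = 0$, the $c^P$ terms drop out from the outset. After swapping, I would collect all contributions per fixed $\tilde{P}^{Cont}$ with all non-empty pieces and $1 \in I_0$. Three sources appear: (i) the direct mergings $P = \widehat{\tilde{P}^{Cont}}_j$ for each $j \in [h']$; (ii) contributions from $P = \widehat{\tilde{P}}^{Cont,+r}$, arising from $P^{Cont} \in \mathcal{P}_{Cont,h'+1}$ obtained by inserting an empty $(\emptyset,r,0)$-piece at any of $h'+1$ positions of $\tilde{P}^{Cont}$; and (iii) the analogous $\widehat{\tilde{P}}^{Cont,+s}$ case. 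The $h'+1$ insertion positions combine with the $1/(h'+1)!$ factor to yield a net $1/h'!$ factor.

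Next, using the Gamma recursion $\Gamma(x+1) = x\Gamma(x)$, one finds $G_{\widehat{\tilde{P}}^{Cont,+r}} = \tfrac{1+\sum k_1(i)}{r}G_{\tilde{P}^{Cont}}$ (similarly for $+s$). Combined with $\beta_r = r/(r+k_1(0))$, $\beta_s = s/(s+k_2(0))$, and the factor $-1 = \langle\sigma_1^r\sigma_{12}\rangle^o = \langle\sigma_2^s\sigma_{12}\rangle^o$ from Theorem~\ref{thm:simple invariants} contributed by each empty-piece invariant, the $+r$ and $+s$ contributions to the coefficient of $\tfrac{1}{h'!}G_{\tilde{P}^{Cont}}\prod_i O_i^{\tilde{P}^{Cont}}$ equal precisely $-\tfrac{1+\sum k_1(i)}{r+k_1(0)}a^{\widehat{\tilde{P}}^{Cont,+r}} - \tfrac{1+\sum k_2(i)}{s+k_2(0)}b^{\widehat{\tilde{P}}^{Cont,+s}}$. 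Invoking Property {\bf Q1} (resp.\ {\bf Q2}) via Lemma~\ref{PCont Properties}, these terms cancel against the $\sum_{j\in[h']}(a^{\widehat{\tilde{P}^{Cont}}_j}\tfrac{k_1(j)}{k_1(0)+k_1(j)} + b^{\widehat{\tilde{P}^{Cont}}_j}\tfrac{k_2(j)}{k_2(0)+k_2(j)})$ contributions from (i) up to the residual $1$ appearing on the right-hand side of the identity.

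For {\bf Q1} (where $d^P \equiv 0$), the net residue per $\tilde{P}^{Cont}$ is $-1$; summing $-\tfrac{1}{h'!}G_{\tilde{P}^{Cont}}\prod_i O_i^{\tilde{P}^{Cont}}$ over all $\tilde{P}^{Cont}$ with $1 \in I_0$ and reindexing $\mathcal{A}(I,\vecd)$ from Notation~\ref{nn:A(J)} by distinguishing the piece containing $1$ (converting $1/h!$ to $1/(h-1)!$) reproduces $-\mathcal{A}(I,\vecd)$ exactly. For {\bf Q2}, the subcase $2 \in I_0$ contributes $0$ outright ($a^P = b^P = c^P = 0$ and $\beta_{z_2} = 0$), while the subcase $2 \in I_k$ for some $k \in [h']$ produces an additional $d^{\widehat{\tilde{P}^{Cont}}_k}\cdot\beta_{z_2} = 1$ from the $j = k$ term of (i), which exactly cancels the residual $-1$ from the Q2 identity (whose right-hand side only sums over $j$ with $2 \notin I_j$), yielding total $0$.

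The main obstacle is the combinatorial bookkeeping in the swap of summations: tracking how the distinguished role of $I_0$ (containing the marking $1$) interacts with the $1/h!$ symmetrization, how the $h'+1$ insertion positions of the empty piece are folded into the combinatorial weight, and the case analysis for {\bf Q2} based on the location of the marking $2$ among the pieces. Additional care is required to ensure that the Gamma recursion, the signs from $-1 = \langle\sigma_1^r\sigma_{12}\rangle^o = \langle\sigma_2^s\sigma_{12}\rangle^o$, and the $\beta$-values combine precisely as in the identities of Lemma~\ref{PCont Properties}.
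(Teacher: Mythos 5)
Your proposal is correct and follows essentially the same route as the paper's proof: expanding via Lemma~\ref{lem:TRR_cont_for_sections}, regrouping by the base partition $P^{Cont}$ (with the $h!$-orbit counting absorbing the symmetrization factors), using the Gamma recursion together with $\langle\sigma_1^r\sigma_{12}\rangle^o=\langle\sigma_2^s\sigma_{12}\rangle^o=-1$ to express the empty-piece insertions, and then applying Lemma~\ref{PCont Properties} with the same case analysis on the location of the marking $2$ for Property {\bf Q2}. The bookkeeping details (insertion positions, conversion of $1/h!$ to $1/(h-1)!$ when distinguishing the piece containing $1$, and the cancellation of the residual $\pm 1$) all match the paper's computation.
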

\begin{proof}
For $h\ge 0$, let
$\overline{\mathcal{P}}_{Cont,h}(I,\vecd+\eee_1)$ be the collection of elements \[P^{\Cont}=\{(I_0,k_1(0),k_2(0)),\ldots,(I_h,k_1(h),k_2(h))\}\] so that
\begin{itemize}
\item $I_j \ne \varnothing $ for all $0 \le j  \le h$;
\item the elements $(I_0,k_1(0),k_2(0)),\ldots,(I_h,k_1(h),k_2(h))$ are ordered in a way so that, for any two indices $i$ and $j$, we have that $\text{min}(I_i) < \text{min}(I_j)$ if and only if $i<j$;
\item  the graph $\Gamma_{P^{\Cont}}$ from~\eqref{def: GammaPCont} is balanced with respect to the descendent vector $\vecd$.
\end{itemize}
This implies that in particular  $1\in I_0$ and any $P^{\Cont} \in \overline{\mathcal{P}}_{Cont,h}(I,\vecd+\eee_1)$ with $h>0$ belongs to $\mathcal{P}_{\Cont}(I,\vecd+\eee_1)$.
 In the case $h=0$, we just have the data of a graph balanced with respect to
$\vecd$ with internal markings $I$. Note that if $h\geq 1$, then $P^{\Cont}\in \overline{\mathcal{P}}_{Cont,h}(I,\vecd+\eee_1)$ is an element of $\mathcal{P}^{\neq\emptyset}_{Cont,h}
(I,\vecd+\eee_1)$.
Moreover, in this case, if we consider the action of the symmetric group
$S_h$ on $\mathcal{P}_{Cont,h}^{\neq\emptyset}(I,\vecd+\eee_1)$ given by permuting
the last $h$ elements in the ordered partitions in
$\mathcal{P}^{\neq\emptyset}_{Cont,h}(I,\vecd+\eee_1)$, we can see that
$\overline{\mathcal{P}}_{Cont,h}(I,\vecd+\eee_1)$ contains a unique element of each
orbit of this $S_h$-action.
Thus $\overline{\mathcal{P}}_{Cont,h}(I,\vecd+\eee_1)$ has precisely one element
for each graph $\Gamma_{P^{\Cont}}$, with the order of connected
components not taken into account.

We define $ \overline{\mathcal{P}}_{Cont,h+1}^r(I,\vecd+\eee_1)$ and $ \overline{\mathcal{P}}_{Cont,h+1}^s(I,\vecd+\eee_1)$ to be the collections of tuples that can be written in the form $P^{Cont,+r}$ and $P^{Cont,+s}$, respectively, as
in~\eqref{PContr and PConts} for some $P^{\Cont} \in  \overline{\mathcal{P}}_{Cont,h}(I,\vecd+\eee_1)$. Take $$\widetilde{\mathcal{P}}_{Cont,h}(I,\vecd+\eee_1) : = \overline{\mathcal{P}}_{Cont,h}(I,\vecd+\eee_1) \cup  \overline{\mathcal{P}}_{Cont,h}^r(I,\vecd+\eee_1) \cup \overline{\mathcal{P}}_{Cont,h}^s(I,\vecd+\eee_1).$$
Define $\widetilde{\mathcal{P}}_{\Cont}(I,\vecd+\eee_1) : = \bigcup_{h} \widetilde{\mathcal{P}}_{Cont,h}(I,\vecd+\eee_1)$.

Set
\[
O_i(P^{\Cont})=\left\langle \prod_{j\in I_i} \tau_{d_j}^{(a_j,b_j)}
\sigma_1^{k_1(i)}\sigma_2^{k_2(i)}\sigma_{12}
\right\rangle^{\mathbf{s}^{\Gamma_{0,k_1(i),k_2(i),1,I_i},o}}.
\]
Using Lemma~\ref{lem:TRR_cont_for_sections} and the fact that $P^{\Cont}$ and $\widehat{P}^{\Cont}_j$ have the same number of $r$-points and $s$-points, the left-hand side of both ~\eqref{eq:contribution_eq1_trr} and ~\eqref{eq:contribution_eq2_trr} can be expanded to:
\begin{equation}\begin{aligned}\label{expanded Cont TRR}
\sum_{h=1}^{|I|}&\sum_{P\in\mathcal{P}_h(I,\vecd+\eee_1)} \sum_{\substack{P^{\Cont}\in\mathcal{P}_{Cont,h}(I,\vecd+\eee_1),\\ P^{\Cont} =\{(I_i,k_1(i),k_2(i))\}_{i=0}^h\\  \text{where $\widehat{P}^{\Cont}_j=P$}\\\text{ for some $j\in [h]$ and $1 \in I_0$}}}
 \frac{1}{h!} \frac{\Gamma(\frac{1+\sum_{i=0}^hk_1(i)}{r})\Gamma(\frac{1+\sum_{i=0}^h k_2(i)}{s})}{\Gamma(\frac{1+r(I)}{r}) \Gamma(\frac{1+s(I)}{s})}\cdot
\left(\prod_{i=0}^hO_i(P^{\Cont})\right) \cdot \\
&\left(\frac{k_1(j)}{k_1(0)+k_1(j)}a^{\widehat{P}^{\Cont}_j} + \frac{k_2(j)}{k_2(0)+k_2(j)} b^{\widehat{P}^{\Cont}_j} + \begin{cases} d^{\widehat{P}^{\Cont}_j}=1 & \text{ if $2 \in I_j$ and {\bf Q2} holds} \\ 0 & \text{otherwise}  \end{cases}\right).
\end{aligned}\end{equation}

We can change the summations over $\mathcal{P}_h(I,\vecd+\eee_1)$ and special subsets of $\mathcal{P}_{Cont,h}(I, \vecd+\eee_1)$ to a summation over $\widetilde{\mathcal{P}}_{\Cont}(I, \vecd+\eee_1)$. Given a $P^{\Cont}= \{(I_i,k_1(i),k_2(i))\}_{i=0}^h \in \widetilde{\mathcal{P}}_{\Cont}(I, \vecd+\eee_1)$, we define the quantity:
$$
Y_{P^{\Cont}} := \frac{\Gamma(\frac{1+\sum_{i=0}^hk_1(i)}{r})\Gamma(\frac{1+\sum_{i=0}^h k_2(i)}{s})}{\Gamma(\frac{1+r(I)}{r}) \Gamma(\frac{1+s(I)}{s})} \left(\prod_{i=0}^hO_i(P^{\Cont})\right).
$$
Using the fact that there are $h!$ elements of $\mathcal{P}_{Cont,h} (I, \vecd+\eee_1)$ in an $S_h$-orbit of a given element of $\mathcal{P}_{Cont,h}^{\not=\emptyset}(I,\vecd+\eee_1)$, we can rewrite the summation given
in~\eqref{expanded Cont TRR} as:

\begin{equation}\label{Cont TRR as a case by case}
\sum_{P\in\widetilde{\mathcal{P}}_{\Cont}(I,\vecd+\eee_1)} C(P),
\end{equation}
where $C(P)$ can be computed as follows from~\eqref{expanded Cont TRR}.
\begin{itemize}
\item If $P = P^{\Cont} = \{(I_i,k_1(i),k_2(i))\}_{i=0}^h \in  \overline{\mathcal{P}}_{Cont,h}(I,\vecd+\eee_1)$, then
$$
C(P) = Y_{P^{\Cont}}\sum_{j=1}^h \left(\frac{k_1(j)}{k_1(0)+k_1(j)}a^{\widehat{P}^{\Cont}_j} + \frac{k_2(j)}{k_2(0)+k_2(j)} b^{\widehat{P}^{\Cont}_j} + \begin{cases} 1 & \text{ if $2 \in I_j$ and {\bf Q2} holds} \\ 0 & \text{otherwise}\end{cases} \right).
$$

\item If $P = P^{Cont, +r} = \{(I_0,k_1(0),k_2(0)),\ldots,(I_h,k_1(h),k_2(h)), (\emptyset, r, 0)\} \in \overline{\mathcal{P}}_{Cont,h+1}^r(I,\vecd+\eee_1)$, then using that $\langle \sigma_1^r \sigma_{12}\rangle=-1$ and
$\Gamma(x+1)=x\Gamma(x)$, we have that
\begin{align*}
C(P) &= \frac{\Gamma(\frac{1+r+ \sum_{i=0}^hk_1(i)}{r})\Gamma(\frac{1+\sum_{i=0}^h k_2(i)}{s})}{\Gamma(\frac{1+r(I)}{r}) \Gamma(\frac{1+s(I)}{s})} \left(\prod_{i=0}^hO_i(P^{\Cont})\right) \cdot \langle \sigma_1^r \sigma_{12}\rangle \cdot \left(\frac{r}{r+k_1(0)} a^{\widehat{P}^{Cont, +r}}\right)\\
	&= \frac{1+\sum_{i=0}^h k_1(i)}{r} \cdot (-1) \cdot \frac{r}{r+k_1(0)} a^{\widehat{P}^{Cont, +r}}  Y_{P^{\Cont}} \\ &= - \frac{1+\sum_{i=0}^h k_1(i)}{r+k_1(0)}a^{\widehat{P}^{Cont, +r}}  Y_{P^{\Cont}}.
\end{align*}
\item If $P= P^{Cont, +s} \in\overline{\mathcal{P}}_{Cont,h+1}^s(I,\vecd)$, then we can analogously compute that
\begin{equation*}
C(P) = - \frac{1+\sum_{i=0}^h k_2(i)}{s+k_2(0)}b^{\widehat{P}^{Cont, +s}}  Y_{P^{\Cont}}.
\end{equation*}
\end{itemize}

View the quantities $Y_{P^{\Cont}}$ for all $ P^{\Cont} \in  \overline{\mathcal{P}}_{Cont,h}(I,\vecd+\eee_1)$ as independent linear variables. Then we can write  the left-hand side of both ~\eqref{eq:contribution_eq1_trr} and ~\eqref{eq:contribution_eq2_trr} as a linear combination of the variables $Y_{P^{\Cont}}$.

Consider a family of $4$-tuples satisfying Property {\bf Q1},
so that in particular
we fix $d^P = 0$ for all $P \in \mathcal{P}(I,\vecd+\eee_1)$. Then the coefficient of $Y_{P^{\Cont}}$ in
\eqref{expanded Cont TRR}
when Property {\bf Q1} holds is, by Lemma~\ref{PCont Properties}(1),
\begin{align*}
\sum_{j=1}^h \Big(\frac{k_1(j)}{k_1(0)+k_1(j)}a^{\widehat{P}^{\Cont}_j} +
& \frac{k_2(j)}{k_2(0)+k_2(j)} b^{\widehat{P}^{\Cont}_j}\Big)
\\
& -  \frac{1+\sum_{i=0}^h k_1(i)}{r+k_1(0)}a^{\widehat{P}^{Cont, +r}} - \frac{1+\sum_{i=0}^h k_2(i)}{s+k_2(0)}b^{\widehat{P}^{Cont, +s}} = - 1.
\end{align*}
Thus we can simplify the left-hand of \eqref{eq:contribution_eq1_trr} to be
\begin{align*}
&\sum_{P^{\Cont} \in \overline{\mathcal P}_{\Cont}(I, \vecd+\eee_1)} -Y_{P^{\Cont}} \\
&\qquad=  \sum_{h\ge0} \sum_{P \in \mathcal{P}_{h+1}(I, \vecd)} \frac{-1}{(h+1)!}  \frac{\Gamma(\frac{1+\sum_{i=0}^hk_1(i)}{r})\Gamma(\frac{1+\sum_{i=0}^h k_2(i)}{s})}{\Gamma(\frac{1+r(I)}{r}) \Gamma(\frac{1+s(I)}{s})} \prod_{i=0}^h \langle\prod_{j\in I_i}\tau_{d_j}^{(a_j,b_j)}\sigma_1^{k_1(i)}\sigma_2^{k_2(i)}\sigma_{12}\rangle^{\mathbf{s}^{\Gamma_{0,k_1(i),k_2(i),1,I_i}},o} \\
	&\qquad = -\mathcal{A}(I, \vecd).
\end{align*}
Note here we are replacing the sum over elements of
$\overline{\mathcal{P}}_{Cont,h}(I,\vecd+\eee_1)$ with the set
$\mathcal{P}_{h+1}(I,\vecd)$ and dividing by $(h+1)!$, as the set
of $S_{h+1}$-orbits of $\mathcal{P}_{h+1}(I,\vecd)$ is in one-to-one
correspondence with $\overline{\mathcal{P}}_{Cont,h}(I,\vecd+\eee_1)$. Moreover, to fit with the convention of the indexing of $\overline{\mathcal{P}}_{Cont,h}(I,\vecd+\eee_1)$ going from $0$ to $h$, we temporarily shift the indexing of tuples in $\mathcal{P}_{h+1}(I,\vecd)$ down by one.

Next consider a family of 4-tuples satisfying Property {\bf Q2}. So, if $P$ satisfies $\{1, 2\} \subseteq I_j$ for some $j$ then $d^P=1$ and $a^P=b^P=c^P=0$ and otherwise $d^P=0$. Then the coefficient of $Y_{P^{\Cont}}$  in the left-hand side of \eqref{eq:contribution_eq2_trr} is
\begin{align*}
\sum_{\substack{j=1 \\ 2 \notin I_j}}^h&\left(\frac{k_1(j)}{k_1(0)+k_1(j)}a^{\widehat{P}^{\Cont}_j} + \frac{k_2(j)}{k_2(0)+k_2(j)}b^{\widehat{P}^{\Cont}_j}\right) +\sum_{j=1}^h \mathbf{1}_{2\in I_j} \\ &\qquad -\frac{1+\sum_{i=0}^hk_1(i)}{r+k_1(0)}a^{\widehat{P}^{Cont, +r}}-\frac{1+\sum_{i=0}^hk_2(i)}{s+k_2(0)}b^{\widehat{P}^{Cont, +s}}.
\end{align*}
Note that if $2\in I_0$ then $\{1,2\}\subseteq I_0$, and hence
$d^{\widehat{P}^{\Cont}_j}=d^{\widehat{P}^{Cont,+r}}=d^{\widehat{P}^{Cont,+s}}=1$
and the corresponding $a,b,c$ terms are zero. Thus the above
expression is automatically zero.
Otherwise, we assume $2\not\in I_0$.
Since Property {\bf Q2} holds, we have by Lemma~~\ref{PCont Properties}(2) that
\begin{align*}
\sum_{\substack{j=1 \\ 2 \notin I_j}}^h&\left(\frac{k_1(j)}{k_1(0)+k_1(j)}a^{\widehat{P}^{\Cont}_j} + \frac{k_2(j)}{k_2(0)+k_2(j)}b^{\widehat{P}^{\Cont}_j}\right) +\sum_{j=1}^h \mathbf{1}_{2\in I_j}\\ &\qquad -\frac{1+\sum_{i=0}^hk_1(i)}{r+k_1(0)}a^{\widehat{P}^{Cont, +r}}-\frac{1+\sum_{i=0}^hk_2(i)}{s+k_2(0)}b^{\widehat{P}^{Cont, +r}} = -1 + \sum_{j\neq 0} \mathbf{1}_{2\in I_j} = 0.
\end{align*}
Hence the coefficient of $Y_{P^{\Cont}}$ vanishes for all $P^{\Cont}$ and the expression in \eqref{eq:contribution_eq2_trr} vanishes.
\end{proof}

With all these computations in hand, we can take a deep breath, put this together, and prove Theorem~\ref{thm:mirrorA}.

 \begin{proof}[Proof of Theorem~\ref{thm:mirrorA}]
Suppose $\{(a^P,b^P, c^P, d^P)\}_{P\in\mathcal{P}(I,\vecd+\eee_1)}$ is a family of $4$-tuples as constructed in Lemma \ref{existence of 4tuples}. Take  a special canonical family $\ess$ of multisections bounded by $I$.  By Theorem~\ref{thm:A_mod_invs}, this choice will not affect the quantities of $\mathcal{A}(I, \vecd + \eee_1)$ or $\mathcal{A}(I, \vecd)$.

We define the quantity
$$
\#Z^{\Gamma_P}(\ttts_{a,b,c,d}):= a^P \int_{\oPM_{\Gamma_P}}e(E,\ttts_r) + b^P \int_{\oPM_{\Gamma_P}}e(E,\ttts_s) + c^P \int_{\oPM_{\Gamma_P}}e(E,\ttts_\Root)+ d^P \int_{\oPM_{\Gamma_P}}e(E,\ttts_{z_2})
$$
We then take the four homotopies $H^\Gamma(t_\bullet)$ from $\ttts_\bullet$ to $\ess$. We define analogously $\#Z^{\Gamma_P}(H^\Gamma_{a,b,c,d})$, $Z_{WC}^{\Gamma_P}(\ttts_{a,b,c,d})$, $Z_{\Cont}^{\Gamma_P}(\ttts_{a,b,c,d})$, and $Z_{\XCH}^{\Gamma_P}(\ttts_{a,b,c,d})$ as the appropriate weighted sums.

As usual, we write
\begin{equation}
\Gamma_{P,j} := \Gamma_{0, k_1(j), k_2(j), 1, \{(a_i, b_i)\}_{i \in I_j}}; \qquad O_j(P):= \left\langle\prod_{i\in {I_j}}\tau^{(a_i,b_i)}_{d_i}\sigma_1^{k_1(j)}\sigma_2^{k_2(j)}\sigma_{12}\right\rangle^{\mathbf{s}^{\Gamma_{P,j}},o}
\end{equation}
for any $P = \{(I_j,k_1(j),k_2(j))\}\in \mathcal{P}(I, \vecd+\eee_1)$.
We compute the quantity  $\mathcal{A}(I, \vecd + \eee_1)$ given by
\begin{equation}\label{A(I, d+e)}
\sum_{h=1}^{|I|}\frac{1}{h!}\sum_{\substack{P \in \mathcal{P}(I, \vecd+ \eee_1) \\ P = \{(I_i, k_1(i), k_2(i))\}_{i=1}^h}}
\frac{\Gamma(\frac{1+\sum_{i=1}^hk_1(i)}{r}) \Gamma(\frac{1+\sum_{i=1}^h k_2(i)}{s})}{\Gamma(\frac{1+r(I)}{r}) \Gamma(\frac{1+s(I)}{s})}
\left\langle\prod_{i\in {I(\Gamma_P)}}\tau^{(a_i,b_i)}_{d_i+\delta_{1i}}\sigma_1^{k_1(\Gamma_P)}\sigma_2^{k_2(\Gamma_P)}\sigma_{12}^{k_{12}(\Gamma_P)}\right\rangle^{\mathbf{s}^{\Gamma_P},o}
\end{equation}
by applying Lemma~\ref{lem: homotopy TRR contributions classify} in the context of the quantity $\#Z^\Gamma(H_{a,b,c,d})$. 
In this case, ~\eqref{eq:int_num_in_terms_of_trr} becomes
\begin{equation}\begin{aligned}\label{homotopy reminder}
\left\langle\prod_{i\in {I(\Gamma_P)}}\tau^{(a_i,b_i)}_{d_i+\delta_{1i}}\sigma_1^{k_1(\Gamma_P)}\sigma_2^{k_2(\Gamma_P)}\sigma_{12}^{k_{12}(\Gamma_P)}\right\rangle^{\mathbf{s}^{\Gamma_P},o} & \\
=\#Z^{\Gamma_P}(\ttts_{a,b,c,d})+Z_{WC}^{\Gamma_P}(\ttts_{a,b,c,d})&+Z_{\Cont}^{\Gamma_P}(\ttts_{a,b,c,d})+Z_{\XCH}^{\Gamma_P}(\ttts_{a,b,c,d}).
\end{aligned}\end{equation}

We now consider each term of the right hand side of ~\eqref{homotopy reminder} when considered as a linear combination given in~\eqref{A(I, d+e)}.
Take a family of 4-tuples $\{(a^P, b^P, c^P, d^P)\}$ that satisfies Property {\bf Q1}.
In this case, since we know that $d^P=0$, the relevant multisections are $\ttts_r$, $\ttts_s$ and $\ttts_\Root$ and these all point towards a boundary marking.
Let $P=\{(I_j,k_1(j),k_2(j))\}\in \mathcal{P}(I, \vecd+\eee_1)$ with $1\in I_1$.

As in Lemma \ref{lem:closed_contribution}, given the graph $\Gamma_{P,1}$ and $A \subseteq I_1$, write $z_{A,a,b}$ for an internal marking labelled by the union of labels of the marked points in $A$ with twist $(r-2-a,s-2-b)$ and write $v(P,1)_{A, a, b}$ for the graph $ \Gamma_{0,k_1,k_2,k_{12},\{(a_i, b_i)\}_{i \in (I_1\setminus A)\cup\{z_{A,a,b}\}}}$. Since $\ess$ is special canonical so is $\ess_1$, hence by Lemma \ref{lem:closed_contribution} and
Notation \ref{not:Gamma ab} we have
\begin{align*}
\#&Z^{\Gamma_{P,1}}(\ttts_{a,b,c,d}) \\ &= \sum_{\substack{A\sqcup B = I_1 \\ 1 \in A, \ |A| \ge 2 \\ a\in\{-1,\ldots,r-2\},\\b\in\{-1,\ldots,s-2\}}}\left\langle \tau_0^{(a,b)}\prod_{i \in A}\tau_{d_i}^{(a_i,b_i)}\right\rangle^{\text{ext}}
\left\langle \tau_0^{(r-2-a,s-2-b)}\prod_{i\in B}\tau^{(a_i,b_i)}_{d_i}\sigma_1^{k_1(1)}\sigma_2^{k_2(1)}\sigma_{12}\right\rangle^{s^{v(P,1)_{A, a, b}},o}
\end{align*}
Thus
\begin{equation}\begin{aligned}\label{first t abcd eqn}
\sum_{h=1}^{|I|}&\frac{1}{h!} \sum_{\substack{P \in \mathcal{P}_h(I, \vecd+ \eee_1) \\ P = \{(I_i, k_1(i), k_2(i))\}_{i=1}^h}}
\frac{\Gamma(\frac{1+\sum_{i=1}^hk_1(i)}{r}) \Gamma(\frac{1+\sum_{i=1}^h k_2(i)}{s})}{\Gamma(\frac{1+r(I)}{r}) \Gamma(\frac{1+s(I)}{s})}\cdot \#Z^{\Gamma_P}(\ttts_{a,b,c,d})  \\
=&\sum_{h=1}^{|I|}\frac{1}{(h-1)!}\sum_{\substack{P \in \mathcal{P}_h(I, \vecd+ \eee_1) \\ P = \{(I_i, k_1(i), k_2(i))\}_{i=1}^h\\ 1 \in I_1}}
\frac{\Gamma(\frac{1+\sum_{i=1}^hk_1(i)}{r}) \Gamma(\frac{1+\sum_{i=1}^h k_2(i)}{s})}{\Gamma(\frac{1+r(I)}{r}) \Gamma(\frac{1+s(I)}{s})}\cdot \prod_{j=2}^h O_j(P) \\
&  \left(\sum_{\substack{A\sqcup B = I_1 \\ 1 \in A, \ |A| \ge 2 \\ a\in\{-1,\ldots,r-2\},\\b\in\{-1,\ldots,s-2\}}}\left\langle \tau_0^{(a,b)}\prod_{i \in A}\tau_{d_i}^{(a_i,b_i)}\right\rangle^{\text{ext}}
\left\langle \tau_0^{(r-2-a,s-2-b)}\prod_{i\in B}\tau^{(a_i,b_i)}_{d_i}\sigma_1^{k_1(1)}\sigma_2^{k_2(1)}\sigma_{12}\right\rangle^{s^{v(P,1)_{A, a, b}},o}\right) \\
&= \sum_{\substack{a\in\{-1,\ldots,r-2\}\\b\in\{-1,\ldots,s-2\}}}\sum_{\substack{ A \sqcup B = I \\ 1 \in A, \ |A| \ge 2}}\left\langle \tau_0^{(a,b)}\prod_{i \in A}\tau_{d_i}^{(a_i,b_i)}\right\rangle^{\text{ext}} \cdot \\
&\qquad \left( \sum_{h=1}^{|B|+1}\frac{1}{(h-1)!}\sum_{\substack{P \in \mathcal{P}_h(B \cup \{z_{a,b}\}, \vecd) \\ P = \{(I_i, k_1(i), k_2(i))\}_{i=1}^h \\ z_{a,b} \in I_1}}
\frac{\Gamma(\frac{1+\sum_{i=1}^hk_1(i)}{r}) \Gamma(\frac{1+\sum_{i=1}^h k_2(i)}{s})}{\Gamma(\frac{1+r(I)}{r}) \Gamma(\frac{1+s(I)}{s})} \prod_{j=2}^h O_j(P)\right. \\
&\qquad\qquad\qquad \left.\left\langle \tau_0^{(r-2-a,s-2-b)}\prod_{i\in I_1\setminus\{z_{a,b}\}}\tau^{(a_i,b_i)}_{d_i}\sigma_1^{k_1(1)}\sigma_2^{k_2(1)}\sigma_{12}\right\rangle^{s^{\Gamma_{0,k_1(1), k_2(1), 1, \{(a_i, b_i)\}_{i \in I_1 \cup \{z_{a,b}\}}}},o}\right) \\
&= \sum_{\substack{a\in\{-1,\ldots,r-2\}\\b\in\{-1,\ldots,s-2\}}}\sum_{\substack{A \sqcup B = I \\ 1 \in A, \ |A| \ge 2}}\left\langle \tau_0^{(a,b)}\prod_{i \in A}\tau_{d_i}^{(a_i,b_i)}\right\rangle^{\text{ext}} \mathcal{A}(B \cup \{z_{a,b}\}, \vecd).
\end{aligned}\end{equation}
By Lemma~\ref{lem:cancelation_eqs}, we know that if $\{(a^P,b^P, c^P, d^P)\}_{P\in\mathcal{P}(I,\vecd+\eee_1)}$ satisfies Property {\bf Q1}, then
\begin{equation}\label{WC recursion equation}
\sum_{h=1}^{|I|}\frac{1}{h!}\sum_{\substack{P \in \mathcal{P}_h(I, \vecd+ \eee_1) \\ P = \{(I_i, k_1(i), k_2(i))\}_{i=1}^h}}
\frac{\Gamma(\frac{1+\sum_{i=1}^hk_1(i)}{r}) \Gamma(\frac{1+\sum_{i=1}^h k_2(i)}{s})}{\Gamma(\frac{1+r(I)}{r}) \Gamma(\frac{1+s(I)}{s})}Z_{WC}^{\Gamma_P}(\ttts_{a,b,c,d} ) = 0.
\end{equation}
By Lemma ~\ref{lem:xch_vanishing_invariants}, we know that for all $P \in \mathcal{P}_h(I, \vecd + \eee_1)$,
\begin{equation}\label{exchange vanish in OTRR}
Z_{\XCH}^{\Gamma_P}(\ttts_{a,b,c,d}) = 0.
\end{equation}
Lastly, since $\{(a^P,b^P, c^P, d^P)\}_{P\in\mathcal{P}(I,\vecd+\eee_1)}$
satisfies Property {\bf Q1}, we have that by Lemma~\ref{lem:contribution_eqs},
\begin{equation}\label{recurse with Q1}
\sum_{h=1}^{|I|}\frac{1}{h!}\sum_{\substack{P \in \mathcal{P}(I, \vecd+ \eee_1) \\ P = \{(I_i, k_1(i), k_2(i))\}_{i=0}^h}}
\frac{\Gamma(\frac{1+\sum_{i=1}^hk_1(i)}{r}) \Gamma(\frac{1+\sum_{i=1}^h k_2(i)}{s})}{\Gamma(\frac{1+r(I)}{r}) \Gamma(\frac{1+s(I)}{s})}Z_{\Cont}^{\Gamma_P}(\ttts_{a,b,c,d} ) = - \mathcal{A}(I, \vecd).
\end{equation}

By substituting into~\eqref{A(I, d+e)} the relation given in \eqref{homotopy reminder}  and then simplifying using \eqref{first t abcd eqn}, \eqref{WC recursion equation}, \eqref{exchange vanish in OTRR}, and ~\eqref{recurse with Q1},
we obtain the open topological recursion relation \eqref{eq:calculation_1}.

We next turn to the case where $|I|\ge 2$.
Take a family of 4-tuples $\{(a^P, b^P, c^P, d^P)\}$ that satisfies Property {\bf Q2}. In this case, the computation for $\#Z^{\Gamma_{P,1}}(\ttts_{a,b,c,d})$ for $\Gamma = \Gamma_P$ has two cases, depending on whether or not the markings $1$ and $2$ are in the same connected component.
Let $P=\{(I_j,k_1(j),k_2(j))\}\in \mathcal{P}(I, \vecd+\eee_1)$
with $1\in I_1$.
If $2 \notin I_1$ then $d^P = 0$ and the relevant multisections $\ttts_\bullet$ point to only boundary markings. We then have that
\begin{align*}
\#&Z^{\Gamma_{P,1}}(\ttts_{a,b,c,d}) \\ &= \sum_{\substack{A\sqcup B = I_1 \\ 1 \in A, |A| \ge 2 \\ a\in\{-1,\ldots,r-2\},\\b\in\{-1,\ldots,s-2\}}}\left\langle \tau_0^{(a,b)}\prod_{i \in A}\tau_{d_i}^{(a_i,b_i)}\right\rangle^{\text{ext}}
\left\langle \tau_0^{(r-2-a,s-2-b)}\prod_{i\in B}\tau^{(a_i,b_i)}_{d_i}\sigma_1^{k_1(1)}\sigma_2^{k_2(1)}\sigma_{12}\right\rangle^{s^{v(P,1)_{A, a, b}},o}
\end{align*}
Otherwise, if $2 \in I_1$ then $d^P=1$ and $a^P=b^P=c^P=0$ so the relevant multisection is $\ttts_{z_2}$ which points towards the internal marking $z_2$, hence we have that
\begin{align*}
\#&Z^{\Gamma_{P,1}}(\ttts_{a,b,c,d}) \\ &=
\sum_{\substack{ A\sqcup B = I_1 \\ 1\in A, 2 \in B  \\ |A| \ge 2 \\a\in\{-1,\ldots,r-2\},\\b\in\{-1,\ldots,s-2\}}}\left\langle \tau_0^{(a,b)}\prod_{i \in A}\tau_{d_i}^{(a_i,b_i)}\right\rangle^{\text{ext}}
\left\langle \tau_0^{(r-2-a,s-2-b)}\prod_{i\in B}\tau^{(a_i,b_i)}_{d_i}\sigma_1^{k_1(1)}\sigma_2^{k_2(1)}\sigma_{12}\right\rangle^{s^{v(P,1)_{A, a, b}},o}.
\end{align*}

Note in both cases $2$ is never an internal marking of a closed vertex
and we will get a contribution whenever $2$ is an internal marking of an
open vertex.
Through a similar resummation as before in~\eqref{first t abcd eqn} we get that

\begin{equation}\begin{aligned}\label{second t abcd eqn}
\sum_{h=1}^{|I|}\frac{1}{h!}&\sum_{\substack{P \in \mathcal{P}_h(I, \vecd+ \eee_1) \\ P = \{(I_i, k_1(i), k_2(i))\}_{i=1}^h\\ 1 \in I_1}}
\frac{\Gamma(\frac{1+\sum_{i=1}^hk_1(i)}{r}) \Gamma(\frac{1+\sum_{i=1}^h k_2(i)}{s})}{\Gamma(\frac{1+r(I)}{r}) \Gamma(\frac{1+s(I)}{s})}\cdot \#Z^{\Gamma_P}(\ttts_{a,b,c,d}) \\
&=\sum_{\substack{a\in\{-1,\ldots,r-2\}\\b\in\{-1,\ldots,s-2\}}} \ \sum_{\substack{A \coprod B =I\\ 1 \in A, \ 2\in B \\ |A| \ge 2}}\left\langle \tau_0^{(a,b)}\prod_{i \in A}\tau_{d_i}^{(a_i,b_i)}\right\rangle^{\text{ext}} \mathcal{A}(B\cup\{z_{a,b}\},\vecd).
\end{aligned}\end{equation}

Lastly, recall that when $\{(a^P,b^P, c^P, d^P)\}_{P\in\mathcal{P}(I,\vecd+\eee_1)}$ satisfies Property {\bf Q2}, we have by Lemma \ref{lem:cancelation_eqs}
that \eqref{WC recursion equation} still holds. Further,
Lemma~\ref{lem:contribution_eqs} implies  that
\begin{equation}\label{recurse with Q2}
\sum_{h=1}^{|I|}\frac{1}{h!}\sum_{\substack{P \in \mathcal{P}_h(I, \vecd+ \eee_1) \\ P = \{(I_i, k_1(i), k_2(i))\}_{i=1}^h}}
\frac{\Gamma(\frac{1+\sum_{i=1}^hk_1(i)}{r}) \Gamma(\frac{1+\sum_{i=1}^h k_2(i)}{s})}{\Gamma(\frac{1+r(I)}{r}) \Gamma(\frac{1+s(I)}{s})}Z_{\Cont}^{\Gamma_P}(\ttts_{a,b,c,d} ) = 0.
\end{equation}
Concluding this computation, by substituting into~\eqref{A(I, d+e)} the relation given in \eqref{homotopy reminder}  and then simplifying using \eqref{second t abcd eqn}, \eqref{WC recursion equation}, \eqref{exchange vanish in OTRR}, and ~\eqref{recurse with Q2}, we obtain the open topological recursion relation
\eqref{eq:calculation_2}.
\end{proof}

\bibliography{biblio_fermat}
\bibliographystyle{amsalpha}
\end{document}